\DeclarePairedDelimiter\floor{\lfloor}{\rfloor}
\newlist{inparaenum}{enumerate}{2}
\setlist[inparaenum]{nosep}
\setlist[inparaenum,1]{label=(\roman*)}
\setlist[inparaenum,2]{label=(\roman{inparaenumi}\emph{\alph*})}
\newtheorem{theorem}{Theorem}[section]
\newtheorem{lemma}[theorem]{Lemma}
\newtheorem{prop}[theorem]{Proposition}
\newtheorem{cor}[theorem]{Corollary}
\theoremstyle{remark}
\newtheorem{definition}[theorem]{Definition}
\newtheorem{condition}[theorem]{Condition}
\newtheorem{example}[theorem]{Example}
\newtheorem{rem}[theorem]{Remark}
\def\thick#1{\hbox{\rlap{$#1$}\kern0.25pt\rlap{$#1$}\kern0.25pt$#1$}}
\newcommand{\prodcop}{{C_0^{\scalebox{0.65}{$(nm_n)$}}}}
\newcommand{\prodFm}{{F_0^{\scalebox{0.65}{\emph{($n m_n$)}}}}}
\newcommand{\prodFmalt}{{F_0^{\scalebox{0.65}{{$(n m_n)$}}}}}
\newcommand{\postobs}{{\Pi_n^{\scalebox{0.65}{(o)}}}}
\newcommand{\postobsdens}{{\tilde{\Pi}_n^{\scalebox{0.65}{(o)}}}}
\newcommand{\altpostobsdens}{{\tilde{\Pi}_n^{\scalebox{0.65}{\emph{(o)}}}}}
\newcommand{\altpostobs}{{\Pi_n^{\scalebox{0.65}{\emph{(o)}}}}}
\newcommand{\prodG}{{G_\bone^{\scalebox{0.65}{($n$)}}}}
\newcommand{\prodF}{{F^{\scalebox{0.65}{($n$)}}}}
\newcommand{\iprodF}{{F^{\scalebox{0.65}{($\infty$)}}}}
\newcommand{\iprodFtruealt}{{F_0^{\scalebox{0.65}{\emph{($\infty$)}}}}}
\newcommand{\pisc}{{\pi_{\text{sc}}}}
\newcommand{\piloc}{{\pi_{\text{loc}}}}
\newcommand{\Yset}{{\mathcal{Y}}}
\newcommand{\iprodGH}{{G_\bone^{\scalebox{0.65}{\emph{($\infty$)}}}(\cdot|H_0)}}
\newcommand{\iprodGtH}{{G_{\bvartheta_0}^{\scalebox{0.65}{($\infty$)}}(\cdot|H_0)}}
\newcommand{\prodGtzerE}{{G_{\bvartheta_0}^{\scalebox{0.65}{($n$)}}(E_n|\theta_0)}}
\newcommand{\prodGtH}{{G_{\bvartheta}^{\scalebox{0.65}{($n$)}}(\cdot|H)}}
\newcommand{\tests}{{\boldsymbol{\tau}_n}}
\newcommand{\cover}{{\mathcal{N}}}
\newcommand{\prodGtHtruealt}{
		{
			G_{
				\scalebox{0.65}{$\bvartheta_0$}
			}^{
			\scalebox{0.65}{\emph{($n$)}}
			}
		}
	}
\newcommand{\prodGtHtrue}{{G_{\bvartheta_0}^{\scalebox{0.65}{($n$)}}}}
\newcommand{\iprodGtHtruealt}{{G_{\bvartheta_0}^{\scalebox{0.65}{\emph{($\infty$)}}}}}
\newcommand{\prodGtHtruealtjj}{{G_{\bvartheta_{0,j}}^{\scalebox{0.65}{{($n$)}}}}}
\newcommand{\prodGtHaltjj}{{G_{\bvartheta_{j}}^{\scalebox{0.65}{($n$)}}}}
\newcommand{\iprodGrho}{{G_{\brho_0,\bsigma_0}^{\scalebox{0.65}{\emph{($\infty$)}}}(\cdot|H_0)}}
\newcommand{\prodGrhooneProb}{{G_{\brho_0,\bone}^{\scalebox{0.65}{($n$)}}}}
\newcommand{\prodGProbaltgenn}{{G^{\scalebox{0.65}{\emph{$(n)$}}}}}
\newcommand{\iprodGzexpand}{{G_{\bsigma_0,\bmu_0}^{\scalebox{0.65}{\emph{($\infty$)}}}(\cdot|H_0)}}
\newcommand{\iprodGomnew}{{G_{\bomega_0,\bsigma_0,\bmu_0}^{\scalebox{0.65}{\emph{($\infty$)}}}(\cdot|H_0)}}
\newcommand{\iprodG}{{G_\bone^{\scalebox{0.65}{\emph{($\infty$)}}}(\cdot|\theta_0)}}
\newcommand{\iprodGalt}{{G_\bone^{\scalebox{0.65}{($\infty$)}}(\cdot|\theta_0)}}
\newcommand{\prodGt}{{G_\bvartheta^{\scalebox{0.65}{($n$)}}}}
\newcommand{\bmu}{{\boldsymbol{\mu}}}
\newcommand{\bsigma}{{\boldsymbol{\sigma}}}
\newcommand{\evc}{{C_{\scalebox{0.65}{EV}}}}
\newcommand{\ba}{{\boldsymbol{a}}}
\newcommand{\bb}{{\boldsymbol{b}}}
\newcommand{\bc}{{\boldsymbol{c}}}
\newcommand{\be}{{\boldsymbol{e}}}
\newcommand{\bu}{{\boldsymbol{u}}}
\newcommand{\bv}{{\boldsymbol{v}}}
\newcommand{\bw}{{\boldsymbol{w}}}
\newcommand{\bx}{{\boldsymbol{x}}}
\newcommand{\by}{{\boldsymbol{y}}}
\newcommand{\bt}{{\boldsymbol{t}}}
\newcommand{\bz}{{\boldsymbol{z}}}
\newcommand{\bphi}{{\boldsymbol{\varphi}}}
\newcommand{\bvarTheta}{\boldsymbol{\varTheta}}
\newcommand{\Gset}{{\mathcal{G}}}
\newcommand{\bvartheta}{{\boldsymbol{\vartheta}}}
\newcommand{\sigBor}{{\mathscr{B}}}
\newcommand{\bvarphi}{{\boldsymbol{\varphi}}}
\newcommand{\resimp}{{\mathcal{R}}}
\newcommand{\bomega}{{\boldsymbol{\omega}}}
\newcommand{\brho}{{\boldsymbol{\rho}}}
\newcommand{\bM}{{\boldsymbol{M}}}
\newcommand{\bX}{{\boldsymbol{X}}}
\newcommand{\bT}{{\boldsymbol{T}}}
\newcommand{\bU}{{\boldsymbol{U}}}
\newcommand{\bY}{{\boldsymbol{Y}}}
\newcommand{\bZ}{{\boldsymbol{Z}}}
\newcommand{\bW}{{\boldsymbol{W}}}
\newcommand{\bfbeta}{{\boldsymbol{\beta}}}
\newcommand{\balpha}{{\boldsymbol{\alpha}}}
\newcommand{\bkappa}{{\boldsymbol{\kappa}}}
\newcommand{\bxi}{{\boldsymbol{\xi}}}
\def\bzero{{\bf 0}}
\def\binf{{\boldsymbol \infty}}
\def\bone{{\bf 1}}
\def\indic{\mathds{1}}
\def\real{{\mathbb R}}
\def\reald{{\mathbb R}^d}
\def\prob{{\mathbb{P}}}
\def\nat{{\mathbb N}}
\def\Bset{\mathcal{B}}
\def\simp{\mathcal{S}}
\def\simpint{\mathring{\simp}}
\def\sreal{\mathcal{X}}
\def\allpart{\mathscr{P}}
\def\part{\mathcal{P}}
\def\dist{\mathscr{D}}
\def\kulb{\mathscr{K}}
\def\betaf{\text{Be}}
\def\dirf{\text{Dir}}
\newcommand{\R}{\mathbb{R}}
\newcommand{\diff}{\mathrm{d}}
\def\MDA{\mathcal{D}}
\newcommand{\BQN}{\begin{eqnarray}}
\newcommand{\EQN}{\end{eqnarray}}
\newcommand{\BQNY}{\begin{eqnarray*}}
\newcommand{\EQNY}{\end{eqnarray*}}
\newcommand{\BS}{\begin{sat}}
\newcommand{\ES}{\end{sat}}
\newcommand{\BT}{\begin{theo}}
\newcommand{\ET}{\end{theo}}
\newcommand{\BK}{\begin{korr}}
\newcommand{\EK}{\end{korr}}
\newcommand{\BD}{\begin{de}}
\newcommand{\ED}{\end{de}}
\newcommand{\BIT}{\begin{itemize}}
\newcommand{\EIT}{\end{itemize}}
\newcommand{\BDI}{\begin{description}}
\newcommand{\EDI}{\end{description}}
\newcommand{\BRM}{\begin{remarks}}
\newcommand{\ERM}{\end{remarks}}
\newcommand{\BEL}{\begin{lem}}
\newcommand{\EEL}{\end{lem}}
\def\Aset{\mathcal{A}}
\def\Hset{\mathcal{H}}
\newcommand*\xbar[1]{%
   \hbox{%
     \vbox{%
       \hrule height 0.5pt 
       \kern0.5ex
       \hbox{%
         \kern-0.1em
         \ensuremath{#1}%
         \kern-0.1em
       }%
     }%
   }%
} 
\begin{document}

\begin{frontmatter}
\title{Consistency of Bayesian Inference for Multivariate Max-Stable Distributions}
\runtitle{Consistency of Bayesian Inference for Max-stable Distributions}
\runauthor{Padoan and Rizzelli}

\begin{aug}
	\author[A]{\fnms{Simone A.} \snm{Padoan}\ead[label=e1]{simone.padoan@unibocconi.it}}
	\and
	\author[B]{\fnms{Stefano} \snm{Rizzelli}\ead[label=e2]{stefano.rizzelli@epfl.ch}}
	\address[A]{Department of Decision Sciences,Bocconi University, via Roentgen 1, 20136 Milan, Italy,
		\printead{e1}}
	
	\address[B]{Chair of Statistics STAT, EPFL, CH-1015 Lausanne, Switzerland,
		\printead{e2}}
\end{aug}

\begin{abstract}
Predicting extreme events is important in many applications in risk analysis. 
The extreme-value theory suggests modelling extremes by max-stable distributions.
The Bayesian approach provides a natural framework for statistical prediction.
Although various Bayesian inferential procedures 
have been proposed in the literature of univariate extremes and some for multivariate extremes, the study of their asymptotic properties has been left largely untouched. 
In this paper we focus on a semiparatric Bayesian method for estimating max-stable distributions in arbitrary dimension. We establish consistency of the pertaining posterior distributions for fairly general, well-specified max-stable models, whose margins can be short-, light- or heavy-tailed.
We then extend our consistency results to the case where the data come from a distribution lying in a neighbourhood of a max-stable one, which represents the most realistic inferential setting.
\end{abstract}

\begin{keyword}[class=MSC2020]
\kwd[Primary ]{62G20}
\kwd{62G32}
\kwd[; secondary ]{60G70}
\kwd{62C10}
\end{keyword}

\begin{keyword}
\kwd{Bernstein polynomials}
\kwd{Extreme-value copula}
\kwd{Multivariate max-stable distribution}
\kwd{Nonparametric estimation}
\kwd{Pickands dependence function}
\kwd{Angular measure}
\kwd{Posterior consistency}
\end{keyword}

\end{frontmatter}

\section{Introduction}\label{sec:intro}

%
Predicting the extremes of multiple variables is important in many applied fields for risk management. For instance, when designing bridges in civil engineering it is crucial to quantify what forces they must sustain in the future, e.g. the maximum wind speed, maximum river level, etc. \citep[e.g.][Ch. 9.3]{r1}.
In finance, the solubility of an investment is influenced by extreme fluctuations in the financial market affecting multiple assets, such as share prices, market indexes, currency values, etc. \citep[e.g.,][]{r3}.
The extreme-value theory encompasses several approaches for modelling multivariate extremes \citep[e.g.][]{r2}.
In this paper we focus on the family of max-stable models, which arises as a class of asymptotic distributions for linearly normalised componentwise maxima of random vectors \citep[][Ch. 4]{r2}.
Max-stable models have been successfully applied in several  areas, e.g. in meteorological, environmental, medical and actuarial studies for analysing heavy rainfall, extreme temperatures, air pollution, clinical trials, insurance claims, etc. \citep[e.g. ][]{r4,r5}, in addition to those previously mentioned.
In recent years, the popularity of some max-stable models is due to max-stable processes, which have been widely used in spatial applications \citep[e.g.][]{r8,r9,r7}.

The Bayesian approach provides a natural framework for statistical prediction.
The study of asymptotic properties like the consistency of the posterior distribution of the parameter of interest is informative for the robustness of the underlying Bayesian procedure \citep{r10}. In the last two decades the asymptotic theory of infinite-dimensional Bayesian statistics has been a very active research area. Applications to several interesting non- and semiparametric statistical problems have been proposed;
among the most recent  works, see \cite{r10,r12,r14,r11} and the references therein. 
To the best of our knowledge, to date there is no such a rigorous study concerning problems in extreme value analysis, both in the univariate and multivariate context. This article fills such a gap, providing the first results on posterior consistency of  non- and semiparametric Bayesian inference for multivariate max-stable models.

The Bayesian literature for univariate extremes includes several methodological and applied contributions  \citep[e.g. see][]{r17, r15, r5, r16}, while only few works address the analysis of multivariate extremes. There are two main reasons for the slow progress in the multivariate context.
The first motivation is that multivariate max-stable distributions define a complex, infinite-dimensional model class. Their (extreme-value) copula cannot be fully characterised using parametric families \citep[e.g.][Ch. 9.2]{r17} but rather depends on an infinite-dimensional parameter, the  so-called angular probability measure, which is subject to specific mean constraints. A special transform (reparametrisation) of such a probability measure yields the well-known Pickands dependence function, complying in turn with specific shape restrictions. It is also commonly used to summarise the dependence level, as it is simple to interpret \citep[e.g.][]{r19}. 
Proper estimation of such functions, accounting for their specificities, is not a simple task. Quite sophisticated non- and semiparametric estimation methods based on polynomials and splines have been proposed for inferring the dependence structure under both parametrisations \citep[e.g.,][]{r22, r39, r21,  r23, r216}.
In particular, \citep{r24} propose a fully nonparametric Bayesian estimation method for bivariate
max-stable distributions, where both dependence parametrisations are simultneously dealt with by means of  Bernstein polynomial representations. 

The second motivation is that the expression of the likelihood function is complicated and computationally burdensome to calculate in practice \citep[e.g.][]{r25}.
Accordingly, in high dimensions the statistical inference is often performed using a composite-likelihood approach 
\cite[see][]{r26} and the development of efficient full-likelihood estimation methods still represents an active research area \citep[e.g.,][]{r25, r28, r30}.
Notably, \cite{r252}
have been able to derive a Bayesian inferential method based on the full-likelihood for fitting max-stable distributions to the data of arbitrary dimensions (greater than two). They also establish asymptotic normality of the posterior distribution, yet under the rather restrictive assumptions that the  extreme-value copula belongs to a known parametric model and the margins are all unit-Fr\'echet.

The present paper provides several contributions. We establish the strong posterior consistency of non- and semiparametric Bayesian inferential procedures 
for max-stable distributions in arbitrary dimensions, where the a prior on the dependence structure is specified through a Bernstein polynomial representation of the angular probability measure. This estimation framework is more flexible  than that considered in \cite{r252}. In the bivariate case, we show that our strong consistency results can be extended to priors specified on the Pickands dependence function, also using an alternative representation of the latter via B-splines.

Our asymptotic results are initially derived assuming that the observable dataset is sampled from a max-stable distribution with known unit-Fr\'echet margins, as in \cite{r252}.
In practice, max-stable distributions are typically used for modelling the so-called block maxima, i.e. a vector of maxima obtained componentwise on a series of multidimensional observations of a certain length (block), e.g. yearly maxima. In this case, the use of max-stable models is only asymptotically justified, under regularity conditions, for increasingly large block sizes.
Each univariate sequence of maxima (suitably normalised) must approximatively follow one of the following three types of distributions: (reverse) Weibull (short-tailed), Gumbel (ligh-tailed) or Fr\'chet (heavy-tailed). Accordingly, as a first step we extend our posterior consistency results to well-specified max-stable statistical models whose margins are all short- or light- or heavy-tailed distributions.
Typically, the assumption of marginal distributions with such a homogeous tail behaviour entails no significant practical restriction.
%
For example, in several environmental applications physical phenomena are well described by short- or light-tailed distributions,  due 
to natural constraints \citep[e.g.][]{r1, r4}. Instead, heavy-tailed distributions 
are found to represent quite well the tail structure of many actuarial and financial data examples \citep[e.g.][]{r17, r4, r700}.

However, block maxima only approximatively follow a max-stable distribution and formally addressing this point requires to go beyond the paradigm of a bonafide Bayesian procedure, where data are exactly sampled from a distribution in the considered model class. 
Thus, in a second step, we provide conditions under which a  pseudo Bayesian procedure using data that come from a distribution in a neighbourhood of a max-stable one is mathematically justified, guaranteeing the consistent estimation of the true data generating distribution. In doing this we define a new hybrid-Bayesian approach, where data-dependent priors are specified in an empirical Bayes fashion \cite[e.g.,][]{r501}. Their use turns out to be essential to adapt classical asymptotic arguments on posterior consistency to the present nonstandard framework. Our asymptotic results are derived by leveraging on the recent theory of remote contiguity  \citep{r12, r32}, which draws a link between the limiting observational model and the actual joint probability law of the data sample. The techical tools developed in this work 
can be also of independent interest, beyond the extreme values context,  and adapted to other statistical methods affected by a model convergence bias. 

The reminder of this paper is organised as follows. We start providing the necessary background (Section \ref{sec:MGEV}) through: the introduction of notations used throughout the paper (Section \ref{sec:notation}), a brief review of the theory on max-stable distributions (Sections \ref{sec:general_theory}, \ref{sec:extreme_dep}), a concise description of their dependence structure's representation via Bernstein polynomials (Section \ref{sec:extreme_dep_poly}). The basic asymptotic theory is developed in Section \ref{sec:bayesian_inference}. After a short introduction on the Bayesian paradigm for well-specified max-stable models (Section \ref{sec:bayesian_paradigm}) consistency results are firstly established for the class of so-called simple max-stable distributions (Section \ref{sec:binf_simple_max}). These are then extended to more general families of  max-stable distributions (Section \ref{sec:binf_general_max}). Finally, we refine our asymptotic theory to account for more realistic sampling schemes, where the data come from a distribution lying in a neighbourhood of a max-stable one (Section \ref{sec:binf_sample_max}). We conclude with a discussion (Section \ref{sec:discussion}). 
The Supplementary Material
document provided with this article offers additional theoretical findings, along with a series of auxiliary lemmas, techical details on the presented examples and all the proofs of our main results.

\section{Background}\label{sec:MGEV}

In this section, we report general notations used throughout the paper and review some results on max-stable models. The latter  provide the mathematical and probabilistic background for our main theoretical findings, presented in the next two sections. 

\subsection{Notation}\label{sec:notation}

Given $\sreal\subset \real^d$, with  $d\in \mathbb{N}_+=\{1,2,\ldots \}$, 
and $f:\sreal \rightarrow \real$, let $\|f\|_{\infty}=\sup_{\bx \in \sreal} |f(\bx)|$ and
$\|f\|_1=\int_{\sreal}|f(\bx)|\diff \bx$.
Let $I=(i_1, \ldots,i_k) \subset\{1, \ldots,d\}^d$ with $1 \leq k \leq d$ and $\bx_I:=(x_i, i \in I)$. For a differentiable function $f$ at the point $\bx_0\in\sreal$, we denote by $f_{I}(\bx_0)$ its partial derivative of order $k$ with respect to $\bx_I$.
If $d=1$ and $f$ is nondecreasing, we also denote by $f^\leftarrow(t)=\inf\{ x \in \sreal: f(x) \geq t \}$ the left generalised inverse, with $t \in \real$.

Le $F$ and $G$ be two probability measures (pm's) on a generic measurable space $\mathbb{X}:=(\mathcal{X}, \sigma(\sreal))$. When $F$ and $G$ are absolutely continuous with respect to a measure $\nu$ on $\mathbb{X}$, with
density functions $f$ and $g$, $\kulb(f,g)= \int_\sreal (\log f/g) f \diff \nu $ and $\dist_{H}^2(f,g)=\int_\sreal (\sqrt{f}-\sqrt{g})^2 \diff \nu$ are the Kullback-Leibler divergence and 
the squared Hellinger distance, respectively. When $\sreal$ is a separable metric space and $\sigma(\sreal)$ is its Borel $\sigma$-algebra, 
$\dist_{T}(F,G)=\sup_{B \in \sigma(\sreal)}|F(B)-G(B)|$ is the total variation distance, while  $\dist_W(F,G)$ denotes a metric between pm's that metrizes the topology of weak convergence \citep[e.g.,][p. 508]{r10}. When $\sreal$ is an ordered subset of $\reald$, for simplicity, given a pm $F$ we also denote with $F$ the corresponding (cumulative) distribution function.
We then denote by $\dist_{KS}(F,G)=\sup_{\bx \in \sreal}|F(\bx)-G(\bx)|$ the Kolmogorov-Smirnov distance \citep[e.g.,][Ch 3.1.1]{r10}.
Moreover, we denote by $\prodF$ and $\iprodF$ the $n$-fold and infinite-fold product pm's. Finally, $\delta_\bx(\cdot)$ is the Dyrac-delta pm at $\bx \in \sreal$.

\subsection{Max-stable distributions}\label{sec:general_theory}

Let $\bZ=(Z_1,\ldots,Z_d)$ be a random vector (rv) with joint distribution $F$ and $\bZ_1,\bZ_2,\ldots$ be independent and identically distributed (iid) copies of it. Hereafter, operations between vectors are meant componentwise.
We say that $F$ is in the max-domain of attraction of $G$, in symbols $F\in\MDA(G)$, if there exist norming sequences $\ba_m>\bzero=(0,\ldots,0)$ and $\bb_m\in\real^d$, $m=1,2,\ldots$, such that
\begin{equation*}\label{eq:doa}
\lim_{m\to\infty} F^m(\ba_m \bx + \bb_m)=G(\bx), \quad \bx\in\real^d.
\end{equation*}
The distribution $G$ satisfies the max-stability property: $G(\bx)=G^k(\balpha_k\bx+\bfbeta_k)$ for some $\balpha_k>\bzero$, $\bfbeta_k\in\real^d$ and all $k=1,2,\ldots$ \citep[e.g.,][p. 143]{r2}. Its form is
\begin{equation*}\label{eq:bevd}
G(\bx) = \evc \left( G_{1}(x_1), \ldots, G_{d}(x_d) \right), \qquad \bx \in \real^d,
\end{equation*}
where $\evc$ is an \textit{extreme-value copula}, allowing the representation
\begin{equation}\label{eq:evcopula}
\evc(\bu) = \exp \left[ - L\left\{  (- \ln u_1), \ldots, (- \ln u_d)\right\} \right], 
\quad \bu \in (0, 1]^d,
\end{equation}
where $L: [0,\infty)^d \mapsto [0,\infty)$ is a homogeneous function of order $1$ named {\it stable-tail} dependence function. 
We refer to Section \ref{sec:extreme_dep} for additional details on the dependence structure.
For any $j\in\{1,\ldots,d\}$, the $j$-th margin of $G$ is of one of the following three types:
\begin{equation}\label{eq:gev}
G_j(x)=\begin{cases}
\exp(-x^{-\rho_j}), \hspace{2.8em} x>0, \, \rho_j>0,\\
\exp(-\exp(-x)), \quad x \in \R,\\
\exp(-(-x)^{\omega_j}), \hspace{1.8em} x <0, \, \omega_j>0,
\end{cases}
\end{equation}
known as the $\rho_j$-Fr\'echet, the Gumbel  and the (reverse) $\omega_j$-Weibull distribution function, respectively. Their location/scale version is readily obtained as: $G_{\rho_j, \sigma_j}(x):=\exp(-(x/\sigma_j)^{-\rho_j})$, for $\sigma_j>0$; $G_{\mu_j,\sigma_j}(x):=\exp(-\exp(-(x-\mu_j)/\sigma_j))$ for $\mu_j\in\R$, $\sigma_j>0$; $G_{\omega_j,\sigma_j,\mu_j}(x):=\exp(-(-(x-\mu_j)/\sigma_j)^{\omega_j})$ for $x<\mu_j$, $\mu_j\in\R$, $\sigma_j>0$.
In the sequel, we focus on classes of absolutely continuous multivariate max-stable distributions having margins all of the same type, denoted generically by $\{G_\bvartheta, \, \bvartheta \in \bvarTheta\}$. We refer to the specific classes as $\brho$-Fr\'echet ($\bvartheta=(\brho,\bsigma)\in\bvarTheta=(0, \infty)^{2d}$), $\bomega$-Weibull ($\bvartheta=(\bomega,\bsigma,\bmu)\in\bvarTheta\subset (0, \infty)^{2d}\times \reald$) and Gumbel $(\bvartheta=(\bsigma, \bmu)\in\bvarTheta=(0,\infty)^d\times \reald$) multivariate max-stable distributions. We denote by $\bvartheta_j$ the parameters of the $j$-th margin $G_{\bvartheta_j}$ of $G_\bvartheta$, with parametric space $\bvarTheta_j$.

The density functions of the three classes are related as follows. Let $\bX=(X_1,\ldots,X_d)$ be a rv with distribution $G_\bvartheta$ and
define $Y_j=U_{\bvartheta_j}(X_j)$, $j\in\{1,\ldots,d\}$, where
\begin{equation*}
U_{\bvartheta_j}(x_j)=-1/\log G_{\bvartheta_j}(x_j),
\end{equation*}
with $x_j \in \text{supp}(G_{\bvartheta_j})$. Then, $\bY=(Y_1,\ldots,Y_d)$ follows 
a max-stable distribution with common $1$-Fr\'{e}chet margins. We refer to the latter as multivariate $\bone$-Fr\'echet-max-stable or \textit{simple} max-stable distribution and denote it by $G_{\bone}$.
Specifically, for $\by>\bzero$,
$G_{\bone}(\by)=\exp(-V(\by))$, where $V(\by)=L(1/\by)$ is known as the {\it exponent} function and is hereafter assumed to have partial mixed derivatives up to order $d$ at almost every $\by>\bzero$.
Thus, the multivariate simple max-stable density function is given by the Fa\'{a} di Bruno's formula 
\begin{equation}\label{eq:density_unit_fre}
g_{\bone}(\by)
 = \sum_{\part\in\allpart_d}G_{\bone}(\by) \prod_{i=1}^{m}\{-V_{I_i}(\by)\},
\end{equation}
where $\allpart_d$ is the set of all the partitions $\part=\{I_1,\ldots,I_m\}$ of $\{1,\ldots,d\}$, with
$m=|\part|$. 
As a result, the multivariate max-stable density density of $G_\bvartheta$ is
\begin{equation}\label{eq:gen_dens}
g_{\bvartheta}(\bx)=  U_\bvartheta'(\bx)
g_{\bone}(U_\bvartheta(\bx)),
\end{equation}
where $U_\bvartheta(\bx)=(U_{\bvartheta_1}(x_1), \ldots, U_{\bvartheta_d}(x_d))$ and $U_\bvartheta'(\bx)=\prod_{j=1}^d U_{\bvartheta_j}'(x_j)$.

\subsection{Extremal dependence}\label{sec:extreme_dep}

The extreme-value copula is fully characterised by the stable-tail dependence function, which in turn is by the  {\it Pickands} dependence function, defined on $\resimp:=\{\bt\in[0,1]^{d-1}: \Vert \bt \Vert_1 \leq1\}$ as $A(\bt)=L(1-t_1-\dots-t_{d-1}, t_1, \ldots, t_{d-1})$. Thus,
%
%
\begin{equation}\label{eq:picklands}
A(\bt)=d\int_{\simp} \max\{(1-t_1-\cdots-t_{d-1})w_1, \ldots,  t_{d-1}w_d\} \diff H( \bw),
\end{equation}
where $H$ is a pm on the $d$-dimensional unit simplex  $\simp:=\{\bw\geq\bzero: \Vert \bw \Vert_1=1\}$, named \textit{angular pm}, and satisfies the mean constraints 
%
%
\begin{enumerate}
\item[(C1)] $\int_{\simp} w_j H(\diff \bw)=1/d,\; \forall\;j\in\{1,\ldots,d\}$.
\end{enumerate}
As a result of \eqref{eq:picklands}, the Pickands dependence function satisfies the convexity and boundary constraints
\begin{enumerate}
\item[(C2)] $A(a\bt_1+(1-a)\bt_2)\leq aA(\bt_1)+(1-a)A(\bt_2), \,a\in[0,1], \,\forall\,\bt_1,\bt_2\in\resimp$,
\item[(C3)] $1/d\leq \max\left(t_1,\ldots,t_{d-1},1-t_1-\cdots-t_{d-1} \right) \leq A(\bt) \leq 1, \,\forall\,\bt\in\resimp$.
\end{enumerate}
These are necessary and sufficient conditions to characterise the class of valid Pickands dependence functions in the case $d=2$, while they are only necessary but not sufficient when $d>2$,  see e.g. \citep[][p. 257]{r17} for a counter example.

In general, $H$ can place mass on all the $2^d-1$ subspaces of $\simp$ of the form
$\simp_{I}=\{\bv\in\simp: v_j>0 \text{ if } j\in I; v_j=0 \text{ if } j \notin I\}$, with $I$ a non-empty subset of $\{1,\ldots,d\}$ \citep[][Ch. 7]{r17}. Notice that $\simp_{\{j\}}=\{\be_j\}$, $j=1, \ldots,d$, where $\be_j$ is the $j$-th canonical basis vector, and $\simp_{\{1,\ldots,d\}}$ corresponds to the interior of $\simp$.
In the sequel, we focus on the subset of all the possible angular pm's given in Definition \ref{cond_angular}. Such a class makes statistical inference not too complicated and, at the same time, is sufficiently rich for applications.
\begin{definition}\label{cond_angular}
Let $\Hset$ denote the class of pm's on the Borel sets of $\simp$ satisfying (C.1) and having null mass outside the subset
$\tilde{\simp}:=\simpint \cup \{\be_1\}\cup\cdots \cup\{\be_d\}$, where $\simpint=\{\bv\in\simp:0<v_j<1, j=1,\ldots,d\}$. 
For any $H \in\Hset$ there are point masses
$p_j\in[0,1/d]$,  $j=1,\ldots,d$, and a Lebesgue integrable function
$h: \mathring{\resimp}\mapsto[0, \infty)$,  $\mathring{\resimp}:=\{\bv\in(0,1)^{d-1}: \Vert \bv \Vert_1 <1\}$, named \textit{angular density}, such that for all Borel subsets $B \subset \simp$ we have
\begin{equation}\label{eq:angpm}
H(B)=\sum_{j=1}^d p_j \delta_{\be_j}(B)+\int_{\pi_{\resimp}(B\cap \simpint)} h(\bv)\diff \bv,  
\end{equation}
\begin{equation*}\label{eq:Bset}
\text{with}\quad \pi_{\resimp}:\simp \mapsto \resimp: (w_1, \ldots, w_{d-1},w_d)\mapsto(w_1, \ldots, w_{d-1}).
\end{equation*}
$H(\bt)$, $\bt\in\resimp$, is  the distribution function pertaining to  the pm $H\circ \pi_{\resimp}$, called \textit{angular distribution}.
Let $\Aset$ be the set of functions on $\resimp$ 
defined through the representation in \eqref{eq:picklands}, with $H \in \Hset$. Hereafter, we refer to $\Hset$ and $\Aset$ as the spaces of valid angular pm's and Pickands functions, respectively. 
\end{definition}
%
%
%
%
%
The dependence level among the components of a max-stable rv can be described by means of a geometric interpretation of  the angular pm.  The more the mass of $H$ concentrates around $(1/d, \ldots, 1/d)$ (the barycenter of the simplex) the more the variables are dependent on each other. On the contrary, the more the mass of $H$ accumulates close to the vertices of the simplex, the less dependent the variables are. Alternatively, the dependence level can be described via the Pickands dependence function $A$, as it satisfies the inequality in (C3),
  where the lower and upper bounds represent the cases of complete dependence and independence, respectively.

\subsection{Polynomial representation of the extremal dependence}\label{sec:extreme_dep_poly}

In recent years, different polynomial functions have been used to model the extremal dependence more flexibly than using specific parametric models. For example, polynomials in Bernstein form have been used to model the univariate angular distribution and Pickands dependence functions \citep[e.g.,][]{r24,r101}. Piecewise polynomials as linear combinations of B-spline have been used to model the univariate Pickands dependence function in a regression setting \citep{r22}.
Alternative polynomial representations have been considered, e.g., in \cite{r21} and \cite{r36}.
%
%
In higher dimensions, the Pickands dependence function is less tractable \citep{r23}, while multivariate pm's on the simplex as the angular pm can be conveniently modelled through density functions and point masses \citep[e.g.,][]{ r211, r212, r39}.
%
%
Bernstein polynomials have shown to be very tractable from a theoretical and a computational viewpoint when performing Bayesian nonparametric inference \citep[e.g.][]{r213,r48,r24,r39}
and allow for a representation of the multivariate angular density in terms of Dirichlet mixtures, one of the most popular model classes for angular pm's \cite{r40, r216}.
Hereafter, we focus on such an approach, aiming to a general and concise discussion.
%
%
%
In Section \ref{app:review} of the supplement, we also provide a novel characterisation of univariate angular distribution and Pickands dependence functions via B-splines, allowing for simple prior specification on the extremal dependence, along with additional details on Bernstein polynomial modelling. 
These representations are useful to construct prior distributions yielding consistent Bayesian semiparametric  procedures.

We now briefly describe angular pm modelling via a mixture of polynomial densities and point masses.
%
For an integer $k>d$, let $\Gamma_k$ be the set of multi-indices $\balpha \in \{1, \ldots, k-d+1\}^d$
such that $\alpha_1 + \cdots + \alpha_{d-1} \le k-1$ and $\alpha_d=k- \alpha_1 + \cdots + \alpha_{d-1} $, whose cardinality is \citep[e.g,][]{r23}
\begin{equation*}\label{eq:p}
|\Gamma_{k}| = \binom{k-1}{d-1}.
\end{equation*}
For each $\balpha \in \Gamma_k$, the Bernstein polynomial basis function of index $\balpha-\bone$ and degree $k-d$ 
$$
b_{\balpha-\bone}(\bt;k-d)=\frac{(k-d)!}{(\alpha_1-1)! \cdots (\alpha_d-1)!} (w_1^{\alpha_1-1}\cdots w_d^{\alpha_d-1}), \quad \bt \in \mathring{\resimp},
$$
can be rewritten as 
$b_{\balpha-\bone}(\bt;k-d)=\dirf(\bt;\balpha)(k-d)!/(k-1)!$ for all $\bt \in \mathring{\resimp}$, where $\dirf(\bt;\balpha)$ denotes the 
Dirichlet probability density with parameters $\balpha$ \citep[e.g.,][]{r230}. 
Therefore, a $(k-d)$-th degree Bernstein polynomial representation of the angular density is given by
\begin{equation}\label{eq:bpoly_density}
h_{k-d}(\bt)=\sum_{\balpha\in \Gamma_{k}}\varphi_{\balpha}\dirf(\bt;\balpha),\quad \bt\in\mathring{\resimp},
\end{equation}
where $\varphi_{\balpha} \in [0,1]$ for any $\balpha\in\Gamma_k$.
Set $\bkappa_j=k\be_j$, $j\in\{1,\ldots,d\}$.
According to \cite{r39}, the following pm on the Borel subsets $B \subset \simp$
\begin{equation}\label{eq: BPoly measure}
H_k(B):=\sum_{ j =1}^d \delta_{\be_j}(B)\varphi_{\bkappa_j}+\int_{\pi_{\resimp}(B\cap \simpint)}h_{k-d}(\bt)\diff \bt,
\end{equation}
is a valid angular pm if and only if the non-negative coefficients 
$$
\bvarphi^{(k)}=(\varphi_{\bkappa_1},\ldots,\varphi_{\bkappa_d}, \varphi_{\balpha}, \, \balpha\in\Gamma_k)
$$ 
satisfy the restrictions:
\begin{enumerate}
\item[(R1)] $\sum_{\balpha\in \Gamma_{k}} \varphi_{\balpha} = 1-\varphi_{\bkappa_1}-\cdots-\varphi_{\bkappa_d};$
\item[(R2)] $\sum_{l=1}^{k-1}\frac{l}{k} \sum_{\balpha \in \Gamma_k; \alpha_j=l}\varphi_\balpha=\frac{1}{d}-\varphi_{\bkappa_j}$, $ \forall\,j=1, \ldots,d$.
\end{enumerate}
\noindent
Thus, we define the classes of angular pm's with density in Bernstein Polynomial (BP) form via
\begin{equation*}
\mathcal{H}_k:=\{
H_k\in \Hset: H(B)=\sum_{j=1}^d \delta_{\be_j}(B)\varphi_{\bkappa_j}+\int_{\pi_{\resimp}(B\cap \simpint)}h_{k-d}(\bt)\diff \bt, \,\text{(R1)-(R2) hold true}
\},
\end{equation*}
for each integer $k>d$.
We have the following approximation property, see Section \ref{appsec:proofpoly} of the supplement for a proof.
\begin{prop}\label{prop: top supp}
For every $H \in \Hset$ and any $\epsilon>0$, there exists $k >d$ and $H_k \in \Hset_k$
such that $\Vert h-h_{k-d} \Vert_1 < \epsilon$.
\end{prop}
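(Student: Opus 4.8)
The plan rests on the observation that, once nonnegative interior coefficients $\varphi_\balpha$, $\balpha\in\Gamma_k$, are chosen, restriction (R2) \emph{forces} the vertex masses and (R1) is then automatic. Write $\bt=(t_1,\dots,t_{d-1})\in\mathring{\resimp}$, set $t_d:=1-t_1-\cdots-t_{d-1}$, and put $\mu_j(f):=\int_{\mathring{\resimp}}t_j\,f(\bt)\,\diff\bt$ for integrable $f$ on $\mathring{\resimp}$ and $j\in\{1,\dots,d\}$. The mean of the $j$-th barycentric coordinate under $\dirf(\cdot;\balpha)$ is $\alpha_j/k$, so for $h_{k-d}=\sum_{\balpha\in\Gamma_k}\varphi_\balpha\dirf(\cdot;\balpha)$ we get $\mu_j(h_{k-d})=k^{-1}\sum_\balpha\alpha_j\varphi_\balpha=\sum_{l=1}^{k-1}(l/k)\sum_{\balpha\in\Gamma_k\colon\alpha_j=l}\varphi_\balpha$; hence (R2) is exactly the requirement $\varphi_{\bkappa_j}=1/d-\mu_j(h_{k-d})$, and summing over $j$ (using $t_1+\cdots+t_d\equiv1$, so $\sum_j\mu_j(h_{k-d})=\int h_{k-d}=\sum_\balpha\varphi_\balpha$) yields (R1). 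Therefore $H_k\in\Hset_k$ provided only that (i) $\varphi_\balpha\ge0$ for every $\balpha$ and (ii) $\mu_j(h_{k-d})\le1/d$ for every $j$ (then $\varphi_{\bkappa_j}\in[0,1/d]$, and $\varphi_\balpha\le1$ follows from $\sum_\balpha\varphi_\balpha=1-\sum_j\varphi_{\bkappa_j}\le1$). So the task reduces to producing nonnegative $\varphi_\balpha$ with $h_{k-d}$ close to $h$ in $L^1$ and with all coordinate means at most $1/d$.

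First I would replace $h$ by a contracted, smoothed version with \emph{strict} mean slack. By (C1), $\mu_j(h)=1/d-p_j\le1/d$, and $\int h=\sum_j\mu_j(h)=1-\sum_jp_j\le1$. For $\delta\in(0,1)$ the function $\tilde h:=(1-\delta)h\ge0$ satisfies $\mu_j(\tilde h)=(1-\delta)\mu_j(h)\le(1-\delta)/d<1/d$ and $\|h-\tilde h\|_1=\delta\int h\le\delta$. By density of $C_c(\mathring{\resimp})$ in $L^1$, choose $h_2\in C_c(\mathring{\resimp})$ with $h_2\ge0$ (take a continuous compactly supported approximant of $\tilde h$ and truncate it at $0$) and $\|\tilde h-h_2\|_1<\eta$; extended by $0$, $h_2$ is continuous on the compact simplex $\resimp$, and $\mu_j(h_2)<(1-\delta)/d+\eta$, which is $<1/d$ once $\eta<\delta/(2d)$. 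Fixing at the outset $\delta,\eta$ small with $\delta+\eta<\epsilon/2$ and $\eta<\delta/(2d)$, we obtain $\|h-h_2\|_1<\epsilon/2$ and $\mu_j(h_2)<1/d$ for all $j$.

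Next I would Bernstein-approximate $h_2$ on the simplex. For $k>d$ set $n:=k-d$ and $\varphi_\balpha:=\tfrac{(k-d)!}{(k-1)!}\,h_2\big((\balpha-\bone)/n\big)\ge0$, $\balpha\in\Gamma_k$. Since $b_{\balpha-\bone}(\cdot;n)=\tfrac{(k-d)!}{(k-1)!}\dirf(\cdot;\balpha)$, the density $h_{k-d}=\sum_\balpha\varphi_\balpha\dirf(\cdot;\balpha)$ is precisely the degree-$n$ Bernstein polynomial of $h_2$ over $\resimp$; as $h_2$ is continuous on the compact set $\resimp$, the classical Bernstein approximation theorem on the simplex gives $\|h_{k-d}-h_2\|_\infty\to0$, whence $\|h_{k-d}-h_2\|_1\le\tfrac{1}{(d-1)!}\|h_{k-d}-h_2\|_\infty\to0$ and $\mu_j(h_{k-d})\to\mu_j(h_2)<1/d$. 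Fixing $k$ large enough that $\|h_{k-d}-h_2\|_1<\epsilon/2$ and $\mu_j(h_{k-d})<1/d$ for all $j$, and setting $\varphi_{\bkappa_j}:=1/d-\mu_j(h_{k-d})\in(0,1/d]$, the first paragraph shows $\bvarphi^{(k)}$ has nonnegative entries satisfying (R1)--(R2); hence $H_k\in\Hset_k$ by the characterisation in \cite{r39} recalled just before the definition of $\Hset_k$, and $\|h-h_{k-d}\|_1\le\|h-h_2\|_1+\|h_2-h_{k-d}\|_1<\epsilon$.

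The main obstacle is precisely condition (ii). The Bernstein operator supplies $L^1$ (indeed uniform) approximation for free, and once $\varphi_{\bkappa_j}$ is read off as $1/d-\mu_j(h_{k-d})$ the barycentre constraints (C1) hold by construction; but the marginal means of the Bernstein polynomial of $h$ itself can overshoot $1/d$ when $p_j=0$ --- already for $d=2$ and $h(t)=2t^2$ (a valid angular density, with $p_1=0$, $p_2=1/3$) one has $\mu_1(B_nh)=\tfrac12+\tfrac{1}{6n}>\tfrac12=\mu_1(h)$, which would force $\varphi_{\bkappa_1}<0$. The preliminary contraction $h\mapsto(1-\delta)h$ removes this pathology at a cost of only $O(\delta)$ in $L^1$, after which large $n$ recovers the slack. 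Everything else is routine: uniform convergence of simplex Bernstein polynomials of continuous functions, the Dirichlet moment identity $\int t_j\,\dirf(\bt;\balpha)\,\diff\bt=\alpha_j/k$, and the elementary verification that (R1)--(R2) with the stated $\varphi_{\bkappa_j}$ are just "total mass one" together with (C1).
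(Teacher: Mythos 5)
Your proof is correct and follows essentially the same route as the paper's: contract the density to create strict slack in the mean constraints, approximate it in $L^1$ by a nonnegative continuous compactly supported function, then Bernstein-approximate on the simplex and read the vertex masses off from (R2). The only difference is that the paper additionally adds a small positive constant to the continuous approximant so as to land in the class $\Hset'$ and invoke a separately stated uniform-approximation lemma for that class, a step your direct verification of the mean constraints (with the instructive $d=2$, $h(t)=2t^2$ counterexample showing why the contraction is needed) renders unnecessary.
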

As a result, $\cup_{k= d+1}^\infty\mathcal{H}_k$ is a dense subset of  $(\Hset, \dist_{T})$ and, therefore, of $(\Hset, \dist_{W})$ and $(\Hset,\dist_{KS})$, where $\dist_{KS}$ metrizes the space of angular pm's through the uniform distance (over $\mathcal{R}$) between their angular distribution functions.
This guarantees
that a prior on the angular pm suitably specified via the Bernstein polynomial
representation in \eqref{eq:bpoly_density} has full support.
Proposition \ref{prop: top supp} further gives the mathematical ground for devising full support priors on $d$-variate max-stable densities and
consistent Bayesian predictive methods. 

%
\section{Bayesian inference}\label{sec:bayesian_inference}
%

%
\subsection{Bayesian paradigm for max-stable models}\label{sec:bayesian_paradigm}
%

%
In this section, we firstly consider a simple max-stable observational model and a Bayesian statistical setting that can be described in the following general terms.
We assume that the observables are iid rv's $\bY_1,\ldots,\bY_n$ in $(0, \infty)^d$ following a simple max-stable distribution $G_{\bone}\equiv G_{\bone}(\cdot|\theta_0)$.
In particular, $\theta_0$ is an unknown infinite-dimensional parameter corresponding either to the true Pickands dependence function $A_0$ or to the true angular pm $H_0$. Observe that, due to the $1$-to-$1$ relation between $A_0$ and $H_0$, these two yield equivalent representations of the dependence structure.
We thus consider a statistical model of the form $\{\prodG (\cdot|\theta): \, \theta \in \Theta\}$, where $\Theta$ equals either $\Aset$ or $\Hset$.
We endow $\Theta$ with a metric $\dist$ that makes it separable and such that, equipping $\Theta$ with the associated borel $\sigma$-field $\sigBor_\Theta$, $(\Theta, \sigBor_{\Theta})$ is a  standard Borel measurable space \cite[e.g.][p. 96]{r121}. Thus, for every Borel set $B$, $\Pi(B)=\prob(\theta\in B)$ is a prior distribution. 
%
%
%
By Theorem V.58 in \cite{r45}, there exists a version of the probability density $g_\bone(\by|\theta)$ (see \eqref{eq:density_unit_fre}), which is jointly measurable in the parameter and the observation for almost every $\by \in (0, \infty)^d$.
Thus,  a version of the posterior distribution is given by Bayes' theorem 
\begin{equation}\label{eq:post}
\Pi_n(B):=\Pi_{\Theta}(B|\bY_{1:n})=\frac{\int_B \prod_{i=1}^n g_\bone(\bY_i|\theta) \diff\Pi_{\Theta }(\theta)}{\int_\Theta \prod_{i=1}^n g_\bone(\bY_i|\theta) \diff\Pi_{\Theta }( \theta)},
\end{equation}
where $\bY_{1:n}=(\bY_1,\ldots,\bY_n)$. In particular, such a version of the conditional distribution of $\theta$ given $\bY_{1:n}$ is regular: i.e., a Markov kernel from the sample space of data into $(\Theta,\sigBor_\Theta)$. Moreover, defining 
\begin{equation}\label{eq:dens_space}
\Gset_{\bone}:=\{g_{\bone}(\cdot|\theta):\, \theta \in \Theta \},
\end{equation}
the map 
$
\phi_\Theta:(\Theta, \dist)\mapsto (\Gset_{\bone}, \dist_H):\theta \mapsto g_\bone(\cdot|\theta)
$
is Borel. Therefore,
the prior $\Pi_\Theta$ on the dependence parameter induces a prior $\Pi_{\Gset_{\bone}}=\Pi_\Theta\circ \phi_\Theta^{-1}$ on the Borel sets of $(\Gset_{\bone}, \dist_H)$, whose posterior distribution is the random pm $\tilde{\Pi}_n=\Pi_n\circ\phi_\Theta^{-1}$. 
In this setup, almost sure (as) convergence of the posterior distributions $\Pi_n$ and $\tilde{\Pi}_n$ to the Dyrac pm's $\delta_{\theta_0}$ and $\delta_{g_\bone(\cdot|\theta_0)}$ is equivalent to the following fact \citep[e.g.,][Ch 6]{r10}.
\begin{definition}\label{def:cons}
The posterior distributions $\Pi_n(\cdot)$ and $\tilde{\Pi}_n$ are strongly consistent at $\theta_0$ and $g_\bone(\cdot|\theta_0)$, respectively, if for all the neighbourhoods $\theta_0 \in\mathcal{U}\subset \Theta$ and $g_\bone(\cdot|\theta_0) \in \tilde{\mathcal{U}}\subset \Gset_{\bone}$
\[
\lim_{n\to\infty}\Pi_n(\mathcal{U}^\complement)\to 0, \quad \lim_{n\to\infty}\tilde{\Pi}_n(\tilde{\mathcal{U}}^\complement)\to 0, \quad \iprodGalt-as.
\]
\end{definition}
These notions have analogous extensions to semiparametric statistical models $\{\prodGt (\cdot|\theta): \, (\theta, \bvartheta) \in \Theta \times \bvarTheta\}$, corresponding to the three types of max-stable distribution classes introduced in Section \ref{sec:general_theory}. 

%
\subsection{Simple max-stable distributions}\label{sec:binf_simple_max}

%
In this subsection, we establish strong consistency of the posterior distributions $\Pi_n$ and $\tilde{\Pi}_n$, previously introduced for data following a simple max-stable model. 
This first step allows to derive most of the mathematical ground used to establish consistency also for non-simple max-stable models, which are more realistic for applications, see Section \ref{sec:binf_general_max}.
%
%
The consistency results presented in the sequel concern prior distributions $\Pi_\Theta$ on the extremal dependence constructed via the representation in Section \ref{sec:extreme_dep_poly}. 
Nevertheless, they are based on a fairly general theory of Borel pm's on the space $\Hset$ and
can be adapted to alternative prior specifications.
%
%
A similar approach tailored to Pickands dependence function in the case $d=2$, encompassing prior constructions via B-splines, is presented in the supplement.
%
%
A plethora of extremal dependence functionals used by practitioners \citep[e.g.,][Ch 2.2]{r5} are defined for pairs of variables and are readily obtainable from the Pickands dependence function. Thus, posterior consistency for those can be straightforwardly deduced from our results in Sections \ref{app:KL_support} and \ref{sec:posterior_consistency_2D} of the supplement. 

Consistency of the posterior distribution $\tilde{\Pi}_n$ on simple max-stable densities is obtained via an extended version of Schwartz{'}s theorem \citep[Theorem 6.23]{r10}. The latter yields consistency via sequential partitioning of the density space into a set whose entropy grows at most linearly in the sample size and a set of exponentially decaying prior probability. A key requirement is the Kullback-Leibler property of prior distributions defined below.
\begin{definition}\label{defi:KL}
{\upshape
We say that the prior $\Pi_\Theta$  possess the Kullback-Leibler property at $\theta_0$ (equivalently, $g_\bone(\cdot|\theta_0)$ belongs to the Kullback-Leibler support of $\Pi_{\Gset_\bone}$)
if 
\[
\Pi_{\Theta}(\theta \in \Theta: g_{\bone}(\cdot|\theta) \in \mathcal{K}_\epsilon )=\Pi_{\Gset_\bone}( \mathcal{K}_\epsilon )>0,
\]
for all $\mathcal{K}_\epsilon:=\{g \in \Gset_\bone: \, \kulb(g_\bone(\cdot|\theta_0),g)<\epsilon \}$, $\epsilon>0$.
}
\end{definition}
In Section \ref{sec:KL_support} we give sufficient conditions under which a generic Borel prior on the angular pm possesses the Kullback-Leibler property.
As for consistency of its posterior $\Pi_n$, results under different metrics can be deduced from Hellinger consistency at the density level. As argued in the next subsection, we  consider metrizations
which adhere to the framework of Section \ref{sec:bayesian_paradigm}, therefore guaranteeing the existence of regular versions of the posterior distributions $\Pi_n$ and $\tilde{\Pi}_n$. This is a necessary requirement to legitimately study their consistency \citep[e.g.,][Ch 1.3]{r10}. Analogous considerations extend to the case where a prior distristibution is assigned to the Pickands dependence function (Sections \ref{app:KL_support}--\ref{sec:posterior_consistency_2D} of the supplement).

\subsubsection{Kullback-Leibler theory for priors on the extremal dependence}\label{sec:KL_support}


We focus on prior distributions specified on the angular pm. Specifically, we consider 
prior distributions $\Pi_{\Hset}(B)=\prob(H\in B)$ on the Borel sets $B$ of $(\Hset, \dist_{W})$.

\begin{prop}\label{prop:Polish_d>2}
In arbitrary dimension $d\geq2$, the space of Borel pm's on $\tilde{\simp}$, endowed with the topology of weak convergence of measures, is separable and completely metrizable. 
Moreover, $\Hset$, endowed with the associated subspace topology, is a standard Borel space, 
whose Borel $\sigma$-field coincides with the one induced by $\dist_{KS}$. 
\end{prop}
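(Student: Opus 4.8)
The plan is to establish the three assertions in sequence: (a) the space $\mathcal{M}_1(\tilde{\simp})$ of Borel probability measures on $\tilde{\simp}$ with the weak topology is separable and completely metrizable; (b) $\Hset \subset \mathcal{M}_1(\tilde{\simp})$ is a standard Borel space in the subspace topology; and (c) the Borel $\sigma$-field on $\Hset$ generated by $\dist_W$ coincides with the one generated by $\dist_{KS}$.

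For (a), the key observation is that $\tilde{\simp} = \simpint \cup \{\be_1\} \cup \cdots \cup \{\be_d\}$, while not closed in $\simp$, is a separable metrizable space. It is in fact \emph{Polish}: $\simpint$ is an open subset of the affine hyperplane carrying $\simp$ hence Polish (a $G_\delta$ in a complete space), each singleton $\{\be_j\}$ is Polish, and a countable disjoint union of Polish spaces is Polish; equivalently, one can exhibit $\tilde{\simp}$ as a $G_\delta$ subset of the compact simplex $\simp$ by removing the relatively closed set $\partial\simp \setminus \{\be_1,\ldots,\be_d\}$, which is an $F_\sigma$. Once $\tilde{\simp}$ is known to be Polish, the classical theorem (e.g.\ Parthasarathy, or \citep[][p.\ 508]{r10}) that $\mathcal{M}_1(S)$ with the weak topology is Polish whenever $S$ is Polish gives (a) immediately, with an explicit complete metric furnished by $\dist_W$ and a countable dense set given by finitely-supported measures with rational weights and atoms in a countable dense subset of $\tilde{\simp}$.

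For (b), I would argue that $\Hset$ is a Borel subset of the Polish space $\mathcal{M}_1(\tilde{\simp})$; a Borel subset of a Polish space, with its relative Borel structure, is standard Borel by the Lusin--Souslin theorem. So the task reduces to checking that the defining constraints of $\Hset$ cut out a Borel set. The mass-on-$\tilde{\simp}$ requirement is automatic here since we already work inside $\mathcal{M}_1(\tilde{\simp})$. The mean constraints (C1), $\int_{\simp} w_j \, H(\diff\bw) = 1/d$ for each $j$, are Borel conditions: each map $H \mapsto \int w_j \, dH$ is, if not continuous (the integrand $w_j$ is bounded and continuous on $\tilde{\simp}$, so in fact it \emph{is} continuous for the weak topology), at worst Borel measurable, and a finite intersection of preimages of the point $\{1/d\}$ under such maps is Borel (indeed closed). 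The absolute-continuity-on-$\simpint$ requirement — that the restriction of $H$ to $\simpint$ have a density with respect to Lebesgue measure — is the one genuinely measure-theoretic condition; I would handle it by noting that ``$H|_{\simpint} \ll \mathrm{Leb}$'' is equivalent to a countable conjunction of conditions of the form ``for every $\varepsilon$ there is $\delta$ such that $\mathrm{Leb}(B) < \delta \Rightarrow H(B) < \varepsilon$ over $B$ in a countable generating algebra,'' which is Borel in $H$; alternatively one invokes the standard fact that $\{\mu : \mu \ll \lambda\}$ is a Borel (in fact $\Pi^0_3$) subset of $\mathcal{M}_1$. Intersecting, $\Hset$ is Borel, hence standard Borel.

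For (c), I would show the two Borel $\sigma$-fields agree by showing each generating family of sets lies in the other $\sigma$-field. Since $\tilde{\simp}$ has the ``ordered subset of $\reald$'' structure that lets us identify a pm with its distribution function via $\pi_{\resimp}$, the map $H \mapsto H(\bt)$ (evaluation of the angular distribution function at $\bt \in \resimp$) is measurable for the weak-topology Borel structure for each fixed $\bt$ — it is the integral of an indicator of a rectangle, a bounded a.e.-continuous function for measures in $\Hset$, since atoms off the vertices are excluded. Hence $\dist_{KS}(H,H') = \sup_{\bt} |H(\bt) - H'(\bt)|$, being a countable supremum (over rational $\bt$, by right-continuity) of jointly $\sigma(\dist_W)$-measurable maps, is $\sigma(\dist_W)$-measurable, so all $\dist_{KS}$-balls are in $\sigma(\dist_W)$. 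Conversely, within $\Hset$ — where weak convergence of $H_n \to H$ is equivalent, by the portmanteau theorem and the fact that $H$ has no atoms on the continuity-problematic set $\partial\simp\setminus\{\be_j\}$, to pointwise convergence of angular distribution functions at continuity points — one gets $\dist_W$-convergence implied by $\dist_{KS}$-convergence, so the identity map $(\Hset,\dist_{KS}) \to (\Hset,\dist_W)$ is continuous, whence $\sigma(\dist_W) \subseteq \sigma(\dist_{KS})$. Combining the two inclusions gives the claimed coincidence.

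The main obstacle I anticipate is part (b), specifically verifying Borel measurability of the absolute-continuity constraint cleanly; the cleanest route is to cite the known descriptive-set-theoretic fact that $\{\mu \ll \lambda\}$ is Borel in $\mathcal{M}_1$ rather than re-deriving it, but one must be careful that restricting a measure on $\tilde{\simp}$ to the open piece $\simpint$ is itself a Borel operation $\mathcal{M}_1(\tilde{\simp}) \to \mathcal{M}(\simpint)$ — which follows because $\simpint$ is open, so $H \mapsto H(\cdot \cap \simpint)$ is weakly lower-semicontinuous on indicators of open sets and Borel overall. A secondary subtlety is confirming that the exclusion of atoms outside $\{\be_1,\ldots,\be_d\}$ is what makes the portmanteau-based equivalence of weak and Kolmogorov--Smirnov convergence hold on $\Hset$; this should be invoked explicitly as it is exactly the feature of the restricted class $\Hset$ that the proposition exploits.
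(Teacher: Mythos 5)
Your argument is correct in outline, and parts (a) and (c) track the paper's proof closely (the paper likewise exhibits $\tilde{\simp}$ as a $G_\delta$ in $\simp$, hence Polish, and cites the standard result that the space of Borel pm's on a Polish space is Polish under the weak topology). Where you genuinely diverge is part (b). The paper does not try to verify directly that $\Hset$ is a Borel subset of the space of pm's on $\tilde{\simp}$; instead it parametrizes $\Hset$ by the closed (hence Polish) subset $\mathcal{I}\subset L_1(\mathring{\resimp})$ of nonnegative densities satisfying the mean \emph{inequalities}, maps $f\in\mathcal{I}$ to the measure with density $f$ on $\simpint$ and point masses at the $\be_j$'s chosen to enforce (C1) exactly, checks this map is injective and Lipschitz (hence Borel) into the space of pm's, and invokes the Lusin--Souslin/Arveson theorem to conclude that its image $\Hset$ is standard Borel. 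Your route — closedness of the mean-constraint set plus the descriptive-set-theoretic fact that $\{\mu:\mu\ll\lambda\}$ is Borel ($\Pi^0_3$) in the weak topology, together with Borel-ness of the restriction map to the open piece $\simpint$ — is a legitimate alternative and arguably more self-contained conceptually, but it forfeits the main dividend of the paper's construction: the explicit Lipschitz bound $\dist_{KS}(H_1,H_2)\leq(d+1)\Vert\phi^{-1}(H_1)-\phi^{-1}(H_2)\Vert_1$, which is exactly what the paper uses to sandwich the $\dist_{KS}$-Borel field between the weak-topology field and the (Polish) $L_1$-pullback field in part (c).

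One step in your part (c) needs to be shored up. From ``each $\dist_{KS}$-ball is $\sigma(\dist_W)$-measurable'' you cannot immediately conclude $\sigma(\dist_{KS})\subseteq\sigma(\dist_W)$: in a general metric space open sets need not be countable unions of balls, so you must first establish that $(\Hset,\dist_{KS})$ is separable. This is true — it follows, for instance, from Proposition \ref{prop: top supp}, since $\cup_{k}\Hset_k$ is $\dist_{KS}$-dense and each $\Hset_k$ is the continuous image of a separable Euclidean set — but it has to be invoked explicitly; the paper's Lipschitz-domination argument avoids the issue because separability there is inherited from $L_1(\mathring{\resimp})$. With that addition, and with the caveat that your appeal to the Borel-ness of absolute continuity should come with a precise reference (the $\varepsilon$--$\delta$ characterization over a countable generating algebra, valid because the measures involved are finite), your proof goes through.
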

\begin{cor}\label{cor:Polish_d>2} 
In dimension $d\geq2$, there exists a version of the simple max-stable density such that the map $(H,\by)\mapsto g_\bone(\by|H)$ is jointly measurable and $\phi_\Hset:(\Hset, \dist_{W})\mapsto (\mathcal{G}_\bone,\dist_H):H \mapsto g_\bone(\cdot|H)$ is a Borel map, where $\Gset_\bone$ is defined as in \eqref{eq:dens_space}, with $\Theta=\Hset$. Moreover, for all $\epsilon>0$ and $\mathcal{K}_\epsilon$ as in Definition \ref{defi:KL}, $\phi_\Hset^{-1}(\mathcal{K}_\epsilon)$ is a Borel set of $(\Hset, \dist_W)$. 
\end{cor}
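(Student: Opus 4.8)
The plan is to deduce all three statements from Proposition~\ref{prop:Polish_d>2} by instantiating, with the choice $\dist=\dist_W$, the abstract measurable-parameter set-up underlying Section~\ref{sec:bayesian_paradigm}. Proposition~\ref{prop:Polish_d>2} shows that $(\Hset,\dist_W)$ is a standard Borel space whose Borel $\sigma$-field $\sigBor_\Hset$ is also the one generated by $\dist_{KS}$; this is exactly the hypothesis needed to invoke Theorem~V.58 in \cite{r45} with $\Theta=\Hset$, as done in Section~\ref{sec:bayesian_paradigm}. That theorem then yields a version of the simple max-stable density $g_\bone(\by|H)$ --- given pointwise, wherever defined, by the formula \eqref{eq:density_unit_fre} applied to the exponent function associated with $H$ through \eqref{eq:picklands} --- that is jointly measurable in $(H,\by)$ with respect to $\sigBor_\Hset\otimes\sigBor((0,\infty)^d)$, which is the first assertion. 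I would fix this jointly measurable version once and for all in the remaining steps.

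For the measurability of $\phi_\Hset$, I would first record that the set of all Lebesgue probability densities on $(0,\infty)^d$ equipped with $\dist_H$ is separable --- via the square-root isometry $g\mapsto\sqrt{g}$ into the unit sphere of the separable space $L^2((0,\infty)^d)$ --- so that $(\Gset_\bone,\dist_H)$ is separable and its Borel $\sigma$-field is generated by open balls $\{g'\in\Gset_\bone:\dist_H(g',g)<r\}$. For each fixed density $g$ the integrand $(\by,H)\mapsto(\sqrt{g_\bone(\by|H)}-\sqrt{g(\by)})^2$ is nonnegative and jointly measurable, so $H\mapsto\dist_H^2(g_\bone(\cdot|H),g)$ is $\sigBor_\Hset$-measurable by Tonelli's theorem; hence $\phi_\Hset^{-1}(\{g':\dist_H(g',g)<r\})=\{H\in\Hset:\dist_H(g_\bone(\cdot|H),g)<r\}$ is Borel in $(\Hset,\dist_W)$, and $\phi_\Hset$ is Borel. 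For the last assertion, writing $g_\bone(\cdot|\theta_0)=g_\bone(\cdot|H_0)$ one has $\phi_\Hset^{-1}(\mathcal{K}_\epsilon)=\{H\in\Hset:\kulb(g_\bone(\cdot|H_0),g_\bone(\cdot|H))<\epsilon\}$, so it suffices to show $H\mapsto\kulb(g_\bone(\cdot|H_0),g_\bone(\cdot|H))$ is $\sigBor_\Hset$-measurable as a $[0,\infty]$-valued map. I would split the integrand $g_\bone(\cdot|H_0)\log\bigl(g_\bone(\cdot|H_0)/g_\bone(\cdot|H)\bigr)$ into positive and negative parts; both are nonnegative and jointly measurable in $(\by,H)$ (with the usual conventions at zeros of the denominator, which only enlarge the divergence), the $\by$-integral of each is $\sigBor_\Hset$-measurable by Tonelli, and the negative part integrates to at most $\int g_\bone(\by|H)\,\diff\by=1$ because $\log x\le x$. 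Thus the divergence is a well-defined measurable $[0,\infty]$-valued function of $H$ and $\phi_\Hset^{-1}(\mathcal{K}_\epsilon)$ is one of its sub-level sets.

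The only step that is not bookkeeping is verifying the applicability of Theorem~V.58 in \cite{r45}, and this rests entirely on $\Hset$ being standard Borel --- precisely the content of Proposition~\ref{prop:Polish_d>2}; everything else then follows from joint measurability and Tonelli's theorem. I would emphasise that the argument deliberately avoids proving continuity of $\phi_\Hset$: establishing continuity would require controlling the convergence of exponent functions and their mixed partial derivatives under weak convergence of angular measures, whereas Borel measurability --- which is all that is needed to transport the prior $\Pi_\Hset$ to a prior on $\Gset_\bone$ and hence to define its posterior $\tilde{\Pi}_n$ --- comes for free from the measurable-version theorem.
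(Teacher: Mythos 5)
Your proposal is correct and follows essentially the same route as the paper: Theorem V.58 of Dellacherie--Meyer (applicable because Proposition \ref{prop:Polish_d>2} makes $(\Hset,\dist_W)$ standard Borel) gives the jointly measurable version, and then separability of $(\Gset_\bone,\dist_H)$ together with measurability of $H\mapsto\dist_H(g_\bone(\cdot|H),g)$ and $H\mapsto\kulb(g_\bone(\cdot|H_0),g_\bone(\cdot|H))$ yields the Borel measurability of $\phi_\Hset$ and of $\phi_\Hset^{-1}(\mathcal{K}_\epsilon)$. The only difference is that where the paper defers to ``arguments analogous to those in Appendix A of \cite{r48}'', you spell out the Tonelli/positive--negative-part details explicitly, which is a faithful rendering of that reference.
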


The proofs of the above results are provided in Section \ref{appsec:proofsKLtheo} of the supplement.
Observe that $\Pi_\Hset$ induces a prior $\Pi_{\Gset_\bone}$ on the Borel sets of $(\Gset_\bone,\dist_H)$.
To inspect its Kullback-Leibler support, we introduce the following notion.
\begin{definition}\label{def:d_inf_prop}
Let $H_*\in\Hset$ and $\Vert h_* \Vert_{ \infty}<\infty$. The prior $\Pi_\Hset$ is said to posses the $\dist_{\infty}$-property at $H_*$ if it has positive inner probability on the sets $\{H\in\Hset:\dist_{\infty}(h,h_*)\leq \epsilon\}$, for all $\epsilon>0$.
\end{definition}
%
We consider true angular pm's with continuous angular density on
$\mathring{\resimp}$ and nonnegative point masses on the vertices of $\simp$ (Definition 
\ref{cond:mvt_angular}\ref{cond:true_ang_set}). 
Only in the bivariate case, we allow for 
angular densities 
that diverge at the vertices of $\resimp$.
Unbounded angular densities are less tractable in high dimensions, while nonessential for practical statical purposes when point masses are admitted. Popular models such as the Asymmetric Logistic, Dirichlet and Symmetric Logistic and Hulser-Reiss for suitable values of the parameters \citep[e.g.,][]{r19} are comprehended by the family in Definition \ref{cond:mvt_angular}\ref{cond: finite_new}. 
We establish that a sufficient condition for $\Pi_\Hset$ to possess the Kullback-Leibler property at true admissible pm's is that it possesses the
$\dist_\infty$-property at 
a subfamily of those (see Section \ref{appsec:proofKLmulti} of the supplement).
\begin{definition}\label{cond:mvt_angular}
Let $\Hset$ be as in Definition \ref{cond_angular} and:
\begin{inparaenum}
\item \label{cond:true_ang_set} Let $\Hset_0\subset \Hset$ be the class of angular pm's whose angular density functions $h$ are continuous on $\mathring{\resimp}$ and satisfy one of the following:
\begin{inparaenum}
	\item \label{cond: finite_new}
	$h$ admits a continuous extension to $\resimp$; 
	\item \label{cond: infinite_new}
	 (only if $d=2$) 
	 $\inf_{w\in(0,1)}h(w)>0$ and $\lim_{w\downarrow0}h(w)=\lim_{w\uparrow1}h(w) =+\infty$.
\end{inparaenum}
\item \label{cond:prior_ang_set} Let $\Hset'\subset \Hset_0$ be the set of angular pm's such that $H(\{\be_j\})=p_j>0$, for $j=1,\ldots,d$, with angular density satisfying the property in \ref{cond: finite_new}  and $\inf_{\bt \in \mathring{\resimp}}h(\bt)>0$.
\end{inparaenum}
\end{definition}  
\begin{theorem}\label{theo:KL_prior_multi}
Let $H_0\in\Hset_0$ be the true angular pm. Assume that for any $H\in\Hset'$, the prior $\Pi_\Hset$  posses the $\dist_{
	\infty}$-property at $H$. Then, for all $\epsilon>0$ 
\[
\Pi_{\Hset}(\phi_\Hset^{-1}(\mathcal{K}_\epsilon))=\Pi_{\Gset_\bone}(\mathcal{K}_\epsilon)>0.
\]
\end{theorem}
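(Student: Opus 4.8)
The plan is to reduce the Kullback--Leibler property at $H_0$ to the $\dist_\infty$-approximation hypothesis by a two-stage approximation: first approximate the true angular pm $H_0\in\Hset_0$ by a perturbed pm $H_\delta\in\Hset'$ that has strictly positive point masses and an angular density bounded away from zero, and then use the assumed $\dist_\infty$-property of $\Pi_\Hset$ at each such $H_\delta$ to deduce positivity of the prior probability of a Kullback--Leibler neighbourhood of $g_\bone(\cdot|\theta_0)$. The key analytic input is a uniform bound showing that if the angular densities $h$ and $h_0$ are uniformly close (and the point masses correspondingly close), then the associated exponent functions $V(\cdot|H)$ and $V(\cdot|H_0)$ and all their mixed partial derivatives $V_{I}(\cdot|H)$ are uniformly close on $\mathring{\resimp}$-parametrised rays, and consequently the simple max-stable densities $g_\bone(\by|H)$ given by the Fa\`a di Bruno formula \eqref{eq:density_unit_fre} are uniformly close; this then controls the KL divergence $\kulb(g_\bone(\cdot|H_0),g_\bone(\cdot|H))$.

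First I would fix $\epsilon>0$ and set up the perturbation: for $H_0\in\Hset_0$ with point masses $p_{0,j}$ and angular density $h_0$, define $H_\delta$ by redistributing a small amount of mass so that each vertex carries mass at least $\delta$ and the interior density is at least $\delta$ on $\mathring{\resimp}$, renormalising to keep the mean constraints (C1); one checks $H_\delta\in\Hset'$ and $\dist_\infty(h_\delta,h_0)\to0$ (in the case \ref{cond: infinite_new}, where $h_0$ may blow up at the endpoints, the approximation is instead in $L^1$ near the vertices and in $\dist_\infty$ on compact subintervals, and one absorbs the boundary contribution into the point masses — this is where the bivariate-only restriction is used). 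Then I would show that $\kulb(g_\bone(\cdot|\theta_0),g_\bone(\cdot|H_\delta))$ can be made smaller than $\epsilon/2$ by choosing $\delta$ small: since both densities are continuous and strictly positive and the ratio $g_\bone(\cdot|\theta_0)/g_\bone(\cdot|H_\delta)$ is controlled on compacta by the derivative estimates above, while the tails are handled using the known Weibull-type decay of max-stable densities (so that $\log$-ratio is dominated by an integrable envelope), dominated convergence gives $\kulb\to0$.

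Next, I would use the triangle-type inequality for KL-neighbourhoods: there is $\epsilon'>0$ such that any $g\in\Gset_\bone$ with $\dist_\infty$-closeness of the underlying angular density to $h_\delta$ (hence, by the Fa\`a di Bruno estimate, $\|g_\bone(\cdot|H_\delta)-g\|$ small in a suitable weighted sense) satisfies $\kulb(g_\bone(\cdot|\theta_0),g)<\epsilon$. Explicitly, the set $\{H\in\Hset:\dist_\infty(h,h_\delta)\le\epsilon'\}$ is contained in $\phi_\Hset^{-1}(\mathcal{K}_\epsilon)$. By the assumed $\dist_\infty$-property at $H_\delta\in\Hset'$, the former set has positive inner prior probability, and since by Corollary~\ref{cor:Polish_d>2} the set $\phi_\Hset^{-1}(\mathcal{K}_\epsilon)$ is Borel, we conclude $\Pi_\Hset(\phi_\Hset^{-1}(\mathcal{K}_\epsilon))=\Pi_{\Gset_\bone}(\mathcal{K}_\epsilon)>0$, as required.

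The main obstacle I anticipate is the quantitative step: converting $\dist_\infty$-closeness of angular densities into KL-closeness of the multivariate simple max-stable densities. This requires (i) writing $V$ and its mixed derivatives $V_I$ as explicit integrals/finite sums against $H$ through \eqref{eq:picklands} and the Dirichlet-mixture structure, and checking that a uniform perturbation of $h$ perturbs each $V_I$ uniformly on the relevant domain; (ii) propagating this through the combinatorial Fa\`a di Bruno sum in \eqref{eq:density_unit_fre} without losing control near the boundary of $(0,\infty)^d$ where some $V_I$ may be large; and (iii) producing an integrable dominating function for the log-density ratio so that the KL integral converges — this uses the precise tail behaviour of $g_\bone$, which the paper treats in the Weibull-tail and general-margin sections. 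The bivariate unbounded-density case \ref{cond: infinite_new} needs separate, more delicate handling of the neighbourhoods of $0$ and $1$, which is presumably why it is excluded in higher dimensions.
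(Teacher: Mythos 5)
Your overall architecture matches the paper's: approximate $H_0$ by some $H_*\in\Hset'$, show that a whole $\dist_\infty$-ball $B_{\delta,\infty}(H_*)$ maps into $\mathcal{K}_\epsilon$, and invoke the assumed $\dist_\infty$-property together with the measurability of $\phi_\Hset^{-1}(\mathcal{K}_\epsilon)$ from Corollary~\ref{cor:Polish_d>2}. This is exactly the route the paper takes, via Lemma~\ref{prop: new_KL_prop}, Lemma~\ref{lem: ratio 1} and Propositions~\ref{cor: New_KL_cor}--\ref{prop:newprop_KL}.

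However, the central analytic step as you state it would fail. You claim that $\dist_\infty$-closeness of the angular densities makes the mixed derivatives $V_I(\cdot|H)$ and $V_I(\cdot|H_0)$ ``uniformly close'', hence the simple max-stable densities uniformly close, and that this ``controls the KL divergence''. Neither implication holds: the $V_I$ blow up near the boundary of $(0,\infty)^d$ (e.g.\ $-V_{\{j\}}(\by|H)\geq d\,p_j y_j^{-2}$), so uniform additive closeness is unavailable; and even where densities are uniformly close, their KL divergence is not controlled, because the log-ratio is unbounded where both densities are small. What actually makes the argument work --- and the reason $\Hset'$ requires $\inf h>0$ and $p_j>0$ --- is \emph{multiplicative} control: if $\dist_\infty(h,h_*)\leq\delta$ with $\delta$ small relative to $\inf h_*$ and $\min_j p_{*,j}$, then the mean constraints and the integral representation \eqref{eq:kernel_density_ang} give $-V_{I}(\by|H_0)\leq(1+\bar\epsilon)\{-V_I(\by|H)\}$ pointwise for every $I$. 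Feeding this into the Fa\`a di Bruno sum via Jensen's inequality over the partition weights (the paper's display \eqref{eq:jensen}) bounds the positive part of the log-ratio of the derivative products by the constant $\log(1+\epsilon)$ pointwise in $\by$, while the remaining term $|V(\by|H)-V(\by|H_0)|$ is bounded by $\dist_\infty(A,A_0)\,\Vert 1/\by\Vert_1$, which is integrable against any simple max-stable law with unit Fr\'echet margins; no dominated convergence or tail-decay analysis is needed. Your sketch of the bivariate unbounded case is likewise too loose: the paper truncates $h_0$ from below by bounded continuous functions near the vertices (increasing the point masses to preserve (C1)), shows one of the resulting KL terms is nonpositive by a monotonicity argument, and uses an integrability exponent for $h_0$ near $\{0,1\}$ to control the term involving $\log(h_0/h_1)$.
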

Theorem \ref{theo:KL_prior_multi}  is leveraged in the proof of
Theorem \ref{theo:post_consistency_mvt} to establish posterior consistency for the specific case of priors on the angular pm constructed via the representation in Section \ref{sec:extreme_dep_poly}. See also Example \ref{ex:mult_BP} for the outline of a prior specification complying with the assumptions of Theorem \ref{theo:KL_prior_multi}.
%

\subsubsection{Posterior consistency}\label{sec:posterior_consistency}

For each $H$ in the class $\Hset$ given in Definition \ref{cond_angular}, $g_{\bone}(\cdot|H)$ equals almost everywhere the expression in \eqref{eq:density_unit_fre} and is explicitly linked to $H$ via the relation
\begin{equation}\label{eq:kernel_density_ang}
\begin{split}
&-V_{I_i}(\by|H)\\
&\quad=
\begin{cases}
d\,p_i\,y^{-2}_{I_i}+d\left(\int_{(\bzero,\by_{I_i^\complement})}
\|\bz\|_1^{- d-1} h\circ\pi_{\resimp}(\bz/\|\bz\|_1)
\Big|_{\bz_{I_i}=\by_{I_i}}
\diff \bz_I\right), \quad \text{if } |I_i|=1\\
d\left(\int_{(\bzero,\by_{I_i^\complement})}
\|\bz\|_1^{- d-1} h\circ\pi_{\resimp}(\bz/\|\bz\|_1)
\Big|_{\bz_{I_i}=\by_{I_i}}
\diff \bz_I\right), \hspace{5.5em} \text{otherwise}
\end{cases}
\end{split}
\end{equation}
where $\by_{I_i}$ and $\by_{I_i^\complement}$ are the restrictions of $\by$ to $I_i$ and 
$I_i^\complement=\{1,\dots,d\}\setminus I_i$, and  $\pi_{\resimp}$ is the projection map in \eqref{eq:Bset}. See also \cite{r33,r25}
and Section \ref{appsec:spectral} of the supplement for details.
We consider priors on the Borel sets of $(\Hset, \dist_W)$ 
constructed via the representation in Section \ref{sec:extreme_dep_poly} as follows,  see also Section \ref{appsec:priorH} of the supplement for additional set-theoretical details. 
\begin{condition}\label{cond:angularprior} 	
		The prior $\Pi_\Hset$ is induced by a prior $\Pi$
		on the disjoint union space $\cup_{k\geq k_*}(\{k\}\times \Phi_k)$, constructed via direct sum of coefficient spaces 
		$$
		\{(\Phi_k,\Sigma_k, \lambda(k)\nu_k), \, k \geq k_* \},
		$$  
		for some $k_*\in \nat_+ \setminus \{1, \ldots,d\}$, where
		 $\Phi_k:=\{\bvarphi^{(k)} :\, \text{(R1)-(R2) hold true}\}$ and 
		 \begin{inparaenum}
		 	\item \label{cond: mvt th1} 
		 	 $\nu_k$ is a probability distribution with full support on $\Phi_k$, equiped with Borel $\sigma$-field $\Sigma_k$;
		 	\item \label{cond: mvt th2} 
			$\lambda(\cdot)$ is a positive probability mass function on
			$\{ k_*, k_*+1, \ldots\}$  and, for some $q>0$,
			$$\sum_{i\geq k}\lambda(i)\lesssim \exp(-qk^{d-1}), \quad (k \to \infty).
			$$ 
		 \end{inparaenum}
\end{condition}
\begin{theorem}\label{theo:post_consistency_mvt}
Let $\bY_1,\ldots,\bY_n$ be iid rv's with distribution $G_{\bone}(\cdot|H_0)$, where $H_0\in\Hset_0$ and $\Hset_0$ is as in Definition \ref{cond:mvt_angular}\ref{cond:true_ang_set}. Assume 
$\Pi_\Hset$ satisfies
\ref{cond:angularprior}.
%
Then, $\Pi_{\mathcal{G}_\bone}$ has full Hellinger support and, $\iprodGH-\text{as}$,
\begin{itemize}
\item[(a)] $\lim_{n \to \infty}\tilde{\Pi}_n(\tilde{\mathcal{U}}^\complement)=0$, for every $\dist_H$-neighbourhood $\tilde{\mathcal{U}}$ of $g_{\bone}(\cdot|H_0)$;
\item[(b)] $\lim_{n \to \infty}\Pi_n(\mathcal{U}^\complement)=0$,  for every $\dist_W$-neighborhood (if $d\geq 2$) or $\dist_{KS}$-neighborhood $\mathcal{U}_1$  of $H_0$ (if d=2).
\end{itemize}
\end{theorem}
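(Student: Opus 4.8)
The plan is to establish part~(a), Hellinger consistency of $\tilde{\Pi}_n$, via the extended Schwartz theorem \citep[Theorem~6.23]{r10}, and then transfer it to part~(b). That theorem requires: (I) $g_\bone(\cdot|H_0)$ lies in the Kullback--Leibler support of the induced prior $\Pi_{\Gset_\bone}$; and (II) for every $\epsilon>0$ there are sieves $\Gset_n\subset\Gset_\bone$ with $\Pi_{\Gset_\bone}(\Gset_n^\complement)$ exponentially small in $n$ and $\dist_H$-metric entropy $\log N(\epsilon,\Gset_n,\dist_H)$ at most linear in $n$. Ingredient~(I) already yields the asserted full Hellinger support: since $\dist_H^2\le\kulb$, every $\dist_H$-ball about $g_\bone(\cdot|H_0)$ receives positive prior mass, and applying this at each $H_0\in\Hset_0$---which is $\dist_T$-dense in $\Hset$ (mollify the angular density and correct for (C1))---together with $\dist_T$-to-$\dist_H$ continuity of $\phi_\Hset$ (read off \eqref{eq:kernel_density_ang}: $\dist_T$-convergence of $H$ gives pointwise convergence of the maps $\by\mapsto-V_{I_i}(\by|H)$, hence, by Scheff\'e's lemma, $L^1$- and $\dist_H$-convergence of $g_\bone(\cdot|H)$), shows $\operatorname{supp}(\Pi_{\Gset_\bone})=\overline{\Gset_\bone}$.

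For~(I) we apply Theorem~\ref{theo:KL_prior_multi}: it suffices to verify that the prior $\Pi_\Hset$ of Condition~\ref{cond:angularprior} possesses the $\dist_\infty$-property at every $H\in\Hset'$. Fix such an $H$---its angular density $h$ is continuous on $\mathring{\resimp}$, extends continuously to $\resimp$, and has $\inf_{\resimp}h>0$---and fix $\epsilon>0$. A sup-norm refinement of the constrained Bernstein-approximation argument behind Proposition~\ref{prop: top supp} (in the supplement) provides, for $k$ large, a coefficient vector $\bvarphi^{(k)}\in\Phi_k$, i.e.\ satisfying (R1)--(R2), with $\|h_{k-d}-h\|_\infty<\epsilon/2$; as $h$ is bounded below, the coefficients may be taken strictly positive, placing $\bvarphi^{(k)}$ in the relative interior of the polytope $\Phi_k$. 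Then $\{\bvarphi\in\Phi_k:\|\sum_{\balpha}\varphi_\balpha\dirf(\cdot;\balpha)-h\|_\infty<\epsilon\}$ is a nonempty relatively open subset of $\Phi_k$, carrying positive $\nu_k$-mass by Condition~\ref{cond:angularprior}\ref{cond: mvt th1}, so $\Pi_\Hset(\{H':\dist_\infty(h',h)<\epsilon\})\ge\lambda(k)\,\nu_k(\cdot)>0$ by Condition~\ref{cond:angularprior}\ref{cond: mvt th2}. This gives the $\dist_\infty$-property, hence~(I).

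For~(II), fix $\epsilon>0$ and put $\Gset_n:=\phi_\Hset\bigl(\bigcup_{d<k\le k_n}\Hset_k\bigr)$, with $k_n\to\infty$ chosen so that $k_n^{d-1}$ is proportional to $n$ with a (possibly $\epsilon$-dependent) proportionality constant small enough for the entropy below to stay at most linear in $n$. Then, by Condition~\ref{cond:angularprior}\ref{cond: mvt th2}, $\Pi_{\Gset_\bone}(\Gset_n^\complement)\le\sum_{k>k_n}\lambda(k)\lesssim\exp(-q\,k_n^{d-1})$ is exponentially small in $n$. For the entropy, each $\Phi_k$ is a compact convex polytope of dimension $|\Gamma_k|-1=\binom{k-1}{d-1}-1\asymp k^{d-1}$ inside a probability simplex, and $\bvarphi^{(k)}\mapsto(h_{k-d},\varphi_{\bkappa_1},\dots,\varphi_{\bkappa_d})$ is $1$-Lipschitz (for the $\|\cdot\|_1$ norms) from $\Phi_k$ into $L^1(\mathring{\resimp})\times\real^d$, \emph{uniformly in $k$}. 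The key input---proved in the supplement---is that $\phi_\Hset$ has, in addition, a $k$-free (power) modulus of continuity from $(\Hset,\dist_T)$ into $(\Gset_\bone,\dist_H)$: indeed each $-V_{I_i}(\cdot|H)$ in \eqref{eq:kernel_density_ang} depends linearly on $(h,p_1,\dots,p_d)$, $g_\bone(\cdot|H)$ is the associated Fa\`a di Bruno polynomial of \eqref{eq:density_unit_fre} times $G_\bone(\cdot|H)=\exp(-V(\cdot|H))$, and the resulting $L^1(\by)$-envelope of $g_\bone(\cdot|H)-g_\bone(\cdot|H')$ is finite and independent of $k$---the factor $\exp(-V(\cdot|H))$ absorbing the singularity of the derivatives as $\by\to\bzero$ and the polynomial decay of $g_\bone$ controlling $\by\to\binf$. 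Hence $N(\epsilon,\phi_\Hset(\Hset_k),\dist_H)\le(C/\epsilon)^{a(|\Gamma_k|-1)}$ for $k$-free $C,a>0$, so $\log N(\epsilon,\Gset_n,\dist_H)\le\log k_n+\max_{d<k\le k_n}\log N(\epsilon,\phi_\Hset(\Hset_k),\dist_H)\lesssim k_n^{d-1}\log(1/\epsilon)$, at most linear in $n$ by the choice of $k_n$. With~(I) and~(II) the extended Schwartz theorem gives part~(a), $\iprodGH$-as. This $k$-uniform Hellinger modulus-of-continuity estimate---delicate in the bivariate case because the admissible angular densities of \ref{cond: infinite_new} make $g_\bone$ itself unbounded near the vertices, and in any dimension because of the heavy unit-Fr\'echet tails and the intricate Fa\`a di Bruno form of $g_\bone$---is the main obstacle; the constrained sup-norm Bernstein approximation of~(I) is a secondary technical point.

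Finally, to obtain~(b) we transfer~(a) through $\phi_\Hset$. The map $H\mapsto G_\bone(\cdot|H)$ is injective on $\Hset$: $G_\bone(\cdot|H)$ determines $V(\cdot|H)=d\int_\simp\max_{j=1,\dots,d}(w_j/\cdot)\,\diff H(\bw)$, hence the stable-tail dependence function, which pins down the angular measure uniquely among all angular pm's \citep[e.g.][]{r2}. Suppose $\delta(\epsilon):=\inf\{\dist_H(g_\bone(\cdot|H),g_\bone(\cdot|H_0)):\dist_W(H,H_0)\ge\epsilon\}=0$ for some $\epsilon>0$; pick $H_m$ with $\dist_W(H_m,H_0)\ge\epsilon$ and $\dist_H(g_\bone(\cdot|H_m),g_\bone(\cdot|H_0))\to0$. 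Since $\Hset$ consists of pm's on the compact simplex $\simp$, Prokhorov's theorem yields a weakly convergent subsequence $H_m\to\tilde H$ with $\dist_W(\tilde H,H_0)\ge\epsilon$; then $V(\cdot|H_m)\to V(\cdot|\tilde H)$ pointwise on $(0,\infty)^d$, so $G_\bone(\cdot|H_m)\to\exp(-V(\cdot|\tilde H))$ pointwise, while $\dist_H$-convergence of the densities forces $G_\bone(\cdot|H_m)\to G_\bone(\cdot|H_0)$ weakly; hence $V(\cdot|\tilde H)\equiv V(\cdot|H_0)$ and, by uniqueness, $\tilde H=H_0$, a contradiction. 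Thus $\delta(\epsilon)>0$, and for any $\dist_W$-neighbourhood $\mathcal{U}_1\ni H_0$, choosing $\epsilon$ with $\{H:\dist_W(H,H_0)<\epsilon\}\subset\mathcal{U}_1$,
\[
\Pi_n(\mathcal{U}_1^\complement)\le\Pi_n\bigl(\{H:\dist_W(H,H_0)\ge\epsilon\}\bigr)\le\tilde{\Pi}_n\bigl(\{g:\dist_H(g,g_\bone(\cdot|H_0))\ge\delta(\epsilon)\}\bigr)\longrightarrow0
\]
$\iprodGH$-as by part~(a), which is~(b) for $d\ge2$. When $d=2$, the limiting angular distribution $H_0(\cdot)$ on $[0,1]$ is continuous on $(0,1)$ with atoms only at $0$ and $1$, and combining the $\dist_W$-consistency just obtained with a P\'olya-type uniform-convergence argument that additionally controls those two boundary atoms (Section~\ref{sec:posterior_consistency_2D} of the supplement) upgrades~(b) to the $\dist_{KS}$-neighbourhood statement.
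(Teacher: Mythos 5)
Your proposal follows essentially the same route as the paper's proof: the extended Schwartz theorem, with the Kullback--Leibler property obtained from Theorem \ref{theo:KL_prior_multi} by verifying the $\dist_\infty$-property through constrained sup-norm Bernstein approximation (the paper's Lemma \ref{lem: approx}), and sieves $\cup_{d<k\le k_n}\phi_\Hset(\Hset_k)$ with $k_n^{d-1}\asymp n$, whose Hellinger entropy is controlled by exactly the $k$-uniform Lipschitz estimate you identify as the crux (the supplement's Lemma \ref{lem: L1}) and whose complement has exponentially small prior mass by the tail condition on $\lambda$. Two auxiliary steps are handled by different but valid routes: you derive full Hellinger support from the KL property at every $H_0\in\Hset_0$ plus $L^1$-density of $\Hset_0$ in $\Hset$ and a Scheff\'e argument, whereas the paper invokes its Corollary \ref{lem:1to1} together with Lemma \ref{lem: L1}; and you prove the $d\ge2$ transfer to $\dist_W$-consistency of $H$ by a Prokhorov-compactness/identifiability contradiction, which is a repackaging of the paper's Proposition \ref{lem: basic metric}(iii) (sequential continuity of $G_\bone(\cdot|H)\mapsto H$). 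One caution on phrasing: the ``$k$-free modulus of continuity of $\phi_\Hset$ from $(\Hset,\dist_T)$ into $(\Gset_\bone,\dist_H)$'' you attribute to the supplement is not what is proved there for $d>2$; what Lemma \ref{lem: L1} gives is a bound $\dist_H^2(g_\bone(\cdot|H_k),g_\bone(\cdot|\widetilde H_k))\lesssim\Vert\bphi_\circ-\widetilde\bphi_\circ\Vert_1$ in the $L^1$-distance of the \emph{coefficient vectors} within each $\Hset_k$, which is what the covering-number bound actually needs and is not interchangeable with a $\dist_T$-modulus since the Dirichlet basis is not an $L^1$-isometry. Also, the $d=2$ upgrade to $\dist_{KS}$ in the paper goes through the Pickands function and convexity (Propositions \ref{lem: basic metric}(i) and \ref{prop: basic metric bvt}(ii)) rather than a P\'olya argument, which the paper reserves for the atomless case; your deferral to the supplement is acceptable but mislabels the mechanism.
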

The proof of Theorem \ref{theo:post_consistency_mvt} is provided in Section \ref{appendix: cons mvt} of the supplement.
Consistency of the posterior on the angular pm  is obtained with a stronger metric in the bivariate case. 
%
%
Hellinger consistency of $\tilde{\Pi}_n$ 
entails that $\Pi_n$ concentrates on sets of angular pm's whose Pickands dependence functions lie in a neighbourhood  of $A_0$
under a Sobolev-type metric, induced by the norm $\Vert A\Vert_\infty+\sum_{1\leq j \leq d-1} \Vert A_{\{j\}}  \Vert_\infty$. The relations
\begin{equation*}
A_{\{j\}}(\bt)= d\int_\simp \{w_{j+1}\indic_{(w_{j+1}^*(\bt),1]}(w_{j+1})+w_{1}\indic_{(w_{1}^*(\bt),1]}(w_1)\}\diff H(\bw),
\end{equation*}
for $j=1, \ldots,d-1$, where $\bw^*(\bt)=\by^*(\bt)/\Vert \by^*(\bt) \Vert_1$ with $\bt \in \mathring{\resimp}$ and $\by^*(\bt)=(1/(1-\Vert \bt \Vert_1), 1/t_1,\ldots, 1/t_{d-1})$,
allow to turn such a concentration result into consistency on the space $\Hset$ with metric $\dist_{KS}$, only when $d=2$. 
An exception is the case where the true angular pm $H_0$ has no point masses. If so, Polya's theorem \citep[e.g.,][Proposition A.11]{r10} guarantees that every open $\dist_{KS}$-neighbourhood of $H_0$ contains a $\dist_W$-neighbourhood, thus consistency under the weaker metric extends to consistency under the stronger one.
We next sketch a prior construction for $\Pi$ that satisfies Condition \ref{cond:angularprior}.
\begin{example}\label{ex:mult_BP}
A prior $\Pi$ that exploits the representation of the angular pm in \eqref{eq: BPoly measure} can be obtained by choosing, for each $k \geq d+1$, a truncated Dirichlet prior $\nu_k(\bvarphi_k)$ on $(|\Gamma_k|+d)$-dimensional weight coefficients, with truncation outside $\Phi_k$ \citep[Section 3]{r39}. Prior specification can thus be completed by choosing $\lambda(k)$ as the probability mass function of a truncated discrete Weibull distribution, with shape parameter $d-1$.
\end{example}

\subsection{$\brho$-Fr\'echet-, $\bomega$-Weibull- and Gumbel-max-stable distributions}\label{sec:binf_general_max}
%
In this section we extend the consistency results of Section \ref{sec:binf_simple_max} to more general statical models $\{\prodGtH :\, (H,\bvartheta) \in \Hset\times \bvarTheta  \}$, all the proofs can be found in Section \ref{appendix:semi} of the supplement.
%
%
Uncertainty on the finite dimensional parameter $\bvartheta$ is expressed by means of a Borel prior $\Pi_{\bvarTheta}$
on the space $\bvarTheta$, endowed with the $L_1$ metric.
%
For $\bomega$-Weibull models with known true locations $\bmu_0$, the prior distribution on $\bmu$ reduces to $\delta_{\bmu_0}$ and the spaces $(0,\infty)^{2d}\times \{\bmu_0\}$ and
$(0,\infty)^{2d}$ are homeomorphic. Thus, with a little abuse of notation we also denote $\bvarTheta=(0,\infty)^{2d}$, $\bvartheta_0=(\bomega_0,\bsigma_0)$, $\bvartheta=(\bomega,\bsigma)$ and use the symbol $\Pi_{\bvarTheta}$ for the prior on shape and scale parameters.
%
%
We compactly report our assumptions on semiparametric prior specifications and true parameter values under different max-stable models.
\begin{condition}\label{cond:genprior} 
	According to the specific semiparametric model $\{G_\bvartheta(\cdot|H):  \, H \in \Hset, \bvartheta \in \bvarTheta\}$ under study, we assume that:
	\begin{inparaenum}
		\item\label{cond:indep} 
		the prior distributions $\Pi_{\bvarTheta}$ and $\Pi_{\Hset}$ on $\bvartheta$ and $H$ are are specified independently;
		\item\label{cond:angularpmprior} $\Pi_\Hset$ satisies Condition \ref{cond:angularprior} and $H_0   \in \Hset_0$;
		\item $\Pi_{\bvarTheta}$ complies with one of the following:
		\begin{inparaenum}
		\item \label{cond:shapescale}
		$\Pi_{\bvarTheta}$ has full support on $(0,\infty)^{2d}$;
		\item 
		\label{cond:compactprior}  $\Pi_{\bvarTheta}$ has marginal distribution  $\Pi_{\bvarTheta_j}$ on $\bvartheta_j$ supported on compact subsets $K_j$ of $(1,\infty)\times (0,\infty)\times \real$, for $j=1, \ldots,d$, and assigns positive mass to every neighbourhood of the true parameter $\bvartheta_0=(\bomega_0,\bsigma_{0},\bmu_0)$;
		\item 
		\label{cond:scaleloc}
		$\Pi_{\bvarTheta}$ has full support on $(0,\infty)^{d} \times \reald$.
		\end{inparaenum}	
	\end{inparaenum}
\end{condition}
The joint prior distribution $\Pi_{\Hset \times \bvarTheta}=\Pi_\Hset\times \Pi_{\bvarTheta}$ on the Borel subsets of $\Hset\times\bvarTheta$ gives rise to the posterior distribution
$$
\Pi_n(B):=\Pi_{\Hset \times \bvarTheta}(B|\bX_{1:n})=\frac{\int_B \prod_{i=1}^n g_\bvartheta(\bX_i|\theta) \Pi_{\Theta \times \bvarTheta}(\diff \theta, \diff \bvartheta)}{\int_\Theta \prod_{i=1}^n g_\bvartheta(\bX_i|\theta) \Pi_{\Theta \times \bvarTheta}(\diff \theta, \diff \bvartheta)},
$$
where $\bX_{1:n}=(\bX_1,\ldots,\bX_n)$ are iid rv's with distribution $G_{\bvartheta_0}(\cdot|H_0)$. In particular, $ (H,\bvartheta) \mapsto
g_\bvartheta(\cdot|H)=U_\bvartheta'(\cdot)
g_{\bone}(U_\bvartheta(\cdot)|H)
$, with $U_\bvartheta$ and $U_\bvartheta'$
as in \eqref{eq:gen_dens}, is a Borel measurable map between $\Hset\times\bvarTheta$ and the class of max-stable densities 
$\Gset_{\bvarTheta}:=\{ g_\bvartheta(\cdot|H),(H,\bvartheta)\in (\Hset,\bvarTheta) \}$ 
equiped with the metric $\dist_H$. Thus, $\Pi_{\Hset\times\bvarTheta}$ also induces a prior $\Pi_{\Gset_{\bvarTheta}}$ on the density $g_{\bvartheta}(\cdot|H)$ with full Hellinger support. In the sequel, strong  
consistency of the posterior distributions $\tilde{\Pi}_n(\cdot)=\Pi_{\Gset_{\bvarTheta}}(\cdot|\bX_{1:n})$ and $\Pi_n$ is obtained by adapting arguments from Schwartz' theorem to the semiparametric model under study. This is done by verifying the following general sufficient condition.
\begin{condition}\label{cond: newcond}
	The Kullback-Leibler support of $\Pi_{\Gset_{\bvarTheta}}$ contains $g_{\bvartheta_0}(\cdot|\theta_0)$,
	and for every $\dist_H$-ball  $\tilde{\mathcal{U}}_\epsilon:=\{ g \in \Gset_{\bvarTheta}: 
	\dist_H(g, g_{\bvartheta_0}(\cdot|\theta_0)) \leq 4\epsilon
	\}$, $\epsilon>0$, and every $\delta \in(0, \delta_*)$, for some $\delta_*>0$, there exist a sequence of measurable partitions $\{\Gset_{\bvarTheta,n}, \Gset_{\bvarTheta,n}^\complement  \}$ of the sumbodel
	$$
	\tilde{\mathcal{U}}_\epsilon^\complement
	\cap
	\{
	g_{\bvartheta}(\cdot|H): 
	\, (H,\bvartheta) \in \Hset\times \bvarTheta , \, \Vert \bvartheta  - \bvartheta_0\Vert_{1} \leq \delta
	\}
	$$
	and test functionals $\tests(\bX_{1:n})=(s_n(\bX_{1:n}),t_{1,n}(\bX_{1:n}), \ldots, t_{n,d}(\bX_{1:n}))$ with values in $[0,1]$ such that $\Pi_{\Gset_{\bvarTheta}}(\Gset_{\bvarTheta,n}^\complement) \lesssim e^{-rn}$ as $n \to \infty$, for some $r>0$ and: 
	\begin{inparaenum} 
		\item \label{dentest}
		$s_n$ satisfies  $\int s_n(\bx_{1:n})\diff \prodGtHtrue (\bx_{1:n}|H_0) \leq e^{-n\epsilon^2}$ and 
		$$
		\sup_{G: \, g \in \Gset_{\bvarTheta,n}}
		\int \{1-s_n \}(\bx_{1:n})\}\diff
		\prodGProbaltgenn(\bx_{1:n}) \leq e^{-2n\epsilon^2};
		$$
		\item\label{margintest} for each $j=1, \ldots,d$, $t_{n,j}$ satisfies $\int t_{n,j}(x_{j,1:n})\diff \prodGtHtruealtjj (x_{j,1:n}) \lesssim e^{-nc_j(\delta)}$ and
		$$
		\sup_{\bvartheta_j \in \text{supp}(\Pi_{\bvarTheta_j}): \,  \Vert \bvartheta_j  - \bvartheta_{0,j}  \Vert_\infty>{\delta/d} }
		\int \{
		1-t_{n,j}(x_{j,1:n})\}
		\diff \prodGtHaltjj (x_{j,1:n}) \lesssim  e^{-nc_j(\delta)},
		$$
		as $n \to \infty$, where $c_j(\delta)$ is a positive constant and $\Pi_{\bvarTheta_j}$ is the marginal prior on $\bvartheta_j$.
	\end{inparaenum}
\end{condition}
Under the above condition, for all neighbourhoods $\tilde{\mathcal{U}}$, $\mathcal{U}_1$ and $\mathcal{U}_2$ of $g_{\bvartheta_0}(\cdot|\theta_0)$, $H_0$ and $\bvartheta_0$, respectively,  the posterior distributions $\tilde{\Pi}_n$ and $\Pi_n$ satisfy an inequality of the form
\begin{equation}\label{eq:mainbound}
\begin{split}
&\max
\left\lbrace
\tilde{\Pi}_n(\tilde{\mathcal{U}}^\complement),
\Pi_n((\mathcal{U}_1\times\mathcal{U}_2)^\complement)
\right\rbrace \\
 &\qquad
 \lesssim\Vert \tests(\bX_{1:n}) \Vert_1+ \frac{\Xi_n(\bX_{1:n}, \tests, \Pi_{\Hset \times \bvarTheta})}{\int \prod_{i=1}^n \{g(\bX_i)/g_{\bvartheta_0}(\bX_i|\theta_0)\}\diff\Pi_{\mathcal{G}_{\bvarTheta}}(g)}\\
&\qquad 
\lesssim \Vert \tests(\bX_{1:n}) \Vert_1+ e^{cn}\Xi_n(\bX_{1:n}, \tests, \Pi_{\Hset \times \bvarTheta})
\end{split}	
\end{equation}
eventually almost surely as $n \to \infty$, for a positive constant $c>0$ and a functional $\Xi_n$ which depend on 
the neighbourhoods' choice and  comply with
\begin{equation}\label{eq:expobound}
	\prodGtHtrue \left(
	\Vert \tests(\bX_{1:n}) \Vert_1+ e^{cn}\Xi_n(\bX_{1:n}, \tests, \Pi_{\Hset \times \bvarTheta})> \varepsilon \big{|}H_0
	\right) \lesssim e^{-c'n}, \quad \forall \varepsilon>0,
\end{equation} 
for some positive constant $c'$. Therefore, strong consistency can be deduced by applying Borel-Cantelli lemma, see Section \ref{sec:gencons} of the supplement for details. 
Strong Hellinger consistency of $\tilde{\Pi}_n$
guarantees that the Bayesian estimator of $g_{\bvartheta_0}(\cdot|H_0)$ under quadratic loss, i.e. the predictive density $\hat{g}_n(\bx)=\int_{\Gset_{\bvarTheta}}g(\bx) \, \diff \tilde{\Pi}_n(g)$, is strongly Hellinger consistent too, that is
$$
\lim_{n \to \infty}
\dist_H(\hat{g}_n, g_{\bvartheta_0}(\cdot|H_0))=0, 
$$
$\iprodGtH-\text{as}$ \citep[e.g.,][Chapter 6.8.3]{r10}. The predictive aspect is particularly relevant as, in most of the real data applications concerning extreme events, 
the prediction of future extremes is often the main inferential goal.
%
%

We now start extending consistency to the case of $\brho$-Fr\'{e}chet max-stable models, with unknown shape and scale parameters $\brho=(\rho_1,\ldots,\rho_d)$ and $\bsigma=(\sigma_1,\ldots,\sigma_d)$, respectively. 
Their density function is 
\begin{equation}\label{eq:dens_a_frec}
g_{\brho, \bsigma}(\bx|H)=\prod_{j=1}^d \rho_j\sigma_j^{-\rho_j} x_j^{\rho_j-1}g_{\bone}\left((x_1/\sigma_1)^{\rho_1}, \ldots, (x_d/\sigma_d)^{\rho_d}|H\right), \quad \bx > \bzero,
\end{equation}
with $g_{\bone}(\cdot|\theta)$ almost everywhere as in \eqref{eq:density_unit_fre}. 
%
In this case, a joint prior $\Pi_{\Hset\times\bvarTheta}$ is assigned to $H$ and $(\brho,\bsigma)$, inducing a prior $\Pi_{\mathcal{G}_{\bvarTheta}}$ on $\Gset_{\bvarTheta}:=\{ g_{\brho,\bsigma}(\cdot|H):\,H\in \Hset, (\brho,\bsigma)\in (0,\infty)^{2d} \}$.
%
%

\begin{theorem}\label{th:alpha_frec}
Let $\bX_1,\ldots,\bX_n$ be iid rv with distribution $G_{\brho_0,\bsigma_0}(\cdot|H_0)$, 
%
where $(\brho_0,\bsigma_0)  \in (0, \infty)^{2d}$. 
%
%
Then, under Conditions \ref{cond:genprior}\ref{cond:indep}--\ref{cond:angularpmprior} and \ref{cond:genprior}\ref{cond:shapescale}, $\iprodGrho-\text{as}$
\begin{itemize}
	\item[(a)] $\lim_{n \to \infty}\tilde{\Pi}_n(\tilde{\mathcal{U}}^\complement)=0$, for every $\dist_H$-neighbourhood $\tilde{\mathcal{U}}$ of $g_{\brho_0,\bsigma_0}(\cdot|H_0)$;
	\item[(b)]  $\lim_{n\to \infty}\dist_H(\hat{g}_n,g_{\brho_0,\bsigma_{0}}(\cdot|H_0))=0$; 
	\item[(c)] $\lim_{n\to\infty}\Pi_n((\mathcal{U}_1\times\mathcal{U}_2)^\complement)=0
	$, for every $\dist_W$-neighborhood (if $d\geq 2$) or $\dist_{KS}$-neighborhood $\mathcal{U}_1$  of $H_0$ (if d=2) and $L_1$-neighborhood $\mathcal{U}_2$ of $(\brho_0,\bsigma_0)$.  
\end{itemize}
%
%
\end{theorem}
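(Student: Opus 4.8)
The plan is to verify the general sufficient Condition~\ref{cond: newcond} for the $\brho$-Fr\'echet model and then invoke the master inequalities \eqref{eq:mainbound}--\eqref{eq:expobound} together with the Borel--Cantelli lemma; claims (a) and (c) follow at once, and (b) is then a consequence of (a) by convexity of the Hellinger distance, exactly as recalled right after \eqref{eq:expobound}. So the work amounts to three things: (i) the Kullback--Leibler support condition, (ii) the density-level sieve and test $s_n$, and (iii) the marginal tests $t_{n,j}$.

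\emph{Kullback--Leibler support.} Since by \eqref{eq:dens_a_frec} one has $g_{\brho,\bsigma}(\bx|H)=\{\prod_{j}\rho_j\sigma_j^{-\rho_j}x_j^{\rho_j-1}\}\,g_{\bone}(U_{\brho,\bsigma}(\bx)|H)$ with $U_{\rho_j,\sigma_j}(x)=(x/\sigma_j)^{\rho_j}$, I would change variables under the true law (setting $y_j=(x_j/\sigma_{0,j})^{\rho_{0,j}}$) to rewrite $\kulb(g_{\brho_0,\bsigma_0}(\cdot|H_0),g_{\brho,\bsigma}(\cdot|H))$ as an integral against $g_{\bone}(\cdot|H_0)$ of a log-ratio comparing $g_{\bone}(\cdot|H_0)$ with a Jacobian-weighted $g_{\bone}(T(\cdot)|H)$, where $T\to\mathrm{id}$ and the Jacobian weight $\to1$ as $(\brho,\bsigma)\to(\brho_0,\bsigma_0)$. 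Using the explicit link \eqref{eq:kernel_density_ang} between $g_\bone(\cdot|H)$ and $H$ to control this on compacta and its tails, one gets that this divergence is small whenever $(\brho,\bsigma)$ is close to $(\brho_0,\bsigma_0)$ and $H$ is close to $H_0$ in $\dist_{\infty}$ of the angular densities. Because $\Pi_\Hset$ satisfies Condition~\ref{cond:angularprior}, the Bernstein construction together with Proposition~\ref{prop: top supp} gives it the $\dist_{\infty}$-property at every $H\in\Hset'$, so Theorem~\ref{theo:KL_prior_multi} yields $\Pi_\Hset(\phi_\Hset^{-1}(\mathcal{K}_\eta))>0$ for all $\eta>0$; combined with $\Pi_{\bvarTheta}$ having full support on $(0,\infty)^{2d}$ (Condition~\ref{cond:genprior}\ref{cond:shapescale}) and prior independence (Condition~\ref{cond:genprior}\ref{cond:indep}), the product prior $\Pi_{\Gset_{\bvarTheta}}$ charges every Kullback--Leibler ball around $g_{\brho_0,\bsigma_0}(\cdot|H_0)$.

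\emph{Sieve and density test.} Fix $\epsilon>0$ and $\delta\in(0,\delta_*)$ with $\delta_*$ small. I would take $\Gset_{\bvarTheta,n}$ to consist of those $g_{\brho,\bsigma}(\cdot|H)$ in the submodel of Condition~\ref{cond: newcond} whose angular pm admits a Bernstein representation of degree at most $k_n$ with $k_n^{d-1}\asymp n$ (i.e.\ $k_n\asymp n^{1/(d-1)}$). Then Condition~\ref{cond:angularprior}\ref{cond: mvt th2} gives $\Pi_{\Gset_{\bvarTheta}}(\Gset_{\bvarTheta,n}^\complement)\lesssim\sum_{i>k_n}\lambda(i)\lesssim e^{-qk_n^{d-1}}\lesssim e^{-rn}$; and since the map $(\brho,\bsigma,H)\mapsto g_{\brho,\bsigma}(\cdot|H)$ is uniformly Hellinger-Lipschitz in $H$ when $(\brho,\bsigma)$ ranges over the compact set $\{\Vert\bvartheta-\bvartheta_0\Vert_1\le\delta\}$, the $\dist_H$-covering number of $\Gset_{\bvarTheta,n}$ is controlled by that of the degree-$\le k_n$ Bernstein coefficient sieve, which has $\lesssim k_n^{d-1}\asymp n$ free parameters, hence log-covering number $\lesssim n$. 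A standard Le Cam--Birg\'e test for $\dist_H$-separated densities against a set of linearly-growing entropy then furnishes $s_n$ with the exponential error bounds in Condition~\ref{cond: newcond}\ref{dentest}.

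\emph{Marginal tests and conclusion.} The $j$-th margin of $G_{\brho,\bsigma}(\cdot|H)$ is the univariate $\rho_j$-Fr\'echet-scale-$\sigma_j$ law $G_{\rho_j,\sigma_j}$ of \eqref{eq:gev}; equivalently $\log X_{j,i}=\log\sigma_j+\rho_j^{-1}E_{j,i}$ with $E_{j,i}$ i.i.d.\ standard Gumbel. I would build $t_{n,j}$ from finitely many bounded statistics --- empirical distribution function evaluations of $X_{j,i}$ on a fixed grid, supplemented by a trimmed statistic catching escaping scale --- and check that the marginal c.d.f.\ $G_{\rho_j,\sigma_j}$ stays uniformly bounded away (in a finite number of grid points) from $G_{\rho_{0,j},\sigma_{0,j}}$ over all $\Vert(\rho_j,\sigma_j)-(\rho_{0,j},\sigma_{0,j})\Vert_\infty>\delta/d$: on compact annuli this is continuity plus identifiability, while in every regime where $\rho_j$ or $\sigma_j$ tends to $0$ or $\infty$ the limit law is degenerate (a constant c.d.f.\ equal to $1$, $0$ or $e^{-1}$, or a point mass) and hence separated from the smooth true Fr\'echet; Hoeffding/Bernstein bounds then give the $e^{-nc_j(\delta)}$ errors uniformly over $\mathrm{supp}(\Pi_{\bvarTheta_j})$, verifying Condition~\ref{cond: newcond}\ref{margintest}. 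Having verified Condition~\ref{cond: newcond}, \eqref{eq:mainbound}--\eqref{eq:expobound} and Borel--Cantelli give (a); decomposing $(\mathcal{U}_1\times\mathcal{U}_2)^\complement\subset(\mathcal{U}_1^\complement\times\bvarTheta)\cup(\Hset\times\mathcal{U}_2^\complement)$, taking $\delta$ smaller than the radius of the $L_1$-ball $\mathcal{U}_2$, and using the identifiability of $H$ from $g_\bone(\cdot|H)$ established in the proof of Theorem~\ref{theo:post_consistency_mvt} (now uniformly over $\bvartheta$ in a small neighbourhood of $\bvartheta_0$, which converts $\dist_H$-closeness of the density into $\dist_W$- or, for $d=2$, $\dist_{KS}$-closeness of $H$), gives (c); and (b) follows from (a). The main obstacle is Condition~\ref{cond: newcond}\ref{margintest}: obtaining \emph{uniformly} exponentially consistent marginal tests over the whole non-compact parameter space $(0,\infty)^2$, and in particular handling the boundary regimes $\rho_j,\sigma_j\to0$ or $\infty$ where likelihood-ratio type tests degenerate and one must exploit the explicit Fr\'echet (equivalently Gumbel location--scale) structure.
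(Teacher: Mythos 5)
Your proposal follows the paper's own route essentially step for step: verify Condition~\ref{cond: newcond} (Kullback--Leibler support obtained by splitting off the simple max-stable part and controlling the marginal perturbation, a Bernstein-degree sieve with $\nu_n^{d-1}\asymp n$ giving log-Hellinger-entropy $\lesssim n$ via the Lipschitz dependence on the mixture weights, and marginal tests built from empirical-distribution-function separation), then conclude through the general machinery of \eqref{eq:mainbound}--\eqref{eq:expobound} and Borel--Cantelli, with (b) following from (a) by convexity and (c) by the same neighbourhood decomposition. The only cosmetic difference is in the marginal tests, where the paper uses the full Kolmogorov--Smirnov statistic with a DKW-type exponential bound after explicitly checking that $\dist_{KS}(G_{\rho_j,\sigma_j},G_{\rho_{0,j},\sigma_{0,j}})$ is uniformly bounded below over the entire non-compact set $\Vert(\rho_j,\sigma_j)-(\rho_{0,j},\sigma_{0,j})\Vert_\infty>\delta/d$ --- precisely the uniform separation (including the degenerate boundary regimes) that your grid-plus-Hoeffding variant also requires and correctly identifies as the crux.
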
 
%
We next consider $\bomega$-Weibull max-stable models, with unknown shape and scale parameters $\bomega=(\omega_1,\ldots,\omega_d)$ and $\bsigma=(\sigma_1,\ldots,\sigma_d)$, respectively, and location parameters $\bmu=(\mu_{1},\ldots,\mu_{d})$
which may be either known or 
uknown. 
Their density function is
\begin{equation}\label{eq: weibdens}
\begin{split}
&g_{\bomega,\bsigma,\bmu}(\bx|H)\\&\quad=\prod_{j=1}^d \frac{\omega_j}{\sigma_j}\left(\frac{\mu_j-x_j}{\sigma_j}\right)^{-\omega_j-1}g_{\bone}\left(\left(\frac{\mu_1-x_1}{\sigma_1}\right)^{-\omega_1}, \ldots, \left(\frac{\mu_d-x_d}{\sigma_d}\right)^{-\omega_d}\bigg{|}H\right)
\end{split}
\end{equation}
for $\bx < \bmu$.
When $\bmu\equiv \bmu_0$ is known, the latter can be interpreted as the joint density of $\bmu_0-\bY^{-1}$, where $\bY$ is a rv with distribution $G_{\brho,\bsigma^{-1}}(\cdot|H)$ and $\bomega=\brho$. 
Accordingly, 
a prior $\Pi_{\bvarTheta}$ on $(\bomega, \bsigma)$ with full support on $(0,\infty)^{2d}$ along with $\Pi_\Hset$ and the induced prior $\Pi_{\Gset_{\bvarTheta}}$ on $\Gset_{\bvarTheta}=\{g_{\bomega,\bsigma, \bmu_0}: H \in \Hset, (\bomega,\bsigma)\in (0,\infty)^{2d} \}$
can be turned into priors on the parameters and the density function
of a multivariate $\brho$-Fr\'echet model, respectively.
Analogous considerations apply to posterior distributions, hence consistency obtains as a byproduct of Theorem \ref{th:alpha_frec}.  
\begin{cor}\label{cor: cons_weibull}
	Let $\bX_1,\ldots,\bX_n$ be iid rv with distribution $G_{\bomega_0,\bsigma_0,\bmu_0}(\cdot|H_0)$, where 
	%
	 $(\bomega_0,\bsigma_0) \in (0, \infty)^{2d}$ and $\bmu_0 \in \reald$ is known. 
	Then, under Conditions \ref{cond:genprior}\ref{cond:indep}--\ref{cond:angularpmprior} and \ref{cond:genprior}\ref{cond:shapescale}, 
	$\iprodGomnew-\text{as}$
	\begin{itemize}
		\item[(a)]  $\lim_{n \to \infty}\tilde{\Pi}_n(\tilde{\mathcal{U}}^\complement)=0$, for every $\dist_H$-neighbourhood $\tilde{\mathcal{U}}$ of $g_{\bomega_0,\bsigma_0,\bmu_0}(\cdot|H_0)$;
		\item[(b)] 	$
		\lim_{n\to \infty}\dist_H(\hat{g}_n,g_{\bomega_0,\bsigma_0,\bmu_0}(\cdot|H_0))=0$;
		\item[(c)] $\lim_{n\to\infty}\Pi_n((\mathcal{U}_1\times\mathcal{U}_2)^\complement)=0$, for every $\dist_W$-neighborhood (if $d\geq 2$) or $\dist_{KS}$-neighborhood $\mathcal{U}_1$ of $H_0$ (if d=2) and $L_1$-neighborhood $\mathcal{U}_2$ of $(\bomega_0,\bsigma_0)$.
	\end{itemize}
\end{cor}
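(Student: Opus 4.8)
The plan is to deduce the statement from Theorem~\ref{th:alpha_frec} via the deterministic change of variables anticipated just before the corollary. Since $\bmu_0$ is known, introduce the diffeomorphism $T_{\bmu_0}:\{\bx\in\reald:\bx<\bmu_0\}\to(0,\infty)^d$, $T_{\bmu_0}(\bx)=(\bmu_0-\bx)^{-1}$ (operations componentwise), whose Jacobian $J(\bx)=\prod_{j=1}^d(\mu_{0,j}-x_j)^{-2}$ depends on $\bx$ alone. Matching \eqref{eq: weibdens} with \eqref{eq:dens_a_frec} yields the pointwise identity $g_{\bomega,\bsigma,\bmu_0}(\bx|H)=J(\bx)\,g_{\bomega,\bsigma^{-1}}(T_{\bmu_0}(\bx)|H)$ for all $\bx<\bmu_0$ and all $(H,\bomega,\bsigma)\in\Hset\times(0,\infty)^{2d}$, where on the right-hand side $\bomega$ plays the role of the Fr\'echet shape $\brho$; in particular $\bX\sim G_{\bomega_0,\bsigma_0,\bmu_0}(\cdot|H_0)$ if and only if $\bY:=T_{\bmu_0}(\bX)\sim G_{\bomega_0,\bsigma_0^{-1}}(\cdot|H_0)$.

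First I would transport the prior. The map $(\bomega,\bsigma)\mapsto(\bomega,\bsigma^{-1})$ is a self-homeomorphism of $(0,\infty)^{2d}$, so the prior $\Pi_{\bvarTheta}$ with full support on $(0,\infty)^{2d}$ (Condition~\ref{cond:genprior}\ref{cond:shapescale}) is carried to a prior with full support on $(0,\infty)^{2d}$ for the Fr\'echet shape/scale; the prior $\Pi_\Hset$ on the angular pm is untouched and the independence in Condition~\ref{cond:genprior}\ref{cond:indep} is preserved. Hence all the hypotheses of Theorem~\ref{th:alpha_frec} hold for the induced $\brho$-Fr\'echet Bayesian model, with data $\bY_1,\ldots,\bY_n$ iid from $G_{\bomega_0,\bsigma_0^{-1}}(\cdot|H_0)$ and true parameters $(\bomega_0,\bsigma_0^{-1})$.

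Next I would identify the posteriors. Because $J(\bX_i)$ is free of $(H,\bomega,\bsigma)$, it cancels in the ratio defining the posterior, so the Weibull posteriors $\Pi_n$ and $\tilde{\Pi}_n$ are precisely the images of the Fr\'echet posteriors given $\bY_{1:n}=T_{\bmu_0}(\bX_{1:n})$ under, respectively, the reparametrisation $(H,\bomega,\bsigma)\mapsto(H,\bomega,\bsigma^{-1})$ and the density pullback $g\mapsto J\cdot(g\circ T_{\bmu_0})$; the same pullback sends the Fr\'echet predictive density to $\hat{g}_n$ and sends $g_{\bomega_0,\bsigma_0^{-1}}(\cdot|H_0)$ to $g_{\bomega_0,\bsigma_0,\bmu_0}(\cdot|H_0)$. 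It then remains to see that the neighbourhoods correspond. The Hellinger distance is invariant under the bijection $T_{\bmu_0}$ (change of variables), so $\dist_H$-neighbourhoods of $g_{\bomega_0,\bsigma_0,\bmu_0}(\cdot|H_0)$ correspond to $\dist_H$-neighbourhoods of $g_{\bomega_0,\bsigma_0^{-1}}(\cdot|H_0)$ and $\dist_H(\hat{g}_n,g_{\bomega_0,\bsigma_0,\bmu_0}(\cdot|H_0))$ equals the Hellinger distance between the corresponding pushed-forward densities, yielding (a) and (b); the angular coordinate $H$ is left unchanged, so $\dist_W$- and $\dist_{KS}$-neighbourhoods of $H_0$ are literally the same, while $(\bomega,\bsigma)\mapsto(\bomega,\bsigma^{-1})$ carries any $L_1$-neighbourhood of $(\bomega_0,\bsigma_0)$ onto an open neighbourhood of $(\bomega_0,\bsigma_0^{-1})$ that contains an $L_1$-ball, which yields (c). Finally, $T_{\bmu_0}$ pushes $\iprodGomnew$ forward to the infinite product law of $G_{\bomega_0,\bsigma_0^{-1}}(\cdot|H_0)$, so each almost-sure conclusion of Theorem~\ref{th:alpha_frec} becomes an $\iprodGomnew$-a.s.\ statement once pulled back through $T_{\bmu_0}$, which completes the argument.

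The argument is otherwise bookkeeping; the one point deserving care is the scale inversion $\bsigma\mapsto\bsigma^{-1}$, which is a homeomorphism but not globally Lipschitz, so one should transport the full-support and local-neighbourhood properties at the level of open sets rather than of metric balls, and the Jacobian $J$ must be tracked consistently: it cancels in the likelihood ratio but re-enters both the predictive density and the Hellinger computation as the change-of-variables factor. No new estimate beyond Theorem~\ref{th:alpha_frec} is required.
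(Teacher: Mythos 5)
Your proposal is correct and follows exactly the route the paper takes: the paper's justification for this corollary is the two-sentence remark preceding it, which reduces the known-location Weibull model to the $\brho$-Fr\'echet model via $\bY=(\bmu_0-\bX)^{-1}$ with $\brho=\bomega$ and scale $\bsigma^{-1}$, and then invokes Theorem~\ref{th:alpha_frec}. Your write-up simply makes explicit the bookkeeping (Jacobian cancellation in the posterior, Hellinger invariance under the bijection, transport of full support and of neighbourhoods under the homeomorphism $\bsigma\mapsto\bsigma^{-1}$) that the paper leaves implicit.
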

In several applications the location parameters are unknown and a diffuse prior distribution has to specified also on those. Notably, the study of such a case gives the mathematical ground for addressing the problem discussed in Section \ref{sec:weibulldom}, where a $\bomega$-Weibull distribution is fitted to sample maxima. For technical convenience, we restrict the parameter space to $\bvarTheta=(1,\infty)^d\times (0,\infty)^d \times \reald$
and consider a prior $\Pi_{\bvarTheta}$ on $(\bomega,\bsigma,\bmu)$ with compactly supported marginal priors on $(\omega_j,\sigma_j, \mu_j)$, $j=1, \ldots,d$, yielding a prior $\Pi_{\Gset_{\bvarTheta}}$ supported on a subset of the
model class $\Gset_{\bvarTheta}=\{g_{\bomega,\bsigma,\bmu}: \, H \in \Hset,
(\bomega,\bsigma,\bmu)\in \bvarTheta
\}$.
%

%
%
\begin{theorem}\label{theo: cons_weibull}
	Let $\bX_1,\ldots,\bX_n$ be iid rv with distribution $G_{\bomega_0,\bsigma_0,\bmu_0}(\cdot|H_0)$, where 
$(\bomega_0,\bsigma_0,\bmu_0 ) \in (1,\infty)^d\times(0, \infty)^d \times\reald$. 
	Then, 
	 under Conditions \ref{cond:genprior}\ref{cond:indep}--\ref{cond:angularpmprior} and \ref{cond:genprior}\ref{cond:compactprior},
	$\iprodGomnew-\text{as}$
	\begin{itemize}
		\item[(a)]  $\lim_{n \to \infty}\tilde{\Pi}_n(\tilde{\mathcal{U}}^\complement)=0$, for every $\dist_H$-neighbourhood $\tilde{\mathcal{U}}$ of $g_{\bomega_0,\bsigma_0,\bmu_0}(\cdot|H_0)$;
		\item[(b)] 	$
		\lim_{n\to \infty}\dist_H(\hat{g}_n,g_{\bomega_0,\bsigma_0,\bmu_0}(\cdot|H_0))=0$;
		\item[(c)] $\lim_{n\to\infty}\Pi_n((\mathcal{U}_1\times\mathcal{U}_2)^\complement)=0$, for every $\dist_W$-neighborhood (if $d\geq 2$) or $\dist_{KS}$-neighborhood $\mathcal{U}_1$ of $H_0$ (if d=2) and $L_1$-neighborhood $\mathcal{U}_2$ of $(\bomega_0,\bsigma_0,\bmu_0)$.
	\end{itemize}
\end{theorem}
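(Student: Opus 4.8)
The plan is to verify the general sufficient Condition~\ref{cond: newcond} for the $\bomega$-Weibull density \eqref{eq: weibdens} on $\bvarTheta=(1,\infty)^d\times(0,\infty)^d\times\reald$ under the prior of Conditions~\ref{cond:genprior}\ref{cond:indep}--\ref{cond:angularpmprior} and \ref{cond:genprior}\ref{cond:compactprior}; once Condition~\ref{cond: newcond} holds, the bound \eqref{eq:mainbound}, the exponential estimate \eqref{eq:expobound} and the Borel--Cantelli lemma deliver the strong consistency claims (a) and (c), while (b) follows from (a) since strong Hellinger consistency of $\tilde{\Pi}_n$ transfers to the posterior-mean density $\hat{g}_n$. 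Thus I must produce: (i) the Kullback--Leibler support property; (ii) for each $\epsilon,\delta$ a sieve $\{\Gset_{\bvarTheta,n},\Gset_{\bvarTheta,n}^\complement\}$ with exponentially small prior mass on the complement and sub-linear metric entropy; (iii) the density test $s_n$ of Condition~\ref{cond: newcond}\ref{dentest}; (iv) the marginal tests $t_{n,1},\dots,t_{n,d}$ of Condition~\ref{cond: newcond}\ref{margintest}.

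For (i), using $g_{\bvartheta}(\cdot|H)=U_{\bvartheta}'(\cdot)\,g_{\bone}(U_{\bvartheta}(\cdot)|H)$ I would split $\kulb\big(g_{\bvartheta_0}(\cdot|H_0),g_{\bvartheta}(\cdot|H)\big)$ into a Jacobian term $\expect[\log\{U_{\bvartheta_0}'(\bX)/U_{\bvartheta}'(\bX)\}]$ and a copula term $\kulb\big(g_{\bone}(\cdot|H_0),g_{\bone}(\cdot|H)\big)+\expect[\log\{g_{\bone}(U_{\bvartheta_0}(\bX)|H)/g_{\bone}(U_{\bvartheta}(\bX)|H)\}]$, for $\bX\sim g_{\bvartheta_0}(\cdot|H_0)$. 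Theorem~\ref{theo:KL_prior_multi} (applicable since $H_0\in\Hset_0$ and, by Condition~\ref{cond:angularprior}, $\Pi_\Hset$ has the $\dist_\infty$-property at every $H\in\Hset'$) makes the copula-$\kulb$ term $<\epsilon/2$ on a set of $H$'s of positive $\Pi_\Hset$-mass, and the remaining terms tend to $0$ as $\bvartheta\to\bvartheta_0$ by dominated convergence, \emph{provided} one restricts to $\bmu\geq\bmu_0$ componentwise: this is forced, since $\text{supp}\,g_{\bvartheta_0}(\cdot|H_0)=\{\bx<\bmu_0\}$ and $\kulb=+\infty$ whenever some $\mu_j<\mu_{0,j}$, while the dominating functions are available because $\omega_j>1$ makes the endpoint density behave like $(\mu_j-x_j)^{\omega_j-1}$ and the $K_j$ are compact. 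Positivity of $\Pi_{\Hset\times\bvarTheta}$ on the intersection then follows from Conditions~\ref{cond:genprior}\ref{cond:indep} and~\ref{cond:genprior}\ref{cond:compactprior}.

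For (ii)--(iii), on the angular component I would take the degree cutoff $\bigcup_{k_*\leq k\leq K_n}\Hset_k$ with $K_n\asymp n^{1/(d-1)}$ and small leading constant: Condition~\ref{cond:angularprior}\ref{cond: mvt th2} gives $\Pi_\Hset(\{k>K_n\})\lesssim\exp(-qK_n^{d-1})\leq e^{-rn}$ with $r$ tunably small, while $|\Gamma_k|+d\asymp k^{d-1}$ makes the sieve metric entropy grow at most linearly in $n$ — the balance already exploited for Theorem~\ref{theo:post_consistency_mvt}. The finite-dimensional part needs no sieving: the marginal priors sit on the compact $K_j$ and $(\bvartheta,H)\mapsto g_{\bvartheta}(\cdot|H)$ is $\dist_H$-H\"older in $\bvartheta$, uniformly over the angular sieve (exponent $\geq\tfrac12$ in $\bmu$, from $\dist_H^2(g_{\bvartheta},g_{\bvartheta'})\lesssim\sum_j|\mu_j-\mu_j'|^{\omega_j\wedge\omega_j'}+\cdots$). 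Hence $\Gset_{\bvarTheta,n}:=\{g_{\bvartheta}(\cdot|H):H\in\bigcup_{k\leq K_n}\Hset_k,\ \bvartheta\in\prod_j K_j\}$ has $\Pi_{\Gset_{\bvarTheta}}(\Gset_{\bvarTheta,n}^\complement)\lesssim e^{-rn}$ and sub-linear $\dist_H$-entropy, and $s_n$ is built as in Theorems~\ref{theo:post_consistency_mvt}--\ref{th:alpha_frec}: cover the Hellinger-$4\epsilon$-separated part of $\Gset_{\bvarTheta,n}$ by $\leq e^{r'n}$ balls, attach a Le Cam--Birg\'e exponentially powerful test to each, take the maximum; with $r'$ small the error bounds meet the thresholds in Condition~\ref{cond: newcond}\ref{dentest}.

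For (iv), since the $j$-th margin of $G_{\bvartheta}(\cdot|H)$ is the univariate reverse Weibull $G_{\bvartheta_j}$ regardless of $H$, the $t_{n,j}$ are tests for a univariate problem over the compact $K_j$: setting $\gamma_j(\delta):=\inf\{\dist_{KS}(G_{\bvartheta_j},G_{\bvartheta_{0,j}}):\bvartheta_j\in K_j,\ \Vert\bvartheta_j-\bvartheta_{0,j}\Vert_\infty>\delta/d\}>0$ (positive by continuity and injectivity of $\bvartheta_j\mapsto G_{\bvartheta_j}$ on the compact alternative), I would take $t_{n,j}=\indic\{\dist_{KS}(\hat{F}_{n,j},G_{\bvartheta_{0,j}})>\gamma_j(\delta)/2\}$ with $\hat{F}_{n,j}$ the empirical marginal distribution function; the Dvoretzky--Kiefer--Wolfowitz inequality then bounds both error probabilities by $2e^{-n\gamma_j(\delta)^2/2}$ uniformly over the alternative, yielding Condition~\ref{cond: newcond}\ref{margintest} with $c_j(\delta)=\gamma_j(\delta)^2/2$. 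The hard part throughout is the unknown location $\bmu$, the \emph{moving upper endpoint} of the support: the model is non-regular, $\kulb$ jumps to $+\infty$ across $\mu_j=\mu_{0,j}$ (forcing the one-sided analysis in (i)), the Hellinger map is only H\"older in $\bmu$ near the endpoint for $\omega_j\in(1,2)$, and every continuity, domination and test estimate must be uniform up to the endpoint — which is exactly what $\omega_j>1$ (endpoint density vanishing like $(\mu_j-x_j)^{\omega_j-1}$ rather than exploding) and compactness of the $K_j$ are imposed to secure. I expect the uniform endpoint estimates, together with the verification that the one-sided Kullback--Leibler neighbourhood carries positive prior mass, to be the delicate points; the remainder adapts the machinery built for Theorems~\ref{theo:post_consistency_mvt} and~\ref{th:alpha_frec}.
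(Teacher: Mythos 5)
Your proposal is correct and follows the same architecture as the paper: Theorem \ref{theo: cons_weibull} is proved by verifying Condition \ref{cond: newcond} and invoking the general consistency machinery (Proposition \ref{prop:gen_cons} in the supplement), with the Kullback--Leibler support obtained exactly as you describe --- via \emph{one-sided} neighbourhoods $\bmu>\bmu_0$ forced by the moving endpoint (Lemmas \ref{lem:vweib}--\ref{lem:divweib} and Corollary \ref{lem: omega}), the sieve taken as the union of Bernstein classes up to degree $\asymp n^{1/(d-1)}$ crossed with the compact $\prod_j K_j$, and the entropy controlled by a H\"older-$\tfrac12$ Hellinger bound in $(\bomega,\bsigma,\bmu)$ (Lemma \ref{lem:entropyweib}; the paper gets $\dist_H^2\lesssim k\Vert\bmu-\widetilde\bmu\Vert_1$ via the one-sided KL rather than your exponent $\omega_j\wedge\omega_j'$, but any positive exponent suffices here). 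The one genuine divergence is the marginal tests: the paper does \emph{not} reuse the Kolmogorov--Smirnov/DKW device of Lemma \ref{lem: tests}; instead Lemma \ref{lem: tests_weib} verifies total-variation continuity of $(\omega,\sigma,\mu)\mapsto G_{\omega,\sigma,\mu}$ on $K_j$ and appeals to Lemmas 10.3 and 10.6 of van der Vaart to turn uniformly consistent estimators on the compact into exponentially consistent tests. Your direct route --- a positive KS-separation constant $\gamma_j(\delta)$ over the compact alternative (positive by injectivity and $\dist_{KS}$-continuity, the latter following from P\'olya's theorem since all the CDFs are continuous) plus the DKW inequality --- is equally valid and arguably more elementary, yielding the explicit exponent $c_j(\delta)=\gamma_j(\delta)^2/2$; what the paper's route buys is a template that does not require computing a metric separation in a specific distance. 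One caveat applies equally to both arguments: positivity of the prior mass on the one-sided set $\{\bmu_0<\bmu<\bmu_0+\delta_3\bone\}$ does not literally follow from ``positive mass on every neighbourhood of $\bvartheta_0$'' in Condition \ref{cond:genprior}\ref{cond:compactprior}, so this step deserves the same explicit justification in your write-up as it implicitly receives in the paper's (omitted) proof of Corollary \ref{lem: omega}.
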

\begin{rem}
Analogously to the other consistency theorems of this section, Theorem \ref{theo: cons_weibull} is proved by verifying Condition \ref{cond: newcond}.
The restriction $\bomega>\bone$ imposed to shape parameters rules out some $\bomega$-Weibull max-stable densities for which the construction of Kullback-Leibler neighbourhoods is intractable.
%
%
We point out that such a restriction is a quite common practice in extreme-value applications
\citep[e.g.,][page 725]{r1001}.
The three-parmeter Weibull distribution is an iregular model and testing $(\omega_{0,j},\sigma_{0,j},\mu_{0,j})$, for $j\in\{1,\ldots,d\}$, against the complement of a large compact neighbourhood with exponentially bounded errors remains an open problem. Herein, this issue is circumvented using a prior $\Pi_{\bvarTheta}$ with compactly supported margins. 
Exponentially consistent tests on the marginal parameters (Condition \ref{cond: newcond}\ref{margintest}) can be constructed over compact parametric subspaces starting from preliminary uniformly consistent estimators  
\cite[Lemmas 10.3 and 10.6]{r999}. 
In practice, confining prior specification on shape, scale and location parameters to an arbitrarily large compact set is hardly a restriction, as in applied sciences the physically reasonable ranges for the parameters are often finite. See also  \cite[page 845]{r800} for similar considerations concerning maximum likelihood inference for the univariate generalised extreme-value distribution, where  maximization is restrained over compact parametric subspaces.
%
%
%
\end{rem}
Finally, we consider the case where the data come from Gumbel max-stable distribution, with unknown scale and location parameters $\bsigma=(\sigma_1,\ldots,\sigma_d)$ and $\bmu=(\mu_1,\ldots,\mu_d)$, respectively. The pertaining  density function is
\begin{equation}\label{eq:densgumb}
g_{\bsigma,\bmu}(\bx|H)=\prod_{j=1}^d \frac{e^{(x_j-\mu_j)/\sigma_j}}{\sigma_j}g_{\bone}\left(e^{(x_1-\mu_1)/\sigma_1}, \ldots, e^{(x_d-\mu_d)/\sigma_d}\bigg{|}H\right), \quad \bx \in \mathbb{R}^d.
\end{equation} 
A prior $\Pi_{\bvarTheta}$ is now specified on $(\bsigma,\bmu)$, yielding a prior $\Pi_{\Gset_{\bvarTheta}}$ on the max-stable class $\Gset_{\bvarTheta}=\{g_{\bsigma,\bmu}(\cdot|H):\, H \in \Hset, (\bsigma,\bmu)\in (0,\infty)^d \times \reald \}$.
\begin{theorem}\label{cor:Gumbel_cons}
		Let $\bX_1,\ldots,\bX_n$ be iid rv's with distribution $G_{\bsigma_0,\bmu_0}(\cdot|H_0)$, where 
		$(\bsigma_0,\bmu_0)\in(0,\infty)^d \times \reald$.
		%
	Then, under Conditions \ref{cond:genprior}\ref{cond:indep}--\ref{cond:angularpmprior} and \ref{cond:genprior}\ref{cond:scaleloc},  $\iprodGzexpand-\text{as}$
	\begin{itemize}
		\item[(a)] $\lim_{n \to \infty}\tilde{\Pi}_n(\tilde{\mathcal{U}}^\complement)=0$, for every $\dist_H$-neighbourhood $\tilde{\mathcal{U}}$ of $g_{\bsigma_0,\bmu_0}(\cdot|H_0)$;
		\item[(b)] $
		\lim_{n\to \infty}\dist_H(\hat{g}_n,g_{\bsigma_0,\bmu_0}(\cdot|H_0))=0$;
		\item[(c)] $\lim_{n\to\infty}\Pi_n((\mathcal{U}_1\times\mathcal{U}_2)^\complement)=0$, for every $\dist_W$-neighborhood (if $d\geq 2$) or $\dist_{KS}$-neighborhood $\mathcal{U}_1$ of $H_0$ (if d=2) and $L_1$-neighborhood $\mathcal{U}_2$ of $(\bsigma_0,\bmu_0)$.
		\end{itemize}
		%

	\end{theorem}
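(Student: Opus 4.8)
\emph{Strategy.} I would prove Theorem~\ref{cor:Gumbel_cons} by the device used for Theorems~\ref{th:alpha_frec}, \ref{cor: cons_weibull} and \ref{theo: cons_weibull}: verify Condition~\ref{cond: newcond} for the Gumbel model $\{G_{\bsigma,\bmu}(\cdot|H):H\in\Hset,(\bsigma,\bmu)\in(0,\infty)^d\times\reald\}$ and then read off conclusions (a)--(c) from the generic inequalities \eqref{eq:mainbound}--\eqref{eq:expobound} and the Borel--Cantelli lemma. Three things must be supplied: the Kullback--Leibler support property of the induced prior $\Pi_{\Gset_{\bvarTheta}}$ at $g_{\bsigma_0,\bmu_0}(\cdot|H_0)$; a sieve $\Gset_{\bvarTheta,n}$ with $\Pi_{\Gset_{\bvarTheta}}(\Gset_{\bvarTheta,n}^\complement)\lesssim e^{-rn}$ carrying a density-level test $s_n$; and exponentially consistent marginal tests $t_{n,j}$. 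As in Theorem~\ref{theo:post_consistency_mvt}, consistency on $\Hset$ comes out under $\dist_W$ in general and under $\dist_{KS}$ when $d=2$ (via the Sobolev-type relation and Polya's theorem recalled after Theorem~\ref{theo:post_consistency_mvt}), while part (b) follows from \cite[Ch.~6.8.3]{r10}.

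\emph{Kullback--Leibler support.} The key point is that, by \eqref{eq:densgumb}, $g_{\bsigma,\bmu}(\cdot|H)$ is the density of $\big(\mu_1+\sigma_1\log Y_1,\ldots,\mu_d+\sigma_d\log Y_d\big)$ for $\bY\sim G_{\bone}(\cdot|H)$, i.e. the Gumbel family is a smooth marginal location--scale transform of the simple max-stable family on the log scale. Under Condition~\ref{cond:genprior}\ref{cond:angularpmprior}, Theorem~\ref{theo:KL_prior_multi} (used exactly as in the proof of Theorem~\ref{theo:post_consistency_mvt}) shows that $\Pi_{\Hset}$ charges $\{H:\dist_{\infty}(h,h_0)<\xi\}$ for every $\xi>0$ and that this makes $\kulb\big(g_{\bone}(\cdot|H_0),g_{\bone}(\cdot|H)\big)$ small; invariance of $\kulb$ under the common transformation $\bx\mapsto\big(\mu_{0,j}+\sigma_{0,j}\log x_j\big)_{j}$ transfers this to $\kulb\big(g_{\bsigma_0,\bmu_0}(\cdot|H_0),g_{\bsigma_0,\bmu_0}(\cdot|H)\big)$. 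It then remains to perturb the finite-dimensional parameter, i.e. to show that $\int g_{\bsigma_0,\bmu_0}(\cdot|H_0)\log\{g_{\bsigma_0,\bmu_0}(\cdot|H)/g_{\bsigma,\bmu}(\cdot|H)\}$ tends to $0$ as $(\bsigma,\bmu)\to(\bsigma_0,\bmu_0)$, uniformly over a $\dist_{\infty}$-neighbourhood of $h_0$; since $\Pi_{\Hset\times\bvarTheta}=\Pi_{\Hset}\times\Pi_{\bvarTheta}$ (Condition~\ref{cond:genprior}\ref{cond:indep}) with $\Pi_{\bvarTheta}$ of full support on $(0,\infty)^d\times\reald$ (Condition~\ref{cond:genprior}\ref{cond:scaleloc}), a product argument then gives $\Pi_{\Gset_{\bvarTheta}}(\mathcal{K}_\epsilon)>0$ for every $\epsilon>0$, i.e. the Kullback--Leibler property of Definition~\ref{defi:KL}.

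\emph{Sieve and tests.} Take $\Gset_{\bvarTheta,n}=\{g_{\bsigma,\bmu}(\cdot|H_k):H_k\in\Hset_k,\ k\le k_n,\ \|(\bsigma,\bmu)-(\bsigma_0,\bmu_0)\|_1\le\delta\}$ with $k_n=\lfloor(\beta n)^{1/(d-1)}\rfloor$ for a small $\beta>0$. By Condition~\ref{cond:angularprior}\ref{cond: mvt th2}, $\Pi_{\Gset_{\bvarTheta}}(\Gset_{\bvarTheta,n}^\complement)\le\sum_{k>k_n}\lambda(k)\lesssim\exp(-qk_n^{d-1})\le e^{-rn}$ with $r=q\beta>0$. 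Combining the Hellinger entropy bound for the Bernstein-polynomial sieve from the proof of Theorem~\ref{theo:post_consistency_mvt} with the fact that $(\bsigma,\bmu)$ ranges over a compact set, one gets $\log N(\xi,\Gset_{\bvarTheta,n},\dist_H)\lesssim k_n^{d-1}\log(1/\xi)\lesssim\beta n\log(1/\epsilon)$, which is below any prescribed multiple of $n\epsilon^2$ once $\beta$ is small; the standard construction of exponentially powerful Hellinger tests from an entropy bound (cf. \cite[Ch.~6]{r10}) then yields $s_n$ obeying Condition~\ref{cond: newcond}\ref{dentest}. For $t_{n,j}$, the $j$-th margin of $G_{\bvartheta}(\cdot|H)$ is the regular univariate Gumbel location--scale law $G_{\sigma_j,\mu_j}$; starting from a location--scale equivariant, uniformly consistent estimator of $(\sigma_j,\mu_j)$ (e.g. built from two sample quantiles) one obtains, as in \cite[Lemmas~10.3 and 10.6]{r999}, tests with type-I and type-II errors $\lesssim e^{-nc_j(\delta)}$ \emph{uniformly over the whole non-compact parameter space}, the separation growing without bound as $(\sigma_j,\mu_j)$ leaves the $\delta/d$-ball around $(\sigma_{0,j},\mu_{0,j})$. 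This verifies Condition~\ref{cond: newcond}.

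\emph{Main obstacle.} The delicate step is the Kullback--Leibler bound above. Unlike the $\brho$-Fr\'echet and $\bomega$-Weibull densities, which are one-sided, the Gumbel max-stable density is supported on all of $\reald$, so the domination needed to push $\int g_{\bsigma_0,\bmu_0}(\cdot|H_0)\log\{g_{\bsigma_0,\bmu_0}(\cdot|H)/g_{\bsigma,\bmu}(\cdot|H)\}$ to $0$ must control the log-ratio both as $x_j\to-\infty$, where the relevant marginal factors behave like $e^{(x_j-\mu_j)/\sigma_j}$, and as $x_j\to+\infty$, where they exhibit doubly-exponential decay, along every partition block in the expansion \eqref{eq:density_unit_fre} and uniformly over a joint neighbourhood of $(\bsigma_0,\bmu_0,H_0)$. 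Everything else is a routine transcription of the arguments already used for Theorems~\ref{th:alpha_frec}, \ref{cor: cons_weibull} and \ref{theo: cons_weibull}.
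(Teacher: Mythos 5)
Your high-level template (verify Condition~\ref{cond: newcond} and invoke the generic machinery of \eqref{eq:mainbound}--\eqref{eq:expobound}) matches the paper's, but the step you explicitly flag as the ``main obstacle'' --- controlling $\kulb\big(g_{\bsigma_0,\bmu_0}(\cdot|H_0),g_{\bsigma,\bmu}(\cdot|H)\big)$ over the two-sided support $\reald$ --- is left unresolved in your write-up, and it is precisely the point where the paper takes a different and decisive route. The paper does not redo the tail analysis for the Gumbel density at all: it observes that if $\bX\sim G_{\bsigma,\bmu}(\cdot|H)$ then $e^{\bX}$ follows a $\brho^*$-Fr\'echet max-stable law with $\rho_j^*=\sigma_j^{-1}$ and scale $\sigma_j^*=e^{\mu_j}$, so that \emph{every} positive Kullback--Leibler divergence, every marginal Kolmogorov--Smirnov distance, and every Hellinger distance in the Gumbel model equals the corresponding quantity in a reparametrised Fr\'echet model. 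All three ingredients of Condition~\ref{cond: newcond} are then imported wholesale from the Fr\'echet analysis (Lemmas~\ref{cor: alpha_kulb}, \ref{lem: tests} and \ref{aux: n2}). You notice the log-scale location--scale structure but only use it to transfer the $H$-perturbation; had you pushed the same change of variables through the $(\bsigma,\bmu)$-perturbation you would have found that the ``delicate step'' is already Lemma~\ref{cor: alpha_kulb}, with no new tail estimates needed.

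Two further points where your sketch is thinner than what is required. First, your entropy bound for the sieve asserts that compactness of the $(\bsigma,\bmu)$-range suffices, but one actually needs a quantitative modulus of continuity of $(\bsigma,\bmu,H_k)\mapsto g_{\bsigma,\bmu}(\cdot|H_k)$ in Hellinger distance (the analogue of Lemma~\ref{aux: n2}, which the paper obtains again by the Fr\'echet reduction in its Fact~3); without it the covering number of the sieve in $\dist_H$ is not controlled by the covering number of the coefficient and parameter spaces. Second, your marginal tests cite \cite[Lemmas~10.3 and~10.6]{r999}, which deliver uniformly consistent tests only over \emph{compact} alternatives --- this is exactly why the $\bomega$-Weibull case in Theorem~\ref{theo: cons_weibull} restricts the prior to compactly supported margins. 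For the non-compact alternative $\Vert\bvartheta_j-\bvartheta_{0,j}\Vert_\infty>\delta/d$ needed here, the paper instead shows directly (Lemma~\ref{lem: tests}, transferred via its Fact~2) that the Kolmogorov--Smirnov separation between $G_{\sigma_{0,j},\mu_{0,j}}$ and $G_{\sigma_j,\mu_j}$ is bounded away from zero on the whole complement of the ball, and then uses the Dvoretzky--Kiefer--Wolfowitz-type inequality; your appeal to the compactness-based lemmas does not by itself justify the claimed uniform exponential error bounds.
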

%

\section{Max-stable distributions' neighbourhoods}\label{sec:binf_sample_max}
Consistency of a Bayesian procedure
requires the posterior distribution to allow increasingly accurate inferences on the true parameter
under study, as the sample size grows larger.
In infnite-dimensional settings, the related asymptotic theory is established mostly in the case of well specified statistical models, where the notion of posterior consistency is formalised as done, for example, in Definition \ref{def:cons} for simple max-stable data. 
%
%
In practice, statistical applications of max-stable models are concerned with data samples of maxima, whose actual distribution only lies in a neighbourhood of a max-stable one. 
%
%
To the best of our knowledge, there is no study offering a rigorous mathematical justification to Bayesian inference on max-stable models when the data sample consists of maxima following only approximately a max-stable distribution. 
%
%
The goal of this section is to provide conditions under which the 
Bayesian 
approach produces posterior distributions  which concentrate near the appropriate max-stable density and
leads to  consistent estimation of the true probability density of maxima.
%
%
We resort to the use of a data-dependent prior, capitalising on the existing literature on frequentist inference on max-stable distributions, and propose guidelines for a new hybrid Bayesian approach.
%
%
We use asymptotic techniques stemming from remote contiguity, devised to bridge the limiting observational model and the actual data generating distribution, which are new to the Bayesian domain. These are of independent interest for further applications to misspecified models, also beyond the extreme values context.
	
\subsection{Empirical Bayes analysis of maxima}\label{eq: EB}

The posterior distributions $\tilde{\Pi}_n$ and  $\Pi_n$ considered so far are conditional pm's depending on a dataset coming from max-stable model with true unknown marginal and dependence parameters.
We now assume that the data sample consists of $n$ rv's of componentwise maxima
$$
\bM_{m_n,i}=\max(\bZ_{(i-1)m_n+1}, \ldots,\bZ_{im_n}),\quad i=1,\ldots,n, 
$$   
obtained by dividing a sample $(\bZ_1, \ldots, \bZ_{nm_n})$ of $nm_n$ iid rv's with distrution $F_0$ into blocks of size $m_n$. Consequently, $\bM_{m_n,1},\ldots,\bM_{m_n,n}$ are iid rv's with distribution $F_0^{m_n}$. We assume that $F_0$ is in the \textit{variational max-domain} of $G_{\bvartheta_0}   (\cdot|H_0)$, that is
\begin{equation}\label{eq:strongconv}
\lim_{n\to \infty} \dist_T\left(F_0^{m_n}(\ba_{m_n}\cdot+\bb_{m_n}), G_{\bvartheta_0}   (\cdot|H_0)\right)=0
\end{equation}
for suitable norming sequences $\ba_{m_n}$ and $\bb_{m_n}$, where $G_{\bvartheta_0}(\cdot|H_0)$ is known to be either a $\brho$-Fr\'{e}chet or $\bomega$-Weibull or Gumbel  multivariate max-stable distribution, with unit scale and null location parameters. 
Precisely, 
we assume $m_n \to \infty$ as $n \to \infty$, which avoids considerations about double limits,
and
rely on the following set of conditions.
\begin{condition}\label{cond:strong}
$F_0 \in \mathcal{D}(G_{\bvartheta_0}(\cdot|H_0))$ and $F_0(\bz)=C_0(F_{0,1}(z_1), \ldots, F_{0,d}(z_d))$, where
	\begin{inparaenum} 
	\item\label{cond:copdiff} $C_0$ is a $d$-times contiuously on $(0,1)^d$ differentiable copula,  which satisfies
			$$
			\lim_{n \to \infty}\frac{\partial^{|I|}}{(\partial x_i, i \in I)} m_n \left( 
			C_0\left( \bone -\frac{\bx}{m_n} \right)-1
			\right)=-\frac{\partial^{|I|}}{(\partial x_i, i \in I)} L(\bx|\theta_0), \,\bx>0,
			$$ 
			for all $I \subset \{1, \ldots,d\}$, and the stable tail-dependence function pertaining to $G_{\bvartheta_0}(\cdot|H_0)$,  $L(\cdot|H_0)$, is assumed $d$-times continuously differentiable on $(0,\infty)^d$;
	\item \label{margins}$F_{0,j}$ with $j\in\{1,\ldots,d\}$ are continuously differentiable and satisfy one of the following	
	\begin{eqnarray*}
	&&\lim_{z \to \infty}\frac{z F_{0,j}'(z)}{1-F_{0,j}(z)}=\rho_{0,j}, 
	\hspace{9.8em} \text{if  } \bvartheta_0=(\brho_0, \bone),\\
	&&\lim_{z \uparrow z_{0,j}}\frac{(z_{0,j}-z)F_{0,j}'(z)}{1-F_{0,j}(z)}=-\omega_{0,j}, \hspace{6.6em}  \text{if  } \bvartheta_0=(\bomega_0, \bone, \bzero),\\
	&&\lim_{z \uparrow z_{0,j}}\frac{F_{0,j}'(z)}{\left( 1-F_{0,j}(z)\right)^2}\int_z^{z_{0,j}}(1-F_{0,j}(t))\diff t=1,
	\quad  \text{if  } \bvartheta_0= (\bone, \bzero),\\
	\end{eqnarray*}	
	with $z_{0,j}:=\sup\{z \in \mathbb{R}: F_{0,j}(z)<1\}$ and $\brho_0, \bomega_0> \bzero$. If $F_0 \in \mathcal{D}(G_{\brho_0, \bone}(\cdot|H_0))$, without loss of generality, we also assume $\text{supp}(F_{0,j})\subset (0,\infty)$, $j=1, \ldots,d$.
		\end{inparaenum}	
\end{condition}
Condition \ref{cond:strong} implies the convergence result in \eqref{eq:strongconv} \cite[Corollary 3.1]{r32}. The latter represents a stronger form of convergence than \eqref{eq:doa}. Under the above reguilarity conditions, it guarantees that the probability density of the rv's $$
\overline{\bM}_{m_n,i}:=(\bM_{m_n,i}-\bb_{m_n})/\ba_{m_n}, \quad i=1, \ldots, n,
$$ 
is in 
a Hellinger-neighborhood of $g_{\bvartheta_0}(\cdot|H_0)$, for all sufficiently large block sizes $m_n$. 
Without loss of generality, in the sequel we  consider the valid choices of $\ba_{m_n}$ and $\bb_{m_n}$ given by 
\begin{equation}\label{eq:norming}
\begin{split}
a_{m_n,j}, \, b_{m_n,j} &= \begin{cases}
F_{0,j}^{\leftarrow}(1-1/m_n), \hspace{5.3em}  0 \hspace{6.95em}  \text{if  } \bvartheta_0=(\brho_0, \bone)\\
z_{0,j}-F^\leftarrow_{0,j}(1-1/m_n), \hspace{2.3em} z_{0,j} \hspace{6.2em} \text{if  } \bvartheta_0=(\bomega_0, \bone, \bzero) 	\\
	m_n \int_{b_{m_n,j}}^{z_{0,j}}(1-F_{0,j}(z))\diff z, 	\hspace{1em} F_{0,j}^{\leftarrow}(1-1/m_n)	
 \hspace{1em}  \text{if  } \bvartheta_0= ( \bone, \bzero)
\end{cases}\\
\end{split}
\end{equation}
for $j=1,\ldots,d$. These are generally unkown, thus the sequence 
$
\overline{\bM}_{m_n,1:n}\equiv
(\overline{\bM}_{m_n,i})_{i=1}^n
$ 
is not directly available for approximate Bayesian inference on the limiting max-stable model. A common practice in extreme-value analysis is to fit a max-stable model directly to 
unnormalised maxima
ecompassing scale and location parameters, whose estimates ultimately absorbe $\ba_{m_n}$ and $\bb_{m_n}$  \citep[e.g,][]{r81}.
Following this approach, we consider the case where a misspecified semiparametric max-stable model as in Section \ref{sec:binf_general_max} is fitted to sample maxima, but replace the prior $\Pi_{\bvarTheta}$ on the finite dimensional model component with a data dependent prior sequence $\Psi_{n}$ of the following general form:
\begin{equation}\label{eq: psin}
\begin{split}
&\diff \Psi_n(\bvartheta)\propto\\
&\quad \begin{cases}
\diff\Pi_{\text{sh}}(\brho) \times \prod_{j=1}^d \pi_{\text{sc}} \left(\frac{\sigma_j}{\widehat{\sigma}_{n,j}}\right)\frac{\diff \sigma_j}{\widehat{\sigma}_{n,j}}, \hspace{12.8em} \text{if  } \bvartheta=(\brho, \bsigma)\\
\diff\Pi_{\text{sh}}(\bomega) \times \prod_{j=1}^d \pi_{\text{sc}} \left(\frac{\sigma_{n,j}}{\widehat{\sigma}_{n,j}}\right)\frac{\diff \sigma_j}{\widehat{\sigma}_{n,j}}
\times \prod_{j=1}^d  \pi_{\text{loc}}\left(
\frac{\mu_j-\widehat{\mu}_{n,j}}{\widehat{\sigma}_{n,j}}\right)\frac{\diff \mu_j}{\widehat{\sigma}_{n,j}}
, \hspace{0.85em} \text{if  } \bvartheta=(\bomega, \bsigma, \bmu) \\
\prod_{j=1}^d \pi_{\text{sc}} \left(\frac{\sigma_j}{\widehat{\sigma}_{n,j}}\right)\frac{\diff \sigma_j}{\widehat{\sigma}_{n,j}}
\times \prod_{j=1}^d  \pi_{\text{loc}}\left(
\frac{\mu_j-\widehat{\mu}_{n,j}}{\widehat{\sigma}_{n,j}}\right)\frac{\diff \mu_j}{\widehat{\sigma}_{n,j}}, \hspace{5.7em} \text{if  } \bvartheta=(\bsigma, \bmu) 
\end{cases}
\end{split}
\end{equation}
where $\widehat{\bsigma}=(\widehat{\sigma}_{n,1}, \ldots, \widehat{\sigma}_{n,d})$ and $\widehat{\bmu}=(\widehat{\mu}_{n,1}, \ldots, \widehat{\mu}_{n,d})$ are estimators of $\ba_{m_n}$ and $\bb_{m_n}$, $\Pi_{\text{sh}}$ is a pm with full support on a suitable subset of $(0, \infty)^d$ and $\pi_{\text{sc}}$ and $\pi_{\text{loc}}$ are Lebesgue probability densities whose properties are made precise in the following shares. Since priors on scale and location parameters should now incorporate information on the norming sequences, which is typically not available a priori, a genuinely subjective specification is hardly viable. In such a case, a data driven prior selection, also known as \textit{empirical Bayes}, is a popular approach in Bayesian analysis \citep[e.g.,][Sections 1 and 3]{r501}.
In this setup, we are interested in establishing 
asymptotic concentration properties of the pseudo-posterior distribution defined via
\begin{equation}\label{eq:pseudopost}
\Pi_n^{\scalebox{0.65}{\text{(o)}}}(B):=\frac{\int_B \prod_{i=1}^n g_\bvartheta(\bM_{m_n,i}|H)\diff(\Pi_{\Hset}\times\Psi_n)(H,\bvartheta)}{\int_{\Hset\times\bvarTheta} \prod_{i=1}^n g_\bvartheta(\bM_{m_n,i}|H)\diff(\Pi_{\Hset}\times\Psi_n)(H,\bvartheta)},
\end{equation}
for all $\Pi_{\Hset}\times \Psi_n$-measurable sets $B$, and of its counterpart $\tilde{\Pi}_n^{\scalebox{0.65}{\text{(o)}}}$ on the corresponding class of max-stable densities $\Gset_{\bvarTheta}$, where the superscript $\text{(o)}$ denotes dependence on observables. Moreover, we aim at establishing Hellinger consistency of the pseudo-predictive density $\hat{g}_n^{\scalebox{0.65}{(o)}}(\bx)=\int_{\Gset_{\bvarTheta}}g(\bx) \, \diff \postobsdens(g)$ as an estimator of the true probability density of unnormalised maxima  $
f_{m_n}^{\scalebox{0.65}{(o)}}(\bx)=(\partial^d/\partial \bx)F_0^{m_n}(\bx).
$
We point out that, under Condition \ref{cond:strong}, $f_{m_n}^{\scalebox{0.65}{(o)}}$ becomes topologically undistinguishable from the density of $G_{\bvartheta_0}((\cdot-\bb_{m_n})/\ba_{m_n}|H_0)$ as $n \to \infty$. Thus, we ultimately aim at showing that $\tilde{\Pi}_n^{\scalebox{0.65}{\text{(o)}}}$ cumulates an increasingly large fraction of its total mass near the latter.
%

\subsection{Reparametrisation, remote contiguity}\label{sec:repar}

To accomplish the objective above, we firstly provide an alternative representation of the posterior distributions under study. A change of variables in the integrals in \eqref{eq:pseudopost}
\begin{equation}\label{eq:reparam}
(H,\bvartheta) \mapsto \psi_n(\bvartheta)= 
\begin{cases}
(H,\brho, \bsigma/\ba_{m_n}), \hspace{8.4em} \text{if  } \bvartheta=(\brho, \bsigma),\\
(H,\bomega,  \bsigma/\ba_{m_n}, \{\bmu- \bb_{m_n}\}/\ba_{m_n}),\quad \text{if  } \bvartheta=(\bomega, \bsigma, \bmu),\\
(H,  \bsigma/\ba_{m_n}, \{\bmu- \bb_{m_n}\}/\ba_{m_n}),\hspace{2.2em} \text{if  } \bvartheta=( \bsigma, \bmu),\\\end{cases}
\end{equation}
corresponding to a change of parametrisation, yields the equality $\Pi_n=\postobs\circ \psi_n^{-1}$, where $\Pi_n$ is the pseudo-posterior defined via
\begin{equation}\label{eq:pseudopost}
\Pi_n(B):=\frac{\int_B \prod_{i=1}^n g_\bvartheta(\overline{\bM}_{m_n,i}|H)\diff(\Pi_{\Hset}\times\overline{\Psi}_n)(H,\bvartheta)}{\int_{\Hset\times\bvarTheta} \prod_{i=1}^n g_\bvartheta(\overline{\bM}_{m_n,i}|H)\diff(\Pi_{\Hset}\times\overline{\Psi}_n)(H,\bvartheta)},
\end{equation}
for every $\Pi_{\Hset}\times \overline{\Psi}_n$-measurable set $B$, where $\overline{\Psi}_n=\Psi_n\circ \psi_n^{-1}$. The latter also induces a pseudo posterior $\tilde{\Pi}_n$ on a corresponding class of max-sable densities which is in turn linked to $\postobsdens$ via the relation $\tilde{\Pi}_n=\postobsdens\circ \tilde{\psi}_n^{-1}$, where
\begin{equation}\label{eq:densrepar}
\tilde{\psi}_n \left( g_\bvartheta(\cdot|H) \right)= g_{\psi_n(\bvartheta)}(\cdot|H).
\end{equation}
Consequently, the required asymptotic analysis boils down to establishing consistency of $\Pi_n$ and $\tilde{\Pi}_n$ almost surely (or in probability) at $(H_0, \bvartheta_0)$ and $g_{\bvartheta_0}(\cdot|H_0)$, respectively, with respect to $Q_n$, the joint pm of $\overline{\bM}_{m_n,i:n}$. 
We stress that herein $\Pi_n$ and $\tilde{\Pi}_n$ are mathematical devices introduced to enable the study of the asymptotic behaviour of $\postobs$ and $\postobsdens$, but are not practical for statistical inference, as they depend on unobservables.
Under appropriate assumptions on the data dependent prior, $\Pi_n$ and $\tilde{\Pi}_n$ satisfy an inequality like \eqref{eq:mainbound}. Therefore, consistency can be obtained by establishing a variant of \eqref{eq:expobound} where the sample of normalised maxima $\overline{\bM}_{m_n,1:n}$ replaces the sample $\bX_{1:n}$ and the joint distribution of the former, $Q_n$, replaces the joint distribution of the latter, $\prodGtHtrue(\cdot|H_0)$. To do this, we resort to a specific form of remote-contiguity.

Recently, \cite{r12} has introduced a generalised form of contiguity.
%
The classical notion of contiguity can be successfully exploited to establish weak consistency (i.e. in probability) of the pseudo-posterior distribution with parametric limiting models \citep[e.g.,][]{r80}, while is unsuitable to obtain strong consistency and less accessible for nonparametric models.
See \cite[Section 3]{r12} for a comprehensive account.

\begin{definition}
Consider two sequences $\upsilon_n ,\tau_n>0$ such that as $n\to\infty$, $\upsilon_n,\tau_n \to 0$. 
As $n \to \infty$, $Q_{n}$ is said to be:
\begin{enumerate}
\item[i.] contiguous with respect to $\prodGtHtrue(\cdot|H_0)$ if, for a sequence of measurable events $E_n$, $\prodGtzerE=o(1) \implies Q_n(E_n)=o(1)$; 
\item[ii.] $\upsilon_n$-remotely contiguous with respect to $\prodGtHtrue(\cdot|H_0)$, if $\prodGtzerE=o(\upsilon_n)\implies$ $Q_n(E_n)=o(1)$;
\item[iii.] $\upsilon_n$-to-$\tau_n$-remotely contiguous with respect to $\prodGtHtrue(\cdot|H_0)$, if $\prodGtzerE=o(\upsilon_n) \implies Q_n(E_n)=o(\tau_n)$,
\end{enumerate}
where  ``$\implies$" denotes the usual implication symbol.
\end{definition}
Observe that remote contiguity prescribes specific forms of agreement between the joint law $Q_n$ of the pseudo-sample $\overline{\bM}_{m_n,1:n}$ and the joint law of a random sample of size $n$ from the limiting max-stable model. On the other hand, Condition \ref{cond:strong} only provides asymptotic guarantees on the behaviour of a single rv $\overline{\bM}_{m_n,i}$. 
We thus assume the following additional condition, allowing to lift the approximation in \eqref{eq:strongconv} to remote contiguity results for $Q_n$.
\begin{condition}\label{cond:densratio}
Let $f_{m_n}$ denote the density of $F_0^{m_n}(\ba_{m_n}\cdot+\bb_{m_n})$, then assume that there exists $n_0\in \nat_+$ and $J_0 \in(0,\infty)$ such that $\sup_{n \geq n_0}\Vert f_{m_n}/g_{\bvartheta_0}(\cdot|H_0)\Vert_\infty<J_0$.
\end{condition}
Conditions \ref{cond:strong} and \ref{cond:densratio} imply that the Kullback-Leibler divergence from the true limiting max-stable density
to 
the density of normalised maxima
is asymptotically null and the positive Kullback-Leibler variations between the two up to the fourth order \citep[e.g.,][equation (B.2)]{r10} are bounded uniformly over all large enough 
$n$. This yields polynomial tail bounds for the associated normalised log likelihood ratios, which underpin the general remote contiguity result established below (the proof is reported in Section \ref{sec:RCproof} of the supplement).
\begin{prop}\label{prop:RC}
	Let $F_0$ and $G_{\bvartheta_0}(\cdot|H_0)$ satisfy Conditions \ref{cond:strong} and \ref{cond:densratio}.
	Then $Q_{n}$ is $e^{-cn}$-to-$n^{-1-c'}$-remotely contiguous with respect to $\prodGtHtruealt(\cdot|H_0)$, $\forall c>0$, $\forall c'\in (0,1)$.
\end{prop}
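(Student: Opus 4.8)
The plan is to establish the remote contiguity via the classical device of splitting an event according to the size of the likelihood ratio, which reduces the problem to a polynomial large-deviation bound for the normalised log-likelihood ratio between the density $f_{m_n}$ of normalised maxima and the limiting density $g_{\bvartheta_0}(\cdot|H_0)$. Write $P_n:=\prodGtHtrue(\cdot|H_0)$ for the $n$-fold product of $g_{\bvartheta_0}(\cdot|H_0)$ and recall that $Q_n$ is the $n$-fold product of $f_{m_n}$. Fix $c>0$, $c'\in(0,1)$ and a sequence of measurable events $E_n$ with $P_n(E_n)=o(e^{-cn})$; we must show $Q_n(E_n)=o(n^{-1-c'})$. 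By Condition \ref{cond:densratio}, for $n\geq n_0$ we have $f_{m_n}\leq J_0\,g_{\bvartheta_0}(\cdot|H_0)$, so $Q_n\ll P_n$ with $\diff Q_n/\diff P_n=\exp(S_n)$, $S_n:=\sum_{i=1}^n\xi_{n,i}$ and $\xi_{n,i}:=\log\{f_{m_n}(\overline{\bM}_{m_n,i})/g_{\bvartheta_0}(\overline{\bM}_{m_n,i}|H_0)\}$. Writing $Q_n(E_n)=\int_{E_n}e^{S_n}\diff P_n$ and splitting on $\{S_n\leq\log t\}$ versus $\{S_n>\log t\}$ gives, for any $t>0$, $Q_n(E_n)\leq t\,P_n(E_n)+Q_n(S_n>\log t)$. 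Choosing $t=e^{cn/2}$ makes the first term $o(e^{-cn/2})=o(n^{-1-c'})$, so it remains to show $Q_n(S_n>cn/2)=o(n^{-1-c'})$.

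Under $Q_n$ the $\xi_{n,1},\dots,\xi_{n,n}$ are i.i.d.\ with mean $E_{Q_n}\xi_{n,i}=\kulb(f_{m_n},g_{\bvartheta_0}(\cdot|H_0))=:K_n$. From the consequences of Conditions \ref{cond:strong}--\ref{cond:densratio} recorded above — the vanishing of this Kullback–Leibler divergence and the uniform-in-$n$ boundedness of the positive Kullback–Leibler variations up to order four — together with the pointwise bound $\xi_{n,i}\leq\log J_0$ from Condition \ref{cond:densratio}, one gets $K_n\to0$ and $B:=\sup_{n\geq n_0}E_{Q_n}|\xi_{n,i}|^4<\infty$ (the variations, originally written as $g_{\bvartheta_0}(\cdot|H_0)$-integrals, are converted into $f_{m_n}$-integrals using $f_{m_n}\leq J_0 g_{\bvartheta_0}(\cdot|H_0)$). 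Hence for $n$ large enough that $K_n<c/4$, $Q_n(S_n>cn/2)\leq Q_n(|S_n-nK_n|>cn/4)$, and since $S_n-nK_n$ is a sum of $n$ i.i.d.\ centred summands with fourth moments bounded by $8(B+K_n^4)\lesssim1$, a standard fourth-moment expansion (or Rosenthal's inequality) yields $E_{Q_n}|S_n-nK_n|^4=O(n^2)$; Markov's inequality then gives $Q_n(|S_n-nK_n|>cn/4)=O(n^{-2})$. Since $c'<1$, $O(n^{-2})=o(n^{-1-c'})$, and combining with the first paragraph we conclude $Q_n(E_n)=o(n^{-1-c'})$. As $c>0$ and $c'\in(0,1)$ were arbitrary, the proof is complete.

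The likelihood-ratio split and the moment/Markov estimate are routine; the only genuine content is extracting from Conditions \ref{cond:strong}--\ref{cond:densratio} the facts that $K_n\to0$ and that the fourth-order Kullback–Leibler variations stay bounded. These rest on the Hellinger convergence $\dist_H(f_{m_n},g_{\bvartheta_0}(\cdot|H_0))\to0$ (which the text deduces from \eqref{eq:strongconv} and the regularity assumptions) together with the uniform ratio bound $J_0$, through the usual comparison between Kullback–Leibler-type functionals and the squared Hellinger distance valid for densities with bounded ratio. I expect that comparison, and the bookkeeping needed to pass from $g_{\bvartheta_0}(\cdot|H_0)$-moments to $f_{m_n}$-moments of $\xi_{n,i}$, to be the main technical obstacle; everything downstream is elementary. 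It is worth noting that the polynomial rate $n^{-1-c'}$ with $c'<1$ is precisely what a fourth-moment bound delivers ($O(n^{-2})$): bounding higher-order variations would sharpen the conclusion, but is unnecessary for the use made of \eqref{eq:expobound}.
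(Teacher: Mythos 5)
Your proof is correct and follows essentially the same route as the paper's: the likelihood-ratio split $Q_n(E_n)\leq e^{n\epsilon}P_n(E_n)+Q_n(R_n>\epsilon)$ with $\epsilon<c$, followed by a fourth-central-moment Chebyshev bound on the normalised log-likelihood ratio giving $O(n^{-2})=o(n^{-1-c'})$, with $\kulb(f_{m_n},g_{\bvartheta_0}(\cdot|H_0))\to0$ and the uniformly bounded fourth-order variations supplied by the Hellinger convergence plus Condition \ref{cond:densratio} (the paper invokes the Wong--Shen inequality and Lemma 4.1 of \cite{r32} for exactly these two facts). The only differences are cosmetic: you fix the threshold at $e^{cn/2}$ rather than an arbitrary $\epsilon\in(0,c)$, and you package the moment computation via Rosenthal rather than the explicit expansion the paper cites.
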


\subsection{Fr\'echet domain of attraction}\label{sec:frec}
We start by analysing consistency in case where maxima are obtained from rv's whose distribution has heavy-tailed margins, which are therefore in the max-domain of attraction of a Fr\'echet distribution.
%
%
We recall that in this case the upper end-points of each 
marginal distribution is infinity. Without loss of generality, we restrict our attention to non-negative random variables (Condition \ref{cond:strong}\ref{margins}). Popular heavy-tailed distributions defined on the real line have symmetric tails \citep[Ch. 1--2]{{r202}}, such as Cauchy, Student-$t$, etc., and can be casted in our framework by considering their folded version. 
Then, we consider the scenario where a max-stable density as in \eqref{eq:dens_a_frec} is fitted to $\bM_{m_{n},1:n}$ and a data-dependent prior on $(\brho, \bsigma)$ is specified as in \eqref{eq: psin}. Our results  rely on the following conditions.
\begin{condition}\label{cond:frecextend}
Assume $\Pi_{\text{sh}}$ has full support on $(0, \infty)^d$ and:
\begin{inparaenum}	
\item\label{cond:pisc} The probability density function $\pisc$ satisfies $\{x \in \real: \pisc(x)>0 \}\subset(0,\infty)$; moreover, there exist $\eta \in (0,1)$ and a Lebesgue integrable continuous function  $u_{\text{sc}}:(0,\infty)\to (0,\infty)$ such that:
	\begin{inparaenum}
	\item\label{posit} $\pisc$ is continuous on $[1\pm \eta]$ and $\inf_{x \in [1\pm \eta]}\pisc(x)>0$;
	\item\label{piscbound} $\sup_{ t \in (1\pm \eta)}\pisc(x/t) \leq u_{\text{sc}}(x)$, for all $ x>0$.
	\end{inparaenum}
\item\label{cond:estscale}	$
\lim_{n\to 0}\widehat{\sigma}_{n,j}/a_{m_n,j}=1, \quad j=1,\ldots,d, \quad \iprodFtruealt-\text{as}$;
\item\label{cond:strongertruedens} if $d=2$ and the true angular density $h_0$ behaves as in Definition \ref{cond:mvt_angular}\ref{cond: infinite_new}, further assume there exists  $s>0$ such that $h_0^{1+s}$ is integrable on $(0,1)$. 
\end{inparaenum}
\end{condition}

Condition \ref{cond:frecextend}\ref{cond:pisc} is very mild. It is satisfied by regular density functions on the positive half-line (or suitable subsets), such as gamma, Pareto, half-Cauchy, etc. The proof of the following theorem is deferred to Section \ref{appsec:proof_theo_frecdom} of the supplement.
\begin{theorem}\label{th:rem_cont_frec}
Let $\bM_{m_{n},1}, \ldots,\bM_{m_{n},n}$ be iid according to $F_0^{m_n}(\ba_{m_n} \cdot +\bb_{m_n})$. Assume $F_0$ and $G_{\brho_0,\bone}(\cdot|H_0)$ satisfy Conditions \ref{cond:strong} and \ref{cond:densratio}.  
%
%
Then, under Conditions \ref{cond:genprior}\ref{cond:angularpmprior} and \ref{cond:frecextend}, $\tilde{\Pi}_n$ and $\Pi_{n}$ satisfy the properties at points (a) and (c) of Theorem \ref{th:alpha_frec}, $\iprodFtruealt-\text{as}$ and: 
\begin{itemize}
	\item[(a$'$)]  $\lim_{n \to \infty}\altpostobsdens(\tilde{\mathcal{U}}_n^\complement)=0$, for every sequence of $\dist_H$-balls $\tilde{\mathcal{U}}_n$ centered at $g_{\brho_0,\ba_{m_n}}(\cdot|H_0)$ of positive fixed radius;
	\item[(b$'$)] 	$
	\lim_{n\to \infty}\dist_H(\hat{g}_n^{\scalebox{0.65}{\emph{(o)}}},g_{\brho_0,\ba_{m_n}}(\cdot|H_0))=0$;
	\item[(c$'$)] $\lim_{n\to\infty}\altpostobs((\mathcal{U}_{1}\times\mathcal{U}_{n,2})^\complement)=0$, for every $\dist_W$-neighborhood (if $d\geq 2$) or $\dist_{KS}$-neighborhood $\mathcal{U}_1$  of $H_0$ (if d=2) and every sequence of rectangles
	$\mathcal{U}_{2,n}=(\brho_0\pm \bone \epsilon)\times(\ba_{m_n}(1-\epsilon),\ba_{m_n}(1+\epsilon))$, $\epsilon>0$.
\end{itemize}
\end{theorem}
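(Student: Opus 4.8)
The plan is to reduce the assertion, via the reparametrisation \eqref{eq:reparam}, to a consistency statement for the auxiliary pseudo-posteriors $\Pi_n,\tilde\Pi_n$ on the (unobservable) normalised maxima $\overline\bM_{m_n,1:n}$, and then to transfer the frequentist exponential bounds available at the limiting $\brho$-Fr\'echet model to the true law $Q_n$ of $\overline\bM_{m_n,1:n}$ by the remote contiguity of Proposition \ref{prop:RC}. First I would record, using $\Pi_n=\postobs\circ\psi_n^{-1}$ and $\tilde\Pi_n=\postobsdens\circ\tilde\psi_n^{-1}$, that in the Fr\'echet case $\bb_{m_n}=\bzero$, the change of variables $(H,\bvartheta)\mapsto\psi_n(\bvartheta)$ fixes $H$ and sends $(\brho,\bsigma)$ to $(\brho,\bsigma/\ba_{m_n})$, and $\tilde\psi_n$ is the pushforward of densities under the diffeomorphism $\bx\mapsto\bx/\ba_{m_n}$ of $(0,\infty)^d$, hence a Hellinger isometry carrying $g_{\brho_0,\ba_{m_n}}(\cdot|H_0)$ to $g_{\brho_0,\bone}(\cdot|H_0)$. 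Consequently (a$'$) is equivalent to $\tilde\Pi_n(\tilde{\mathcal V}^\complement)\to0$ for every $\dist_H$-ball $\tilde{\mathcal V}$ of positive fixed radius about $g_{\brho_0,\bone}(\cdot|H_0)$; (c$'$) is equivalent to $\Pi_n((\mathcal U_1\times\mathcal U_2)^\complement)\to0$ for $\mathcal U_1$ a $\dist_W$- ($d\geq2$) or $\dist_{KS}$- ($d=2$) neighbourhood of $H_0$ and $\mathcal U_2=(\brho_0\pm \bone\epsilon)\times(1-\epsilon,1+\epsilon)^d$; (b$'$) follows from (a$'$) via the standard inequalities relating Hellinger and total-variation distance for the posterior mean, exactly as part (b) of Theorem \ref{th:alpha_frec} follows from part (a); and the claimed analogues of parts (a) and (c) of Theorem \ref{th:alpha_frec} are untouched, since they concern genuinely max-stable data.

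Next I would run the argument on the event $A_n:=\{\widehat\sigma_{n,j}/a_{m_n,j}\in(1-\eta,1+\eta),\ j=1,\dots,d\}$, which by Condition \ref{cond:frecextend}\ref{cond:estscale} holds for all large $n$, $\iprodFtruealt$-a.s. By the scale-equivariance of $\widehat{\bsigma}_n$ the ratios $\widehat r_{n,j}:=\widehat\sigma_{n,j}/a_{m_n,j}$ are functions of $\overline\bM_{m_n,1:n}$ alone, so the reparametrised prior $\overline\Psi_n=\Psi_n\circ\psi_n^{-1}$ has $\sigma_j$-density $\pisc(\sigma_j/\widehat r_{n,j})/\widehat r_{n,j}$ and shape component the fixed prior $\Pi_{\text{sh}}$; on $A_n$ the positivity part of Condition \ref{cond:frecextend}\ref{cond:pisc} pinches this $\sigma_j$-density from below by a fixed positive constant on a fixed neighbourhood $K$ of $(\brho_0,\bone)$, while its domination part pinches it from above by the fixed Lebesgue-integrable majorant $u_{\text{sc}}$. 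Hence, on $A_n$, the denominator of $\Pi_n$ dominates a fixed positive multiple of the marginal likelihood under the \emph{fixed} prior given by the product of $\Pi_\Hset$ and Lebesgue measure on $K$, and this is $\geq e^{-cn}$ eventually (any $c>0$), because $\Pi_\Hset$ has the Kullback-Leibler property at $H_0$ (Theorem \ref{theo:KL_prior_multi}, valid under Condition \ref{cond:genprior}\ref{cond:angularpmprior}), $K$ is a neighbourhood of the truth, and, by Conditions \ref{cond:strong}--\ref{cond:densratio}, the density of $\overline\bM_{m_n,i}$ lies in a shrinking Hellinger ball about $g_{\brho_0,\bone}(\cdot|H_0)$ and has Kullback-Leibler divergence and positive variations up to the fourth order from $g_{\brho_0,\bone}(\cdot|H_0)$ uniformly bounded in $n$ --- here the extra integrability of Condition \ref{cond:frecextend}\ref{cond:strongertruedens} is what keeps these quantities finite when $d=2$ and $h_0$ behaves as in Definition \ref{cond:mvt_angular}\ref{cond: infinite_new}. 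Likewise, on $A_n$ the numerator of $\Pi_n$ is dominated by the integral against the fixed finite measure given by the product of $\Pi_\Hset\times\Pi_{\text{sh}}$ and $\prod_{j=1}^d u_{\text{sc}}(\sigma_j)\diff\sigma_j$, so the sieve and the test functionals $\tests$ built for the $\brho$-Fr\'echet model in the proof of Theorem \ref{th:alpha_frec} (the density-level test of Condition \ref{cond: newcond}\ref{dentest} and the marginal shape/scale tests of Condition \ref{cond: newcond}\ref{margintest} over $(0,\infty)^{2d}$, with exponentially small sieve prior mass) apply essentially unchanged. Putting these together, on $A_n$ the pseudo-posteriors $\Pi_n,\tilde\Pi_n$ satisfy an inequality of the form \eqref{eq:mainbound} with $\overline\bM_{m_n,1:n}$ in place of $\bX_{1:n}$, and the attendant functional $\Xi_n$ and tests $\tests$ satisfy \eqref{eq:expobound} with respect to the \emph{exactly} max-stable law $\prodGtHtrue(\cdot|H_0)=\prodGrhooneProb(\cdot|H_0)$.

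Finally I would fix $c>0$ small and, with $c'$ the rate in \eqref{eq:expobound}, note that $e^{-c'n}=o(e^{-cn})$, so Proposition \ref{prop:RC} --- applicable precisely because Conditions \ref{cond:strong} and \ref{cond:densratio} hold --- upgrades the $\prodGtHtrue(\cdot|H_0)$-bound to $Q_n(\Vert\tests\Vert_1+e^{cn}\Xi_n>\varepsilon)\lesssim n^{-1-c''}$ for some $c''\in(0,1)$ and every $\varepsilon>0$; this being summable in $n$, the Borel-Cantelli lemma yields $\Vert\tests\Vert_1+e^{cn}\Xi_n\to0$, $\iprodFtruealt$-a.s. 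Combined with the a.s.\ eventual validity of $A_n$ and with \eqref{eq:mainbound}, this forces $\tilde\Pi_n(\tilde{\mathcal V}^\complement)\to0$ and $\Pi_n((\mathcal U_1\times\mathcal U_2)^\complement)\to0$, $\iprodFtruealt$-a.s., which by the equivalences of the first paragraph yields (a$'$), (b$'$) and (c$'$). The step I expect to be the main obstacle is exactly this coupling of the $n$-dependence of the data-driven prior $\overline\Psi_n$ with a remote-contiguity comparison that is anchored at the \emph{fixed} law $\prodGtHtrue(\cdot|H_0)$ rather than at $Q_n$: one must bound the pseudo-posterior pathwise on $A_n$, sandwiching $\overline\Psi_n$ between fixed priors so that the denominator stays $\geq e^{-cn}$ and the sieve retains exponentially small prior mass uniformly over all large $n$, while checking that the tests and the model-approximation bias --- controlled by Conditions \ref{cond:strong}, \ref{cond:densratio} and, for $d=2$, \ref{cond:frecextend}\ref{cond:strongertruedens} --- behave uniformly in $n$; Conditions \ref{cond:frecextend}\ref{cond:pisc} and \ref{cond:frecextend}\ref{cond:estscale} are precisely tailored to make this interplay go through.
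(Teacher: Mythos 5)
Your proposal is correct and follows essentially the same route as the paper: reduction to the pseudo-posteriors on normalised maxima via the reparametrisation, sandwiching the data-dependent prior $\overline\Psi_n$ between fixed priors on the a.s.-eventual event $\{\widehat\sigma_{n,j}/a_{m_n,j}\in(1\pm\eta)\}$ (lower bound from the positivity of $\pisc$ on $[1\pm\eta]$ for the Kullback--Leibler-type denominator bound, which is the content of the paper's Lemma \ref{lem:pseudokulb} and uses Condition \ref{cond:densratio} and the fourth-order positive divergences exactly as you describe; upper bound from $u_{\text{sc}}$ so that the sieve and tests of Theorem \ref{th:alpha_frec} carry over), followed by Proposition \ref{prop:RC} and Borel--Cantelli. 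The only quibble is your remark that the analogues of (a) and (c) of Theorem \ref{th:alpha_frec} are ``untouched'' because they concern max-stable data --- they concern the pseudo-posteriors $\Pi_n,\tilde\Pi_n$ on normalised maxima and do require proof, but your final paragraph in fact establishes precisely these statements, so the substance is complete.
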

\begin{rem}\label{rem: strong}
The weak convergence result in \eqref{eq:doa} is often used as a justification for the following somewhat informal approximation
for $i=1, \ldots,n,$ as $n\to \infty$,
$$
\prob(\bM_{m_n,i}\leq \bx) \approx G_{\brho_0,\bone}(\bx/\ba_{m_n}|H_0)=G_{\brho_0, \ba_{m_n}}(\bx|H_0),
$$
and, hence, for using a family of max-stable distributions as an approximate statistical model for (unnormalised) sample maxima \citep[e.g.,][p. 48]{r4}. From a Bayesian perspective, Theorem \ref{th:rem_cont_frec}(a$'$) provides a rigorous mathematical justification for such a practice
under the strong domain of attraction  \eqref{eq:strongconv}, 
as it ensures that the pseudo-posterior distribution $\postobsdens$ asymptotically concentrates on a 
Hellinger-neighbourhood of
$g_{\brho_0, \ba_{m_n}}(\cdot|H_0)$, the density of $G_{\brho_0, \ba_{m_n}}(\cdot|H_0)$.
%
%
Furthermore, Theorem \ref{th:rem_cont_frec}(b$'$)
guarantees that, almost surely as $n\to \infty$,
$$
\dist_H(\hat{g}_n^{\scalebox{0.65}{(o)}},f_{m_n}^{\scalebox{0.65}{(o)}}) \leq \dist_H(g_{\brho_0,\bone}(\cdot|H_0),f_{m_n})+
\dist_H(\hat{g}_n^{\scalebox{0.65}{(o)}},g_{\brho_0,\ba_{m_n}}(\cdot|H_0))=o(1),
$$
Thus, for large sample sizes $n$, the pseudo-predictive density $\hat{g}_n^{\scalebox{0.65}{(o)}}$ accurately estimates the true data generating density $f_{m_n}^{\scalebox{0.65}{(o)}}$.
As for the finite dimensional model components, Theorem \ref{th:rem_cont_frec}(c$'$) establishes that the marginal pseudo-posterior of the shape and scale parameters concentrates on a set of $(\brho, \bsigma)$ such that $|\rho_j - \rho_{0,j}|$ is small and $\sigma_{j}/a_{m_n,j}$ is close to one for any $j\in\{1, \ldots,d$\}, reminiscing the behaviour of the MLE in the frequentist approach \citep[Theorem 2]{r81}.
\end{rem}	
\begin{rem}\label{rem: eb}
Under Condition \ref{cond:frecextend}, the data dependent prior $\overline{\Psi}_{n}$, obtained via the reparametrisation in \eqref{eq:reparam}, is positive on a neighbourhood of the true limiting parameter $(H_0,\ \brho_0, \bone)$. This is a crucial property to obtain consistency of the pseudo-poseriors $\tilde{\Pi}_n$ and $\Pi_n$, which may fail in absence of the empirical Bayes rescaling in \eqref{eq: psin} (first line), through  estimators $\widehat{\boldsymbol{\sigma}}_n$
complying with Condition \ref{cond:frecextend}\ref{cond:estscale}.
Indeed, the norming sequence $a_{m_n,j}$, $j=1, \ldots,d$, in \eqref{eq:norming} diverges to infinity as $n \to \infty$. 
Therefore, a prior density $\pisc$ specified on each original scale parameter with no scale correction is after reparametrisation asymptotically null at the true value, since the integrability of $\pisc$ requires $\pisc(a_{m_{n,j}}1)a_{m_n,j}\to 0$.
%
%
%
%
\end{rem}
\begin{example}\label{ex:est_frec}
Let $(\bZ_{i})_{i=1}^{n m_n}$ be an observable sample. For each $j=1, \ldots,d$, choose 
%
$
\widehat{\sigma}_{n,j}=\widehat{F}_{n,j}^\leftarrow\left(1- 1/{m_n}\right),
$
where $\widehat{F}_{n,j}(x)=n^{-1}\sum_{i=1}^{nm_n}\indic(Z_{i,j}\leq x)$. Then, we have that for all $\epsilon>0$, as $n \to \infty$,
$$
\prodFmalt(\vert \widehat{\sigma}_{n,j}/a_{m_n,j} -1  \vert > \epsilon)\leq 4 e^{-\tau_j \sqrt{n+1}}, \quad j=1, \ldots,d,
$$
where $\tau_j \equiv \tau_j(\epsilon)$ are positive constants. A proof of the above inequality is provided in Section \ref{sec:ex_frec_est_proof} of the supplementary material. Thus, by Borel-Cantelli lemma, $\widehat{\bsigma}_n$ complies with Condition \ref{cond:frecextend}\ref{cond:estscale}. 
\end{example}

\begin{rem}\label{rem: weak}
	If the limiting relations in Condition \ref{cond:frecextend}\ref{cond:estscale} hold true in probability rather than almost surely,  the asymptotic results in Theorem \ref{th:rem_cont_frec} obtain in probability. Examples of estimators $\widehat{\boldsymbol{\sigma}}_n$ complying with such a weaker requirement can be obtained from those in \cite[pp. 130--131]{r202}
	and \cite{r83}.
\end{rem}
Finally, we provide two examples of
bivariate models complying with Condition \ref{cond:densratio}.
Full derivations are provided in Sections \ref{sec: first_ex_frec}-\ref{sec:frecsecondex} of the supplement.
\begin{example}\label{ex:exp_dep_frec}
Consider 
$F_0(\bx)=1-1/x_1^{\rho_{0,1}}-1/x_2^{\rho_{0,2}}+1/\left(
x_1^{\rho_{0,1}}+x_2^{\rho_{0,2}}
 -1\right)$, for $\bx > \bone$, which is a slightly more general version of the distribution examined in \cite[p. 289]{r200}.
The distribution $F_0(\bx)$ has Pareto margins, i.e. $F_{0,j}(x_j)=1-1/x_j^{\rho_{0,j}}$, $j=1,2$, and belongs to the variational max-domain of 
 $$
 G_{\brho_0, \bone}(\bx|H_0)=\exp\left\lbrace
 -x_1^{-\rho_{0,1}}-x_2^{-\rho_{0,1}}
 +\left(
 x_1^{\rho_{0,1}}+x_2^{\rho_{0,2}}
 \right)^{-1}
 \right\rbrace, \quad \bx > \bzero,
 $$ 
whose angular distribution $H_0$ is the uniform on $[0,1]$.
\end{example}

\begin{example}\label{ex:log_dep_frec}
Consider
$$
F_0(\bx)=1-\left\lbrace x_1^{-3\rho_{0,1}}-x_2^{-3\rho_{0,2}}+
\left(
x_1^{\rho_{0,1}}x_2^{\rho_{0,2}}
\right)^{-3}\right\rbrace^{1/3},\quad \bx > \bone.
$$ 
This distribution has Pareto margins 
%
and Joe/B5 copula 
\citep[e.g.,][p. 216]{r201}; it belongs to the variational max-domain of 
$$
G_{\brho_0, \bone}(\bx|H_0)=\exp\left\lbrace
-\left(x_1^{-3\rho_{0,1}}+x_2^{-3\rho_{0,1}} \right)^{1/3}
\right\rbrace, \quad \bx > \bzero,
$$ 
whose extreme-value copula is a member of the so-called logistic family \citep[e.g.,][p. 172]{r201}.
%
\end{example}

\subsection{Gumbel domain of attraction}\label{sec:gumb}
%
In this subsection we establish results similar to those in Theorem \ref{th:rem_cont_frec} for the case where the marginal distributions of $F_0$ belong to the Gumbel max-domain of attraction.
In this setup the max-stable density in \eqref{eq:densgumb} provides an approximate statistical model for $\bM_{m_{n},1:n}$. 
We assume that a data-dependent prior is assigned to $(\bsigma,\bmu)$, with prior densities $\pisc$
and $\piloc$ and estimators $\widehat{\bsigma}_n$ and $\widehat{\bmu}_n$ complying with the following assumptions.
\begin{condition}\label{cond:gumbextend}
		$\pisc$ and $\widehat{\bsigma}_n$ satisfy Conditions \ref{cond:frecextend}\ref{cond:pisc} and \ref{cond:frecextend}\ref{cond:estscale}, respectively, and:
		\begin{inparaenum}
		\item
		\label{cond:piloc}
		there exist $\eta \in (0,1)$ and a Lebesgue integrable continuous function  $u_{\text{loc}}:\mathbb{R}\to (0,\infty)$ such that:
		\begin{inparaenum}
			\item\label{posit1} $\piloc$ is continuous on $[-\eta, +\eta]$ and $\inf_{x \in [-\eta, +\eta]}\piloc(x)>0$;
			\item\label{piscbound1} $\sup_{ t_1 \in (1\pm\eta), t_2 \in (-\eta, +\eta)}\piloc((x-t_1)/t_2) \leq u_{\text{loc}}(x)$, for all $ x>0$.
		\end{inparaenum}
		\item	\label{eq:estlocscale}
		$
		\lim_{n\to 0}(\widehat{\mu}_{n,j}-b_{m_n,j})/a_{m_n,j}=0,
		\quad j=1,\ldots,d, \quad \iprodFtruealt-\text{as}$.
		\end{inparaenum}
\end{condition}
%
\begin{theorem}\label{th:rem_cont_gumb}
Let $\bM_{m_{n},1}, \ldots,\bM_{m_{n},n}$ be iid according to $F_0^{m_n}(\ba_{m_n} \cdot +\bb_{m_n})$, let $F_0$ and $G_{\bone, \bzero}(\cdot|H_0)$ satisfy Conditions \ref{cond:strong} and \ref{cond:densratio}. 
%
Then, 
under Conditions \ref{cond:genprior}\ref{cond:angularpmprior}, \ref{cond:frecextend}\ref{cond:strongertruedens} and \ref{cond:gumbextend},  
$\tilde{\Pi}_n$ and $\Pi_{n}$ satisfy the properties at points (a) and (c) of Theorem \ref{cor:Gumbel_cons} $\iprodFtruealt-\text{as}$ and:
\begin{itemize}
	\item[(a$'$)]  $\lim_{n \to \infty}\altpostobsdens(\tilde{\mathcal{U}}_n^\complement)=0$, for every sequence of $\dist_H$-balls $\tilde{\mathcal{U}}_n$ centered at $g_{\ba_{m_n}, \bb_{m_n}}(\cdot|H_0)$ of positive fixed radius;
	\item[(b$'$)] 	$
	\lim_{n\to \infty}\dist_H(\hat{g}_n^{\scalebox{0.65}{\emph{(o)}}},g_{\ba_{m_n}, \bb_{m_n}}(\cdot|H_0))=0$;
	\item[(c$'$)] $\lim_{n\to\infty}\altpostobs((\mathcal{U}_{1}\times\mathcal{U}_{n,2})^\complement)=0$, for every $\dist_W$-neighborhood (if $d\geq 2$) or $\dist_{KS}$-neighborhood $\mathcal{U}_1$ of $H_0$ (if d=2) and every sequence of rectangles $\mathcal{U}_{2,n}=(\ba_{m_n}(1-\epsilon),\ba_{m_n}(1+\epsilon))\times (\bb_{m_n} \pm \epsilon \, \ba_{m_n})$, $\epsilon>0$.
\end{itemize}
\end{theorem}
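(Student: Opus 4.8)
\emph{Strategy.} The plan is to mirror the proof of Theorem~\ref{th:rem_cont_frec}, replacing the $\brho$-Fr\'echet marginal model by the Gumbel one in~\eqref{eq:densgumb} and taking the reparametrisation~\eqref{eq:reparam} in its scale--location form $\psi_n(\bsigma,\bmu)=(H,\bsigma/\ba_{m_n},\{\bmu-\bb_{m_n}\}/\ba_{m_n})$. Applying this change of variables to the integrals defining $\postobs$ gives $\postobs=\Pi_n\circ\psi_n$ and $\postobsdens=\tilde\Pi_n\circ\tilde\psi_n$, with $\tilde\psi_n$ as in~\eqref{eq:densrepar}, where $\Pi_n,\tilde\Pi_n$ are the pseudo-posteriors built from the rescaled maxima $\overline{\bM}_{m_n,1:n}$, the angular prior $\Pi_{\Hset}$ and the reparametrised data-dependent prior $\overline{\Psi}_n=\Psi_n\circ\psi_n^{-1}$; in coordinates the latter has density proportional to $\prod_{j=1}^d\pisc(\sigma_j/r_{n,j})\piloc((\mu_j-s_{n,j})/r_{n,j})$ with $r_{n,j}:=\widehat\sigma_{n,j}/a_{m_n,j}$ and $s_{n,j}:=(\widehat\mu_{n,j}-b_{m_n,j})/a_{m_n,j}$. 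Since $\psi_n$ is a bijection of $(0,\infty)^d\times\reald$ onto itself and $\tilde\psi_n$ is the induced coordinatewise affine change of variables of the sample space, $\tilde\psi_n$ is a $\dist_H$-isometry sending $g_{\ba_{m_n},\bb_{m_n}}(\cdot|H_0)$ to $g_{\bone,\bzero}(\cdot|H_0)$, and $\psi_n$ sends the rectangle $\mathcal{U}_{2,n}$ of point~(c$'$) to a fixed neighbourhood of $(\bone,\bzero)$. Hence it suffices to show, $Q_n$-eventually almost surely (with $Q_n$ the $n$-fold product of the law of $\overline{\bM}_{m_n,1}$, density $f_{m_n}$), that $\tilde\Pi_n$ concentrates on every fixed $\dist_H$-ball around $g_{\bone,\bzero}(\cdot|H_0)$ and $\Pi_n$ on every $\dist_W$-/$\dist_{KS}$-neighbourhood of $H_0$ times every fixed $L_1$-ball around $(\bone,\bzero)$, i.e. the analogues of points~(a) and~(c) of Theorem~\ref{cor:Gumbel_cons}. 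Pushing these forward through $\tilde\psi_n$ and $\psi_n$ yields~(a$'$) and~(c$'$), while~(b$'$) follows from~(a$'$) exactly as in the predictive-consistency remark after Condition~\ref{cond: newcond}, via the triangle inequality $\dist_H(\hat{g}_n^{\scalebox{0.65}{(o)}},f_{m_n}^{\scalebox{0.65}{(o)}})\le\dist_H(g_{\bone,\bzero}(\cdot|H_0),f_{m_n})+\dist_H(\hat{g}_n^{\scalebox{0.65}{(o)}},g_{\ba_{m_n},\bb_{m_n}}(\cdot|H_0))$, whose first summand vanishes by Conditions~\ref{cond:strong}--\ref{cond:densratio}.

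\emph{The master inequality.} Next I would establish that $\tilde\Pi_n,\Pi_n$ satisfy a bound of the shape~\eqref{eq:mainbound} with $\overline{\bM}_{m_n,1:n}$ in place of $\bX_{1:n}$ and with the reference product law $\prodGtHtrue(\cdot|H_0)$ of Section~\ref{sec:binf_general_max} (now $\bvartheta_0=(\bone,\bzero)$) governing~\eqref{eq:expobound}. This means re-verifying Condition~\ref{cond: newcond} for the Gumbel model as in the proof of Theorem~\ref{cor:Gumbel_cons}: the Kullback--Leibler support of the dependence part comes from Theorem~\ref{theo:KL_prior_multi} under Condition~\ref{cond:genprior}\ref{cond:angularpmprior} (together with the extra integrability of Condition~\ref{cond:frecextend}\ref{cond:strongertruedens} when $d=2$ and $h_0$ blows up at the vertices); the exponentially small sieve is the Bernstein-polynomial sieve of Condition~\ref{cond:angularprior}; and, the Gumbel margins being a genuine location--scale family, exponentially consistent tests for $(\sigma_j,\mu_j)$ against complements of $O(1)$ neighbourhoods of $(1,0)$ are available as in Theorem~\ref{cor:Gumbel_cons}. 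The delicate new ingredient is the data dependence and uniformity in $n$ of $\overline{\Psi}_n$: Conditions~\ref{cond:frecextend}\ref{cond:estscale} and~\ref{cond:gumbextend}\ref{eq:estlocscale} give $r_{n,j}\to1$ and $s_{n,j}\to0$ $\iprodFtruealt$-a.s., so eventually $(\widehat\bsigma_n,\widehat\bmu_n)$ lands in $[1\pm\eta]^d\times[-\eta,\eta]^d$; then the positivity/continuity in Conditions~\ref{cond:frecextend}\ref{cond:pisc} and~\ref{cond:gumbextend}\ref{cond:piloc} give a lower bound on the $\overline{\Psi}_n$-mass of a fixed neighbourhood of $(\bone,\bzero)$ that is uniform over all large $n$, which, combined with Condition~\ref{cond:densratio} (bounding the log-likelihood-ratio averages of $f_{m_n}$ against $g_{\bone,\bzero}(\cdot|H_0)$, hence keeping the Kullback--Leibler averages over that neighbourhood small), produces the $e^{-cn}$ lower bound on the denominator of~\eqref{eq:mainbound} holding $Q_n$-eventually a.s.; meanwhile the domination functions $u_{\text{sc}},u_{\text{loc}}$ of Conditions~\ref{cond:frecextend}\ref{cond:pisc} and~\ref{cond:gumbextend}\ref{cond:piloc} bound $\diff\overline{\Psi}_n$ by a fixed integrable function irrespective of the position of $(\widehat\bsigma_n,\widehat\bmu_n)$ in $[1\pm\eta]^d\times[-\eta,\eta]^d$, which preserves the exponentially small prior mass of the sieve complements.

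\emph{Transfer via remote contiguity and Borel--Cantelli.} By the previous step, for every admissible choice of neighbourhoods there are tests $\tests(\overline{\bM}_{m_n,1:n})$ and a functional $\Xi_n$ with $\max\{\tilde\Pi_n(\tilde{\mathcal U}^\complement),\Pi_n((\mathcal U_1\times\mathcal U_2)^\complement)\}\lesssim\Vert\tests\Vert_1+e^{cn}\Xi_n$, $Q_n$-eventually a.s., and — by the very computations that yield~\eqref{eq:expobound} in the well-specified case — $\prodGtHtrue(\Vert\tests\Vert_1+e^{cn}\Xi_n>\varepsilon\,|\,H_0)\lesssim e^{-c'n}$ for each $\varepsilon>0$ and some $c'>0$. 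Writing $E_n:=\{\Vert\tests\Vert_1+e^{cn}\Xi_n>\varepsilon\}$, this gives $\prodGtHtrue(E_n|H_0)=o(e^{-\beta n})$ with $\beta:=c'/2$, so Proposition~\ref{prop:RC} (applied with remote-contiguity rate $e^{-\beta n}$) delivers $Q_n(E_n)=o(n^{-1-\alpha})$ for some $\alpha\in(0,1)$, whence $\sum_n Q_n(E_n)<\infty$. Borel--Cantelli then forces $\Vert\tests\Vert_1+e^{cn}\Xi_n\to0$, hence $\tilde\Pi_n(\tilde{\mathcal U}^\complement)\to0$ and $\Pi_n((\mathcal U_1\times\mathcal U_2)^\complement)\to0$ $Q_n$-a.s., and undoing the reparametrisation gives~(a$'$)--(c$'$) together with the stated analogues of points~(a) and~(c) of Theorem~\ref{cor:Gumbel_cons} for the reparametrised posteriors. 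I expect the main obstacle to be the second step — securing the $e^{-cn}$ lower bound on the \emph{random} denominator of~\eqref{eq:mainbound} and the exponentially small prior mass of the sieve complements \emph{uniformly in $n$}, i.e. controlling the fluctuation of $\overline{\Psi}_n$ caused by $(\widehat\bsigma_n,\widehat\bmu_n)$ — which is precisely what the positivity and domination hypotheses in Conditions~\ref{cond:frecextend} and~\ref{cond:gumbextend} are designed for, and which also explains why the argument remains tractable here, unlike the three-parameter Weibull case, since the Gumbel margins form a bona fide location--scale family.
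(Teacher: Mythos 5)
Your proposal is correct and follows essentially the same route as the paper, which in fact omits this proof with the remark that it is "similar to that of Theorem \ref{th:rem_cont_frec}": reparametrise via $\psi_n$, verify the master inequality \eqref{eq:mainbound} for the Gumbel model by mapping it to the Fr\'echet parametrisation as in Theorem \ref{cor:Gumbel_cons}, control the data-dependent prior through the positivity and domination hypotheses of Conditions \ref{cond:frecextend}\ref{cond:pisc} and \ref{cond:gumbextend}\ref{cond:piloc}, and transfer the exponential tail bound to $Q_n$ via Proposition \ref{prop:RC} and Borel--Cantelli. The only detail worth making explicit is that the almost-sure $e^{-cn}$ lower bound on the random denominator needs control of the positive Kullback--Leibler variations up to order four (as in Lemma \ref{lem:pseudokulb}), which is exactly what Conditions \ref{cond:genprior}\ref{cond:angularpmprior} and \ref{cond:frecextend}\ref{cond:strongertruedens} supply for the Gumbel model.
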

The proof of Theorem \ref{th:rem_cont_gumb} is similar to that of Theorem \ref{th:rem_cont_frec} and is therefore omitted.
The main points of Remarks \ref{rem: strong}, \ref{rem: eb} and \ref{rem: weak} carry over to the present case and 
the asymptotic conditions imposed to $\widehat{\bsigma}_n$ and $\widehat{\bmu}_n$ are satisfied by empirical estimators, as illustrated by the following example.
\begin{example}\label{ex:gumb_est}
Define $\widehat{F}_{nm_n,j}$ as in Example \ref{ex:est_frec}, for $j\in\{1, \ldots,d\}$ and set
	\begin{equation*}
	\widehat{\mu}_{n,j}=\widehat{F}_{nm_n,j}^\leftarrow\left(1- \frac{1}{m_n}\right), \quad \widehat{\sigma}_{n,j}=m_n\int_{\widehat{\mu}_{n,j}}^\infty (1-\widehat{F}_{nm_n,j}(x)) \diff x.
	\end{equation*}
To satisfy Conditions \ref{cond:frecextend}\ref{cond:estscale} and \ref{cond:gumbextend}\ref{eq:estlocscale} it is sufficient that 
the following requirements are met: there exists $(\gamma_j,\alpha_j)\in(0, \infty)^2$ for every $j=1, \ldots,d$ such that
\begin{equation}\label{eq: exp_moment}
\int \exp(\gamma_j |z_j|^{\alpha_j})F_{0,j}(\diff z)< \infty
\end{equation}	
and for some $s \in (0,\min\{1,\alpha_1, \ldots ,\alpha_d\})$, as $n \to \infty$,
\begin{equation}\label{eq:rate}
		  {\log n}=
		  \begin{cases}
		  	  o\left(n a_{m_n,j}^2 /m_n\right), \hspace{7.3em} \text{if } \alpha_j\geq 1,\\
		  	  o\left(
		  	  \min\{n a_{m_n,j}^2 /m_n,
		  	  a_{m_n,j}^s n^s \}\right), \quad \text{if } \alpha_j<1.
		  \end{cases}
		\end{equation}
A proof is provided in Section \ref{sec:suppgumb} of the supplement, where we exploit the concentration inequalities for the 1-Wasserstein distances
$$
\int |F_{0,j}(z)-\widehat{F}_{nm_n,j}(z)|\diff z, \quad j=1, \ldots,d,
$$ 
recently derived by \cite{r699} under the exponential moment condition \eqref{eq: exp_moment}. The latter is satisfied by all the most common univariate distributions in the Gumbel max-domain of attraction, such as the exponential, Gaussian, Beta and log-normal distributions. Morover, for each $j\in\{1,\ldots,d\}$, the map $m_{n}\mapsto a_{m_n,j}$ in the third line of \eqref{eq:norming} is slowly varying at infinity, then \eqref{eq:rate} is easily obtained, for example, by requiring that $m_n \sim n^t$ as $n \to \infty$, with $t \in (0,1)$. 
\end{example}

We complete this section with an example of a bivariate distribution belonging to the Gumbel max-domain of attraction  
%
that satisfies Condition \ref{cond:densratio}. This is formally verified in Section \ref{appsec:gumbexamp} of the supplement.
\begin{example}\label{ex:mod_gumb}
Consider the bivariate exponential distribution \citep[e.g.,][]{r880}, i.e.
$$
		F_0(\bx)=1-e^{-x_1}-e^{-x_2}+\left(
		e^{x_1}+e^{x_2}
		-1\right)^{-1}, \quad \bx > \bzero,	
$$ 
with exponential margins $F_{0,j}(x_j)=1-e^{-x_j}$, $j=1,2$.
This distribution belongs to the variational max-domain of 
		$$
		G_{\bone, \bzero}(\bx|H_0)=\exp\left\lbrace
		-e^{-x_1}-e^{-x_2}
		+\left(
		e^{x_1}+ e^{x_2}
		\right)^{-1}
		\right\rbrace, \quad \bx \in \reald.
		$$ 
Herein, the distributions $F_0$ and $G_{ \bone, \bzero}(\bx|H_0)$ have the  the same copulas 
of the distributions in Example \ref{ex:exp_dep_frec}.
\end{example}
%

\subsection{Reverse Weibull domain of attraction}\label{sec:weibulldom}

We finally study consistency in the case where a sample of maxima is obtained from random variables with distribution in the Weibull max-domain of attraction.
%
%
We consider an approximate statistical model  given by the max-stable density  \eqref{eq: weibdens} with $(\bomega, \bsigma, \bmu) \in (1,\infty)^d\times (0, \infty)^d \times \reald =: \bvarTheta$. 
For technical convenience, we assume the following conditions on the support of the prior distributions in \eqref{eq: psin}.
%
%
%
\begin{condition}\label{cond:extendweib}
	$\pisc$ and $\piloc$ satisfy Conditions \ref{cond:frecextend}\ref{cond:pisc} and \ref{cond:gumbextend}\ref{cond:piloc}, respectively, and:
	\begin{inparaenum}
		\item\label{cond:compsupp1} there exist closed and bounded intervals $I_{\text{sc}} \subset (0, \infty)$ and $I_{\text{loc}} \subset \real$ such that $\{x \in \real: \pisc(x)>0 \}= I_{\text{sc}}$ and $\{x \in \real: \piloc(x)>0 \}= I_{\text{loc}}$;
		\item\label{cond:compsupp2} there exists a closed and bounded rectangle $K_{\text{sh}} \subset (1,\infty)^d$ such that $\text{supp}(\Pi_{\text{sh}})=K_{\text{sh}}$ and $\Pi_{\text{sh}}$ assigns positive mass to every neighbourhood of $\bomega_0$;
		\item \label{eq:estlocscaleweib}
		in $\prodFmalt$-probability as $n \to \infty$, 
		$\widehat{\sigma}_{n,j}/a_{m_n,j}\to1$, 
		and
		$(\widehat{\mu}_{n,j}-b_{m_n,j})/a_{m_n,j} \to 0$,
		for $j=1,\ldots,d.$
	\end{inparaenum}
\end{condition}
\begin{theorem}\label{theo:rem_weib}
Let $\bM_{m_{n},1}, \ldots,\bM_{m_{n},n}$ be iid according to $F_0^{m_n}(\ba_{m_n} \cdot +\bb_{m_n})$. Let $F_0$ and $G_{\bomega_0,\bone, \bzero}(\cdot|H_0)$ satisfy Conditions \ref{cond:strong} and \ref{cond:densratio}. 
%
%
Then, under Conditions \ref{cond:genprior}\ref{cond:angularpmprior}, \ref{cond:frecextend}\ref{cond:strongertruedens} and \ref{cond:extendweib},
$\tilde{\Pi}_n$ and $\Pi_{n}$ satisfy the properties at points (a) and (c) of Theorem \ref{cor:Gumbel_cons} in $\prodFm$-probability as $n \to \infty$ and:
\begin{itemize}
	\item[(a$'$)]  $\lim_{n \to \infty}\altpostobsdens(\tilde{\mathcal{U}}_n^\complement)=0$, for every sequence of $\dist_H$-balls $\tilde{\mathcal{U}}_n$ centered at $g_{\bomega_0,\ba_{m_n}, \bb_{m_n}}(\cdot|H_0)$ of positive fixed radius;
	\item[(b$'$)] 	$
	\lim_{n\to \infty}\dist_H(\hat{g}_n^{\scalebox{0.65}{\emph{(o)}}},g_{\bomega_0,\ba_{m_n}, \bb_{m_n}}(\cdot|H_0))=0$;
	\item[(c$'$)] $\lim_{n\to\infty}\altpostobs((\mathcal{U}_{1}\times\mathcal{U}_{n,2})^\complement)=0$, for every $\dist_W$-neighborhood (if $d\geq 2$) or $\dist_{KS}$-neighborhood $\mathcal{U}_1$  of $H_0$ (if d=2) and every sequence of rectangles 	$\mathcal{U}_{2,n}=(\ba_{m_n}(1-\epsilon),\ba_{m_n}(1+\epsilon))\times (\bb_{m_n} \pm \epsilon \, \ba_{m_n})$, $\epsilon>0$.
\end{itemize}
%
\end{theorem}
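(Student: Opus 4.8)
The plan is to follow the proof of Theorem~\ref{th:rem_cont_frec} step by step, replacing the well-specified Fr\'echet result (Theorem~\ref{th:alpha_frec}) used there with the well-specified three-parameter reverse Weibull result (Theorem~\ref{theo: cons_weibull}) and tracking the Weibull-specific modifications. First I would pass to the reparametrised pseudo-posteriors of Section~\ref{sec:repar}: the change of variables \eqref{eq:reparam}--\eqref{eq:densrepar} gives $\altpostobs=\Pi_n\circ\psi_n^{-1}$ and $\altpostobsdens=\tilde{\Pi}_n\circ\tilde{\psi}_n^{-1}$, where $\Pi_n,\tilde{\Pi}_n$ are built from the (unobservable) normalised maxima $\overline{\bM}_{m_n,1:n}$ and the reparametrised data-dependent prior $\Pi_{\Hset}\times\overline{\Psi}_n$, $\overline{\Psi}_n=\Psi_n\circ\psi_n^{-1}$. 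Since $\psi_n$ and $\tilde{\psi}_n$ are homeomorphisms for fixed norming constants and estimates, it suffices to establish consistency of $\Pi_n$ and $\tilde{\Pi}_n$ at $(H_0,\bvartheta_0)$, $\bvartheta_0=(\bomega_0,\bone,\bzero)$, and at $g_{\bvartheta_0}(\cdot|H_0)$, in $Q_n$-probability, $Q_n$ being the joint law of $\overline{\bM}_{m_n,1:n}$; the fixed $\dist_H$-balls and rectangles appearing in Theorem~\ref{theo: cons_weibull} are mapped by $\psi_n,\tilde{\psi}_n$ precisely onto the shifting objects centred at $g_{\bomega_0,\ba_{m_n},\bb_{m_n}}(\cdot|H_0)$ and onto $\mathcal{U}_{2,n}$, turning the conclusion into (a$'$)--(c$'$) and the well-specified-type statements.

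Next I would localise to the high-probability event $\Omega_n=\{\,\widehat{\sigma}_{n,j}/a_{m_n,j}\in[1-\epsilon_0,1+\epsilon_0]\text{ and }|(\widehat{\mu}_{n,j}-b_{m_n,j})/a_{m_n,j}|\le\epsilon_0,\ j=1,\ldots,d\,\}$ for a small fixed $\epsilon_0\in(0,\eta)$; by Condition~\ref{cond:extendweib}\ref{eq:estlocscaleweib}, $\prodFm(\Omega_n)\to1$. On $\Omega_n$ the reparametrised prior $\overline{\Psi}_n$ has support contained in a fixed compact rectangle $\overline{K}\subset(1,\infty)^d\times(0,\infty)^d\times\reald$ and, because $1$, $0$ and $\bomega_0$ lie in the interiors of $I_{\text{sc}}$, $I_{\text{loc}}$ and $K_{\text{sh}}$ respectively, with $\pisc,\piloc$ bounded away from zero near $1$ and $0$ and dominated by the integrable $u_{\text{sc}},u_{\text{loc}}$ (Conditions~\ref{cond:frecextend}\ref{cond:pisc}, \ref{cond:gumbextend}\ref{cond:piloc}, \ref{cond:extendweib}), it assigns positive mass to a fixed neighbourhood $\overline{U}$ of $(\bomega_0,\bone,\bzero)$ and is sandwiched between two fixed, compactly supported priors, uniformly over realisations of $(\widehat{\bsigma}_n,\widehat{\bmu}_n)$ in the $\epsilon_0$-window; this is exactly where the empirical-Bayes rescaling in \eqref{eq: psin} is needed, since without it the reparametrised prior would be asymptotically null at $(\bone,\bzero)$ (cf.\ Remark~\ref{rem: eb}). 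Thus, on $\Omega_n$, we are --- up to this sandwiching --- in the situation of Theorem~\ref{theo: cons_weibull} under Condition~\ref{cond:genprior}\ref{cond:compactprior}.

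I would then verify Condition~\ref{cond: newcond} for $\Pi_{\Hset}\times\overline{\Psi}_n$ on $\Omega_n$ with constants independent of the estimator realisation. The Kullback-Leibler support requirement follows from Theorem~\ref{theo:KL_prior_multi} (positive $\Pi_{\Hset}$-mass for $\dist_\infty$-neighbourhoods of admissible angular densities, hence for Kullback-Leibler balls of $g_{\bone}(\cdot|H)$), the positive $\overline{\Psi}_n$-mass of $\overline{U}$, and continuity in Kullback-Leibler divergence of $(H,\bvartheta)\mapsto g_{\bvartheta}(\cdot|H)$ at $(H_0,(\bomega_0,\bone,\bzero))$; here the restriction $\bomega>\bone$ --- which keeps the reverse Weibull densities vanishing at their endpoints so that the relevant log-ratios are $g_{\bvartheta_0}(\cdot|H_0)$-integrable despite the endpoint mismatch between $\bmu$ and $\bzero$ --- and, when $d=2$ and $h_0$ diverges at the vertices, Condition~\ref{cond:frecextend}\ref{cond:strongertruedens}, are used exactly as in the proof of Theorem~\ref{theo: cons_weibull}. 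The density-level test $s_n$ and the exponentially small prior mass of its Bernstein-polynomial sieve complement come from the construction of Theorem~\ref{theo:post_consistency_mvt} (Condition~\ref{cond:angularprior}), while the marginal tests $t_{n,j}$ against the complement of a fixed compact neighbourhood of $(\omega_{0,j},1,0)$ are built from preliminary uniformly consistent estimators over $\overline{K}$ via Lemmas~10.3 and 10.6 of \cite{r999} --- compactness being indispensable, since the three-parameter reverse Weibull is an irregular model. This yields \eqref{eq:mainbound} on $\Omega_n$ with a fixed $c>0$ and test array $\tests$, together with \eqref{eq:expobound} for the corresponding limiting sample.

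Finally, Conditions~\ref{cond:strong} and \ref{cond:densratio} put us under Proposition~\ref{prop:RC}, so $Q_n$ is $e^{-cn}$-to-$n^{-1-c'}$-remotely contiguous with respect to $\prodGtHtruealt(\cdot|H_0)$; applied to $E_n=\{\,\Vert\tests(\overline{\bM}_{m_n,1:n})\Vert_1+e^{cn}\Xi_n(\overline{\bM}_{m_n,1:n},\tests,\Pi_{\Hset}\times\overline{\Psi}_n)>\varepsilon\,\}$, whose $\prodGtHtrue(\cdot|H_0)$-probability is $o(e^{-cn})$ by \eqref{eq:expobound}, this gives $Q_n(E_n)\to0$ for every $\varepsilon>0$, as in the proof of Theorem~\ref{th:rem_cont_frec}. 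Combining this with $\prodFm(\Omega_n)\to1$, with $\overline{\bM}_{m_n,1:n}\sim Q_n$ under $\prodFm$, and with the sandwiching of the second step, a union bound gives $\prodFm(\tilde{\Pi}_n(\tilde{\mathcal{U}}^\complement)>\delta)\to0$ for every $\delta>0$ and $\dist_H$-neighbourhood $\tilde{\mathcal{U}}$, and likewise for $\Pi_n$; undoing the reparametrisation delivers (a$'$), (c$'$) and points (a),(c) of Theorem~\ref{theo: cons_weibull}, and (b$'$) follows from (a$'$) since Hellinger consistency of the posterior forces that of the quadratic-loss Bayes estimator $\hat{g}_n^{(\text{o})}$. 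The main obstacle is the Kullback-Leibler step: controlling $\kulb(g_{\bomega_0,\bone,\bzero}(\cdot|H_0),g_{\bvartheta}(\cdot|H))$ uniformly over a neighbourhood of $(H_0,(\bomega_0,\bone,\bzero))$ in the presence of the irregular endpoint behaviour of the reverse Weibull densities --- it is precisely for this that the prior is confined to $\bomega>\bone$ with compactly supported margins; the secondary difficulty is reconciling the data-dependent rescaling with the remote-contiguity tail bound so that the $e^{cn}$ inflation in \eqref{eq:mainbound} is absorbed, now only in $\prodFm$-probability because the estimators of Condition~\ref{cond:extendweib}\ref{eq:estlocscaleweib} converge only in probability.
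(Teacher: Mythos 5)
Your proposal follows the paper's proof essentially verbatim: reparametrise via \eqref{eq:reparam}, localise to the high-probability event where the estimators are close to the norming constants so that the reparametrised data-dependent prior is dominated above and below by fixed compactly supported priors satisfying Condition \ref{cond:genprior}\ref{cond:compactprior}, verify Condition \ref{cond: newcond} through the well-specified machinery of Theorem \ref{theo: cons_weibull}, lower-bound the marginal-likelihood denominator via the Weibull analogue of the pseudo-Kullback--Leibler lemma, and transfer the exponential tail bound to $Q_n$ by the remote contiguity of Proposition \ref{prop:RC}. The only slight imprecision is attributing the in-probability (rather than almost-sure) conclusion solely to the estimators of Condition \ref{cond:extendweib}\ref{eq:estlocscaleweib}: the deeper obstruction, which the paper makes explicit, is that for the $\bomega$-Weibull model only first-order positive Kullback--Leibler moments are controllable, so the denominator bound itself is obtained by Markov's inequality rather than Borel--Cantelli, and the restriction to in-probability estimators is a consequence of this rather than its cause.
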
	
Theorem \ref{theo:rem_weib} provides convergence results which are valid in probability (see Section \ref{appsec:proof_weibdom} of the supplement for a proof). In order to obtain stronger forms of convergence as in Theorems \ref{th:rem_cont_frec} and \ref{th:rem_cont_gumb}, the moments of the positive part of the pseudo log likelihood ratios 
$$
 \log\{g_{\bomega_0, \bone, \bzero}(\bM_{m_{n},i}|H)/g_{\bomega, \bsigma, \bmu}(\bM_{m_{n},i}|H)\},
$$ 
$i=1, \ldots,n$, have to be bounded uniformly over a $(H,\bomega,\bsigma,\bmu)$-set of positive prior mass. For multivariate $\brho$-Fr\'echet and Gumbel models, the tractability of such quantities is guaranteed by Conditions \ref{cond:genprior}\ref{cond:angularpmprior} and \ref{cond:frecextend}\ref{cond:strongertruedens}. 
In the $\bomega$-Weibull model, the latter only guarantee control over the first moments, while the
higher order ones
are less tractable unless further restrictions are imposed on the shape parameters. See Section \ref{sec:preli_weib} and Lemma \ref{lem:pseudokulbweib} of the supplement for details. Accordingly, we only focus on estimators $\widehat{\bsigma}_n$, $\widehat{\bmu}_n$ satisfying the convergence properties in Condition \ref{cond:extendweib}\ref{cond:scaleloc} in probability, an example of which is provided next.
\begin{example}\label{ex: weibest}
Consider an iid observable sample $\bZ_1,\ldots, \bZ_{nm_n}$ from $F_0$. According to \cite[Ch. 4.2 and Ch. 4.5]{r202}, estimators $\widehat{\bsigma}_n$, $\widehat{\bmu}_n$ complying with Condition \ref{cond:extendweib}\ref{cond:scaleloc} can be constructed by selecting 
	$$
	\widehat{\sigma}_{n,j}=Z_{n(m_n-1),nm_n,j}{\xi}_{n,j,1}(1-\widehat{\gamma}_{n,j}^{-})/(-\widehat{\gamma}_{n,j}), \quad j=1,\ldots,d,
	$$
	and $\widehat{\mu}_{n,j}=Z_{n(m_n-1),nm_n,j} +\widehat{\sigma}_{n,j}$, for $j=1,\ldots,d$, where $Z_{s,j,nm_n}$ is the $s$-th order statistic of the marginal sample $Z_{1,j}, \ldots, Z_{nm_n,j}$ and 
$$
\widehat{\gamma}_{n,j}^{-}=\frac{\xi_{n,j,2}-2\xi_{n,j,1}^2}{2(\xi_{n,j,2}-\xi_{n,j,1}^2)},
$$
with
$$
	\xi_{n,j,l}=\frac{1}{n} \sum_{i=i}^{n-1}\left(
	\log Z_{nm_{n}-i,nm_n,j}-\log Z_{n(m_n-1),nm_n,j}
	\right)^l, \quad l=1,2,
	$$
	and $\widehat{\gamma}_{n,j}$ is an estimator satisfying $\widehat{\gamma}_{n,j}=-1/\omega_{0,j}+o_p(1)$ \citep[see][Ch. 3 for examples]{r202}.
\end{example}

\section{Discussion}\label{sec:discussion}
%

%
This work contributes to the development of Bayesian statistics, elaborating an asymptotic theory in the extreme value context. We provide the first concentration results for semiparametric posterior distributions stemming from max-stable stastical models.
Our asymptotic results cover two cases: (a) the data come exactly from a max-stable distribution; (b) the data generating distribution lies in a neighbourhood of max-stable one (misspecified max-stable model), which is the most realistic situation.
%
%
%
All margins of the well-specified or misspecified  max-stable model are assumed to correspond to only one of the  three admissible types: short-, light- or heavy-tailed. 
%
%
%
The assumption of a homogeneous tail behaviour among marginal distributions is unrestrictive for many real-world applications. Indeed, in environmental studies data are often modelled using distributions with short or light tails, justified on physical grounds. Heavy-tailed distributions are typically used in applications of financial and actuarial sciences, to account for the occurrence of few exceptional events.

Nevertheless, in some applications uncertainty on marginal tail behaviours has to be taken into account and heterogenous marginal tails may be found to provide a better fit to the data. 
%
In this case a possible approach is to model the marginal distributions via the generalised extreme value distribution \citep[e.g.,][]{r2408}, with cumulative distribution function of the form
$$
G_{\gamma_j,\sigma_j,\mu_j}(x)=\exp\left( - \left(1+\frac{\gamma_j(x-\mu_j)}{\sigma_j}\right)^{1/\gamma_j}
\right),
$$
provided that $x>\mu_j-\sigma_j/\xi_j$, for $j=1, \ldots,d$.  
Within this setting, vectors of marginal parameters which are close in $L_1$ distance may correspond to joint max-stable distributions with mismatched supports and infinite Kullback-Leibler divergences.
This seems to hamper the construction of prior distributions with positive mass on Kullback-Leibler neighbourhoods of the true density function, rendering the most well established approaches to the study of posterior concentration hardly viable. 
Consequently, such a modelling framework is left out from the present work.
		
%
%
%
%
%
 Bayesian computational methods for non- and semiparametric max-stable models in dimensions higher than two are still in their infancy.
Data augmentations schemes exploiting the spectral representation of max-stable rv's have recently been adopted in the parametric context \citep[e.g.,][]{r252}.
This seems to be promising for the derivation of efficient MCMC algorithms to sample from posterior distributions also in a Bayesian semiparametric approach.
Such a task requires a thourough investigation, as the infinite-dimensionality of the spaces to be explored make it sensibly more complex than in the parametric case.   
Therefore, a comprehensive discussion of computational aspects is deferred to future studies.
Nonetheless, our theoretical results draw mathematical guidelines for good Bayesian inferential practices, benefitting from frequentist large sample guarantees. In particular, we show that empirical Bayes prior specifications constitute an asymptotically principled approach for the prediction of extremes, allowing for adaptation to some unkown extremal features of the data.
We hope our work opens the way towards new the developments in the theory and practice of Bayesian analysis of extreme values.
\section*{Acknowledgements}
The authors are grateful to Michael Falk, Bas Kleijn and Anthony Davison for their valuable support and help. Simone Padoan is supported by the Bocconi Institute for Data Science and Analytics (BIDSA), Italy. 

%




\newpage

\begin{supplement}
\setcounter{page}{1}
This supplementary material document contains a discussion on additional results derived for bivariate simple max-stable distributions (see Section~\ref{app:technical}), a collection of auxiliary results (see Section~\ref{app:auxiliary}), the proofs of all theoretical findings presented both in the main paper and in this manuscript (see Section~\ref{appsec:proofs}).
\appendix 
\section{Notation}\label{appsec:notation}
In this section provide some additional notations that are used thoughout the supplement, along with those in Section \ref{sec:notation} of the main article.

Let $\sreal\subset\R$ and $f\in C^1(\sreal)$ with an absolutely continuous first derivative. We denote the latter by $f'$ and its weak derivative by $f''$ \citep[e.g.][p. 22]{r99}. 
If $f$ is twice continuously differentiable, then $(\partial^2/\partial x^2)f(x)=f''(x)$ almost everywhere; therefore, in such a case, we consider the continuous representative given by the strong (i.e. the canonical) second derivative. 
We define the functionals
$$
\|f\|_{1,\infty}:=\|f\|_\infty+ \|f'\|_\infty, \quad \|f\|_{2,\infty}:=\|f\|_{1,\infty}+ \|f''\|_\infty,
$$
and the associated metrics $\dist_{p,\infty}(f,g):=\|f-g\|_{p,\infty}$, $p=1,2$, with $g:\sreal \rightarrow \real$ sharing the same properties of $f$. 
If, instead, $f$ is only weakly differentiable, then $f'$ denotes denotes its weak derivative.
For a generic space $\sreal$, endowed with a metric $\dist$, $x_* \in \mathcal{X}$ and $\epsilon>0$ we denote
$$
B_{\epsilon, \dist}(x_*)=\{ x \in \sreal: \, \dist(x,x_* )\leq  \epsilon \};
$$
for the sake of a lighter notation, when $\dist$ is an $L_p$-distance, $p \in  [1, \infty]$, we write $B_{\epsilon, p}(\cdot)$.
With a little notational abuse, when $H_* \in \Hset$, with $\Hset$ given in Definition \ref{cond_angular} of the main article, we also use the symbol
$$
B_{\epsilon, \infty}(H_*):=\{H \in \Hset: \, \Vert
h-h_*
\Vert_\infty \leq \epsilon
\},
$$
where $h$ and $h_*$ denote the angular densities of $H$ and $H_*$, respectively.
Moreover, for any $B \subset \sreal$, we denote by $\cover(\delta, B, \dist)$ be the $\delta$-covering number of a set $B$ with respect to the metric $\dist$ \citep[e.g.,][Appendix C]{r10}. Finally, for pairs of pm's  $F$ and $G$ with density functions $f$ and $g$ with respect to some dominating measure $\nu$ on $\mathcal{X}$, we denote the higher-order positive Kullback-Leibler divergences \citep[e.g.,][Appendix B]{r10} 
by
$$
\kulb_+^{(l)}(f,g)=\int_\mathcal{X}
\left[
\log^+\{f(x)/g(x)\}
\right]^l
f(x) \diff \nu(x),
$$
where $l \in \nat_+$ and, for all $y>0$, $\log^+(y)=\max\{\log(y),0\}$.  

\section{Bivariate simple max-stable distributions}
\label{app:technical}
This section complements Sections  \ref{sec:MGEV}--\ref{sec:bayesian_inference} of the main paper, by providing additional notions and results on representations of the extremal dependence and the related Bayesian inference in the bivariate case.
Given a Pickands dependence function $A \in \Aset$ (Definition \ref{cond_angular} of the main article), when $d=2$ the pertaining simple max-stable density function (equation \eqref{eq:density_unit_fre} of the main article)
simplifies to
\begin{equation}\label{eq:bdens_fre}
g_{\bone}(\by|A)=G_{\bone}(\by|A)\left(\frac{(A(t)-tA'(t))(A(t)+(1-t)A'(t))}{(y_1y_2)^2}+\frac{A''(t)}{(y_1+y_2)^3}\right),
\end{equation}
where $\by>\bzero$ and $t\equiv t(\by)=y_1/(y_1+y_2)$.
Moreover,
conveniently setting $p_0=H(\{\be_2\})$, the pertaining angular cumulative distribution function (cdf) is of the form 
$$
H(t)=p_0+\int_0^th(v)\diff v + p_1\delta_{1}([0,t]),\quad t\in[0,1].
$$
Consequently, $A$ and its derivatives are related to $H$ and $h$ through the explicit formulas  
\begin{eqnarray}
\label{eq:pick}
A(t)&=&1+2\int_0^t H(v)\diff v - t, \quad t \in[0,1],\\
\label{eq:pickdev}
A'(t)&=&-1+2H(t), \hspace{4.7em} t \in (0,1),\\
\label{eq:pick_2dev}
A''(t)&=&2h(t),\hspace{7.5em} \text{for almost every } t \in (0,1).
\end{eqnarray}
Furthermore, $A'(0)=2p_0-1$ and $A'(1)=1-2p_1$, where, with an abuse of notation, $A'$
also denotes the continuous extension of the first derivative of $A$ on $[0,1]$. We stress that the notation $p_0$ should not be confused with a true parameter value, used to actually generate the data. In the sequel, the point masses pertaining to the true bivariate angular pm $H_0$ are denoted by $p_{0,0}$ and $p_{0,1}$, to avoid ambiguities.

The relations in \eqref{eq:pick}-\eqref{eq:pickdev} makes simultaneous polynomial modeling of the Pickands dependence and the angular distribution functions very tractable, as illustrated in the next subsection. In Section \ref{app:review_BP}, we review a Bernstein polynomial representation of the Pickands dependence and the angular distribution functions which, in the specific case $d=2$, corresponds to the Bernstein polynomial representation of the angular density in Section \ref{sec:extreme_dep_poly} of the main article.
In Section \ref{app:review_BS} we provide a novel characterization via B-splines.
Both constructions can be used for prior specifications on the extremal dependence yielding consistent posterior distributions, as shown in Section \ref{sec:posterior_consistency_2D}.
The consistency results provided therein build on the general theory on Kullback-Leibler property presented in Section \ref{app:KL_support}. 
The latter rephrases the analysis in Section \ref{sec:KL_support} of the main article from the perspective where uncertainty on the extremal dependence structure is dealt with by assigning a prior to the Pickands dependence function.	
We recall that the regularity conditions (C.2)-(C.3) in the main article univocally identify the class of valid Pickands dependence functions only when $d=2$. 
Therefore, a direct specification of a prior with full support on the space of Pickands dependence functions is essentially tractable only in the bivariate case.  
From a practical viewpoint, the Pickands dependence function provides an easy-to-interpret description of the dependence structure of a bivariate max-stable rv and allows to readily retrieve some of the most commonly used extremal dependence measures. Examples are the extremal coefficient, the coefficient of upper tail dependence and Spearman's rho for extreme-value copulas, given by $2A(1/2)$,  $2-2A(1/2)$ and
$$
12\int_0^1\frac{1}{\{1+A(t)\}^2}\diff t-3,
$$
respectively, see Ch. 8.2.7 in \cite{r17} for a comprehensive account. 
From a mathematical stance, we highlight that the case $d=2$ is the only one where the map  $(\Aset,\dist_{\infty})\mapsto(\Hset, \dist_{KS}):A \mapsto H$ is homeomorphic (see also Proposition \ref{prop: basic metric bvt}). Such a relation between metric spaces is underpinned by equation \eqref{eq:pick_2dev} and the convexity of the Pickands dependence function. 
In inferential terms, this guarantees that $\dist_{\infty}$-consistency at the true Pickands dependence function directly translates into $\dist_{KS}$-consistency at the true angular pm, while an equivalent implication fails in higher-dimensions. 
For technical convenience, the class $\Aset$ is hereafter endowed with the equivalent metric $\dist_{1,\infty}$ and consistency results are  provided with respect to the latter,
paralleling the analysis in
Section \ref{sec:posterior_consistency} of the main article.
%


%
\subsection{Polynomial representation of the extremal dependence}
\label{app:review}
\subsubsection{Bernstein form}
\label{app:review_BP}
According to 
\cite{r24},
for $k \geq 2$, a $k-1$-th degree Bernstein polynomial representation of the angular cdf is given by 
\begin{equation}\label{eq:bpoly_angdist}
H_{k-1}(t):=
\begin{cases}
\sum_{j=0}^{k-1} \eta_j k^{-1}\betaf(t|j+1,k-j), & \text{ if } t\in[0,1)\\
1, &  \text{ if } t=1
\end{cases}
\end{equation}
where $\betaf(\cdot|a,b)$ denotes the beta density function with shape parameters
$a,b>0$ and, for $0\leq j\leq k-1$, $k^{-1}\betaf(t|j+1,k-j)$ is the $j$-th Bernstein basis function. If the polynomial's coefficients $\eta_0, \ldots, \eta_{k-1}$ satisfy the restrictions:
\begin{enumerate}
\item[(R3)] $0\leq p_0=\eta_0\leq \eta_1\leq \ldots \leq \eta_{k-1}=1-p_1\leq 1$,
\item[(R4)] $\eta_0+\cdots+\eta_{k-1}=k/2$,
\end{enumerate}
where $0\leq p_0,p_1\leq 1/2$, the function in \eqref{eq:bpoly_angdist} is a valid angular cdf.
Similarly, the Bernstein polynomial of degree $k$, for $k=2,3,\ldots$, given by 
\begin{equation}\label{eq:bpoly_picka}
A_{k}(t) := \sum_{j=0}^k \beta_j (k+1)^{-1}\betaf(t|j+1,k-j-1),
\qquad t \in[0,1],
\end{equation}
defines a valid Pickands dependence function if 
its coefficients satisfy the restrictions:
\begin{enumerate}
\item[(R5)] $\beta_0=\beta_k=1\ge \beta_j,$ for all $j=1,\ldots,k-1$;
\item[(R6)] $\beta_1=\frac{k-1 + 2 p_0}{k}$ and $\beta_{k-1} = \frac{k-1 + 2p_1}{k}$;
\item[(R7)] $\beta_{j+2}-2\beta_{j+1}+\beta_j\geq 0$, $j=0,\ldots,k-2$.
\end{enumerate}
Thus, for $k=2,3,\ldots$,  we define the classes of  Pickands dependence functions
and angular pm's with cdf's 
\textit{in Bernstein polynomial (BP) form} via
\begin{eqnarray*}
	\Aset_k&=&\{A_k \in \Aset:\, A_k(t)=\sum_{j=0 }^k\frac{\beta_j}{k+1}\betaf(t|j+1,k-j-1),\,  \text{(R5)-(R7) hold true}\},\\
	\Hset_{k-1}&=&\{ H_{k-1}\in \Hset: H_{k-1}(t)= \sum_{j=0}^{k-1} \frac{\eta_j}{k}\betaf(t|j+1,k-j),
	\,\text{(R3)-(R4) hold true}\}.
\end{eqnarray*}
Notably, for each $A_k\in\Aset_k$ it is possible to derive a polynomial $H_{k-1}\in \Hset_{k-1}$
and vice versa, by means of precise relationships between the two polynomials' coefficients, see \cite[][Proposition 3.2]{r24} for details. Moreover, by arguments in \cite[][Propositions
3.1-3.3]{r24},   $\cup_{k=2}^{\infty}\Aset_{k}$ and $\cup_{k=2}^{\infty}\Hset_{k-1}$ are dense subsets of
the spaces $(\Aset, \dist_{1, \infty})$ and $(\Hset, \dist_{KS})$, respectively.

Concluding, we point out that, for any $t \in (0,1)$, the angular density corresponding to the distribution in \eqref{eq:bpoly_angdist} is given by
\begin{equation}\label{eq:polyrep_dens}
	h_{{k-2}}(t)=\sum_{j=1}^{k-1}(\eta_{j}-\eta_{j-1})\text{Be}(t|j,k-j).
\end{equation}
Letting $\varphi_{\bkappa_2}=\eta_0$, $\varphi_\balpha=\eta_{\alpha_1}-\eta_{\alpha_1-1}$, $\balpha = (j,k-j)$, $j=1, \ldots, k-1$,  and
$\varphi_{\bkappa_1}=1-\eta_{k-1}$, we have that if $\eta_0,\ldots,\eta_{k-1}$ satisfy (R3)-(R4), then $(\varphi
_{\bkappa_1}, \varphi
_{\bkappa_2}, \varphi_\balpha, \, \balpha \in \Gamma_k)$ satisfy
(R1)-(R2) and the Bernstein polynomial representation in Section \ref{sec:extreme_dep_poly} of the main article is retrieved.

\subsubsection{Piecewise polynomial representation with B-splines}
\label{app:review_BS}

Piecewise polynomial representations of the angular cdf and the Pickands dependence function on $[0,1]$ as linear combinations of B-splines can be obtained as follows.
For $\kappa\geq 1$ and $m\geq 2$ we define the interior knots as the sequence
$0<\tau_{m+1}<\cdots<\tau_{m+\kappa}<1$ and
the exterior knots by $\tau_1=\cdots=\tau_m=0$ and $\tau_{m+\kappa+1}=\cdots=\tau_{2m+\kappa}=1$.
Consider the B-spline basis of order $m$ defined by the recursive formula \citep{r35}
$$
\phi_{j,i}(t)=\frac{t-\tau_j}{\tau_{j+i-1}-\tau_j}\phi_{j,i-1}(t)+\frac{\tau_{j+i}- t}{\tau_{j+i}-\tau_{j+1}}\phi_{j+1,i-1}(t),\quad t\in[0,1],
$$
for $1< i\leq m$ and $1\leq j\leq 2m+\kappa-i$, starting with
$$
\phi_{j,1}(t)=
\begin{cases}
1, & \tau_j \leq t < \tau_{j+1},\\
0, & \text{otherwise},
\end{cases}
$$
for $1\leq j \leq  2m+\kappa-1$. 
Given $\kappa$ points in $(0,1)$, a spline of order $m$, with knot sequence defined as above, can be expressed
as the linear combination of $m+\kappa$ piecewise polynomials of degree $m-1$ (B-spline basis functions) and has $m-1$ continuous derivatives at each of the $\kappa$ interior points.
Let
\begin{equation}\label{eq:bspline_angdist}
H_{k-1}(t)=
\begin{cases}
\sum_{j=1}^{k-1}\eta_j\phi_{j,m-1}(t), & t\in[0,1),\\
1, & t=1,
\end{cases}
\end{equation}
be a spline of order $m-1$, whose basis consists of $k-1=m+\kappa-1$ piecewise polynomials of degree $m-2$.
Notice that taking the first derivative of \eqref{eq:bspline_angdist} with respect to $t$ we obtain
the following spline of order $m-2$
\begin{equation}\label{eq:bspline_angdens}
H_{k-1}'(t)=\sum_{j=1}^{k-2}\left(\frac{(m-2)(\eta_{j+1}-\eta_{j})}{\tau_{j+m-1}-\tau_{j+1}}\right)\phi_{j+1,m-2}(t),\quad t\in(0,1).
\end{equation}
Similarly, let
\begin{equation}\label{eq:bspline_pick}
A_k(t)=\sum_{j=1}^k\beta_j\phi_{j,m}(t), \quad t\in[0,1],
\end{equation}
be a spline of order $m$, whose basis consists of $k=m+\kappa$ piecewise polynomials of degree $m-1$.
Notice that
the first two derivatives of \eqref{eq:bspline_pick} with respect to $t$ gives the following splines of order $m-1$ and $m-2$, respectively,
\begin{equation}\label{eq:bspline_fder_pick}
A_{k}'(t)=\sum_{j=1}^{k-1}\beta_{j,1}\phi_{j+1,m-1}(t),\quad A_{k}''(t)=\sum_{j=1}^{k-2}\beta_{j,2}\phi_{j+2,m-2}(t),
\end{equation}
where $t \in (0,1)$ and
\begin{equation*}
\beta_{j,s}=\frac{(m-s)(\beta_{j+1,s-1}-\beta_{j,s-1})}{\tau_{j+m+1-s}-\tau_{j+1}}, \quad s=1,2,
\end{equation*}
with $\beta_{j,0}\equiv\beta_j$ for $j=1,2,\ldots,k-1$.

The next result provides necessary and sufficient conditions on the splines' coefficients in \eqref{eq:bspline_angdist} and \eqref{eq:bspline_pick} to define valid angular cdf's and Pickands dependence functions of order $2$ and $3$, respectively. The proof is provided in Section \ref{sec:proofPropSP1}.
We found that extending such a result to higher orders is less tractable, while nonessential for practical statistical purposes.
Hereafter, for any given integer $k>3$, we fix the sequence of internal knots $(i/(k-2), i=1,\ldots,k-3)$.
We refer to the functions \eqref{eq:bspline_angdist} and \eqref{eq:bspline_pick}, satisfying the restrictions of Proposition \ref{prop:bspline_cond}, as the angular cdf and Pickands dependence function \textit{in B-Spline (BS) form}.
\begin{prop}\label{prop:bspline_cond}
For $m=3$ and $k\geq 4$, the spline  $H_{k-1}(t)$ in \eqref{eq:bspline_angdist} 
is a valid angular cdf if and only if the spline's coefficients satisfy the restrictions:
\begin{enumerate}
\item[(R8)] $0\leq p_0=\eta_1\leq \eta_2\leq \ldots \leq \eta_{k-1}=1-p_1\leq 1$; 
\item[(R9)] $\eta_1+2(\eta_2+\cdots+\eta_{k-2})+\eta_{k-1}=(k-2)$;
\end{enumerate}
where $0\leq p_0,p_1\leq 1/2$.
Likewise, the spline $A_{k}(t)$  in \eqref{eq:bspline_pick} is a valid Pickands dependence function if and only if the spline's coefficients satisfy the restrictions:
\begin{enumerate}
\item[(R10)] $\beta_1=\beta_k=1\ge \beta_j$, for all $j=2,\ldots,k-1$;
\item[(R11)] $\beta_2=1+(p_0-1/2)/(k-2)$ and $\beta_{k-1} = 1+(p_1-1/2)/(k-2)$;
\item[(R12)] $\beta_3 -3\beta_2+2\beta_1\geq0$, $2\beta_k - 3\beta_{k-1}+\beta_{k-2}\geq 0$ and 
$\beta_{j}-2\beta_{j-1}+\beta_{j-2}\geq 0$, $j=4,\ldots,k-1$.
\end{enumerate}
\end{prop}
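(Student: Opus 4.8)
The plan is to establish both characterisations (for the angular cdf in B-spline form and the Pickands dependence function in B-spline form) by translating the known restrictions (R3)--(R7) from the Bernstein setting — or, more directly, the defining properties of valid angular cdf's and Pickands functions in Definition \ref{cond_angular} and conditions (C1)--(C3) — into the language of B-spline coefficients, exploiting the special structure of the fixed uniform knot sequence $(i/(k-2),\,i=1,\ldots,k-3)$ together with the chosen exterior knots. First I would record the basic properties of the order-$m$ B-spline basis $\phi_{j,m}$ that I intend to use: partition of unity $\sum_j \phi_{j,m}(t)=1$ on $[0,1]$; the endpoint interpolation values $\phi_{1,m}(0)=1$, $\phi_{k,m}(1)=1$ with all other basis functions vanishing there (a consequence of the $m$-fold knots at $0$ and $1$); nonnegativity; the derivative formula already displayed in \eqref{eq:bspline_angdens} and \eqref{eq:bspline_fder_pick}; and the integral identities $\int_0^1 \phi_{j,m}(t)\,\diff t = (\tau_{j+m}-\tau_j)/m$, which with uniform interior knots of spacing $1/(k-2)$ and $m=3$ give $\int_0^1\phi_{j,3} = 1/(k-2)$ for the interior indices and the appropriate boundary-adjusted values for $j$ near $1$ or $k$.

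Next I would treat the angular cdf. By \eqref{eq:pick}--\eqref{eq:pick_2dev} and Definition \ref{cond_angular}, $H_{k-1}$ is a valid angular cdf precisely when it is nondecreasing on $[0,1)$ with $H_{k-1}(0)=p_0\in[0,1/2]$, $H_{k-1}(1^-)=1-p_1$ with $p_1\in[0,1/2]$, and the mean constraint $\int_0^1 H_{k-1}(t)\,\diff t = 1/2$ (which is (C1) rewritten via \eqref{eq:pick}, since $A(1)=A(0)=1$ forces $2\int_0^1 H = 1$). For $m=3$ the derivative \eqref{eq:bspline_angdens} is an order-$2$ (piecewise linear) spline with coefficients proportional to the forward differences $\eta_{j+1}-\eta_j$; since a piecewise linear B-spline expansion is nonnegative on $[0,1)$ if and only if all its coefficients are nonnegative (the basis functions being nonnegative and their supports covering the interval with each point lying in the support of a basis function that is positive there), monotonicity is equivalent to $\eta_1\le\eta_2\le\cdots\le\eta_{k-1}$. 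The endpoint values give $H_{k-1}(0)=\eta_1$ and $H_{k-1}(1^-)=\eta_{k-1}$ directly from endpoint interpolation, yielding the boundary part of (R8). Finally I integrate the expansion term by term using the knot-dependent integral values to obtain (R9): the boundary basis functions $\phi_{1,3},\phi_{k-1,3}$ contribute weight $1/(2(k-2))$ while interior ones contribute $1/(k-2)$, so $\int_0^1 H_{k-1} = \tfrac{1}{2(k-2)}(\eta_1+\eta_{k-1}) + \tfrac{1}{k-2}\sum_{j=2}^{k-2}\eta_j = 1/2$ rearranges exactly to (R9).

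Then I would handle the Pickands dependence function. Here $A_k$ in \eqref{eq:bspline_pick} is order $m=3$ (the statement says ``of order $2$ and $3$ respectively''), so $A_k'$ is piecewise linear and $A_k''$ is piecewise constant with coefficients $\beta_{j,2}$ proportional to the second differences $\beta_{j+1}-2\beta_j+\beta_{j-1}$. Validity of $A_k$ as a Pickands function amounts to (C2)--(C3): convexity, the boundary conditions $A_k(0)=A_k(1)=1$, and the constraint tying $A_k'(0)$, $A_k'(1)$ to the point masses via $A'(0)=2p_0-1$, $A'(1)=1-2p_1$. Endpoint interpolation gives $A_k(0)=\beta_1$, $A_k(1)=\beta_k$, hence the equalities in (R10); the inequality $\beta_j\le 1$ follows because $A_k\le 1$ on $[0,1]$ and, conversely, a convex function with $A_k(0)=A_k(1)=1$ and $A_k\le 1$ forces each control point below $1$ by the variation-diminishing / convex-hull property of B-splines. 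Convexity of $A_k$ is equivalent to $A_k''\ge 0$; since $A_k''$ is a piecewise-constant (order-$2$, i.e. order-$1$ after differentiating twice — I must be careful: $m=3$ gives $A_k''$ an order-$1$ spline, piecewise constant) B-spline expansion, nonnegativity is equivalent to nonnegativity of every coefficient $\beta_{j,2}$, and writing $\beta_{j,2}$ out with the uniform interior spacing reduces — after accounting for the unequal first/last knot gaps induced by the triple exterior knots — to the stated inequalities (R12): the interior ones are plain second differences $\beta_j - 2\beta_{j-1}+\beta_{j-2}\ge 0$, while the two boundary ones acquire the coefficients $3$ and $2$ because the relevant knot differences at the ends are $2/(k-2)$ rather than $1/(k-2)$. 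Finally, computing $A_k'(0)$ and $A_k'(1)$ from \eqref{eq:bspline_fder_pick} at the endpoints (only one basis function of the derivative spline is nonzero at each end, with value determined by the end knot spacing $1/(k-2)$ after the triple knot) gives $A_k'(0) = (k-2)(\beta_2-\beta_1)$ and $A_k'(1)=(k-2)(\beta_k-\beta_{k-1})$; setting these equal to $2p_0-1$ and $1-2p_1$ and using $\beta_1=\beta_k=1$ yields exactly (R11).

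The main obstacle I anticipate is bookkeeping the boundary knot multiplicities correctly: the triple exterior knots at $0$ and $1$ make the first and last few knot differences $\tau_{j+m+1-s}-\tau_{j+1}$ unequal to the uniform interior value, and this is precisely what produces the asymmetric-looking coefficients (the $3\beta_2$ and $2\beta_k-3\beta_{k-1}$ in (R12), the factor $1/(2(k-2))$ in (R9), the end derivative formulas in (R11)). Getting these constants exactly right — rather than off by a factor coming from a miscounted knot gap — requires writing the order-$2$ and order-$3$ basis functions near the endpoints explicitly from the recursion and differentiating by hand; I would do this once and carefully, then the rest is routine. The only genuinely nontrivial mathematical point is the equivalence ``nonnegative B-spline expansion of a low-order spline $\iff$ nonnegative coefficients'': the forward (sufficiency) direction is immediate from nonnegativity of the basis, while the converse for orders $1$ and $2$ holds because each point of $[0,1)$ lies strictly inside the support of some basis function on which that function is the unique nonzero one or attains a positive value, so evaluating at such points (or at knots) forces each coefficient $\ge 0$; I would state this as a small lemma or cite it from \cite{r35}.
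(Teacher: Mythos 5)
Your strategy is essentially the paper's: endpoint interpolation at the repeated exterior knots gives the boundary values $H_{k-1}(0)=\eta_1$, $H_{k-1}(1^-)=\eta_{k-1}$, $A_k(0)=\beta_1$, $A_k(1)=\beta_k$; monotonicity and convexity reduce to nonnegativity of the coefficients of the (low-order) derivative splines; and the unequal knot gaps near $0$ and $1$ produce the asymmetric constants in (R9) and (R12). Your derivation of (R9) by integrating $H_{k-1}$ itself, via $\int_0^1\phi_{j,2}\,\diff t=(\tau_{j+2}-\tau_j)/2$ and the identity $\int_0^1H_{k-1}=1/2$, is an integration-by-parts reformulation of the paper's computation of $\int_0^1 vH_{k-1}'(v)\diff v$ from the mean constraint (C1); both yield (R9). (Minor slip: for $m=3$ the derivative in \eqref{eq:bspline_angdens} is an order-$1$, piecewise constant spline, not order $2$; your coefficient-nonnegativity argument works either way.)

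There is, however, one concrete numerical error, precisely in the step you flagged as delicate. From \eqref{eq:bspline_fder_pick} with $m=3$ one has $A_k'(t)=\sum_{j}\beta_{j,1}\phi_{j+1,2}(t)$ with $\beta_{j,1}=2(\beta_{j+1}-\beta_j)/(\tau_{j+3}-\tau_{j+1})$; at $t=0$ only $\phi_{2,2}(0)=1$ survives and $\tau_4-\tau_2=1/(k-2)$, so $A_k'(0)=2(k-2)(\beta_2-\beta_1)$, not $(k-2)(\beta_2-\beta_1)$ as you wrote (similarly $A_k'(1)=2(k-2)(\beta_k-\beta_{k-1})$). Equating your value to $2p_0-1$ would give $\beta_2=1+2(p_0-1/2)/(k-2)$, off from (R11) by a factor of two; with the correct factor one gets $\beta_2-1=(2p_0-1)/(2(k-2))=(p_0-1/2)/(k-2)$ as required. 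Everything else --- the convex-hull/discrete-convexity argument for $\beta_j\le1$, and the reduction of $A_k''\ge0$ to (R12) with the boundary corrections $\beta_3-3\beta_2+2\beta_1\ge0$ and $2\beta_k-3\beta_{k-1}+\beta_{k-2}\ge0$ coming from the gaps $\tau_5-\tau_3=\tau_{k+1}-\tau_{k-1}=2/(k-2)$ versus $\tau_4-\tau_2=\tau_{k+2}-\tau_k=1/(k-2)$ --- matches the paper's proof.
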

Similarly to the Bernstein polynomial case, the classes of angular cdf's and
Pickands dependence functions in BS form are related via their coefficients, as described in the following proposition. See Section \ref{sec:proofBS2} for its proof. 
\begin{prop}\label{prop:bspline_rel_pick_ang}
For $m=3$ and $k \geq 4$, let $H_{k-1}(t)$ be  defined as in \eqref{eq:bspline_angdist}. Define 
$$
\eta_j=\frac{1}{2}+\frac{\beta_{j+1}-\beta_{j}}{\tau_{j+2}-\tau_j},\quad j=1,\ldots,k-1,
$$
where $\beta_1,\ldots,\beta_k$ satisfy the restrictions (R10)-(R12) and
$(\tau_j, j=1,\ldots,k+1)=(0,0, 1/(k-2), \ldots, (k-3)/(k-2),1,1)$.
Then, $H_k(t)$ is a valid angular cdf.
Similarly, define $A_{k}(t)$ as in \eqref{eq:bspline_pick} and set
$$
\beta_1=1, \quad \beta_j=\sum_{i=1}^{j-1}(\eta_i-1/2)(\tau_{i+3}-\tau_{i+1})+1,\quad j=2,\ldots,k,
$$
where $\eta_1,\ldots,\eta_{k-1}$ satisfy the restrictions (R8)-(R9) and 
$(\tau_j, j=1,\ldots,k+3)=(0,0,0,1/(k-2), \ldots,(k-3)/(k-2), 1, 1, 1)$.
Then, $A_k(t)$ is a valid Pickands dependence function.
\end{prop}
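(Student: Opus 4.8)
The plan is to verify the two claimed correspondences directly at the level of the defining restrictions, using the derivative formulas for B-splines recorded in \eqref{eq:bspline_angdens} and \eqref{eq:bspline_fder_pick} together with the structural identities from Proposition \ref{prop:bspline_cond}. Recall that for $m=3$ the basis functions in \eqref{eq:bspline_pick} are piecewise quadratic, so $A_k$ has one continuous derivative, and the first derivative $A_k'$ in \eqref{eq:bspline_fder_pick} is a piecewise-linear spline of order $2$ with coefficients $\beta_{j,1}=(\beta_{j+1}-\beta_j)/(\tau_{j+2}-\tau_j)$ (using $m-s=2$ when $s=1$). The key algebraic observation is that the defining formula $\eta_j=\tfrac12+(\beta_{j+1}-\beta_j)/(\tau_{j+2}-\tau_j)=\tfrac12+\tfrac12 A_k'$-coefficient is precisely the discrete analogue of the pointwise relation $A'(t)=2H(t)-1$ from \eqref{eq:pickdev}; indeed the order-$2$ spline with coefficients $\eta_j$ is $\tfrac12(A_k'+1)$, which is exactly the shape one expects an angular cdf to have.

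First I would treat the direction ``coefficients $\beta$ satisfying (R10)--(R12) yield a valid angular cdf via the $\eta_j$''. I would check the three groups of restrictions in Proposition \ref{prop:bspline_cond} one at a time. For (R8): the monotonicity $\eta_1\le\cdots\le\eta_{k-1}$ is equivalent to $\beta_{j+1}-\beta_j$ being nondecreasing after division by the (constant, except at the ends) knot gaps, which is exactly the convexity condition (R12) — here one must be careful at the two endpoints where the knot gaps are halved, and verify that the ``boundary'' inequalities $\beta_3-3\beta_2+2\beta_1\ge0$ and $2\beta_k-3\beta_{k-1}+\beta_{k-2}\ge0$ are the correctly rescaled monotonicity statements for $\eta_1\le\eta_2$ and $\eta_{k-2}\le\eta_{k-1}$. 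For the boundary values $\eta_1=p_0$ and $\eta_{k-1}=1-p_1$ I would plug $j=1$ and $j=k-1$ into the definition, use $\beta_1=\beta_k=1$ from (R10) and the explicit values of $\beta_2,\beta_{k-1}$ from (R11), and the knot gaps $\tau_3-\tau_1=1/(k-2)$, $\tau_{k+1}-\tau_{k-1}=1/(k-2)$; the bounds $0\le p_0,p_1\le1/2$ then follow from (R10)--(R11). For (R9) I would sum $\sum_j (\eta_j-\tfrac12)(\text{appropriate weight})$; the weighted sum $\eta_1+2(\eta_2+\cdots+\eta_{k-2})+\eta_{k-1}$ telescopes because $\eta_j-\tfrac12$ is a difference quotient of the $\beta$'s against the knot gaps, and the stated weights are exactly the $(\tau_{j+2}-\tau_j)$ up to the factor $(k-2)$, so the sum collapses to $(k-2)(\beta_k-\beta_1)+$ constant $=(k-2)$. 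The converse direction — starting from $\eta$'s with (R8)--(R9) and building $\beta$'s by the cumulative-sum formula — is then essentially the inverse of this computation: (R10) at the endpoints is forced by the telescoping constraint (R9), (R11) comes from reading off $\beta_2-\beta_1$, and (R12) is monotonicity of the $\eta$'s; one also checks $\beta_j\le1$ for interior $j$ using $\eta_i\le\eta_{k-1}=1-p_1\le1$ and a summation-by-parts bound of the kind already used in \citep{r24}.

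The main obstacle I anticipate is bookkeeping around the non-uniform exterior knots: because $\tau_1=\tau_2=\tau_3=0$ and the three rightmost knots all equal $1$, the difference quotients defining $\beta_{j,1}$ and hence the $\eta_j$ involve gaps $\tau_{j+2}-\tau_j$ that equal $2/(k-2)$ at the two ends and $1/(k-2)$ in the interior (a little care is needed since with $m=3$ the relevant gap for $A_k'$ is $\tau_{j+2}-\tau_j$, which at $j=1$ is $\tau_3-\tau_1=1/(k-2)$ — I would tabulate these once at the start of the proof to avoid sign/scale slips). A second, milder subtlety is that $A_k'$ is only defined a.e.\ at the interior knots for the order-$2$ spline, but it is continuous there for $m=3$, so \eqref{eq:pick_2dev}-type identifications hold in the strong sense; I would note this explicitly. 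Once the knot-gap table is fixed, each of (R8)--(R12) reduces to a one-line equivalence with one of (R10)--(R12) or (R8)--(R9), and the two halves of Proposition \ref{prop:bspline_rel_pick_ang} follow, exactly paralleling \citep[Proposition 3.2]{r24} in the Bernstein case.
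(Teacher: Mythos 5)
Your proposal is correct, but it takes a genuinely different route from the paper's proof. The paper works at the level of functions: since (R10)--(R12) already certify via Proposition \ref{prop:bspline_cond} that $A_k$ is a valid Pickands dependence function, it invokes the bivariate correspondence $H=(1+A')/2$ to conclude immediately that $(1+A_k'(t))/2$ is a valid angular cdf, and then only needs the derivative formula \eqref{eq:bspline_fder_pick} (with the knots reindexed) to identify its B-spline coefficients with the stated $\eta_j$; the converse direction likewise transfers validity through $A(t)=1+2\int_0^t H(v)\,\diff v-t$ and uses de Boor's integration identities to read off the $\beta_j$. You instead work at the level of the coefficient restrictions, showing directly that the map $\bfbeta\mapsto\boldsymbol{\eta}$ carries (R10)--(R12) into (R8)--(R9) and that the cumulative-sum map is its inverse, then applying Proposition \ref{prop:bspline_cond} at the end. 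Your route is longer and demands careful bookkeeping of the non-uniform knot gaps, but it is self-contained (no appeal to de Boor's integration formula) and yields as a by-product the explicit equivalence of the two restriction sets under the coefficient maps, which is the form actually used later in the prior construction of Example \ref{ex:BS}. All the key computations you sketch check out: $\eta_2-\eta_1=\tfrac{k-2}{2}(\beta_3-3\beta_2+2\beta_1)$ and its mirror recover the two boundary inequalities of (R12), the interior monotonicity matches the third, $\eta_1=p_0$ and $\eta_{k-1}=1-p_1$ follow from (R10)--(R11) with the end gaps $\tau_3-\tau_1=\tau_{k+1}-\tau_{k-1}=1/(k-2)$, and the weighted sum in (R9) telescopes to $(k-2)(\beta_k-\beta_1)+(k-2)=k-2$.

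One caution for the write-up: your prose states the knot gaps backwards (``$2/(k-2)$ at the two ends and $1/(k-2)$ in the interior''), whereas in fact $\tau_{j+2}-\tau_j$ equals $1/(k-2)$ for $j\in\{1,k-1\}$ and $2/(k-2)$ for $j=2,\ldots,k-2$; your parenthetical self-correction at $j=1$ shows you know this, but the table you propose to fix at the outset must record the correct values, since both the (R9) telescoping and the boundary cases of the monotonicity argument depend on them. Also, the paper's coefficient for $A_k'$ is $\beta_{j,1}=2(\beta_{j+1}-\beta_j)/(\tau_{j+3}-\tau_{j+1})$ in the $A$-knot indexing, so the identity you want is $\eta_j=\tfrac12+\tfrac12\beta_{j,1}$ after the shift $\tau^{A}_{j+1}=\tau^{H}_j$; your displayed formula drops the factor $2$, though the resulting expression for $\eta_j$ is the correct one.
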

Finally, we show that the elements of 
$\Aset$ and $\Hset$ are well approximated by Pickands dependence functions and angular pm's with cdf's in BS form, respectively. For the proof, see Section \ref{sec:proofBS3}.
\begin{prop}\label{prop:bspline_full_supp}
For $m=3$, define for each $k \geq 4$
\begin{eqnarray*}
\Aset_k&:=&\{ A_k\in \Aset: \, A_k(t)=\sum_{j=1}^k\beta_j\phi_{j,m}(t), \,  \text{(R10)-(R12) hold true}\},\\
\Hset_{k-1}&:=&\{H_{k-1} \in \Hset: \, H_{k-1}(t)= \sum_{j=1}^{k-1}\eta_j\phi_{j,m-1}(t), \,
\text{(R8)-(R9) hold true}\}.
\end{eqnarray*}
Then, $\cup_{k=4}^{\infty}\Aset_{k}$ and $\cup_{k=4}^{\infty}\Hset_{k-1}$ are dense subsets of
$(\Aset, \dist_{1, \infty})$ and  $(\Hset, \dist_{KS})$, respectively.
\end{prop}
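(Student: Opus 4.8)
The plan is to prove Proposition~\ref{prop:bspline_full_supp} by reducing the approximation problem on the spaces $(\Aset,\dist_{1,\infty})$ and $(\Hset,\dist_{KS})$ to a density statement about piecewise-linear functions (order $m=3$ B-splines have piecewise-linear second derivatives, and the angular density $h=A''/2$ in the B-spline family is piecewise-linear). Since by Proposition~\ref{prop:bspline_rel_pick_ang} the coefficient relations give a bijective correspondence between valid $A_k\in\Aset_k$ and valid $H_{k-1}\in\Hset_{k-1}$, and since the map $A\mapsto H$ is a homeomorphism $(\Aset,\dist_\infty)\to(\Hset,\dist_{KS})$ in the bivariate case (mentioned in Section~\ref{app:review}, cf. Proposition~\ref{prop: basic metric bvt}; and recall $\dist_{1,\infty}$ is equivalent to $\dist_\infty$ on $\Aset$ by convexity), it suffices to prove density of $\cup_k \Hset_{k-1}$ in $(\Hset,\dist_{KS})$; the statement for $\Aset_k$ then follows by transporting through the homeomorphism. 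Alternatively one can argue density for $\Aset_k$ directly and transport back — either direction works, and I would pick whichever makes the constraint-bookkeeping lighter, presumably the angular side.

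First I would fix $H\in\Hset$ with point masses $p_{0},p_{1}\in[0,1/2]$ and angular density $h\in L_1(0,1)$, and any $\varepsilon>0$. The goal is to produce, for $k$ large, coefficients $\eta_1,\dots,\eta_{k-1}$ satisfying (R8)--(R9) such that the resulting $H_{k-1}$ in \eqref{eq:bspline_angdist} is within $\varepsilon$ of $H$ in Kolmogorov--Smirnov distance. I would first handle the atomless bulk: choose a continuous (or piecewise-linear) $\tilde h\ge 0$ on $[0,1]$ with $\|\tilde h - h\|_1$ small and with $\int_0^1 \tilde h$ and $\int_0^1 v\,\tilde h(v)\diff v$ matching the corresponding moments of $h$ up to a small error — a routine mollification/truncation, noting we may need to slightly renormalise to preserve the first mean constraint exactly. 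Then take the $(k-2)$-point B-spline quasi-interpolant (or simply the $\ell$-th value $\eta_j = p_0 + \int_0^{\tau_{j+1}}\tilde h$, i.e. evaluate the target cdf at interior knots, which for order-$2$ splines reproduces continuous functions uniformly) so that $H_{k-1}\to H$ uniformly on compacts of $(0,1)$; the jumps $p_0,p_1$ are captured exactly by the boundary coefficients $\eta_1=p_0$, $\eta_{k-1}=1-p_1$ forced by (R8). The slope of $H_{k-1}$ between knots is controlled by consecutive differences $\eta_{j+1}-\eta_j\ge0$, giving monotonicity (R8); the mean constraint (R9) is the discretised version of $\int_0^1 H(v)\diff v = (1-A'(0)/2\cdot\!)\dots$, i.e. of $\int w\,\diff H = 1/2$, and I would enforce it by a final rank-one correction of the coefficient vector (shift a convex combination of the $\eta_j$'s by an $O(1/k)$ amount), which perturbs $H_{k-1}$ by $O(1/k)$ in sup norm and hence does not destroy the approximation. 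Finally, uniform convergence of cdf's plus — if $H$ has atoms only at $0$ and $1$, which are matched exactly — yields $\dist_{KS}(H_{k-1},H)\to0$; at interior points $H$ is continuous so no Polya-type issue arises.

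For the Pickands side I would either transport the above through the homeomorphism $H_{k-1}\mapsto A_k$ of Proposition~\ref{prop:bspline_rel_pick_ang} (whose coefficient formula shows $A_k$ is exactly the order-$3$ spline with $A_k''=2h_{k-1}$, $A_k(0)=A_k(1)=1$, so $\dist_\infty$-closeness of $A_k$ to $A$, and of $A_k'$ to $A'$, follows from $L_1$-closeness of the densities together with the two endpoint/constraint pins), or run the parallel direct argument using \eqref{eq:bspline_fder_pick} and the quasi-interpolation error estimates for splines from \cite{r355,r35}. The main obstacle I anticipate is not convergence per se — standard spline quasi-interpolation gives that for free — but maintaining \emph{exact feasibility} of the linear equality constraints (R9), resp. (R11)/(R6)-type relations, and the shape inequalities (R8)/(R12) simultaneously along the approximating sequence: the naive interpolant need satisfy neither the mean constraint nor, near the boundary when $h$ blows up (the $d=2$ case in Definition~\ref{cond:mvt_angular}\ref{cond: infinite_new} is allowed in $\Hset$), the requisite monotonicity with the right boundary values. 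I expect to deal with the unbounded-density case by a preliminary truncation of $h$ at height $N$ (costing $o(1)$ in $L_1$ and hence $o(1)$ in $\dist_{KS}$ of the cdf), reducing to bounded $h$, and to deal with the equality constraint by the $O(1/k)$ rank-one correction described above, checking it is compatible with the inequality constraints for $k$ large. Once feasibility and convergence are both in hand, the density statement — and therefore, via Proposition~\ref{prop:bspline_rel_pick_ang} and the bivariate homeomorphism, the full Proposition~\ref{prop:bspline_full_supp} — follows.
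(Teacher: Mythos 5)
Your overall strategy (approximate within the B-spline family, use the bivariate correspondence $A\leftrightarrow H$ of Proposition~\ref{prop:bspline_rel_pick_ang} and the topological equivalences of Proposition~\ref{prop: basic metric bvt} to transfer density between the two spaces) is sound, and your observation that uniform convergence of convex $A_k$ upgrades to $\dist_{1,\infty}$-convergence is exactly the ingredient the paper uses. But you chose the harder direction, and the step you yourself flag as ``the main obstacle'' — restoring the equality constraint (R9) by an $O(1/k)$ correction while preserving the monotonicity (R8), the pinned boundary values $\eta_1=p_0$, $\eta_{k-1}=1-p_1$, and (after transporting) the convexity inequalities (R12) — is left unproved. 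That is a genuine gap, not a routine verification: (R9) is a single linear equality coupling \emph{all} interior coefficients, and you would need to exhibit a correction direction that stays inside the order cone for every $H\in\Hset$, including the case of unbounded $h$ where you have additionally perturbed the moments by truncation. As written, the proposal identifies the difficulty and then asserts it away.

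The paper's proof sidesteps all of this by working on the Pickands side with Schoenberg's variation-diminishing operator: $S_k(t;A)=\sum_{j=1}^k A(\tau_j^*)\phi_{j,3}(t)$ with $\tau_j^*=(\tau_{j+1}+\tau_{j+2})/2$. This operator is shape-preserving (it preserves convexity and the bounds $\max(t,1-t)\le A\le 1$) and interpolates at the endpoints, so $S_k(\cdot;A)$ is itself a valid Pickands dependence function; by Proposition~\ref{prop:bspline_cond} its coefficients $\beta_j=A(\tau_j^*)$ therefore satisfy (R10)--(R12) \emph{automatically} — no feasibility repair is needed. The reason this works so cleanly is that the troublesome mean constraint (R9) on the angular side is equivalent, on the Pickands side, to the trivial endpoint condition $A(1)=1$, i.e.\ $\beta_k=1$, which the operator satisfies exactly. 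Uniform convergence $\dist_\infty(A,S_k(\cdot;A))\to0$ is classical (de Boor, Ch.~11), and Proposition~\ref{prop: basic metric bvt}\ref{res: new pick} then delivers both the $\dist_{1,\infty}$ statement for $\Aset$ and the $\dist_{KS}$ statement for $\Hset$. So your guess that the angular side has ``lighter constraint-bookkeeping'' is backwards; if you want to salvage your route, either switch to the Pickands side with a shape-preserving operator, or actually construct the feasibility correction you invoke.
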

%

\subsection{Kullback-Leibler theory for priors on the extremal dependence}\label{app:KL_support}
%
The Pickands dependence functions in $\Aset$  
are continuous and differentiable, with absolutely continuous first derivatives. Accordingly, we denote with 
$\Pi_{\Aset}(B)=\prob(A\in B)$ a prior distribution on the Borel sets $B$ of $(\Aset, \dist_{1,\infty})$. We have the following set theoretical results (the referenced definitions and equations are given in the main paper).
\begin{prop}\label{prop:Polish_d2}
Let $\mathbb{W}^{1,\infty}((0,1))$ be the Sobolev space of bounded functions with bounded weak derivative on $(0,1)$, endowed with $\dist_{1, \infty}$. Then, in dimension $d=2$,  $\Aset$ is a closed subset of $\mathbb{W}^{1,\infty}((0,1))$. In particular, $(\Aset, \dist_{1, \infty})$ is a Polish subspace.
\end{prop}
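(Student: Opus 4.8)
The plan is to establish the two assertions of Proposition \ref{prop:Polish_d2} in turn: first that $(\mathbb{W}^{1,\infty}((0,1)), \dist_{1,\infty})$ is itself a Banach space (hence Polish once separability of the relevant subspace is checked), and then that $\Aset$ is a closed subset of it; closedness of a subset of a Polish space, together with the fact that $\Aset$ carries a metric making it separable (the density of $\cup_k \Aset_k$ established in Proposition \ref{prop:bspline_full_supp} or in Section \ref{app:review_BP}), yields that $(\Aset, \dist_{1,\infty})$ is Polish. Actually, the cleanest route is to observe that a closed subset of a complete metric space is complete, and a subset of a separable metric space is separable, so it suffices to (i) recall that $\dist_{1,\infty}$ makes the ambient function space complete, (ii) show $\Aset$ is $\dist_{1,\infty}$-closed, and (iii) invoke separability of $\Aset$ via the countable dense family of polynomial Pickands functions.

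For step (ii), the heart of the matter, I would take a sequence $(A_n) \subset \Aset$ with $A_n \to A$ in $\dist_{1,\infty}$, i.e. $\|A_n - A\|_\infty \to 0$ and $\|A_n' - A'\|_\infty \to 0$, and verify that the limit $A$ satisfies the defining constraints of $\Aset$. The boundary and sandwich bounds (C3), $1/d \le \max(t_1,\ldots,t_{d-1},1-\sum t_i) \le A(\bt) \le 1$, pass to the uniform limit trivially since they are closed conditions. Convexity (C2) is also preserved under pointwise (a fortiori uniform) limits. The subtler point is that membership in $\Aset$ requires the \emph{angular measure} representation \eqref{eq:picklands} with $H \in \Hset$ as in Definition \ref{cond_angular}, i.e. $H$ supported on $\simpint \cup \{\be_1,\ldots,\be_d\}$; in $d=2$ this translates, via \eqref{eq:pick}--\eqref{eq:pick_2dev}, into: $A$ convex on $[0,1]$, $A(0)=A(1)=1$, and $A'(0) \ge -1$, $A'(1) \le 1$ (equivalently $p_0 = (1+A'(0))/2 \in [0,1/2]$ and $p_1 = (1-A'(1))/2 \in [0,1/2]$, with the mean constraint (C1) automatically encoded by $A(0)=A(1)=1$). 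Since $\dist_{1,\infty}$ convergence gives uniform convergence of both $A_n$ and $A_n'$ on $[0,1]$ (using the continuous extension of the derivatives to the closed interval, as the paper already notes $A'$ extends continuously), the endpoint derivative inequalities $A_n'(0) \ge -1$, $A_n'(1) \le 1$ pass to the limit, and convexity of $A$ on the closed interval follows from convexity of each $A_n$. Hence $A$ has the correct shape and, by the one-to-one correspondence between valid bivariate Pickands functions and angular pm's in $\Hset$ (the homeomorphism $(\Aset,\dist_\infty) \cong (\Hset, \dist_{KS})$ mentioned just before Proposition \ref{prop:Polish_d2}), $A \in \Aset$.

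The main obstacle I anticipate is handling the second weak derivative correctly: $\Aset$ elements have absolutely continuous first derivative but only a weak (Lebesgue-integrable, nonnegative) second derivative $A'' = 2h$, and $\dist_{1,\infty}$ does \emph{not} control $A_n''$. So I cannot argue via convergence of second derivatives. The resolution is that I do not need to: convexity of $A$ plus absolute continuity of $A'$ is equivalent to $A'$ being nondecreasing, which is a closed condition under uniform convergence of the derivatives — indeed $A_n'$ nondecreasing for all $n$ and $A_n' \to A'$ uniformly forces $A'$ nondecreasing, hence $A$ convex, hence $A'$ automatically of bounded variation and $A''$ exists as a nonnegative measure; since $A'$ is continuous (being a uniform limit of continuous functions) this measure has no atoms, giving $A'' \in L^1$ and $h$ integrable as required by Definition \ref{cond_angular}. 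I would spell this out carefully, as it is the one place where the argument is not a pure soft-topology exercise. The separability claim (iii) I would dispatch in one line by citing the density of $\cup_{k\ge 2}\Aset_k$ in $(\Aset,\dist_{1,\infty})$ from Section \ref{app:review_BP} and noting that within each $\Aset_k$ the rational-coefficient polynomials form a countable dense subset.
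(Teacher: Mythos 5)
Your overall architecture (completeness of the ambient Sobolev space, closedness of $\Aset$, separability via rational-coefficient polynomial Pickands functions) coincides with the paper's, and your treatment of the closed conditions (C2)--(C3), the endpoint derivative bounds, and the $A\leftrightarrow H$ correspondence is fine. The gap sits exactly at the step you yourself flagged as delicate, and your resolution of it fails: from ``$A'$ is continuous and nondecreasing'' you conclude that the measure $A''$ ``has no atoms, giving $A''\in L^1$''. A nonatomic finite Borel measure on $(0,1)$ need not be absolutely continuous with respect to Lebesgue measure --- the Cantor--Lebesgue (devil's staircase) measure is continuous and atomless yet singular. So continuity of the limit derivative rules out only point masses in the interior; it does not produce the Lebesgue density $h=A''/2$ that Definition \ref{cond_angular} requires for the angular pm to belong to $\Hset$, hence does not give $A\in\Aset$. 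The possibility that must be excluded is precisely a singular continuous component of the limiting angular measure, and your argument does not touch it. (A minor related slip: ``convexity of $A$ plus absolute continuity of $A'$ is equivalent to $A'$ being nondecreasing'' --- convexity alone is equivalent to $A'$ nondecreasing; absolute continuity of $A'$ is the extra requirement, and it is the whole issue here.)

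The paper takes a different route at this point: it verifies the $\varepsilon$--$\delta$ definition of absolute continuity for the limit $A_*'$ directly, bounding $\sum_i|A_*'(b_i)-A_*'(a_i)|$ over a finite collection of nonoverlapping intervals by the corresponding sum for $A_k'$ plus a uniform-approximation error, and then invoking absolute continuity of $A_k'$ to choose $\delta$. To repair your proof you need an argument of this kind (or some other device ruling out a singular continuous part of $\lim_k H_k$), not the observation that the limit derivative is continuous; and note that even the direct route requires care with the order of quantifiers, since the admissible $\delta$ a priori depends on the approximating index $k$, so some uniformity across the sequence has to be extracted. Everything else in your write-up (completeness of $\mathbb{W}^{1,\infty}((0,1))$, preservation of (C2)--(C3) under uniform limits, and separability via the countable dense family of polynomial Pickands functions with rational coefficients) matches the paper's proof.
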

\begin{cor}\label{cor:measurable}
	In dimension $d=2$, there exists a version of the simple max-stable density such that the map $(A,\by)\mapsto g_\bone(\by|A)$ is jointly measurable and $\phi_\Aset:(\Aset, \dist_{1,\infty})\mapsto (\mathcal{G}_\bone,\dist_H):A \mapsto g_\bone(\cdot|A)$ is a Borel map, where $\Gset_\bone$ is defined as in \eqref{eq:dens_space} with $\Theta=\Aset$. Moreover, for all $\epsilon>0$ and $\mathcal{K}_\epsilon$ as in Definition \ref{defi:KL}, $\phi_\Aset^{-1}(\mathcal{K}_\epsilon)$ is a Borel set of $(\Aset, \dist_{1, \infty})$.
\end{cor}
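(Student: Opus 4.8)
The plan is to transport the measurability and Borel‑structure statements already proved for angular probability measures in arbitrary dimension (Corollary~\ref{cor:Polish_d>2}) onto the space of Pickands dependence functions, using the bijection $\iota:\Aset\to\Hset$, $A\mapsto H_A$, determined by Definition~\ref{cond_angular} and the explicit relations \eqref{eq:pick}--\eqref{eq:pick_2dev}. The crucial point is that $\iota$ depends only on $A$ and its first derivative, not on the almost‑everywhere‑defined second derivative, so no measurability subtleties occur at the level of $\iota$; all the delicate handling of the angular density $h=A''/2$ is inherited from Corollary~\ref{cor:Polish_d>2}. I will therefore (i) show that $\iota$ is $(\dist_{1,\infty},\dist_{KS})$‑continuous, hence Borel into $(\Hset,\dist_W)$; (ii) define $g_\bone(\by|A):=g_\bone(\by|H_A)$ and deduce joint measurability; (iii) factor $\phi_\Aset=\phi_\Hset\circ\iota$; (iv) read off the Borel measurability of $\phi_\Aset^{-1}(\mathcal K_\epsilon)$.

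\textbf{Continuity of $\iota$.} Suppose $A_n\to A$ in $\dist_{1,\infty}$, so that $A_n'\to A'$ uniformly on $(0,1)$. By \eqref{eq:pickdev} the associated angular distribution functions satisfy $H_n(t)=(1+A_n'(t))/2$ on $(0,1)$, whence $\sup_{t\in(0,1)}|H_n(t)-H(t)|=\tfrac12\|A_n'-A'\|_\infty\to0$. For the boundary values, convexity of each $A_n$ makes $A_n'$ nondecreasing and continuous on $(0,1)$, so its continuous extensions to the endpoints are $A_n'(0)=\inf_{(0,1)}A_n'$ and $A_n'(1)=\sup_{(0,1)}A_n'$; since $\inf$ and $\sup$ are $1$‑Lipschitz for the uniform norm, $A_n'(0)\to A'(0)$ and $A_n'(1)\to A'(1)$, hence the vertex masses $p_{0,n},p_{1,n}$ converge to $p_0,p_1$. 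Therefore $\dist_{KS}(H_n,H)\le\tfrac12\|A_n'-A'\|_\infty\to0$, i.e.\ $\iota$ is continuous from $(\Aset,\dist_{1,\infty})$ to $(\Hset,\dist_{KS})$. By Proposition~\ref{prop:Polish_d>2} the $\dist_{KS}$‑ and $\dist_W$‑Borel $\sigma$‑fields on $\Hset$ coincide, so $\iota$ is Borel measurable as a map into $(\Hset,\dist_W)$ as well.

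\textbf{Measurability of the density and of $\phi_\Aset$.} Let $(H,\by)\mapsto g_\bone(\by|H)$ be the jointly measurable version supplied by Corollary~\ref{cor:Polish_d>2}, and set $g_\bone(\by|A):=g_\bone(\by|H_A)$. The map $(A,\by)\mapsto(H_A,\by)$ is Borel, and for second‑countable (in particular separable metric) spaces $\mathcal B(\Aset\times(0,\infty)^2)=\mathcal B(\Aset)\otimes\mathcal B((0,\infty)^2)$; hence $(A,\by)\mapsto g_\bone(\by|H_A)$ is jointly Borel. Comparing \eqref{eq:bdens_fre} with \eqref{eq:density_unit_fre} shows that, for each fixed $A$, this function coincides Lebesgue‑a.e.\ in $\by$ with the simple max‑stable density expressed through $A$, so it is indeed a version of the latter. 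Since $\Gset_\bone$ is the same set of densities whether indexed by $\Aset$ or $\Hset$, we have $\phi_\Aset=\phi_\Hset\circ\iota$, a composition of Borel maps (using that $\phi_\Hset:(\Hset,\dist_W)\to(\Gset_\bone,\dist_H)$ is Borel by Corollary~\ref{cor:Polish_d>2} and that $\iota$ is Borel by the previous step), hence $\phi_\Aset$ is Borel. Finally, with $\mathcal K_\epsilon$ as in Definition~\ref{defi:KL}, $\phi_\Aset^{-1}(\mathcal K_\epsilon)=\iota^{-1}\!\big(\phi_\Hset^{-1}(\mathcal K_\epsilon)\big)$; since $\phi_\Hset^{-1}(\mathcal K_\epsilon)$ is a $\dist_W$‑Borel subset of $\Hset$ by Corollary~\ref{cor:Polish_d>2} and $\iota$ is Borel, this preimage is a Borel subset of $(\Aset,\dist_{1,\infty})$.

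\textbf{Main obstacle.} There is no deep obstruction; the single point requiring care is the $\dist_{KS}$‑continuity of $\iota$ at the boundary of $\resimp$, i.e.\ the convergence of the vertex point masses — controlled above via the convexity of Pickands functions and the Lipschitz property of $\inf/\sup$ — together with the (deliberate) choice to route the argument through Corollary~\ref{cor:Polish_d>2} so that the a.e.\ nature of the angular density never has to be handled directly at the level of the Pickands parametrisation.
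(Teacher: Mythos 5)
Your proof is correct and follows essentially the same route as the paper: both arguments factor $\phi_\Aset$ through the map $A\mapsto H_A$, establish that this map is continuous from $(\Aset,\dist_{1,\infty})$ into $\Hset$ (hence Borel), and then invoke Corollary~\ref{cor:Polish_d>2} by composition. Your treatment is merely more explicit than the paper's, notably in verifying convergence of the vertex point masses via convexity of the Pickands functions and in noting the coincidence of product Borel $\sigma$-fields for the joint-measurability step.
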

By Corollary \ref{cor:measurable}, $\Pi_\Aset$ induces a prior $\Pi_{\Gset_\bone}$ on the Borel sets of $(\Gset_\bone,\dist_H)$. We next give conditions under which the true density is in the Kullback-Leibler support of the latter. In particular, we make use of the following notion.
\begin{definition}\label{defi:d_2inf}
	Let $A_*\in\Aset$ and $\Vert A_* \Vert_{2, \infty}<\infty$. The prior $\Pi_\Aset$ is said to posses the $\dist_{2,\infty}$ property at $A_*$ if it has positive inner probability on the sets $\{A\in\Aset:\dist_{2,\infty}(A,A_*)\leq \epsilon\}$, for all $\epsilon>0$. 
\end{definition}
To establish the Kullback-Leibler property at $A_0$ for a prior $\Pi_\Aset$, we postulate twice continuous differentiabilty of the true Pickands dependence function on $(0,1)$ and impose mild restrictions on the behaviour of the second derivative near the boundary.
The resulting class $\Aset_0$ of admissible true Pickands dependence functions (Definition \ref{cond_density}\ref{cond:true_set}) is rich enough for applications, since it includes many well known parametric models, such as Symmetric Logistic, Asymmetric Logistic, Hulser-Reiss, Pairwise-Beta, Dirichlet \citep[see, e.g.,][]{r19}. 
We also point out that, by the fundamental theorem of calculus, $\Aset_0$ coincides with the class of Pickands dependence functions whose angular pm's lie in the class $\Hset_0$, given in Definition \ref{cond:mvt_angular} of the main paper.
As we claim next, it suffices to check that $\Pi_\Aset$ posseses the $\dist_{2,\infty}$-property at the Pickands dependence  functions in a subclass (Definition \ref{cond_density}\ref{cond:prior_set}).
Loosely speaking, the use of the stronger metric $\dist_{2,\infty}$ is justified by the fact that the map $f \mapsto \kulb(g_\bone(\cdot|A),g_\bone(\cdot|f))$ on $(\Aset,\dist_{1,\infty})$ is measurable for any fixed $A\in \Aset$ but not necessarily continuous.
\begin{definition}\label{cond_density}
Let $\Aset$ be as in Definition \ref{cond_angular} of the main paper and: 
%
\begin{inparaenum}
\item \label{cond:true_set} Let $\Aset_0\subset \Aset$ be the class of twice continuously on $(0,1)$  (strongly) differentiable Pickands dependence functions $A$, whose second derivative satisfies one of the following:
\begin{inparaenum}
\item \label{cond: finite}
$0 \leq \lim_{t\downarrow0}A''(t)<+\infty$ and $0\leq \lim_{t\uparrow1}A''(t) <+\infty$;
\item \label{cond: infinite} $\inf_{t\in(0,1)}A''(t)>0$ and $\lim_{t\downarrow0}h(t)=\lim_{w\uparrow1}h(t) =+\infty$.
\end{inparaenum}
\item \label{cond:prior_set} Let $\Aset'\subset\Aset_0$ be a class of Pickands dependence functions whose extended first derivative satisfies $A'(0)>-1$, $A'(1)<1$, and whose second derivative
complies with $\inf_{t\in(0,1)}A(t)>0$, together with property \ref{cond: finite}.
\end{inparaenum}
\end{definition}
\begin{theorem}\label{theo:KL_prior}
Let $A_0\in\Aset_0$ be the true Pickands dependence function.
Assume that $\Pi_\Aset$ possesses the $\dist_{2,\infty}$-property at every $A\in\Aset'$. Then, for any $\epsilon>0$
%
\begin{equation}\label{eq:KLpicka}
\Pi_{\Aset}( \phi_\Aset^{-1}(\mathcal{K}_\epsilon))=\Pi_{\Gset_\bone}(\mathcal{K}_\epsilon)>0.
\end{equation}
\end{theorem}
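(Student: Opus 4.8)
The plan is to reduce the Kullback--Leibler statement for densities to a $\dist_{2,\infty}$-approximation statement for Pickands functions, and then invoke the postulated $\dist_{2,\infty}$-property of $\Pi_\Aset$. Concretely, I would first show that for the true Pickands function $A_0\in\Aset_0$ there is a sub-family of functions $A\in\Aset'$ that approximate $A_0$ arbitrarily well in $\dist_{2,\infty}$ away from the boundary, with a controlled behaviour near $t=0$ and $t=1$; this handles the two cases \ref{cond: finite} and \ref{cond: infinite} separately. In case \ref{cond: finite}, $A_0''$ is bounded, so a mild smoothing/convex-combination argument with an interior-positive element of $\Aset'$ (e.g.\ a small mixture $A_\delta=(1-\delta)A_0+\delta A_{\mathrm{ref}}$ with $A_{\mathrm{ref}}$ the logistic Pickands function having $A_{\mathrm{ref}}'(0)>-1$, $A_{\mathrm{ref}}'(1)<1$ and $\inf A_{\mathrm{ref}}''>0$) lands in $\Aset'$ and converges to $A_0$ in $\dist_{2,\infty}$. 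In case \ref{cond: infinite} (only $d=2$), $A_0''=2h_0$ blows up at the endpoints, so a genuine $\dist_{2,\infty}$-approximation of $A_0$ is impossible; instead I would truncate the angular density, replacing $h_0$ on $(0,\epsilon')\cup(1-\epsilon',1)$ by a bounded piece, re-normalising to keep (C1)/(R4)-type mean constraints, obtaining $A_\delta\in\Aset'$ with $\dist_{1,\infty}(A_\delta,A_0)$ small but $A_\delta''$ only close to $A_0''$ on a compact interior subinterval.

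The analytic heart is the following continuity-type estimate: if $A,f\in\Aset$ with $f$ close to $A$ in $\dist_{1,\infty}$ and with $f''$ close to $A''$ uniformly on the relevant subinterval, then $\kulb\bigl(g_\bone(\cdot|A),g_\bone(\cdot|f)\bigr)$ is small. Using the explicit bivariate density \eqref{eq:bdens_fre} (for $d=2$; the analogous multivariate formula \eqref{eq:density_unit_fre} with \eqref{eq:kernel_density_ang} for $d>2$), the log-ratio $\log\{g_\bone(\by|A)/g_\bone(\by|f)\}$ decomposes into (i) the exponent-function difference $V(\by|f)-V(\by|A)$, controlled by $\Vert A-f\Vert_\infty$ through homogeneity, and (ii) the logarithm of the ratio of the bracketed terms, which involves $A-f$, $A'-f'$ and $A''-f''$ evaluated at $t=t(\by)$. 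The denominators are bounded below uniformly: on the interior by $\inf A_\delta''>0$, and near the endpoints by the factor $(A(t)-tA'(t))(A(t)+(1-t)A'(t))/(y_1y_2)^2$ together with the boundary slopes $A'(0)>-1$, $A'(1)<1$ guaranteed by membership in $\Aset'$. One then integrates against $g_\bone(\by|A_0)\,\diff\by$; after the substitution $\by\mapsto(t,r)$ with $t=y_1/(y_1+y_2)$, $r=y_1+y_2$, the $r$-integral is handled by the unit-Fr\'echet marginal moments and the $t$-integral by the angular density $h_0$. In case \ref{cond: infinite} the only subtlety is integrability near the endpoints of the residual term coming from the non-approximated tail of $h_0$; this is where the argument needs $h_0\in L^1(0,1)$ (already in Definition \ref{cond:mvt_angular}\ref{cond: infinite_new}) and can be made small by taking $\epsilon'\downarrow0$, because the contribution is bounded by a constant times $\int_0^{\epsilon'}h_0+\int_{1-\epsilon'}^1 h_0$.

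Once the estimate $\kulb(g_\bone(\cdot|A_0),g_\bone(\cdot|f))\le C\,\dist_{2,\infty}^{(\mathrm{loc})}(A_\delta,f)+o_\delta(1)$ is in place, the conclusion follows quickly: fix $\epsilon>0$, choose $\delta$ so that the $o_\delta(1)$-term is below $\epsilon/2$ and $A_\delta\in\Aset'$; by the $\dist_{2,\infty}$-property the prior assigns positive inner probability to $\{f\in\Aset:\dist_{2,\infty}(f,A_\delta)\le\epsilon'\}$, and for $\epsilon'$ small enough every such $f$ satisfies $\kulb(g_\bone(\cdot|A_0),g_\bone(\cdot|f))<\epsilon$, i.e.\ $g_\bone(\cdot|f)\in\mathcal K_\epsilon$. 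Hence $\Pi_\Aset(\phi_\Aset^{-1}(\mathcal K_\epsilon))\ge\Pi_\Aset(\{f:\dist_{2,\infty}(f,A_\delta)\le\epsilon'\})>0$, and the equality with $\Pi_{\Gset_\bone}(\mathcal K_\epsilon)$ is just the change-of-variables identity from Corollary \ref{cor:measurable}. I expect the main obstacle to be the uniform-in-$\by$ lower bound on the density brackets near the simplex vertices in the unbounded-density case \ref{cond: infinite}: one has to show that the troublesome region contributes a term that is not merely finite but vanishes as the truncation is refined, and that the boundary-slope conditions $A'(0)>-1$, $A'(1)<1$ in $\Aset'$ are exactly what prevents the $(A(t)-tA'(t))$-type factors from degenerating; getting these endpoint estimates clean is the delicate part of the calculation.
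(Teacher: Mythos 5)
Your proposal is correct and follows essentially the same route as the paper: the paper formally deduces Theorem \ref{theo:KL_prior} from its angular-measure counterpart (Theorem \ref{theo:KL_prior_multi}) via the identity $A''=2h$, and the underlying analysis there is exactly your two-step scheme — approximate $A_0$ by an element of $\Aset'$ (by a shrink-and-lift of the angular density in the bounded case, by endpoint truncation of $h_0$ in the unbounded case, cf.\ Lemma \ref{prop: new_KL_prop} and Proposition \ref{prop:newprop_KL}), then show that uniform closeness of the second derivatives forces a small Kullback--Leibler divergence by splitting the log-ratio into the exponent-function difference and the bracket ratio, with denominators bounded below thanks to $\inf A''>0$ and $A'(0)>-1$, $A'(1)<1$ (Lemma \ref{lem: ratio 1}). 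Your convex-mixture approximant $(1-\delta)A_0+\delta A_{\mathrm{ref}}$ is a harmless variant of the paper's construction and the endpoint estimates you flag as delicate are precisely the ones the paper handles via the $L^1$-integrability of $h_0$ near the vertices.
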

\begin{rem}
	As a map from $(\Aset, \dist_{1, \infty})$ to $(\Hset, \dist_{KS})$, $A\mapsto H$ is 1-to-1 and continuous (thus, Borel). Hence, the prior $\Pi_\Aset$ on the Pickands dependence function also induces a prior $\Pi_\Hset$ on the Borel sets of $(\Hset, \dist_{KS})$. 
	If
	$\Pi_\Aset$ possesses the $\dist_{2,\infty}$-property at every $A\in\Aset'$, then $\Pi_\Hset$ possesses the $\dist_\infty$-property at every $H\in\Hset'$, see Definitions \ref{def:d_inf_prop} and \ref{cond:mvt_angular}\ref{cond:prior_ang_set} of the main paper.
	Consequently, the result in 
	Theorem \ref{theo:KL_prior} redily follows from Theorem \ref{theo:KL_prior_multi}.
\end{rem}
To establish the consistency results in Section \ref{sec:posterior_consistency_2D}, Theorem \ref{theo:KL_prior} is applied to the specific cases of prior distributions constructed via the BP and BS representations of the Pickands dependence functions (Sections \ref{app:review_BP}-\ref{app:review_BS}). Explicit constructions of such priors are detailed in Examples \ref{ex:biv_BP}-\ref{ex:BS}.

\subsection{Posterior consistency}\label{sec:posterior_consistency_2D}

We consider priors on $A$ constructed through the following scheme, which relies on the representations in Sections \ref{app:review_BP}-\ref{app:review_BS}, see also Section \ref{appsec:measur_det_D2} for technical details.

\begin{condition}\label{cond:prior_D2}
	 Let $k_* \in \nat_+$ and let
	 the set sequence $(\Bset_k)_{k=k^*}^\infty$ be defined either via $\Bset_k:=\{\bfbeta_k\in[0,1]^{k+1}:\text{(R5)-(R7) hold true}\}$ or $\Bset_k=\{\bfbeta_k\in[0,1]^{k}:\text{(R10)-(R12) hold true}\}$.
	 Assume $\Pi_\Aset$ is the prior on the Borel sets of $(\Aset, \dist_{1, \infty})$ induced by $\Pi$,
	 the Borel pm on the disjoint union space $(\cup_{k\geq k_*}(\{k\}\times \Bset_k), \Sigma)$ obtained
	 via direct sum 
	 of the family $\{(\mathcal{B}_k,\Sigma_k, \lambda(k)\nu_k), \, k \geq k_* \}$, where, for for every $k \geq k_*$:
	 \begin{inparaenum}
	 	\item\label{cond:B10i} $\nu_k$ is a pm with full support on $\mathcal{B}_k$, equiped with its Borel $\sigma$-field $\Sigma_k$,
	 	\item\label{cond:B10ii} $\lambda(k)>0$,  with $\lambda(\cdot)$ denoting a probability mass function on $\{k_*, k_*+1, \ldots \}$ such that
	 	$$
	 	\sum_{i\geq k}\lambda(i) \lesssim e^{-qk} 	\quad (k \to \infty) 
	 	$$
	 	for some $q>0$.
	 \end{inparaenum}
\end{condition}

\begin{theorem}\label{theo:post_consistency}
Let $\bY_1,\ldots,\bY_n$ be iid rv's with distribution $G_{\bone}(\cdot|A_0)$, where $A_0\in \Aset_0$ and $\Aset_0$ is given in Definition \ref{cond_density}\ref{cond:true_set}. 
Let $\Pi_{\Aset}$ be a prior distribution on $A$ satisfying Condition \ref{cond:prior_D2}.
%
Then, $\iprodG-\text{as}$
\begin{itemize}
\item[(a)] $\lim_{n \to \infty}\tilde{\Pi}_n(\tilde{\mathcal{U}}^\complement)=0$, for every $\dist_H$-neighbourhood $\tilde{\mathcal{U}}$ of $g_{\bone}(\cdot|A_0)$;
\item[(b)] $\lim_{n \to \infty}\Pi_n(\mathcal{U}^\complement)=0$, for every $\dist_{1,\infty}$-neighbourhood $\mathcal{U}$ of $A_0$; 
\end{itemize}
where $\Pi_n(\cdot)=\Pi_\Aset(\cdot|\bY_{1:n})$, $\tilde{\Pi}_n(\cdot)=\Pi_{\mathcal{G}_\bone}(\cdot|\bY_{1:n})$ and $\Pi_{\mathcal{G}_\bone}$ is the prior on $g_{\bone}(\cdot|A)$  induced by $\Pi_\Aset$. 
%
%
\end{theorem}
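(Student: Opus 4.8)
The plan is to prove~(a) via the extended Schwartz consistency theorem \citep[Theorem~6.23]{r10} — the route used for Theorem~\ref{theo:post_consistency_mvt} — and to deduce~(b) from~(a) through the identities \eqref{eq:pick}--\eqref{eq:pickdev} linking $A$ to the angular distribution. The Bernstein-polynomial and B-spline priors of Condition~\ref{cond:prior_D2} are handled in parallel, since both share a degree distribution $\lambda$ with exponentially light tails, fully supported coefficient laws $\nu_k$ on $\Bset_k$, and good simultaneous approximation of smooth Pickands functions by elements of $\Aset_k$. Joint measurability of $(A,\by)\mapsto g_\bone(\by|A)$, the Borel property of $\phi_\Aset$, and hence the existence of regular posterior versions are supplied by Corollary~\ref{cor:measurable}. \emph{Step 1 (Kullback--Leibler support).} First I would check that $\Pi_\Aset$ possesses the $\dist_{2,\infty}$-property at every $A\in\Aset'$ (Definition~\ref{cond_density}\ref{cond:prior_set}): for such $A_*$ its degree-$k$ Bernstein (resp.\ B-spline) approximant lies in $\Bset_k$ for all large $k$ and converges to $A_*$ in $\dist_{2,\infty}$, upgrading the $\dist_{1,\infty}$-density of \cite{r24} (resp.\ Proposition~\ref{prop:bspline_full_supp}) to simultaneous uniform approximation of the function together with its first two derivatives — this is where the extra regularity built into $\Aset'$ is used. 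Since $\lambda(k)>0$ and $\nu_k$ has full support (Condition~\ref{cond:prior_D2}\ref{cond:B10i}--\ref{cond:B10ii}), every $\dist_{2,\infty}$-ball around $A_*$ receives positive $\Pi_\Aset$-mass, so Theorem~\ref{theo:KL_prior} gives $\Pi_{\Gset_\bone}(\mathcal{K}_\epsilon)>0$ for all $\epsilon>0$; by the usual evidence lower bound \citep[Ch.~6, 8]{r10} this entails $\int\prod_{i=1}^n\{g_\bone(\bY_i|A)/g_\bone(\bY_i|A_0)\}\,\diff\Pi_\Aset(A)\geq e^{-\beta n}$ eventually a.s., for every $\beta>0$.

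\emph{Step 2 (sieves, entropy, tests).} Fix $\epsilon>0$ and set $\Theta_n:=\cup_{k_*\leq k\leq k_n}\Aset_k$ with $k_n:=\lfloor c\,\epsilon^2 n\rfloor$, $c>0$ small. Condition~\ref{cond:prior_D2}\ref{cond:B10ii} gives $\Pi_\Aset(\Theta_n^\complement)=\sum_{k>k_n}\lambda(k)\lesssim e^{-qk_n}$. The core estimate is a global Hellinger-Lipschitz bound: starting from \eqref{eq:bdens_fre} and writing $V(\by|A)=\tfrac{y_1+y_2}{y_1y_2}A\big(\tfrac{y_1}{y_1+y_2}\big)$, the fast decay of $G_\bone(\by|A)=e^{-V(\by|A)}$ near the axes controls the singular $(y_1+y_2)^{-3}$ and $(y_1y_2)^{-2}$ weights and, with \eqref{eq:pick_2dev}, yields $\dist_H(g_\bone(\cdot|A),g_\bone(\cdot|\tilde A))\lesssim(\|A-\tilde A\|_{1,\infty}+\|A''-\tilde A''\|_1)^{1/2}$. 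For $A,\tilde A\in\Aset_k$, \eqref{eq:pick}--\eqref{eq:pickdev} bound $\|A-\tilde A\|_{1,\infty}$ by a fixed multiple of $\|h-\tilde h\|_1+|p_0-\tilde p_0|+|p_1-\tilde p_1|$ and $\|A''-\tilde A''\|_1=2\|h-\tilde h\|_1$; since the angular density is a nonnegative mixture of fixed Beta densities \eqref{eq:polyrep_dens} (resp.\ of fixed indicator splines \eqref{eq:bspline_angdens}), all these quantities are dominated by the $\ell^1$-distance of the $O(k)$ coefficient vectors, with constants independent of $k$. Hence $\log N(\xi,\phi_\Aset(\Theta_n),\dist_H)\lesssim k_n\log(1/\xi)\lesssim c\,\epsilon^2 n\log(1/\xi)$; taking $\xi\asymp\epsilon$ and then $c$ small enough, Le Cam--Birgé testing \citep[Ch.~6]{r10} produces tests $\phi_n$ with $G_\bone(\cdot|A_0)$-mean at most $e^{-Kn\epsilon^2}$ and with type-II error at most $e^{-Kn\epsilon^2}$ uniformly over $\{g\in\phi_\Aset(\Theta_n):\dist_H(g,g_\bone(\cdot|A_0))>\epsilon\}$.

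\emph{Step 3 (conclusion).} Expressing $\tilde\Pi_n(\{\dist_H(\cdot,g_\bone(\cdot|A_0))\geq\epsilon\})$ as a ratio of integrated likelihood ratios and splitting the numerator over $\phi_n$, over $\Theta_n$, and over $\Theta_n^\complement$, Step~1 bounds the denominator below by $e^{-\beta n}$ eventually a.s., while the three numerator pieces are $\leq\phi_n$, have $G_\bone(\cdot|A_0)$-mean $\leq e^{-Kn\epsilon^2}$, and have $G_\bone(\cdot|A_0)$-mean $=\Pi_\Aset(\Theta_n^\complement)\lesssim e^{-qk_n}$, respectively. Choosing $\beta<\min\{K,qc\}\epsilon^2$, Markov's inequality and the Borel--Cantelli lemma (all bounds being summable in $n$) force $\tilde\Pi_n(\{\dist_H\geq\epsilon\})\to0$ $\iprodG$-a.s.; as $\epsilon>0$ is arbitrary and every $\dist_H$-neighbourhood of $g_\bone(\cdot|A_0)$ contains such a ball, (a) follows. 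For~(b), \eqref{eq:pick}--\eqref{eq:pickdev} give $\dist_{KS}(H,H_0)=\tfrac12\|A'-A'_0\|_\infty\leq\tfrac12\dist_{1,\infty}(A,A_0)$ and $\dist_{1,\infty}(A,A_0)\leq4\dist_{KS}(H,H_0)$, so $A\leftrightarrow H$ is bi-Lipschitz between $(\Aset,\dist_{1,\infty})$ and $(\Hset,\dist_{KS})$; hence~(b) is equivalent to $\dist_{KS}$-consistency at $H_0$, which is obtained from~(a) exactly as for $d=2$ in Theorem~\ref{theo:post_consistency_mvt}, by combining Hellinger consistency with the Sobolev-type concentration it entails (via \eqref{eq:bdens_fre} and the convexity of Pickands functions).

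\emph{Main obstacle.} The delicate step is the global Hellinger-Lipschitz bound of Step~2: one must control the max-stable density and its dependence on $A,A',A''$ uniformly over all of $(0,\infty)^2$, in particular near the axes and at infinity where the density degenerates, and verify that $G_\bone(\by|A)$ absorbs the singular $(y_1+y_2)^{-3}$ and $(y_1y_2)^{-2}$ weights. This is precisely what keeps the per-degree covering cost bounded rather than growing with $k$, and thereby reconciles the exponentially light tails of $\lambda$ with an entropy bound of order $n\epsilon^2$ and with exponentially powerful tests.
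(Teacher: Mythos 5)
Your proposal is correct and follows essentially the same route as the paper: Kullback--Leibler support via the $\dist_{2,\infty}$-property at $\Aset'$ and Theorem~\ref{theo:KL_prior}, a sieve of bounded-degree classes with entropy controlled through the $\ell^1$-geometry of the coefficient vectors, exponentially small prior mass outside the sieve from the tail of $\lambda$, and the extended Schwartz theorem. The ``global Hellinger-Lipschitz bound'' you flag as the main obstacle is exactly Proposition~\ref{prop: basic metric bvt}\ref{res: basic bvt 1} ($\dist_H^2\lesssim\Vert h-\tilde h\Vert_1$), and your derivation of~(b) via the relations \eqref{eq:pick}--\eqref{eq:pickdev} and convexity is the paper's combination of Propositions~\ref{lem: basic metric}\ref{res: basic 1} and~\ref{prop: basic metric bvt}\ref{res: new pick}.
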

%
%
%
For the sake of completeness, in Section \ref{appsec:proofconsD2} a sketch of the proof is provided for both the Bernstein polynomial and the B-spline representations of $A$.
For the former, the consistency results in Theorem \ref{theo:post_consistency} also obtains as a byproduct of Theorem \ref{theo:post_consistency_mvt}, once noticing that the prior $\Pi_\Aset$ induces a prior $\Pi_\Hset$ satisfying the assumptions therein, see Remark \ref{rem:prior_identity} for technical aspects.
Concluding, we provide two examples of prior distributions $\Pi$ complying with Condition \ref{cond:angularprior}.
\begin{example}\label{ex:biv_BP}
	{\upshape
		Exploiting the BP representation of the Pickands dependence function in \eqref{eq:bpoly_picka}, a prior $\Pi$ can be constructed by specifying priors $\nu_k$ for $k+1$-dimensional coefficient vectors $\bfbeta_k=(\beta_0, \ldots,\beta_k)$ via mixtures of the kernels
		\begin{align*}
		\diff \nu_k(\bfbeta_k|p_1,p_0)=&\delta_1(\beta_0)\delta_{(k-1+2p_0)/k}(\beta_1)\delta_{(k-1+2p_1)/k}(\beta_{k-1})\delta_{1}(\beta_k)\\
		&\times \prod_{j=2}^{k-2}\text{Unif}(\beta_j;a_{k,j},b_{k,j})\left(\frac{k}{2}\right)^{k-3}\diff \beta_j
		\end{align*}
		 with respect to the mixing densities $\chi_k( p_1,  p_0)=\text{Unif}(p_1;a_{k,1}, b_{k,1}) \text{Unif}(p_0;0, 1/2)$,
		where $\text{Unif}(\cdot;a,b)$ is the uniform probability density over the interval $(a,b)$, 
		$a_{k,1}=\max(0,(k-1)p_0-k/2+1)$, $b_{k,1}=(p_0+k/2-1)/(k-1)$ and, for $j=2,\ldots,k-2$,
		\[
		a_{k,j}=\max(2\beta_{j-1}-\beta_{j-2},(k-j)\beta_{k-1}-(k-j-1)); \quad
		b_{k,j}\frac{1}{k-j}(\beta_{k-1}+(k-j-1)\beta_{j-1})
		\]
		%
	\citep[Corollary 3.4]{r24}.
	Then, the prior probability mass function $\lambda$ for the polynomial degree $k$ can be chosen to be, e.g., a Poisson truncated outside $\{3,4,\ldots\}$.
	}
\end{example}
\begin{example}\label{ex:BS}
	{\upshape 
		A prior construction $\Pi$ based on the BS representation of the Pickands dependence function in \eqref{eq:bspline_pick}, with order $m=3$, is as follows.
		For $k \geq 4$, let 
		\begin{equation*}
		\begin{split}
		\bW_k=(W_1,\ldots,W_{k-2}) \sim \chi_k, \quad  J_k|\bW_k \sim \psi_k,
		\end{split}
		\end{equation*}
		where $\chi_k$ is any distribution on the $(k-2)$-dimensional simplex, with full support (e.g., Dirichlet), and $\psi_k$ is any absolutely continuous distribution on the interval $(l_k,1)$ (e.g., uniform, truncated Beta), where  $l_k =\max\{1-1/(2E_k), 1-1/(2(1-E_k))\}$ and $E_k= \sum_{1\leq j \leq k-2} W_{k,j}(2j-1)/(2(k-2))$.
		Set 
		$p_0=\frac{1}{2}-(1-J_k)(1-E_k)$,
		$p_1=\frac{1}{2}-(1-J_k)E_k$,
		and define $\mathcal{I}_{k,0}:=(0,1/(k-2))$, $\mathcal{I}_{k,j}:=[j/(k-2), (j+1)/(k-2))$, $1 \leq j \leq k-4$, $\mathcal{I}_{k,k-3}:=((k-3)/(k-2),1)$. For $j=0, \ldots,k-3$, let $\indic_{\mathcal{I}_{k,j}}(t)$, $t\in(0,1)$, be the indicator function, taking value one if $t \in \mathcal{I}_{k,j}$ and zero otherwise. Then, via equation \eqref{eq:angpm}, the rescaled random histogram $\sum_{0\leq j\leq k-3}W_{k,j+1}(1-J_k)\indic_{\mathcal{I}_{k,j}}(\cdot)$ and the point masses $p_0$ and $p_1$ at $\{\be_2\}$ and $\{\be_1\}$ induce  a valid random angular pm, whose cdf is a random spline of the form in \eqref{eq:bspline_angdist}. By Propositions \ref{prop:bspline_cond}-\ref{prop:bspline_rel_pick_ang}, choosing $\nu_k$ as the distribution of the rv $\bfbeta_k=(\beta_1,\ldots,\beta_k)$, defined via $\beta_1\equiv1$,
		\begin{equation*}
		\begin{split}
		\beta_j&=\sum_{i=1}^{j-1}\left\lbrace p_0+\sum_{1 \leq s \leq i-1}\frac{W_s(1-J_k)}{k-2}-1/2\right\rbrace(\tau_{i+3}-\tau_{i+1})+1,\quad j=2,\ldots,k-1,\\
		\beta_k&=\sum_{i=1}^{k-2}\left\lbrace p_0+\sum_{1 \leq s \leq i-1}\frac{W_s(1-J_k)}{k-2}-1/2\right\rbrace(\tau_{i+3}-\tau_{i+1})+(1/2-p_1)(\tau_k-\tau_{k-2})+1,
		\end{split}
		\end{equation*}
	$\nu_k$ has full support on $\Bset_k$, defined in Condition \ref{cond:prior_D2}. In the above display,
	$(\tau_j, j=1,\ldots,k+3)=(0,0,0,1/(k-2), \ldots,(k-3)/(k-2), 1, 1, 1)$. Moreover, by construction, $\beta_k\equiv1$.
    Prior specification can be thus completed by choosing $\lambda$ as in the previous example, with truncation outside $\{4,\ldots\}$.
	}
\end{example}

\section{Auxiliary Results}\label{app:auxiliary}
In this section we provide some auxiliary results on simple max-stable models and their dependece structure, that are useful for the proofs of the results in Sections \ref{sec:MGEV}--\ref{sec:bayesian_inference} of the main article and \ref{app:KL_support}--\ref{sec:posterior_consistency_2D} of the present manuscript.
In the sequel, we consider distributions of arbitrary dimension $d \geq 2$, unless otherwise specified.

\subsection{Spectral representation}\label{appsec:spectral}
We start by recalling some known characterisations of simple max-stable models and setting up the corresponding notation.
Any simple max-stable rv allows the following spectral representation 
\citep[e.g.,][Ch. 4--5]{r32} 
$$
\bY = \max (\bxi_i, i=1,2,\ldots),
$$ 
where $\bxi_i$, $i=1,2,\ldots$ are the points of a Poisson process on $E:=[0, \infty]^d\setminus\{\bzero\}$ with mean measure $\Lambda(\cdot|H)$ satisfying 
$$
\Lambda([0, \by]^\complement|H)= V(\by|H)= d \int_\simp \max_{1 \leq j \leq d}(w_j/y_j)\diff H(\bw).  
$$
For each nonempty subset $ I \subset \{1, \ldots, d\}$, define the subspace 
$$
E_I:=\{ \by \in E: \by_{I} >\bzero, y_j = 0, \, \forall j \in I^c \}.
$$
By Definition \ref{cond_angular} in the main paper, for $H \in \Hset$ the restriction $\Lambda_{\{1, \ldots,d\}}(\cdot|H)$ of $\Lambda(\cdot|H)$ on $E_{\{1,\ldots,d\}}$ has Lebesgue density  
$$
\lambda_{\{1, \ldots,d\}}(\bz|H)=d\Vert \bz \Vert_1^{-d-1}h\circ \pi_{\resimp}(\bz /\Vert \bz \Vert_1),
$$ 
while, for $j=1,\ldots,d$, the restrictions $\Lambda_{\{j\}}(\cdot|H)$ on $E_{\{j\}}$ satisfy
$$
\Lambda_{\{j\}}(B|H) = dH(\{\be_j\}) \int_{\pi_j(B \cap E_{\{j\}})} z_{j} ^{-2} \diff z_j,
$$
for all Borel sets $B \subset E$, where, for $\bx\in E$, $\pi_j(\bx)=x_j$.
Also note that $\Lambda(\cdot|H)$ has no mass on the subsets $E_I$ where $I$ is neither a singleton or the entire set $\{1, \ldots,d\}$. These properties yield the identity \eqref{eq:kernel_density_ang} in the main article.

For $j=1,\ldots,d$, define the random index $i_j^*$ by 
$
\xi_{i_j^*}=\max_{i=1,2,\ldots} \xi_{i,j}.
$
Then, the set $\{i_1^*, \ldots, i_d^*\}$ induces a random partition of $\{1, \ldots,d\}$. 
For a given angular pm $H \in \Hset$, we denote  the joint density of a simple max-stable rv and the corresponding random partition by
$$
g_\bone(\by,\part|H)=G_\bone(\by|H)\prod_{i=1}^m \{-V_{I_i}(\by|H)\}, \quad \part=\{I_1,\ldots, I_m\} \in \allpart_d,
$$ 
 for almost every $\by \in \Yset$, where $\Yset=(0, \infty)^d$ is the sample space for simple max-stable models,
see e.g. 
\citep[][pp. 4816-4818]{r252} 
for additional details. In the sequel, we denote by $G_\bone(\cdot,\cdot|H)$ the associated pm on the subsets of $\Yset \times \allpart_d$. 

\subsection{Metric properties of simple max-stable models}
The results presented in Section \ref{sec:binf_simple_max} of the main article and \ref{app:technical} of the present manuscript involve several types of distance for simple max-stable distributions and the associated dependence functions. The following propositions describe the relationships existing among those. The first result we provide is valid in arbitrary dimension $d\geq 2$.

\begin{prop}\label{lem: basic metric}
	Let $A_1$ and $A_2$ be the Pickands dependence functions corresponding to the angular measures
	$H_1, H_2 \in \mathcal{H}$, respectively.
	The following results hold true:
	\begin{inparaenum}
		\item \label{res: basic 1}
		$
		e^{-1} \dist_\infty(A_1, A_2) \leq  \dist_H(g_\bone(\cdot|H_1), g_\bone(\cdot|{H_2}))
		\leq \Vert  g_\bone(\cdot,\cdot|H_1)-g_\bone(\cdot, \cdot|H_1)\Vert_1^{1/2}; 
		$
		\item \label{res: basic 2}
		$
		\dist_\infty(A_1, A_2) \leq   \min \left\lbrace 2 \dist_\infty(h_1,h_2)/\Gamma(d),
		2d\Vert h_1-h_2\Vert_1
		\right\rbrace;
		$
		\item \label{res: basic 4}
		$\forall \epsilon>0$, $\exists \eta>0$ such that, if $\dist_{W}(G_\bone(\cdot|H_1),G_\bone(\cdot|H_2))<\eta$, then $\dist_W(H_1,H_2)<\epsilon$;
	\end{inparaenum}
	where $\Gamma(\cdot)$ is the gamma function.
\end{prop}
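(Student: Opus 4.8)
The three assertions are essentially independent, and I would treat them in order. For claim (i), reading the rightmost quantity as $\Vert g_{\bone}(\cdot,\cdot|H_1)-g_{\bone}(\cdot,\cdot|H_2)\Vert_1^{1/2}$ (the subscript $H_1$ there is a typo for $H_2$), the upper bound combines two routine facts: marginalising the joint density $g_{\bone}(\by,\part|H)$ over the random partition $\part\in\allpart_d$ cannot increase the Hellinger distance, so $\dist_H(g_{\bone}(\cdot|H_1),g_{\bone}(\cdot|H_2))\le\dist_H(g_{\bone}(\cdot,\cdot|H_1),g_{\bone}(\cdot,\cdot|H_2))$, while $\dist_H^2(p,q)\le\Vert p-q\Vert_1$ for any densities $p,q$ because $(\sqrt p-\sqrt q)^2\le|p-q|$. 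For the lower bound I would use the chain $\dist_H\ge\dist_T\ge\dist_{KS}$ for the margins $G_{\bone}(\cdot|H_i)=\exp(-V(\cdot|H_i))$ --- the first step by Cauchy--Schwarz applied to $\dist_T=\tfrac12\int|\sqrt p-\sqrt q|(\sqrt p+\sqrt q)$, the second because half-infinite boxes form a subfamily of measurable sets --- and then bound $\dist_{KS}$ below by restricting to $\by$ with $\Vert 1/\by\Vert_1=1$. On that surface the order-one homogeneity of the stable-tail dependence function gives $V(\by|H_i)=A_i(\bt(\by))$, with $\bt(\by)$ the angular coordinate of $1/\by$ sweeping all of $\mathring{\resimp}$, so the mean value theorem yields $|G_{\bone}(\by|H_1)-G_{\bone}(\by|H_2)|=e^{-\xi}|A_1(\bt)-A_2(\bt)|$ for some $\xi$ between $A_1(\bt)$ and $A_2(\bt)$, hence $\xi\in[1/d,1]$ by (C3) and $e^{-\xi}\ge e^{-1}$; taking the supremum over $\bt$ and extending to the closed simplex by continuity of $A_i$ gives $e^{-1}\dist_\infty(A_1,A_2)\le\dist_{KS}(G_{\bone}(\cdot|H_1),G_{\bone}(\cdot|H_2))$.

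For claim (ii) I would work from the linear representation $A_i(\bt)=d\int_{\simp}M_{\bt}(\bw)\,\diff H_i(\bw)$, with $M_{\bt}(\bw)=\max\{(1-\Vert\bt\Vert_1)w_1,t_1w_2,\ldots,t_{d-1}w_d\}$, noting that $M_{\bt}(\be_1)=1-\Vert\bt\Vert_1$, $M_{\bt}(\be_j)=t_{j-1}$ for $j\ge2$ (so $\sum_jM_{\bt}(\be_j)=1$) and $0\le M_{\bt}\le1$. The decisive step is to remove the point masses: the mean constraint (C1) forces $p_{i,j}=1/d-\int_{\mathring{\resimp}}w_j(\bv)h_i(\bv)\,\diff\bv$, where for $\bv\in\mathring{\resimp}$ I write $\bw(\bv)=(v_1,\ldots,v_{d-1},1-\Vert\bv\Vert_1)\in\simp$ and $w_j(\bv)$ for its coordinates; substituting this affine relation into the representation gives $A_i(\bt)=1+d\int_{\mathring{\resimp}}K_{\bt}(\bv)h_i(\bv)\,\diff\bv$ with $K_{\bt}(\bv)=M_{\bt}(\bw(\bv))-\sum_jw_j(\bv)M_{\bt}(\be_j)$. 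Convexity of $M_{\bt}$ against the convex combination $\bw(\bv)=\sum_jw_j(\bv)\be_j$ gives $-1\le K_{\bt}\le0$, and $\int_{\mathring{\resimp}}|K_{\bt}(\bv)|\,\diff\bv\le\sum_jM_{\bt}(\be_j)\int_{\mathring{\resimp}}w_j(\bv)\,\diff\bv=1/d!$, using $\int_{\mathring{\resimp}}w_j=1/d!$. Hence $|A_1(\bt)-A_2(\bt)|=d\,\big|\int_{\mathring{\resimp}}K_{\bt}(\bv)(h_1-h_2)(\bv)\,\diff\bv\big|$ is at most $d\Vert h_1-h_2\Vert_1$ and at most $d\Vert h_1-h_2\Vert_\infty/d!=\Vert h_1-h_2\Vert_\infty/\Gamma(d)$; taking the supremum over $\bt$ delivers both entries of the minimum (in fact with the sharper constants $d$ and $1/\Gamma(d)$, which a fortiori give $2d$ and $2/\Gamma(d)$), the bound being vacuous when $\Vert h_1\Vert_\infty$ or $\Vert h_2\Vert_\infty$ is infinite.

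Since claim (iii) is uniform over all pairs in $\Hset$, I would argue by contradiction: assume there are $H_1^{(n)},H_2^{(n)}\in\Hset$ and $\epsilon_0>0$ with $\dist_W(G_{\bone}(\cdot|H_1^{(n)}),G_{\bone}(\cdot|H_2^{(n)}))\to0$ but $\dist_W(H_1^{(n)},H_2^{(n)})\ge\epsilon_0$. As the $H_i^{(n)}$ are probability measures on the compact set $\simp$, Prokhorov's theorem permits passing to a common subsequence with $H_1^{(n)}\Rightarrow H_1^{\ast}$ and $H_2^{(n)}\Rightarrow H_2^{\ast}$, where the limits remain probability measures on $\simp$ satisfying (C1) (preserved since $w_j$ is bounded and continuous). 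The map $\bw\mapsto\max_j(w_j/y_j)$ being bounded and continuous on $\simp$, $V(\by|H_i^{(n)})\to V(\by|H_i^{\ast})$ for each $\by$, hence $G_{\bone}(\cdot|H_i^{(n)})\to\exp(-V(\cdot|H_i^{\ast}))$ pointwise; the limits being continuous distribution functions of genuine simple max-stable laws, $G_{\bone}(\cdot|H_i^{(n)})\Rightarrow G_{\bone}(\cdot|H_i^{\ast})$. Uniqueness of weak limits together with $\dist_W(G_{\bone}(\cdot|H_1^{(n)}),G_{\bone}(\cdot|H_2^{(n)}))\to0$ forces $G_{\bone}(\cdot|H_1^{\ast})=G_{\bone}(\cdot|H_2^{\ast})$, i.e. equal exponent functions, and by the classical one-to-one correspondence between the exponent function of a simple max-stable law and its angular measure --- valid for every probability measure on $\simp$ obeying (C1), via the polar decomposition of the exponent measure --- this gives $H_1^{\ast}=H_2^{\ast}$, whence $\dist_W(H_1^{(n)},H_2^{(n)})\le\dist_W(H_1^{(n)},H_1^{\ast})+\dist_W(H_2^{\ast},H_2^{(n)})\to0$, a contradiction.

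I expect claim (iii) to be the main difficulty. One cannot argue directly with the non-compact family $\{G_{\bone}(\cdot|H)\}$, and $\Hset$ itself fails to be weakly closed --- a weak limit of elements of $\Hset$ may charge the intermediate faces $\simp_I$ with $1<|I|<d$ --- so the compactness argument has to be carried out at the level of angular measures on the compact simplex, and it must invoke injectivity of $H\mapsto A_H$ on the full class of probability measures satisfying (C1) rather than just on $\Hset$; verifying the weak continuity of $H\mapsto G_{\bone}(\cdot|H)$ and the stability of (C1) under weak limits is the supporting technical work. By comparison, claims (i) and (ii) are routine once the surface $\{\Vert 1/\by\Vert_1=1\}$ and the affine dependence of the point masses on the angular density have been isolated.
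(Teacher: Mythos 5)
Your proposal is correct, and each of the three parts deserves a slightly different verdict. Part (i) is essentially the paper's own argument: the lower bound via the mean value theorem applied to $x\mapsto e^{-x}$ on $[1/d,1]$ after restricting to points $\by$ with $V(\by|H_i)=A_i(\bt)$, followed by $\dist_{KS}\leq\dist_T\leq\dist_H$, and the upper bound via $\dist_H^2\leq\Vert\cdot\Vert_1$ together with monotonicity under marginalising out the partition (the paper orders these two steps the other way, passing through $2\dist_T$, but the content is identical). For (ii) you take a cleaner route than the paper: you substitute the mean constraint (C1) into the spectral representation so that $A_i(\bt)-1$ becomes a single integral of a kernel $K_\bt\in[-\sum_j w_jM_\bt(\be_j),0]$ against $h_i$ alone, whereas the paper keeps the point-mass and density contributions separate and bounds $|p_{1,j}-p_{2,j}|$ by $\int w_j|h_1-h_2|$; both work, and yours in fact delivers the sharper constants $d$ and $1/\Gamma(d)$, which a fortiori give the stated $2d$ and $2/\Gamma(d)$. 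The genuine divergence is (iii): the paper reads the claim with one measure fixed and proves sequential continuity of the inverse map $G_\bone(\cdot|H)\mapsto H$ by passing to vague convergence of the exponent measures and citing a convergence result for angular measures, while you prove the stronger uniform two-measure version by Prokhorov compactness of probability measures on $\simp$ plus injectivity of $H\mapsto V(\cdot|H)$ on the full (C1)-constrained class. Your argument is valid and you correctly isolate the one real subtlety — weak limits of elements of $\Hset$ may charge the intermediate faces, so injectivity must be invoked beyond $\Hset$ via uniqueness of the polar decomposition — at the cost of appealing to that classical fact where the paper instead leans on a single cited proposition.
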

\begin{proof}
	The first half of the result at point (i) can be established by observing that for all $\bt \in \resimp$, denoting $\by=(1/(1-\Vert \bt \Vert_1), 1/\bt)\in \Yset$, we have
	\begin{equation*}
	\begin{split}
	|A_1(\bt) - A_2(\bt)| &\leq e |e^{-A_1(\bt)}-e^{-A_2(\bt)}|\\
	&=e |G_\bone(\by|H_1)-G_\bone(\by|H_2)|\\
	& \leq e \dist_T(G_\bone(\cdot|H_1), G_\bone(\cdot|H_2))\\
	&\leq e \dist_H(g_\bone(\cdot|H_1), g_\bone(\cdot|H_2)),
	\end{split}
	\end{equation*}	
	see e.g. equation (B.1) and Lemma B.1(i) in
	\cite{r10}
	for the latter inequality. 
	The second half directly follows from the inequalities
	\begin{equation*}
	\begin{split}
	\dist_H(g_\bone(\cdot|H_1), g_\bone(\cdot|{H_2}))
	&\leq \Vert g_\bone(\cdot|H_1) - g_\bone(\cdot|{H_2}) \Vert_1^{1/2}\\
	&= \left\lbrace
	2\dist_T(G_\bone(\cdot|H_1),G_\bone(\cdot|H_2))
	\right\rbrace^{1/2}\\
	&\leq \left\lbrace
	2\dist_T(G_\bone(\cdot,\cdot|H_1),G_\bone(\cdot,\cdot|H_2))
	\right\rbrace^{1/2}\\
	&=\Vert g_\bone(\cdot,\cdot|H_1) - g_\bone(\cdot,\cdot|{H_2}) \Vert_1^{1/2},
	\end{split}
	\end{equation*}
	see also Lemma B.1(ii) in \cite{r10}.
	%
	
	Next, observe that, using equation \eqref{eq:picklands} and Definition \ref{cond_angular}, the term  $|A_1(\bt)-A_2(\bt)|$ can be bounded from above by
	\begin{equation}\label{eq:what_we_bound}
	\begin{split}
	&d \left[ 
	(1-\Vert \bt\Vert_1)|p_{1,1}-p_{2,1}|+
	\sum_{j=2}^d t_{j-1}|p_{1,j}-p_{2,j}|
	\right]\\
	& \quad +d\int_{\mathring{\resimp}} \max\left\lbrace
	(1-\Vert \bt \Vert_1)v_1, t_1v_2,\ldots, t_{d-2}v_{d-1},
	t_{d-1}(1-\Vert \bv \Vert_1)
	\right\rbrace
	|h_1(\bv)-h_2(\bv)|\diff \bv,
	\end{split}
	\end{equation}
	where, by the mean-contraints (C1),
	\begin{equation*}
	\begin{split}
	&|p_{1,j}-p_{2,j}|\leq \int_{\mathring{\resimp}} v_j|h_1(\bv)-h_2(\bv)|\diff \bv, \quad j=1, \ldots,d-1,\\
	&|p_{1,d}-p_{2,d}| \leq \int_{\mathring{\resimp}}(1-\Vert \bv \Vert_1)|h_1(\bv)-h_2(\bv)|\diff \bv.
	\end{split}
	\end{equation*}
	With few algebraic steps it can be now verified that the upper bound at point (ii) bounds from above the term in \eqref{eq:what_we_bound}.

	Finally, consider a sequence $(H_k, k=1,2,\ldots)\subset \Hset$ such that 
	$$
	\lim_{k\to\infty }\dist_{W} (G_\bone(\cdot|H_k),G_\bone(\cdot|H))=0.
	$$ 
	Then, 
	$$
	\lim_{k \to \infty}\Lambda([\bzero, \by]^\complement|H_k)=\lim_{k \to \infty}-\log G_\bone(\by|H_k) = -\log G_\bone(\by|H)=\Lambda([\bzero, \by]^\complement|H),
	$$
	pointwise for every $\by \in \Yset$.
	This is sufficient to claim that $\Lambda(\cdot|H_k)$ converges vaguely to $\Lambda(\cdot|H)$. The identities $\Lambda(B|H_k)=H_k(B)$, $\Lambda(B|H)=H(B)$, for all Borel subset of $\simp$, together with Proposition 3.12(ii) in 
	\cite{r11}
	now entail that $\lim_{k\to \infty}\dist_W(H_k,H)=0$. We can conclude that the map $G_\bone(\cdot|H) \mapsto H$ is sequentially $\dist_W/\dist_W$-continuos and therefore $\dist_W/\dist_W$-continuous. The result at point (iii) follows.
\end{proof}

In the bivariate case ($d=2$), the simpler relations between the Pickands dependence and the angular distribution functions (equations \eqref{eq:pick}--\eqref{eq:pickdev}) underpin a stronger form of continuity for the map $A \mapsto H$, mapping a Pickands dependence function to its angular pm, than in the higher dimensional case ($d>2$). Moreover, the more tractable form of the simple max-stable density (equation \eqref{eq:bdens_fre}) allows to establish a general Lipschitz continuity result for the map $H \mapsto g_\bone(\cdot|H)$, mapping an angular pm to the pertaining simple max-stable density. 
%
\begin{prop}\label{prop: basic metric bvt}
	In the specific case of $d=2$, the following results hold true for any $H_1, H_2 \in \mathcal{H}$, with corresponding Pickands $A_1, A_2$:
	\begin{inparaenum}
		\item \label{res: basic bvt 1}
		$\dist_H^2(g_\bone(\cdot|H_1),g_\bone(\cdot|H_2)) \leq c \Vert h_1-h_2 \Vert_1$, for a positive global constant $c$;
		\item \label{res: new pick}
		$\forall \epsilon>0$, $\exists \, \eta>0$ such that, if $\dist_\infty(A_1,A_2)<\eta$, then $\dist_{1, \infty}(A_1,A_2)\leq \epsilon$; in particular, $\dist_{KS}(H_1,H_2)\leq \epsilon$.
	\end{inparaenum}	
\end{prop}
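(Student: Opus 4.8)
\textbf{Proof proposal for Proposition~\ref{prop: basic metric bvt}.}
The plan is to treat the two claims separately, both exploiting the explicit bivariate density formula \eqref{eq:bdens_fre} and the integral relations \eqref{eq:pick}--\eqref{eq:pick_2dev}. For part~\ref{res: basic bvt 1}, I would start from the bound $\dist_H^2(g_\bone(\cdot|H_1),g_\bone(\cdot|H_2))\le \Vert g_\bone(\cdot|H_1)-g_\bone(\cdot|H_2)\Vert_1$ and compute the $L_1$-distance directly using \eqref{eq:bdens_fre}, passing to the coordinates $(t,s)$ with $t=y_1/(y_1+y_2)$ and $s=y_1+y_2$. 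The density splits as a sum of two terms: the ``smooth'' part involving $A$ and $A'$, with Jacobian weight concentrated on the region $(y_1y_2)^{-2}$, and the ``atomic-like'' part involving $A''(t)/(y_1+y_2)^3$. After the change of variables one integrates out $s$ (using that $G_\bone(\by|A)$ provides exponential decay), reducing everything to one-dimensional integrals over $t\in(0,1)$. The term coming from $A''$ contributes exactly $\tfrac12\int_0^1|A_1''(t)-A_2''(t)|\,\diff t=\int_0^1|h_1(t)-h_2(t)|\,\diff t=\Vert h_1-h_2\Vert_1$ by \eqref{eq:pick_2dev}. The remaining terms, built from $A$, $A'$ and their products, must be controlled by $\Vert h_1-h_2\Vert_1$ as well; here one uses \eqref{eq:pick}--\eqref{eq:pickdev} to write $A_i(t)-1+t=2\int_0^t H_i$ and $A_i'(t)=2H_i(t)-1$, so that both $\dist_\infty(A_1,A_2)$ and $\Vert A_1'-A_2'\Vert_\infty$ are bounded by $2\dist_\infty(H_1,H_2)\le 2\Vert h_1-h_2\Vert_1$ (since $H_i(t)=p_{i,0}+\int_0^t h_i$ and the point masses $p_{i,0}=1/2-\int_0^1 v h_i(v)\,\diff v$ also differ by at most $\Vert h_1-h_2\Vert_1$ via the mean constraint, exactly as in the proof of Proposition~\ref{lem: basic metric}\ref{res: basic 2}). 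Since the coefficients $A,A',A''$ are uniformly bounded over the valid class (by (C2)--(C3), $\Vert A\Vert_\infty\le 1$ and $|A'|\le 1$), factoring out a difference in one factor and bounding the others by constants gives the claimed global constant $c$.

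For part~\ref{res: new pick}, the key observation is that convexity of $A_1$ and $A_2$ turns $\dist_\infty$-closeness into $\dist_{1,\infty}$-closeness. Concretely, if $\dist_\infty(A_1,A_2)<\eta$, then for any $t\in(0,1)$ and any $h$ small enough, the difference quotient $(A_2(t+h)-A_2(t))/h$ is squeezed between difference quotients of $A_1$ shifted by $\pm 2\eta/h$; letting $h\to 0$ and using that $A_1'$ exists and is monotone (convexity) plus continuous on compacts, one gets $|A_1'(t)-A_2'(t)|$ small uniformly on any compact subinterval. To handle the endpoints, I would use the extended first derivatives $A_i'(0)=2p_{i,0}-1$, $A_i'(1)=1-2p_{i,1}$ together with the boundary constraints: a standard convexity argument (a convex function on $[0,1]$ that is uniformly close to another in sup-norm has uniformly close one-sided derivatives, with a modulus depending only on $\eta$ and the oscillation) yields $\Vert A_1'-A_2'\Vert_\infty\le\epsilon$ once $\eta$ is chosen small. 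This gives $\dist_{1,\infty}(A_1,A_2)\le\epsilon$. Finally, $\dist_{KS}(H_1,H_2)=\sup_t|H_1(t)-H_2(t)|=\tfrac12\sup_t|A_1'(t)-A_2'(t)|\le\epsilon/2$ by \eqref{eq:pickdev} (and the endpoint identities for the point masses), which is the last assertion.

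The main obstacle I expect is the uniformity in part~\ref{res: basic bvt 1}: one must verify that the contribution of the ``smooth'' part of the density to the $L_1$-distance is genuinely $O(\Vert h_1-h_2\Vert_1)$ and not merely $O(\dist_\infty(A_1,A_2))$ with a constant that blows up near the boundary $t\downarrow 0$ or $t\uparrow 1$, where $A''=2h$ may be unbounded (case \ref{cond: infinite}/\ref{cond: infinite_new} of Definition~\ref{cond:mvt_angular}). The factors $(A(t)-tA'(t))$ and $(A(t)+(1-t)A'(t))$ are exactly the conditional tail quantities that stay bounded in $[0,1]$ regardless of whether $h$ is bounded, so the dangerous terms cancel; but making this cancellation explicit, and checking that after integrating out $s$ no spurious $s$-dependent factor survives, is the delicate bookkeeping step. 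The second obstacle, milder, is the quantitative convexity lemma at the endpoints in part~\ref{res: new pick}: one needs the modulus-of-continuity argument for one-sided derivatives of convex functions to be uniform over the compact family of valid Pickands functions, which follows from (C3) but should be stated carefully.
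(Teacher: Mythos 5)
Your proposal is correct in substance, and for part (i) it is essentially the paper's own argument: after the polar-type change of variables the paper splits $\Vert g_\bone(\cdot|H_1)-g_\bone(\cdot|H_2)\Vert_1$ into a term $I_1$ collecting the differences of the coefficient functions $(A-tA')(A+(1-t)A')$ and $t^2(1-t)^2A''$, and a term $I_2$ for the difference of the exponential factors, bounding each by $c\Vert h_1-h_2\Vert_1$ via Proposition \ref{lem: basic metric}\ref{res: basic 2} and the identities \eqref{eq:pickdev}--\eqref{eq:pick_2dev}, then finishes with $\dist_H^2\le\Vert\cdot\Vert_1$. The boundary cancellation you flag as the delicate step is resolved exactly as you suspect: integrating out the radial variable produces weights $\Gamma(2)/A_1(t)^2$ and $t(1-t)/A_1(t)$, which are globally bounded because $A_1\ge 1/2$, so no constant blows up even when $h$ is unbounded near $0$ or $1$. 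For part (ii) your route differs from the paper's: the paper invokes Rockafellar's Theorem 25.7 (pointwise convergence of convex functions with differentiable limit forces locally uniform convergence of derivatives), whereas your difference-quotient squeeze is the elementary proof of the same fact; either works. One genuine error in your closing remark, though: the modulus-of-continuity argument cannot be made uniform over the whole class $\Aset$, and this does \emph{not} follow from (C3) — the derivatives $A'$ range monotonically in $[-1,1]$ but are not equicontinuous as a family (take $h_1$ a tall narrow bump and $h_2$ the same bump slightly shifted: $\dist_\infty(A_1,A_2)$ is of the order of the shift while $\dist_\infty(A_1',A_2')$ stays close to $2$). Consequently the statement must be read with $\eta$ depending on $A_1$, i.e. as continuity of the map $A\mapsto A'$ at the fixed $A_1$; that is what the paper proves, what its applications (consistency at the true $A_0$) require, and what your squeeze argument delivers once you use the modulus of continuity of $A_1'$ alone, which is finite since $A_1'=2H_1-1$ is continuous on the compact $[0,1]$.
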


\begin{proof}
	Recall the general expression of the simple-max stable density in \eqref{eq:bdens_fre} and observe that, by a change of variables, the term $\Vert g_\bone(\cdot|H_1)-g_\bone(\cdot|H_2) \Vert_1$ can be re-expressed as 
	\begin{equation*}
	\begin{split}
	&\int_0^1\int_0^\infty \left| \frac{\phi_1(t)+r\chi_1(t)}{r^3t^2(1-t)^2}\exp
	\left(
	-\frac{A_1(t)}{rt(1-t)}
	\right)
	-\frac{\phi_2(t)+r\chi_2(t)}{r^3t^2(1-t)^2}\exp
	\left(
	-\frac{A_2(t)}{rt(1-t)}
	\right)
	\right|\diff r \diff t		\\
	&\quad \leq 
	\int_0^1\int_0^\infty
	\left| \frac{\phi_1(t)+r\chi_1(t)}{r^3t^2(1-t)^2}
	-\frac{\phi_2(t)+r\chi_2(t)}{r^3t^2(1-t)^2}
	\right|
	\exp
	\left(
	-\frac{A_1(t)}{rt(1-t)}
	\right)\diff r \diff t\\
	&\qquad + \int_0^1\int_0^\infty
	\left|
	\exp
	\left(
	-\frac{A_1(t)}{rt(1-t)}
	\right)
	-	\exp
	\left(
	-\frac{A_2(t)}{rt(1-t)}
	\right)
	\right|\frac{\phi_2(t)+r\chi_2(t)}{r^3t^2(1-t)^2} \diff r \diff t\\
	& \quad =: I_1+I_2,
	\end{split}
	\end{equation*}
	where, for $t \in (0,1)$, 
	$$
	\phi_\bullet(t)=\left(A_\bullet(t)-tA_\bullet' (t)\right)\left(A_\bullet(t)+(1-t)A_\bullet'(t)\right), \quad \chi_\bullet(t)=t^2(1-t)^2A_\bullet''(t). 
	$$
	On one hand, Proposition \ref{lem: basic metric}\ref{res: basic 2}, the identities in \eqref{eq:pickdev}-\eqref{eq:pick_2dev}
	and few algebraic manipulations yield
	\begin{equation*}
	\begin{split}
	I_1 & \leq 
	\int_0^1\int_0^\infty
	\left(
	\frac{|\phi_1(t)-\phi_2(t)|}{r^3t^2(1-t)^2}	
	+\frac{|\chi_1(t)-\chi_2(t)|}{r^2t^2(1-t)^2}\right)
	\exp
	\left(
	-\frac{A_1(t)}{rt(1-t)}
	\right)\diff r \diff t
	\\
	& \leq \int_0^1\left| 
	\phi_1(t)-\phi_2(t)
	\right|\frac{\Gamma(2)}{(A_1(t))^2}\diff t+
	\int_0^1\left| 
	h_1(t)-h_2(t)
	\right|\frac{\Gamma(1)t(1-t)}{A_1(t)}\diff t\\
	& \leq c_1 \Vert h_1-h_2 \Vert_1,
	\end{split}
	\end{equation*}	
	for a positive global constant $c_1$. On the other, similar arguments together with the Lipschitz continuity of the exponential function on bounded sets give
	\begin{equation*}
	\begin{split}
	I_2 & \leq \int_0^1
	\int_0^\infty
	\left|
	A_1(t)-A_2(t)
	\right| 
	\frac{
		e^{-\frac{1}{2r}t^{-1}(1-t)^{-1}}
	}{rt(1-t)}
	\frac{\phi_2(t)+r\chi_2(t)}{r^3t^2(1-t)^2} \diff r \diff t\\
	& =\int_0^1 \left|
	A_1(t)-A_2(t)
	\right| 
	\left(
	8\phi_2(t) \Gamma(3)
	+4\frac{\chi_2(t)\Gamma(2)}{t(1-t)}
	\right)\diff t\\
	& \leq c_2 \Vert h_1-h_2 \Vert_1
	,
	\end{split}
	\end{equation*}	
	where $c_2$ is a positive global constant. The result at point (i) now follows from Lemma B.1(ii) in \cite{r10} 
	%
	
	To establish the result at point (ii), start by considering a sequence 
	$(A_k, k=1,2,\ldots)\subset \Aset$ such that $\lim_{k \to \infty}\dist_\infty(A_k,A_*)=0$. Observe that, by Definition \ref{cond_angular}, the identity in \eqref{eq:pickdev} with $A$ replaced by $A_*$ and $A_k$, $H$ by $H_*$ and $H_k$, respectively, entails that $A_*$ and $A_k$ are continuously differentiable on $[0,1]$. For, extend the first derivatives by considering the right and the left derivatives at $0$ and $1$, respectively. Thus, Theorem 25.7 in 
	\cite{r34}
	entails that $\lim_{k \to \infty}\dist_{\infty}(A_*',A_k')=0$. We have then established that the map $A \mapsto A'$ is sequentially $\dist_\infty/\dist_\infty$-continuous, and thus continuous, at $A_*$. The first half of the claim at point (ii) now follows. The second half is a direct consequence of the first one, together with the identity in \eqref{eq:pickdev}. The proof is now complete.
\end{proof}

\subsection{Positive Kullback-Leibler divergences for simple max-stable models}\label{sec:signed_kulb_simple}
In this section we introduce some technical results concerning the classical Kullback-Leibler and other related divergences for simple max-stable densities, which give the mathematical ground for proving the theorems of Section \ref{sec:KL_support} of the main paper and Section \ref{app:KL_support} of the present manuscript. They also represent a first step for establishing some of the lemmas in
Section \ref{appendix:semi} of the present manuscript, concerning semiparametric max-stable models. The actual interest on stronger notions of Kullback-Leibler divergences is motivated by the results in Section \ref{sec:binf_sample_max} of the main paper, whose proofs require control on higher-order moments of log likelihood ratios.

For any $\{H,H_1, H_2\}\subset \Hset$ and $l \in \mathbb{N}_+$, define the functional
\begin{equation*}
\mathscr{V}_H^{(l)}(H_1; H_2):=\left(
\int_{\Yset} \left[\log^+\left\lbrace
\frac{g_\bone(\by|H_1)}{g_\bone(\by|H_2)}
\right\rbrace\right]^l g_\bone(\by|H)\diff\by 
\right)^{1/l}.
\end{equation*}
In particular, 
$\mathscr{V}_{H_1}^{(l)}(H_1; H_2)$ equals the $l$-th order positive Kullback-Leibler divergence from $g_\bone(\cdot|H_2)$ to $g_\bone(\cdot|H_1)$  and it holds that
$$
\kulb(g_\bone(\cdot|H_1),g_\bone(\cdot|H_2)) \leq \mathscr{V}_{H_1}^{(1)}(H_1; H_2)= \kulb_+^{(1)}(g_\bone(\cdot|H_1),g_\bone(\cdot|H_2)).
$$
Before stating our first auxiliary result, we introduce the following 
class of angular pm's.
\begin{definition}
	 Let $\Hset'' \subset \Hset$ be the set of angular pm's such that $H(\{\be_j\})=p_j>0$, for $j=1,\ldots,d$, with angular density satisfying $\Vert h\Vert_\infty < \infty$  and $\inf_{\bt \in \mathring{\resimp}}h(\bt)>0$.
\end{definition}
\begin{lemma}\label{lem: ratio 1}
	Let $H \in \Hset''$. Then,
	for every $l \in \mathbb{N}_+$ and  $\epsilon >0$ there exists $\delta>0$ such that, for all $H_1\in B_{\delta,\infty}(H)$ and $H_2 \in \Hset$,
	\begin{equation}\label{eq: ratio bound}
	\mathscr{V}_{H_2}^{(l)}(H; H_1) \leq 
	\epsilon + \log(1+\epsilon).
	\end{equation}
	In particular, when $l=1$, it also holds that $\kulb(g_\bone(\cdot|H),g_\bone(\cdot|H_1))\leq\epsilon+\log(1+\epsilon)$.
\end{lemma}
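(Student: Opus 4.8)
\textbf{Proof plan for Lemma \ref{lem: ratio 1}.}

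The plan is to control the log-ratio $\log^+\{g_\bone(\by|H)/g_\bone(\by|H_1)\}$ pointwise in $\by$, uniformly over $H_1$ in a small $\dist_\infty$-ball around $H$, and then integrate against $g_\bone(\by|H_2)$ which is a probability density, so the integral of any constant bound is that constant. The key observation is that, since $H \in \Hset''$ has a bounded angular density bounded away from zero and strictly positive point masses, the representation \eqref{eq:kernel_density_ang} shows each $-V_{I_i}(\by|H)$ is, up to the explicit $\by$-dependent weights $y_{I_i}^{-2}$ and the integral factors, a positive linear functional of $(p_i, h)$; an $\dist_\infty$-perturbation $h_1$ of size $\delta$ with $\|h_1\|_\infty<\infty$ and the associated (perturbed) point masses $p_{1,i}$ — which by the mean constraints (C.1) differ from $p_i$ by at most $\int_{\mathring{\resimp}}|h-h_1|\,\lesssim\delta$ — therefore produce $-V_{I_i}(\by|H_1)$ that is within a relative factor $(1+c\delta/\inf h)$ of $-V_{I_i}(\by|H)$, for a constant $c$ depending only on $d$, uniformly in $\by$. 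Concretely I would write, for each partition block $I_i$,
\[
\left|\frac{-V_{I_i}(\by|H)}{-V_{I_i}(\by|H_1)} - 1\right| \le \frac{c\,\delta}{\min\{\inf_{\bt}h(\bt),\, p_1,\dots,p_d\}} =: \epsilon'(\delta),
\]
the point being that the $\by$-dependent kernels cancel in the ratio and only the $\dist_\infty$-distance (and the $L_1$-distance controlling the masses, itself $\lesssim\delta$) survives. Note $\inf h >0$ is exactly what prevents the denominator from degenerating; this is why the hypothesis $H\in\Hset''$ (as opposed to merely $H\in\Hset$) is used.

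Next, from the Fa\`a di Bruno formula \eqref{eq:density_unit_fre}, $g_\bone(\by|H) = \sum_{\part\in\allpart_d} G_\bone(\by|H)\prod_{i}\{-V_{I_i}(\by|H)\}$. For the exponential factor, $G_\bone(\by|H)/G_\bone(\by|H_1) = \exp\{V(\by|H_1)-V(\by|H)\}$, and $|V(\by|H_1)-V(\by|H)| = |L(1/\by|H_1)-L(1/\by|H)| \le \|A_1-A_0\|_\infty \cdot\|1/\by\|_1$-type bound; but a cleaner route, avoiding the unbounded $\|1/\by\|_1$, is to note that $V(\by|H)$ and $V(\by|H_1)$ are themselves positive linear functionals of $(p_j,h)$ and $(p_{1,j},h_1)$ with the \emph{same} $\by$-kernels (the $\max$-integrals in \eqref{eq:picklands}), so $V(\by|H_1)/V(\by|H)\in[1-\epsilon'(\delta), 1+\epsilon'(\delta)]$ and hence $|V(\by|H_1)-V(\by|H)| \le \epsilon'(\delta)\, V(\by|H)$; since $G_\bone(\by|H) = e^{-V(\by|H)} \le 1$ the ratio $G_\bone(\by|H)/G_\bone(\by|H_1) = e^{V(\by|H_1)-V(\by|H)}$ is only problematic when $V(\by|H_1)<V(\by|H)$, in which case the ratio is $\le e^{\epsilon'(\delta)V(\by|H)}$, which is \emph{not} uniformly bounded. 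I would circumvent this by observing that $\log^+\{g_\bone(\by|H)/g_\bone(\by|H_1)\}$ combines the product over blocks (giving an additive contribution $\le |\part|\log(1+\epsilon'(\delta)) \le d\log(1+\epsilon'(\delta))$ after taking logs) with the exponent contribution $\le \epsilon'(\delta)V(\by|H)$; the latter, raised to the $l$-th power and integrated against $g_\bone(\by|H_2)$, is finite because $V(\cdot|H)$ has finite moments of all orders under any simple max-stable law (standard: $V(\bY|H)$ under $G_\bone(\cdot|H)$ has a known distribution with exponential tails, and comparing $g_\bone(\cdot|H_2)$ and $g_\bone(\cdot|H)$ via the same uniform ratio bound transfers the moment finiteness). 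So choosing $\delta$ small enough that $\epsilon'(\delta)$ times that moment bound is $\le\epsilon$, and $d\log(1+\epsilon'(\delta))\le\log(1+\epsilon)$, gives \eqref{eq: ratio bound} after applying $(a+b)^l \le 2^{l-1}(a^l+b^l)$ and absorbing constants. The $l=1$ addendum then follows since $\kulb \le \mathscr{V}^{(1)}_{H}(H;H_1) = \mathscr{V}^{(1)}_{H_2}(H;H_1)$ with $H_2=H$.

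The main obstacle I anticipate is precisely the exponential factor $G_\bone(\by|H)/G_\bone(\by|H_1)$: unlike the polynomial (product-of-derivatives) part, it is not uniformly bounded in $\by$, so one cannot get a clean $\by$-free bound on the log-ratio. The resolution is to keep the $\epsilon'(\delta)V(\by|H)$ term \emph{unintegrated until the end}, use that $V(\by|H)$ is integrable to every power under $g_\bone(\cdot|H_2)$ (a moment bound that itself needs the uniform ratio estimate to compare the $H_2$-law with the $H$-law, or alternatively an explicit estimate exploiting $V(\bY|H)\stackrel{d}{=}$ a standard exponential-type variable when $\bY\sim G_\bone(\cdot|H)$), and only then choose $\delta$ in terms of $\epsilon$, $l$, $d$, and that moment constant. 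A secondary technical point is verifying the uniform relative bound on the $-V_{I_i}$'s carefully from \eqref{eq:kernel_density_ang} — handling separately the singleton blocks $|I_i|=1$ (which carry the extra $d\,p_i\,y_{I_i}^{-2}$ term, whence the need for $p_i>0$) and the larger blocks — but this is routine once the structure as a positive linear functional of $(p,h)$ is made explicit.
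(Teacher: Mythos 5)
Your plan follows essentially the same route as the paper's proof: split $\log^+\{g_\bone(\by|H)/g_\bone(\by|H_1)\}$ into the exponent-function difference $V(\by|H_1)-V(\by|H)$ plus the log-ratio of the sums of products of the $-V_{I_i}$'s (the paper uses Minkowski's inequality on the $L^l$-norm rather than $(a+b)^l\le 2^{l-1}(a^l+b^l)$, which just gives cleaner constants); control the second piece by a uniform relative bound $\{-V_{I_i}(\by|H)\}/\{-V_{I_i}(\by|H_1)\}\le 1+\bar\epsilon$ derived exactly as you describe from $\dist_\infty(h,h_1)\le\delta$, $\inf h>0$ and $p_j>0$ (the sum over partitions is handled immediately since all terms are positive, or by the paper's Jensen/weighted-average step); and integrate the first piece against $g_\bone(\cdot|H_2)$ using a moment bound. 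Your identification of why $H\in\Hset''$ is needed is also correct.

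The one step whose justification as written would fail is the moment bound for the exponent term under $g_\bone(\cdot|H_2)$. Neither of your two suggested routes works: there is no uniform ratio comparison between $g_\bone(\cdot|H_2)$ and $g_\bone(\cdot|H)$ because $H_2$ is an \emph{arbitrary} element of $\Hset$, not close to $H$; and the distributional identity for $V(\bY|H)$ holds when $\bY\sim G_\bone(\cdot|H)$, not when $\bY\sim G_\bone(\cdot|H_2)$. The correct (and simpler) argument, which is what the paper uses, is that $|V(\by|H_1)-V(\by|H)|\le \dist_\infty(A,A_1)\,\Vert 1/\by\Vert_1$ (equivalently your $\epsilon'(\delta)V(\by|H)\le\epsilon'(\delta)\Vert 1/\by\Vert_1$ since $A\le 1$), and \emph{every} simple max-stable law has unit Fr\'echet margins, so
$\left(\int \Vert 1/\by\Vert_1^l\, g_\bone(\by|H_2)\,\diff\by\right)^{1/l}\le \sum_{j=1}^d\left(\int y_j^{-l}g_\bone(\by|H_2)\diff\by\right)^{1/l}= d\,\{\Gamma(l+1)\}^{1/l}$,
a bound that is uniform over $H_2\in\Hset$. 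With this fix your argument closes: choose $\delta$ so that the first piece is at most $\epsilon$ and $(1+\bar\epsilon)^d\le 1+\epsilon$ for the second.
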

\begin{proof}
	Define $\bar{\epsilon}$ via $(1+\bar{\epsilon})=(1+{\epsilon})^{1/d}$ and ${h}_{\inf}:=\inf_{\bv \in \mathring{\resimp}}h(\bv)$. 
	Let
	\begin{equation}\label{eq: constant}
	0<\delta< \min \left\lbrace 
	\frac{\epsilon}{2c  \{d\Gamma(l+1) \}^{1/l}},  \frac{\bar{\epsilon}}{1+\bar{\epsilon}}{h_{\inf}},
	\frac{\bar{\epsilon}}{1+\bar{\epsilon}}\frac{d}{c}\,
	\min_{j=1,\ldots,d} p_j
	\right\rbrace,
	\end{equation}
	with $c=1/\Gamma(d)$. Fix $H_1 \in B_{\delta, \infty}(H)$. By Minkowski's inequality, the left hand-side of \eqref{eq: ratio bound} is bounded from above by
	\begin{equation*}
	\begin{split}
	T_1+T_2 &\equiv 
	\left(
	\int_{\Yset}  \left|V(\by|H_1)-V(\by|H)\right|^l g_\bone(\by|H_2)\diff \by\right)^{1/l}\\
	&\quad+
	\left(
	\int_{\Yset}  \left[\log^+ \left\lbrace
	\frac{\sum_{\part \in \allpart_d}
		\prod_{i=1}^{m}\{-V_{I_i}(\by|H)\}}{\sum_{\part \in \allpart_d}
		\prod_{i=1}^{m}\{-V_{I_i}(\by|H_1)\}}
	\right\rbrace\right]^l
	g_\bone(\by|H_2)\diff \by \right)^{1/l}.
	\end{split}
	\end{equation*}

	Proposition \ref{lem: basic metric}\ref{res: basic 2}, the bound in \eqref{eq: constant} and the fact that 
	$g_\bone(\cdot|H_2)$ has unit Fr\'{e}chet margins allow to claim that
	\begin{equation*}
	\begin{split}
	T_1& \leq 
	\left(
	\int_{\Yset} {
		\left|A_1\left(
		\frac{1/\by_{2:d}}{\Vert 1/\by \Vert_1}
		\right)-A\left(
		\frac{1/\by_{2:d}}{\Vert 1/\by \Vert_1}
		\right)\right|^l } 
	\Vert 1/\by\Vert_1^l g_\bone(\by|H_2)\diff \by\right)^{1/l}\\
	&\leq \{d\Gamma(l+1) \}^{1/l} \dist_\infty (A,A_1) 
	<  \epsilon .
	\end{split}
	\end{equation*}
	Moreover, defining for all $\part \in \allpart_d$ the positive weights
	$$
	w_H(\part)= \frac{\prod_{i=1}^m\{-V_{I_i}(\by|H)\}}{\sum_{\part' \in \allpart_d}\prod_{i=1}^{m'}\{-V_{I_i'}(\by|H)\}},
	$$
	Jensen's inequality yields that 
	\begin{equation}\label{eq:jensen}
	\begin{split}
	\log^+ \left\lbrace
	\frac{\sum_{\part \in \allpart_d}
		\prod_{i=1}^{m}\{-V_{I_i}(\by|H)\}}{\sum_{\part \in \allpart_d}
		\prod_{i=1}^{m}\{-V_{I_i}(\by|H_1)\}}
	\right\rbrace&= \max \left[0, -\log \left\lbrace
	\frac{\sum_{\part \in \allpart_d} 
		\prod_{i=1}^{m}\{-V_{I_i}(\by|H_1)\}}{\sum_{\part \in \allpart_d}
		\prod_{i=1}^{m}\{-V_{I_i}(\by|H)\}}	\right\rbrace
	\right]\\
	& = \max \left[0, -\log \left\lbrace
	\sum_{\part \in \allpart_d} w_H(\part)
	\prod_{i=1}^{m} \frac{\{-V_{I_i}(\by|H_1)\}}{\{-V_{I_i}(\by|H)\}}	\right\rbrace
	\right]\\
	& \leq \max \left[0, -\sum_{\part \in \allpart_d} w_H(\part)\log
	\left\lbrace
	\prod_{i=1}^{m} \frac{\{-V_{I_i}(\by|H_1)\}}{\{-V_{I_i}(\by|H)\}}
	\right\rbrace
	\right]\\
	& \leq \max \left[0, \max_{\part \in \allpart_d} \log
	\left\lbrace \prod_{i=1}^m
	\frac{-V_{I_i}(\by|H)}{-V_{I_i}(\by|H_1)}
	\right\rbrace
	\right]
	\end{split}
	\end{equation}
	Therefore, $T_2 \leq \log(1+\epsilon)$, since for all $\part \in \allpart_d$, $i \in \{1, \ldots, m\}$ and (almost) every $ \by \in \Yset$ we have
	$$
	\left\lbrace
	\frac{-V_{I_i}(\by|H)}{-V_{I_i}(\by|H_1)}
	\right\rbrace \leq 1+\bar{\epsilon}.
	$$
	The latter inequality easily follows from \eqref{eq:kernel_density_ang}  and the subsequent facts:
	\begin{inparaenum}
		\item\label{fact I}
		since $\dist_\infty(h,h_1) < \delta$, then $h-h_1 < \delta= \frac{\delta}{{h}_{\inf} - \delta}({h}_{\inf} -\delta) < \bar{\epsilon}({h}_{\inf} -\delta)< \bar{\epsilon} h_1$. As a result, 
		\begin{equation*}
		\begin{split}
		&\int_{
			(\bzero, \by_{I_i^\complement})
		}
		\frac{d h\circ\pi_{\resimp}(\bz/\Vert \bz\Vert_1)}{\Vert \bz \Vert_1^{d+1} }
		\bigg{|}_{\bz_{\tau_i}=\by_{I_i}} \diff \bz_{I_i^c }  \leq 	(1+\bar{\epsilon})\int_{
			(\bzero, \by_{I_i^\complement})
		}
		\frac{d h_1\circ\pi_{\resimp}(\bz/\Vert \bz \Vert_1)}{\Vert \bz \Vert_1^{d+1} } \bigg{|}_{\bz_{I_i}=\by_{I_i}} \diff \bz_{I_i^\complement}; 
		\end{split}
		\end{equation*}
		\item \label{fact II}
		by exploting the mean constraints in (C.1) we deduce that for $j=1,\ldots,d-1$,
		$$
		(p_{j}-p_{1,j})/p_{1,j}=\frac{\int_{\mathring{\resimp}}v_j [h_1(\bv)-h(\bv)]\diff \bv}{p_{1,j}
		} 
		\leq \frac{ \delta cd^{-1}}{p_{j}-\delta cd^{-1}} < \bar{\epsilon},
		$$
		and
		$$
		(p_{d}-p_{1,d})/p_{1,d}=\frac{\int_{\mathring{\resimp}}(1-\Vert \bv \Vert_1) [h_1(\bv)-h(\bv)]\diff \bv}{p_{1,d}
		} 
		\leq \frac{ \delta cd^{-1}}{p_{d}-\delta cd^{-1}} < \bar{\epsilon},
		$$
		therefore, for all $j \in\{1, \ldots,d\}$, $p_j\leq (1+\bar{\epsilon})p_j'$, wherefrom we conclude $p_{j} d y_{I_i }^{-2} \leq (1+\bar{\epsilon})p_{j}' d y_{I_i }^{-2}$, with $I_i=\{j\}$.
	\end{inparaenum}
	The proof is now complete.
\end{proof}

\begin{rem}
	Observe that the definition of $\Hset''$ is similar to that of $\Hset'$ in Definition \ref{cond:mvt_angular}\ref{cond:prior_ang_set} of the main article.
	Though, the pm's in $\Hset''$ are only required to have a bounded angular density, which may be discontinuous.
	As a consequence, $\Hset' \subset \Hset''$.
	The assumption $H \in \Hset'' $ allows to apply Lemma \ref{lem: ratio 1} also in the case where $H$ is an angular pm whose cdf is in BS form, with piecewise constant angular density (see Section \ref{app:review_BS} of the present manuscript).
\end{rem}

\begin{lemma}\label{prop: new_KL_prop}
	Let $H \in \Hset_0$ and assume it satisfies property \ref{cond: finite_new} in Definition \ref{cond:mvt_angular} of the main article. Then, for all $l \in \nat_+$ and $\epsilon>0$ there exist $H_* \in\Hset'$ such that
	\begin{equation*}
		\kulb_+^{(l)}(g_\bone(\cdot|H), g_\bone(\cdot|H_*)) \leq \epsilon.
	\end{equation*}
	In the particular case of $l=1$, it also holds that
	$
	\kulb(g_\bone(\cdot|H),g_\bone(\cdot|H_*))\leq\epsilon.
	$ 
\end{lemma}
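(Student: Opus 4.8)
The plan is to obtain $H_*$ as a small convex perturbation of $H$ towards a fixed ``regular'' angular pm, and then to control the positive Kullback--Leibler divergences by exploiting that the exponent function and all of its mixed partial derivatives are linear functionals of the angular measure. First I would fix a reference pm $\bar H\in\Hset'$ (one exists: take the constant angular density $\bar h\equiv (d-1)!/2$ on $\mathring{\resimp}$ together with the equal point masses $\bar p_1=\cdots=\bar p_d=1/(2d)$, which satisfies the mean constraint (C1) because $\int_{\mathring{\resimp}}v_j\,\diff\bv=1/d!$, and clearly lies in $\Hset'$). For $\lambda\in(0,1)$ I then set $H_*^{(\lambda)}:=(1-\lambda)H+\lambda\bar H$. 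Since $H\in\Hset_0$ satisfies property~\ref{cond: finite_new} in Definition~\ref{cond:mvt_angular} — so its angular density $h$ extends continuously, hence boundedly, to $\resimp$ — and has nonnegative point masses, the convex combination $H_*^{(\lambda)}$ has angular density $(1-\lambda)h+\lambda\bar h$, which again extends continuously to $\resimp$ and is now bounded below by $\lambda(d-1)!/2>0$, and point masses $(1-\lambda)p_j+\lambda/(2d)\ge\lambda/(2d)>0$; since (C1) and the support restriction are preserved under mixtures, $H_*^{(\lambda)}\in\Hset'$ for every $\lambda\in(0,1)$.

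Next I would write the likelihood ratio explicitly. From the spectral representation $V(\by|H)=d\int_{\simp}\max_{1\le j\le d}(w_j/y_j)\,\diff H(\bw)$ one gets $V(\by|H_*^{(\lambda)})=(1-\lambda)V(\by|H)+\lambda V(\by|\bar H)$, hence $G_\bone(\by|H_*^{(\lambda)})=G_\bone(\by|H)^{1-\lambda}G_\bone(\by|\bar H)^{\lambda}$; and differentiating (or reading off \eqref{eq:kernel_density_ang}) one gets, for each block $I_i$, $-V_{I_i}(\by|H_*^{(\lambda)})=(1-\lambda)\{-V_{I_i}(\by|H)\}+\lambda\{-V_{I_i}(\by|\bar H)\}\ge(1-\lambda)\{-V_{I_i}(\by|H)\}$, all the $-V_{I_i}$ being nonnegative. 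Plugging these into the expansion \eqref{eq:density_unit_fre} and using $|\part|\le d$ to bound the ratio of the two partition sums by $(1-\lambda)^{-d}$, one obtains, for almost every $\by$ with $g_\bone(\by|H)>0$,
\[
\frac{g_\bone(\by|H)}{g_\bone(\by|H_*^{(\lambda)})}=e^{\lambda\{V(\by|\bar H)-V(\by|H)\}}\,\frac{\sum_{\part\in\allpart_d}\prod_i\{-V_{I_i}(\by|H)\}}{\sum_{\part\in\allpart_d}\prod_i\{-V_{I_i}(\by|H_*^{(\lambda)})\}}\le e^{\lambda V(\by|\bar H)}(1-\lambda)^{-d},
\]
and therefore $\log^+\{g_\bone(\by|H)/g_\bone(\by|H_*^{(\lambda)})\}\le \lambda V(\by|\bar H)+d\log\tfrac{1}{1-\lambda}$. (If one prefers not to use the termwise comparison, the factor $(1-\lambda)^{-d}$ can be recovered from the Jensen argument already used in the proof of Lemma~\ref{lem: ratio 1}.)

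It then remains to integrate the $l$-th power of this bound against $g_\bone(\cdot|H)$. Using (C1) one has $V(\by|\bar H)\le\sum_{j=1}^d 1/y_j=\|1/\by\|_1$, and since $g_\bone(\cdot|H)$ has unit-Fr\'echet margins each $1/Y_j$ is unit-exponential, so Minkowski's inequality gives $\big(\int\|1/\by\|_1^l\,g_\bone(\by|H)\,\diff\by\big)^{1/l}\le d\,(l!)^{1/l}=:C_l<\infty$. A second application of Minkowski's inequality in $L^l(g_\bone(\cdot|H))$ then yields
\[
\kulb_+^{(l)}\big(g_\bone(\cdot|H),g_\bone(\cdot|H_*^{(\lambda)})\big)^{1/l}\le \lambda C_l+d\log\tfrac{1}{1-\lambda},
\]
which tends to $0$ as $\lambda\downarrow0$. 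Choosing $\lambda>0$ small enough that the right-hand side is at most $\epsilon^{1/l}$ and setting $H_*:=H_*^{(\lambda)}$ proves the first assertion; the case $l=1$ follows at once because $\kulb\le\kulb_+^{(1)}$.

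The routine but slightly delicate points will be verifying that $H_*^{(\lambda)}$ inherits every requirement for membership in $\Hset'$ and the linearity of the mixed partials $V_{I_i}(\by|\cdot)$ in the angular measure (which can also be read directly from \eqref{eq:kernel_density_ang}). The only genuinely substantive step — a bound on the partition-sum ratio that is uniform in $\by$ — reduces here to the elementary inequality $-V_{I_i}(\by|H_*^{(\lambda)})\ge(1-\lambda)\{-V_{I_i}(\by|H)\}$, so I do not expect a serious obstacle; the analytic content is essentially that mixing with a tiny amount of the regular pm $\bar H$ perturbs the simple max-stable density only by the benign multiplicative factor $e^{\lambda V(\by|\bar H)}(1-\lambda)^{-d}$, whose positive log has finite $L^l$-moments under $g_\bone(\cdot|H)$.
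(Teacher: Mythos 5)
Your proof is correct, and it takes a genuinely different route from the paper's. The paper constructs $H_*$ in two stages — first shrinking the density to $h_1=h/(1+c_1)$ and then lifting it to $h_2=h_1+c_2$, with the constants $c_1,c_2$ calibrated against $\Vert h\Vert_\infty$ and the point masses — and controls each stage through the machinery of Lemma~\ref{lem: ratio 1} (the Jensen/partition-weight argument and the facts (i)–(ii) therein). You instead take a convex mixture $H_*^{(\lambda)}=(1-\lambda)H+\lambda\bar H$ with a fixed reference $\bar H\in\Hset'$ and exploit the linearity of $V(\by|\cdot)$ and of every mixed partial $-V_{I_i}(\by|\cdot)$ in the angular measure; this yields the uniform pointwise bound $g_\bone(\by|H)/g_\bone(\by|H_*^{(\lambda)})\le e^{\lambda V(\by|\bar H)}(1-\lambda)^{-d}$, whose positive log has $L^l(g_\bone(\cdot|H))$-norm at most $\lambda\,d(l!)^{1/l}+d\log\{1/(1-\lambda)\}\to 0$. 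All the supporting details check out: your $\bar H$ is a valid element of $\Hset'$ (the constant $(d-1)!/2$ and point masses $1/(2d)$ do satisfy (C1) since $\int_{\mathring{\resimp}}v_j\,\diff\bv=1/d!$), the mixture inherits membership in $\Hset'$, the termwise inequality $-V_{I_i}(\by|H_*^{(\lambda)})\ge(1-\lambda)\{-V_{I_i}(\by|H)\}$ combined with $|\part|\le d$ gives the $(1-\lambda)^{-d}$ factor, and the moment bound via unit-exponential margins is exact. What your approach buys is a shorter, more self-contained argument with explicit constants and no dependence on Lemma~\ref{lem: ratio 1}; what the paper's buys is reuse of a lemma it needs elsewhere anyway and a construction ($h_1=h/(1+c_1)$, $h_2=h_1+c_2$) that it adapts in the unbounded-density case of Proposition~\ref{prop:newprop_KL}, where a global convex mixture would not by itself produce a bounded density.
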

\begin{proof}
	Consider the non-trivial case where $H \notin \Hset'$ and fix $\epsilon>0$. For any $H_1,H_2 \in \Hset$, Minkowski's inequality entails that
	\begin{equation}\label{eq: KL decomposition2}
	\begin{split}
	\kulb_+^{(l)}(g_\bone(\cdot|H), g_\bone(\cdot|H_2))&=\mathscr{V}_{H}^{(l)}(H; H_2)\\
	&\leq\mathscr{V}_{H}^{(l)}(H; H_1)+\mathscr{V}_{H}^{(l)}(H_1; H_2).
	\end{split}
	\end{equation}
	Let $c=1/\Gamma(d)$ and fix an aribtrarily small $\epsilon'>0$ such that
	$$
	\epsilon' + \log(1+\epsilon') < \epsilon/2.
	$$	
	Set $\bar{\epsilon}'$ via $(1+\bar{\epsilon}')=(1+\epsilon')^{1/d}$ and let $c_1, c_2$ be any two positive constants satisying
	\begin{equation*}
	\begin{split}
	c_1 < \min\left\lbrace \frac{\epsilon'}{\{d\Gamma(1+l)\}^{1/l}2c\Vert h \Vert_\infty}, \bar{\epsilon}'  \right\rbrace,\quad 
	c_2 < \min \left\lbrace
	\frac{\epsilon'}{\{d\Gamma(1+l)\}^{1/l}2c},
	\frac{1}{c}\frac{\bar{\epsilon}'}{1+\bar{\epsilon}'}
	\frac{c_1}{1+c_1}
	\right\rbrace.
	\end{split}
	\end{equation*}
	Choose $H_1$ and $H_2$ in \eqref{eq: KL decomposition2} as the angular pm's corresponding to densities $h_1={h}/(1+c_1)$ and $h_2=h_1+c_2$, respectively, and point masses
	\begin{equation*}
	\begin{split}
	p_{i,j}&=\frac{1}{d} - \int_{\mathring{\resimp}}v_j h_i(\bv) \diff \bv, \quad i=1,2, \quad j=1,\ldots,d-1,\\
	p_{i,d}&=\frac{1}{d} - \int_{\mathring{\resimp}}(1-\Vert \bv \Vert_1)h_i(\bv) \diff \bv, \quad i=1,2.
	\end{split}
	\end{equation*}
	Proceeding as in the proof of Lemma \ref{lem: ratio 1} and
	using results similar to facts \ref{fact I}-\ref{fact II} therein,
	we can show that both the terms on the right-hand side of \eqref{eq: KL decomposition2} are smaller than $\epsilon'+\log(1+\epsilon')$. Since $H_2 \in \Hset'$, the result in the statement now follows selecting $H_*=H_2$.

	In passing, note that, if $\inf_{w\in \simpint_{d}}h(w)>0$, then $H_1\in\Hset'$ and, selecting $H_*=H_1$, the result follows directly from 
	$$
	\mathscr{V}_{H}^{(l)}(H; H_1)< \epsilon'+\log(1+\epsilon').
	$$
The proof is now complete.
\end{proof}

\begin{prop}\label{cor: New_KL_cor}
	Let $H \in\Hset_0$ and satisfies property \ref{cond: finite_new} in Definition \ref{cond:mvt_angular}, then for any $l \in \nat_+$ and $\epsilon>0$ there exist $H_1 \in \Hset'$ and $\delta>0$ such that, for all $H_2 \in B_{\delta, \infty}(H_1)$, 
	\begin{equation}\label{eq:l_div_simp}	\kulb_{+}^{(l)}(g_\bone(\cdot|H), g_\bone(\cdot|H_2))\leq \epsilon.
	\end{equation}

	In the particular case of $l=1$, it also holds that
	$
	\kulb(g_\bone(\cdot|H),g_\bone(\cdot|H_2))\leq\epsilon.
	$
\end{prop}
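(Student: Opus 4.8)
The plan is to obtain the statement by chaining Lemma~\ref{prop: new_KL_prop} with Lemma~\ref{lem: ratio 1}, exploiting the triangle inequality for the functional $\mathscr{V}_H^{(l)}(\cdot\,;\cdot)$ already used in \eqref{eq: KL decomposition2}. Recall that $\mathscr{V}_H^{(l)}(H;H_2)=\kulb_+^{(l)}(g_\bone(\cdot|H),g_\bone(\cdot|H_2))$ and that, for any triple $\{H,H_1,H_2\}\subset\Hset$, the pointwise bound $\log^+\{g_\bone(\cdot|H)/g_\bone(\cdot|H_2)\}\le\log^+\{g_\bone(\cdot|H)/g_\bone(\cdot|H_1)\}+\log^+\{g_\bone(\cdot|H_1)/g_\bone(\cdot|H_2)\}$ together with Minkowski's inequality in $L^l(g_\bone(\cdot|H)\diff\by)$ yields $\mathscr{V}_H^{(l)}(H;H_2)\le\mathscr{V}_H^{(l)}(H;H_1)+\mathscr{V}_H^{(l)}(H_1;H_2)$, so it suffices to make both right-hand terms small. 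Recall also, from the remark following Lemma~\ref{lem: ratio 1}, that $\Hset'\subset\Hset''$, so Lemma~\ref{lem: ratio 1} is applicable to any $H_1\in\Hset'$ in the role of its member of $\Hset''$.

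Concretely, I would fix $l\in\nat_+$ and $\epsilon>0$ and choose $\epsilon_1,\epsilon_2>0$ with $\epsilon_1+\epsilon_2+\log(1+\epsilon_2)\le\epsilon$. First, since $H\in\Hset_0$ satisfies property~\ref{cond: finite_new} of Definition~\ref{cond:mvt_angular}, Lemma~\ref{prop: new_KL_prop} with parameters $l$ and $\epsilon_1$ provides $H_1\in\Hset'$ with $\mathscr{V}_H^{(l)}(H;H_1)=\kulb_+^{(l)}(g_\bone(\cdot|H),g_\bone(\cdot|H_1))\le\epsilon_1$. Second, with this fixed $H_1\in\Hset'\subset\Hset''$, apply Lemma~\ref{lem: ratio 1} with base measure $H$, first argument $H_1$, second argument $H_2$, and parameters $l$, $\epsilon_2$: this yields $\delta>0$ (depending on $H_1$, $l$, $\epsilon_2$) such that $\mathscr{V}_H^{(l)}(H_1;H_2)\le\epsilon_2+\log(1+\epsilon_2)$ for every $H_2\in B_{\delta,\infty}(H_1)$. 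Combining the two estimates via the triangle inequality gives, for all $H_2\in B_{\delta,\infty}(H_1)$, $\kulb_+^{(l)}(g_\bone(\cdot|H),g_\bone(\cdot|H_2))=\mathscr{V}_H^{(l)}(H;H_2)\le\epsilon_1+\epsilon_2+\log(1+\epsilon_2)\le\epsilon$, which is \eqref{eq:l_div_simp}. For $l=1$, the additional assertion follows from $\kulb(g_\bone(\cdot|H),g_\bone(\cdot|H_2))\le\kulb_+^{(1)}(g_\bone(\cdot|H),g_\bone(\cdot|H_2))$.

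There is no real obstacle here; the argument is essentially bookkeeping. The only point that needs care is the correct identification of the roles of the three angular measures when invoking Lemma~\ref{lem: ratio 1} --- in that lemma the element of $\Hset''$ and its nearby perturbation are the \emph{arguments} of $\mathscr{V}$ while the \emph{base} measure is arbitrary, which is precisely the configuration delivered by the first step with base $H$ --- and verifying that all $\mathscr{V}$-quantities are finite so that the triangle inequality is non-vacuous, finiteness being guaranteed by Lemmas~\ref{prop: new_KL_prop} and~\ref{lem: ratio 1} themselves.
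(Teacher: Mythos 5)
Your proposal is correct and follows exactly the paper's own argument: the decomposition $\mathscr{V}_{H}^{(l)}(H;H_2)\leq \mathscr{V}_{H}^{(l)}(H;H_1)+\mathscr{V}_{H}^{(l)}(H_1;H_2)$ from \eqref{eq: KL decomposition2}, with the first term controlled by Lemma \ref{prop: new_KL_prop} and the second by Lemma \ref{lem: ratio 1} applied to $H_1\in\Hset'\subset\Hset''$. Your identification of the roles of the three angular measures in Lemma \ref{lem: ratio 1} is also the correct one.
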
	
\begin{proof}
	The result readily obtains by bounding from above $\kulb_{+}^{(l)}(g_\bone(\cdot|H), g_\bone(\cdot|H_2))$ as in \eqref{eq: KL decomposition2}. On one hand, by Lemma \ref{prop: new_KL_prop}, the first term on the right-hand side can be made arbitrarily small via an appropriate choice of $H_1 \in\Hset'$. On the other hand, by Lemma \ref{lem: ratio 1}, also the second term can be made arbitrarily small by choosing $H_2 \in B_{\delta, \infty}(H_1)$ and $\delta>0$ small enough.
\end{proof}

In the specific case of $d=2$, we establish a result similar to the previous one for unbounded angular densities.
\begin{prop}\label{prop:newprop_KL}
	Let $d=2$ and $H \in \Hset_0$, with angular density $h$ satisfying property \ref{cond: infinite_new} in Definition \ref{cond:mvt_angular} of the main article.
	Then, for all $\epsilon>0$ there exist $H_1 \in\Hset'$ and $\delta>0$ such that
	$$
	\kulb(g_\bone(\cdot|H),g_\bone(\cdot|H_2))\leq\epsilon,
	$$
	for all $H_2 \in B_{\delta, \infty}(H_1)$. Further assuming that for some $s>0$ 
	$$
	\int_0^1 (h(t))^{1+s}\diff t<\infty, 
	$$ 
	for each $l \in \nat_+$ there exist choices of such $H_1$ and $\delta$ 
	for which the inequality  \eqref{eq:l_div_simp} is also satisfied by all $H_2 \in B_{\delta, \infty}(H_1)$.
\end{prop}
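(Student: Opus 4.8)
I would follow the blueprint of the proof of Proposition~\ref{cor: New_KL_cor}, splitting the positive Kullback--Leibler divergence by Minkowski's inequality as
\[
\kulb_{+}^{(l)}(g_\bone(\cdot|H), g_\bone(\cdot|H_2))=\mathscr{V}_{H}^{(l)}(H;H_2)\leq \mathscr{V}_{H}^{(l)}(H;H_1)+\mathscr{V}_{H}^{(l)}(H_1;H_2),
\]
where $H_1\in\Hset'$ is to be chosen and $H_2\in B_{\delta,\infty}(H_1)$. Since $\Hset'\subset\Hset''$, Lemma~\ref{lem: ratio 1} makes the second term $\leq\epsilon'+\log(1+\epsilon')$ once $\delta$ is small, uniformly in $H_2\in B_{\delta,\infty}(H_1)$; this covers both the case $l=1$ and $l\geq 2$. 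Hence everything reduces to producing, for each $\epsilon>0$, an angular pm $H_1\in\Hset'$ for which $\mathscr{V}_{H}^{(l)}(H;H_1)^l=\kulb_{+}^{(l)}(g_\bone(\cdot|H),g_\bone(\cdot|H_1))$ is small --- unconditionally when $l=1$, and under the extra hypothesis $\int_0^1 h^{1+s}<\infty$ when $l\geq 2$. (The plain Kullback--Leibler bound claimed in the statement is the case $l=1$ together with $\kulb\leq\kulb_+^{(1)}$.) This replaces Lemma~\ref{prop: new_KL_prop}, which is unavailable here because $h$ is unbounded.

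\textbf{Construction of $H_1$ by truncation at the vertices.} Write $h_{\inf}:=\inf_{w\in(0,1)}h(w)>0$. For $M>h_{\inf}$ set $h_M:=\min(h,M)$. Since $h$ is continuous on $(0,1)$, bounded below by $h_{\inf}$, and $\lim_{w\downarrow 0}h(w)=\lim_{w\uparrow 1}h(w)=+\infty$, the function $h_M$ is continuous on $(0,1)$, satisfies $\lim_{w\downarrow0}h_M(w)=\lim_{w\uparrow1}h_M(w)=M$, hence extends continuously to $[0,1]$, and $\inf h_M=h_{\inf}>0$. Moreover one can pick $w_M\downarrow 0$ as $M\to\infty$ with $\sup_{[w_M,1-w_M]}h\leq M$, so that $h_M=h$ on $[w_M,1-w_M]$. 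Define point masses $p_0^{(M)}:=\tfrac12-\int_0^1(1-w)h_M(w)\diff w$ and $p_1^{(M)}:=\tfrac12-\int_0^1 w\,h_M(w)\diff w$; because $h_M\leq h$ with strict inequality on a set of positive measure near each vertex, $p_0^{(M)}>p_0\geq 0$ and $p_1^{(M)}>p_1\geq 0$, and by monotone convergence $\|h-h_M\|_1\to 0$, $p_0^{(M)}\to p_0$, $p_1^{(M)}\to p_1$. Thus the angular pm $H_M$ with density $h_M$ and point masses $p_0^{(M)},p_1^{(M)}$ belongs to $\Hset'$ for all large $M$, and by Proposition~\ref{lem: basic metric}\ref{res: basic 2} the Pickands functions satisfy $\dist_\infty(A,A_M)\to 0$.

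\textbf{The main estimate.} It remains to show $\kulb_{+}^{(l)}(g_\bone(\cdot|H),g_\bone(\cdot|H_M))\to 0$ as $M\to\infty$, which I would do using the explicit bivariate density \eqref{eq:bdens_fre} in the coordinates $(r,t)=(y_1+y_2,\,y_1/(y_1+y_2))$, where $A''(t)=2h(t)$ by \eqref{eq:pick_2dev}. Split the $t$-range into the bulk $[w_M,1-w_M]$ and the two vertex strips $(0,w_M)\cup(1-w_M,1)$. On the bulk $h_M=h$, so $g_\bone(\cdot|H)$ and $g_\bone(\cdot|H_M)$ differ only through the point masses and through $A-A_M$, which on $[w_M,1-w_M]$ is the \emph{constant} $2(p_0-p_0^{(M)})+2\int_0^{w_M}(h-h_M)\to 0$; arguing as in the proof of Lemma~\ref{lem: ratio 1} (Jensen over the two partitions of $\{1,2\}$ and Proposition~\ref{lem: basic metric}), the $\log^+$ of the ratio is $\leq\epsilon'$ there, contributing at most $\epsilon'^{\,l}$ to the $l$th moment. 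On the vertex strips $h_M=M<h$, so the coinciding-atom component of $g_\bone(\cdot|H)$, namely $G_\bone(\by|H)\,2h(t)/(y_1+y_2)^3$, is the one that can blow up; bounding the full ratio from above --- the $G_\bone$-factor ratio is bounded because $A(0|H)=A(0|H_M)=1$ and $\dist_\infty(A,A_M)\to0$, the off-diagonal components are comparable near the vertices, and the diagonal ratio is $\lesssim r\,h(t)\,t^2(1-t)^2$ --- and then integrating $r$ out against $g_\bone(\cdot|H)$, the vertex contribution to $\kulb_{+}^{(l)}$ is bounded by a tail integral of the form
\[
C\int_0^{w_M}\bigl[\log^+ h(t)\bigr]^l\,\bigl(h(t)\,t+1\bigr)\diff t \;+\;(\text{symmetric term at }1).
\]
For $l=1$ this vanishes as $w_M\to0$ by absolute continuity of the integral together with $\int_0^1\log^+ h\leq\int_0^1 h<\infty$ (and the $t$-weight on the $h$-term). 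For $l\geq 2$ I would use that $(\log^+ y)^l\leq C_{l,s}\,y^{\,s}$ for $y\geq 1$, so that the integrand is dominated by $C(h^{1+s}+h)$, which is integrable by the hypothesis $\int_0^1 h^{1+s}<\infty$; hence the tail integral vanishes as $w_M\to0$ as well. Combining the bulk and vertex bounds gives $\kulb_{+}^{(l)}(g_\bone(\cdot|H),g_\bone(\cdot|H_M))\to 0$, and choosing $M$ large then $\delta$ small completes the proof.

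\textbf{Main obstacle.} The delicate point is the vertex-strip estimate: obtaining a sharp enough pointwise upper bound for the likelihood ratio $g_\bone(\by|H)/g_\bone(\by|H_M)$ near the vertices, correctly accounting for the interplay between the diagonal component (growing like $h(t)$), the smoothed off-diagonal component $-V_{\{j\}}$ (which averages $h$ and stays comparable), and the normalising factor $G_\bone$; and then carrying out the $r$-integration so that one lands precisely on the tail integral above, where the factor $r$ in the ratio bound is what supplies the extra $t$-weight (for $l=1$) and, via $(\log^+ h)^l\lesssim h^{s}$, makes the hypothesis $\int_0^1 h^{1+s}<\infty$ exactly what is needed for $l\geq 2$.
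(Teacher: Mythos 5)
Your strategy is essentially the paper's: the same two--step decomposition $\kulb_+^{(l)}(g_\bone(\cdot|H),g_\bone(\cdot|H_2))\leq \mathscr{V}_{H}^{(l)}(H;H_1)+\mathscr{V}_{H}^{(l)}(H_1;H_2)$ with Lemma~\ref{lem: ratio 1} disposing of the second term, the same construction of $H_1\in\Hset'$ by capping $h$ near the vertices and dumping the removed mass onto the atoms (the paper uses continuous minorants $\gamma_{\epsilon_1},\gamma_{\epsilon_2}\le h$ on vertex strips rather than $\min(h,M)$, an immaterial difference), the same bulk/vertex splitting in $(r,t)$ coordinates, and for $l\ge 2$ the same device $[\log^+ y]^l\lesssim y^{s}$ turning the vertex integral into $\int (h^{1+s}+h)$, which is exactly why the hypothesis $\int_0^1 h^{1+s}<\infty$ enters. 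Your reduction of the bracket ratio to $\log^+(h/h_M)$ implicitly uses that the off--diagonal factors are ordered, $\phi\le\phi_M$ because $h\ge h_M$; this is the monotonicity the paper makes explicit when it shows $T_2\le 0$ and $R_2=0$, and you should state it rather than say the components are merely ``comparable''.

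The one genuine soft spot is the $l=1$ vertex estimate. Your bound requires $\int_0^{w_M} t\,h(t)\log^+h(t)\,\diff t\to 0$, and this is \emph{not} a consequence of $\int_0^1 h<\infty$ (or even of $\int_0^1 t\,h(t)\,\diff t\le 1/2$) by ``absolute continuity of the integral'': one can build a continuous $h$ with $h\to\infty$ at $0$, $\int_0^1 h<\infty$, yet $\int_0^{1/2} t\,h\log^+ h\,\diff t=\infty$ (take spikes of height $e^{1/t_n^2}$ on intervals of length $t_n e^{-1/t_n^2}/n$ near $t_n=2^{-n}$). The paper sidesteps the $L\log L$ issue by applying Jensen's inequality to pull the logarithm \emph{outside} the diagonal integral, reducing the problem to bounding $\int h^2 t^{1-\varsigma}$ under a mild pointwise decay condition $t^{\varsigma}h(t)\to 0$ at the vertices. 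So either adopt that Jensen step, or add an explicit regularity hypothesis (e.g.\ $h(t)=O(t^{-\varsigma}\wedge(1-t)^{-\varsigma})$ for some $\varsigma<1$, under which your direct computation $\int th\log^+h\lesssim\int th\log(1/t)+\int th<\infty$ goes through). The $l\ge2$ part of your argument is sound as written.
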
 
\begin{proof}
	To establish the first part of the statement, we construct an angular pm $H_1$ as follows.
	For small constants $\epsilon_1, \epsilon_2$, there exist positive bounded functions $\gamma_{\epsilon_1},\gamma_{\epsilon_2}$ which are continuous on $[0,1]$, satisfy $\gamma_{\epsilon_1} \leq h_0(t)$, $\forall t \in (0, \epsilon_1)$, $\gamma_{\epsilon_2} \leq h_0(t)$, $\forall t \in (1-\epsilon_2, 1)$, and are such that the 
	function
	$$
	h_{1}(t):= 
	\begin{cases}
	\gamma_{\epsilon_1}(t), \quad  t \in (0, \epsilon_1)\\
	h(t), \hspace{1.7em} t \in (\epsilon_1, 1- \epsilon_2)\\
	\gamma_{\epsilon_2}(t), \quad  t \in (1-\epsilon_2,1)
	\end{cases}
	$$ 
	can be continuously extended to $[0,1]$ and is bounded from below by $\inf_{w \in (0,1)}h_0(w)>0$. Set 
	$$
 	p_{1,1}= 1/2 - \int_{0}^1 t\,h_{1}(t)\diff t,  \quad 
 	p_{1,2}= 1/2 - \int_{0}^1 (1-t)h_{1}(t)\diff t, 
	$$
	Then, the corresponding angular pm $H_{1}$ is an element of $\Hset'$. Consequently, for any $H_2 \in B_{\delta, \infty}(H_1)$, we can exploit the bound
	$$
	\kulb(g_\bone(\cdot|H),g_\bone(\cdot|H_2))\leq \kulb(g_\bone(\cdot|H),g_\bone(\cdot|H_1)) + \mathscr{V}_{H}^{(1)}(H_1; H_2)
	$$
	together with Lemma \ref{lem: ratio 1} (applied with $H, H_1, H_2$ in place  of $H_2, H, H_1$ in the statement, respectively), and conclude by showing that $\kulb(g_\bone(\cdot,\cdot|H),g_\bone(\cdot,\cdot|H_{1}))$	can be made arbitrarily small if $\epsilon_1$ and $\epsilon_2$ are chosen small enough. 

	Observe that, in the present bivariate case, $V(\by|H)=A(t_\by)(y_1^{-1}+y_2^{-2})$ and
	\begin{equation*}
	\begin{split}
	V_{\{1\}}(\by|H)V_{\{1\}}(\by|H)&=\frac{\left\{A(t_\by)-tA'(t_\by)\right\}\left\{A(t_\by)+(1-t_\by)A'(t_\by)\right\}}{(y_1y_2)^2},\\
	V_{\{1,2\}}(\by|H)&=\frac{A''(t_\by)}{(y_1+y_2)^3},
	\end{split}
	\end{equation*}
	where $t_\by=y_1/\Vert \by \Vert_1$ and $A$ is the Pickands dependence function pertaining to $H$. 
	Similar equalities holds true for $H_1$ and the pertaining Pickands dependence and exponent functions, $A_1$ and $V(\cdot|H_1)$, respectively.
	This fact and few algebraic manipulations now give 
	$$
	\kulb(g_\bone(\cdot|H),g_\bone(\cdot|H_1)) \leq \kulb(g_\bone(\cdot,\cdot|H),g_\bone(\cdot, \cdot|H_1))
	\leq T_1 + T_2 + T_3
	$$
	where
	$T_1=2\dist_\infty(A_0,A_{1})$ and
	\begin{equation*}
	\begin{split}
	T_2&= \int_{\Yset}
	\log\left(\frac{A(t_\by)-tA'(t_\by)}{A_{1}(t_\by)-t_\by A_{1}'(t_\by)}\frac{A(t)+(1-t_\by)A'(t_\by)}{A_{1}(t_\by)+(1-t_\by)A_{1}'(t_\by)}
	\right)g_\bone(\by, \part_1|H) \diff \by,\\
	T_3&=\int_{\Yset}
	\log 
	\left(
	\frac{h_0(t_\by)}{h_{1}(t_\by)}
	\right) g_\bone(\by, \part_2|H_0) \diff \by,
	\end{split}
	\end{equation*}
	with $\part_1=\{\{1\},\{2\}\}$, 
	$\part_2=\{1,2\}$ and $g_\bone(\by, \part|\bullet)$ defined as in Section \ref{appsec:spectral}.
	By Proposition \ref{lem: basic metric}\ref{res: basic 2}, we have that 
	$$
	T_1 \leq 8 \Vert h-h_{1}\Vert_1 \leq 8 \left(\int_0^{\epsilon_1}+\int_{1-\epsilon_2}^1 \, \right) h(t)\diff t,
	$$
	and the right-hand side can be made arbitrarily small by choosing $\epsilon_1,\epsilon_2$ small enough.
	As for $T_2$, observe that for every $t\in (0,1)$
	\begin{eqnarray*}
	0 <  T_2^{(1)}(t):=\frac{A(t)-tA'(t)}{A_{1}(t)-tA_{1}'(t)}&=&\frac{1-2\int_0^t\int_v^th(s)\diff s \diff v}{1-2\int_0^t\int_v^th_{1}(s)\diff s \diff v} \leq 1,\\
	0 <T_2^{(2)}(t):= \frac{A(t)+(1-t)A'(t)}{A_{1}(t)+(1-t)A_{1}'(t)} &=&
	\frac{1/2-\int_0^{1-t}\int_t^{1-v}h(s)\diff s \diff v}{1/2-\int_0^{1-t}\int_t^{1-v}h_{1}(s)\diff s \diff v} \leq 1,
	\end{eqnarray*}
	and $\lim_{t \to  0}T_2^{(1)}(t)T_2^{(2)}(t)=p_{2}/p_{1,2} \leq 1$, $\lim_{t \to  1}T_2^{(1)}(t)T_2^{(2)}(t)=p_{1}/p_{1,1}\leq 1$. Therefore, $T_2 \leq 0$. 
	As for $T_3$, due to the integrability of $h$, there
	exists a constant $\varsigma \in (0,1)$ such that 
	$$
	\lim_{t \to 0}t^\varsigma h(t)=\lim_{t \to 1}(1-t)^\varsigma h(t)=0.
	$$ 
	Consequently, for $\epsilon_1,\epsilon_2$ sufficiently small, Jensen's inequality, the inequality $\log(x)\leq x$ for all $x \geq1$ and some algebraic steps allow to deduce that $T_3/(1-p_1-p_2)$ is bounded from above by
	$$
	 \log \left( 1+ \frac{a_1 \epsilon_1^{1-\varsigma}+a_2\int
		_0^{\epsilon_1}h(t)t^{1-\varsigma}\diff t}{\inf_{0<v<1}h(v)}
	+\frac{a_3(1-\epsilon_2)^\varsigma+a_3 \int_{1-\epsilon_2}^1 h(t)(1-t)^\varsigma \diff t}{\inf_{0<v<1}h(v)}\right),
	$$
	where $a_1, \ldots,a_4$ are positive global constants. The term in the above display can be made arbitrarily small by choosing $\epsilon_1,\epsilon_2$ sufficiently small. The result now follows.

	Fix $l \in \nat_+$.
	To establish the second part of the statement, we now resort to the inequality \eqref{eq: KL decomposition2},  where, by Lemma \ref{lem: ratio 1}, the second term on the right-hand side can be made arbitrarily small by choosing $\delta$ small enough.
	Consequently, the result obtains once we show that
	$$
	\mathscr{V}^{(l)}_{H}(H;H_1)
	=\kulb_+^{(l)}(g_\bone(\cdot|H), g_\bone(\cdot|H_1))
	$$ 
	can be made arbitrarily small by choosing $\epsilon_1,\epsilon_2$ small enough.
	Minkowski's inequality and a few manipulations allow to deduce that the $l$-th order postive Kullback-Leibler divergence from $g_\bone(\cdot|H_1)$ to $g_\bone(\cdot|H)$ is bounded from above by $R_1+R_2+R_3$, where
	\begin{eqnarray*}
		R_1 &=& \dist_{\infty}(A,A_1)\Gamma^l(1+l),\\
		R_2 &=& 
		\left\lbrace
		\int_\Yset
		\left[
		\log^+\left(\frac{A(t_\by)-t_\by A'(t_\by)}{A_{1}(t_\by)-t_\by A_{1}'(t_\by)}\frac{A(t_\by)+(1-t_\by)A'(t_\by)}{A_{1}(t_\by)+(1-t_\by)A_{1}'(t_\by)}
		\right)
		\right]^{l}
		g_\bone(\by|H) \diff \by
		\right\rbrace^{1/l},\\
		R_3 &=&
		\left\lbrace
		\int_{\Yset}
		\left[
		\log^+ 
		\left(
		\frac{h_0(t_\by)}{h_{1}(t_\by)}
		\right)
		\right]^l
		 g_\bone(\by|H_0) \diff \by
		 \right\rbrace^{1/l},
	\end{eqnarray*}
and, as before, $t_\by=y_1/\Vert \by \Vert_1$. Reasoning as above, we conclude that $R_1$ can be made arbitrarily small by choosing $\epsilon_1,\epsilon_2$ small enough and $R_2=0$. The limit $\lim_{x\to \infty}\log(x)x^{-s/l}=0$ and a few algebraic manipulations also allow to deduce that, for $\epsilon_1,\epsilon_2$ sufficiently small,
\begin{equation*}
\begin{split}
R_3 \leq \frac{a_5}{{\inf_{0<v<1}h^s(v)}}
\left\lbrace \left(\int_0^{\epsilon_1}+\int_{1-\epsilon_2}^1 \, \right) (h(t))^{1+s}\diff t
\right\rbrace^{1/l}
\end{split}	
\end{equation*}
where $a_5$ is a positive global constant. Since, by hypothesis, $h^{1+s}$ is integrable, the term on the right-hand side can be made arbitrarily small by choosing $\epsilon_1,\epsilon_2$ small enough, whence the conclusion.
\end{proof}


\subsection{Construction of priors for simple max-stable models}
In this subsection we discuss more extensively some set theoretical aspects of the prior constructions in Section \ref{sec:posterior_consistency} of the main paper and Section \ref{sec:posterior_consistency_2D} of this manuscript.
\subsubsection{Prior distributions on $A$}\label{appsec:measur_det_D2}
We first deal with the case where $d=2$ and give additional details on the prior construction in  Condition \ref{cond:prior_D2}. 
As in Sections \ref{app:KL_support}--\ref{sec:posterior_consistency_2D}, we endow $\Aset$ with the Sobolev metric $\dist_{1, \infty}$ and denote by $\mathscr{B}_\Aset$ the induced Borel $\sigma$-algebra. In what follows, for a fixed positive integer $k_*$, the subsets $\{\Aset_{k}, \, k\geq k_*\}$ denote the classes of Pickands dependence functions either in BP form or in BS form, given in
Sections \ref{app:review_BP} and \ref{app:review_BS},
respectively. Note that in both cases, for each $k \geq k_*$, $\Aset_k$ is $\dist_{1, \infty}$-close, therefore  $\Aset_k \in \mathscr{B}_\Aset$ and 
$$
\mathcal{A}_*:=\cup_{k \geq k_*}\Aset_k \in \mathscr{B}_\Aset.
$$
A pm on $(\Aset, \mathscr{B}_\Aset)$, say $\Pi_\Aset$, can be obtained from a pm $\Pi_{\mathcal{A}_*} $ on the subspace $\sigma$-algebra $\mathscr{B}_{\mathcal{A}_*}$ pertaining to $\mathcal{A}_*$ via
$$
\Pi_\Aset(B):=\Pi_{\Aset_* }(B \cap \Aset_* ), \quad \forall B \in \mathscr{B}_\Aset.
$$  
Clearly, since $\Aset_*$ is dense in $\Aset$, if $\Pi_{\Aset_*}$ has full support, the same is true for $\Pi_{\Aset}$. This fact is exploited in Section \ref{sec:posterior_consistency_2D} to specify a prior distribution on the Pickands dependence function.
Therein, a prior on 
$(\Aset_*,\mathscr{B}_{\Aset_*})$ is firstly induced by specifying a probability measure $\Pi$ on the Borel sets of the disjoint union topological space 
$$
\mathcal{B}_*:=\cup_{k=k_*}^\infty (\{k\}\times \mathcal{B}_k),
$$
where the $\mathcal{B}_k$'s are Euclidean subspaces of suitable linear coefficient vectors (Condition \ref{cond:prior_D2}). 
Precisely, letting $\lambda(\cdot)$ be a positive probability mass function on $\{k_*, k_*+1, \ldots\}$
and $\nu_k$, $k \geq k_*$, a squence of pm's on $\mathcal{B}_k$'s endowed with the Borel $\sigma$-algebras $\Sigma_k$, $\Pi$ denotes the (probability) measure pertaining to the direct sum of the measure spaces $(\mathcal{B}_k, \Sigma_k, \Pi_k)$, $k \geq k_*$, where $\Pi_k:=\lambda(k)\nu_k(\cdot)$. Note that the $\sigma$-field of the measure space constructed via direct sum coincides with the Borel $\sigma$-algebra generated by the disjoint union topology, hereafter denoted by $\mathscr{B}_{\mathcal{B}_*}$; see e.g.
\citep[p. 41]{r700} and \citep[pp. 18, 84 and 828]{r701}.
%
For $k \geq k_*$, define by 
$$
\phi_{\Aset_k}^{(BP)}:\mathcal{B}_k \mapsto \Aset_* : (\beta_0,  \ldots,\beta_k) \mapsto \sum_{0 \leq j \leq k} \beta_j (k+1)^{-1}\betaf(\cdot|j+1,k-j-1)
$$
and
$$
\phi_{\Aset_k}^{(BS)}:\mathcal{B}_k \mapsto \Aset_* : (\beta_1, \ldots,\beta_k) \mapsto \sum_{1 \leq j \leq k} \beta_j\phi_{j, 3}
$$
the maps transforming linear coefficients into the corresponding Pickands dependence functions in BP and BS forms, respectively. Clearly, these are 
injective and continuous with respect to the Euclidean distance on $\mathcal{B}_k$ and $\dist_{1, \infty}$ on $\Aset_*$. Thus, by the universal property of disjoint union topologies, there exists a unique continuous map
$$
\phi_{\Aset_*} ^{(\bullet)}: \mathcal{B}_*\mapsto \Aset_*
$$ 
such that,  for all $k \geq k_*$, 
$$
\phi_{\Aset_k}^{(\bullet)}(\boldsymbol{\beta})= \phi_{\Aset_*}^{(\bullet)}((k,\boldsymbol{\beta}))
$$ 
whenever $\beta \in \mathcal{B}_k$. Finally, we have that
$
\Pi_{\Aset_*}= \Pi\circ \{\phi_{\Aset_*}^{(\bullet)}\}^{-1}.
$

Next, we focus on the Bernstein polynomial representation.
For $k \geq k_*$, 
let $\Phi_{k-1}$ be defined as in Condition \ref{cond:angularprior}.
Then, define the maps
$$
\phi^{(BP)}_k: \mathcal{B}_k \mapsto \Phi_{k-1}:(\beta_0, \ldots, \beta_k)\mapsto (\varphi_{\bkappa_1}, \varphi_{\bkappa_2}, \varphi_\balpha, \, \balpha \in \Gamma_k)\equiv\bphi^{(k)}
$$ 
transforming linear coefficients for the Pickands dependence function into the linear coefficients for the associated angular pm on the two dimensional simplex with density in Bernstein polynomial form, where
\begin{eqnarray*}
	\varphi_{\bkappa_1}&=&\frac{k \beta_{k-1}-k+1}{2}, \hspace{3.1em} 
	\varphi_{\bkappa_2} \, = \, \frac{k \beta_1-k+1}{2},
	\\
	\varphi_\balpha& =&\frac{k}{2} (\beta_{j+1}-2\beta_j +\beta_{j-1}), \quad \balpha=(j,k-j), \quad j=1, \ldots, k-1,
\end{eqnarray*}
see Proposition 3.2 in \cite{r24} 
and 
equation \eqref{eq:polyrep_dens} of this manuscript. 
%
Consider the disjoint union topological space 
$$
\Phi_*:=\cup_{k \geq k_*} ( \{k\}\times \Phi_{k-1})
$$ 
and endow it with the corresponding Borel $\sigma$-field $\mathscr{B}_{\Phi_*}$. The maps $\phi_k^{(BP)}$ are 1-to-1 and continuous; thus, by the universal property of disjoint union topologies, there exists a unique continuous map $$\phi_*^{(BP)}:\mathcal{B}_*\mapsto \Phi_*$$ such that, for all $k \geq k_*$,
$$
\phi_k^{(BP)}(\boldsymbol{\beta})= \phi_*^{(BP)}((k,\boldsymbol{\beta}))
$$
whenever $\boldsymbol{\beta} \in \mathcal{B}_k$. Hence, a prior $\Pi$ on $(\mathcal{B}_*, \mathscr{B}_{\mathcal{B}_*})$ induces a prior $\Pi'=\Pi\circ\{\phi_*^{(\bullet)}\}^{-1}$ on $(\Phi_*,\mathscr{B}_{\Phi_*})$.
As done before for Pickands dependence funcitons, 
let $\Hset_*:=\cup_{k \geq k_*}\Hset_{k-1}$ and,
for $k \geq k_*$, define by 
\begin{equation}\label{eq:measuremap}
	\begin{split}
	&\phi_{\Hset_{k-1}}^{(BP)}:\Phi_{k-1} \mapsto  \Hset_*: \bphi^{(k)} \mapsto 
	\sum_{j=1}^2 \delta_{\be_j}(\cdot)\varphi_{\bkappa_j}+\int_{\pi_{\resimp}(\cdot \, \cap \simpint)}h_{k-2}(t)\diff t\\
	&\text{with} \quad  h_{k-2}(t)= \sum_{\balpha \in \Gamma_k} \varphi_\balpha \text{Be}(t|\alpha_1, \alpha_2)=
	\sum_{1 \leq j \leq k-1} \varphi_{(j,k-j)} \beta(t|j, k -j)\\
	& \text{and} \quad \bphi^{(k)}=(\varphi_{\bkappa_1}, \varphi_{\bkappa_2}, \varphi_\balpha, \, \balpha\in\Gamma_k)
	\end{split}
\end{equation}
the map transforming linear coefficients into the corresponding angular pm with density in Bernstein polynomial form. Once more, these are 1-to-1 and continuous, hence there exists a unique continuous map 
$$
\phi^{(BP)}_\Hset : \Phi_*\mapsto \Hset_*
$$ 
such that, for all $k \geq k_*$, 
\begin{equation}
	\phi_{\Hset_{k-1}}^{(BP)}(\bphi)= \phi_{\Hset}^{(\bullet)}((k,\bphi))
\end{equation} 
whenever $\bphi \in \Phi_{k-1}$. Thus, a prior is induced on $(
\Hset_*, \mathscr{B}_{\Hset_*})
$ 
via
\begin{equation}
	\Pi_{\Hset_*}(B)= \Pi' \circ \{\phi_{\Hset}^{(\bullet)}\}^{-1}(B), \quad  \forall B \in \mathscr{B}_{\Hset_*},
\end{equation}
where $\mathscr{B}_{\Hset_*}$ is the Borel $\sigma$-field generated by $\dist_{KS}$. In turn, a prior on the Borel subsets $B$ of  $(\Hset,\dist_{KS})$ is defined via
\begin{equation}\label{eq:finprior}
\Pi_\Hset(B)= \Pi_{\Hset_*}(B\cap \Hset_*).
\end{equation}

%

On the other hand, the map $\phi_{\Aset/\Hset}:\Aset \mapsto \Hset: A \mapsto H$ is 1-to-1 and measurable  with respect to $\mathscr{B}_\Aset$ and the Borel $\sigma$-field induced by $\dist_{KS}$ on $\Hset$, respectively. Consequently, the prior $\Pi_\Aset$ on $(\Aset, \mathscr{B}_{\Aset})$ induces a prior $\Pi_\Aset \circ \phi_{\Aset/\Hset}^{-1}$ on the Borel subsets of $(\Hset,\dist_{KS})$. We thus point out the following equivalence between pm's.
\begin{rem}\label{rem:prior_identity}
For each $B \in \mathscr{B}_{\Hset_*}$ we have the identity
	$$
	\{\phi_{\Aset}^{(\bullet)}\}^{-1}\circ \phi_{\Aset/\Hset}^{-1}(B) = \{\phi^{(\bullet)}\}^{-1} \circ \{\phi^{(\bullet)}_\Hset\}^{-1}(B),
	$$
	whence it immediately follows that $\Pi_\Hset=\Pi_\Aset \circ \phi_{\Aset/\Hset}^{-1}$. An analogous identity holds for the corresponding posterior distributions. In this view, a prior on the Pickands dependence function constructed as in Condition \ref{cond:prior_D2} induces a prior on the angular pm complying with Condition \ref{cond:angularprior}.
\end{rem}

\subsubsection{Prior on multivariate $H$}\label{appsec:priorH}

In this subsection we deal with the general case $d\geq 2$. As in Sections \ref{sec:KL_support}--\ref{sec:posterior_consistency} of the main article, we endow $\Hset$ with the a metric $\dist_W$ metrizing weak convergence
and denote by $\mathscr{B}_{\Hset}$ the induced Borel $\sigma$-algebra. For $k\geq d+1$, we denote by $\Hset_k$ and $\Phi_k$ the sets of angular measures with Bernstein polynomial density of degree $k-d$ and the corresponding sets of vectors of mixture weights, introduced in Section \ref{sec:extreme_dep_poly} and Condition \ref{cond:angularprior} of the main paper. Observe that each $\Hset_k$ is a Borel set of $(\Hset,\dist_W)$, since it is the range of a continuous injective map of a Polish space, namely $\Phi_k$ endowed with the Euclidean metric;  see, e.g., Corollary A.7 in \cite{r2400}.
Thus, 
$$
\Hset_*=\cup_{k \geq d+1} \Hset_k \in \mathscr{B}_\Hset.
$$
Arguments similar to those presented in the bivariate case show that a prior distribution $\Pi$ on the Borel sets of the disjoint union topological space
$$
\Phi_*:=\cup_{k \geq d+1}(\{k\}\times \Phi_k)
$$
induces a prior on $\Hset_*$ endowed with the subspace $\sigma$-algebra, which, in turn, allows to specify a prior $\Pi_\Hset$  on $(\Hset,\mathscr{B}_\Hset)$. 
Specifically, the construction is via equations \eqref{eq:measuremap}--\eqref{eq:finprior}, with the following adaptations: $k_*$ is set equal to $d+2$, $\Pi'$ is replaced by $\Pi$ and equation \eqref{eq:measuremap}
is generalised to
\begin{equation*}
\begin{split}
&\phi_{\Hset_{k-1}}^{(BP)}:\Phi_{k-1} \mapsto  \Hset_*: \bphi^{(k)} \mapsto 
\sum_{j=1}^d \delta_{\be_j}(\cdot)\varphi_{\bkappa_j}+\int_{\pi_{\resimp}(\cdot \, \cap \simpint)}h_{k-d}(\bt)\diff \bt\\
&\text{with} \quad  h_{k-d}(\bt)= \sum_{\balpha \in \Gamma_k} \bphi_\balpha \text{Dir}(\bt|\balpha), \, \bt \in \mathring{\resimp}; \quad \bphi^{(k)}=(\varphi_{\bkappa_1}, \ldots,\varphi_{\bkappa_d}, \varphi_\balpha, \, \balpha\in\Gamma_k).
\end{split}
\end{equation*} 

\section{Proofs}\label{appsec:proofs}

\subsection{B-Splines}

\subsubsection{Proof of Proposition \ref{prop:bspline_cond}}
\label{sec:proofPropSP1}

For the spline in \eqref{eq:bspline_angdist} with $m=3$, we use knots $\tau_1=\tau_2=0$, $\tau_{j+2}=j/(k-2)$, $j=1, \ldots, k-3$, $\tau_{k}=\tau_{k+1}=1$. 
In particular, we have $H_{k-1}(0)=\eta_1$, $H_{k-1}(1^-)=\eta_{k-1}$.
Then, $H_{k-1}$ is a valid angular cdf  if and only if $\eta_1=p_0$, $\eta_{k-1}=1-p_1$, for some $0 \leq p_0,p_1 \leq 1/2$. Furthermore, for any $t_1 \leq t_2 \in[0,1)$ we have that
\begin{equation*}
\begin{split}
H_{k-1}(t_2)-H_{k-1}(t_1) & =  \int_{t_1}^{t_2}  H_{k-1}'(v)\diff v\\
&=  \int_{t_1}^{t_2}  \left\lbrace
\sum_{j=1}^{k-2} \left( \frac{\eta_{j+1} - \eta_{j}}{\tau_{j+2}-\tau_{j+1}}\right) \phi_{j+1,1}(v) \right\rbrace \diff v \\
&= \sum_{j=1}^{k-2} \left( \frac{\eta_{j+1} - \eta_{j}}{\tau_{j+2}-\tau_{j+1}}\right) \int_{t_1}^{t_2} \mathds{1}_{[\tau_{j+1},\tau_{j+2})}(v)\diff v.
\end{split}
\end{equation*}
Hence, $\eta_{j+1} \geq \eta_{j}$, $j=1, \ldots, k-2$ is a sufficient condition for $H_{k-1}$ to be monotone nondecreasing. 
In particular, in the case $\tau_{j+1} \leq t_1 \leq t_2<\tau_{j+2}$ the above equality reduces to 
$$
H_{k-1}(t_2)-H_{k-1}(t_1)=\left( \frac{\eta_{j+1} - \eta_{j}}{\tau_{j+2}-\tau_{j+1}}\right)(t_2-t_1)
$$
and therefore  $\eta_{j+1} \geq \eta_{j}$ is also a necessary condition. 
Finally, notice that the mean constraints in (C1) of the main article for $H_{k-1}$ are equivalent to
\begin{equation*}
\begin{split}
1/2= p_1+\int_{0}^{1}  v H_{k-1}'(v)\diff v = p_1 + \frac{1}{2}\sum_{j=1}^{k-2}(\eta_{j+1} - \eta_{j})(\tau_{j+2}+\tau_{j+1})
\end{split}
\end{equation*}
and
\begin{equation*}
\begin{split}
1/2=p_0+\int_{0}^{1}  (1-v)H_{k-1}'(v)\diff v=p_0+\frac{1}{2} \sum_{j=1}^{k-2}(\eta_{j+1} - \eta_{j}) \left(1-\frac{\tau_{j+1}+\tau_{j+2}}{2} \right),
\end{split}
\end{equation*}
which both reduce to (R9) after few algebraic manipulations.

Next, for the spline in \eqref{eq:bspline_pick} with $m=3$, we use knots $\tau_1=\tau_2=\tau_3=0$, $\tau_{j+3}=j/(k-2)$, $j=1, \ldots, k-3$, $\tau_{k+1}=\tau_{k+2}=\tau_{k+3}=1$. 
In particular, we have that $A_{k}(0)=\beta_1$ and $A_{k}(1)=\beta_k$, therefore $A_{k}(0)=A_{k}(1)=1$ if and only if $\beta_1=\beta_k=1$. Then, notice that for every $j=3, \ldots, k-1$
$$
A_{k}''(t)= \frac{2}{\tau_{j+1}-\tau_j}\left( 
\frac{\beta_j - \beta_{j-1}}{\tau_{j+2}-\tau_j} - 
\frac{\beta_{j-1}-\beta_{j-2}}{\tau_{j+1}-\tau_{j-1}}
\right), \quad t \in [\tau_j, \tau_{j+1}).
$$
Therefore, $A_k$ is convex if and only if the second terms in the right hand-side of the above display is nonnegative, which reduces to condition (R12).
Furthermore, we have that
$$
A_{k}'(0)=2 \frac{\beta_2 - \beta_1}{\tau_4-\tau_2}\phi_{2, 2}(0)= 2(\beta_2 - \beta_1)(k-2)
$$
and
$$
A_{k}'(1^-)=2 \frac{\beta_k - \beta_{k-1}}{\tau_{k+2}-\tau_k}\phi_{k,2}(1^-)= 2 (\beta_k -\beta_{k-1})(k-2).
$$
Then, we can continuously extend $A_k'$ from $[0,1)$ to $[0,1]$ by imposing $A_k'(1)= 2 (\beta_k -\beta_{k-1})(k-2)$. 
By a corollary in \citep[p. 131]{r35}
%
$A_{k}(t) \leq \max (\beta_{j-2}, \beta_{j-1}, \beta_j)$ $ \leq 1$ for all $t \in [\tau_j, \tau_{j+1}]$.
As a consequence, under conditions (R10) and (R12), $A_{k}$ is convex, equals 1 at $\{0\}$ and $\{1\}$ and is less than equal to 1 on $(0,1)$. 
Such a function $A_{k}$ satisfies the lower bound condition $\max(t,1-t) \leq A_{k}(t)$ if and only if the inequalities
\begin{equation}\label{eq: splines bounds first deriv}
-1 \leq A_{k}'(0) \leq 0, \quad 0 \leq A_{k}'(1) \leq 1
\end{equation}
hold true, which are satisfied in turn if $1- 2^{-1}(k-2)^{-1} \leq \beta_2 , \beta_{k-1} \leq 1$. This is 
is equivalent to (R11). Consequently, (R10)-(R12) are sufficient conditions for $A_{k}$ to satisfy conditions (C2)-(C3) of the main article.
For the converse implication, assume $A_{k}$ is a valid Pickands dependence function. Then, it satisfies
\eqref{eq: splines bounds first deriv}, since it is convex and $\max(t,1-t) \leq A_{k}(t) \leq 1$, 
which is (R11), together with (R12) and $\beta_1=\beta_k=1$. Finally, the inequalities $\beta_j\leq 1$, $j=3, \ldots, k-1$ are necessary conditions by (R12) and the inequality $A_{k}(t) \leq \max (\beta_{j-2}, \beta_{j-1}, \beta_j) \leq 1$ for all $t \in [\tau_j, \tau_{j+1}]$. The proof is now complete.

\subsubsection{Proof of Proposition \ref{prop:bspline_rel_pick_ang}}\label{sec:proofBS2}

Let $A_k$ be defined via \eqref{eq:bspline_pick} with $m=3$ and coefficients satisfying restrictions (R10)-(R12) in Proposition \ref{prop:bspline_cond}. 
Then, $(1+A_k'(t))/2$ provides the expression of a valid angular cdf for $t\in [0,1)$. Therefore, the first part of the statement follows from \eqref{eq:bspline_fder_pick}, after a suitable reorganisation of the knots.

Let $H_{k-1}$ be defined via \eqref{eq:bspline_angdist} with $m=3$ and coefficients satisfying the restrictions (R8)-(R9) in Proposition \ref{prop:bspline_cond}.
Then, $1+2\int_0^tH_{k-1}(v)\diff v - t$ is a valid Pickands dependence function for $t\in [0,1]$. 
Properties (36) and (33) in 
\citep[pp. 96 and 128]{r35}
and some simple algebraic manipulations allow to express it as linear combination of B-splines of order 3, leading to the second half of the statement.
%

\subsubsection{Proof of Proposition \ref{prop:bspline_full_supp}}\label{sec:proofBS3}
Let $A \in \Aset$ and for every $k>3$ consider the sequence of knots $(\tau_j, j=1,\ldots,k+3)=(0,0,0,1/(k-2), \ldots, (k-3)/(k-2),1,1,1)$. Set
$$
\tau_j^*:=\frac{\tau_{j+1}+\tau_{j+2}}{2}, \quad j=1, \ldots, k.
$$
The spline $S_k(t;A):=\sum_{j=1}^k A(\tau_j^*)\phi_{j,3}(t)$ (known as Shoenberg's
variation diminishing approximation) is shape preserving, thus $S_k(t;A)\in\Aset$, and satisfies
$\dist_{\infty}(A, S_k(\cdot;A))\to 0$ as $k \to \infty$ \citep[][Ch. 11]{r35}.
By this fact and Proposition \ref{prop:bspline_cond}, the first half of the statement follows. The second half follows from the first one and Proposition \ref{prop: basic metric bvt}\ref{res: new pick}.

%
%
%
%


\subsection{Bernstein polynomials}


In this subsection we provide the proof of Proposition \ref{prop: top supp} and a series of technical results involving max-stable distributions with angular densities in Bernstein polynomial form, which are used in the derivations of Sections \ref{appsec:proof_bivar}-\ref{appendix:semi}.
In what follows, for a given integer $k>d$
denote
\begin{eqnarray}
\bphi_{\partial}&:=&(\varphi_{\bkappa_1},\ldots,\varphi_{\bkappa_d}),\\
\label{eq:interior_coef}
\bphi_\circ&:=&(\varphi_{\balpha},\balpha\in\Gamma_k),\\
\bphi&:=&(\bphi_{\partial}, \bphi_\circ),
\end{eqnarray}
and let $\bphi$ be a coefficients' vector satisfying (R1)-(R2). Accordingly, the symbol $\Hset_k$ denotes the set defined in Section \ref{sec:extreme_dep_poly} of the main article. 
When dealing with sequences $H_k \in \Hset_k$, $k=d+1,d+2,\ldots$, we occasionally use the symbol $\bphi^{(k)}=\left(\bphi_\partial^{(k)},\bphi_\circ^{(k)}\right)$
to denote the mixture weights pertaining to each $H_k$. 
%

\subsubsection{Proof of Proposition \ref{prop: top supp}}\label{appsec:proofpoly}

\begin{lemma}\label{lem: approx}
	Let $\Hset'$ be as in Definition \ref{cond:mvt_angular}\ref{cond:prior_ang_set} and $d \geq 2$.
	For every $H \in \Hset'$ there exists a sequence $H_k \in\mathcal{H}_k$, with coefficients $\bphi^{(k)}$, such that
	\begin{equation}\label{eq: approx lem}
	\dist_\infty\left(h, 
	h_{k-d}\right)=o(1), \quad k  \to \infty.
	\end{equation}
\end{lemma}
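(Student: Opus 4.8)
The plan is to exhibit the approximating sequence explicitly: take the unadjusted Bernstein--Dirichlet coefficients of the angular density $h$ and let the mean constraints (R2) determine the point masses. The key realisation is that these induced point masses automatically converge to the point masses of $H$, so that no further correction of the coefficients is required. Concretely, I would first fix the continuous extension $\bar h\in C(\resimp)$ of $h$ --- this is the only place where the hypothesis $H\in\Hset'$ (property~\ref{cond: finite_new}) enters --- and, for $\balpha\in\Gamma_k$, set $\bt_\balpha:=\bigl((\alpha_1-1)/(k-d),\dots,(\alpha_{d-1}-1)/(k-d)\bigr)\in\resimp$. Then I would define $\varphi_\balpha^{(k)}:=\bar h(\bt_\balpha)\,(k-d)!/(k-1)!\ge0$ for $\balpha\in\Gamma_k$, and, as forced by (R2), $\varphi_{\bkappa_j}^{(k)}:=\tfrac1d-\sum_{\balpha\in\Gamma_k}(\alpha_j/k)\,\varphi_\balpha^{(k)}$ for $j=1,\dots,d$. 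Since $b_{\balpha-\bone}(\bt;k-d)=\dirf(\bt;\balpha)\,(k-d)!/(k-1)!$, the resulting mixture $h_{k-d}=\sum_{\balpha\in\Gamma_k}\varphi_\balpha^{(k)}\dirf(\cdot;\balpha)$ coincides with the degree-$(k-d)$ multivariate Bernstein polynomial $B_{k-d}[\bar h](\bt):=\sum_{\balpha\in\Gamma_k}\bar h(\bt_\balpha)\,b_{\balpha-\bone}(\bt;k-d)$ of $\bar h$ over the simplex.

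Next I would check that $\bvarphi^{(k)}$ is admissible. First, $\sum_{l=1}^{k-1}(l/k)\sum_{\balpha\in\Gamma_k:\alpha_j=l}\varphi_\balpha^{(k)}=(1/k)\sum_{\balpha}\alpha_j\varphi_\balpha^{(k)}$, so the definition of $\varphi_{\bkappa_j}^{(k)}$ is precisely (R2); summing (R2) over $j$ and using $\sum_j\alpha_j=k$ recovers (R1), while $\varphi_{\bkappa_j}^{(k)}\le\tfrac1d\le1$ holds automatically. The remaining --- and only nontrivial --- point is non-negativity of $\varphi_{\bkappa_j}^{(k)}$. For this I would use $\alpha_j/k=\tfrac{k-d}{k}\cdot\tfrac{\alpha_j-1}{k-d}+\tfrac1k$, the identity $\tfrac{\alpha_d-1}{k-d}=1-\sum_{i<d}\tfrac{\alpha_i-1}{k-d}$, and $\int_{\mathring\resimp}b_{\balpha-\bone}(\cdot;k-d)=(k-d)!/(k-1)!$ to rewrite $\sum_{\balpha}(\alpha_j/k)\varphi_\balpha^{(k)}=\tfrac{k-d}{k}\int_{\mathring\resimp}B_{k-d}[g_j]+\tfrac1k\int_{\mathring\resimp}B_{k-d}[\bar h]$, where $g_j(\bv):=v_j\bar h(\bv)$ for $j<d$ and $g_d(\bv):=(1-\|\bv\|_1)\bar h(\bv)$. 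By the classical uniform convergence of simplex Bernstein polynomials \cite{r44}, $B_{k-d}[f]\to f$ uniformly on the compact set $\resimp$ for every $f\in C(\resimp)$, hence $\int_{\mathring\resimp}B_{k-d}[g_j]\to\int_{\mathring\resimp}g_j$ and $\int_{\mathring\resimp}B_{k-d}[\bar h]$ stays bounded; thus $\sum_{\balpha}(\alpha_j/k)\varphi_\balpha^{(k)}\to\int_{\mathring\resimp}v_jh(\bv)\,\diff\bv$ for $j<d$, and $\to\int_{\mathring\resimp}(1-\|\bv\|_1)h(\bv)\,\diff\bv$ for $j=d$. By the mean constraints (C1) for $H$, these limits equal $\tfrac1d-p_j$, so $\varphi_{\bkappa_j}^{(k)}\to p_j>0$. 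Therefore $\bvarphi^{(k)}$ has non-negative entries for all sufficiently large $k$, and the angular pm with point masses $\varphi_{\bkappa_j}^{(k)}$ and angular density $h_{k-d}$ belongs to $\mathcal{H}_k$; for the finitely many remaining $k$ one simply lets $H_k$ be an arbitrary element of $\mathcal{H}_k$, which is irrelevant for the asymptotic claim.

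Finally, since $h_{k-d}=B_{k-d}[\bar h]$ and $\bar h$ agrees with $h$ on $\mathring\resimp$, the same convergence theorem gives $\dist_\infty(h,h_{k-d})=\|\bar h-B_{k-d}[\bar h]\|_\infty\to0$, which is \eqref{eq: approx lem}. I expect the main obstacle to be exactly the non-negativity verification in the second paragraph: one must recognise $\sum_{\balpha}(\alpha_j/k)\varphi_\balpha^{(k)}$ as, up to a term of order $1/k$, the integral of the Bernstein polynomial of $v_jh$, and then appeal to the mean constraint (C1) satisfied by the \emph{target} measure $H$ to identify the limit as $\tfrac1d-p_j$; every other ingredient is the textbook simplex Bernstein approximation theorem.
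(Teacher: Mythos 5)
Your proposal is correct and follows essentially the same route as the paper: the same coefficients $\varphi_{\balpha}^{(k)}=\bar h(\bt_\balpha)(k-d)!/(k-1)!$, the same appeal to the classical uniform convergence of simplex Bernstein polynomials for \eqref{eq: approx lem}, and the same use of the mean constraints (C1) together with $p_j>0$ to get non-negativity of the vertex weights for all large $k$. The only cosmetic difference is that you verify $\varphi_{\bkappa_j}^{(k)}\to p_j$ by splitting $\alpha_j/k$ and invoking $B_{k-d}[v_j\bar h]\to v_j\bar h$, whereas the paper notes directly that $\sum_\balpha(\alpha_j/k)\varphi_\balpha^{(k)}=\int t_jB_{k-d}(\bar h;\bt)\,\diff\bt$ (the Dirichlet mean identity) and bounds it by $\int t_j\bar h+c\epsilon/d<1/d$.
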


\begin{proof}
	By assumption, the angular density $h$ admits an extension $\bar{h}$ which is bounded and continuous on $\resimp$. Then,
	defining $\balpha_{1:d-1}=(\alpha_1,\ldots,\alpha_{d-1})$ and
	\begin{equation*}
	\begin{split}
	b_{\balpha-\bone}(\bt)&=\frac{(k-d)!}{\prod_{j=1}^{d-1}(\alpha_j-1)!(k-d-\Vert \balpha_{1:d-1} -\bone\Vert_1 )!}\prod_{j=1}^{d-1}t_j^{\alpha_j-1}(1-\Vert \bt\Vert_1)^{k-d-\Vert \balpha_{1:d-1}-\bone \Vert_1}\\
	&=\frac{(k-d)!}{\prod_{j=1}^d(\alpha_j-1)!}\prod_{j=1}^{d-1}t_j^{\alpha_j} (1- \Vert \bt \Vert_1)^{\alpha_d-1}\\
	&=\frac{(k-d)!}{(k-1)!}\text{Dir}(\bt; \balpha)
	\end{split}
	\end{equation*}
	where $\alpha_d=k-\Vert \balpha_{1:d-1} \Vert_1$, it is well known that, as $k \to \infty$ 
	$$
	B_{k-d}(\bar{h}; \bt):= \sum_{\balpha \in \Gamma_k} \bar{h}\left(\frac{\balpha_{1:d-1} -\bone}{k-d}\right)b_{\balpha-\bone}(\bt)=\bar{h}(\bt)+o(1),
	$$
	where the error term is uniform over $\resimp$, see e.g. 
	\citep[p. 51]{r44}.
	Therefore, letting $c= 1/\Gamma(d)$, for every $\epsilon \in (0, \wedge_{1 \leq j \leq d}p_j)$ there exists $k_\epsilon$ such that for all $ k \geq k_\epsilon$
	\begin{itemize}
		\item 	$
		\dist_\infty(B_{k-d}(\bar{h} ; \cdot),\bar{h}) < \epsilon
		$
		and $\int_{\mathring{\resimp}}B_{k-d}(\bar{h}; \bt)\diff \bt < \int_{\mathring{\resimp}}h(\bt)\diff \bt+\epsilon<1$;
		\item $\int_{\mathring{\resimp}}t_j B_{k-d}(\bar{h};\bt)\diff \bt <d^{-1}-p_j+c\epsilon d^{-1} < d^{-1}$, for $j=1, \ldots, d-1$, and 
		$\int_{\mathring{\resimp}}(1-\Vert \bt \Vert_1) B_{k-d}(\bar{h};\bt)\diff \bt <d^{-1}-p_d+c\epsilon d^{-1} < d^{-1} $.
	\end{itemize}
	As a consequence, setting for each $k\geq k_\epsilon$
	\begin{eqnarray*}
		\varphi_{\bkappa_j}^{(k)}&=&\frac{1}{d}-\int_{\mathring{\resimp}}t_j B_{k-d}(\bar{h};\bt)\diff \bt, \quad j=1,\ldots,d-1,\\
		\varphi_{\bkappa_d}^{(k)}&=&\frac{1}{d}-\int_{\mathring{\resimp}}(1-\Vert \bt \Vert_1) B_{k-d}(\bar{h};\bt)\diff \bt, \\
		\varphi_\balpha^{(k)}&=&
		\bar{h}\left(\frac{\balpha_{1:d-1} -\bone}{k-d}\right)
		\frac{(k-d)!}{(k-1)!},
		\hspace{3em} \balpha \in \Gamma_k
	\end{eqnarray*} 
	we obtain a sequence of valid angular pm's $H_{k}$ of the form \eqref{eq: BPoly measure}. In particular, the coefficients $\bphi^{(k)}$ satisfy (R11)-(R12), being the latter necessary and sufficient conditions to define valid angular pm's via multivariate Bernstein polynomials.
\end{proof}

\begin{proof}[Proof of Proposition \ref{prop: top supp}]
	Fix an arbitrary $\epsilon>0$. For any positive constant $c< \epsilon/3$, we can define a valid spectral pm $H^* \in \Hset$ 
	via
	$$
	H^*(B)=\sum_{j=1}^d  p_j^* \delta_{\be_j}(B)+\int_{\pi_\resimp(B\cap \simpint)} h^*(\bv) \diff \bv,
	$$
	for all Borel subsets $B$ of $\tilde{\simp}$, where $h^*=h/(1+c)$ and $p_j^*=1/d- \int_{\mathring{\resimp}}t_j h^*(\bt)\diff \bt$, $j=1, \ldots, d-1$, $p_d^*=1/d- \int_{\mathring{\resimp}}(1-\Vert \bt \Vert_1)h^*(\bt)\diff \bt$. Note that
	$$
	\Vert h - h^* \Vert_1 \leq c/(1+c) <\epsilon/3
	$$
	and $\int_{\mathring{\resimp}}h^*(\bt)\diff \bt<1$, $\min_{j=1, \ldots, d} p_j^*>0$. There exists a nonnegative continuous function $h_K^*$ with compact support $K \subset \mathring{\resimp}$ such that 
	$$
	\Vert h^* - h_K^* \Vert_1 < \frac{1}{6} \min\left\lbrace{\epsilon}, \min_{j=1, \ldots, d} p_j^*\right\rbrace=:c',
	$$
	see e.g. \citep[Ch. 3]{r871}.
	Consequently, choosing $c''\in (0,c')$, setting $h^{**}=h_K^*+c''$, $p_j^{**}=1/d- \int_{\mathring{\resimp}}t_j h^{**}(\bt)\diff \bt$, $j=1, \ldots, d-1$,
	$p_d^{**}=1/d- \int_{\mathring{\resimp}}(1-\Vert \bt \Vert_1)h^{**}(\bt)\diff \bt$
	and defining $H^{**}$ via
	$$
	H^{**}(B)=\sum_{j=1}^d p_j^{**}\delta_{\be_j}(B)+\int_{\pi_\resimp(B \cap \simpint)} h^{**}(\bt) \diff \bt,
	$$
	for all Borel subsets $B$ of $\tilde{\simp}$, we have $H^{**}\in \Hset'$ and 
	$$
	\Vert h^* - h^{**}\Vert_1 \leq  \Vert h^{*} - h^*_K \Vert_1 + \Vert h^*_K - h^{**} \Vert_1 < \epsilon/3.
	$$
	Finally, by Lemma \ref{lem: approx}, there exists a sequence $H_k \in \Hset_k$ such that, for $k$ sufficiently large, $\Vert h^{**}- h_{k-d} \Vert<\epsilon/3$. The result now follows by triangular inequality.
\end{proof}

\begin{cor}\label{lem:1to1}
	The set $\{g_\bone(\cdot|H): \, H \in \cup_{k=d+1}^\infty\Hset_k \}$ is $\dist_H$-dense in $\mathcal{G}_\bone$, given in equation \eqref{eq:dens_space} of the main article. 
\end{cor}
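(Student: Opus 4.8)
The statement to prove is Corollary \ref{lem:1to1}: the set $\{g_\bone(\cdot|H): H \in \cup_{k=d+1}^\infty \Hset_k\}$ is $\dist_H$-dense in $\mathcal{G}_\bone$.

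My plan is as follows. Fix an arbitrary target density $g_\bone(\cdot|H)$ with $H \in \Hset$, and let $\epsilon > 0$. Since $\Hset$ is the class in Definition \ref{cond_angular}, $H$ decomposes via \eqref{eq:angpm} into point masses $p_1, \dots, p_d$ on the vertices and a Lebesgue-integrable angular density $h$ on $\mathring{\resimp}$. By Proposition \ref{prop: top supp}, there exists $k > d$ and $H_k \in \Hset_k$ such that $\Vert h - h_{k-d}\Vert_1 < \delta$, for $\delta$ to be chosen. The goal is to convert this $L_1$-closeness of angular densities into $\dist_H$-closeness of the corresponding simple max-stable densities. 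The natural tool for $d=2$ is Proposition \ref{prop: basic metric bvt}\ref{res: basic bvt 1}, which states $\dist_H^2(g_\bone(\cdot|H_1), g_\bone(\cdot|H_2)) \leq c \Vert h_1 - h_2 \Vert_1$; choosing $\delta < \epsilon^2/c$ then finishes the bivariate case immediately.

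For general $d \geq 2$ one must instead combine the two inequalities in Proposition \ref{lem: basic metric}. By part \ref{res: basic 1}, $\dist_H(g_\bone(\cdot|H_1), g_\bone(\cdot|H_2)) \leq \Vert g_\bone(\cdot,\cdot|H_1) - g_\bone(\cdot,\cdot|H_2)\Vert_1^{1/2}$, and by the intermediate chain of inequalities in its proof, $\dist_H(g_\bone(\cdot|H_1), g_\bone(\cdot|H_2)) \leq \{2\dist_T(G_\bone(\cdot|H_1), G_\bone(\cdot|H_2))\}^{1/2}$; moreover the proof of part \ref{res: basic 2} bounds $|A_1(\bt) - A_2(\bt)|$ (and hence, via the spectral representation \eqref{eq:kernel_density_ang}, the differences of the exponent measure derivatives) directly in terms of $\Vert h_1 - h_2\Vert_1$, because the point-mass differences $|p_{1,j} - p_{2,j}|$ are themselves controlled by $\int_{\mathring{\resimp}} v_j |h_1 - h_2|\diff \bv$ through the mean constraints (C1). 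Thus $\Vert h - h_{k-d}\Vert_1 < \delta$ forces all the ingredients of $g_\bone(\by|H)$ — the exponent $V(\by|\cdot)$ and its mixed partials $V_{I_i}(\by|\cdot)$ — to be uniformly close in the appropriate sense, and this propagates through the Faà di Bruno formula \eqref{eq:density_unit_fre} to give $\Vert g_\bone(\cdot|H) - g_\bone(\cdot|H_k)\Vert_1$ (equivalently, $2\dist_T$) small. Choosing $\delta$ small enough then yields $\dist_H(g_\bone(\cdot|H), g_\bone(\cdot|H_k)) < \epsilon$, which is what we want since $H_k \in \cup_{k=d+1}^\infty \Hset_k$.

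The main obstacle is the passage from $L_1$-closeness of angular densities to $L_1$-closeness (or total-variation closeness) of the full max-stable densities in dimension $d > 2$: unlike the bivariate case, there is no single clean Lipschitz estimate of the form in Proposition \ref{prop: basic metric bvt}, so one must trace through the spectral decomposition of $\Lambda(\cdot|H)$ described in Section \ref{appsec:spectral}, check that each restriction $\Lambda_I(\cdot|H)$ depends continuously (indeed locally Lipschitz-ly) on $h$ with respect to the relevant norms, and control the integrability issues arising from the unbounded factors $\Vert \bz\Vert_1^{-d-1}$ near the boundary of the simplex. However, for the purpose of this corollary we only need density-level convergence along the approximating sequence, not uniform constants, so it suffices to invoke the bounds already established inside the proofs of Propositions \ref{lem: basic metric} and \ref{prop: top supp} — the corollary is essentially a repackaging of Proposition \ref{prop: top supp} composed with the (semi)continuity of $H \mapsto g_\bone(\cdot|H)$ into $(\mathcal{G}_\bone, \dist_H)$, which is exactly the content of Proposition \ref{lem: basic metric}\ref{res: basic 1}--\ref{res: basic 2}.
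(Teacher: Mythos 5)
Your argument is complete only in the bivariate case, where Proposition \ref{prop: basic metric bvt}\ref{res: basic bvt 1} indeed gives the Lipschitz estimate $\dist_H^2(g_\bone(\cdot|H_1),g_\bone(\cdot|H_2))\leq c\Vert h_1-h_2\Vert_1$ and the corollary follows at once from Proposition \ref{prop: top supp}. For $d>2$ there is a genuine gap at the decisive step. Proposition \ref{lem: basic metric}\ref{res: basic 1} bounds $\dist_H$ from \emph{below} by $e^{-1}\dist_\infty(A_1,A_2)$ and from \emph{above} by $\Vert g_\bone(\cdot,\cdot|H_1)-g_\bone(\cdot,\cdot|H_2)\Vert_1^{1/2}$, while part \ref{res: basic 2} controls only $\dist_\infty(A_1,A_2)$ by $\Vert h_1-h_2\Vert_1$; combining them therefore controls the lower bound on $\dist_H$, not the upper bound you need. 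The claim that $L_1$-closeness of angular densities ``propagates through the Fa\`a di Bruno formula'' to closeness of the max-stable densities is precisely the assertion that has to be proved: the partial derivatives $-V_{I_i}(\by|H)$ in \eqref{eq:kernel_density_ang} involve integrals of $h$ against the unbounded kernel $\Vert\bz\Vert_1^{-d-1}$, and the products over partitions must then be integrated over $\Yset$; no Lipschitz bound of the required form is available for a general $H\in\Hset$ against its polynomial approximant (the paper's Lemma \ref{lem: L1} gives one only when \emph{both} measures lie in the same class $\Hset_k$).

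The paper's proof handles exactly this point by a different and more delicate route: it extracts from the $L_1$-convergent sequence $h_{k-d}\to h$ a subsequence converging pointwise almost everywhere, and then proves $\dist_T(G_\bone(\cdot|H_{k_s}),G_\bone(\cdot|H))\to 0$ via the spectral (Poisson process) representation --- truncating the processes at a level $t$, bounding the total variation distance of the laws of the truncated maxima by the total variation distances of the restricted exponent measures $\Lambda^{(I,t)}$ through a coupling inequality for Poisson random measures, and concluding by Scheff\'e's lemma applied to those finite restricted measures. If you want to complete your approach for $d>2$ you would either need to establish a genuine continuity (or local Lipschitz) result for $H\mapsto G_\bone(\cdot|H)$ in total variation as a function of $\Vert h_1-h_2\Vert_1$, which is not contained in Propositions \ref{lem: basic metric} or \ref{prop: top supp}, or reproduce an argument of the truncation--Scheff\'e type.
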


\begin{proof}
	As a consequence of Proposition \ref{prop: top supp}, for any $H\in \Hset$ there exists a sequence $H_k \in \Hset_k $ such that $\lim_{k \to \infty}\Vert h - h_{k-d} \Vert_{1} \to 0$.	Then, there exists a subsequence $h_{k_s-d}$ that converges to $h$ 
	pointwise almost-everywhere 
	\citep[Corollary 1.5.10]{r2500}. 
	We next show that also 
	\begin{equation}\label{eq:totalvarsub}
	\dist_{T}(G_\bone(\cdot|H_{k_s}),G_\bone(\cdot|H))\to 0
	\end{equation}
	as $k_s \to \infty$, entailing that $\dist_H(g_\bone(\cdot|H_{k_s}),g_\bone(\cdot|H)) \to 0$ and establishing the result.

	In what follows, we use the notation introduced in Section \ref{appsec:notation} of this manuscript. We denote by $\Lambda_{k_s}:=\Lambda(\cdot|H_{k_s})$ and $\Lambda:=\Lambda(\cdot|H)$ the exponent measures corresponding to $H_{k_s}$ and $H$, respectively. 
	Denote by $N^{(k_s)}$ and $N$ two Poisson random measures on $E$, with mean measures $\Lambda_{k_s}$ and $\Lambda$, respectively. For $I \subset \{1, \ldots, d\}$, denote by $\pi_I$ the projection map $\pi_I(\bx)=(x_j)_{j \in I}$, $\bx \in E$. Accordingly, let $$
	N_{j}^{(k_s)}:=N^{(k_s)} \circ \pi_{\{j\}}^{-1}, \quad  N_j:=N \circ \pi_{\{j\}}^{-1}, \quad j=1, \ldots,d,
	$$ 
	denote the marginal Poisson random measures. Moreover, let $\bY^{(k_s)}$ and $\bY$ be rv's distributed according to $G_\bone(\cdot|H_{k_s})$ and $G_\bone(\cdot|H)$, respectively. For $t>0$, define $\bY_t^{(k_s)}$ via
	$$
	\bY_t^{(k_s)}:=\left(Y_1^{(k_s)}\indic(Y_1^{(k_s)}>t), \ldots, Y_d^{(k_s)}\indic(Y_d^{(k_s)}>t)\right)
	$$
	and $\bY_t$ in an analogous fashion. For a random element $X$, let $\mathcal{L}(X)$ denote the pertaining pm. Fix a small $\epsilon>0$ and observe that
	\begin{equation}\label{eq: central_ineq}
	\begin{split}
	\dist_T((G_\bone(\cdot|H_{k_s}),G_\bone(\cdot|H))) &\leq \dist_T(\mathcal{L}(\bY^{(k_s)}),\mathcal{L}(\bY_t^{(k_s)}))+\dist_T(\mathcal{L}(\bY),\mathcal{L}(\bY_t)) \\
	& \quad+
	\dist_T(\mathcal{L}(\bY_t^{(k_s)}),\mathcal{L}(\bY_t)).
	\end{split}
	\end{equation}
	Since $G_\bone(\cdot|H_{k_s})$ has unit Fr\'echet margins, it holds that
	$$
	\dist_T(\mathcal{L}(\bY^{(k_s)}),\mathcal{L}(\bY_t^{(k_s)}))\leq \mathbb{P}(\bY^{(k_s)} \neq \bY^{(k_s)}_t)\leq d e^{-1/t}.
	$$
	Analogously, $\dist_T(\mathcal{L}(\bY),\mathcal{L}(\bY_t)) <de^{-1/t}$. Thus, for $t$ sufficiently small, the sum of the first two terms on the right-hand side of \eqref{eq: central_ineq} is smaller than $\epsilon/2$. Next, for  $j=1, \ldots, d$, denote  $$
	N_{j,t}^{(k_s)}=N^{(k_s)}_j(\cdot\cap (t, \infty]), \quad N_{j,t}=N_j(\cdot\cap (t, \infty]).
	$$
	For a Poisson random measure $N^*=\sum_{i \geq i} \delta_{X_i}$ on $(t, \infty]$, define the random functional
	$$
	\phi(N^*)=
	\begin{cases}
	0, \hspace{5em} \text{if } N^{*}((t,\infty])=0\\
	\max_{i \geq i} X_i, \quad \text{otherwise}
	\end{cases}.
	$$ 
	Then, we have that $\bY_t^{(k_s)}$ and $\bY_t$ are equivalent in distribution to $\boldsymbol{\Phi}_{t}^{(k_s)}:=\{\phi(N_{j,t}^{(k_s)})\}_{j=1}^d$ and $\boldsymbol{\Phi}_{t}:=\{\phi(N_{j,t})\}_{j=1}^d$, respectively. Corollary 1.4.2 in \cite{r2002}
	and arguments on pages 237--239 in \cite{r2003},
	with a few adaptations, now yield
	\begin{equation}\label{eq:second_cent_ineq}
	\begin{split}
	\dist_T(\mathcal{L}(\bY_t^{(k_s)}),\mathcal{L}(\bY_t))& = \dist_T(\mathcal{L}(\boldsymbol{\Phi}_{t}^{(k_s)}),\mathcal{L}(\boldsymbol{\Phi}_{t}))\\
	&\leq c_d \sum_{I \subset\{1, \ldots,d\}} \dist_T(\Lambda_{k_s}^{(I,t)}\circ \pi_I^{-1},\Lambda_{k_s}^{(I,t)}\circ \pi_I^{-1})\\
	&\leq  c_d \sum_{I \subset\{1, \ldots,d\}} \dist_T(\Lambda_{k_s}^{(I,t)},\Lambda_{k_s}^{(I,t)})
	\end{split}
	\end{equation}
	where $c_d$ is a positive constant depending only on $d$, $E_{I,t}=\{\by \in E: y_j >t, \forall j \in I\}$, $\Lambda_{k_s}^{(I,t)}=\Lambda_{k_s}(\cdot\cap E_{I,t})$ and $\Lambda_{k_s}^{(I,t)}=\Lambda_{k_s}(\cdot\cap E_{I,t})$. Observe that each $E_{I,t}$ is relatively compact, thus $\Lambda_{k_s}^{(I,t)}$ and $\Lambda^{(I,t)}$ are finite measures. Moreover, 
	\begin{eqnarray*}
		\Lambda_{k_s}^{(I,t)}(E_{I,t}\cap E_{\{1, \ldots,d\}}) &=& d\int_{\simpint}\min_{j \in I} (w_j/t)H_{k_s}(\diff \bw),\\
		\Lambda^{(I,t)}(E_{I,t}\cap E_{\{1, \ldots,d\}})&=& d\int_{\simpint}\min_{j \in I} (w_j/t)H(\diff \bw),
	\end{eqnarray*}
	therefore it can be easily seen that $\Lambda_{k_s}^{(I,t)}(E_{I,t}\cap E_{\{1, \ldots,d\}}) \to \Lambda^{(I,t)}(E_{I,t}\cap E_{\{1, \ldots,d\}})$ as $k_s \to \infty$. Consequently, by Scheff\'e's lemma, as $k_s \to \infty$,
	\begin{equation*}
	\begin{split}
	\dist_T(\Lambda_{k_s}^{(I,t)},\Lambda_{k_s}^{(I,t)}) &\leq \frac{dt}{2} \Vert h_{k_s}-h \Vert_1\\
	&\quad+\int_{E_{I,t}\cap E_{\{1,\ldots,d\}}} d \Vert \bz \Vert_1^{-d-1}|h_{k_s}\circ \pi_\resimp(\bz/\vert \bz \vert_1)-h\circ \pi_\resimp(\bz/\vert \bz \vert_1)| \diff \bz\\
	& \to 0.
	\end{split}
	\end{equation*}
	Due to \eqref{eq:second_cent_ineq}, we now deduce that, for large enough $k_s$, also the third therm on the right-hand side of \eqref{eq: central_ineq} is smaller than $\epsilon/2$ and $\dist_T((G_\bone(\cdot|H_{k_s}),G_\bone(\cdot|H))) < \epsilon$. The result in \eqref{eq:totalvarsub} now follows and the proof is complete.
\end{proof}

\subsubsection{Additional technical results}

%
The following lemmas are used in the proofs of Theorems
\ref{theo:post_consistency_mvt}, \ref{th:alpha_frec} and \ref{theo: cons_weibull}, provided in Sections \ref{appsec:proof_simp}--\ref{appendix:semi} of this manuscript.
In order to provide a concise account of the rather involved algebraic arguments used to establish such lemmas, we
make use of the following compact notations.
For $I \subset \{1,\ldots,d\}$, denote by
\begin{equation}\label{eq:I_min}
I^-:=I \setminus\{ \max(i: i \in I) \}
\end{equation}
the set obtained by removing from $I$ its largest element. As a convention, products over empty sets are meant as equal to $1$, e.g., $\bx \in \mathbb{R}^d$ and $I = \emptyset$ yield $\prod_{i\in I}x_i=1$. For a function $f:\mathbb{R}^d\mapsto\mathbb{R}$ and subsets $I_1, I_2 \subset \{1,\ldots,d\}$, $I_1 \cap I_2=\emptyset$, we denote by $f(\bx)|_{\bx_{I_1}=\by_{{I_1}}}^{\bx_{I_2}=\by_{I_2}}$ the function $f$ evaluated at $\bx$ with fixed components $x_i=y_i$, for $i \in I_j$, $j=1,2$.
Finally, for $\bv \in \resimp$, we define the function
\begin{equation*}
\begin{split}
&q(I, \balpha, \bv):=\\
& \begin{cases}
0, & \balpha=\bkappa_j, j \in \{1,\ldots,d\}, I \neq \{j\},\\
{d}v_{I}^{-2}, & (I,\balpha) \in\cup_{i=1}^{d-1} (\{i\}\times\{\bkappa_i\}),\\
{d}(1-\Vert \bv \Vert_1)^{-2}, & (I,\balpha) =(\{d\}, \bkappa_d),\\
\frac{{d}v_{I}^{-2}\balpha_{I}}{k} \left(1-\mathcal{I}_{v_{I}}(\balpha_{I}+1,\Vert \balpha_{I^\complement}\Vert_1)\right), & I \in \{\{1\}, \ldots, \{d-1\}\}, \balpha \in \Gamma_k,\\
\frac{{d}(1-\Vert \bv \Vert_1)^{-2}\balpha_{I}}{k} \left(1-\mathcal{I}_{1-\Vert \bv \Vert_1}(\alpha_{I}+1,\Vert \balpha_{I^\complement}\Vert_1)\right), & I =\{d\}, \balpha \in \Gamma_k,\\
\text{Dir}(\bv; \balpha),  & I =\{1, \ldots,d\}, \balpha \in \Gamma_k,\\
\frac{d\, \Vert \balpha_{I} \Vert_1}{k(1-\Vert \bv_{I^c} \Vert_1)^{|I|+1}}\times\\
\text{Dir}\left(
\frac{\bv_{I^-}}{1-\Vert \bv_{I^c} \Vert_1 }
;
\balpha_{I}
\right)
\left(1-\mathcal{I}_{1-\Vert \bv_{I^c}\Vert_1}(\Vert \balpha_{I} \Vert_1+1,\Vert \balpha_{I^\complement}\Vert_1)\right), & d \in I, 1<|I|<d, \balpha \in\Gamma_k,\\
\frac{d\, \Vert \balpha_{I} \Vert_1}{k\Vert \bw_{I} \Vert_1^{|I|+1}}\times\\
\text{Dir}\left(
\frac{\bv_{I^-}}{\Vert \bv_{I} \Vert_1}
;
\balpha_{I}
\right)
\left(1-\mathcal{I}_{\Vert \bv_{I}\Vert_1 }(\Vert \balpha_{I} \Vert_1+1,\Vert \balpha_{I^\complement}\Vert_1)\right), & d \notin I, 1<|I|<d, \balpha \in\Gamma_k.
\end{cases}
\end{split}
\end{equation*}
For any $a,b>0$ and $x \in [0,1]$, $\mathcal{I}_x(a,b)$ is the regularised incomplete Beta function, namely the cdf of a Beta distrubtion of parameters $a,b$ evaluated at $x$, while the (complete) Beta function is denoted by $B(a,b)$.

\begin{lemma}\label{lem: expression omega}
	Let $k\geq d+1$, $H_k \in \Hset_k$ and $I \subset \{1, \ldots,d\}$. Then, for every $\by \in \Yset$  
	\begin{equation}\label{eq:expr_delta}
	\begin{split}
	-V_I(\by|H_k)=\Vert \by \Vert_1^{-|I|-1}
	\sum_{\balpha}
	\varphi_\balpha q\left(I, \balpha, \frac{\by_{1:d-1}}{\Vert \by \Vert_1}\right)
	,
	\end{split}
	\end{equation}
	where $\balpha$ ranges over $\cup_{i=1}^d \{\bkappa_i\} \cup \Gamma_k$.
\end{lemma}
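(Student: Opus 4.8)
The plan is to combine the kernel formula \eqref{eq:kernel_density_ang} with the mixture structure \eqref{eq:bpoly_density}--\eqref{eq: BPoly measure} of $H_k$, reducing the statement to an explicit Dirichlet integral evaluated by a change of variables. Recall that $H_k\in\Hset_k$ has angular density $h_{k-d}=\sum_{\balpha\in\Gamma_k}\varphi_\balpha\,\dirf(\cdot;\balpha)$ on $\mathring\resimp$ and vertex masses $H_k(\{\be_j\})=\varphi_{\bkappa_j}$. By \eqref{eq:kernel_density_ang} (equivalently, by the spectral description recalled in Section \ref{appsec:spectral}), $-V_I(\by|H_k)$ is an affine functional of these ingredients: it is the sum of the axis term $d\,\varphi_{\bkappa_i}\,y_i^{-2}$, present only when $I=\{i\}$ is a singleton, and the integral $d\int_{(\bzero,\by_{I^\complement})}\Vert\bz\Vert_1^{-d-1}\,h_{k-d}\circ\pi_\resimp(\bz/\Vert\bz\Vert_1)|_{\bz_I=\by_I}\,\diff\bz_{I^\complement}$ coming from the absolutely continuous part (for $I=\{1,\dots,d\}$ the set $\by_{I^\complement}$ is empty and no integration occurs). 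Substituting $h_{k-d}=\sum_\balpha\varphi_\balpha\dirf(\cdot;\balpha)$ and pulling the finite sum out of the integral, it suffices to establish \eqref{eq:expr_delta} for each summand. The vertex terms are immediate: with $\bv=\by_{1:d-1}/\Vert\by\Vert_1$ one has $v_i=y_i/\Vert\by\Vert_1$ and $1-\Vert\bv\Vert_1=y_d/\Vert\by\Vert_1$, so $d\,\varphi_{\bkappa_i}y_i^{-2}=\Vert\by\Vert_1^{-2}\varphi_{\bkappa_i}q(\{i\},\bkappa_i,\bv)$, while the vanishing of $q(I,\bkappa_j,\cdot)$ for $I\neq\{j\}$ records that an atom of $H$ at a vertex feeds only the corresponding first-order derivative.

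It remains to evaluate, for a fixed $\balpha\in\Gamma_k$, the integral $d\int_{(\bzero,\by_{I^\complement})}\Vert\bz\Vert_1^{-d-1}\dirf(\pi_\resimp(\bz/\Vert\bz\Vert_1);\balpha)|_{\bz_I=\by_I}\,\diff\bz_{I^\complement}$. Writing $\dirf(\bt;\balpha)=\{\Gamma(k)/\prod_j\Gamma(\alpha_j)\}\prod_{j=1}^{d-1}t_j^{\alpha_j-1}(1-\Vert\bt\Vert_1)^{\alpha_d-1}$ and using $t_j=z_j/\Vert\bz\Vert_1$, $1-\Vert\bt\Vert_1=z_d/\Vert\bz\Vert_1$ together with $\sum_j\alpha_j=k$, the integrand collapses to a constant multiple of $\Vert\bz\Vert_1^{-k-1}\prod_{j=1}^d z_j^{\alpha_j-1}$ with $\bz_I=\by_I$. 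I would then substitute $\bz_{I^\complement}\mapsto\bu:=\bz_{I^\complement}/(\Vert\by_I\Vert_1+\Vert\bz_{I^\complement}\Vert_1)$, which turns $(\bzero,\by_{I^\complement})$ into a simplicial corner and reduces the expression to an incomplete Dirichlet integral; iterating the one-dimensional identity $\int_0^x u^{a-1}(1-u)^{b-1}\diff u=B(a,b)\,\mathcal{I}_x(a,b)$ and cancelling Gamma factors via $\Gamma(k)/\{\Gamma(a)\Gamma(b)\}=1/B(a,b)$ leaves exactly the prefactor $\Vert\by\Vert_1^{-|I|-1}$ multiplying the asserted value of $q(I,\balpha,\bv)$. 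The several cases in the definition of $q$ reflect only the special role of the index $d$ under $\pi_\resimp$ and the size of $I$: if $d\notin I$ the coordinate $z_d$ is integrated, producing a regularised incomplete Beta factor with argument $\Vert\bv_I\Vert_1$; if $d\in I$ it is held fixed and the surviving Dirichlet variables are rescaled by $1-\Vert\bv_{I^\complement}\Vert_1$; and the cases $|I|=1$, $1<|I|<d$, $|I|=d$ differ only in how many one-dimensional collapses are carried out (none when $I=\{1,\dots,d\}$, leaving the bare Dirichlet density). Summing over $\balpha\in\Gamma_k$ and adding the vertex contributions yields \eqref{eq:expr_delta}.

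\textbf{Main difficulty.} The only substantive work is the multivariate change of variables and the attendant bookkeeping: keeping straight which coordinates are fixed versus integrated, tracking the distinguished $d$-th coordinate, and applying the Beta and incomplete-Beta recursions carefully enough that the normalising constants telescope to $\Vert\by\Vert_1^{-|I|-1}$ and reproduce each branch of the piecewise definition of $q$. No ideas beyond \eqref{eq:kernel_density_ang} and elementary Dirichlet calculus are required.
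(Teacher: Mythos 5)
Your proposal follows essentially the same route as the paper's proof: both reduce, via \eqref{eq:kernel_density_ang} and linearity of the mixture $h_{k-d}=\sum_{\balpha\in\Gamma_k}\varphi_\balpha\,\dirf(\cdot;\balpha)$ plus the direct treatment of the vertex masses, to evaluating the rectangle integral for a single Dirichlet component, pass to radial--angular coordinates, and invoke the Dirichlet aggregation property together with the (regularised) incomplete-Beta identity, treating the cases $|I|=1$, $1<|I|<d$ and $I=\{1,\ldots,d\}$ separately exactly as the paper does. The only discrepancy is presentational: in the paper the single incomplete-Beta factor arises from a one-dimensional integral in the radial variable $s=\Vert\bz\Vert_1$ over $(\Vert\by_I\Vert_1,\Vert\by\Vert_1)$ \emph{after} the angular slice integral has been collapsed by aggregation to a univariate Beta density in $w_I$ (or $\Vert\bw_I\Vert_1$), rather than from iterating coordinate-wise incomplete-Beta identities over the components of $\bz_{I^\complement}$ as you describe, but the two bookkeeping schemes yield the same cancellation of Gamma factors and the same prefactor $\Vert\by\Vert_1^{-|I|-1}$.
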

\begin{proof}
	We consider different types of $I$ separately.

	\noindent
	\underline{\textbf{Case 1}}\textbf{: $I=\{j\}$, for some $j \in \{1, \ldots,d\}$.}
	Some changes of variables allow to express $-V_I(\by|H_k)$ as 
	\begin{equation*}
	\begin{split}
	&= d \varphi_{\bkappa_j}y_I^{-2}+\int_{(\bzero,\by_{I^\complement})} d \Vert \bz \Vert_1^{-d-1} 
	h_{k-d}\circ \pi_\resimp \left(\bz/\Vert \bz\Vert_1 \right) \big{|}_{z_I=y_I} \diff \bz_{I^\complement}\\
	&=d \varphi_{\bkappa_j}y_I^{-2}+\int_{y_I}^{\Vert \by \Vert_1}ds^{|I^\complement|-d-2}\int_{\mathcal{W} (I^\complement;y_I/s)} h_{k-d}\circ \pi_\resimp(\bw)\big{|}_{w_I=y_I/s}^{w_l=1-\Vert \bw_{-l} \Vert_1} \diff \bw_{I^\complement \setminus\{l\}} \diff s\\
	&=d \varphi_{\bkappa_j}y_I^{-2}+\int_{y_I/\Vert \by \Vert_1}^{1}t^{d-|I^\complement|}dy_I^{|I^\complement|-d-1}
	\int_{\mathcal{W} (I^\complement;t)} h_{k-d}\circ \pi_\resimp(\bw)\big{|}_{w_I=t}^{w_l=1-\Vert \bw_{-l} \Vert_1} \diff \bw_{I^\complement \setminus\{l\}} \diff t,
	\end{split}
	\end{equation*}
	where $\mathcal{W} (I^\complement;x)=\{\bv \in(0,1)^{|I^\complement|-1}: \Vert \bv \Vert_1 \leq 1-x \}$, $\bw_{-l}=\bw_{\{1,\ldots,d\} \setminus \{l\}}$ and $l = \max\{j: j\in I^\complement\}$. 
	Recall that $h_{k-d}(\bt)=\sum_{\balpha \in \Gamma_k}\varphi_\balpha \text{Dir}(\bt;\balpha)$, $\bt \in \resimp$. Then, the aggregation property of the Dirichlet distribution and some algebraic manipulations allow to rephrase the right-hand side as
	\begin{equation*}
	\begin{split}
	&dy_I^{-2}\left(
	\varphi_{\bkappa_j}
	+\sum_{\balpha \in \Gamma_k}\varphi_\balpha
	\int_{y_I/\Vert \by \Vert_1}^{1} t \text{Dir}\left(t; \alpha_I, \Vert
	\balpha_{I^\complement} \Vert_1\right)\diff t
	\right)\\
	&\quad = dy_I^{-2}\left(
	\varphi_{\bkappa_j}
	+\sum_{\balpha \in \Gamma_k}\varphi_\balpha
	\frac{B(\alpha_I+1, \Vert \balpha_{I^\complement} \Vert_1)}{B(\alpha_I, \Vert \balpha_{I^\complement} \Vert_1)}
	\left(
	1- \mathcal{I}_{y_I/\Vert \by \Vert_1}
	(\alpha_I+1, \Vert \balpha_{I^\complement}\Vert_1)
	\right)
	\right)\\
	&\quad = dy_I^{-2}\left(
	\varphi_{\bkappa_j}
	+\sum_{\balpha \in \Gamma_k}\varphi_\balpha
	\frac{\alpha_I}{k}
	\left(
	1- \mathcal{I}_{y_I/\Vert \by \Vert_1}
	(\alpha_I+1, \Vert \balpha_{I^\complement}\Vert_1)
	\right)
	\right).
	\end{split}
	\end{equation*}
	The result now follows after some simple algebraic tweaking.

	\noindent
	\underline{\textbf{Case 2}}\textbf{: $1<|I|<d$.}
	Arguments similar to the ones above allow to rephrase $-V_I(\by|H_k)$ as follows
	\begin{equation*}
	\begin{split}
	&= \int_{(\bzero,\by_{I^c})} d \Vert \bz \Vert_1^{-d-1} h_{k-d}\circ \pi_\resimp\left(\bz/\Vert \bz\Vert_1 \right) \big{|}_{\bz_I=\by_I} \diff \bz_{I^\complement}\\
	&=\int_{\Vert \by_I \Vert_1}^{\Vert \by \Vert_1}ds^{|I^\complement|-d-2}\int_{\mathcal{W} (I^\complement;\Vert \by_I \Vert_1/s)} h_{k-d}\circ \pi_\resimp(\bw)\big{|}_{\bw_I=\by_I/s}^{w_l = 1- \Vert \bw_{-l}\Vert_1} \diff \bw_{I^\complement \setminus\{l\}}\\
	&=\int_{\Vert\by_I\Vert_1/\Vert \by \Vert_1}^{1}dt^{d-|I^\complement|}\Vert \by_I \Vert_1^{|I^\complement|-d-1}
	\int_{\mathcal{W} (I^\complement;t)} h_{k-d}\circ \pi_\resimp(\bw)\big{|}_{\bw_I=t\by_I/\Vert \by_I\Vert_1}^{w_l = 1- \Vert \bw_{-l}\Vert_1} \diff \bw_{I^\complement \setminus\{l\}}\\
	&=
	d\Vert \by_I\Vert_1^{-|I|-1}
	\sum_{\balpha \in \Gamma_k}\varphi_\balpha
	\int_{\Vert\by_I\Vert_1/\Vert \by \Vert_1}^{1} t^{|I|} \text{Dir}\left(t \frac{\by_I}{\Vert \by_I \Vert_1};\balpha_I, \Vert
	\balpha_{I^\complement} \Vert_1\right)\diff t\\
	&=
	d\Vert \by_I\Vert_1^{-|I|-1}
	\sum_{\balpha \in \Gamma_k}\varphi_\balpha \left( 
	\prod_{i \in I} \left( \frac{y_i}{\Vert \by_I\Vert_1}\right)^{\alpha_i-1}
	\right)\frac{B(\Vert\balpha_I\Vert_1+1, \Vert \balpha_{I^\complement} \Vert_1)}{B(\balpha_I, \Vert \balpha_{I^\complement} \Vert_1)}\\
	& \hspace{11em}\times
	\left(
	1- \mathcal{I}_{\Vert \by_I\Vert_1/\Vert \by \Vert_1}
	(\Vert \balpha_I \Vert_1+1, \Vert \balpha_{I^c}\Vert_1)
	\right)\\
	&=d\Vert \by_I\Vert_1^{-|I|-1}
	\sum_{\balpha \in \Gamma_k}\varphi_\balpha\frac{\Vert \balpha_I \Vert_1}{k} \text{Dir}\left(\frac{\by_{I^-}}{\Vert \by_I\Vert_1} ; \balpha_I\right)
	\left(
	1- \mathcal{I}_{\Vert \by_I\Vert_1/\Vert \by \Vert_1}
	(\Vert \balpha_I \Vert_1+1, \Vert \balpha_{I^c}\Vert_1)
	\right),
	\end{split}
	\end{equation*}
	where $I^-$ is defined as in \eqref{eq:I_min}.
	Once more, the result follows after some simple algebraic tweaking.

	\noindent
	\underline{\textbf{Case 3}}\textbf{: $I=\{1, \ldots, d\}$.} In this case, the result straightforwardly follows from
	\eqref{eq:kernel_density_ang}.
\end{proof}

\begin{lemma}
	Let $k\geq d+1$ and $H_k \in \Hset_k$. Then, conditionally on $H_k$, 
	the probability of a partition $\part$ is
	\begin{equation}\label{eq: prob partition}
	\begin{split}
	\int_{\Yset}g_\bone(\by,\part|H_k)\diff \by& 
	=\int_{\resimp}
	\frac{
		\Gamma(m)
		\prod_{j=1}^m \{-V_{I_j}(\bv, 1-\Vert \bv\Vert_1|H_k)\} }{\left(A_{k}(\bt_\bv)r_\bv\right)^{m}} \diff \bv
	\\
	&= \sum_{\balpha^{(1)}}\ldots \sum_{\balpha^{(m)}}\left(\prod_{j=1}^{m} \varphi_{\balpha^{(j)}} \right) \int_{\resimp}
	\frac{\Gamma(m)\prod_{j=1}^{m} q(I_j, \balpha^{(j)}, \bv)}{\left(A_k(\bt_\bv)r_{\bv}\right)^{m}}  \diff \bv,
	\end{split}
	\end{equation}
	where all the integer vectors $\balpha^{(j)}$ range over $\cup_{i=1}^d \{\bkappa_i\} \cup \Gamma_k$ and
	\begin{equation}\label{eq:functofv}
	\bt_\bv:=
	\left( \frac{1/v_2}{r_\bv},\ldots,\frac{1/v_{d-1}}{r_\bv}, \frac{1/(1-\Vert \bv \Vert_1)}{r_\bv}\right),
	\quad
	r_\bv:=\Vert 1/\bv \Vert_1 +1/(1-\Vert \bv \Vert_1). 
	\end{equation}
\end{lemma}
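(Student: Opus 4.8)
The plan is to compute $\int_{\Yset}g_\bone(\by,\part|H_k)\diff\by$ by introducing polar-type coordinates tuned to the scaling of the exponent function, and then to invoke Lemma~\ref{lem: expression omega} to pass from the first to the second line of \eqref{eq: prob partition}. Concretely, I would apply the change of variables $\by\mapsto(s,\bv)$ with $s:=\Vert\by\Vert_1$ and $v_i:=y_i/\Vert\by\Vert_1$, $i=1,\dots,d-1$; this is a diffeomorphism of $(0,\infty)^d$ onto $(0,\infty)\times\mathring{\resimp}$ (and $\resimp\setminus\mathring{\resimp}$ is Lebesgue-null) with Jacobian exactly $s^{d-1}$, so $\diff\by=s^{d-1}\diff s\,\diff\bv$ and $\by=s\,\bw$ with $\bw:=(v_1,\dots,v_{d-1},1-\Vert\bv\Vert_1)\in\simp$.

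Next I would track the homogeneity degrees. Since the stable tail dependence function $L$ is homogeneous of order $1$, the exponent function $V(\cdot|H_k)=L(1/\cdot|H_k)$ is homogeneous of order $-1$; writing it through the Pickands representation \eqref{eq:picklands} and evaluating at $\bw$ gives $V(\bw|H_k)=r_\bv A_k(\bt_\bv)$, with $r_\bv$, $\bt_\bv$ as in \eqref{eq:functofv}, hence $V(\by|H_k)=s^{-1}r_\bv A_k(\bt_\bv)$. Differentiating a homogeneous function lowers its degree by one per derivative, so each mixed partial $V_{I_j}(\cdot|H_k)$ is homogeneous of order $-1-|I_j|$ and $-V_{I_j}(\by|H_k)=s^{-1-|I_j|}\{-V_{I_j}(\bw|H_k)\}$; for $H_k\in\Hset_k$ these identities hold for every $\by>\bzero$, since \eqref{eq:kernel_density_ang} (equivalently Lemma~\ref{lem: expression omega}) exhibits $-V_{I_j}(\cdot|H_k)$ explicitly as a smooth function of $\by$. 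Because $\part=\{I_1,\dots,I_m\}$ partitions $\{1,\dots,d\}$ we have $\sum_{j=1}^m|I_j|=d$, whence
\[
g_\bone(\by,\part|H_k)=s^{-m-d}\exp\!\big(-s^{-1}r_\bv A_k(\bt_\bv)\big)\prod_{j=1}^m\{-V_{I_j}(\bw|H_k)\}.
\]
Integrating first in $s$, the substitution $u=s^{-1}r_\bv A_k(\bt_\bv)$ converts $\int_0^\infty s^{-m-1}\exp(-s^{-1}r_\bv A_k(\bt_\bv))\diff s$ into $\Gamma(m)\{r_\bv A_k(\bt_\bv)\}^{-m}$, and since $g_\bone(\cdot,\part|H_k)\ge0$ Tonelli's theorem licenses the iterated integral; this yields the first equality in \eqref{eq: prob partition}, writing $-V_{I_j}(\bv,1-\Vert\bv\Vert_1|H_k)$ for $-V_{I_j}(\bw|H_k)$.

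For the second equality I would evaluate Lemma~\ref{lem: expression omega} at $\bw$, where $\Vert\bw\Vert_1=1$ and $\bw_{1:d-1}=\bv$, obtaining $-V_{I_j}(\bw|H_k)=\sum_{\balpha}\varphi_\balpha\,q(I_j,\balpha,\bv)$ with $\balpha$ ranging over $\cup_{i=1}^d\{\bkappa_i\}\cup\Gamma_k$. Inserting this for each $j$, expanding the product $\prod_{j=1}^m\big(\sum_{\balpha^{(j)}}\varphi_{\balpha^{(j)}}q(I_j,\balpha^{(j)},\bv)\big)$ into the multi-sum $\sum_{\balpha^{(1)}}\cdots\sum_{\balpha^{(m)}}\prod_{j=1}^m\varphi_{\balpha^{(j)}}q(I_j,\balpha^{(j)},\bv)$, and exchanging these finitely many sums with the $\bv$-integral — legitimate because $\varphi_\balpha\ge0$ and $q\ge0$ (the factors $1-\mathcal{I}_x(\cdot,\cdot)$ and the Dirichlet densities appearing in $q$ are nonnegative) — produces the stated identity.

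There is no deep obstacle here; the step demanding the most attention is the first one, namely setting up the polar change of variables, correctly matching the homogeneity exponents, and using $\sum_j|I_j|=d$ to collapse the power of $s$ so that the radial integral reduces to a Gamma integral. The remaining manipulations (evaluating the Jacobian, the $\Gamma$-integral, and the interchange of finite sums with the integral) are routine, and the overall finiteness of the integral is automatic from $\sum_{\part\in\allpart_d}\int_{\Yset}g_\bone(\by,\part|H_k)\diff\by=1$.
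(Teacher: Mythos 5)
Your proof is correct and follows essentially the same route as the paper's: the polar change of variables $\by\mapsto(\Vert\by\Vert_1,\by_{1:d-1}/\Vert\by\Vert_1)$ with Jacobian $s^{d-1}$, the collapse of the radial power via $\sum_j|I_j|=d$ to a $\Gamma(m)$-integral against $V(\bw|H_k)=r_\bv A_k(\bt_\bv)$, and the expansion of each $-V_{I_j}(\bw|H_k)$ through Lemma~\ref{lem: expression omega}. The only cosmetic difference is the order of operations (you integrate out the radial variable before substituting the explicit mixture form of $-V_{I_j}$, whereas the paper substitutes first), which changes nothing.
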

\begin{proof}
	The change of variables in \eqref{eq: change}, Lemma \ref{lem: expression omega} and few algebraic manipulations yield
	\begin{equation*}
	\begin{split}
	\int_{\Yset}& g_\bone(\by,\part|H_k) \diff \by
	\\
	&=\int_{\Yset} \prod_{j=1}^m 
	\{-V_{I_i}(\by|H_k)\}
	e^{-V(\by|H_k)} \diff \by\\
	&= \int_{\resimp}\int_0^\infty r^{d-1} \prod_{j=1}^m\{-V_{I_i}(r\bv, r(1-\Vert \bv \Vert_1)|H_k)\}e^{-V(\bv, 1-\Vert \bv \Vert_1|H_k)/r} \diff r \diff \bv \\
	&=\int_{\resimp}\int_0^\infty r^{d-1} \prod_{j=1}^m \left[
	r^{-|I_j|-1}\sum_{\balpha^{(j)}}\varphi_{\balpha^{(j)}} q(I_j, \balpha^{(j)}, \bv)
	\right]e^{-V(\bv, 1-\Vert \bv \Vert_1|H_k)/r} \diff r \diff \bv\\
	&=\int_{\resimp}
	\prod_{j=1}^m \left[
	\sum_{\balpha^{(j)}}\varphi_{\balpha^{(j)}} q(I_j, \balpha^{(j)}, \bv)
	\right]\frac{\Gamma(m)}{\left[V(\bv, 1- \Vert \bv \Vert_1|H_k)\right]^m} \diff \bv\\
	&= \sum_{\balpha^{(1)}}\ldots \sum_{\balpha^{(m)}}\left(\prod_{j=1}^{m} \varphi_{\balpha^{(j)}}\right) \int_{\resimp}
	\frac{\Gamma(m)\prod_{j=1}^{m} q(I_j, \balpha^{(j)}, \bv)}{\left(A_k(\bt_\bv)r_\bv\right)^{m}}  \diff \bv.
	\end{split}
	\end{equation*}
	The first equality of the statement follows from the fifth line and equation \eqref{eq:expr_delta}, with $\by$ replaced by $(\bv, 1-\Vert \bv \Vert_1)$.
	The second equality is given in the sixth line.
\end{proof}

\begin{lemma}\label{lem: bound for sums}
	Let $k\geq d+1$ and $\bphi,\widetilde{\bphi}\in \Phi_k$, with $\Phi_k$ as in Condition \ref{cond:angularprior} of the main paper. Let $H_k$ and $\widetilde{H}_k$ be the angular pm's corrisponding to $\bphi$ and $\widetilde{\bphi}$, respectively, then denote by $A_k$ and $\widetilde{A}_k$ the related Pickands dependence functions. Let $\part\in \allpart_d \setminus \{1, \ldots,d\}$ and, for $l \in \{1, \ldots, m\}$,
	denote	
	$$
	\sum_{-l}:=\sum_{\balpha^{(1)}}\ldots\sum_{\balpha^{(l-1)}}\sum_{\balpha^{(l+1)}}\ldots\sum_{\balpha^{(m)}}.
	$$
	Then, for any $\balpha^{(l)} \in \cup_{i=1}^d \{\bkappa_i\} \cup \Gamma_k$ it holds that
	$$
	\sum_{-l}\left( \prod_{s<l}\varphi_{\balpha^{(s)}} \right)\left( \prod_{b>l}\widetilde{\varphi}_{\balpha^{(b)}} \right)\int_{\resimp}
	\frac{\Gamma(m)\prod_{j=1}^{m} q(I_j, \balpha^{(j)}, \bv)}{\left(\widetilde{A}_{k}(\bt_\bv) r_\bv\right)^{m}}  \diff \bv \leq d^m.
	$$
\end{lemma}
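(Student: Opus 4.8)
The plan is to turn the multiple sum into a single integral over $\resimp$ and then estimate that integral by crude but $k$-uniform bounds. First I would invoke nonnegativity: all the ingredients — the mixture weights $\varphi_{\balpha^{(s)}},\widetilde\varphi_{\balpha^{(b)}}\geq0$, the functions $q(I_j,\cdot,\cdot)\geq0$, and $(\widetilde A_{k}(\bt_\bv)r_\bv)^{-m}>0$ — are nonnegative, so Tonelli's theorem lets me pull the finite sum $\sum_{-l}$ inside the $\bv$-integral.

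Next I would collapse the inner sums. For $\bv\in\resimp$ set $\by^\circ:=(\bv,1-\Vert\bv\Vert_1)\in\simp$, so that $\Vert\by^\circ\Vert_1=1$ and $\by^\circ_{1:d-1}=\bv$. Evaluating Lemma \ref{lem: expression omega} at $\by^\circ$ gives, for the summation range $\balpha\in\bigcup_i\{\bkappa_i\}\cup\Gamma_k$, the identities $\sum_{\balpha}\varphi_\balpha\,q(I,\balpha,\bv)=-V_I(\by^\circ|H_k)$ and $\sum_{\balpha}\widetilde\varphi_\balpha\,q(I,\balpha,\bv)=-V_I(\by^\circ|\widetilde H_k)$; moreover, by homogeneity of the exponent function together with the definition of $\bt_\bv,r_\bv$ in \eqref{eq:functofv}, $\widetilde A_k(\bt_\bv)r_\bv=V(\by^\circ|\widetilde H_k)$. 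Performing the sums over the indices $s<l$ (weighted by $\varphi$) and $b>l$ (weighted by $\widetilde\varphi$), the left-hand side of the lemma equals
\[
\int_{\resimp}\frac{\Gamma(m)\,q(I_l,\balpha^{(l)},\bv)\,\prod_{s<l}\bigl(-V_{I_s}(\by^\circ|H_k)\bigr)\,\prod_{b>l}\bigl(-V_{I_b}(\by^\circ|\widetilde H_k)\bigr)}{\bigl(V(\by^\circ|\widetilde H_k)\bigr)^{m}}\,\diff\bv .
\]

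Then I would estimate this integral. Since $\part$ is not the trivial one-block partition, $m\geq2$ and every block $I_j$ is a proper subset of $\{1,\dots,d\}$, so each $q(I_j,\cdot,\cdot)$ carries the leading constant $d$ appearing in its definition. I would bound $q(I_l,\balpha^{(l)},\bv)$ by discarding the regularised incomplete-Beta factor ($\leq1$) and the factor $\Vert\balpha^{(l)}_{I_l}\Vert_1/k\leq1$, which leaves $d$ times a Dirichlet/Beta density in the block variables $\bv_{I_l}$. For the remaining blocks I would use the uniform kernel bounds $-V_{\{i\}}(\by^\circ|H)\leq(y^\circ_i)^{-2}$ (a direct consequence of the mean constraint (C1)) and its multi-index analogue for $|I|\geq2$ obtained by integrating the spectral density against $\Vert\cdot\Vert_1^{-d-1}$, together with the boundary-constraint lower bound $V(\by^\circ|\widetilde H_k)=\widetilde A_k(\bt_\bv)r_\bv\geq\max_j(y^\circ_j)^{-1}$ from (C3). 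Inserting these estimates and integrating the block variables out one block at a time — each integration being a Beta/Dirichlet integral, together with the Gamma-type identity $\Gamma(m)r_\bv^{-m}=\int_0^\infty t^{m-1}e^{-r_\bv t}\,\diff t$ to handle the normalising denominator — yields a bounded contribution per block, and collecting the constants gives the bound $d^m$.

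The main obstacle is this last step: producing the correct uniform-in-$k$ bounds on the multi-element kernel derivatives $-V_{I_j}(\by^\circ|\cdot)$ and on the single-term functions $q(I_j,\balpha^{(j)},\cdot)$, and then carrying out the iterated integration against the Dirichlet densities and the $\Gamma(m)r_\bv^{-m}$ weight while tracking the constants so that the product of the per-block factors comes out exactly $\leq d^m$. The computation is lengthy but elementary, relying only on the explicit form of $q$, standard properties of incomplete Beta functions and Dirichlet integrals, and the shape constraints (C1) and (C3).
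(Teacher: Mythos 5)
Your opening reduction is sound: by nonnegativity you may exchange the finite sums with the integral, and Lemma \ref{lem: expression omega} applied at $\by^\circ:=(\bv,1-\Vert\bv\Vert_1)$ does collapse the sums over $s<l$ and $b>l$ into $\prod_{s<l}\{-V_{I_s}(\by^\circ|H_k)\}\prod_{b>l}\{-V_{I_b}(\by^\circ|\widetilde{H}_k)\}$, with $\widetilde{A}_k(\bt_\bv)r_\bv=V(\by^\circ|\widetilde{H}_k)$. The gap is the estimation step, which you yourself flag as the ``main obstacle'': it is not lengthy-but-elementary bookkeeping, it is the entire content of the lemma, and the paper does not do it by analytic estimation at all. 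The paper's proof uses the second line of \eqref{eq: prob partition}: the fully weighted sum $\sum_{\balpha^{(1)}}\cdots\sum_{\balpha^{(m)}}(\prod_j\varphi_{\balpha^{(j)}})\int_\resimp\Gamma(m)\prod_jq(I_j,\balpha^{(j)},\bv)\,(A_k(\bt_\bv)r_\bv)^{-m}\diff\bv$ is the probability of the partition $\part$, hence at most $1$; since the mean constraint (R2) permits any single weight $\varphi_\balpha$ to be as large as $k/(d\max_j\alpha_j)\geq 1/d$, a contradiction argument peels off one summation index at a time, each unsummed index costing at most a factor $d$, and yields $\int_\resimp\Gamma(m)\prod_jq(I_j,\balpha^{(j)},\bv)(\widetilde{A}_k(\bt_\bv)r_\bv)^{-m}\diff\bv\leq d^m$ for \emph{every} fixed multi-index vector; the mixed weighted sum is then $\leq d^m$ because (R1) makes $\sum_{-l}(\prod_{s<l}\varphi_{\balpha^{(s)}})(\prod_{b>l}\widetilde{\varphi}_{\balpha^{(b)}})\leq 1$.

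Your direct route cannot be completed with the tools you list. First, for blocks with $|I_j|\geq 2$ (which occur for every non-singleton, non-trivial partition once $d\geq 3$) there is no $k$-uniform pointwise ``multi-index analogue'' of $-V_{\{i\}}(\by)\leq y_i^{-2}$: these mixed derivatives involve the Bernstein angular density $h_{k-d}=\sum_\balpha\varphi_\balpha\mathrm{Dir}(\cdot;\balpha)$ pointwise, whose supremum grows polynomially in $k$, so any pointwise majorisation ruins uniformity in $k$. Second, even in the simplest case $d=2$, $m=2$, $I_l=\{1\}$, $\balpha^{(l)}=(k-1,1)$, your bounds give $q(I_l,\balpha^{(l)},v)\leq 2v^{-2}(1-v^k)$, $-V_{\{2\}}\leq(1-v)^{-2}$ and $V\geq\max\{v^{-1},(1-v)^{-1}\}$; using any single one of these lower bounds for $V$ globally produces a non-integrable majorant (e.g.\ $V^2\geq v^{-2}$ leaves $(1-v^k)/(1-v)^2\sim k/(1-v)$ near $v=1$), and even splitting $[0,1]$ and using the better bound on each half yields a finite constant strictly larger than $d^m=4$. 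Third, nothing in your scheme can manufacture the exact constant $d^m$: that constant is precisely ``one factor of $d$ per unsummed index'' and comes from the $1/d$ admissible-weight cap in (R2), which your argument never invokes. The missing idea is therefore the normalization/contradiction argument (or an equivalent explicit use of (R1)--(R2) and the partition-probability identity); the constraints (C1) and (C3) alone, used pointwise, are not enough.
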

\begin{proof}
	By the second line of \eqref{eq: prob partition}, we have that
	$
	\sum_{\balpha^{(1)}} \varphi_{\balpha^{(1)}} s_{\balpha^{(1)}}\leq 1
	$, where
	$$
	s_{\balpha^{(1)}}:=
	\sum_{-1}\left(\prod_{j=2}^{m} \varphi_{\balpha^{(j)}}\right) \int_{\resimp}
	\frac{\Gamma(m)\prod_{j=1}^{m} q(I_j, \balpha^{(j)}, \bv)}{\left[A_k(\bt_\bv)r_\bv\right]^{m}}  \diff \bv, \quad \balpha^{(1)}\in \cup_{i=1}^d \{\bkappa_i\} \cup \Gamma_k.
	$$	
	Consequently, we have that $s_{\balpha^{(1)}} \leq d$, for all $\balpha^{(1)}\in \cup_{i=1}^d \{\bkappa_i\} \cup \Gamma_k$. We easily prove this by contradiction. For an arbitrary choice of $\balpha^{(1)}_* $ in $\cup_{i=1}^d \{\bkappa_i\} \cup \Gamma_k$, we might have chosen $	\varphi_{\balpha^{(1)}_* }$ equal to the maximum value allowed by the mean constraint, i.e. 
	$$
	\varphi_{\balpha^{(1)}_* }=\frac{k}{d}\max_{1\leq j\leq d}\alpha_{*,j}^{(1)}.
	$$
	If $s_{\balpha^{(1)}_* }> d> d \max_{1\leq j\leq d}\alpha_{*,j}^{(1)}/k$, we would have
	$$
	1 \geq \sum_{\balpha^{(1)} \neq \balpha_{*}^{(1)}} \varphi_{\balpha^(1)} s_{\balpha^{(1)}}+\varphi_{\balpha^{(1)}_* }s_{\balpha^{(1)}_*} > 
	\sum_{\balpha^{(1)} \neq \balpha_{*}^{(1)}} \varphi_{\balpha^(1)} s_{\balpha^{(1)}}+1,
	$$
	yielding a contradiction. Next, observe that each $s_{\balpha^{(1)}}$ can be written in the form 
	$$
	s_{\balpha^{(1)}}=\sum_{\balpha^{(2)}}\varphi_{\balpha^{(2)}} t_{\balpha^{(2)}}
	$$ 
	and, once more, a contradiction argument allows to prove $t_{\balpha^{(2)}}\leq d^2$, for all $\balpha^{(2)}\in \cup_{i=1}^d \{\bkappa_i\} \cup \Gamma_k$. Proceeding in this way, we can finally prove that
	$$
	\int_{\resimp}
	\frac{\Gamma(m)\prod_{j=1}^{m} q(I_j, \balpha^{(j)}, \bv)}{\left(A_k(\bt_\bv)r_\bv\right)^{m}}  \diff \bv \leq d^m, \quad \text{for all }\balpha^{(1)}, \ldots,\balpha^{(d)}. 
	$$
	As a consequence, for all $l \in \{1, \ldots,d\}$ and $\balpha^{(l)}\in \cup_{i=1}^d \{\bkappa_i\} \cup \Gamma_k$,
	\begin{equation*}
	\begin{split}
	&\sum_{-l}\left( \prod_{s<l}\varphi_{\balpha^{(s)}} \right)\left( \prod_{b>l}\widetilde{\varphi}_{\balpha^{(b)}} \right)\int_{\resimp}
	\frac{\Gamma(m)\prod_{j=1}^{m} q(I_j, \balpha^{(j)}, \bv)}{\left(\widetilde{A}_{k}(\bt_\bv)r_\bv\right)^{m}}  \diff \bv \\
	& \quad \leq  d^m \sum_{-l}\left( \prod_{s<l}\varphi_{\balpha^{(s)}} \right)\left( \prod_{b>l}\widetilde{\varphi}_{\balpha^{(b)}} \right)\\
	&\quad \leq d^m,
	\end{split}
	\end{equation*}
	where the third line follows from the constraint (R1). The proof is now complete.
\end{proof}

\begin{lemma}\label{lem: L1}
	Let $k \geq d+1$ and $H_k,\widetilde{H}_k \in \Hset_k$, with coefficients $\bphi,\widetilde{\bphi}\in \Phi_k$. Then, there exists a positive constant $c$ (depending on $d$) such that
	\begin{equation}\label{eq: ineq for Hellinger}
	\dist_H^2(g_\bone(\cdot|H_k),g_\bone(\cdot|\widetilde{H}_k)) \leq  
	c \Vert {\bphi}_{\circ}- \widetilde{\bphi}_{\circ}\Vert_{1},
	\end{equation}
	where ${\bphi}_{\circ}$ and $\widetilde{\bphi}_{\circ}$ are defined as in  \eqref{eq:interior_coef}.
\end{lemma}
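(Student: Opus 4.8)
The plan is to start from the second half of the inequality chain in Proposition~\ref{lem: basic metric}\ref{res: basic 1}, which yields
\[
\dist_H^2\!\left(g_\bone(\cdot|H_k),g_\bone(\cdot|\widetilde H_k)\right)\;\le\;\bigl\Vert g_\bone(\cdot,\cdot|H_k)-g_\bone(\cdot,\cdot|\widetilde H_k)\bigr\Vert_1\;=\;\sum_{\part\in\allpart_d}\int_{\Yset}\bigl|g_\bone(\by,\part|H_k)-g_\bone(\by,\part|\widetilde H_k)\bigr|\diff\by ,
\]
so it suffices to bound the right-hand side. I would use throughout the elementary fact that restriction (R2) writes each vertex mass $\varphi_{\bkappa_j}$ as a linear combination of the interior weights $\{\varphi_\balpha:\balpha\in\Gamma_k\}$ with coefficients $l/k\in(0,1)$; this gives $\Vert\bphi_\partial-\widetilde\bphi_\partial\Vert_1\le d\,\Vert\bphi_\circ-\widetilde\bphi_\circ\Vert_1$, with $\bphi_\partial,\bphi_\circ$ as in \eqref{eq:interior_coef}, so it is enough to bound each summand above by a constant (depending only on $d$) times $\Vert\bphi_\circ-\widetilde\bphi_\circ\Vert_1+\Vert\bphi_\partial-\widetilde\bphi_\partial\Vert_1$.

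Fixing a partition $\part=\{I_1,\dots,I_m\}$ and writing $v_i:=-V_{I_i}(\by|H_k)\ge0$, $\widetilde v_i:=-V_{I_i}(\by|\widetilde H_k)\ge0$, I would telescope along the hybrid sequence
\[
D_0:=e^{-V(\by|H_k)}\prod_{i=1}^m v_i,\qquad D_1:=e^{-V(\by|\widetilde H_k)}\prod_{i=1}^m v_i,\qquad D_{l+1}:=e^{-V(\by|\widetilde H_k)}\Bigl(\prod_{s\le l}\widetilde v_s\Bigr)\Bigl(\prod_{b>l}v_b\Bigr),\quad l=1,\dots,m,
\]
so that $g_\bone(\by,\part|H_k)-g_\bone(\by,\part|\widetilde H_k)=(D_0-D_1)+\sum_{l=1}^m(D_l-D_{l+1})$. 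For $l\ge1$ one has $D_l-D_{l+1}=e^{-V(\by|\widetilde H_k)}\bigl(\prod_{s<l}\widetilde v_s\bigr)(v_l-\widetilde v_l)\bigl(\prod_{b>l}v_b\bigr)$, and Lemma~\ref{lem: expression omega} gives the linear expansion $v_l-\widetilde v_l=\Vert\by\Vert_1^{-|I_l|-1}\sum_{\balpha^{(l)}}(\varphi_{\balpha^{(l)}}-\widetilde\varphi_{\balpha^{(l)}})\,q\bigl(I_l,\balpha^{(l)},\by_{1:d-1}/\Vert\by\Vert_1\bigr)$. Applying the change of variables used in the proof of \eqref{eq: prob partition}, the integral $\int_{\Yset}|D_l-D_{l+1}|\diff\by$ is then bounded by $\sum_{\balpha^{(l)}}|\varphi_{\balpha^{(l)}}-\widetilde\varphi_{\balpha^{(l)}}|$ times exactly the quantity estimated in Lemma~\ref{lem: bound for sums}, hence by $d^m\bigl(\Vert\bphi_\circ-\widetilde\bphi_\circ\Vert_1+\Vert\bphi_\partial-\widetilde\bphi_\partial\Vert_1\bigr)$; summing over $l\le m$ costs a further factor $m\le d$.

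The remaining and most delicate term is $D_0-D_1=\bigl(e^{-V(\by|H_k)}-e^{-V(\by|\widetilde H_k)}\bigr)\prod_i v_i$. Here the crude bound $|e^{-a}-e^{-b}|\le|a-b|$ is useless, since $\prod_i\{-V_{I_i}(\by|H_k)\}$ is not integrable on $\Yset$ once the exponential is dropped; instead I would use $|e^{-a}-e^{-b}|\le|a-b|(e^{-a}+e^{-b})$ together with the identity $V(\by|H_k)-V(\by|\widetilde H_k)=\Vert 1/\by\Vert_1\bigl(A_k(\bt_\by)-\widetilde A_k(\bt_\by)\bigr)$ (homogeneity of $L$, with $\bt_\by\in\mathring{\resimp}$ the simplex coordinate determined by $\by$) and the uniform estimate $\dist_\infty(A_k,\widetilde A_k)\le2d\,\Vert h_{k-d}-\widetilde h_{k-d}\Vert_1\le2d\,\Vert\bphi_\circ-\widetilde\bphi_\circ\Vert_1$ from Proposition~\ref{lem: basic metric}\ref{res: basic 2}. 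This reduces the task to showing that $\int_{\Yset}\Vert 1/\by\Vert_1\,e^{-V(\by|H')}\prod_i\{-V_{I_i}(\by|H_k)\}\diff\by$ is bounded by a constant depending only on $d$, for $H'\in\{H_k,\widetilde H_k\}$. The same radial change of variables shows that the extra factor $\Vert 1/\by\Vert_1=r_\bv/\Vert\by\Vert_1$ merely raises the exponent of the radial variable by one, producing $\Gamma(m+1)$ in place of $\Gamma(m)$ and one additional power of $A_k(\bt_\bv)$ (resp.\ $\widetilde A_k(\bt_\bv)$) in the denominator; since $A_k,\widetilde A_k\ge1/d$ on $\resimp$ this extra power is absorbed up to a factor $d$, and the ensuing fully $\bphi$-weighted sum is $\le d^m$ by the contradiction argument underpinning Lemma~\ref{lem: bound for sums} (the partition probabilities summing to one). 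Thus $\int_{\Yset}|D_0-D_1|\diff\by\lesssim m\,d^{m+2}\,\Vert\bphi_\circ-\widetilde\bphi_\circ\Vert_1$.

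Collecting the bounds for $D_0-D_1$ and for the $D_l-D_{l+1}$, $l=1,\dots,m$, and summing over the finitely many partitions $\part\in\allpart_d$, each with $|\part|\le d$, gives $\sum_\part\int_{\Yset}|g_\bone(\by,\part|H_k)-g_\bone(\by,\part|\widetilde H_k)|\diff\by\le c\,\Vert\bphi_\circ-\widetilde\bphi_\circ\Vert_1$ for a constant $c$ depending only on $d$, which together with the first display proves \eqref{eq: ineq for Hellinger}. The crux of the argument is the exponential-swap term $D_0-D_1$: one must retain enough exponential decay for the integrals to converge while still isolating the linear dependence on $\Vert\bphi_\circ-\widetilde\bphi_\circ\Vert_1$, and the key observation is that, after integrating out the radial coordinate, the nuisance factor $\Vert 1/\by\Vert_1$ only shifts $m\mapsto m+1$ and is tamed by the lower bound $A_k\ge1/d$, so that Lemma~\ref{lem: bound for sums} still applies.
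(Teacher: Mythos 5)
Your proposal is correct and follows essentially the same route as the paper's proof: reduce to the $L_1$ distance between the joint densities $g_\bone(\cdot,\cdot|H_k)$ via Proposition~\ref{lem: basic metric}\ref{res: basic 1}, split each partition's contribution into an exponential-swap term (controlled through the homogeneity identity, the Lipschitz bound on $e^{-x}$, Proposition~\ref{lem: basic metric}\ref{res: basic 2}, and the radial integration that trades $\Gamma(m)$ for $\Gamma(m+1)$ absorbed by $A_k\ge 1/d$) and a telescoped product term handled by Lemma~\ref{lem: expression omega} and Lemma~\ref{lem: bound for sums}. The only differences are presentational — you telescope the whole hybrid chain at once rather than the paper's $T_1+T_2$ split, use $|e^{-a}-e^{-b}|\le|a-b|(e^{-a}+e^{-b})$ in place of $e^{-\min(a,b)}\le e^{-r_\bv/(dr)}$, and make explicit the (R2) reduction of $\Vert\bphi_\partial-\widetilde\bphi_\partial\Vert_1$ to $\Vert\bphi_\circ-\widetilde\bphi_\circ\Vert_1$ that the paper leaves implicit.
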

\begin{proof}
	By Proposition \ref{lem: basic metric}\ref{res: basic 1}, the left hand-side of \eqref{eq: ineq for Hellinger} is bounded from above by
	\begin{equation*}
	\begin{split}
	&\Vert g_\bone(\cdot,\cdot|H_{k})-g_\bone(\cdot, \cdot|\widetilde{H}_{k})\Vert_1\\
	&\quad= \sum_{\part \in \allpart_d} \int_{\Yset}
	\left|
	e^{-V(\by|H_k)}\prod_{j=1}^{m} \{-V_{I_j}(\by|H_k)\}
	-
	e^{-V(\by|\widetilde{H}_k)}\prod_{j=1}^{m} \{-V_{I_j}(\by|\widetilde{H}_k)\}
	\right|\diff \by.
	\end{split}
	\end{equation*}
	Thus, to establish \eqref{eq: ineq for Hellinger}, it is sufficient to show that for each $\part \in \allpart_d$
	\begin{equation}\label{eq: bound for each tau}
	\int_{\Yset}
	\left|
	e^{-V(\by|H_k)}\prod_{j=1}^{m} \{-V_{I_j}(\by|H_k)\}
	-
	e^{-V(\by|\widetilde{H}_k)}\prod_{j=1}^{m} \{-V_{I_j}(\by|\widetilde{H}_k)\}
	\right|\diff \by
	\leq 	c_d \Vert {\bphi}_{\circ}- \widetilde{\bphi}_{\circ}\Vert_{1},
	\end{equation}
	where $c
	_d$ is a positive constant. The term on the left hand-side can be bounded from above by
	\begin{equation*}
	\begin{split}
	T_1+T_2 & := \int_{\Yset}
	\left|
	e^{-V(\by|H_k)}
	-
	e^{-V(\by|\widetilde{H}_{k})}\right| \prod_{j=1}^{m} \{-V_{I_j}(\by|H_k)\}
	\diff \by\\
	&\quad +\int_{\Yset}
	e^{-V(\by|\widetilde{H}_{k})}\left|\prod_{j=1}^{m} \{-V_{I_j}(\by|H_k)\}
	-
	\prod_{j=1}^{m} \{-V_{I_j}(\by|\widetilde{H}_k)\}
	\right|\diff \by.
	\end{split}
	\end{equation*}	
	
	We now derive upper bounds for $T_1$ and $T_2$, starting from $T_1$. Assume first that $\part \neq \{1, \ldots, d\}$. 
	Consider the change of variables
	\begin{equation}\label{eq: change}
	\by \mapsto (r,\bv):= \left(\Vert \by\Vert_1, y_1/\Vert \by \Vert_1, \ldots,y_{d-1}/\Vert \by\Vert_1\right).
	\end{equation}	
	This together with a Lipschitz continuity argument, Proposition \ref{lem: basic metric}\ref{res: basic 2}, Lemma \ref{lem: expression omega} and the first line of \eqref{eq: prob partition} 
	lead to the following upper-bounds
	\begin{equation*}
	\begin{split}
	&\dist_\infty (A_{k},\widetilde{A}_{k})\int_{\resimp}  \int_0^\infty
	r^{d-1}
	\frac{r_\bv}{r} 
	e^{-r_{\bv}/dr}  
	\prod_{j=1}^{m}\{- V_{I_j} (r\bv, r(1-\Vert \bv \Vert_1)|H_k) \} 
	\diff r \diff \bv\\
	& \leq 2d \Vert {\bphi}_{\circ}- \widetilde{\bphi}_{\circ}\Vert_{1} \int_{\resimp}  \int_0^\infty
	r^{-m-2}r_\bv
	e^{-r_{\bv}/dr} 
	\prod_{j=1}^{m} \{-V_{I_j}(\bv, 1- \Vert \bv \Vert_1|H_k)\}
	\diff r \diff \bv\\
	& \leq 2d  \Vert {\bphi}_{\circ}- \widetilde{\bphi}_{\circ}\Vert_{1} md^{m+1}
	\int_{\resimp}
	\frac{\Gamma(m)
		{
			\prod_{j=1}^{m} \{-V_{I_j}(\bv, 1- \Vert \bv \Vert_1|H_k)\} }
	}{\left(A_{k}(\bt_\bv)r_\bv\right)^{m}} \diff \bv\\
	&= 2d \Vert {\bphi}_{\circ}- \widetilde{\bphi}_{\circ}\Vert_{1} md^{m+1}\int_{\Yset}g_\bone(\by,\part|H_{k})\diff \by,
	\end{split}
	\end{equation*}
	whence we conclude $T_1  \leq 2  d^{d+3} \Vert {\bphi}_{\circ}- \widetilde{\bphi}_{\circ}\Vert_{1}$. 
	Furthermore, we have that
	\begin{equation*}
	\begin{split}
	T_2 & \leq
	\int_{\Yset}
	e^{-V(\by|\widetilde{H}_{k})}
	\{-V_{I_1}(\by|H_k)\}
	\left|\prod_{j=2}^{m} \{-V_{I_j}(\by|H_k)\}
	-
	\prod_{j=2}^{m} \{-V_{I_j}(\by|\widetilde{H}_{k})\}
	\right|\diff \by\\
	&\quad +\int_{\Yset} e^{-V(\by|\widetilde{H}_{k})}\left|
	V_{I_1}(\by|\widetilde{H}_{k})-V_{I_1}(\by|H_k)
	\right|
	\prod_{j=2}^{m}\{-V_{I_j} (\by|\widetilde{H}_{k})\}\diff \by
	\end{split}
	\end{equation*}
	and recursively upper-bounding the first term on the right hand-side we finally obtain the bound from above
	\begin{equation*}
	\begin{split}
	\sum_{l=1}^{m}T_2^{(l)} := 
	\sum_{l=1}^{m}&  \int_{\Yset}
	e^{-V(\by|\widetilde{H}_{k})}\left|
	V_{I_l}(\by|H_k)
	-
	V_{I_l}(\by|\widetilde{H}_{k})
	\right|\\
	&\qquad
	\times\left( 
	\prod_{j=1}^{l-1}\{-V_{I_j}(\by|H_k)\}
	\right)\hspace{-.3em}
	\left( 
	\prod_{j=l+1}^{m}\{-V_{I_j}(\by|\widetilde{H}_{k})\}
	\right) \hspace{-.3em}\diff \by.
	\end{split}
	\end{equation*}
	For each $l \in \{1, \ldots, m\}$, the change of variables in \eqref{eq: change} and the homogeneity of the functions $V_{I}(\bx|H_k)$, $V_{I}(\bx|\widetilde{H}_k)$, $I \subset\{1,\ldots,d\}$, allow to re-express $T_2^{(l)}$ as
	\begin{equation*}
	\begin{split}
	&\int_{\resimp}\int_0^\infty r^{d-1} e^{-V(\bv, 1-\Vert \bv \Vert_1|\widetilde{H}_{k})/r}
	\left|
	V_{I_l}(r\bv, r(1-\Vert \bv \Vert_1)|H_k)
	-
	V_{I_l}(r\bv, r(1-\Vert \bv \Vert_1)|\widetilde{H}_k)
	\right|\\
	& \qquad \times \left( 
	\prod_{j=1}^{l-1} \{-V_{I_j}(r\bv, r(1-\Vert \bv \Vert_1)|H_k)\}
	\right)\hspace{-.3em}\left( 
	\prod_{j=l+1}^{m} 	\{-V_{I_j}(r\bv, r(1-\Vert \bv \Vert_1)|\widetilde{H}_{k})\}
	\right)\hspace{-.3em}\diff r\diff \bv \\
	& = \int_{\resimp}\int_0^\infty r^{-m-1}  e^{-V(\bv, 1-\Vert \bv \Vert_1|\widetilde{H}_{k})/r}
	\left|
	V_{I_l}(\bv,1-\Vert\bv \Vert_1|H_k)
	-
	V_{I_l}(\bv,1-\Vert\bv \Vert_1|\widetilde{H}_{k})
	\right|\\
	& \qquad \quad \times\left( 
	\prod_{j=1}^{l-1} 	\{-V_{I_j}(\bv,1-\Vert\bv \Vert_1|H_k)\}
	\right)\hspace{-.3em}\left( 
	\prod_{j=l+1}^{ m } 	V_{I_j}(\bv,1-\Vert\bv \Vert_1|\widetilde{H}_{k})
	\right) \hspace{-.3em} \diff r \diff \bv \\
	&=\int_\resimp 	\left|
	V_{I_l}(\bv,1-\Vert\bv \Vert_1|H_k)
	-
	V_{I_l}(\bv,1-\Vert\bv \Vert_1|\widetilde{H}_{k})
	\right|\frac{\Gamma(m)}{\left(\widetilde{A}_{k}(\bt_\bv)r_\bv\right)^{m}}\\
	& \quad  \qquad \times \left( 
	\prod_{j=1}^{l-1} 	\{-V_{I_j}(\bv,1-\Vert\bv \Vert_1|H_k)\}
	\right)\hspace{-.3em}\left( 
	\prod_{j=l+1}^{ m } 	V_{I_j}(\bv,1-\Vert\bv \Vert_1|\widetilde{H}_{k})
	\right) \hspace{-.3em}  \diff \bv
	\end{split}
	\end{equation*}
	where, by Lemmas \ref{lem: expression omega}-\ref{lem: bound for sums}, the term on the right-hand side is bounded from above by
	\begin{equation*}
	\begin{split}
	&  	\sum_{\balpha^{(l)}} |\varphi_{\balpha^{(l)}}-\widetilde{\varphi}_{\balpha^{(l)}}| \sum_{-l}\left( \prod_{s<l}\varphi_{\balpha^{(s)}} \right)\left( \prod_{b>l}\widetilde{\varphi}_{\balpha^{(b)}} \right)
	\int_{\resimp}
	\frac{\Gamma(m)
		\prod_{j=1}^{m} q(I_j, \balpha^{(j)}, \bv)
	}{\left(\widetilde{A}_{k}(\bt_\bv)r_\bv\right)^{m}}  \diff \bv \\
	&\leq d^m\Vert {\bphi}_{\circ}- \widetilde{\bphi}_{\circ}\Vert_{1}.
	\end{split}
	\end{equation*}
	Therefore, $T_2 \leq d^{d+1}\Vert {\bphi}_{\circ}- \widetilde{\bphi}_{\circ}\Vert_{1}$. 
	The inequality \eqref{eq: bound for each tau} now follows for every $\part \neq \{1,\ldots,d\}$.
	Using similar arguments, the same upper bounds can be obtained for $T_1$ and $T_2$ in the simpler case of $\part=\{1,\ldots,d\}$. Hence, inequality \eqref{eq: bound for each tau} holds true also in this instance, completing the proof.
\end{proof}

The next two results are specific to the proof of Theorem \ref{th:alpha_frec}.
We recall that for any $\by=(y_1, \ldots, y_d)>\bzero$ and $\bc=(c_1, \ldots, c_d)\in \reald$, we denote by 
$$
\by^\bc=(y_1^{c_1}, \ldots,y_d^{c_d}).
$$
We also recall that, for $(\brho, \bsigma) \in (0,\infty)^{2d}$ and $H \in \Hset$, the density $g_{\brho, \bsigma}(\cdot|H)$ is given by
$$
g_{\brho, \bsigma}(\by|H)=g_\bone(\{\by/\bsigma\}^\brho|H)\prod_{j=1}^d \rho_j y_j^{\rho_j-1} \sigma_j^{-\rho_j}, \quad \by \in (\bzero,\binf).
$$
Moreover, we make use of the following notations.
For $l \in \nat_+$, we define
\begin{equation}\label{eq:gammas}
\gamma_{(l,+)}:=\int_0^\infty x^l e^{-x}e^{-e^{-x}}\diff x, \quad
\gamma_{(l,-)}:=\int_0^\infty x^l e^{x}e^{-e^{x}}\diff x.
\end{equation}
Notice that if $X$ is a rv with standard Gumbel distribution and we denote $X^+=\max(X,0)$, $X^-=\max(-X,0)$, then $\gamma_{(l,+)}=\mathbb{E} [X^+]^l$ and $\gamma_{(l,-)}=\mathbb{E} [X^-]^l$. Of particular interest for later derivations are the cases $l=1,\ldots,4$, where
\begin{equation}\label{eq:gammas_ex}
\begin{split}
\gamma_{(1,+)}&=\gamma -\text{Ei}(-1), \quad \hspace{9.3em}\gamma_{(1,-)}=\text{Ei}(-1),\\
\gamma_{(2,+)}&=2_3F_3(1,1,1;2,2,2;-1), \hspace{5.5em} \gamma_{(2,-)}=2G^{3,0}_{2,3}\left(1\bigg{|} \begin{smallmatrix}1,1\\ 0,0,0\end{smallmatrix} \right),\\
\gamma_{(3,+)}&=6_4F_4(1,1,1,1;2,2,2,2;-1),  \hspace{3.7em}\gamma_{(3,-)}=6G^{4,0}_{3,4}\left(1\bigg{|} \begin{smallmatrix}1,1,1\\ 0,0,0,0\end{smallmatrix} \right),\\
\gamma_{(4,+)}&=24_5F_5(1,1,1,1,1;2,2,2,2,2;-1), \hspace{1.4em}
\gamma_{(4,-)}=24G^{5,0}_{4,5}\left(1\bigg{|} \begin{smallmatrix}1,1,1,1\\ 0,0,0,0,0\end{smallmatrix} \right)
\end{split}
\end{equation}
with $\gamma$ denoting the Euler-Mascheroni constant, $\text{Ei}(z)$, $z \in \mathbb{R}\setminus\{0\}$, denoting the exponential integral function, $G^{p,q}_{r,s}(z|\begin{smallmatrix}t_1,\ldots,t_p \\ v_1,\ldots,v_q\end{smallmatrix})$, $z\in \mathbb{R}\setminus\{0\}$, denoting the Meijer $G$-function and $_pF_q(a_1,\ldots, a_p; b_1, \ldots,b_q; z)$, $z>0$, denoting the generalized hypergeometric function. See the proofs of Lemmas \ref{lem: KLalpha} and \ref{lem:pseudokulb} for details. For the sake of the lighter notation, all the intervals of the form $\times_{j=1}^d(x_j,y_j)$, for $\bx, \by \in [-\infty,\infty]^d$, $\bx < \by$, are denoted by $(\bx,\by)$.

\begin{lemma}\label{aux1}
	For all $k\geq d+1$, $l \in \nat_+$, $H \in \Hset$, $H_k \in \Hset_k$, $\brho, \widetilde{\brho} \in (\bzero, \binf)$ and $\bsigma, \widetilde{\bsigma} \in  (\bzero, \binf)$ it holds that
	\begin{equation}\label{eq:term_to_be_bounded}
	\left(
	\int_{(\bzero,\binf)}
	\left[\max_{\part \in \allpart_d}\max_{I_i \in \part} \log^+
	\left\lbrace 
	\frac{-V_{I_i}(\by|H_k)}{-V_{I_i}(\by^{	\widetilde{\brho}/\brho} \{\bsigma/\widetilde{\bsigma}\}^{\widetilde{\brho}}|H_k)}
	\right\rbrace
	\right]^l g_\bone(\by|H)\diff \by
	\right)^{1/l}
	\end{equation}
	is bounded from above by 
	$$
	3k \left\lbrace
	\left(\gamma_{(+,l)}+\gamma_{(l,-)}\right)\sum_{j=1}^d  \frac{|\rho_j-\widetilde{\rho}_j|}{\rho_j}	
	+
	\sum_{s=1}^d \frac{|1-\{\sigma_s/\widetilde{\sigma}_s\}^{\widetilde{\rho}_s}|}{\min\left(1, \{\sigma_s/\widetilde{\sigma}_s\}^{\widetilde{\rho}_s}  \right)}
	\right\rbrace,
	$$
	where $\gamma_{(+,l)},\gamma_{(l,-)}$ are given in \eqref{eq:gammas}.
	\begin{proof}
		By Minkowski's inequality, the term in \eqref{eq:term_to_be_bounded} is bounded from above by $T_1+T_2$, where
		\begin{equation*}
		\begin{split}
		T_1&:=\left(
		\int_{(\bzero,\binf)}
		\left[\max_{\part \in \allpart_d}\max_{I_i \in \part} \log^+
		\left\lbrace 
		\frac{-V_{I_i}(\by|H_k)}{-V_{I_i}(\by^{	\widetilde{\brho}/\brho} |H_k)}
		\right\rbrace
		\right]^l g_\bone(\by|H)\diff \by
		\right)^{1/l},\\
		T_2&=\left(
		\int_{(\bzero,\binf)}
		\left[\max_{\part \in \allpart_d}\max_{I_i \in \part} \log^+
		\left\lbrace 
		\frac{-V_{I_i}(\by^{	\widetilde{\brho}/\brho}|H_k)}{-V_{I_i}(\by^{	\widetilde{\brho}/\brho} \{\bsigma/\widetilde{\bsigma}\}^{\widetilde{\brho}}|H_k)}
		\right\rbrace
		\right]^l g_\bone(\by|H)\diff \by
		\right)^{1/l}.
		\end{split}
		\end{equation*}
		The reminder of the proof consists of two parts: the derivation of upper bounds for the terms in curly brackets in the expression of $T_1,T_2$ and the conclusion.
		\\
		
		\noindent
		\textit{Derivation of upper bounds.}
		To derive upper bounds for the terms of the form
		$$
		\log^+
		\left\lbrace 
		\frac{-V_{I}(\by|H_k)}{-V_{I}(\by^{	\widetilde{\brho}/\brho} |H_k)}
		\right\rbrace,
		\quad \log^+
		\left\lbrace 
		\frac{-V_{I}(\by^{	\widetilde{\brho}/\brho}|H_k)}{-V_{I}(\by^{	\widetilde{\brho}/\brho} \{\bsigma/\widetilde{\bsigma}\}^{\widetilde{\brho}}|H_k)}
		\right\rbrace,
		$$
		we treat different types of $I\subset\{1,\ldots,d\}$ separately.
		
		\vspace{0.5em}
		\noindent
		\underline{\textbf{Case 1:}} \textbf{$I=\{j\}$, for some $j \in \{1, \ldots,d\}$}. By Lemma \ref{lem: expression omega}, we have that
		\begin{equation*}
		\begin{split}
		&\log^+
		\left\lbrace 
		\frac{-V_{I}(\by|H_k)}{-V_{I}(\by^{	\widetilde{\brho}/\brho} |H_k)}
		\right\rbrace	\\
		& \quad \leq  2\left|\frac{\widetilde{\rho}_{I}}{\rho_{I}}-1\right||\log y_I|
		+
		\log^+\left\lbrace
		\frac{\varphi_{\bkappa_j}
			+\sum_{\balpha \in \Gamma_k}\varphi_\balpha
			\frac{\alpha_{I}}{k}
			\left(
			1- \mathcal{I}_{y_{I}/\Vert \by \Vert_1}
			(\alpha_{I}+1, \Vert \balpha_{I^\complement}\Vert_1)
			\right)}{\varphi_{\bkappa_j}
			+\sum_{\balpha \in \Gamma_k}\varphi_\balpha
			\frac{\alpha_{I}}{k}
			\left(
			1- \mathcal{I}_{\widetilde{y}_{I}/\Vert \widetilde{\by} \Vert_1}
			(\alpha_{I}+1, \Vert \balpha_{I^\complement}\Vert_1)
			\right)}
		\right\rbrace,
		\end{split}
		\end{equation*}
		where $\widetilde{y}_{I}=y_{I}^{\widetilde{\rho}_{I}/\rho_{I}}$, $\widetilde{\by}=\by^{\widetilde{\brho}/\brho}$, $y_I=\by_I=y_j$, $\rho_I=\brho_I=\rho_j$, $\widetilde{\rho}_I=\widetilde{\brho}_I=\widetilde{\rho}_j$ and $\alpha_I=\balpha_I=\alpha_j$. 
		The second term on the right-hand side is bounded from above by
		\begin{equation*}
		\begin{split}
		&	 \log^+ \max\left\lbrace 1, 
		\frac{^{}
			1- \mathcal{I}_{y_{I}/\Vert \by \Vert_1}
			(\alpha_{I}+1, \Vert \balpha_{I^\complement}\Vert_1)
		}{
			1- \mathcal{I}_{\widetilde{y}_{I}/\Vert \widetilde{\by} \Vert_1}
			(\alpha_{I}+1, \Vert \balpha_{I^\complement}\Vert_1)
		}, \balpha \in \Gamma_k
		\right\rbrace\\
		& = \max \left\lbrace \log^+ 
		\frac{
			1- \mathcal{I}_{y_{I}/\Vert \by \Vert_1}
			(\alpha_{I}+1, \Vert \balpha_{I^\complement}\Vert_1)
		}{
			1- \mathcal{I}_{\widetilde{y}_{I}/\Vert \widetilde{\by} \Vert_1}
			(\alpha_{I}+1, \Vert \balpha_{I^\complement}\Vert_1)
		},  \balpha \in \Gamma_k 
		\right\rbrace\\
		&= \max \left\lbrace
		\log^+ 
		\frac{
			\mathcal{I}_{1-y_{I}/\Vert \by \Vert_1}
			( \Vert \balpha_{I^\complement}\Vert_1, \alpha_{I}+1)
		}{
			\mathcal{I}_{1-\widetilde{y}_{I}/\Vert \widetilde{\by} \Vert_1}
			(\Vert \balpha_{I^\complement}\Vert_1, \alpha_{I}+1)
		},  \balpha \in \Gamma_k 
		\right\rbrace
		\\
		&= \max \left\lbrace
		0, \ell_{\balpha,\by}(\bone)-\ell_{\balpha,\by}(\widetilde{\brho}/\brho)
		,  \balpha \in \Gamma_k 
		\right\rbrace,
		\end{split}
		\end{equation*}
		where, for all $\balpha \in \Gamma_k $ and $\boldsymbol{\beta}\in (\bzero, \binf)$,
		$$
		\ell_{\balpha,\by}(\boldsymbol{\beta}):=\log\left( 
		\mathcal{I}_{1-x}
		( \Vert \balpha_{I^\complement}\Vert_1, \alpha_{I}+1)\right) \Big{|}_{x=y_{I}^{\beta_{I}}/\Vert \by^{\boldsymbol{\beta}} \Vert_1}.
		$$
		By the multivariate mean-value theorem there exists $c\in (0,1)$ such that, setting $\boldsymbol{\beta}=c\bone +(1-c)\widetilde{\brho}/{\brho}$,  
		\begin{equation*}
		\begin{split}
		&\ell_{\balpha,\by}(\bone)-
		\ell_{\balpha,\by}(\widetilde{\brho}/\brho)\\
		&=\frac{\left(1-y_{I}^{\beta_{I}}/\Vert \by^{\boldsymbol{\beta}}\Vert_1\right)^{\Vert \balpha_{I^\complement}\Vert_1} \left(y_{I}^{\beta_{I}}/\Vert \by^{\boldsymbol{\beta}}\Vert_1\right)^{\alpha_{I}+1}}{B\left(1-y_{I}^{\beta_{I}}/\Vert \by^{\boldsymbol{\beta}}\Vert_1;\Vert \balpha_{I^\complement}\Vert_1, \alpha_{I}+1\right)}
		(-\log y_{I}) \left(1-\frac{\widetilde{\rho}_{I}}{\rho_{I}}\right) 
		\\
		&\quad +\sum_{s \in I^\complement}
		\frac{\left(1-y_{I}^{\beta_{I}}/\Vert \by^{\boldsymbol{\beta}}\Vert_1\right)^{\Vert \balpha_{I^\complement}\Vert_1-1} \left(y_{I}^{\beta_{I}}/\Vert \by^{\boldsymbol{\beta}}\Vert_1\right)^{\alpha_{I}+1}}{B\left(1-y_{I}^{\beta_{I}}/\Vert \by^{\boldsymbol{\beta}}\Vert_1;\Vert \balpha_{I^\complement}\Vert_1, \alpha_{I}+1\right)}
		\frac{y_s^{\beta_s}}{\Vert \by^{\boldsymbol{\beta}}\Vert_1}(\log y_{s}) \left(1-\frac{\widetilde{\rho}_{s}}{\rho_{s}}\right) 	
		\\
		&\leq  \sum_{s=1}^d\frac{\alpha_{I}B\left(\Vert \balpha_{I^\complement}\Vert_1, \alpha_{I}\right)}{B\left(\Vert \balpha_{I^\complement}\Vert_1, \alpha_{I}+1\right)}|\log y_{s}|\left|
		1-\frac{\widetilde{\rho}_{s}}{\rho_{s}}
		\right|
		\\
		&  \leq k  \sum_{s=1}^d \frac{|\rho_s-\widetilde{\rho}_s|}{\rho_s} |\log y_s|,
		\end{split}
		\end{equation*}
		where 
		$B(x;q_1, q_2)=B(q_1, q_2)\mathcal{I}_x(q_1,q_2)$, $x \in (0,1)$. Therefore, we conclude that
		\begin{equation}\label{eq:bound1}
		\log^+
		\left\lbrace 
		\frac{-V_{I}(\by|H_k)}{-V_{I}(\by^{	\widetilde{\brho}/\brho} |H_k)}
		\right\rbrace 
		\leq (2+k) \sum_{s=1}^d \frac{|\rho_s-\widetilde{\rho}_s|}{\rho_s}|\log y_s|.
		\end{equation}

		By a similar reasoning and exploiting the inequality $\log^+(x)\leq |x-1|$, $x>0$, we also obtain
		\begin{equation}\label{eq:firstbound}
		\begin{split}
		&\log^+
		\left\lbrace 
		\frac{-V_{I}(\by^{	\widetilde{\brho}/\brho}|H_k)}{-V_{I}(\by^{	\widetilde{\brho}/\brho} \{\bsigma/\widetilde{\bsigma}\}^{\widetilde{\brho}}|H_k)}
		\right\rbrace\\
		&
		\quad \leq  2\log^+\left\lbrace (\sigma_I/\widetilde{\sigma}_I)^{\widetilde{\rho}_I}\right\rbrace
		+
		\log^+\left\lbrace
		\frac{\varphi_{\bkappa_j}
			+\sum_{\balpha \in \Gamma_k}\varphi_\balpha
			\frac{\alpha_{I}}{k}
			\left(
			1- \mathcal{I}_{\widetilde{y}_{I}/\Vert \widetilde{\by} \Vert_1}
			(\alpha_{I}+1, \Vert \balpha_{I^\complement}\Vert_1)
			\right)}{\varphi_{\bkappa_j}
			+\sum_{\balpha \in \Gamma_k}\varphi_\balpha
			\frac{\alpha_{I}}{k}
			\left(
			1- \mathcal{I}_{\widehat{y}_{I}/\Vert \widehat{\by} \Vert_1}
			(\alpha_{I}+1, \Vert \balpha_{I^\complement}\Vert_1)
			\right)}, 
		\right\rbrace\\
		& \quad \leq (2+ k) \sum_{s=1}^d \frac{|1-\{\sigma_s/\widetilde{\sigma}_s\}^{\widetilde{\rho}_s}|}{\min\left(1, \{\sigma_s/\widetilde{\sigma}_s\}^{\widetilde{\rho}_s}  \right)}
		\end{split}
		\end{equation}
		where $\widehat{y}_I=\widetilde{y}_I  \{\sigma_I/\widetilde{\sigma}_I\}^{\widetilde{\rho}_I}$,  $\widehat{\by}=\widetilde{\by}  \{\bsigma/\widetilde{\bsigma}\}^{\widetilde{\brho}}$.

		\vspace{0.5em}
		\noindent
		\underline{\textbf{Case 2:}} \textbf{$1<|I|\leq d-1$.}
		By arguments similar to those used in the previous case,  we have that
		\begin{equation*}
		\begin{split}
		\log^+
		\left\lbrace 
		\frac{-V_{I}(\by|H_k)}{-V_{I}(\by^{	\widetilde{\brho}/\brho} |H_k)}
		\right\rbrace \leq S_1+S_2,
		\end{split}
		\end{equation*}
		where
		\begin{equation*}
		\begin{split}
		S_1&:= (|I|+1)\log^+ \frac{\Vert 
			\widetilde{\by}_{I}
			\Vert_1}{\Vert \by_{I}\Vert_1} \leq d\sum_{s=1}^d |\log y_s| \frac{|\widetilde{\rho_s}-\rho_s|}{\rho_s}
		\end{split}
		\end{equation*}
		and
		\begin{equation*}
		\begin{split}
		S_2&:=	\log^+\left\lbrace
		\frac{\sum_{\balpha \in \Gamma_k}\varphi_\balpha
			\frac{\Vert \balpha_{I}\Vert_1}{k}
			\text{Dir}\left( \by_{I^-}/\Vert \by_{I_i}\Vert_1 ; \balpha_{I}\right)
			\left(
			1- \mathcal{I}_{\Vert \by_{I}\Vert_1/\Vert \by \Vert_1}
			(\Vert \balpha_{I} \Vert_1+1, \Vert \balpha_{I^\complement}\Vert_1)
			\right)}{
			\sum_{\balpha \in \Gamma_k}\varphi_\balpha
			\frac{\Vert \balpha_{I}\Vert_1}{k}
			\text{Dir}\left( \widetilde{\by}_{I^-}/\Vert \widetilde{\by}_{I}\Vert_1 ; \balpha_{I}\right)
			\left(
			1- \mathcal{I}_{\Vert \widetilde{\by}_{I}\Vert_1/\Vert \widetilde{\by} \Vert_1}
			(\Vert \balpha_{I} \Vert_1+1, \Vert \balpha_{I^\complement}\Vert_1)
			\right)}
		\right\rbrace\\
		&\leq \max_{\balpha \in \Gamma_k}\left\lbrace
		\log^+ \frac{\text{Dir}\left( \by_{I^-}/\Vert \by_{I}\Vert_1 ; \balpha_{I}\right)}
		{	\text{Dir}\left( \widetilde{\by}_{I^-}/\Vert \widetilde{\by}_{I}\Vert_1 ; \balpha_{I}\right)}
		\right\rbrace + 
		\max_{\balpha\in \Gamma_k}
		\left\lbrace 
		\log^+
		\frac{1- \mathcal{I}_{\Vert \by_{I}\Vert_1/\Vert \by \Vert_1}
			(\Vert \balpha_{I} \Vert_1+1, \Vert \balpha_{I^\complement}\Vert_1)}{1- \mathcal{I}_{\Vert \widetilde{\by}_{I}\Vert_1/\Vert \widetilde{\by} \Vert_1}
			(\Vert \balpha_{I} \Vert_1+1, \Vert \balpha_{I^\complement}\Vert_1)}
		\right\rbrace\\
		& \quad  \leq 2k  \sum_{s=1}^d \frac{|\rho_s-\widetilde{\rho}_s|}{\rho_s} |\log y_s|,
		\end{split}
		\end{equation*}
		with $I^-$ is as in \eqref{eq:I_min}. Thus, we conclude that
		\begin{equation}\label{eq: res_multi}
		\begin{split}
		\log^+
		\left\lbrace 
		\frac{-V_{I}(\by|H_k)}{-V_{I}(\by^{	\widetilde{\brho}/\brho} |H_k)}
		\right\rbrace \leq 3k \sum_{s=1}^d \frac{|\rho_s-\widetilde{\rho}_s|}{\rho_s} |\log y_s|.
		\end{split}
		\end{equation}
		Moreover, we have that
		\begin{equation*}
		\begin{split}
		\log^+
		\left\lbrace 
		\frac{-V_{I}(\by^{	\widetilde{\brho}/\brho}|H_k)}{-V_{I}(\by^{	\widetilde{\brho}/\brho} \{\bsigma/\widetilde{\bsigma}\}^{\widetilde{\brho}}|H_k)}
		\right\rbrace\leq S_3+S_4,
		\end{split}
		\end{equation*}
		where
		\begin{equation*}
		\begin{split}
		S_3:=(|I|+1)\log^+ \frac{\Vert 
			\widehat{\by}_{I}
			\Vert_1}{\Vert \widetilde{\by}_{I}\Vert_1}\leq d \log^+
		\left\lbrace
		\max_{1\leq s \leq d}(\sigma_s/\widetilde{\sigma})^{\widetilde{\rho}_s}
		\right\rbrace\leq d\sum_{s=1}^d\left|1- 
		(\sigma_s/\widetilde{\sigma}_s)^{\widetilde{\rho}_s}
		\right|
		\end{split}
		\end{equation*}
		and, using the inequality $\log^+(1/x)\leq |1-x|/x$, $x>0$,
		\begin{equation*}
		\begin{split}
		S_4&:=	\log^+\left\lbrace
		\frac{\sum_{\balpha \in \Gamma_k}\varphi_\balpha
			\frac{\Vert \balpha_{I}\Vert_1}{k}
			\text{Dir}\left( \widetilde{\by}_{I^-}/\Vert \widetilde{\by}_{I_i}\Vert_1 ; \balpha_{I}\right)
			\left(
			1- \mathcal{I}_{\Vert \widetilde{\by}_{I}\Vert_1/\Vert \widetilde{\by} \Vert_1}
			(\Vert \balpha_{I} \Vert_1+1, \Vert \balpha_{I^\complement}\Vert_1)
			\right)}{
			\sum_{\balpha \in \Gamma_k}\varphi_\balpha
			\frac{\Vert \balpha_{I}\Vert_1}{k}
			\text{Dir}\left( \widehat{\by}_{I^-}/\Vert \widehat{\by}_{I}\Vert_1 ; \balpha_{I}\right)
			\left(
			1- \mathcal{I}_{\Vert \widehat{\by}_{I}\Vert_1/\Vert \widehat{\by} \Vert_1}
			(\Vert \balpha_{I} \Vert_1+1, \Vert \balpha_{I^\complement}\Vert_1)
			\right)}
		\right\rbrace\\
		&\leq \max_{\balpha \in \Gamma_k}\left\lbrace
		\log^+ \frac{\text{Dir}\left( \widetilde{\by}_{I^-}/\Vert \widetilde{\by}_{I}\Vert_1 ; \balpha_{I}\right)}
		{	\text{Dir}\left( \widehat{\by}_{I^-}/\Vert \widehat{\by}_{I}\Vert_1 ; \balpha_{I}\right)}
		\right\rbrace + 
		\max_{\balpha\in \Gamma_k}
		\left\lbrace 
		\log^+
		\frac{1- \mathcal{I}_{\Vert \widetilde{\by}_{I}\Vert_1/\Vert \widetilde{\by} \Vert_1}
			(\Vert \balpha_{I} \Vert_1+1, \Vert \balpha_{I^\complement}\Vert_1)}{1- \mathcal{I}_{\Vert \widehat{\by}_{I}\Vert_1/\Vert \widehat{\by} \Vert_1}
			(\Vert \balpha_{I} \Vert_1+1, \Vert \balpha_{I^\complement}\Vert_1)}
		\right\rbrace\\
		&  \leq  
		\max_{\balpha \in \Gamma_k}\sum_{s=1}^d(\alpha_s-1)\log^+\left\lbrace (\sigma_s/\widetilde{\sigma}_s)^{-\widetilde{\rho}_s}\right\rbrace
		+ k  \sum_{s=1}^d\frac{\left|1- 
			(\sigma_s/\widetilde{\sigma})^{\widetilde{\rho}_s}
			\right|}{\min\left( 1, 
			(\sigma_s/\widetilde{\sigma})^{\widetilde{\rho}_s}
			\right)}\\
		&\leq 2k \sum_{s=1}^d\frac{\left|1- 
			(\sigma_s/\widetilde{\sigma}_s)^{\widetilde{\rho}_s}
			\right|}{\min\left( 1, 
			(\sigma_s/\widetilde{\sigma}_s)^{\widetilde{\rho}_s}
			\right)},
		\end{split}
		\end{equation*}
		consequently
		\begin{equation}\label{eq:secondbound}
		\log^+
		\left\lbrace 
		\frac{-V_{I}(\by^{	\widetilde{\brho}/\brho}|H_k)}{-V_{I}(\by^{	\widetilde{\brho}/\brho} \{\bsigma/\widetilde{\bsigma}\}^{\widetilde{\brho}}|H_k)}
		\right\rbrace \leq 3k \sum_{s=1}^d\frac{\left|1- 
			(\sigma_s/\widetilde{\sigma})^{\widetilde{\rho}_s}
			\right|}{\min\left( 1, 
			(\sigma_s/\widetilde{\sigma})^{\widetilde{\rho}_s}
			\right)}.
		\end{equation}

		\vspace{0.5em}
		\noindent
		\underline{\textbf{Case 3:}} \textbf{$I=\{1,\ldots,d\}$.} Once again, Lemma \ref{lem: expression omega} and a few algebraic manipulations similar to those described above yield
		\begin{equation}\label{eq:bound_final}
		\begin{split}
		&\log^+
		\left\lbrace 
		\frac{-V_{I}(\by|H_k)}{-V_{I}(\by^{	\widetilde{\brho}/\brho} |H_k)}
		\right\rbrace\\
		&\quad =\log^+
		\left\lbrace 
		\frac{\Vert \by \Vert_1^{-d-1} \sum_{\balpha \in \Gamma_k}\varphi_\balpha
			\text{Dir}\left(
			\by_{1:{d-1}}/\Vert \by \Vert_1; \balpha		
			\right)}{\Vert \widetilde{\by} \Vert_1^{-d-1} \sum_{\balpha \in \Gamma_k}\varphi_\balpha
			\text{Dir}\left(
			\widetilde{\by}_{1:{d-1}}/\Vert  \widetilde{\by} \Vert_1; \balpha		
			\right)}
		\right\rbrace\\
		&\quad \leq (d+1)\log^+\frac{\Vert \widetilde{\by} \Vert_1}{\Vert \by \Vert_1}  + 
		\max\left[
		0,
		\max_{\balpha \in \Gamma_k}\log \frac{\text{Dir}\left(
			\by_{1:{d-1}}/\Vert \by \Vert_1; \balpha		
			\right)}{\text{Dir}\left(
			\widetilde{\by}_{1:{d-1}}/\Vert  \widetilde{\by} \Vert_1; \balpha		
			\right)}
		\right]\\
		& \quad  \leq (d+1)\sum_{s=1}^d \frac{|\rho_s-\widetilde{\rho}_s|}{\rho_s} |\log y_s|
		+k\sum_{s=1}^d \frac{|\rho_s-\widetilde{\rho}_s|}{\rho_s} |\log y_s|\\
		& \quad \leq  2k\sum_{s=1}^d \frac{|\rho_s-\widetilde{\rho}_s|}{\rho_s} |\log y_s|
		\end{split}
		\end{equation}
		and
		\begin{equation}\label{eq:thirdbound}
		\begin{split}
		&\log^+
		\left\lbrace 
		\frac{-V_{I}(\by^{	\widetilde{\brho}/\brho}|H_k)}{-V_{I}(\by^{	\widetilde{\brho}/\brho} \{\bsigma/\widetilde{\bsigma}\}^{\widetilde{\brho}}|H_k)}
		\right\rbrace \\
		&\quad =\log^+
		\left\lbrace 
		\frac{\Vert \widetilde{\by} \Vert_1^{-d-1} \sum_{\balpha \in \Gamma_k}\varphi_\balpha
			\text{Dir}\left(
			\widetilde{\by}_{1:{d-1}}/\Vert \widetilde{\by} \Vert_1; \balpha		
			\right)}{\Vert \widehat{\by} \Vert_1^{-d-1} \sum_{\balpha \in \Gamma_k}\varphi_\balpha
			\text{Dir}\left(
			\widehat{\by}_{1:{d-1}}/\Vert  \widehat{\by} \Vert_1; \balpha		
			\right)}
		\right\rbrace\\
		& \quad \leq (d+1)\log^+ \frac{\Vert 
			\widehat{\by}_{I}
			\Vert_1}{\Vert \widetilde{\by}_{I}\Vert_1}+
		\max\left[
		0,
		\max_{\balpha \in \Gamma_k}\log \frac{\text{Dir}\left(
			\widetilde{\by}_{1:{d-1}}/\Vert \widetilde{\by} \Vert_1; \balpha		
			\right)}{\text{Dir}\left(
			\widehat{\by}_{1:{d-1}}/\Vert  \widehat{\by} \Vert_1; \balpha		
			\right)}
		\right]\\
		& \quad \leq (d+1) \log^+
		\left\lbrace
		\max_{1\leq s \leq d}(\sigma_s/\widetilde{\sigma}_s)^{\widetilde{\rho}_s}
		\right\rbrace + \max_{\balpha \in \Gamma_k}\sum_{s=1}^d(\alpha_s-1)\log^+\left\lbrace (\sigma_s/\widetilde{\sigma}_s)^{-\widetilde{\rho}_s}\right\rbrace\\
		& \quad  \leq 2k \sum_{s=1}^d\frac{\left|1- 
			(\sigma_s/\widetilde{\sigma}_s)^{\widetilde{\rho}_s}
			\right|}{\min\left( 1, 
			(\sigma_s/\widetilde{\sigma}_s)^{\widetilde{\rho}_s}
			\right)}.
		\end{split}
		\end{equation}
		
		\vspace{0.5em}
		\noindent
		\textit{Conclusion.}
		Combining the bounds in \eqref{eq:bound1}, \eqref{eq: res_multi} and \eqref{eq:bound_final} and using Minkowski's inequality together with the fact that $g_\bone(\cdot|H)$ has unit Fr\'{e}chet margins, we obtain
		\begin{equation}
		\begin{split}
		T _1 &\leq
		\left(
		\int_{(\bzero,\binf)}
		\left[
		3k \sum_{s=1}^d \frac{|\rho_s-\widetilde{\rho}_s|}{\rho_s} |\log y_s|
		\right]^l g_\bone(\by|H)\diff \by
		\right)^{1/l}\\
		& \leq 3k\sum_{j=s}^d  \frac{|\rho_s-\widetilde{\rho}_s|}{\rho_s}	\left(
		\int_{(\bzero,\binf)}
		|\log y_s|^l g_\bone(\by|H)\diff \by
		\right)^{1/l}\\
		&=3k \left(\gamma_{(+,l)}+\gamma_{(l,-)}\right)\sum_{s=1}^d  \frac{|\rho_s-\widetilde{\rho}_s|}{\rho_s}.	
		\end{split}
		\end{equation}
		Moreover, combining the bounds in \eqref{eq:firstbound}, \eqref{eq:secondbound} and \eqref{eq:thirdbound} and using Minkowski's inequality we conclude
		\begin{equation*}
		\begin{split}
		T_2 & \leq 	\left(
		\int_{(\bzero,\binf)}
		\left[
		3k \sum_{s=1}^d \frac{|1-\{\sigma_s/\widetilde{\sigma}_s\}^{\widetilde{\rho}_s}|}{\min\left(1, \{\sigma_s/\widetilde{\sigma}_s\}^{\widetilde{\rho}_s}  \right)}
		\right]^l g_\bone(\by|H)\diff \by
		\right)^{1/l}\\
		& \leq 3k \sum_{s=1}^d \frac{|1-\{\sigma_s/\widetilde{\sigma}_s\}^{\widetilde{\rho}_s}|}{\min\left(1, \{\sigma_s/\widetilde{\sigma}_s\}^{\widetilde{\rho}_s}  \right)}.
		\end{split}
		\end{equation*}
		The result now follows.
	\end{proof}
	
\end{lemma}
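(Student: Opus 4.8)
The plan is to insert the intermediate point $\widetilde{\by}:=\by^{\widetilde{\brho}/\brho}$ so as to separate a ``shape contribution'' from a ``scale contribution''. Writing $\widehat{\by}:=\widetilde{\by}\{\bsigma/\widetilde{\bsigma}\}^{\widetilde{\brho}}$ for the argument appearing in the denominator, Minkowski's inequality for the $L^l$-norm with respect to $g_\bone(\cdot|H)\diff\by$ bounds the quantity in \eqref{eq:term_to_be_bounded} by $T_1+T_2$, where $T_1$ is the same integral with the ratio $\{-V_{I_i}(\by|H_k)\}/\{-V_{I_i}(\widetilde{\by}|H_k)\}$ inside $\log^+$ and $T_2$ the one with $\{-V_{I_i}(\widetilde{\by}|H_k)\}/\{-V_{I_i}(\widehat{\by}|H_k)\}$. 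The two terms are handled by the same scheme, so the core of the argument is a pointwise bound, uniform in $\by$, on $\max_{\part\in\allpart_d}\max_{I_i\in\part}\log^+\{\cdot\}$ for each of the two ratios.

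To obtain these pointwise bounds, first I would invoke the explicit representation of Lemma \ref{lem: expression omega}, namely $-V_I(\by|H_k)=\Vert\by\Vert_1^{-|I|-1}\sum_\balpha\varphi_\balpha\,q(I,\balpha,\by_{1:d-1}/\Vert\by\Vert_1)$, so that each ratio factors as a power of $\Vert\widetilde{\by}\Vert_1/\Vert\by\Vert_1$ (respectively $\Vert\widehat{\by}\Vert_1/\Vert\widetilde{\by}\Vert_1$) times a ratio of $\varphi$-weighted sums of $q$-values. Since $\log^+$ of a quotient of two non-negative convex combinations with the same weights is at most the maximum over $\balpha$ of $\log^+$ of the termwise quotient $q(I,\balpha,\by)/q(I,\balpha,\widetilde{\by})$, it suffices to estimate these termwise quotients, the homogeneity prefactor contributing at most $(|I|+1)\log^+(\Vert\widetilde{\by}\Vert_1/\Vert\by\Vert_1)\le d\sum_s|\log y_s|\,|\rho_s-\widetilde{\rho}_s|/\rho_s$. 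For the $q$-quotients one runs through the cases $|I|=1$ (separately for the point-mass coefficient $\balpha=\bkappa_j$ and for $\balpha\in\Gamma_k$), $1<|I|<d$, and $I=\{1,\dots,d\}$; in each case $q$ is a product of a Dirichlet density and a tail factor of incomplete-Beta type, which I would rewrite through the identity $\mathcal I_x(a,b)=1-\mathcal I_{1-x}(b,a)$ and then control by a multivariate mean-value-theorem estimate of $\log$ along the segment joining $\bone$ and $\widetilde{\brho}/\brho$. Using the elementary bounds $\alpha_I\le k$ and $\alpha_I\,B(\Vert\balpha_{I^\complement}\Vert_1,\alpha_I)/B(\Vert\balpha_{I^\complement}\Vert_1,\alpha_I+1)\le k$, together with $\log^+(x)\le|x-1|$ and $\log^+(1/x)\le(1-x)/x$, this produces the pointwise bounds $3k\sum_s|\log y_s|\,|\rho_s-\widetilde{\rho}_s|/\rho_s$ for the shape ratio and the ($\by$-free) bound $3k\sum_s|1-(\sigma_s/\widetilde{\sigma}_s)^{\widetilde{\rho}_s}|/\min(1,(\sigma_s/\widetilde{\sigma}_s)^{\widetilde{\rho}_s})$ for the scale ratio.

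The conclusion is then immediate. Applying Minkowski once more and recalling that $g_\bone(\cdot|H)$ has unit-Fr\'echet margins, so that $\log Y_s$ is standard Gumbel and $\int|\log y_s|^l\,g_\bone(\by|H)\diff\by=\gamma_{(l,+)}+\gamma_{(l,-)}$, gives $T_1\le 3k(\gamma_{(l,+)}+\gamma_{(l,-)})\sum_s|\rho_s-\widetilde{\rho}_s|/\rho_s$; and since the integrand bounding $T_2$ is constant in $\by$ while $g_\bone(\cdot|H)$ integrates to $1$, one gets $T_2\le 3k\sum_s|1-(\sigma_s/\widetilde{\sigma}_s)^{\widetilde{\rho}_s}|/\min(1,(\sigma_s/\widetilde{\sigma}_s)^{\widetilde{\rho}_s})$. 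Summing $T_1$ and $T_2$ yields the asserted bound. I expect the main obstacle to be precisely the case-by-case estimation of the $q$-quotients: one must pick the parametrisation of the incomplete-Beta arguments so that the mean-value bound is uniform over $\balpha\in\Gamma_k$, over $\by$, and over the intermediate point of the segment, and one must track the constants so that they stay of order $k$ (not $k^2$); the remaining steps are routine algebra.
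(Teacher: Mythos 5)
Your proposal follows exactly the paper's argument: the same Minkowski split through the intermediate point $\by^{\widetilde{\brho}/\brho}$ into a shape term $T_1$ and a scale term $T_2$, the same use of the explicit representation of $-V_I(\cdot|H_k)$ with a mediant inequality reducing the $\varphi$-weighted sums to termwise quotients, the same three-case analysis over $|I|$ with the incomplete-Beta reparametrisation and multivariate mean-value bound (yielding constants $\le k$ via $\alpha_I B(\Vert\balpha_{I^\complement}\Vert_1,\alpha_I)/B(\Vert\balpha_{I^\complement}\Vert_1,\alpha_I+1)=k$), and the same conclusion via the Gumbel moments of $\log Y_s$. The proof is correct and coincides with the paper's.
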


\begin{lemma}\label{aux: n2}
	For all $k>d$, $H_k, \widetilde{H}_k \in \Hset_k$, $\brho_0\in (\bzero, \binf)$, $\brho,\widetilde{\brho}  \in B_{\delta_0,1}(\brho_0)$, $\bsigma_0 \in (\bzero, \binf)$,
	$\bsigma,\widetilde{\bsigma} \in B_{\delta_0,1}(\bsigma_0)$, with  
	\begin{equation}\label{eq:deltastar}
	0<\delta_0 <\frac{1}{5}\min\left(\min_{1\leq j \leq d} \rho_{0,j}, \min_{1\leq j \leq d} \sigma_{0,j}
	\right)=:\delta_*,
	\end{equation}
	it holds that
	\begin{equation*}
	\begin{split}
	\dist_H (g_{\brho,\bsigma}(\cdot|H_k), g_{\widetilde{\brho},\widetilde{\bsigma}}(\cdot|\widetilde{H}_k)) &\leq  \sqrt{ c_0 \Vert \bphi_\circ - \widetilde{\bphi}_\circ\Vert_1} + \sqrt{c_0k \Vert \brho-\widetilde{\brho} \Vert_1 + c_0k \Vert \bsigma-\widetilde{\bsigma} \Vert_1},
	\end{split}
	\end{equation*}
	where $c_0$ is a positive constant depending on $d$, $\brho_0$ and $\bsigma_0$, while $\Vert \bphi_\circ - \widetilde{\bphi}_\circ\Vert_1$ is as in Lemma \ref{lem: L1}.
\end{lemma}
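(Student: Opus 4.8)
The plan is to bound the squared Hellinger distance $\dist_H^2(g_{\brho,\bsigma}(\cdot|H_k), g_{\widetilde{\brho},\widetilde{\bsigma}}(\cdot|\widetilde H_k))$ by splitting along an intermediate density and using the triangle inequality for $\dist_H$. First I would interpolate through $g_{\widetilde{\brho},\widetilde{\bsigma}}(\cdot|H_k)$, writing
\[
\dist_H(g_{\brho,\bsigma}(\cdot|H_k), g_{\widetilde{\brho},\widetilde{\bsigma}}(\cdot|\widetilde H_k))
\leq \dist_H(g_{\brho,\bsigma}(\cdot|H_k), g_{\widetilde{\brho},\widetilde{\bsigma}}(\cdot|H_k))
+ \dist_H(g_{\widetilde{\brho},\widetilde{\bsigma}}(\cdot|H_k), g_{\widetilde{\brho},\widetilde{\bsigma}}(\cdot|\widetilde H_k)).
\]
The second term involves only a change of the angular pm at fixed marginal parameters. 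By the change of variables $\by\mapsto \{\by/\widetilde{\bsigma}\}^{\widetilde{\brho}}$, which is a smooth bijection preserving the Hellinger distance up to the Jacobian cancelling in the square-root integrand, this term equals $\dist_H(g_\bone(\cdot|H_k),g_\bone(\cdot|\widetilde H_k))$, which by Lemma \ref{lem: L1} is at most $\sqrt{c\,\Vert \bphi_\circ-\widetilde{\bphi}_\circ\Vert_1}$. This accounts for the first summand in the claimed bound.

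The first term, the change in marginal parameters at fixed angular pm $H_k$, is the crux. I would bound $\dist_H^2 \le 2\dist_T(G_\bone(\cdot,\cdot|\cdot),G_\bone(\cdot,\cdot|\cdot))$ (via Lemma B.1(ii) of \cite{r10}, as used already in Proposition \ref{lem: basic metric}), i.e.\ work with the joint density $g_\bone(\by,\part|\cdot)$ of the max-stable vector together with its random partition. Then the total variation distance is controlled by
\[
\dist_H^2 \le \sum_{\part\in\allpart_d}\int_{(\bzero,\binf)}\Big|g_{\brho,\bsigma}(\by,\part|H_k)-g_{\widetilde{\brho},\widetilde{\bsigma}}(\by,\part|H_k)\Big|\diff\by,
\]
and I would use the elementary inequality $|a-b|\le (a+b)\,|\log a-\log b|$ together with $|e^{-u}-e^{-v}|\le e^{-\min(u,v)}|u-v|$ to reduce everything to controlling positive log-ratios of the exponent function $V(\cdot|H_k)$ and of the kernels $-V_{I_i}(\cdot|H_k)$ under the rescaling $\by\mapsto \by^{\widetilde{\brho}/\brho}\{\bsigma/\widetilde{\bsigma}\}^{\widetilde{\brho}}$. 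Here Lemma \ref{aux1} is exactly the tool: it gives a bound of the form $3k\{(\gamma_{(1,+)}+\gamma_{(1,-)})\sum_j|\rho_j-\widetilde\rho_j|/\rho_j + \sum_s |1-\{\sigma_s/\widetilde\sigma_s\}^{\widetilde\rho_s}|/\min(1,\{\sigma_s/\widetilde\sigma_s\}^{\widetilde\rho_s})\}$ for the relevant log-ratio functionals (taking $l=1$, and using $H=H_k$ so that $g_\bone(\cdot|H)$ has unit-Fréchet margins). Since $\brho,\widetilde\brho,\bsigma,\widetilde\bsigma$ all lie in the $\delta_0$-ball around $(\brho_0,\bsigma_0)$ with $\delta_0<\delta_*$, the quantities $\rho_j$ and $\min(1,\{\sigma_s/\widetilde\sigma_s\}^{\widetilde\rho_s})$ are bounded below and the exponents $\widetilde\rho_s$ are bounded above by constants depending only on $d,\brho_0,\bsigma_0$; a first-order Taylor bound then gives $|1-\{\sigma_s/\widetilde\sigma_s\}^{\widetilde\rho_s}|\lesssim |\sigma_s-\widetilde\sigma_s|$ with such a constant. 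Combining, the first Hellinger term is $\le \sqrt{c_0 k\,\Vert\brho-\widetilde\brho\Vert_1 + c_0 k\,\Vert\bsigma-\widetilde\bsigma\Vert_1}$ for a suitable $c_0=c_0(d,\brho_0,\bsigma_0)$, which is the second summand in the statement.

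The main obstacle I anticipate is organising the reduction of $\int|g_{\brho,\bsigma}(\by,\part|H_k)-g_{\widetilde{\brho},\widetilde{\bsigma}}(\by,\part|H_k)|\diff\by$ cleanly: one must perform the change of variables to put both densities on a common scale, split off the exponential factor from the product of kernels, and then recursively telescope the product $\prod_i\{-V_{I_i}\}$ (replacing one factor at a time), exactly as in the proof of Lemma \ref{lem: L1}, while at each step invoking the homogeneity of $V_{I}(\cdot|H_k)$ and the partition-probability identity \eqref{eq: prob partition} to absorb the integral over the remaining radial/spectral variables. Keeping track of the uniform-over-$\part$ constants (which cost at most a factor $d^{d}$) and verifying that all $\delta_0$-dependent lower bounds are uniform is tedious but routine; the genuinely delicate estimates have already been isolated into Lemmas \ref{lem: L1} and \ref{aux1}, so the proof is essentially an assembly of these two with the triangle inequality and a Taylor expansion of $t\mapsto t^{\widetilde\rho_s}$ near $1$.
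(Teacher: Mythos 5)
Your proposal is correct and follows essentially the same route as the paper: a triangle inequality that separates the change of angular measure (controlled by Lemma \ref{lem: L1}) from the change of marginal parameters (reduced by a change of variables to unit-Fr\'echet scale and controlled via Lemma \ref{lem: aux} for the exponent function and Lemma \ref{aux1} for the kernels $-V_{I_i}$, with the $\delta_0<\delta_*$ restriction supplying the uniform constants). The only minor difference is that the paper dominates the parametric Hellinger term by the Kullback--Leibler divergence (Lemma B.1(iv) of \cite{r10}), which requires the log-ratios integrated against one density only, whereas your $L_1$/total-variation route with $|a-b|\le(a+b)|\log a-\log b|$ requires the symmetrized control; both work, since Lemma \ref{aux1} bounds the log-ratio integrated against $g_\bone(\cdot|H)$ for an arbitrary $H\in\Hset$.
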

\begin{proof}
	Preliminary observe that $B_{\delta_0,1}(\brho_0)\subset B_{\delta_{0},\infty}(\brho_0)$ and
	$B_{\delta_0,1}(\bsigma_0)\subset B_{\delta_{0},\infty}(\bsigma_0)$. Therefore,
	$\widetilde{\brho} \in B_{2\delta_{0},\infty}(\brho)$, as well as
	$\bsigma \in B_{2\delta_{0},\infty}(\widetilde{\bsigma})$. Moreover,
	\begin{equation}\label{eq:deltabound1}
	2 \delta_0<\min_{1\leq j \leq d}\frac{\rho_{0,j}-\delta_0}{2}
	<\frac{1}{2}\min_{1\leq j \leq d}\rho_j
	\end{equation}
	and 
	\begin{equation}\label{eq:deltabound2}
	2 \delta_0<\min_{1\leq j \leq d}\frac{\sigma_{0,j}-\delta_0}{2}
	<\frac{1}{2}\min_{1\leq j \leq d}\sigma_j.
	\end{equation}
	Therefore, $2\delta_0$ satisfies the condition in \eqref{eq:deltas}, with $\delta_1$ and $\delta_2$ replaced by $2\delta_0$ and $\varepsilon=1/2$.

	Next, observe that, 
	\begin{equation*}
	\begin{split}
	\dist_H (g_{\brho,\bsigma}(\cdot|H_k), g_{\widetilde{\brho},\widetilde{\bsigma}}(\cdot|\widetilde{H}_k))
	& =
	\dist_H (
	g_\bone(\cdot|H_k),
	g_{\widehat{\brho}, \widehat{\bsigma}}(\cdot|\widetilde{H}_k)
	)
	\\
	&
	\leq 
	\dist_H(g_\bone(\cdot|H_k), g_\bone(\cdot|\widetilde{H}_k))
	+
	\dist_H(	g_\bone(\cdot|\widetilde{H}_k),
	g_{\widehat{\brho},
		\widehat{\bsigma}
	}(\cdot|\widetilde{H}_k) ),
	\end{split}
	\end{equation*}
	where $\widehat{\brho}=\widetilde{\brho}/\brho$
	and $\widehat{\bsigma}=\{
	\widetilde{\bsigma}/\bsigma
	\}^{1/\brho}$. 
	By  
	Lemma \ref{lem: L1}, 
	we have that the first term on the right-hand side is bounded from above by the square root of $c\Vert {\bphi}_{\circ}- \widetilde{\bphi}_{\circ}\Vert_{1}$, for some $c>0$ depending only on $d$. Moreover, by Lemma B.1(iv) in \cite{r10},
	the second term on the right hand side is bounded from above by the square root of 
	\begin{equation*}
	\begin{split}
	\kulb(
	g_\bone(\cdot|\widetilde{H}_k),
	g_{\widehat{\brho}, 
		\widehat{\bsigma}
	}(\cdot|\widetilde{H}_k) 
	)& = \sum_{j=1}^d \log \frac{\widehat{\sigma}_j^{\widehat{\rho}_j}}{\widehat{\rho}_j}  +\sum_{j=1}^d \left(1-\widehat{\rho}_j\right)\int_{(\bzero, \binf)}\log y_j g_\bone(\by|\widetilde{H}_k) \diff \by\\
	& \quad +  \int_{(\bzero, \binf)}[V(\{\by/\widehat{\bsigma}\}^{\widehat{\brho}}|H_k)- \widetilde{V}_k(\by)]g_\bone(\by|\widetilde{H}_k) \diff \by\\
	&\quad +  
	\int_{(\bzero, \binf)} \log
	\frac{
		\sum_{\part \in \allpart_d}
		\prod_{i=1}^m\{-V_{I_i}(\by|\widetilde{H}_k)\}}
	{\sum_{\part \in \allpart_d}
		\prod_{i=1}^m\{-V_{I_i}(\{\by/\widehat{\bsigma}\}^{\widehat{\brho}}|\widetilde{H}_k)\}}
	g_\bone(\by|\widetilde{H}_k)
	\diff \by\\
	&=: T_1+T_2+T_3+T_4.
	\end{split}
	\end{equation*}
	A few algebraic manipulations lead to show that the first two terms satisfy
	$$
	T_1 \leq \frac{5}{4\min_{1\leq j \leq d}\rho_{0,j}} \Vert \brho-\widetilde{\brho}\Vert_1
	+ \frac{75}{8\min_{1\leq j \leq d}\rho_{0,j}\sigma_{0,j}} \Vert \bsigma-\widetilde{\bsigma}\Vert_1
	$$ 
	and 
	$$
	T_2 \leq \frac{5\gamma }{4\min_{1\leq j \leq d} \rho_{0,j}}\Vert \brho-\widetilde{\brho}\Vert_1.
	$$ 
	By Lemma \ref{lem: aux} and the bounds in \eqref{eq:deltabound1}-\eqref{eq:deltabound2}, we also have that
	\begin{equation*}
	\begin{split}
	T_2 & \leq 	\frac{ \upsilon_1 + \Gamma(2)
	}{\min_{1\leq j \leq d}\rho_j}
	\Vert \brho -\widetilde{\brho} \Vert_1+
	\Gamma(3)
	\max_{1\leq j \leq d} \left\lbrace \frac{3\rho_j}{\sigma_j}
	\left(\frac{3}{2}\right)^{\frac{3}{2}\rho_j}
	\right\rbrace
	\Vert \bsigma -\widetilde{\bsigma}\Vert_1\\
	& \leq \frac{5(\upsilon_1+1)}{4\min_{1\leq j \leq d}\rho_{0,j}} \Vert \brho -\widetilde{\brho} \Vert_1
	+\Gamma(3)
	\max_{1\leq j \leq d} \left\lbrace \frac{9\rho_{0,j}}{2\sigma_{0,j}}
	\left(\frac{3}{2}\right)^{\frac{9}{5}\rho_{0,j}}
	\right\rbrace
	\Vert \bsigma -\widetilde{\bsigma}\Vert_1,
	\end{split}
	\end{equation*}
	with $\upsilon_1$ as in \eqref{eq:upsilons}. Finally, a Jensen's inequality argument like that in \eqref{eq:jensen}, Lemma \ref{aux1}, the bounds in \eqref{eq:deltabound1}-\eqref{eq:deltabound2} and a few algebraic steps analogous to those in \eqref{eq:similar} yield 
	\begin{equation*}
	\begin{split}
	T_4 &\leq 3k \left\lbrace
	\left(\gamma_{(1,+)}+\gamma_{(1,-)}\right)\sum_{j=1}^d  \frac{|\rho_j-\widetilde{\rho}_j|}{\rho_j}	
	+
	\sum_{s=1}^d \frac{|1-\{\sigma_s/\widetilde{\sigma}_s\}^{\widetilde{\rho}_s}|}{\min\left(1, \{\sigma_s/\widetilde{\sigma}_s\}^{\widetilde{\rho}_s}  \right)}
	\right\rbrace \\
	& \leq 15k \frac{\gamma_{(+,1)}+\gamma_{(1,-)}}{4 \min_{1\leq j \leq d}\rho_{0,j}} \Vert \brho -\widetilde{\brho}\Vert_1+
	\frac{27}{2}k
	\frac{\max_{1\leq j \leq d}\rho_{0,j}}{\min_{1\leq j \leq d}\sigma_{0,j}} 
	\max_{1\leq j \leq d}(2/3)^{3\rho_{0,j}}
	\Vert \bsigma -\widetilde{\bsigma}\Vert_1,
	\end{split}
	\end{equation*}
	where $\gamma_{(1, \bullet)}$ is as in \eqref{eq:gammas_ex}. The result now follows.
\end{proof}

The two final results are specific to the proof of Theorem \ref{theo: cons_weibull}. They parallel the results established in Lemmas \ref{aux1}-\ref{aux: n2} for $\brho$-Fr\'echet max-stable densities. We recall that, for $H \in \Hset$, $(\bomega, \bsigma,\bmu)\in (\bzero,\binf)\times (\bzero,\binf) \times \reald$ and $\bx \in (-\binf,\bmu)$, the $\bomega$-Weibull max-stable density is given by
$$
g_{\bomega,\bsigma,\bmu}(\bx|H)=\prod_{j=1}^d \frac{\omega_j}{\sigma_j}\left(\frac{\mu_j-x_j}{\sigma_j}\right)^{-\omega_j-1}g_{\bone}\left(\left(\frac{\mu_1-x_1}{\sigma_1}\right)^{-\omega_1}, \ldots, \left(\frac{\mu_d-x_d}{\sigma_d}\right)^{-\omega_d}\bigg{|}H\right).
$$

\begin{lemma}\label{lem:weibkulb}
	For all $k\geq d+1$, $\widetilde{H} \in \Hset$, $H_k \in \Hset_k$, $\bomega, \widetilde{\bomega} \in (\bone, \binf)$, $\bsigma, \widetilde{\bsigma} \in  (\bzero, \binf)$ and $\bmu > \widetilde{\bmu} \in \reald$ it holds that
	\begin{equation}\label{eq:tobound}
	\int_{(-\binf,\widetilde{\bmu})}
	\left[\max_{\part \in \allpart_d}\max_{I_i \in \part}
	\log^+
	\left\lbrace
	\frac{-V_{I_i}\left(
		\left\lbrace
		\frac{\widetilde{\bmu} -\bx}{\bsigma}
		\right\rbrace^{-\bomega}
		\bigg{|}H_k
		\right) }{-V_{I_i}\left(
		\left\lbrace
		\frac{\bmu -\bx}{\bsigma}
		\right\rbrace^{-\bomega}
		\bigg{|}H_k
		\right) }	\right\rbrace
	\right]
	g_{\widetilde{\bomega},\widetilde{\bsigma},\widetilde{\bmu}}(\bx|\widetilde{H}) \diff \bx
	\end{equation}
	is bounded from above by 
	$$
	4k \Vert \bmu-\widetilde{\bmu} \Vert_1 \max_{1\leq j \leq d}\frac{\omega_j}{\widetilde{\sigma}_j}\Gamma\left(
	1-\frac{1}{\widetilde{\omega}_j}
	\right).
	$$
\end{lemma}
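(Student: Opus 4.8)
The goal is to bound the $l$-th root of the integral in \eqref{eq:tobound} (here stated for $l=1$, but the argument should be uniform in $l$ and in fact Lemma~\ref{lem:weibkulb} is the $\bomega$-Weibull analogue of Lemma~\ref{aux1}). The first move is a change of variables $\bx \mapsto \by = \left( (\widetilde{\mu}_j - x_j)/\widetilde{\sigma}_j \right)^{-\widetilde{\omega}_j}$, which transforms the measure $g_{\widetilde{\bomega},\widetilde{\bsigma},\widetilde{\bmu}}(\cdot|\widetilde{H})$ into $g_\bone(\cdot|\widetilde{H})$ and rewrites the two arguments of $-V_{I_i}$ in terms of $\by$ and the ratios $\widetilde{\bmu}-\bx$, $\bmu-\bx$. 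The key observation is that $(\mu_j - x_j)/(\widetilde{\mu}_j - x_j) = 1 + (\mu_j - \widetilde{\mu}_j)/(\widetilde{\mu}_j - x_j)$, and since $x_j < \widetilde{\mu}_j$ one has $\widetilde{\mu}_j - x_j = \widetilde{\sigma}_j\, y_j^{-1/\widetilde{\omega}_j}$; this makes the perturbation of the argument inside $-V_{I_i}$ explicit as a multiplicative factor depending on $\by$ and on $\mu_j - \widetilde{\mu}_j$. By the homogeneity of order $-|I_i|-1$ of $V_{I_i}$ and the explicit formula in Lemma~\ref{lem: expression omega}, the ratio $-V_{I_i}(\cdot|H_k)/-V_{I_i}(\cdot|H_k)$ in the two shifted points splits into (a) a power of the ratio of the $\ell_1$-norms of the argument vectors and (b) ratios of the $q(I_i,\balpha,\cdot)$ terms, exactly as in Case~1--Case~3 of the proof of Lemma~\ref{aux1}.

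Next I would carry out the Case~1 / Case~2 / Case~3 split over the type of $I_i$ (singleton, intermediate, full) precisely mirroring Lemma~\ref{aux1}. In each case one bounds $\log^+$ of the relevant ratio by a sum of two pieces: a "norm" piece of the form $(|I_i|+1)\log^+$ of a ratio of $\ell_1$-norms of $\by$-vectors evaluated at the two shifts, and a "Dirichlet/incomplete-Beta" piece handled by the multivariate mean-value theorem applied to $\log \mathcal{I}_{1-x}(\cdot,\cdot)$ (or to $\log \mathrm{Dir}$), just as in the derivation leading to \eqref{eq:bound1}, \eqref{eq: res_multi} and \eqref{eq:bound_final}. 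The crucial quantitative input here is that, writing $r_j := (\mu_j - \widetilde{\mu}_j)/(\widetilde{\mu}_j - x_j) = (\mu_j - \widetilde{\mu}_j)\, \widetilde{\sigma}_j^{-1} y_j^{1/\widetilde{\omega}_j}$, one has $\log^+(1+r_j) \le r_j$ and $|\log(1+r_j)| \le r_j$ when $r_j \ge 0$; thus every $\log^+$ term is controlled by a constant (at most $\sim k$, coming from the incomplete-Beta ratio estimates, as in Lemma~\ref{aux1}) times $\sum_j r_j = \sum_j (\mu_j - \widetilde{\mu}_j)\widetilde{\sigma}_j^{-1} y_j^{1/\widetilde{\omega}_j}$. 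Collecting the three cases gives a pointwise bound of the integrand in \eqref{eq:tobound} by $C k \sum_{j=1}^d (\mu_j - \widetilde{\mu}_j)\widetilde{\sigma}_j^{-1} y_j^{1/\widetilde{\omega}_j}$ for an absolute constant $C$ (one can track $C=4$, or more conservatively fold it into $k$).

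The final step is to integrate this bound against $g_\bone(\by|\widetilde{H})$ (after the change of variables). Since $g_\bone(\cdot|\widetilde{H})$ has unit Fr\'echet margins, $Y_j$ has distribution function $e^{-1/y}$, so $\mathbb{E}\big[Y_j^{1/\widetilde{\omega}_j}\big] = \int_0^\infty y^{1/\widetilde{\omega}_j}\, \mathrm{d}(e^{-1/y}) = \Gamma\!\left(1 - 1/\widetilde{\omega}_j\right)$, which is finite precisely because $\widetilde{\omega}_j > 1$. By Minkowski's inequality (for the general-$l$ version, extracting $\|\cdot\|_{L^l(g_\bone(\cdot|\widetilde H))}$ of each $Y_j^{1/\widetilde\omega_j}$ separately) and monotonicity of the $L^l$ norm, the whole quantity is bounded by
$$
4k \sum_{j=1}^d (\mu_j - \widetilde{\mu}_j)\,\frac{\omega_j}{\widetilde{\sigma}_j}\,\Gamma\!\left(1 - \tfrac{1}{\widetilde{\omega}_j}\right)
\;\le\; 4k\, \Vert \bmu - \widetilde{\bmu}\Vert_1 \max_{1\le j \le d} \frac{\omega_j}{\widetilde{\sigma}_j}\,\Gamma\!\left(1 - \tfrac{1}{\widetilde{\omega}_j}\right),
$$
which is the claimed bound. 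The main obstacle I anticipate is bookkeeping in the Case~2 (intermediate-size $I_i$) estimate: one must carefully apply the mean-value theorem to $\log \mathrm{Dir}(\by_{I^-}/\Vert \by_I\Vert_1;\balpha_I)$ and to the incomplete-Beta ratio simultaneously, and keep the resulting constants uniform in $\balpha \in \Gamma_k$ — this is where the restriction $\bomega > \bone$ is essential, since it is what guarantees finiteness of the $y_j^{1/\widetilde\omega_j}$ moments, and an analogous control of higher moments for $l \ge 2$ (the point flagged in the remark after Theorem~\ref{theo: cons_weibull}) is exactly the quantity that becomes intractable when $\widetilde\omega_j$ is too close to $1$.
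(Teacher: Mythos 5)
Your proposal follows essentially the same route as the paper's proof: a change of variables to normalise location and scale, a case-by-case pointwise bound on $\log^+$ of the ratio of the $-V_{I_i}$ terms via Lemma~\ref{lem: expression omega} and the multivariate mean-value theorem (mirroring Lemma~\ref{aux1}) that is linear in $(\mu_j-\widetilde{\mu}_j)/(\widetilde{\mu}_j-x_j)$, and then the moment computation producing $\Gamma(1-1/\widetilde{\omega}_j)$, finite precisely because $\widetilde{\omega}_j>1$. The only blemishes are cosmetic: your intermediate pointwise bound $Ck\sum_j r_j$ drops the factor $\omega_j$ multiplying each $r_j$ (it arises from $\log\{(1+r_j)^{-\omega_j}\}$ and correctly reappears in your final display), and the lemma is stated only for $l=1$, which — as you rightly note at the end — is exactly why the argument closes, since the $l\geq 2$ moments are not controlled when $\widetilde{\omega}_j$ is close to one.
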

\begin{proof}
	A change of variables allow to re-express the term in \eqref{eq:tobound} as
	$$
	\int_{(-\binf,\bzero)}
	\left[\max_{\part \in \allpart_d}\max_{I_i \in \part}
	\log^+
	\left\lbrace
	\frac{-V_{I_i}\left(
		\left\lbrace
		\frac{ -\bx}{\bsigma}
		\right\rbrace^{-\bomega}
		\bigg{|}H_k
		\right) }{-V_{I_i}\left(
		\left\lbrace
		\frac{\bmu - \widetilde{\bmu}-\bx}{\bsigma}
		\right\rbrace^{-\bomega}
		\bigg{|}H_k
		\right) }	\right\rbrace
	\right]
	g_{\widetilde{\bomega},\widetilde{\bsigma},\bzero}(\bx|\widetilde{H}) \diff \bx.
	$$
	As done in the proof of Lemma \ref{aux1}, by reasoning on a case by case basis via Lemma \ref{lem: expression omega} and the multivariate mean-value theorem we end up with the general inequality
	$$
	\log^+
	\left\lbrace
	\frac{-V_{I_i}\left(
		\left\lbrace
		\frac{ -\bx}{\bsigma}
		\right\rbrace^{-\bomega}
		\bigg{|}H_k
		\right) }{-V_{I_i}\left(
		\left\lbrace
		\frac{\bmu - \widetilde{\bmu}-\bx}{\bsigma}
		\right\rbrace^{-\bomega}
		\bigg{|}H_k
		\right) }	\right\rbrace \leq 4k\sum_{j=1}^d\omega_j\frac{\mu_j-\widetilde{\mu}_j}{-x_j}
	$$
	for all $I_i \in \part$, $\part \in \allpart$. Consequently,  the term in \eqref{eq:tobound} is bounded from above by
	\begin{equation*}
	\begin{split}
	4k \sum_{j=1}^d\omega_j\int_{-\infty}^0\frac{\mu_j-\widetilde{\mu}_j}{-x_j}g_{\widetilde{\omega}_j,\widetilde{\sigma}_j}(x_j)\diff x_j=4k\sum_{j=1}^d\frac{\omega_j}{\widetilde{\sigma}_j}\Gamma\left(
	1-\frac{1}{\widetilde{\omega}_j}
	\right)(\mu_j-\widetilde{\mu}_j),
	\end{split}
	\end{equation*}
	where $g_{\widetilde{\omega}_j,\widetilde{\sigma}_j}(x_j)$ is the probability density function defined in  \eqref{eq:sing_weib}.
	The result now follows.
\end{proof}

\begin{lemma}\label{lem:entropyweib}
	For all $k \geq d+1 $, $H_k, \widetilde{H}_k\in \Hset_k$,  $\bomega_0 \in (\bone, \binf)$, ${\bomega} ,\widetilde{\bomega} \in B_{\delta_1,1}(\bomega_0 )$, 
	$\bsigma \in (\bzero, \binf)$,
	${\bsigma},\widetilde{\bsigma} \in B_{\delta_2,1}(\bsigma_0)$,
	$\bmu_0\in \reald$, ${\bmu},\widetilde{\bmu} \in B_{\delta_3,1}(\bmu_0)$ with 
	\begin{equation}\label{eq:delta0_weib}
	\begin{split}
	\delta_0 &<\frac{1}{5}\min\left(\min_{1\leq j \leq d} \omega_{0,j}, \min_{1\leq j \leq d} \sigma_{0,j}^{-1}
	\right)\\
	\delta_1 & <  (1-p) \min_{1\leq j \leq d}\omega_{0,j},  \quad \left( 
	\min_{1\leq j \leq d}\omega_{0,j}
	\right)^{-1}<p<1,\\
	\delta_2& < \frac{16}{25}\min_{1 \leq j \leq d}\sigma_{0,j}^2{\delta_0}, \hspace{1.9em} \delta_3 < 1/4, 
	\end{split}
	\end{equation} 
	there exists a constant $c_0>0$ depending only on $\bomega_0, \bsigma_0, \bmu_0$ such that
	\begin{equation*}
	\begin{split}
	\dist_H(g_{\bomega,\bsigma,\bmu}(\cdot|H_k),
	g_{\widetilde{\bomega},\widetilde{\bsigma},\widetilde{\bmu}}(\cdot|\widetilde{H}_k))&\leq 
	\sqrt{ c_0 \Vert \bphi_\circ - \widetilde{\bphi}_\circ\Vert_1} + \sqrt{c_0 k \Vert \bomega-\widetilde{\bomega} \Vert_1 + c_0 k \Vert \bsigma-\widetilde{\bsigma} \Vert_1}\\
	&\quad +\sqrt{c_0k \Vert \bmu - \widetilde{\bmu} \Vert_1}.
	\end{split}
	\end{equation*}
\end{lemma}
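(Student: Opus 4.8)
The plan is to control the Hellinger distance through a two–term triangle inequality that separates the contribution of the location parameters from that of the shape, scale and angular measure. Writing $\bvartheta=(\bomega,\bsigma,\bmu)$, $\widetilde\bvartheta=(\widetilde\bomega,\widetilde\bsigma,\widetilde\bmu)$, I would interpose the density $g_{\widetilde\bomega,\widetilde\bsigma,\bmu}(\cdot|\widetilde H_k)$, which shares the location with the first density and the shape, scale and angular measure with the second, so that
\begin{align*}
\dist_H(g_{\bomega,\bsigma,\bmu}(\cdot|H_k),g_{\widetilde\bomega,\widetilde\bsigma,\widetilde\bmu}(\cdot|\widetilde H_k))
&\leq \dist_H(g_{\bomega,\bsigma,\bmu}(\cdot|H_k),g_{\widetilde\bomega,\widetilde\bsigma,\bmu}(\cdot|\widetilde H_k))\\
&\quad +\dist_H(g_{\widetilde\bomega,\widetilde\bsigma,\bmu}(\cdot|\widetilde H_k),g_{\widetilde\bomega,\widetilde\bsigma,\widetilde\bmu}(\cdot|\widetilde H_k)).
\end{align*}

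For the first summand I would use the identity noted in Section~\ref{sec:binf_general_max}, namely that $g_{\bomega,\bsigma,\bmu}(\cdot|H)$ is the law of $\bmu-\bY^{-1}$ with $\bY\sim G_{\bomega,\bsigma^{-1}}(\cdot|H)$. Since the transformation $\bx\mapsto(\bmu-\bx)^{-1}$ depends only on the common location $\bmu$, the Hellinger distance is invariant under it, whence the first summand equals $\dist_H(g_{\bomega,\bsigma^{-1}}(\cdot|H_k),g_{\widetilde\bomega,\widetilde\bsigma^{-1}}(\cdot|\widetilde H_k))$, a distance between two $\brho$-Fr\'echet max-stable densities. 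I would then apply Lemma~\ref{aux: n2} verbatim, after checking that the hypotheses \eqref{eq:delta0_weib} force $\bomega,\widetilde\bomega$ into a ball on which the shape stays bounded away from $1$ and $\bsigma^{-1},\widetilde\bsigma^{-1}$ into $B_{\delta_0,1}(\bsigma_0^{-1})$ with $\delta_0$ obeying the analogue of \eqref{eq:deltastar} for $(\bomega_0,\bsigma_0^{-1})$ — this is exactly what the constants $\tfrac15$ and $\tfrac{16}{25}\min_j\sigma_{0,j}^2$ are calibrated for, through the elementary estimate $|\sigma_j^{-1}-\sigma_{0,j}^{-1}|\leq|\sigma_j-\sigma_{0,j}|/(\sigma_j\sigma_{0,j})$. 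Since $\bsigma\mapsto\bsigma^{-1}$ is Lipschitz on the relevant ball, this yields a bound $\sqrt{c\Vert\bphi_\circ-\widetilde\bphi_\circ\Vert_1}+\sqrt{ck\Vert\bomega-\widetilde\bomega\Vert_1+ck\Vert\bsigma-\widetilde\bsigma\Vert_1}$.

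For the second summand the two densities share shape, scale and angular measure but have differing supports $(-\binf,\bmu)$ and $(-\binf,\widetilde\bmu)$, so the Kullback--Leibler divergence between them is infinite in whichever direction the support inclusion fails. I would therefore split once more at $\bmu^-:=\bmu\wedge\widetilde\bmu$ (componentwise minimum), so that in each resulting pair one density has support contained in that of the other. On each pair I would invoke the Hellinger--Kullback--Leibler inequality (Lemma~B.1(iv) of \cite{r10}) in the direction for which the divergence is finite, and expand that divergence into a normalising-constant term, a term coming from a difference of the exponent functions $V(\cdot|\widetilde H_k)$ at the two locations, and the logarithm of the ratio of the partition sums $\sum_{\part}\prod_i\{-V_{I_i}\}$. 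The last, most delicate piece is precisely what Lemma~\ref{lem:weibkulb} bounds, by $4k\Vert\bmu-\widetilde\bmu\Vert_1\max_j(\widetilde\omega_j/\widetilde\sigma_j)\Gamma(1-1/\widetilde\omega_j)$, finite because \eqref{eq:delta0_weib} keeps $\widetilde\bomega$ away from $1$; the remaining two pieces are elementary integrals against $g_{\widetilde\bomega,\widetilde\bsigma,\bmu^-}(\cdot|\widetilde H_k)$ that are $O(k\Vert\bmu-\widetilde\bmu\Vert_1)$ by estimates of the same type as those used for the auxiliary lemmas (e.g. Lemma~\ref{lem: aux}). This gives $\sqrt{ck\Vert\bmu-\widetilde\bmu\Vert_1}$. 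Combining the two summands via $\sqrt{a+b}\leq\sqrt a+\sqrt b$ and taking $c_0$ to be the largest constant produced yields the stated inequality. The main obstacle is the location step: the support mismatch precludes a direct divergence bound, and making the interposition at $\bmu\wedge\widetilde\bmu$ work requires careful bookkeeping of the radius constraints \eqref{eq:delta0_weib} so that every intermediate parameter remains inside the regime covered by Lemmas~\ref{aux: n2} and~\ref{lem:weibkulb}.
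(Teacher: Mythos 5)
Your proposal is correct and follows essentially the same route as the paper's proof: a triangle inequality isolating the location change from the shape/scale/angular change, reduction of the latter leg to the $\brho$-Fr\'echet case via the change of variables and Lemma~\ref{aux: n2} (including the same $\Vert\bsigma^{-1}-\widetilde{\bsigma}^{-1}\Vert_1$ estimate that the radius $\tfrac{16}{25}\min_j\sigma_{0,j}^2\delta_0$ is calibrated for), and a squared-Hellinger-by-Kullback--Leibler bound on each location leg, decomposed into exactly the three pieces controlled by Lemma~\ref{lem:vweib}, an elementary integral, and Lemma~\ref{lem:weibkulb}. The only (immaterial) difference is that you interpose the location at $\bmu\wedge\widetilde{\bmu}$ whereas the paper uses $\bmu\vee\widetilde{\bmu}$; either choice nests the supports so that the Kullback--Leibler divergence is taken in the finite direction.
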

\begin{proof}
	Preliminarily observe that, by 
	triangular inequality, 
	\begin{equation*}
	\begin{split}
	\dist_H(g_{\bomega,\bsigma,\bmu}(\cdot|H_k),
	g_{\widetilde{\bomega},\widetilde{\bsigma},\widetilde{\bmu}}(\cdot|\widetilde{H}_k))&\leq
	\dist_H(
	g_{\bomega,\bsigma,
		\widetilde{\bmu}
	}(\cdot|H_k)
	,
	g_{\widetilde{\bomega},\widetilde{\bsigma},\widetilde{\bmu}}(\cdot|\widetilde{H}_k))
	\\
	&\quad +
	\dist_H(
	g_{\bomega,\bsigma,\bmu}(\cdot|H_k)
	,
	g_{\bomega,\bsigma,\bmu\vee
		\widetilde{\bmu}
	}(\cdot|H_k))
	\\
	&\quad +
	\dist_H(
	g_{\bomega,\bsigma,\bmu\vee
		\widetilde{\bmu}
	}(\cdot|H_k)
	,
	g_{\bomega,\bsigma,\widetilde{\bmu}}(\cdot|H_k))
	\\
	&=: S_1+S_2+S_3.
	\end{split}
	\end{equation*}
	We have that 
	\begin{equation}\label{eq:sigma_ineq}
	\begin{split}
	\Vert \bsigma^{-1} - \widetilde{\bsigma}^{-1} \Vert_1 &\leq \left(\min_{1 \leq j \leq d}\sigma_j \widetilde{\sigma}_j \right)^{-1}\Vert \bsigma - \widetilde{\bsigma} \Vert_1\\
	& \leq  \left(\frac{4}{5}\min_{1 \leq j \leq d} \sigma_{0,j} \right)^{-2}\Vert \bsigma - \widetilde{\bsigma} \Vert_1\\
	& < \delta_0,
	\end{split}
	\end{equation}
	therefore, denoting $\brho_0=\bomega_0$, $\brho=\bomega$, $\widetilde{\brho}=\widetilde{\bomega}$, it holds that $\brho, \widetilde{\brho} \in B_{\delta_0,1}(\brho_0)$ and $\bsigma^{-1}, \widetilde{\bsigma}^{-1} \in B_{\delta_0,1}(\bsigma_0^{-1})$. Moreover, a change of variables yields
	$$
	S_1 =	\dist_H(
	g_{\brho,\bsigma^{-1}
	}(\cdot|H_k)
	,
	g_{\widetilde{\brho},\widetilde{\bsigma}^{-1}}(\cdot|\widetilde{H}_k)).
	$$
	Consequently, by Lemma \ref{aux: n2} and \eqref{eq:sigma_ineq},
	\begin{equation*}
	\begin{split}
	S_1 &  \leq \sqrt{ c_2 \Vert \bphi_\circ - \widetilde{\bphi}_\circ\Vert_1} + \sqrt{c_2 k \Vert \bomega-\widetilde{\bomega} \Vert_1 + c_2 k \Vert \bsigma-\widetilde{\bsigma} \Vert_1},
	\end{split}
	\end{equation*}
	where $c_2$ is a positive constant depending only on $d$, $\bomega_0$ and $\bsigma_0$. Furthermore, by Lemma B.1(iv) in \cite{r10}
	\begin{equation*}
	\begin{split}
	S_2^2 &\leq \kulb(	g_{\bomega,\bsigma,\bmu}(\cdot|H_k)
	,
	g_{\bomega,\bsigma,\bmu\vee
		\widetilde{\bmu}
	}(\cdot|H_k))\\
	& \leq \int_{(-\binf,\bmu)}
	\left[ V\left(
	\left\lbrace
	\frac{\bmu -\bx}{\bsigma}
	\right\rbrace^{-\bomega}
	\bigg{|}H_k
	\right) 
	-V\left(
	\left\lbrace
	\frac{\bmu\vee
		\widetilde{\bmu} -\bx}{\bsigma}
	\right\rbrace^{-\bomega}
	\bigg{|}H_k
	\right) 
	\right]_+g_{{\bomega}, {\bsigma}, {\bmu}}(\bx|{H}_k)\diff \bx
	\\
	&\quad + \sum_{j=1}^d \int_{(-\binf,\bmu)} \log^+\left\lbrace
	\frac{\mu_j\vee\widetilde{\mu}_j-x_j}{\mu_{j}-x_j} 
	\right\rbrace
	g_{\bomega,\bsigma,\bmu}(\bx|H_k) \diff \bx\\
	&\quad +
	\int_{(-\binf,\bmu)}
	\log^+
	\left\lbrace
	\frac{\sum_{\part \in \allpart_d}
		\prod_{i=1}^{m}\left[-V_{I_i}\left(
		\left\lbrace
		\frac{\bmu \vee \widetilde{\bmu} -\bx}{\bsigma}
		\right\rbrace^{-\bomega}
		\bigg{|}H_k
		\right) \right]}{\sum_{\part \in \allpart_d}
		\prod_{i=1}^{m}\left[-V_{I_i}\left(
		\left\lbrace
		\frac{\bmu -\bx}{\bsigma}
		\right\rbrace^{-\bomega}
		\bigg{|}H_k
		\right) \right]}
	\right\rbrace
	g_{{\bomega}, {\bsigma}, {\bmu}}(\bx|{H}_k)\diff \bx
	\\
	&=:T_1+T_2+T_3.
	\end{split}
	\end{equation*}
	Since $\bmu \vee \widetilde{\bmu} \in B^+_{2\delta_3,\infty}(\bmu)$, by Lemma \ref{lem:vweib} and the bounds in \eqref{eq:sigma_ineq},	
	$$
	T_1 \leq c_3 \Vert \bmu \vee \widetilde{\bmu} - \bmu \Vert_1 \leq  c_3 \Vert  \widetilde{\bmu} - \bmu \Vert_1,
	$$
	where $c_3>0$ only depends on $\bomega_0,\bsigma_0$. Moreover, following steps similar to those in \eqref{eq:T2} and exploiting \eqref{eq:sigma_ineq}, we obtain
	\begin{equation}
	\begin{split}
	T_2 &\leq \sum_{j=1}^d \frac{\mu_j\vee \mu_j-\mu_{j}}{\sigma_{j}}\Gamma(1-1/\omega_{j})\\
	& \leq 
	\frac{5}{4}\max_{1\leq j \leq d} \frac{\Gamma(1-1/(p \omega_{0,j}))}{\sigma_{0,j}} \Vert \bmu -\widetilde{\bmu}\Vert_1.
	\end{split}
	\end{equation}
	Finally, by Lemma \ref{lem:weibkulb}
	\begin{equation*}
	\begin{split}
	T_3 &\leq 4k \Vert \widetilde{\bmu} \vee \bmu-\bmu \Vert_1 \max_{1\leq j \leq d}\frac{\omega_j}{\sigma_j}\Gamma\left(
	1-\frac{1}{\omega_j}\right)\\
	&\leq 10k \Vert \widetilde{\bmu} -\bmu \Vert_1 \max_{1\leq j \leq d}\frac{\omega_{0,j}}{\sigma_{0,j}}\Gamma\left(
	1-\frac{1}{p\omega_{0,j}}\right).
	\end{split}
	\end{equation*}
	The term $S_3$ can be bounded from above analogously, whence the conclusion.
\end{proof}


\subsection{Proofs of the results in Sections \ref{app:KL_support}--\ref{sec:posterior_consistency_2D}}
\label{appsec:proof_bivar}



\subsubsection{Proof of Proposition \ref{prop:Polish_d2}}


Let $(A_k)_{k=1}^\infty \subset \Aset$ be a sequence which converges in $\dist_{1, \infty}$-metric to some $A_* \in \mathbb{W}^{1,\infty}((0,1))$. We now show it must be that $A_* \in \Aset$, whence we conclude that $\Aset$ is a closed subset of $\mathbb{W}^{1,\infty}((0,1))$. By the uniform limit theorem, $A_*$ must be continuous. Consequently, it must also be convex \citep[Theorem  E, p. 17]{r344}.
%
Obviously, 
$$
\max(t,1-t)\leq \lim_{k \to \infty}A_k(t) \leq 1, \quad t \in [0,1].
$$
Thus, $A_*$  is a Pickands dependence function. Moreover, for any collection of nonoverlapping intervals $[a_i,b_i]$, $i=1, \ldots, s$, with $s \in \mathbb{N}$, for any $\varepsilon>0$ and $k$ large enough we have 
$$
\sum_{i=1}^s \left|A_*'(b_i)-A_*'(a_i)\right| \leq \sum_{i=1}^s \left|A_k'(b_i)-A_k'(a_i)\right|+\varepsilon/2. 
$$
By absolute continuity of $A_k'$, there exists $\delta>0$ such that, if $\sum_{i=1}^s \left|b_i-a_i\right|<\delta$, then the first term on the right is smaller than $\varepsilon/2$. Hence, $A_*'$ is absolutely continuous too and $H_*(t)=(A_*'(t)+1)/2$, $0<t<1$, is an angular cdf with absolutely continuous restriction on $(0,1)$. We can now deduce that $A_* \in \Aset$.

Since $\Aset$ is a closed subset of $\mathbb{W}^{1,\infty}((0,1))$ and the latter space is complete 
\citep[Theorem 3.3]{r99},
we deduce that $(\Aset, \dist_{1, \infty})$ is a complete subspace. Moreover, we have that the class of polynomials in Bernstein form $\cup_{k \geq 1}\Aset_k$, given in Section \ref{app:review_BP}, is dense in $(\Aset, \dist_{1, \infty})$. Thus, the countable subclass of polynomials with rational coefficients is dense too. We now deduce that $(\Aset, \dist_{1, \infty})$ is also separable and the proof is complete.


\subsubsection{Proof of Corollary \ref{cor:measurable}}


Due to the relation between the first derivative of a Pickands dependence function, say $A\in \Aset$, and the associated angular pm, say $H \in \Hset$, the map $A \mapsto H$ is 1-to-1 and continuous with respect to the $\dist_{1,\infty}$-metric topology and the topology of weak convergence on $\Aset$ and $\Hset$, respectively, thus measurable with respect to the corresponding Borel $\sigma$-algebras. Noting that $g_\bone(\cdot|A)=g_\bone(\cdot|H)$, the conclusion now follows from Corollary \ref{cor:Polish_d>2}, by map composition.

\subsubsection{Proof of Theorem  \ref{theo:post_consistency}}\label{appsec:proofconsD2}

To prove the first statement, we resort to Theorem 6.23 in 
\cite{r10} and verify that the conditions therein are satisfied. The second one obtains as a by-product.

\vspace{0.5em}
\noindent
\textit{Kulback-Leibler property.}
For all $A \in \Aset'$ and $\delta>0$ there exists a sequence $A_k \in \Aset_k$, with linear coefficients $\boldsymbol{\beta}^{(k)}\in \mathcal{B}_k$, such that, as $k \to \infty$,
\begin{equation*}
\begin{split}
B_{\delta,k}:=\{\widetilde{A}_k \in \Aset_k: \, 8k^2\Vert \widetilde{\boldsymbol{\beta}}^{(k)} - \boldsymbol{\beta}^{(k)}\Vert_\infty < \delta \} &\subset
\{\widetilde{A}_k \in \Aset_k: \dist_{2,\infty}(\widetilde{A}_k, A_k)< \delta/2\}\\
&	 \subset
\{\widetilde{A} \in \Aset: \dist_{2,\infty}(\widetilde{A}, A)< \delta\}. 
\end{split}
\end{equation*}
This is guaranteed by Theorem 6.3.2 in 
\cite{r77}, when the extremal dependence is represented through Bernstein polynomials, and Theorem 2.1 in 
\cite{r355}
together with Proposition \ref{prop:bspline_cond}, when the extremal dependence is represented through splines. Furthermore, by Conditions \ref{cond:prior_D2}\ref{cond:B10i}--\ref{cond:B10ii}, $\Pi_\Aset(B_{\delta,k})>0$. 
We can now apply Theorem \ref{theo:KL_prior} and conclude that $\Pi_\Aset$ complies with \eqref{eq:KLpicka}, for all $\epsilon>0$.

\vspace{0.5em}
\noindent
\textit{Metric entropy.} Let $A_k$ and $\tilde{A}_k$ be two Pickands dependence functions in BP form, with degree $k$. 
Let $H_{k-1}$ and $\widetilde{H}_{k-1}$ be the corresponding angular distributions in BP form, obtained 
by applying Proposition 3.2(i) in \cite{r24}.
The associated angular densities satisfy
\begin{equation*}
\begin{split}
\Vert h_{k-2}-\widetilde{h}_{k-2} \Vert_1&= \int_0^1 \left|
\sum_{j=0}^{k-2}(\eta_{j+1}-\eta_j-\widetilde{\eta}_{j+1}+\widetilde{\eta}_j)\betaf(v|j+1, k-j-1)
\right|\diff v \\
& \leq \sum_{j=0}^{k-2}|\eta_{j+1}-\eta_j-\widetilde{\eta}_{j+1}+\widetilde{\eta}_j|.
\end{split}
\end{equation*} 
Analogously, let $A_k$ and $\tilde{A}_k$ be two valid Pickands dependence spline functions of order $3$ and
$H_{k-1}$ and $\widetilde{H}_{k-1}$ be the corresponding angular distribution spline functions, obtained by applying
Proposition \ref{prop:bspline_rel_pick_ang}.
By \eqref{eq:bspline_angdens} with $m=3$, their angular densities satisfy
\begin{equation*}
\begin{split}
\Vert h_{k-2}-\widetilde{h}_{k-2} \Vert_1&= \int_0^1 \left|
\sum_{j=1}^{k-2}\left(\frac{\eta_{j+1}-\eta_{j}-\widetilde{\eta}_{j+1}+\widetilde{\eta}_{j}}{\tau_{j+2}-\tau_{j+1}}\right)\indic_{[\tau_{j+1},\tau_{j+2})}(v)
\right|\diff v \\
& \leq \sum_{j=1}^{k-2}|\eta_{j+1}-\eta_j-\widetilde{\eta}_{j+1}+\widetilde{\eta}_j|.
\end{split}
\end{equation*} 
%
By the above inequalities, Proposition \ref{prop: basic metric bvt}\ref{res: basic bvt 1} and Proposition C.2 in \cite{r10},
we have that, for any
set of the form $\mathcal{G}_\bone^{(k)}=\{g_\bone(\cdot|A_k): A_k \in {\Aset}_k\}$, 
$$
\mathcal{N}(\epsilon, \mathcal{G}_\bone^{(k)}, \dist_H) \leq \mathcal{N}(c'\epsilon^2, \{\bx \in \real^{k-2}:\Vert \bx \Vert_1 \leq 1\}, L^1) \leq (3/c'\epsilon^2)^{k-2},
$$
for some $c'>0$, where, without loss of generality, we assume $c'\epsilon^2<1$. Therefore, Conditions i. and ii. of Theorem 6.23 in \cite{r10}
can be verified by arguments similar to those in Section \ref{appendix: cons mvt}.

\vspace{0.5em}
\noindent
\textit{Conclusion.} The first statement is now proven. As for the second statement, it follows from the first one together with Proposition \ref{lem: basic metric}\ref{res: basic 1} and Proposition \ref{prop: basic metric bvt}\ref{res: new pick}.


\subsection{Proofs of the results in Section \ref{sec:binf_simple_max}}\label{appsec:proof_simp}



\subsubsection{Proof of Proposition \ref{prop:Polish_d>2}}
\label{appsec:proofsKLtheo}

Observe that, as a finite union of an open and $d$ closed sets, $\tilde{\simp}=\simpint \cup\{ \cup_{j=1}^d \{\be_j\}\}$ is a $G_\delta$ subset of $\simp$. Thus, $\tilde{\simp}$, endowed with the restriction of the Euclidean metric, is a Polish subspace of the simplex. As a consequence, the class of Borel pm's on the latter space, say $\mathcal{M}$, endowed with the weak topology, is metrizable by some metric $\dist_W$ and Polish 
%
\citep[Theorem  A.3]{r10}.

To establish the result in the second part of the statement, we use arguments similar to those of Theorem 4.1 in \cite{r214}.
Let $\mathring{\nu}$ be the Lebesgue measure on $\mathring{\resimp}$ and define the class
\begin{equation*}
\begin{split}
\mathcal{I}:=\{ f \in L_1(\mathring{\nu}): \, &\int_{\mathring{\resimp}} v_jf(\bv) \mathring{\nu}(\diff \bv) \leq 1/d,\,j=1,\ldots,d-1, \\ 
&  \int_{\mathring{\resimp}} (1-\Vert \bv \Vert_1)f(\bv) \mathring{\nu}(\diff \bv) \leq 1/d, 
\, \, f \geq 0 \,\, \text{a.e.} \}
\end{split}
\end{equation*}
and the map $\phi:\mathcal{I}\mapsto \mathcal{M}$ via 
\begin{equation}\label{eq: map_def}
\begin{split}
[\phi(f)](B)=\int_{\pi_{\resimp}(B\cap \simpint)} f(\bv)\mathring{\nu}(\diff \bv) &+ \sum_{j=1}^{d-1}\left[1/d-\int_{\pi_{\resimp}( \simpint)} v_jf(\bv) \mathring{\nu}(\diff \bv)\right]\delta_{\be_j}(B)\\
&+
\left[1/d-\int_{\pi_{\resimp}( \simpint)} (1-\Vert \bv \Vert_1)f(\bv) \mathring{\nu}(\diff \bv)\right]
\delta_{\be_d}(B)
\end{split}
\end{equation}
for any Borel subset $B$ of $\tilde{\simp}$. It can be easily seen that $\mathcal{I}$ is the intersection of the $L_1$-closed sets 
$
\{ f \in L_1(\mathring{\nu}): \,  f \geq 0 \,\, \text{a.e.} \}$ and
\begin{eqnarray*}
	&&\{ f \in L_1(\mathring{\nu}): \, \int_{\mathring{\resimp}} v_jf(\bv) \mathring{\nu}(\diff \bv) \leq 1/d \}, \quad j=1,\ldots,d-1,\\
	&&\{ f \in L_1(\mathring{\nu}): \, \int_{\mathring{\resimp}} (1-\Vert \bv \Vert_1)f(\bv) \mathring{\nu}(\diff \bv) \leq 1/d \},
\end{eqnarray*}
thus $\mathcal{I}$ is closed. 
We deduce that $\mathcal{I}$ is a Borel subset of the Polish space $L_1(\mathring{\nu})$. Consequently, to establish the second result,  it is sufficient to show that $\phi$ is a Borel isomoprhism from $\mathcal{I}$ onto $\Hset$. 

Since $\mathcal{I}$, equiped with the subspace $L_1$-topology, is standard Borel (in fact, Polish) and $(\mathcal{M},\dist_W)$ is Polish, we can resort to 
Theorem 3.3.2 in \cite{r10000}
and prove that $\phi$ is a $1-\text{to}-1$ Borel map from $\mathcal{I}$ to $\mathcal{M}$, satisfying $\phi(\mathcal{I})=\Hset$. From the definition in \eqref{eq: map_def} it immediately follows that, for each $f \in \mathcal{I}$, we have $H(\cdot)=[\phi(f)](\cdot)\in \Hset$, thus $\phi(\mathcal{I})\subset \Hset$. On the other hand, by Definition \ref{cond_angular} in the main article, each $H \in \Hset$ admits the representation in  \eqref{eq: map_def}, for some nonnegative Lebesgue-integrable function $f \in \mathcal{I}$, hence $\phi^{-1}(\Hset) \subset \mathcal{I}$, i.e. $\Hset  \subset \phi(\mathcal{I})$. By the Radon-Nikodym theorem, the function $f$ satisfying such a representation is uniquely defined (up to a $\mathring{\nu}$-null set), thus $\varphi$ is also $1-\text{to}-1$. Finally, for any $f,h \in \mathcal{I}$ and any Borel subset $B \subset \tilde{\simp}$, we have
$$
|[\phi(f)](B)-[\phi(h)](B)| \leq (d+1)\Vert f-h\Vert_1,
$$
i.e. the map $\varphi:\mathcal{I}\mapsto \mathcal{M}$ is Lipschitz continuous and, thus, a Borel map. 

We have now established that $\Hset$ endowed with the subspace topology of weak convergence of pm's, metrized by $\dist_W$, is standard Borel. 
Denote by $\sigBor_\Hset$ the pertaining 
Borel $\sigma$-field of subsets of $\Hset$. 
To prove the last claim in the statment, observe that $\dist_{KS}$ induces a stronger topology on $\Hset$ than 
$\dist_W$. Then, the corresponding $\sigma$-field, say $\sigBor_{KS}$,  contains $\sigBor_W$. On the other hand, for any pair of angular pm's $H_1, H_2 \in\Hset $, we also have
$$
\dist_{KS}(H_1,H_2) \leq (d+1)\Vert \phi^{-1}(H_1)-\phi^{-1}(H_2)\Vert_1=: \dist_{\mathcal{I}}(H_1, H_2)
$$
and $\mathcal{I}$, endowed with the $L_1$ distance, is Polish (as a closed subset of a Polish space). 
Consequently, denoting by $\sigBor_{\mathcal{I}}$ the Borel $\sigma$-algebra induced on $\Hset$ by the $\dist_{\mathcal{I}}$-metric topology, 
we can deduce
$$
\sigBor_{KS} \subset \sigBor_{\mathcal{I}} \subset \sigBor_\Hset.
$$
It now follows that $\sigBor_{KS} = \sigBor_\Hset$, which completes the proof.


\subsubsection{Proof of Corollary \ref{cor:Polish_d>2}}

Preliminary notice that the map $H\mapsto g_\bone(\cdot|H)$ 1-to-1. Indeed, each simple max-stable distribution can be seen as the distribution of the componentwise maxima over the points of a Poisson process, whose mean measure is univocally identified by an angular pm on the unit simplex $\simp$; see  Section \ref{appsec:notation} and, e.g., Chapter 1.1 of \cite{r200}.
Hence, the map linking an angular pm to the corresponding max-stable distribution is injective.

Next, denote by $\mathscr{B}_{\Yset}$  the Borel $\sigma$-algebra of $\Yset=(0, \infty)^d$ and by $\mathscr{B}_{\Hset}$ the Borel $\sigma$-algebra induced on $\Hset$ by the metric $\dist_W$. By Theorem V.58 in 
\cite{r45},
there exists a positive $\mathcal{B}_{\Yset} \times \mathscr{B}_\Hset$-measurable function $(\by,H) \mapsto g_\bone(\by|H)$ such that, for all $H$, $g_\bone(\cdot|H)$ is a Lebesgue densiy of $G_\bone(\cdot|H)$. Then, arguments analogous to those in Appendix A of 
\cite{r48}
yield that, for any fixed $H_* \in \Hset$ and $\epsilon>0$, 
$$
\{H \in\Hset:\, \dist_H(g_\bone(\cdot|H_*),g_\bone(\cdot|H))<\epsilon\} \in \mathscr{B}_\Hset
$$ 
and 
$$
\{H \in\Hset:\, \kulb(g_\bone(\cdot|H_*),g_\bone(\cdot|H))<\epsilon\} \in \mathscr{B}_\Hset. 
$$
Since $(\Gset_\bone, \dist_H)$ is separable, the associated Borel $\sigma$-algebra, $\mathscr{B}_{\Gset_\bone}$, is generated by $\dist_H$-open balls and we can conclude 
\citep[e.g.,][p. 86]{r121}
that the map $H \mapsto g_\bone(\cdot|H)$ is $\mathscr{B}_\Hset/ \mathscr{B}_{\Gset_\bone}$-measurable. 


\subsubsection{Proof of Theorem \ref{theo:KL_prior_multi}}\label{appsec:proofKLmulti}


It is sufficient to show that, for any $\epsilon>0$, there exists a measurable subset of $\Hset$ with positive $\Pi_\Hset$-mass, whose elements $H$ satisfy
\begin{equation}\label{eq: KL joint}
\kulb(g_\bone(\cdot|H_{0}), g_\bone(\cdot|H)) <\epsilon.
\end{equation}
By Proposition \ref{cor: New_KL_cor}, the  above inequality is satisfied for all $H \in\ B_{\delta,\infty}(H_*)$, for some $H_* \in \Hset'$ and $\delta>0$. Morever, by assumption, there exists a measurable set $B_\delta \subset \ B_{\delta,\infty}(H_*)$ such that $\Pi_\Hset(B_\delta)>0$. The result now follows.


\subsubsection{Proof of Theorem \ref{theo:post_consistency_mvt}}\label{appendix: cons mvt}


%
Firstly, we verify the conditions of Theorem 6.23 in 
\cite{r10} 
to establish the result at point (a). Then, the one at point (b) is deduced from the latter. Finally, the claim concering the support of $\Pi_{\Gset_{\bone}}$ is proved.

\vspace{0.5em}
\noindent
\textit{Kulback-Leibler property.} By assumption, $H_0\in \Hset_0$, with $\Hset_0$ given in Definition \ref{cond:mvt_angular}\ref{cond:true_ang_set} of the main article. Also, by Lemma \ref{lem: approx}, for all $H \in\Hset'$ (see Definition \ref{cond:mvt_angular}\ref{cond:prior_ang_set} of the main article) and $\delta>0$, there exists a sequence $H_k \in \Hset_k$, with weights $\bphi^{(k)}\in \Phi_k$, such that, defining
\begin{equation*}
\begin{split}
B_{\delta,k}:=\left\lbrace \widetilde{H}_k \in \Hset_k: 2\prod_{i=1}^{d-1}(k+i)\Vert \widetilde{\bphi}^{(k)}-\bphi^{(k)} \Vert_\infty < \delta \right\rbrace,
\end{split}
\end{equation*}
we have $	B_{\delta,k} \subset B_{\delta/2,\infty}(H_k)\subset B_{\delta/2,\infty}(H_k)$, as $k\to \infty$.
By  Condition \ref{cond:angularprior},
$\Pi_\Hset(B_{\delta,k})>0$, hence the prior $\Pi_\Hset$ posses the $\dist_{
	\infty}$-property at $H$ (Definition \ref{def:d_inf_prop} of the main article). Therefore,
by Theorem \ref{theo:KL_prior_multi}, we conclude that
$\Pi_{\Gset_\bone}(\mathcal{K}_\epsilon)>0$, for all $\epsilon>0$, where $\mathcal{K}_\epsilon$ is as in Definition \ref{defi:KL} of the main paper with $\theta_0=H_0$. That is, $g_\bone(\cdot|H_0)$ is in the Kullback-Leibler support of the prior $\Pi_{\Gset_\bone}$.

\vspace{0.5em}
\noindent
\textit{Metric entroypy.} Observe that $\dist_H$ is a metric that generates convex balls and define 
\begin{eqnarray*}
	&\mathcal{G}^{(k)}_\bone&:=\{g_\bone(\cdot|H_k): \, 
	H_k \in \Hset_k; \, 
	\dist_H(g_\bone(\cdot|H_k), g_\bone(\cdot|H_0)) >4\epsilon  \},\\
	&\mathcal{G}_{\bone,n,1}&:=\cup_{k=d+1}^{\nu_n} \mathcal{G}_\bone^{(k)}, \\
	&\mathcal{G}_{\bone,n,2}&:=\mathcal{G}_{\bone,n,1}^\complement=\mathcal{G}_\bone\setminus \mathcal{G}_{\bone,n,1}
\end{eqnarray*}
where $\nu_n$ is a sequence of positive integers and $\mathcal{G}_\bone:=\{g_\bone(\cdot|H):\, H \in \Hset\}$. 
Then, by Lemma \ref{lem: L1} and Proposition C.2 in
\cite{r10} we have
$$
\mathcal{N}(2\epsilon, \mathcal{G}_\bone^{(k)}, \dist_H)\leq \mathcal{N}\left(c'\epsilon^2,  \{\bx \in \real^{|\Gamma_k|}:\Vert \bx \Vert_1 \leq 1\}, L_1 \right) \leq \left(3/c'\epsilon^2\right)^{\binom{k-1}{d-1}},
$$
where $c'$ is a positive global constant and, without loss of generality, we assume $c'\epsilon^2<1$. As a consequence, we also have that
\begin{equation}
\begin{split}
\mathcal{N}(2\epsilon, \mathcal{G}_{\bone,n,1}, \dist_H) \leq \sum_{k=d+1}^{\nu_n} \left( 3/c'\epsilon^2\right)^{\binom{k-1}{d-1}} 
\leq \nu_n \left( 3/c'\epsilon^2\right)^{\nu_n^{d-1}}
\end{split}
\end{equation}
and choosing $\nu_n=\floor{ (n\epsilon^2)^{1/(d-1)}c''}$, with $c''=(\log(3/c'\epsilon^{2}))^{-1/(d-1)}2^{-1}$, we have
\begin{equation}
\begin{split}
\log \mathcal{N}(2\epsilon, {\mathcal{G}}_{\bone,n,1}, \dist_H)  \leq \log \nu_n +\nu_n^d \log(3/c'\epsilon^{2})  \leq n \epsilon^2.
\end{split}
\end{equation}
The first condition of Theorem 6.23 in
\cite{r10} 
is therefore satisfied. 
The second condition therein is satisfied by assumption \ref{cond: mvt th2} in Condition \ref{cond:angularprior}. 

\vspace{0.5em}
\noindent
\textit{Conclusion.} The result at point (a) in the statement of Theorem \ref{theo:post_consistency_mvt} follows from the above considerations. For the general case $d\geq 2$, the result at point (b) is a direct consequence of the one at point (a) and Proposition \ref{lem: basic metric}\ref{res: basic 4}.
For the specific case $d=2$, the result follows from an application of Propositions \ref{lem: basic metric}\ref{res: basic 1} and \ref{prop: basic metric bvt}\ref{res: new pick}.

As for the claim on the full support of $\Pi_{\Gset_\bone}$, by Corollary \ref{lem:1to1}  we have that for any $\epsilon>0$ and $g_\bone(\cdot|\widetilde{H}) \in \Gset_\bone$ there exists $g_\bone(\cdot|\widetilde{H}_k)\in \{g_\bone(\cdot|H): \,H \in \cup_{i=d+1}^\infty\Hset_i \}$ such that $\dist_H(g_\bone(\cdot|\widetilde{H}),g_\bone(\cdot|\widetilde{H}_k)) < \epsilon/4$. Thus, by Lemma \ref{lem: L1}, we have that 
\begin{equation*}
\begin{split}
&\Pi_{\mathcal{G}_\bone}(g \in \mathcal{G}_\bone:\dist_H(g,g_\bone(\cdot|\widetilde{H}))<\epsilon)\\
& \quad \geq 		\Pi_{\mathcal{G}_\bone}(g \in \mathcal{G}_\bone:\dist_H(g,g_\bone(\cdot|\widetilde{H}_k))<\epsilon/4)\\
&\quad \geq 		\Pi_{\mathcal{G}_\bone}(g\in \{g_\bone(\cdot|H_k), \, H_k \in \Hset_k\}:\dist_H(g,g_\bone(\cdot|\widetilde{H}_k))<\epsilon/4)\\
&  \quad \geq \Pi(\bphi \in \Phi_k: \Vert \bphi- \widetilde{\bphi} \Vert_1 < \epsilon/4c )
\end{split}
\end{equation*}
where $\widetilde{\bphi}$ is the vector of linear coefficients corresponding to $\widetilde{h}_{k-d}$ and $c$ is a positive constants.	
In passing, observe that the set on the second line is $\mathscr{B}(\mathcal{G}_\bone)$-measurable, since it is the intersection of the measurable set $\{g_\bone(\cdot|H_k), \, H_k \in \Hset_k\}$ and the Hellinger ball of radius $\epsilon/4$ around $g_\bone(\cdot|\widetilde{H}_k)$.  
By Condition \ref{cond:angularprior}, the term on the third line of the above display is positive, hence the conclusion.

\subsection{Proofs of the results in Section \ref{sec:binf_general_max}}\label{appendix:semi}

\subsubsection{A general consistency result}\label{sec:gencons}

We herein provide a general consistency result for the three max-stable model classes in Section \ref{sec:general_theory} of the main paper. 
In the following subsections, the proofs of Theorems \ref{th:alpha_frec}, \ref{theo: cons_weibull} and \ref{cor:Gumbel_cons} are developed by verifying its conditions for the specific cases of $\brho$-Fr\'echet, $\bomega$-Weibull and Gumbel multivariate max-stable models, respectively. 
The proof of Proposition \ref{prop:gen_cons} adapts arguments from the proofs of Theorems 6.17 and 6.23 in 
\cite{r10}. Some key derivations yielding consistency at an exponential rate - equations \eqref{eq:mainbound}-\eqref{eq:expobound} of the main paper; see also \cite[Definition 3.1]{r555} -
are discussed more in detail, as they also play a crucial role in establishing the results of Section \ref{sec:binf_sample_max}.

\begin{prop}\label{prop:gen_cons}
	Let $\bX_1,\ldots,\bX_n$ be iid rv with distribution $G_{\bvartheta_0}(\cdot|H_0)$, where $H_0 \in \Hset_0$
	and $\bvartheta_0  \in \bvarTheta$. Let $\Pi_{\Hset \times {\bvarTheta}}:=\Pi_\Hset \times \Pi_{\bvarTheta}$, where $\Pi_\Hset$ and $\Pi_{\bvarTheta}$ 
	are Borel priors on $\Hset$ and $\bvarTheta$.
	Denote by $\Pi_{\mathcal{G}_{\bvarTheta}}$ the prior induced  by $\Pi_{\Hset\times\bvarTheta}$ on $\Gset_{\bvarTheta}:=\{ g_{\bvartheta}(\cdot|H):\,(H,\bvartheta)\in (\Hset,\bvarTheta) \}$  and assume that Conditions \ref{cond:angularprior} and \ref{cond: newcond} are satisfied.
	Then, $\iprodGtHtruealt-\text{as}$
	\begin{itemize}
		\item[(a)] $\lim_{n \to \infty}\tilde{\Pi}_n(\tilde{\mathcal{U}}^\complement)=0$, for every $\dist_H$-neighbourhood $\tilde{\mathcal{U}}$ of $g_{\bvartheta_0}(\cdot|H_0)$;
		\item[(b)] $
		\lim_{n\to \infty}\dist_H(\hat{g}_n,g_{\bvartheta_0}(\cdot|H_0))=0$; 
		\item[(c)] $\lim_{n\to\infty}\Pi_n((\mathcal{U}_1\times\mathcal{U}_2)^\complement)=0
		$, for every $\dist_W$-neighborhood (if $d\geq 2$) or $\dist_{KS}$-neighborhood (if d=2) $\mathcal{U}_1$  of $H_0$ and $L_1$-neighborhood $\mathcal{U}_2$ of $\bvartheta_0$;  
	\end{itemize}
	where $\Pi_n(\cdot)=\Pi_{\Hset \times \bvarTheta}(\cdot|\bX_{1:n} )$, $\tilde{\Pi}_n(\cdot)=\Pi_{\Gset_{\bvarTheta}}(\cdot|\bX_{1:n})$ and $\hat{g}_n(\bx)=\int_{\Gset_{\bvarTheta}}g(\bx) \, \diff \tilde{\Pi}_n(g)$.
\end{prop}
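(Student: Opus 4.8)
\textbf{Proof plan for Proposition \ref{prop:gen_cons}.}

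The plan is to verify a suitably strengthened version of the hypotheses of Schwartz' theorem (Theorem 6.17 in \cite{r10}) together with the entropy-and-sieve argument of Theorem 6.23, exploiting the test structure postulated in Condition \ref{cond: newcond} to obtain convergence at an exponential rate, and then concluding almost sure consistency by Borel--Cantelli. First I would recall that, by Condition \ref{cond: newcond}, $g_{\bvartheta_0}(\cdot|\theta_0)$ lies in the Kullback--Leibler support of $\Pi_{\Gset_{\bvarTheta}}$; this is the ingredient that controls the denominator of the posterior, since it guarantees that for every $c>0$, $\iprodGtHtruealt$-almost surely, eventually
\[
\int \prod_{i=1}^n \{g(\bX_i)/g_{\bvartheta_0}(\bX_i|\theta_0)\}\,\diff \Pi_{\Gset_{\bvarTheta}}(g) \geq e^{-cn},
\]
a standard consequence of Fatou's lemma and the strong law applied to the KL neighbourhood (cf. \cite[Lemma 6.26]{r10}). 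This yields the second inequality in \eqref{eq:mainbound}.

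Next I would fix neighbourhoods $\tilde{\mathcal U}$, $\mathcal U_1$, $\mathcal U_2$ and, after shrinking, assume $\tilde{\mathcal U}=\tilde{\mathcal U}_\epsilon$ is a Hellinger ball of radius $4\epsilon$. The complement of $\tilde{\mathcal U}_\epsilon$, and likewise $(\mathcal U_1\times\mathcal U_2)^\complement$, must be covered by two pieces: a ``local in $\bvartheta$'' piece $\{\Vert\bvartheta-\bvartheta_0\Vert_1\le\delta\}$ and a ``far in $\bvartheta$'' piece. On the local piece I invoke the density tests $s_n$ from Condition \ref{cond: newcond}\ref{dentest} together with the exponential prior decay of $\Pi_{\Gset_{\bvarTheta}}(\Gset_{\bvarTheta,n}^\complement)\lesssim e^{-rn}$; on the far piece I use the marginal tests $t_{n,j}$ from Condition \ref{cond: newcond}\ref{margintest}, noting that if $\Vert\bvartheta-\bvartheta_0\Vert_1>\delta$ then $\Vert\bvartheta_j-\bvartheta_{0,j}\Vert_\infty>\delta/d$ for some $j$, so that $\max_j t_{n,j}$ separates this region with errors $\lesssim e^{-nc_j(\delta)}$. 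Combining the tests into $\tests(\bX_{1:n})$ and bounding the posterior mass of each excluded region by the usual ratio $\tests + e^{cn}\Xi_n$ (with $\Xi_n$ the numerator contribution of the untested, prior-light region), one arrives at \eqref{eq:mainbound}, and the tail bound \eqref{eq:expobound} follows by taking expectations under $\prodGtHtruealt(\cdot|H_0)$ of each summand, using $\int s_n\,\diff\prodGtHtrue\le e^{-n\epsilon^2}$, the analogous marginal error bounds, Markov's inequality for $e^{cn}\Xi_n$ (whose expectation is $\lesssim e^{cn}\Pi_{\Gset_{\bvarTheta}}(\Gset_{\bvarTheta,n}^\complement)\lesssim e^{(c-r)n}$ once $c<r$ is fixed small enough, together with $e^{(c-r)n}+e^{cn-2n\epsilon^2}+\sum_j e^{cn-nc_j(\delta)}$). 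Choosing $c$ small relative to $\min\{r,2\epsilon^2,c_j(\delta)\}$ gives \eqref{eq:expobound} with some $c'>0$, hence an $\iprodGtHtruealt$-summable probability and, by Borel--Cantelli, $\Pi_n((\mathcal U_1\times\mathcal U_2)^\complement)\to0$ and $\tilde\Pi_n(\tilde{\mathcal U}^\complement)\to0$ almost surely. This proves (a) and (c). For (a) I also need that a $\dist_W$- (or $\dist_{KS}$-, when $d=2$) neighbourhood of $H_0$ is implied by a Hellinger neighbourhood at the density level; this is supplied by Proposition \ref{lem: basic metric}\ref{res: basic 4} in general and by Propositions \ref{lem: basic metric}\ref{res: basic 1} and \ref{prop: basic metric bvt}\ref{res: new pick} when $d=2$, after which the marginal $L_1$-consistency on $\bvartheta$ comes from the same tests. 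Finally (b) follows from (a) by convexity of the squared Hellinger distance and Jensen's inequality: $\dist_H^2(\hat g_n, g_{\bvartheta_0}(\cdot|H_0))\le\int\dist_H^2(g,g_{\bvartheta_0}(\cdot|H_0))\,\diff\tilde\Pi_n(g)\to0$ since $\tilde\Pi_n$ concentrates on a small Hellinger ball and the total Hellinger diameter contribution of the complement is at most $2\,\tilde\Pi_n(\tilde{\mathcal U}_\epsilon^\complement)\to0$ (cf.\ \cite[Ch.\ 6.8.3]{r10}).

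The main obstacle I anticipate is not the Schwartz-type bookkeeping itself but the bookkeeping around the \emph{partitioning in the marginal parameter $\bvartheta$}: Condition \ref{cond: newcond} only provides uniformly exponential tests and uniformly exponential sieve-complement bounds \emph{on the strip} $\{\Vert\bvartheta-\bvartheta_0\Vert_1\le\delta\}$, while testing $\bvartheta$ far from $\bvartheta_0$ is delegated entirely to the marginal tests $t_{n,j}$ over $\mathrm{supp}(\Pi_{\bvarTheta_j})$. Gluing these two mechanisms requires care: one must show that the product prior structure ($\Pi_{\Hset\times\bvarTheta}=\Pi_\Hset\times\Pi_{\bvarTheta}$, Condition \ref{cond:genprior}\ref{cond:indep}) lets the numerator over the ``far in $\bvartheta$'' region factor as (marginal test error)$\times$(total mass), so that the $e^{cn}$ blow-up from the denominator bound is absorbed by $e^{-nc_j(\delta)}$, uniformly in $H$; and one must ensure the radii $\epsilon,\delta$ are coupled so that $\tilde{\mathcal U}_\epsilon^\complement$ is genuinely covered by the union of the density-test region and the marginal-test region (this is where the factor $4$ in the definition of $\tilde{\mathcal U}_\epsilon$ and the constant in $\Vert\bvartheta_j-\bvartheta_{0,j}\Vert_\infty>\delta/d$ enter). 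Keeping all constants compatible, so that a single $c>0$ beats $r$, $2\epsilon^2$ and every $c_j(\delta)$ simultaneously, is the delicate point; once that is arranged, the rest is routine.
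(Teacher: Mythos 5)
Your proposal is correct and follows essentially the same route as the paper's proof: a Kullback--Leibler lower bound on the marginal likelihood, a partition of the alternative into the sieve $\Gset_{\bvarTheta,n}$, its prior-light complement, and the ``far in $\bvartheta$'' region handled by the marginal tests, followed by the exponential tail bound, Borel--Cantelli, and convexity of the squared Hellinger distance for the predictive density; the set-theoretical coupling of the radii via the reverse triangle inequality that you flag as the delicate point is precisely how the paper's proof begins. The only slip is cosmetic: the metric comparison via Propositions \ref{lem: basic metric} and \ref{prop: basic metric bvt} is needed for part (c), not part (a).
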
 

\begin{proof}
	The proof is organized in three parts: set theoretical preliminaries, main body of the proof and conclusions. 
	\\

	\noindent
	\textit{Set theoretical preliminaries.}
	Denote
	\begin{equation}\label{eq:genmap}
	\phi_{\Hset\times\bvarTheta}:(\Hset\times \bvarTheta, \sigBor_{\Hset}\otimes\sigBor_{\bvarTheta})\mapsto (\Gset_{\bvarTheta}, \sigBor_{\Gset_{\bvarTheta}}):(H,\bvartheta)\mapsto g_{\bvartheta}(\cdot|H),
	\end{equation}
	where $\sigBor_{\Hset},\sigBor_{\bvarTheta}$ and $\sigBor_{\Gset_{\bvarTheta}}$ are the Borel $\sigma$-field induced on $\Hset$, $\bvarTheta$ and $\Gset_{\bvarTheta}$ by the metrics $\dist_W$, $L_1$ and $\dist_H$, respectively.
	Let $\tilde{\mathcal{U}}$ be any $\dist_H$-neighbourhood of $g_{\bvartheta_0}(\cdot|H_0)$, $\mathcal{U}_1$ be any
	$\dist_W$-neighborhood (if $d\geq 2$) or $\dist_K$-neighborhood (if d=2) of $H_0$
	and $\mathcal{U}_2$ be any
	$L_1$-neighborhood of $\bvartheta_0$.
	We next derive a preliminary upper bound for the term
	$$
	\max
	\left\lbrace
	\tilde{\Pi}_n(\tilde{\mathcal{U}}^\complement),
	\Pi_n((\mathcal{U}_1\times\mathcal{U}_2)^\complement)
	\right\rbrace
	$$
	using some simple set theoretical arguments.

	Observe that, for some sufficiently small $\epsilon',\delta>0$, $B_{\epsilon',\dist}(H_0):=\{H \in \Hset: \, \dist(H,H_0)< \epsilon'\}\subset \mathcal{U}_1$, where $\dist=\dist_{KS}$ if $d=2$ and $\dist=\dist_W$ if $d>2$, and $B_{\delta,1}(\bvartheta_0) \subset \mathcal{U}_2$. Therefore,
	\begin{equation}\label{eq:newupboundgen1}
	\begin{split}
	\Pi_n(\{\mathcal{U}_1\times\mathcal{U}_2\}^\complement)
	&\leq 
	\Pi_n\left(
	B^\complement_{\epsilon',\dist}(H_0)
	\times B_{\delta,{1}}(\bvartheta_0)
	\right)
	+
	\Pi_n\left(
	\Hset\times  B^\complement_{{\delta/d},\infty}(\bvartheta_0)
	\right).
	\end{split}
	\end{equation}
	Moreover, notice that by Propositions \ref{lem: basic metric}-\ref{prop: basic metric bvt}, for some $\epsilon''>0$,
	$$
	B_{\epsilon''}:=\{H \in \Hset: \, \dist_H(g_\bone(\cdot |H),g_\bone(\cdot |H_0))< 2\epsilon''\}\subset B_{\epsilon',\dist}(H_0).
	$$
	Furthermore, by continuity of the map $\bvartheta\mapsto g_{\bvartheta}(\cdot|H_0)$ with respect to the $L_1$ and the Hellinger metrics on a neighbourhood of $\bvartheta_0$, we can choose $\delta$ small enough to guarantee that for all $\bvartheta \in B_{\delta, {1}}(\bvartheta_0)$
	$$
	\dist_H(g_{\bvartheta}(\cdot|H_0),g_{\bvartheta_0}(\cdot|H_0))<\epsilon''.
	$$
	Hence, reverse triangular inequality entails that for all $(H,\bvartheta) \in B_{\epsilon''}^\complement\times B_{\delta,{\color{red}1}}(\bvartheta_0)$
	\begin{equation*}
	\begin{split}
	\dist_H(g_{\bvartheta}(\cdot|H),g_{\bvartheta_0}(\cdot|H_0))&\geq 
	\dist_H(g_{\bvartheta}(\cdot|H),g_{\bvartheta}(\cdot|H_0))
	-
	\dist_H(g_{\bvartheta}(\cdot|H_0),g_{\bvartheta_0}(\cdot|H_0))\\
	&=\dist_H(g_\bone(\cdot |H),g_\bone(\cdot |H_0))
	-
	\dist_H(g_{\bvartheta}(\cdot|H_0),g_{\bvartheta_0}(\cdot|H_0))\\
	&>\epsilon'', 
	\end{split}
	\end{equation*}
	from which we conclude that 
	$$
	B_{\epsilon',\dist}^\complement(H_0)\times B_{\delta, {1}}(\bvartheta_0)
	\subset
	B_{\epsilon''}^\complement\times B_{\delta, {1}}(\bvartheta_0)\subset \phi_{\Hset\times\bvarTheta}^{-1}(\widetilde{\mathcal{U}}_{\epsilon}^\complement),
	$$
	where $\epsilon=\epsilon''/4$ and
	$\widetilde{\mathcal{U}}_\epsilon:= 
	\{g \in \Gset_{\bvarTheta}: \dist_{H}(g,g_{\bvartheta_0}(\cdot|H_0))\leq 4\epsilon\}$.
	As a result,
	\begin{equation}\label{eq:newupboundgen2}
	\Pi_n\left(
	B^\complement_{\epsilon',\dist}(H_0)
	\times B_{\delta,{1}}(\bvartheta_0)
	\right) \leq \widetilde{\Pi}_n(\widetilde{\mathcal{U}}_{\epsilon}^\complement).
	\end{equation}
	In particular, we can choose $\epsilon''$, and thus $\epsilon$, small enough to also guarantee that $\widetilde{\mathcal{U}}_\epsilon \subset \widetilde{\mathcal{U}}$. Therefore,
	\begin{equation}\label{eq:newupboundgen3}
	\begin{split}
	\widetilde{\Pi}_n(\widetilde{\mathcal{U}}^\complement)&\leq \widetilde{\Pi}_n(\widetilde{\mathcal{U}}_\epsilon^\complement)\\
	&\leq \Pi_n\left(\phi_{\Hset\times\bvarTheta}^{-1}(\widetilde{\mathcal{U}}_\epsilon^\complement)\cap \{
	\Hset \times B_{\delta, {1}}(\bvartheta_0)
	\}\right) + \Pi_n\left(
	\Hset \times  B^\complement_{{\delta/d}, \infty}(\bvartheta_0)
	\right).
	\end{split}
	\end{equation}
	By combining \eqref{eq:newupboundgen1}-\eqref{eq:newupboundgen3} we finally deduce that
	\begin{equation}\label{eq: preliminaryineq}
	\begin{split}
	\max
	\left\lbrace
	\tilde{\Pi}_n(\tilde{\mathcal{U}}^\complement),
	\Pi_n((\mathcal{U}_1\times\mathcal{U}_2)^\complement)
	\right\rbrace & \leq \Pi_n\left(\phi_{\Hset\times\bvarTheta}^{-1}(\widetilde{\mathcal{U}}_\epsilon^\complement)\cap \{
	\Hset \times B_{\delta, {1}}(\bvartheta_0)
	\}\right)\\
	& \quad + 2\Pi_n\left(
	\Hset \times  B^\complement_{{\delta/d}, \infty}(\bvartheta_0)
	\right).
	\end{split}
	\end{equation}
	\\
	
	\noindent
	\textit{Main body of the proof.} 
	We now analyse the terms on the right-hand side of \eqref{eq: preliminaryineq}.
	The first term can be decomposed as
	\begin{equation}\label{eq:eqgen2}
	\begin{split}
	\Pi_n\left(\phi_{\Hset\times\bvarTheta}^{-1}(\widetilde{\mathcal{U}}_\epsilon^\complement)\cap \{
	\Hset \times B_{\delta, {1}}(\bvartheta_0)
	\}\right) &=\widetilde{\Pi}_n(\Gset_{\bvarTheta,n})+\widetilde{\Pi}_n(\Gset_{\bvarTheta,n}^\complement),
	\end{split}
	\end{equation}
	where $\mathcal{G}_{\bvarTheta,n}$ is any sequence of measurable subsets of
	\begin{equation}\label{eq:subsetof}
	\widetilde{\mathcal{U}}_\epsilon^\complement\cap \phi_{\Hset\times\bvarTheta}\left(\{
	\Hset \times B_{\delta, {1}}(\bvartheta_0)
	\}\right)
	\end{equation}
	and $\Gset_{\bvarTheta,n}^\complement$ is the relative complement of $\Gset_{\bvarTheta,n}$ in the set above.
	Denote by $p(\bX_{1:n})$ the ratio between the marginal likelihood and the likelihood at $(H_0,\bvartheta_0)$, i.e.
	$$
	p(\bX_{1:n}):=\int_{\Gset_{\bvarTheta}}\prod_{i=1}^n\frac{g(\bX_i)}{g_{\bvartheta_0}(\bX_i|H_0)}\diff \Pi_{\Gset_{\bvarTheta}}(g),
	$$
	and let $\tests=(s_n, t_{n,1}, \ldots,t_{n,d})$ be any collection of measurable functions from $\text{supp}(\prodGtHtrue(\cdot|H_0))$ to $[0,1]$.
	Then, on one hand, we have the equality
	\begin{equation}\label{eq:upgen3}
	\begin{split}
	\widetilde{\Pi}_n(\Gset_{\bvarTheta,n})&= s_n(\bX_{1:n}) +(1-s_n(\bX_{1:n}))\widetilde{\Pi}_n(\Gset_{\bvarTheta,n})\\
	& = s_n(\bX_{1:n}) + \frac{1-s_n(\bX_{1:n})}{p(\bX_{1:n})}\int_{\Gset_{\bvarTheta,n}}\prod_{i=1}^n\frac{g(\bX_i)}{g_{\bvartheta_0}(\bX_i|H_0)}\diff \Pi_{\Gset_{\bvarTheta}}(g),
	\end{split}
	\end{equation}
	on the other hand, we have the identity
	\begin{equation}\label{eq:upgen4}
	\begin{split}
	\widetilde{\Pi}_n(\Gset_{\bvarTheta,n}^\complement)= \frac{1}{p(\bX_{1:n})}\int_{\Gset_{\bvarTheta,n}^\complement}\prod_{i=1}^n\frac{g(\bX_i|H)}{g_{\bvartheta_0}(\bX_i|H_0)}\diff \Pi_{\Gset_{\bvarTheta}}(g).
	\end{split}
	\end{equation}
	Moreover, denoting for $j=1, \ldots,d$,
	$$
	D_{n,j}:= \Hset\times\{
	\bvartheta\in \bvarTheta: \,
	\Vert \bvartheta_j - \bvartheta_{0,j}\Vert_\infty
	>{\delta/d}
	\},
	$$
	we can observe that
	\begin{equation}\label{eq:upgen5}
	\begin{split}
	\Pi_n&\left(
	\Hset \times  B^\complement_{{\delta/d}, \infty}(\bvartheta_0)
	\right)\\
	&\leq \sum_{j=1}^d \Pi_n(D_{n,j})\\
	&=\sum_{j=1}^dt_{n,j}(\bX_{1:n})+ \sum_{j=1}^d(1-t_{n,j}(\bX_{1:n})) \Pi_n(D_{n,j})\\
	&= \sum_{j=1}^dt_{n,j}(\bX_{1:n})+ \sum_{j=1}^d\frac{1-t_{n,j}(\bX_{1:n})}{p(\bX_{1:n})} \int_{D_{n,j}}\prod_{i=1}^n\frac{g_{\bvartheta}(\bX_i|H)}{g_{\bvartheta_0}(\bX_i|H_0)}\diff \Pi_{\Hset\times \bvarTheta}(H,\bvartheta).
	\end{split}
	\end{equation}
	%
	Combining \eqref{eq: preliminaryineq}-\eqref{eq:upgen5}, we can now conclude that 
	\begin{equation}\label{eq: finalbound}
	\begin{split}
	%
	\max
	\left\lbrace
	\tilde{\Pi}_n(\tilde{\mathcal{U}}^\complement),
	\Pi_n((\mathcal{U}_1\times\mathcal{U}_2)^\complement)
	\right\rbrace 
	&\leq s_n(\bX_{1:n})
	+2\sum_{j=1}^d t_{n,j}(\bX_{1:n}) 	 + \frac{\Xi_n(\bX_{1:n},\tests,\Pi_{\Hset\times \bvarTheta})}{p(\bX_{1:n})}
	\end{split}
	\end{equation}
	where 
	the functional $\Xi_n(\cdot,\cdot,\cdot)$ is defined via
	\begin{equation}\label{eq:xi_n}
	\begin{split}
	\Xi_n(\bX_{1:n},\boldsymbol{\tau},\Psi)&:=
	(1-s(\bX_{1:n}))\int_{\phi_{\Hset\times\bvarTheta}^{-1}(\Gset_{\bvarTheta,n})}\prod_{i=1}^n\frac{g_\bvartheta(\bX_i|H)}{g_{\bvartheta_0}(\bX_i|H_0)}\diff \Psi(H, \bvartheta)\\
	&\qquad + \int_{\phi_{\Hset\times\bvarTheta}^{-1}(\Gset_{\bvarTheta,n}^\complement)}\prod_{i=1}^n\frac{g_\bvartheta(\bX_i|H)}{g_{\bvartheta_0}(\bX_i|H_0)}\diff \Psi(H, \bvartheta)\\
	&\qquad +
	2\sum_{j=1}^d(1-t_{j}(\bX_{1:n}))\int_{D_{n,j}}\prod_{i=1}^n\frac{g_{\bvartheta}(\bX_i|H)}{g_{\bvartheta_0}(\bX_i|H_0)}\diff \Psi(H,\bvartheta),
	\end{split}
	\end{equation}
	for all choices of measurable functions $\boldsymbol{\tau}=(s, t_1, \ldots,t_d)$ from $\text{supp}(\prodGtHtrue(\cdot|H_0))$ to $[0,1]$ and Borel pm $\Psi$ on $\Hset \times \bvarTheta$.
	Moreover, by the Kullback leibler property,
	for any $c>0$ we have that 
	$
	p(\bX_{1:n}) \geq e^{-nc}
	$
	eventually almost surely as $n \to \infty$,
	hence 
	\begin{equation}\label{eq: applykulb}
	\frac{\Xi_n(\bX_{1:n},\tests,\Pi_{\Hset\times \bvarTheta})}{p(\bX_{1:n})} \leq  e^{nc}\Xi_n(\bX_{1:n},\tests,\Pi_{\Hset\times \bvarTheta}).
	\end{equation}
	Without loss of generality,
	we can assume that $\delta<\delta_*$, with $\delta_*$ as in Condition \ref{cond: newcond}, and select $\Gset_{\bvarTheta,n}$ and $\tests$ satisfying the properties therein.
	Consequently, choosing  $c$ such that
	\begin{equation}\label{eq:choosec}
	2c < \min\left\lbrace
	\epsilon^2, r, c_1(\delta), \ldots,c_d(\delta)
	\right\rbrace=: \overline{c},
	\end{equation}
	an application of Markov's inequality and  Fubini's theorem
	yields that as $n \to \infty$
	\begin{equation}\label{eq:tailprob}
	\prodGtHtrue \left(
	2\Vert \tests(\bX_{1:n}) \Vert_1+ e^{cn}\Xi_n(\bX_{1:n}, \tests, \Pi_{\Hset \times \bvarTheta})> \varepsilon \big{|}H_0
	\right) \lesssim e^{-n\overline{c}/2}.
	\end{equation}
	for all $\varepsilon>0$.
	\\
	
	\noindent
	\textit{Conclusions.} 
	In view of the bounds in \eqref{eq: finalbound},  \eqref{eq: applykulb} and \eqref{eq:tailprob}, the results at points (a) and (c) of the statement follows from an application of Borel-Cantelli lemma.
	Since the map $g\mapsto \dist_H^2(g, g_{\bvartheta_0}(\cdot|H_0))$ is convex, the result at point (b) follows directly from that at point (a) together with Theorem 6.8 in \cite{r10}. 
\end{proof}

\begin{rem}\label{rem:proof_highlight}
	We stress that, once $\epsilon$, $\delta$ and an arbitrary measurable subset $\mathcal{G}_{\bvarTheta,n}$ are fixed, a functional $\Xi_n$ can be defined as in \eqref{eq:xi_n}
	and the inequalities in \eqref{eq:upgen3} and \eqref{eq:upgen5}  
	are satisfied 
	for all choices of measurable functions $\boldsymbol{\tau}_n=(s_n, t_{n,1}, \ldots,t_{n,d})$ from 
	$$
	\text{supp}(\prodGtHtrue(\cdot|H_0))
	$$
	to $[0,1]$.
	Therefore, the inequality in \eqref{eq: finalbound} is valid also for posterior distributions $\Pi_n$ and $\tilde{\Pi}_n$ arising from priors which do not comply with Condition \ref{cond: newcond}.
	This fact is used in the proofs of Theorems \ref{th:rem_cont_frec}, \ref{th:rem_cont_gumb} and \ref{theo:rem_weib}.
	We also point out that $\Xi_n$
	tacitly depends on $\epsilon$ and $\delta$ via the sets 
	$\mathcal{G}_{\bvarTheta,n}$ (depending on $\epsilon$ and $\delta$ via the set in \eqref{eq:subsetof}) and $D_{n,1}, \ldots, D_{n,d}$ (depending on $\delta$).
	We recall that $\epsilon$ and $\delta$ are determined in the first place using some set theoretical arguments which move from the choices of $\tilde{\mathcal{U}}$, $\mathcal{U}_1$ and $\mathcal{U}_2$.
	Thus, the functional $\Xi_n$ ultimately depends on $\tilde{\mathcal{U}}$, $\mathcal{U}_1$ and $\mathcal{U}_2$. This consideration makes precise the remark following inequality \eqref{eq:mainbound} in the main paper.
\end{rem}

\subsubsection{Preliminary auxiliary results for the proof of Theorem \ref{th:alpha_frec}}\label{app:aux_frec}
For $l \in \nat_+$, we introduce the following symbol
\begin{eqnarray}
\label{eq:upsilon1}
\upsilon_l:= \Gamma(2l+1)\int_1^\infty \{\log(s)\}^l \text{Gamma}(s;2l+1,1)\diff s,
\end{eqnarray}
where $\text{Gamma}(s;a,b)$, $s>0$, denotes the Gamma probability density function with shape parameter $a$ and scale parameter $b$. Of particular interest for this work are the cases $l=1,\ldots,4$, where we have
\begin{equation}\label{eq:upsilons}
\begin{split}
&\upsilon_1= \frac{4}{e}-2\text{Ei}(-1), \hspace{4em} \upsilon_2=
2
G^{4,0}_{3,4}\left(1\bigg{|} \begin{smallmatrix}1,1,1 \\ 0,0,0,5\end{smallmatrix} \right),
\\
&\upsilon_3=6
G^{5,0}_{4,5}\left(1\bigg{|} \begin{smallmatrix}1,1,1,1 \\ 0,0,0,0,7\end{smallmatrix} \right), \hspace{1.2em}
\upsilon_4=24
G^{6,0}_{5,6}\left(1\bigg{|} \begin{smallmatrix}1,1,1,1,1 \\ 0,0,0,0,0,9\end{smallmatrix} \right),
\end{split}
\end{equation}
with $\text{Ei}(\cdot)$ denoting the exponential integral function and $G^{p,q}_{r,s}(\cdot|\begin{smallmatrix}t_1,\ldots,t_p \\ v_1,\ldots,v_q\end{smallmatrix})$ denoting the Meijer $G$-function. See Corollary \ref{lem: KLalpha} and Lemma \ref{lem:pseudokulb}. Moreover, we use the same notational convention for vectors raised to powers and rectangles in $[-\infty,\infty]^d$ introduced before Lemma \ref{aux1}.
Finally, for $l \in \nat_+$, $H_*,H, \widetilde{H}\in \Hset$ and $\brho_*,\brho, \widetilde{\brho},\bsigma_*,\bsigma, \widetilde{\bsigma}\in (\bzero, \binf)$, we define
$$
\mathscr{V}_{H_*, \brho_*, \bsigma_*}^{(l)}(H, \brho, \bsigma; \widetilde{H}, \widetilde{\brho}, \widetilde{\bsigma}):=
\left( 
\int_{(\bzero, \binf)} \left[
\log^+ \left\lbrace
\frac{g_{\brho,\bsigma}(\by|H)}{g_{\widetilde{\brho},\widetilde{\bsigma}}(\by|\widetilde{H})}
\right\rbrace
\right]^lg_{\brho_*, \bsigma_* }(\by|H_*)\diff \by
\right)^{1/l}.
$$	
Notice that the above functional is analogous to that introduced in Section \ref{sec:signed_kulb_simple} in the context of simple max-stable distributions. In particular, 
$$
\mathscr{V}_{H, \brho, \bsigma}^{(l)}(H, \brho, \bsigma; \widetilde{H}, \widetilde{\brho}, \widetilde{\bsigma})=\kulb_+^{(l)}(g_{{\brho}, {\bsigma}}(\cdot|{H}), g_{\widetilde{\brho}, \widetilde{\bsigma}}(\cdot|\widetilde{H}))
$$
where the term on the right-hand side denotes the $l$-th order positive Kullback-Leibler divergence from $ g_{\widetilde{\brho}, \widetilde{\bsigma}}(\cdot|\widetilde{H})$
to $ g_{{\brho}, {\bsigma}}(\cdot|{H})$. 
\begin{lemma}\label{lem: aux}
	Let $H\in \Hset$, with exponent function $V(\cdot|H)$. Then, for any  $l \in \nat_+$, $\widetilde{H} \in \Hset$, $\brho\in (\bzero, \binf)$, $\widetilde{\brho}  \in B_{\delta_1,\infty}(\brho)$, $\bsigma \in (\bzero, \binf)$,
	$\widetilde{\bsigma} \in B_{\delta_2,\infty}(\bsigma)$, with 
	\begin{equation}\label{eq:deltas}
	\delta_1 \in \left(0, \min_{1\leq j \leq d} \rho_j \varepsilon
	\right), \quad 	\delta_2 \in \left(0, \min_{1\leq j \leq d} \sigma_j \varepsilon
	\right), \quad \varepsilon \in (0,1/2],
	\end{equation} 
	we have that
	$$
	\left(
	\int_{(\bzero,\binf)}
	\left|
	V(\by^{\widetilde{\brho}/\brho} \{\bsigma/\widetilde{\bsigma}\}^{\widetilde{\brho}}|H)-V(\by|H)
	\right| ^l g_\bone(\by|\widetilde{H})\diff \by
	\right)^{1/l}
	$$
	is bounded from above by
	$$
	\frac{\left\lbrace \upsilon_l + \Gamma(1+l)
		\right\rbrace^{1/l} }{\min_{1\leq j \leq d}\rho_j}
	\Vert \brho -\widetilde{\brho} \Vert_1+
	\left\lbrace
	\Gamma(1+2l)
	\right\rbrace
	^{1/l}
	\max_{1\leq j \leq d} \left\lbrace \frac{3\rho_j}{\sigma_j}
	\left(\frac{3}{2}\right)^{\frac{3}{2}\rho_j}
	\right\rbrace
	\Vert \bsigma -\widetilde{\bsigma}\Vert_1.
	$$
\end{lemma}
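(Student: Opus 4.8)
The quantity to bound is the $L^l$-norm, under $g_\bone(\cdot|\widetilde H)$, of the increment of the exponent function $V(\cdot|H)$ caused by simultaneously perturbing the marginal exponents $\brho$ and scales $\bsigma$. The plan is to split this increment into a ``shape part'' and a ``scale part'' by inserting the intermediate argument $\by^{\widetilde\brho/\brho}$, applying Minkowski's inequality
$$
\Bigl\|V(\by^{\widetilde\brho/\brho}\{\bsigma/\widetilde\bsigma\}^{\widetilde\brho}|H)-V(\by|H)\Bigr\|_{L^l}
\leq
\Bigl\|V(\by^{\widetilde\brho/\brho}|H)-V(\by|H)\Bigr\|_{L^l}
+
\Bigl\|V(\by^{\widetilde\brho/\brho}\{\bsigma/\widetilde\bsigma\}^{\widetilde\brho}|H)-V(\by^{\widetilde\brho/\brho}|H)\Bigr\|_{L^l},
$$
where all $L^l$-norms are taken under $g_\bone(\cdot|\widetilde H)$. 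I would then bound each term separately, exploiting the homogeneity of order $1$ of $V(\cdot|H)$ (so $V(\by|H)=A(\bt_\by)\,\Vert 1/\by\Vert_1^{-1}$ up to the parametrisation used in \eqref{eq:functofv}) together with the uniform bound $\Vert A\Vert_\infty\le1$ from (C3), which controls $V$ by $\Vert 1/\by\Vert_1$ and hence lets me reduce everything to moments of $\log y_j$ and powers of $y_j$ under the unit-Fr\'echet marginals of $g_\bone(\cdot|\widetilde H)$.

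\textbf{The shape term.} For the first term, write $\widetilde\rho_j/\rho_j = 1 + (\widetilde\rho_j-\rho_j)/\rho_j$, and note that under the constraint \eqref{eq:deltas} each ratio lies in $(1/2,3/2)$. A mean-value-theorem argument in the exponents gives, for each coordinate, $|y_j^{\widetilde\rho_j/\rho_j}-y_j| \le |\log y_j|\,|\widetilde\rho_j-\rho_j|/\rho_j \cdot \max(y_j, y_j^{\widetilde\rho_j/\rho_j})$, and after summing over $j$, pulling out $1/\min_j\rho_j$, and using $\Vert A\Vert_\infty\le 1$, the term is controlled by an $L^l$-norm of $\sum_j |\log y_j| y_j^{\widetilde\rho_j/\rho_j}$ against the unit-Fr\'echet law. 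Since $Y_j\sim$ unit-Fr\'echet means $1/Y_j\sim$ unit-exponential, one has $\int (\log y)^l \, y^{c}\, y^{-2} e^{-1/y}\,dy = \Gamma(2-c)\cdot(\text{moment of }\log)$-type integrals; for $c\le 3/2$ these are finite and, after another Minkowski step to separate the $\log$-power from the $y^c$ factor, they produce exactly the constant $\upsilon_l$ (defined in \eqref{eq:upsilon1} as a $\log$-moment of a $\text{Gamma}(2l+1,1)$ variable) together with a $\Gamma(1+l)$ term from the ``no-$\log$'' part, divided by $\min_j\rho_j$, times $\Vert\brho-\widetilde\brho\Vert_1$. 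This matches the first summand in the claimed bound.

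\textbf{The scale term and conclusion.} For the second term, the argument is a pure scale change: $V(\by^{\widetilde\brho/\brho}\{\bsigma/\widetilde\bsigma\}^{\widetilde\brho}|H)-V(\by^{\widetilde\brho/\brho}|H)$, again using homogeneity and $\Vert A\Vert_\infty\le1$, is bounded by $\Vert 1/\by^{\widetilde\brho/\brho}\Vert_1$ times $|1-\{\sigma_j/\widetilde\sigma_j\}^{-\widetilde\rho_j}|$-type factors; a first-order Taylor expansion of $t\mapsto t^{-\widetilde\rho_j}$ near $t=1$ (valid since $\sigma_j/\widetilde\sigma_j\in(1/2,3/2)$) yields the factor $(3\rho_j/\sigma_j)(3/2)^{(3/2)\rho_j}$ and $\Vert\bsigma-\widetilde\bsigma\Vert_1$, while the $L^l$-moment of $\Vert 1/\by^{\widetilde\brho/\brho}\Vert_1$ against unit-Fr\'echet — now with exponents in $(1/2,3/2)$ — gives the $\Gamma(1+2l)^{1/l}$ factor (the ``$2l$'' coming from raising $y_j^{-1}$ to a power up to $3/2$ inside the $l$-th power, i.e. up to $\sim 2l$ after accounting for the $y^{-2}$ density). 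Taking the maximum over $j$ of the scale-dependent constants and adding the two contributions gives the stated inequality. The main obstacle I anticipate is the bookkeeping of the finitely many elementary-but-delicate estimates: making the mean-value/Taylor bounds uniform over the allowed $\delta_1,\delta_2$-balls (so that all exponent ratios stay in $(1/2,3/2)$ and the constants are genuinely global), and then matching the resulting Gamma- and $\upsilon_l$-integrals exactly to the constants in the statement without slack — essentially the same type of calculation that underlies Lemmas \ref{lem: KLalpha} and \ref{lem:pseudokulb}, so the structure is clear but the constants must be tracked carefully.
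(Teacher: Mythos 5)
Your overall architecture (Minkowski split into a shape part and a scale part, mean-value arguments on the exponents and scale ratios, reduction to Gamma-type moments of the unit-Fr\'echet marginals) matches the paper's, but there is a genuine gap at the crucial first step. Homogeneity together with $\Vert A\Vert_\infty\le 1$ only gives the pointwise bound $V(\by|H)\le\Vert 1/\by\Vert_1$; it says nothing about the \emph{difference} $|V(\bx|H)-V(\by|H)|$, which is what you must control. The paper's proof instead invokes the fact that $\bz\mapsto V(1/\bz|H)$ is a $D$-norm, so that the reverse triangle inequality plus domination of the $D$-norm by the $\ell_1$-norm (property (4.37) in Falk--H\"usler--Reiss, which ultimately rests on subadditivity of $\max_j(w_jz_j)$ and the mean constraints (C1)) yields
$$
\bigl|V(\bx|H)-V(\by|H)\bigr|\le\Vert 1/\bx-1/\by\Vert_1 .
$$
Without this Lipschitz property in the \emph{reciprocal} arguments your two $V$-differences are not controlled by the coordinatewise increments you compute; note that $V(\by|H)\ge\max_j y_j^{-1}$, so $V$ is not Lipschitz in $\by$ itself near the coordinate hyperplanes.

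This omission propagates into your moment computation. You bound the shape term via $|y_j^{\widetilde\rho_j/\rho_j}-y_j|$ and then invoke integrals of the form $\int(\log y)^l\,y^{c}\,y^{-2}e^{-1/y}\,\diff y$ with $c$ as large as $3/2$; but that integral equals $\int_0^\infty(\log u)^l u^{-c}e^{-u}\,\diff u$ after $u=1/y$ and diverges for $c\ge1$, so the route through the direct arguments fails. Working with the reciprocals, as the paper does, the relevant quantity is $|y_j^{-\widetilde\rho_j/\rho_j}-y_j^{-1}|\le|1-\widetilde\rho_j/\rho_j|\,|\log y_j|\,y_j^{-(1\pm\varepsilon)}$ on $(0,1)$ and $(1,\infty)$ respectively, and the resulting moments $\int_1^\infty(\log u)^l u^{l(1+\varepsilon)}e^{-u}\diff u\le\upsilon_l$ and $\int_0^1(-\log u)^l u^{l(1-\varepsilon)}e^{-u}\diff u\le\Gamma(1+l)$ are exactly where the constants in the statement come from; the scale term is handled analogously with $\Gamma(1+l(1+\varepsilon))\le\Gamma(1+2l)$ and $|\log x|\le\max(|x-1|,|x-1|/x)$ producing the factor $(3\rho_j/\sigma_j)(3/2)^{3\rho_j/2}$. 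So the fix is to insert the $D$-norm Lipschitz inequality at the outset and carry out all subsequent estimates on $1/\by$ rather than on $\by$.
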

\begin{proof}
	For $\bx> \bzero$, the function $V(1/\bx|H)$ defines a $D$-norm, see \cite{r2}.
	Property (4.37) therein together with reverse triangle inequality  yield
	$$
	|V(\by^{\widetilde{\brho}/\brho} \{\bsigma/\widetilde{\bsigma}\}^\brho|H)-V(\by|H)|\leq 
	\Vert 	\by^{-\widetilde{\brho}/\brho} \{\bsigma/\widetilde{\bsigma}\}^{-\widetilde{\brho}}- \by^{-1}\Vert_1.
	$$ 
	The above inequality, the fact that $g_\bone(\cdot|\widetilde{H})$ has unit Fr\'{e}chet margins and a few manipulations lead to conclude that
	\begin{equation*}
	\begin{split}
	&\left(
	\int_{(\bzero,\binf)}
	\left|
	V(\by^{\widetilde{\brho}/\brho} \{\bsigma/\widetilde{\bsigma}\}^\brho|H)-V(\by|H)
	\right|^l g_\bone(\by|\widetilde{H})\diff \by
	\right)^{1/l}\\
	&\quad		 \leq \sum_{j=1}^d 
	\left( \int_0^\infty\left|{y_j^{-\widetilde{\rho}_j/\rho_j}} - {y_j^{-1}} \right|^ly_j^{-2}e^{-y_j^{-1}}\diff y_j
	\right)^{1/l}\\
	& \qquad
	+ \sum_{j=1}^d 
	\left(
	\int_0^\infty \left|1 - \left(\frac{\widetilde{\sigma}_j}{\sigma_j}\right)^{\widetilde{\rho}_j} \right|^l{y_j^{-2-l \widetilde{\rho}_j/\rho_j}}e^{-y_j^{-1}}\diff y_j
	\right)^{1/l}\\
	&\quad =: T_1+T_2.
	\end{split}
	\end{equation*} 
	On one hand, an application of the mean-value theorem and the inequality $\log(x)\leq x$, $x\geq 1$, yield
	\begin{equation*}
	\begin{split}
	T_1
	&\leq 
	\sum_{j=1}^d 
	\left|1-\frac{\widetilde{\rho}_j}{\rho_j} \right|
	\left( \int_0^1\left[
	\frac{-\log y_j}{y_j^{1+\varepsilon}}
	\right]^ly_j^{-2}e^{-y_j^{-1}}\diff y_j
	+\int_1^\infty\left[
	\frac{\log y_j}{y_j^{1-\varepsilon}}
	\right]^ly_j^{-2}e^{-y_j^{-1}}\diff y_j
	\right)^{1/l}\\
	& \leq
	\left\lbrace \upsilon_l+\Gamma(1+l \varepsilon )\right\rbrace^{1/l}
	\sum_{j=1}^d 
	\left|1-\frac{\widetilde{\rho}_j}{\rho_j} \right|\\
	& \leq \left\lbrace \upsilon_l + \Gamma(1+l)
	\right\rbrace^{1/l}
	\sum_{j=1}^d 
	\left|1-\frac{\widetilde{\rho}_j}{\rho_j} \right|.
	\end{split}
	\end{equation*}
	On the other hand, using sequentially the integral representation of the Gamma function, the mean-value theorem, the fact that $|\log(x)|\leq \max(|x-1|,|x-1|/x )$, $x>0$, together with the bounds in \eqref{eq:deltas}, we obtain  
	\begin{equation}\label{eq:similar}
	\begin{split}
	T_2 &\leq \left\lbrace
	\Gamma(1+l(1+\varepsilon))
	\right\rbrace
	^{1/l} 
	\sum_{j=1}^d\left|1 - \left(\frac{\widetilde{\sigma}_j}{\sigma_j}\right)^{\widetilde{\rho}_j} \right|\\
	& \leq 
	\left\lbrace
	\Gamma(1+2l)
	\right\rbrace
	^{1/l}
	\sum_{j=1}^d \widetilde{\rho}_j \left|
	\log (\widetilde{\sigma}_j/\sigma_j)\right| \max\{
	(\widetilde{\sigma}_j/\sigma_j)^{\widetilde{\rho}_j},1
	\}
	\\
	&\leq \left\lbrace
	\Gamma(1+2l)
	\right\rbrace
	^{1/l}
	\sum_{j=1}^d \frac{3\rho_j}{\sigma_j}
	\left(\frac{3}{2}\right)^{\frac{3}{2}\rho_j}
	|\sigma_j -\widetilde{\sigma_j}|.
	\end{split}
	\end{equation}
	The result now follows.
\end{proof}

The next result follows from the the previous one and Lemma \ref{aux1}.

\begin{lemma}\label{cor: alpha_kulb}
	Let  $H_0\in \Hset_0$ satisfy property \ref{cond: finite_new} in Definition \ref{cond:mvt_angular}. Let $\brho_0\in (\bzero,\binf)$, $\bsigma_0\in (\bzero,\binf)$. Then, for all $l \in \nat_+$ and $\epsilon>0$ there exist $H^* \in \Hset'$, $\delta_1,\delta_2, \delta_3>0$ such that
	\begin{equation}\label{eq:wehave}
	\kulb_+^{(l)}(g_{\brho_0, \bsigma_0}(\cdot|H_0), g_{\brho_0, \bsigma_0}(\cdot|H))<\epsilon,
	\end{equation}
	for all $\brho \in B_{\delta_1,1}(\brho_0)$, $\bsigma \in B_{\delta_2,1}(\bsigma_0)$ and $H \in B_{\delta_3,\infty}(H^*)$.
	In the particular case of $l=1$, it also holds that
	$
	\kulb({g_{\brho_0,\bsigma_0}(\cdot|H_0),g_{\brho,\bsigma}(\cdot|H)})<\epsilon.
	$
\end{lemma}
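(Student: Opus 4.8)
The aim is to bound the $l$-th order positive Kullback--Leibler divergence $\kulb_+^{(l)}(g_{\brho_0,\bsigma_0}(\cdot|H_0),g_{\brho,\bsigma}(\cdot|H))$ uniformly over $(\brho,\bsigma,H)$ in a neighbourhood of $(\brho_0,\bsigma_0,H^*)$ to be specified (the case $l=1$ then follows from $\kulb\le\kulb_+^{(1)}$). The first step is to choose the auxiliary angular pm $H^*$. Since $H_0$ satisfies property~\ref{cond: finite_new}, Lemma~\ref{prop: new_KL_prop} gives $H_1\in\Hset'$ with $\kulb_+^{(l)}(g_\bone(\cdot|H_0),g_\bone(\cdot|H_1))$ arbitrarily small; Lemma~\ref{lem: approx} then produces a \emph{Bernstein-form} $H^*\in\Hset_k\cap\Hset'$ --- for $k$ large, so that the sup-norm approximation preserves the strictly positive density infimum and vertex masses characterising $\Hset'$ --- with $\dist_\infty(h^*,h_1)$ arbitrarily small, and Lemma~\ref{lem: ratio 1} then makes $\mathscr{V}_{H_0}^{(l)}(H_1;H^*)$ arbitrarily small, so altogether $\kulb_+^{(l)}(g_\bone(\cdot|H_0),g_\bone(\cdot|H^*))<\epsilon/3$. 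Taking $H^*$ in the Bernstein class is essential, because Lemma~\ref{aux1}, the main marginal-perturbation estimate, is available only for $\Hset_k$.

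Passing to the unit-Fr\'echet scale via $\bY=\{\bX/\bsigma_0\}^{\brho_0}$ (so $\bY\sim G_\bone(\cdot|H_0)$ under $\bX\sim G_{\brho_0,\bsigma_0}(\cdot|H_0)$), using $\log^+(a/c)\le\log^+(a/b)+\log^+(b/c)$ and Minkowski's inequality in $L^l(g_{\brho_0,\bsigma_0}(\cdot|H_0))$, I would split
\[
\kulb_+^{(l)}(g_{\brho_0,\bsigma_0}(\cdot|H_0),g_{\brho,\bsigma}(\cdot|H))\le A+B_1+B_2 ,
\]
where $A=\mathscr{V}_{H_0,\brho_0,\bsigma_0}^{(l)}(H_0,\brho_0,\bsigma_0;H^*,\brho_0,\bsigma_0)$, $B_1=\mathscr{V}_{H_0,\brho_0,\bsigma_0}^{(l)}(H^*,\brho_0,\bsigma_0;H^*,\brho,\bsigma)$ and $B_2=\mathscr{V}_{H_0,\brho_0,\bsigma_0}^{(l)}(H^*,\brho,\bsigma;H,\brho,\bsigma)$. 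In the ratio defining $A$ the marginal Jacobians and marginal maps cancel, so $A=\kulb_+^{(l)}(g_\bone(\cdot|H_0),g_\bone(\cdot|H^*))<\epsilon/3$ by the previous paragraph; note that $A$ carries no dependence on $(\brho,\bsigma,H)$, so the whole dependence on these variables lies in $B_1$ and $B_2$.

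For $B_1$ the angular pm is frozen at $H^*$ and only the margins move. After the change of variables, and using the Fa\`a di Bruno representation \eqref{eq:density_unit_fre}, the integrand is dominated by: the log-ratio of marginal Jacobians, a linear form in the $\log y_j$ with coefficients $O(\Vert\brho-\brho_0\Vert_1+\Vert\bsigma-\bsigma_0\Vert_1)$ whose $L^l(g_\bone(\cdot|H_0))$-norm is $\le c(\brho_0,\bsigma_0)(\Vert\brho-\brho_0\Vert_1+\Vert\bsigma-\bsigma_0\Vert_1)$ since $\int|\log y_j|^l g_\bone(\by|H_0)\diff\by=\gamma_{(l,+)}+\gamma_{(l,-)}<\infty$; the difference of exponent functions $V(\,\cdot\,|H^*)$ at $\by^{\brho/\brho_0}\{\bsigma_0/\bsigma\}^{\brho}$ and at $\by$, controlled by Lemma~\ref{lem: aux}; and, after a Jensen step as in \eqref{eq:jensen}, the quantity $\max_{\part\in\allpart_d}\max_{I_i\in\part}\log^+(-V_{I_i}(\by|H^*)/(-V_{I_i}(\by^{\brho/\brho_0}\{\bsigma_0/\bsigma\}^{\brho}|H^*)))$, which is exactly the object bounded in Lemma~\ref{aux1} (applicable because $H^*\in\Hset_k$), hence $O(\Vert\brho-\brho_0\Vert_1+\Vert\bsigma-\bsigma_0\Vert_1)$ uniformly. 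Thus $B_1<\epsilon/3$ for $\delta_1,\delta_2$ small. For $B_2$ the margins are frozen at $(\brho,\bsigma)$, so Jacobians and marginal maps cancel and the substitution $z_j=(x_j/\sigma_j)^{\rho_j}$ turns $B_2$ into the $L^l$-norm of $\log^+(g_\bone(\cdot|H^*)/g_\bone(\cdot|H))$ against a $\brho$-Fr\'echet density (that of $\{\bX/\bsigma\}^{\brho}$) of shape $\brho_0/\brho$ near $\bone$; splitting the log-ratio, the $V(\,\cdot\,|H^*)-V(\,\cdot\,|H)$ part is $\le\Vert 1/\bz\Vert_1\,\dist_\infty(A^*,A)\le 2d\Vert 1/\bz\Vert_1\,\delta_3/\Gamma(d)$ by Proposition~\ref{lem: basic metric}\ref{res: basic 2} (with $A^*,A$ the Pickands functions of $H^*,H$), whose moments against the weight stay finite for $\brho$ near $\brho_0$, while $\max_{\part}\max_{I_i}\log^+(-V_{I_i}(\bz|H^*)/(-V_{I_i}(\bz|H)))$ is bounded \emph{pointwise} by a small constant via facts~\ref{fact I}--\ref{fact II} in the proof of Lemma~\ref{lem: ratio 1} (valid because $H^*\in\Hset'$ and $H\in B_{\delta_3,\infty}(H^*)$). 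Hence $B_2<\epsilon/3$ for $\delta_3$ small, and $A+B_1+B_2<\epsilon$.

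The main obstacle I anticipate is the bookkeeping in the last paragraph: each auxiliary estimate is phrased against a prescribed reference density, so one must check that every successive marginal reparametrisation leaves the weighting measure exactly of the required form --- unit Fr\'echet for Lemmas~\ref{lem: ratio 1} and~\ref{aux1}, a general $g_\bone(\cdot|\widetilde H)$ for Lemma~\ref{lem: aux}, and (in $B_2$) a genuine $\brho$-Fr\'echet density of shape near $\bone$ --- and that choosing $H^*$ inside $\Hset_k\cap\Hset'$ is precisely what licenses the use of Lemma~\ref{aux1} in $B_1$. Everything else reduces to routine moment bounds for (near-)unit-Fr\'echet laws.
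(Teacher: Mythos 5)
Your proof is correct, but it takes a genuinely different route from the paper's. The paper uses a two-term split: first $\mathscr{V}_{H_0,\brho_0,\bsigma_0}^{(l)}(H_0,\brho_0,\bsigma_0;H,\brho_0,\bsigma_0)$, which after a change of variables is exactly $\kulb_+^{(l)}(g_\bone(\cdot|H_0),g_\bone(\cdot|H))$ and is dispatched by Proposition~\ref{cor: New_KL_cor}; then $\mathscr{V}_{H_0,\brho_0,\bsigma_0}^{(l)}(H,\brho_0,\bsigma_0;H,\brho,\bsigma)$, the marginal perturbation carried out at the \emph{generic} $H$. Since Lemma~\ref{aux1} is indeed only available for Bernstein-form angular pm's, the paper inserts an auxiliary $H_k\in\Hset_k\cap B_{\delta_3,\infty}(H^*)$ (via Lemma~\ref{lem: approx}) \emph{inside} the $T_3$ estimate, splitting it into $T_3^{(1)}$ (handled by Lemma~\ref{aux1} at $H_k$) plus two correction terms $T_3^{(2)},T_3^{(3)}$ controlled pointwise by the bound \eqref{eq:delta_bound}. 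You instead commit to $H^*\in\Hset_k\cap\Hset'$ up front and use a three-term split $A+B_1+B_2$: this makes the marginal step $B_1$ cleaner (Lemma~\ref{aux1} applies directly, no auxiliary approximant), but it introduces a third term $B_2$ in which the angular comparison $H^*$ vs.\ $H$ is weighted by the law of $\{\bX/\bsigma\}^{\brho}$ rather than by a unit-Fr\'echet max-stable density. You correctly identify that the partial-derivative ratios are bounded \emph{pointwise} (so the weighting is irrelevant there), while the exponent-function difference needs the moments $\int\Vert 1/\bz\Vert_1^l$ against the shifted law, which remain finite and locally uniformly bounded for $(\brho,\bsigma)$ near $(\brho_0,\bsigma_0)$ — that is the one verification your ordering forces that the paper's ordering avoids. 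In short: the paper pays with an extra mid-proof approximation to keep every angular comparison against unit-Fr\'echet weights; you pay with a moment check against a near-unit-Fr\'echet weight to avoid that approximation. Both are valid, and your $A$-term construction is essentially a re-derivation of Proposition~\ref{cor: New_KL_cor} strengthened to land the centre $H^*$ inside the Bernstein class.
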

\begin{proof} Observe that, for any $H \in \Hset$, $\brho \in (0,\binf)$, $\bsigma \in (0,\binf)$, Minkowski's inequality	yields
	\begin{equation}\label{eq:thetwo}
	\begin{split}
	\kulb_+^{(l)}(g_{\brho_0, \bsigma_0}(\cdot|H_0), g_{\brho_0, \bsigma_0}(\cdot|H))&=
	\mathscr{V}_{H_0, \brho_0, \bsigma_0}^{(l)}(H_0, \brho_0, \bsigma_0; H, \brho, \bsigma)\\ &\leq 
	\mathscr{V}_{H_0, \brho_0, \bsigma_0}^{(l)}(H_0, \brho_0, \bsigma_0; H, \brho_0, \bsigma_0)\\
	&\quad +\mathscr{V}_{H_0, \brho_0, \bsigma_0}^{(l)}(H, \brho_0, \bsigma_0; H, \brho, \bsigma).
	\end{split}
	\end{equation}
	A change of variables allows to re-express the first term on the right-hand side of \eqref{eq:thetwo} as follows
	$$
	\kulb_+^{(l)}(g_{\brho_0, \bsigma_0}(\cdot|H_0), g_{\brho_0, \bsigma_0}(\cdot|H))=	\kulb_+^{(l)}(g_{\bone}(\cdot|H_0), g_{\bone}(\cdot|H)).
	$$ 
	Thus, by Proposition \ref{cor: New_KL_cor}, such a term can be made arbitrarily small by choosing $H \in B_{\delta_3,\infty}(H^*)$, for suitable $H^* \in \Hset'$ and $\delta_3>0$. As for the second term on the right-hand side of \eqref{eq:thetwo}, a change of variables and Minkowski's inequality yield
	\begin{equation*}
	\begin{split}
	&\mathscr{V}_{H_0, \brho_0, \bsigma_0}^{(l)}(H, \brho_0, \bsigma_0; H, \brho, \bsigma)\\
	&\quad \leq 
	\sum_{j=1}^d\left(
	\int_{(\bzero,\binf)}
	\left[
	\log^+
	\left\lbrace
	\frac{\rho_{0,j}}{\rho_{j}}
	y_j^{1-\rho_j/\rho_{0,j}}
	\frac{\sigma_j^{\rho_j}}{\sigma_{0,j}^{\rho_{j}}}
	\right\rbrace
	\right]^l g_\bone(\by|H_0)\diff \by
	\right)^{1/l}\\
	& \qquad + 	\left(
	\int_{(\bzero,\binf)}
	\left[
	\max
	\left\lbrace
	0, V(\by^{\brho/\brho_0} \{\bsigma_0/\bsigma\}^\brho|H)-V(\by|H)
	\right\rbrace
	\right]^l g_\bone(\by|H_0)\diff \by
	\right)^{1/l}\\
	& \qquad + 	\left(
	\int_{(\bzero,\binf)}
	\left[
	\log^+
	\left\lbrace
	\frac{\sum_{\part \in \allpart_d}
		\prod_{i=1}^{m}\{-V_{I_i}(\by|H)\}}{\sum_{\part \in \allpart_d}
		\prod_{i=1}^{m}\{-V_{I_i}(\by^{\brho/\brho_0} \{\bsigma_0/\bsigma\}^\brho|H)\}}
	\right\rbrace
	\right]^l g_\bone(\by|H_0)\diff \by
	\right)^{1/l}\\
	& \quad=:T_1+T_2+T_3.
	\end{split}
	\end{equation*}
	Applying once more Minkowski's inequality and exploiting the fact that $g_\bone(\cdot|H_0)$ has unit-Fr\'{e}chet margins we obtain
	\begin{equation*}
	\begin{split}
	T_1 &\leq \sum_{j=1}^d 
	\left[
	\log^+\left\lbrace
	\frac{\rho_{0,j}}{\rho_{j}}
	\frac{\sigma_j^{\rho_j}}{\sigma_{0,j}^{\rho_{j}}}
	\right\rbrace
	+\left(
	1-
	\frac{\rho_j}{\rho_{0,j}}\right) \indic(\rho_{0,j}>\rho_j)\gamma^{1/l}_{(l,+)} \right.\\
	& \qquad \qquad
	\left.
	+\left(
	\frac{\rho_j}{\rho_{0,j}}-1
	\right) \indic(\rho_{0,j}>\rho_j)\gamma_{(l,-)}^{1/l}
	\right],
	\end{split}
	\end{equation*}
	with $\gamma_{(+,l)},\gamma_{(l,-)}$ as in \eqref{eq:gammas}. Consequently, $T_1$ can be made arbitrarily small by choosing $\delta_1,\delta_2$ small enough.
	If $\delta_1, \delta_2$ also comply with \eqref{eq:deltas} for some $\varepsilon \in (0,1/2]$, with $(\brho_0,\bsigma_0)$ and $(\brho,\bsigma)$ in place of $(\brho,\bsigma)$ and $(\widetilde{\brho},\widetilde{\bsigma})$, respectively, the facts that
	$B_{\delta_1,1}(\brho_0)\subset B_{\delta_1,\infty}(\brho_0)$ and $B_{\delta_1,1}(\bsigma_0)\subset B_{\delta_1,\infty}(\bsigma_0)$ together with
	Lemma \ref{lem: aux}  entail that 
	\begin{equation*}
	\begin{split}
	T_2 \leq 
	\sum_{j=1}^d  \leq c_1 \delta_1 +c_2 \delta_2,
	\end{split}
	\end{equation*}
	where  $c_1, \, c_2$ are positive constant depending on $l$, $\brho_0$, $\bsigma_0$. 
	Furthermore, selecting $\delta_3$ such that
	$$
	0<\delta_3 < \min\left\lbrace 1,
	\frac{\inf_{\bv \in \mathring{\resimp}}h^*(\bv)}{2}, \min_{1 \leq j \leq d} \frac{p_j^*}{cd^{-1}}
	\right\rbrace^2
	$$
	where $c=1/\Gamma(d)$ and $p_j^* =H^*(\{ \be_j \})$, 	$j=1, \ldots,d$, by Lemma \ref{lem: approx} there exists $k>d$ and $H_k\in \Hset_k \cap B_{\delta_3, \infty}(H^*)\subset B_{2\delta_3, \infty}(H)$ with coefficients $\varphi_{\bkappa_1},\ldots,\varphi_{\bkappa_d},\varphi_\balpha,\balpha\in \Gamma_k$, satisfying  
	\begin{equation*}
	\begin{split}
	&\Vert h-h_{k-d} \Vert_\infty  \leq  \frac{2\delta}{\inf_{\bv \in \mathring{\resimp}}h^*(\bv) -\delta_3}(\inf_{\bv \in \mathring{\resimp}}h^*(\bv) -\delta_3) \leq \sqrt{\delta_3} \min\{h,h_{k-d}\},\\
	&\max
	\left(
	\frac{p_j - \varphi_{\bkappa_j}}{\varphi_{\bkappa_j}},
	\frac{\varphi_{\bkappa_j} - p_j}{p_j} 
	\right)
	\leq  \frac{2\delta_3}{p_j^*-\delta_3 cd^{-1}} \leq  \sqrt{\delta_3}, \quad j=1, \ldots,d,
	\end{split}
	\end{equation*}	
	wherefrom we deduce that facts analogous to (i)-(ii) in the proof of Lemma \ref{lem: ratio 1} hold true and we conclude that for all $\bz \in (\bzero, \binf)$ 
	\begin{equation}\label{eq:delta_bound}
	\max_{\part \in \allpart_d}\max_{I_i \in \part} \max \left( \log^+
	\left\lbrace 
	\frac{-V_{I_i}(\bz|H_k)}
	{-V_{I_i}(\bz|H)}
	\right\rbrace,
	\log^+
	\left\lbrace 
	\frac{-V_{I_i}(\bz|H)}
	{-V_{I_i}(\bz|H_k)}
	\right\rbrace
	\right)
	\leq \log(1+\sqrt{\delta_3}).
	\end{equation}
	Now, following steps like those in \eqref{eq:jensen} and applying Minkowski's inequality, we obtain
	\begin{equation*}
	\begin{split}
	T_3&\leq 
	\left(
	\int_{(\bzero,\binf)}
	\left[\max_{\part \in \allpart_d} \log^+
	\left\lbrace \prod_{i=1}^m
	\frac{-V_{I_i}(\by|H)}{-V_{I_i}(\by^{\brho/\brho_0} \{\bsigma_0/\bsigma\}^\brho|H)}
	\right\rbrace
	\right]^l g_\bone(\by|H_0)\diff \by
	\right)^{1/l}
	\\
	&\leq
	\left(
	\int_{(\bzero,\binf)}
	\left[d \max_{\part \in \allpart_d}\max_{I_i \in \part} \log^+
	\left\lbrace 
	\frac{-V_{I_i}(\by|H)}{-V_{I_i}(\by^{\brho/\brho_0} \{\bsigma_0/\bsigma\}^\brho|H)}
	\right\rbrace
	\right]^l g_\bone(\by|H_0)\diff \by
	\right)^{1/l}
	\\
	&\leq 
	\left(
	\int_{(\bzero,\binf)}
	\left[d \max_{\part \in \allpart_d}\max_{I_i \in \part} \log^+
	\left\lbrace 
	\frac{-V_{I_i}(\by|H_k)}{-V_{I_i}(\by^{\brho/\brho_0} \{\bsigma_0/\bsigma\}^\brho|H_k)}
	\right\rbrace
	\right]^l g_\bone(\by|H_0)\diff \by
	\right)^{1/l}
	\\
	&\qquad + 
	\left(
	\int_{(\bzero,\binf)}
	\left[d \max_{\part \in \allpart_d}\max_{I_i \in \part} \log^+
	\left\lbrace 
	\frac{-V_{I_i}(\by|H)}{-V_{I_i}(\by|H_k)}
	\right\rbrace
	\right]^l g_\bone(\by|H_0)\diff \by
	\right)^{1/l}
	\\
	& \qquad +\left(
	\int_{(\bzero,\binf)}
	\left[d \max_{\part \in \allpart_d}\max_{I_i \in \part} \log^+
	\left\lbrace 
	\frac{-V_{I_i}(\by^{\brho/\brho_0} \{\bsigma_0/\bsigma\}^\brho|H_k)}
	{-V_{I_i}(\by^{\brho/\brho_0} \{\bsigma_0/\bsigma\}^\brho|H)}
	\right\rbrace
	\right]^l g_\bone(\by|H_0)\diff \by
	\right)^{1/l}\\
	&=: T_3^{(1)} + T_3^{(2)} + T_3^{(3)}.
	\end{split}
	\end{equation*}
	The term $T_3^{(1)}$ can be made arbitrarily small by resorting to Lemma \ref{aux1} and choosing $\delta_1,\delta_2$ sufficiently small. 
	By the bound in \eqref{eq:delta_bound}, the same is true for $T_3^{(2)}$ and $T_3^{(3)}$. The proof is now complete.
\end{proof}
If $d=2$, a similar result can be established also when the angular density pertaining to $H_0$ is unbounded.
\begin{lemma}\label{lem:newkulblem2}
	Let $d=2$ and  $H_0\in \Hset_0$ satisfy property \ref{cond: infinite_new} in Definition \ref{cond:mvt_angular}. Let $\brho_0\in (\bzero,\binf)$, $\bsigma_0\in (\bzero,\binf)$. Then, for all $l \in \nat_+$ and $\epsilon>0$ there exist $H^* \in \Hset'$, $\delta_1,\delta_2, \delta_3>0$ such that
	$$
	\kulb(g_{\brho_0, \bsigma_0}(\cdot|H_0), g_{\brho, \bsigma}(\cdot|H))<\epsilon,
	$$
	for all $\brho \in B_{\delta_1,1}(\brho_0)$, $\bsigma \in B_{\delta_2,1}(\bsigma_0)$ and $H \in B_{\delta_3,\infty}(H^*)$. Further assuming that, for some $s>0$,
	$$
	\int_0^1 (h_0(t))^{1+s}\diff t<\infty,
	$$
	then, for any $l \in \nat_+$, $H^*$ and $\delta_j$, $j=1, \ldots,3$, can be chosen in such a way that \eqref{eq:wehave} is also satisfied.
\end{lemma}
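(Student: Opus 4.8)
The plan is to follow the proof of Lemma~\ref{cor: alpha_kulb} step by step, replacing the single place where boundedness of the true angular density is exploited --- namely the appeal to Proposition~\ref{cor: New_KL_cor} --- by the bivariate unbounded-density result Proposition~\ref{prop:newprop_KL}. Fix $l\in\nat_+$ (take $l=1$ for the first assertion) and $\epsilon>0$. For any $H\in\Hset$, $\brho\in(\bzero,\binf)$, $\bsigma\in(\bzero,\binf)$, subadditivity of $\log^+$ together with Minkowski's inequality yields the same split as in \eqref{eq:thetwo},
\begin{equation*}
\begin{split}
\kulb_+^{(l)}(g_{\brho_0,\bsigma_0}(\cdot|H_0),g_{\brho,\bsigma}(\cdot|H))
&\le \mathscr{V}_{H_0,\brho_0,\bsigma_0}^{(l)}(H_0,\brho_0,\bsigma_0;H,\brho_0,\bsigma_0)\\
&\quad + \mathscr{V}_{H_0,\brho_0,\bsigma_0}^{(l)}(H,\brho_0,\bsigma_0;H,\brho,\bsigma),
\end{split}
\end{equation*}
and a change of variables identifies the first term on the right with $\kulb_+^{(l)}(g_\bone(\cdot|H_0),g_\bone(\cdot|H))$.

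\emph{The endpoint term.} First I would invoke Proposition~\ref{prop:newprop_KL}. For the first assertion it furnishes $H^*\in\Hset'$ and $\delta_3>0$ with $\kulb(g_\bone(\cdot|H_0),g_\bone(\cdot|H))<\epsilon/2$ for every $H\in B_{\delta_3,\infty}(H^*)$; explicitly $H^*$ is the angular pm whose density equals $h_0$ away from the vertices of $\resimp$ and is replaced near them by the bounded continuous functions $\gamma_{\epsilon_1},\gamma_{\epsilon_2}$ appearing in that proof, and one checks directly that $H^*\in\Hset'$ (bounded density with positive infimum, continuous extension to $\resimp$, positive vertex masses). For the second assertion, the extra hypothesis $\int_0^1 h_0^{1+s}<\infty$ is precisely what the second part of Proposition~\ref{prop:newprop_KL} requires in order to bound $\kulb_+^{(l)}(g_\bone(\cdot|H_0),g_\bone(\cdot|H))$ by $\epsilon/2$ on a ball $B_{\delta_3,\infty}(H^*)$ for arbitrary $l\in\nat_+$, the $R_3$-type contribution there being controlled via $\lim_{x\to\infty}\log(x)x^{-s/l}=0$.

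\emph{The marginal-perturbation term.} With $H^*$ now fixed, I would estimate $\mathscr{V}_{H_0,\brho_0,\bsigma_0}^{(l)}(H,\brho_0,\bsigma_0;H,\brho,\bsigma)$ verbatim as in the proof of Lemma~\ref{cor: alpha_kulb}: a change of variables and a further Minkowski split produce three terms $T_1+T_2+T_3$. Here $T_1$ gathers the marginal $\log$-ratio contributions and is small once $\Vert\brho-\brho_0\Vert_1$ and $\Vert\bsigma-\bsigma_0\Vert_1$ are small, using that $g_\bone(\cdot|H_0)$ has unit-Fr\'{e}chet margins so the moments $\gamma_{(l,\pm)}$ are finite; $T_2$ is the exponent-function perturbation, bounded by Lemma~\ref{lem: aux} once $\delta_1,\delta_2$ are chosen so that \eqref{eq:deltas} holds with $(\brho_0,\bsigma_0)$ in place of $(\brho,\bsigma)$; and $T_3$ is the partition-weight ratio, handled by the Jensen step of \eqref{eq:jensen}, a triangle inequality against an approximating polynomial pm $H_k\in\Hset_k\cap B_{2\delta_3,\infty}(H)$ supplied by Lemma~\ref{lem: approx} (legitimate precisely because $H^*\in\Hset'$), and Lemma~\ref{aux1}. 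Shrinking $\delta_1,\delta_2$ and, if necessary, $\delta_3$ makes $T_1+T_2+T_3<\epsilon/2$, and combining with the endpoint bound closes both assertions (for the first one additionally using $\kulb\le\kulb_+^{(1)}$).

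\emph{Main obstacle.} The only genuinely non-routine ingredient is the endpoint term when $h_0$ blows up at the vertices of $\resimp$, and it is entirely packaged inside Proposition~\ref{prop:newprop_KL}; the remaining work is bookkeeping --- ensuring $H^*$ can be taken in $\Hset'$ so that Lemmas~\ref{lem: approx}, \ref{lem: ratio 1} and \ref{aux1} apply as stated, and checking that the constants $\gamma_{(l,\pm)}$ and $\upsilon_l$ entering $T_1$--$T_3$ are finite for every $l$, which they are since they are moments of (folded) Gumbel and Gamma variables.
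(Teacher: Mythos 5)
Your treatment of the second assertion is correct and coincides with the paper's: decompose $\kulb_+^{(l)}$ as in \eqref{eq:thetwo}, control the endpoint term $\kulb_+^{(l)}(g_\bone(\cdot|H_0),g_\bone(\cdot|H))$ via the second part of Proposition \ref{prop:newprop_KL} (which is exactly where the hypothesis $\int_0^1 h_0^{1+s}<\infty$ enters), and bound the marginal-perturbation term by the $T_1+T_2+T_3$ machinery of Lemma \ref{cor: alpha_kulb}.

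For the first assertion, however, there is a genuine gap. You set up the split at the level of $\kulb_+^{(1)}$ and then invoke $\kulb\le\kulb_+^{(1)}$, so the endpoint term you must control is $\mathscr{V}_{H_0,\brho_0,\bsigma_0}^{(1)}(H_0,\brho_0,\bsigma_0;H,\brho_0,\bsigma_0)=\kulb_+^{(1)}(g_\bone(\cdot|H_0),g_\bone(\cdot|H))$, whereas the first part of Proposition \ref{prop:newprop_KL} only bounds the plain divergence $\kulb(g_\bone(\cdot|H_0),g_\bone(\cdot|H))$. These are not interchangeable when $h_0$ blows up at the vertices: in the proof of that proposition the plain-KL bound relies on the negative contribution $T_2\le0$ and a Jensen step applied to the signed logarithm, while the positive-part analogue ($R_3$ there) is exactly the quantity that requires the additional integrability of $h_0^{1+s}$. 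In other words, without the extra hypothesis you cannot assert that $\kulb_+^{(1)}(g_\bone(\cdot|H_0),g_\bone(\cdot|H))$ is small, so your first-assertion argument does not close. The repair is the paper's decomposition, which keeps the endpoint as a plain KL and only positivizes the marginal piece:
\begin{equation*}
\begin{split}
\kulb(g_{\brho_0, \bsigma_0}(\cdot|H_0), g_{\brho, \bsigma}(\cdot|H))&\leq \kulb(g_{\brho_0, \bsigma_0}(\cdot|H_0), g_{\brho_0, \bsigma_0}(\cdot|H))\\
&\quad + \mathscr{V}_{H_0, \brho_0, \bsigma_0}^{(1)}(H, \brho_0, \bsigma_0; H, \brho, \bsigma),
\end{split}
\end{equation*}
which follows from $\log(f/h)=\log(f/g)+\log(g/h)\le\log(f/g)+\log^+(g/h)$. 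The first term on the right equals $\kulb(g_\bone(\cdot|H_0),g_\bone(\cdot|H))$ after a change of variables and is handled by the first part of Proposition \ref{prop:newprop_KL}; the second term is the one you already bound correctly. The rest of your argument (choice of $H^*\in\Hset'$, the applicability of Lemmas \ref{lem: approx}, \ref{lem: ratio 1} and \ref{aux1}, and the finiteness of $\gamma_{(l,\pm)}$ and $\upsilon_l$) is sound.
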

\begin{proof}
	To prove the first claim, observe that
	\begin{equation*}
		\begin{split}
			\kulb(g_{\brho_0, \bsigma_0}(\cdot|H_0), g_{\brho, \bsigma}(\cdot|H))&\leq \kulb(g_{\brho_0, \bsigma_0}(\cdot|H_0), g_{\brho_0, \bsigma_0}(\cdot|H))\\
		&\quad + \mathscr{V}_{H_0, \brho_0, \bsigma_0}^{(1)}(H, \brho_0, \bsigma_0; H, \brho, \bsigma).
		\end{split}
	\end{equation*}
The first term on the right-hand side equals $\kulb(g_{\bone}(\cdot|H_0), g_{\bone}(\cdot|H))$ and can be made arbitrarily small by resorting to the first result in Proposition \ref{prop:newprop_KL}. As for the second term, it can be bounded from above by proceeding as in the proof of Lemma \ref{cor: alpha_kulb}, hence the result.

Similarly, to prove the second claim, note that inequality \eqref{eq:thetwo} holds true and that the first term on the right-hand side therein equals
$$
\kulb_{+}^{(l)}(g_\bone(\cdot|H_0),g_\bone(\cdot|H)).
$$
Therefore, it can be made arbitrarily small appealing to the second result in Proposition \ref{prop:newprop_KL}. Once more, the second term can be bounded from above by proceeding as in the proof of Lemma \ref{cor: alpha_kulb}, yielding the conclusion.
\end{proof}

For given $H_0\in \Hset_0$, $\brho_0\in (\bzero,\binf)$, $\bsigma_0\in (\bzero,\binf)$ and for all $\epsilon>0$ define
$$
\mathcal{V}_\epsilon:= 
\{(H, \brho,\bsigma) \in \Hset \times(\bzero,\binf)	\times(\bzero,\binf): \,
\kulb(g_{\brho_0, \bsigma_0}(\cdot|H_0), g_{\brho, \bsigma}(\cdot|H))<\epsilon
\}.
$$
moreover, for all $l \in \nat_+$, define 
\begin{equation}\label{eq:vsetfrec}
\mathcal{V}_\epsilon^{(l)}:= 
\{(H, \brho,\bsigma) \in \Hset \times(\bzero,\binf)	\times(\bzero,\binf): \,
\kulb_+^{(l)}(g_{\brho_0, \bsigma_0}(\cdot|H_0), g_{\brho, \bsigma}(\cdot|H)))<\epsilon
\}.
\end{equation}
We are now in the position to establish the following result, which is used in the proof of both Theorem \ref{th:alpha_frec} and Lemma \ref{lem:pseudokulb} (auxiliary to Theorem \ref{th:rem_cont_frec}).

\begin{cor}\label{lem: KLalpha}
	Under Conditions \ref{cond:genprior}\ref{cond:indep}--\ref{cond:angularpmprior} and \ref{cond:genprior}\ref{cond:compactprior} , for all $\epsilon>0$ we have that
	$$
	\Pi_{\Hset \times \bvarTheta}\left( \mathcal{V}_\epsilon \right)>0.
	$$   
	Thus, the induced prior $\Pi_{\Gset_{\bvarTheta}}$ on the Borel sets of
	$
	\Gset_{\bvarTheta}=\{g_{\brho,\bsigma}(\cdot|H): \, H \in\Hset, (\brho,\bsigma)\in (\bzero,\binf)\times(\bzero,\binf) \}
	$
	possesses the Kullback-Leibler property. Further assuming that Condition  \ref{cond:frecextend}\ref{cond:strongertruedens} is satisfied, we also have 
	$$
	\Pi_{\Hset \times \bvarTheta}\left(\cap_{l=1}^4 \mathcal{V}_\epsilon^{(l)}\right)>0.
	$$
\end{cor}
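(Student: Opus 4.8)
The plan is to deduce the statement by chaining the auxiliary Kullback--Leibler lemmas already established with the $\dist_\infty$-property guaranteed by Condition \ref{cond:angularprior}. The one point needing care is that the approximating angular pm and the radii produced by Lemma \ref{cor: alpha_kulb} depend on the divergence order $l$, so I would first record a monotonicity fact: since $g_{\brho_0,\bsigma_0}(\cdot|H_0)$ is a probability density, Lyapunov's inequality (applied to $x\mapsto x^{l_2/l_1}$) gives $\kulb\le\kulb_+^{(1)}\le\kulb_+^{(2)}\le\kulb_+^{(3)}\le\kulb_+^{(4)}$ for every fixed second argument. Consequently $\cap_{l=1}^4\mathcal{V}_\epsilon^{(l)}=\mathcal{V}_\epsilon^{(4)}$ and $\mathcal{V}_\epsilon^{(1)}\subset\mathcal{V}_\epsilon$, so it suffices to prove $\Pi_{\Hset\times\bvarTheta}(\mathcal{V}_\epsilon)>0$ for the first assertion and $\Pi_{\Hset\times\bvarTheta}(\mathcal{V}_\epsilon^{(4)})>0$ for the second.

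Next I would split according to the behaviour of $h_0$. By Condition \ref{cond:genprior}\ref{cond:angularpmprior}, $H_0\in\Hset_0$, so $h_0$ either admits a continuous extension to $\resimp$ (property \ref{cond:mvt_angular}\ref{cond: finite_new}) or, only when $d=2$, satisfies property \ref{cond:mvt_angular}\ref{cond: infinite_new}; in the latter case Condition \ref{cond:frecextend}\ref{cond:strongertruedens} supplies $s>0$ with $\int_0^1 h_0(t)^{1+s}\diff t<\infty$. In the former case, Lemma \ref{cor: alpha_kulb} (invoked with $l=1$ for the first assertion and $l=4$ for the second) provides $H^*\in\Hset'$ and $\delta_1,\delta_2,\delta_3>0$ such that the relevant divergence from $g_{\brho_0,\bsigma_0}(\cdot|H_0)$ to $g_{\brho,\bsigma}(\cdot|H)$ is below $\epsilon$ for every $(H,\brho,\bsigma)$ in the product $N:=B_{\delta_3,\infty}(H^*)\times B_{\delta_1,1}(\brho_0)\times B_{\delta_2,1}(\bsigma_0)$; in the $d=2$ unbounded case the same conclusion follows from Lemma \ref{lem:newkulblem2}, using its first part for $\mathcal{V}_\epsilon$ and its second part --- which needs precisely the integrability of $h_0^{1+s}$ --- for $\mathcal{V}_\epsilon^{(4)}$. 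Hence $N\subset\mathcal{V}_\epsilon$, respectively $N\subset\mathcal{V}_\epsilon^{(4)}$.

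It then remains to see that $N$ carries positive prior mass. By Condition \ref{cond:genprior}\ref{cond:indep} the prior factorises, $\Pi_{\Hset\times\bvarTheta}=\Pi_\Hset\times\Pi_{\bvarTheta}$. The assumed condition on $\Pi_{\bvarTheta}$ (Condition \ref{cond:genprior}\ref{cond:compactprior}, or equally \ref{cond:genprior}\ref{cond:shapescale}) ensures $\Pi_{\bvarTheta}$ charges every neighbourhood of $(\brho_0,\bsigma_0)$, whence $\Pi_{\bvarTheta}(B_{\delta_1,1}(\brho_0)\times B_{\delta_2,1}(\bsigma_0))>0$. On the angular side, Condition \ref{cond:genprior}\ref{cond:angularpmprior} gives that $\Pi_\Hset$ obeys Condition \ref{cond:angularprior}; combining this with Lemma \ref{lem: approx} exactly as in the proof of Theorem \ref{theo:post_consistency_mvt} shows that $\Pi_\Hset$ has the $\dist_\infty$-property (Definition \ref{def:d_inf_prop}) at every point of $\Hset'$, in particular at $H^*$, so $\Pi_\Hset(B_{\delta_3,\infty}(H^*))>0$ (positive inner probability already suffices). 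Multiplying, $\Pi_{\Hset\times\bvarTheta}(N)>0$, and since $N$ is contained in $\mathcal{V}_\epsilon$ (resp. $\mathcal{V}_\epsilon^{(4)}$), which are Borel because $(H,\bvartheta)\mapsto g_\bvartheta(\cdot|H)$ is Borel and the pertinent Kullback--Leibler balls in $(\Gset_{\bvarTheta},\dist_H)$ are Borel, we obtain $\Pi_{\Hset\times\bvarTheta}(\mathcal{V}_\epsilon)>0$ and, via the monotonicity above, $\Pi_{\Hset\times\bvarTheta}(\cap_{l=1}^4\mathcal{V}_\epsilon^{(l)})=\Pi_{\Hset\times\bvarTheta}(\mathcal{V}_\epsilon^{(4)})>0$. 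Finally, with $\mathcal{K}_\epsilon=\{g\in\Gset_{\bvarTheta}:\kulb(g_{\brho_0,\bsigma_0}(\cdot|H_0),g)<\epsilon\}$ one has $\mathcal{V}_\epsilon=\phi_{\Hset\times\bvarTheta}^{-1}(\mathcal{K}_\epsilon)$ (the map of \eqref{eq:genmap}), so $\Pi_{\Gset_{\bvarTheta}}(\mathcal{K}_\epsilon)=\Pi_{\Hset\times\bvarTheta}(\mathcal{V}_\epsilon)>0$ for all $\epsilon>0$, i.e. $g_{\brho_0,\bsigma_0}(\cdot|H_0)$ lies in the Kullback--Leibler support of $\Pi_{\Gset_{\bvarTheta}}$.

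The only genuinely substantive step is the uniform-in-$l$ issue noted above: Lemma \ref{cor: alpha_kulb} produces $l$-dependent $H^*$ and radii, while the second assertion demands one neighbourhood good for $l=1,\dots,4$ simultaneously; Lyapunov's inequality is what reduces everything to $l=4$ (at the cost of the integrability hypothesis in Condition \ref{cond:frecextend}\ref{cond:strongertruedens} in the $d=2$ unbounded case). Everything else --- factorising the prior, passing to the $\dist_\infty$-property, and the Borel measurability of $\mathcal{V}_\epsilon$ and $\mathcal{V}_\epsilon^{(l)}$ --- is routine bookkeeping on top of results already in place.
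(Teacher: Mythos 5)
Your overall architecture matches the paper's: combine Lemma \ref{cor: alpha_kulb} (or Lemma \ref{lem:newkulblem2} in the bivariate unbounded-density case, where Condition \ref{cond:frecextend}\ref{cond:strongertruedens} is needed) with the $\dist_\infty$-property that Condition \ref{cond:angularprior} yields via Lemma \ref{lem: approx}, factorise the prior, and check measurability. For the first assertion, $\Pi_{\Hset\times\bvarTheta}(\mathcal{V}_\epsilon)>0$, this is exactly the paper's argument and is sound (and your containment $\mathcal{V}_\epsilon^{(1)}\subset\mathcal{V}_\epsilon$ is correct, since $\kulb\le\kulb_+^{(1)}$).

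The gap is in your reduction of the second assertion to $l=4$. With the definition used for the sets $\mathcal{V}_\epsilon^{(l)}$, namely $\kulb_+^{(l)}(f,g)=\int[\log^+\{f/g\}]^l f\,\diff\nu$ (a raw $l$-th moment, no $l$-th root), Lyapunov's inequality gives monotonicity in $l$ of the normalised quantities $(\kulb_+^{(l)})^{1/l}$, not of $\kulb_+^{(l)}$ itself. If $Y=\log^+\{f/g\}$ takes values in $(0,1)$ on a set of positive $f$-measure --- the typical situation for parameters near the truth --- then $\int Y f\,\diff\nu$ can exceed $\int Y^4 f\,\diff\nu$, so the chain $\kulb_+^{(1)}\le\kulb_+^{(2)}\le\kulb_+^{(3)}\le\kulb_+^{(4)}$ fails; what Lyapunov actually yields is $\mathcal{V}_\epsilon^{(4)}\subset\mathcal{V}_{\epsilon^{1/4}}^{(1)}$, which is useless for small $\epsilon$, and in particular $\cap_{l=1}^4\mathcal{V}_\epsilon^{(l)}\ne\mathcal{V}_\epsilon^{(4)}$ in general. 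Hence proving $\Pi_{\Hset\times\bvarTheta}(\mathcal{V}_\epsilon^{(4)})>0$ does not deliver the intersection. The repair is the route the paper takes: invoke Lemma \ref{cor: alpha_kulb} (resp.\ the second part of Lemma \ref{lem:newkulblem2}) for each $l\in\{1,\dots,4\}$ and note that a \emph{single} $H^*\in\Hset'$ and a single triple of radii can be chosen to work for all four orders simultaneously --- there are only finitely many $l$, and the constants in the underlying constructions (Lemma \ref{lem: ratio 1}, Lemma \ref{prop: new_KL_prop}, Proposition \ref{cor: New_KL_cor}, Lemma \ref{aux1}) are constrained only by upper bounds depending on $l$, so one takes minima over $l\le 4$. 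The resulting neighbourhood $N$ then lies in $\cap_{l=1}^4\mathcal{V}_\epsilon^{(l)}$, and your positive-mass and measurability arguments finish the proof unchanged.
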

\begin{proof}
	Preliminarily observe that, since  $(\bx, H,\brho,\bsigma) \mapsto
	g_{\brho,\bsigma}(\bx|H)$ is tacitly chosen as a Borel measurable map between $(\bzero,\binf)\times\{\Hset\times(\bzero,\binf)\times(\bzero,\binf)\}$ and $[0,\infty]$, arguments similar to those in Appendix A of \cite{r48}
	guarantee that $\mathcal{V}_\epsilon\in \sigBor_\Hset\otimes \sigBor_{\bvarTheta}$ and $\mathcal{V}_\epsilon^{(l)}\in \sigBor_\Hset\otimes \sigBor_{\bvarTheta}$, $l=1,\ldots,4$, where $\sigBor_\Hset$ is the Borel $\sigma$-algebra of $(\Hset,\dist_W)$ and $\sigBor_{\bvarTheta}$ is the Borel $\sigma$-algebra induced by the $L_1$-topology on $(\bzero,\binf) \times (\bzero,\binf)$.

	By Lemmas \ref{cor: alpha_kulb}-\ref{lem:newkulblem2}, whenever $H \in \Hset_0$, the divergences
	$$
	\kulb(g_{\brho_{0},\bsigma_{0}}(\cdot|H_0),g_{\brho,\bsigma}(\cdot|H))
	$$
	can be made arbitrarily small over a parameter subset $B_{\delta_3,\infty}(H^*)\times B_{\delta_1,1}(\brho_0)\times B_{\delta_2,1}(\bsigma_0)$, for suitable $H^* \in \Hset'$ and suitably small radii $\delta_1$, $\delta_2$, $\delta_3$. 
	Under the additional assumption in Condition  \ref{cond:frecextend}\ref{cond:strongertruedens}, the same is true for the higher-order positive divergences
	$$
	\kulb_+^{(l)}(g_{\brho_{0},\bsigma_{0}}(\cdot|H_0),g_{\brho,\bsigma}(\cdot|H)), \quad l=1,\ldots,4.
	$$
	On one hand,  Condition \ref{cond:genprior}\ref{cond:angularpmprior} guarantees the existence of a $\sigBor_\Hset$-measurable subset of $ B_{\delta_3,\infty}(H^*)$ with positive $\Pi_\Hset$-probability, 
	see the Kullback-Leibler property section in the proof of Theorem \ref{theo:post_consistency_mvt} for details. On the other hand,  $B_{\delta_1,1}(\brho_0)\times B_{\delta_2,1}(\bsigma_0)$ is an open set with respect to $L_1$-topology on $(\bzero,\binf) \times (\bzero,\binf)\equiv(0,\infty)^{2d}$, hence it is $\sigBor_{\bvarTheta}$-measurable and, by Condition \ref{cond:genprior}\ref{cond:shapescale},  $\Pi_{\bvarTheta}(B_{\delta_1,1}(\brho_0)\times B_{\delta_2,1}(\bsigma_0))>0$. The result now follows.
\end{proof}

Before stating the last lemma, we recall that for $\rho,\sigma>0$, $x>0$,  $G_{\rho,\sigma}(x):=e^{-(x/\sigma)^{-\rho}}$ denotes the (univariate) two-parameter Fr\'echet cdf.

\begin{lemma}\label{lem: tests}
	For each  $\epsilon>0$, there exist Borel-measurable functions $t_{n,j}:  (0, \infty)^n\mapsto\{0,1\}$, $j=1, \ldots,d$, $n \in \mathbb{N}_+$, and positive constants $c_j(\epsilon)$, $j=1, \ldots,d$, such that
	$$
	G_{\rho_{0,j},\sigma_{0,j}}^{(n)} t_{n,j} \leq 2e^{-nc_j(\epsilon)}, \quad \sup_{(\rho_j,\sigma_j) \in B^\complement_{\epsilon,\infty}((\rho_{0,j}, \sigma_{0,j}))} G_{\rho_{j},\sigma_j}^{(n)}(1- t_{n,j}) \leq 2e^{-nc_j(\epsilon)},
	$$
	where, for all $\rho,\sigma>0$, 
	$$
	G_{\rho,\sigma}^{(n)} t_{n,j} =\int_{(0, \infty)^n}t_{n,j}(x_1,\ldots,x_n) \prod_{i=1}^n \frac{\rho e^{-(x_i/\sigma)^{-\rho}}}{\sigma(x_i/\sigma)^{\rho+1}}\diff x_i. 
	$$
\end{lemma}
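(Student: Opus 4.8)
The plan is to build $t_{n,j}$ directly from the empirical distribution function of the $j$-th marginal sample and to reduce both error bounds to the Dvoretzky--Kiefer--Wolfowitz (DKW) inequality, so that the only substantive point is a \emph{uniform separation estimate} for the two-parameter Fr\'echet family in the Kolmogorov--Smirnov metric. Fix $\epsilon>0$ and $j\in\{1,\ldots,d\}$, write $\theta_0:=(\rho_{0,j},\sigma_{0,j})$ and $G_\theta:=G_{\rho,\sigma}$ for $\theta=(\rho,\sigma)\in(0,\infty)^2$, and let $\widehat F_{n,j}(x):=n^{-1}\sum_{i=1}^n\indic(X_i\le x)$. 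For a constant $\kappa=\kappa(\epsilon)>0$ to be chosen, set
$$
t_{n,j}(x_1,\ldots,x_n):=\indic\Big(\sup_{x>0}\big|\widehat F_{n,j}(x)-G_{\theta_0}(x)\big|>\kappa/2\Big),
$$
which is Borel measurable since the supremum may be restricted to the rationals and $\widehat F_{n,j}$ is a measurable function of $(x_1,\ldots,x_n)$.

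\textbf{Key lemma (separation).} The core step I would establish is: for every $\epsilon>0$ there is $\kappa(\epsilon)>0$ with
$$
\inf_{\theta\in B^\complement_{\epsilon,\infty}(\theta_0)}\ \sup_{x>0}\big|G_\theta(x)-G_{\theta_0}(x)\big|\ \ge\ \kappa(\epsilon)>0 .
$$
I would argue by contradiction on the compactification $[0,\infty]^2$ of the parameter space: if some sequence $\theta_k=(\rho_k,\sigma_k)\in B^\complement_{\epsilon,\infty}(\theta_0)$ had $\sup_{x>0}|G_{\theta_k}(x)-G_{\theta_0}(x)|\to0$, pass to a subsequence with $\theta_k\to\theta_*=(\rho_*,\sigma_*)\in[0,\infty]^2$. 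If $\theta_*\in(0,\infty)^2$, then $\theta_*\in B^\complement_{\epsilon,\infty}(\theta_0)$ forces $\theta_*\neq\theta_0$, while $G_{\theta_k}(x)=\exp(-(x/\sigma_k)^{-\rho_k})\to G_{\theta_*}(x)$ for all $x>0$; the assumed uniform convergence then gives $G_{\theta_*}\equiv G_{\theta_0}$, contradicting identifiability of the Fr\'echet family. If $\theta_*$ lies on the boundary ($\rho_*\in\{0,\infty\}$ or $\sigma_*\in\{0,\infty\}$), a short case analysis of $(x/\sigma_k)^{-\rho_k}$ shows that $G_{\theta_k}(\cdot)$ converges pointwise on $(0,\infty)$ (along a further subsequence if needed) either to a constant in $[0,1]$ (whenever $\rho_*=0$, or $\rho_*\in(0,\infty)$ with $\sigma_*\in\{0,\infty\}$, or $\rho_*=\infty$ with $\sigma_*\in\{0,\infty\}$) or to a single jump $\indic_{(\sigma_*,\infty)}$ (when $\rho_*=\infty$, $\sigma_*\in(0,\infty)$); in every case the limit fails to equal, pointwise, the continuous strictly increasing CDF $G_{\theta_0}$, again contradicting $\sup_x|G_{\theta_k}-G_{\theta_0}|\to0$. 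Hence the infimum is a strictly positive number $\kappa(\epsilon)$.

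\textbf{Conclusion from DKW.} With $\kappa=\kappa(\epsilon)$ fixed, Massart's form of DKW applied to the continuous CDF $G_{\theta_0}$ gives $G^{(n)}_{\rho_{0,j},\sigma_{0,j}}t_{n,j}=\mathbb P^{(n)}_{\theta_0}\big(\|\widehat F_{n,j}-G_{\theta_0}\|_\infty>\kappa/2\big)\le 2e^{-n\kappa^2/2}$, which is the first inequality with $c_j(\epsilon):=\kappa(\epsilon)^2/2$. For the second, fix $\theta\in B^\complement_{\epsilon,\infty}(\theta_0)$; on $\{t_{n,j}=0\}$ we have $\|\widehat F_{n,j}-G_{\theta_0}\|_\infty\le\kappa/2$, whereas the separation lemma gives $\|G_\theta-G_{\theta_0}\|_\infty\ge\kappa$, so by the triangle inequality $\|\widehat F_{n,j}-G_\theta\|_\infty\ge\kappa/2$. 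Thus $\{1-t_{n,j}=1\}\subseteq\{\sup_{x>0}|\widehat F_{n,j}-G_\theta|\ge\kappa/2\}$, and DKW applied to the continuous CDF $G_\theta$ yields $G^{(n)}_{\rho,\sigma}(1-t_{n,j})\le 2e^{-n\kappa^2/2}$ \emph{uniformly} over $\theta\in B^\complement_{\epsilon,\infty}(\theta_0)$. The main obstacle is exactly the separation lemma: because the alternative set $B^\complement_{\epsilon,\infty}(\theta_0)$ is non-compact, one must rule out uniformly the degeneration of $G_\theta$ as $\rho$ or $\sigma$ tends to $0$ or $\infty$, which is what the compactification argument above accomplishes. (A cosmetic alternative would be to pass to the Gumbel location--scale experiment via $X_i\mapsto\log X_i$ and invoke the general uniformly-consistent-estimator machinery of \cite{r999}, but the empirical-CDF route is self-contained and produces the factor $2$ required in the statement.)
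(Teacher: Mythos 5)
Your proposal is correct and follows the same overall route as the paper: a Kolmogorov--Smirnov-distance test against $G_{\rho_{0,j},\sigma_{0,j}}$, a uniform separation estimate over $B^\complement_{\epsilon,\infty}((\rho_{0,j},\sigma_{0,j}))$, and the DKW inequality applied under both the null and the alternative via the (reverse) triangle inequality. The one genuine difference lies in how the separation constant is obtained: the paper proves $\inf_{(\rho,\sigma)\in B^\complement_{\epsilon,\infty}}\dist_{KS}(G_{\rho,\sigma},G_{\rho_{0,j},\sigma_{0,j}})>0$ by direct computation, evaluating the two Fr\'echet distribution functions at explicitly chosen points (the scale $\sigma_{0,j}$ and a median-type quantile) and splitting into the cases $|\sigma-\sigma_{0,j}|\geq\epsilon$ versus $|\rho-\rho_{0,j}|\geq\epsilon$, which yields an explicit value of $\epsilon_j$ and hence of $c_j(\epsilon)$; you instead argue by contradiction on the compactification $[0,\infty]^2$, classifying the possible degenerate limits of $G_{\rho_k,\sigma_k}$ as the parameters escape to the boundary. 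Your soft argument is shorter and generalizes more readily to other identifiable continuous families, at the cost of a non-explicit constant; the paper's computation buys explicitness. One cosmetic point: the limit $\theta_*$ of a sequence in the open set $B^\complement_{\epsilon,\infty}(\theta_0)$ need not itself lie in that set, but it does satisfy $\Vert\theta_*-\theta_0\Vert_\infty\geq\epsilon>0$, which is all your identifiability step requires.
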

\begin{proof}
	First, assume $(\rho,\sigma)\in B^\complement_{\epsilon,\infty}((\rho_{0,j},\sigma_{0,j}))$ and, in particular, $\sigma \in(\sigma_{0,j}-\epsilon,\sigma_{0,j}+\epsilon)^\complement$. Then, it is not difficult to check that
	\begin{equation*}
	\begin{split}
	\dist_{KS}(G_{\rho,\sigma}, G_{\rho_{0,j},\sigma_{0,j}})
	&\geq  \min\left\lbrace 
	\exp\left\lbrace
	-\left(\frac{\sigma_{0,j}}{\sigma_{0,j}+\epsilon}\right)^{\rho_0}
	\right\rbrace
	-e^{-1},
	e^{-1}-\exp\left\lbrace
	-\left(\frac{\sigma_{0,j}}{\sigma_{0,j}-\epsilon}\right)^{\rho_0}
	\right\rbrace
	\right\rbrace.
	\end{split}
	\end{equation*}
	Next, assume $(\rho,\sigma)\in B^\complement_{\epsilon,\infty}((\rho_{0,j},\sigma_{0,j}))$ and $\sigma \in(\sigma_{0,j}-\epsilon,\sigma_{0,j}+\epsilon)$, thus it must be that  $\rho \in(\rho_{0,j}-\epsilon,\rho_{0,j}+\epsilon)^\complement$. In this case, it holds that
	\begin{equation*}
	\begin{split}
	&\dist_{KS}(G_{\rho,\sigma}, G_{\rho_{0,j},\sigma_{0,j}})\\
	&\,\geq 
	\min \left\lbrace
	e^{-1/2}-e^{-2^{-(\rho_{0,j}-\epsilon)/\rho_{0,j}}}, e^{-1/2}-e^{-2^{-\rho_{0,j}/(\rho_{0,j}+\epsilon)}},
	e^{-2^{-(\rho_{0,j}+\epsilon)/\rho_{0,j}}}-e^{-1/2},\right.\\
	& \hspace{3.8em} \left. e^{-2^{-\rho_{0,j}/(\rho_{0,j}-\epsilon)}}-e^{-1/2}
	\right\rbrace.	\end{split}
	\end{equation*}
	Therefore, we deduce	
	$$
	\epsilon_j:=\inf_{(\rho,\sigma)\in B^\complement_{\epsilon,\infty}((\rho_{0,j},\sigma_{0,j}))}\dist_{KS}(G_{\rho,\sigma}, G_{\rho_{0,j},\sigma_{0,j}})>0.
	$$
	Define the empirical distribution function map
	$$
	\widehat{G}_{n,j}: (x_1, \ldots,x_n)\mapsto\left\lbrace \frac{1}{n}\sum_{i=1}^n \indic(x_i \leq x)\right\rbrace_{x\in (0, \infty)}, 
	$$	
	and the associated test functions
	$$
	t_{n,j}:(x_1, \ldots,x_n)\mapsto\indic \left( \dist_{KS}(\widehat{G}_{n,j}, G_{\rho_{0,j}, \sigma_{0,j}})>\epsilon_j/2 \right), \quad j=1,\ldots,d.
	$$
	For each $j=1, \ldots,d$, $(\rho_j,\sigma_j) \in B^\complement_{\epsilon,\infty}((\rho_{0,j}, \sigma_{0,j}))$, reverse triangular inequality and the first formula on page 253 of \cite{r289}
	yield
	\begin{equation*}
	\begin{split}
	G_{\rho_{j},\sigma_j}^{(n)}(1- t_{n,j})&=G_{\rho_{j},\sigma_j}^{(n)}\{\dist_{KS}(\widehat{G}_{n,j}, G_{\rho_{0,j},\sigma_{0,j}})\leq \epsilon_j/2\}\\
	& \leq G_{\rho_{j},\sigma_j}^{(n)}\left\lbrace\dist_{KS}(G_{\rho_{j},\sigma_j}, G_{\rho_{0,j},\sigma_{0,j}})\leq \epsilon_j/2+\dist_{KS}(\widehat{G}_{n,j}, G_{\rho_{j},\sigma_j})\right\rbrace\\
	& \leq G_{\rho_{j},\sigma_j}^{(n)}\left\lbrace \epsilon_j/2\leq \dist_{KS}(\widehat{G}_{n,j}, G_{\rho_{j},\sigma_j})\right\rbrace \\
	&=G_{\rho_{j},\sigma_j}^{(n)}\left\lbrace \sqrt{n}\epsilon_j/2 \leq \sqrt{n}\dist_{KS}(\widehat{G}_{n,j}, G_{\rho_{j},\sigma_j})\right\rbrace
	\\
	& \leq 2e^{-ne_j^2/2}.
	\end{split}
	\end{equation*}
	Similarly, we also obtain $G_{\rho_{0,j},\sigma_{0,j}}^{(n)} t_{n,j} \leq 2e^{-ne_j^2/2}$. The result now follows. 
\end{proof}

\subsubsection{Proof of Theorem \ref{th:alpha_frec}}

The proof is developed by verifying Condition \ref{cond: newcond} for the semiparametric $\brho$-Fr\'echet model under study and by applying Proposition \ref{prop:gen_cons}.
Such derivations also play a crucial role in establishing the results of Theorem \ref{th:alpha_frec}.

Preliminary observe that by Corollary \ref{lem: KLalpha}, the Kullback-Leibler support of the prior $\Pi_{\Gset_{\bvarTheta}}$ given in Theorem \ref{th:alpha_frec} contains $g_{\brho_0,\bsigma_{0}}(\cdot|H_0)$. Hence, the first requirement of \ref{cond: newcond} is met.
Next, fixing $\epsilon>0$ and $\delta<\delta_*$, with $\delta_*$ as in  \eqref{eq:deltastar}, define 
\begin{equation}\label{eq:theset}
\begin{split}
{\mathcal{U}_\delta}&:=\{(\brho,\bsigma)\in (0,\infty)^{2d}: \, \Vert (\brho,\bsigma)
-(\brho_0,\bsigma_0)
\Vert_1<\delta\},\\
\widetilde{\mathcal{U}}_\epsilon&:= 
\{g \in \Gset_{\bvarTheta}: \dist_{H}(g,g_{\brho_0,\bsigma_0}(\cdot|H_0))\leq 4\epsilon\}
\subset
\widetilde{\mathcal{U}},
\end{split}
\end{equation}
and
\begin{equation*}
\begin{split}
\Gset_{\bvarTheta,\epsilon,\delta}^{(k)}&:= \phi_{\Hset\times\bvarTheta}\left( \Hset_k \times 
{\mathcal{U}_\delta}
\right)\cap \widetilde{\mathcal{U}}_\epsilon^\complement, \quad k=d+1,\ldots,\nu_n,\\
\Gset_{\bvarTheta,n}&:=\cup_{k=d+1}^{\nu_n}
\Gset_{\bvarTheta,\epsilon,\delta}^{(k)},
\end{split}
\end{equation*}
where $\nu_n$ is a sequence of integers increasing with $n$.
The latter is next selected by means of a metric entropy argument, which allows to derive a sequence of test functionals $s_n$ complying with Condition \ref{cond: newcond}\ref{dentest}. 
By Lemma \ref{aux: n2} and Proposition C.2 in 
\cite{r10},
we have that, for $k=d+1,\ldots,\nu_n$,
\begin{equation*}
\begin{split}
\mathcal{N}\left(2\epsilon, \Gset_{\bvarTheta,\epsilon,\delta}^{(k)}, \dist_H\right)&
\leq \mathcal{N}\left(
c_1\epsilon^2,
\{
\bx \in \real^{|\Gamma_k|}: \Vert\bx \Vert_1 \leq 1
\}, L_1
\right)\\
&\quad \times  \mathcal{N}\left(
c_2\epsilon^2/k,
B_{\delta,1}(\brho_0), L_1
\right)\mathcal{N}\left(
c_3\epsilon^2/k,
B_{\delta,1}(\bsigma_0), L_1
\right)\\
&\leq 	\left(
\frac{c_4}{\epsilon^2}
\right)^{\binom{k-1}{d-1}}  \mathcal{N}\left(
c_2\epsilon^2/k,
B_{\delta_*,1}(\brho_0), L_1
\right)\mathcal{N}\left(
c_3\epsilon^2/k,
B_{\delta_*,1}(\bsigma_0), L_1
\right)\\
&\leq 
\left(
\frac{c_4}{\epsilon^2}
\right)^{\binom{k-1}{d-1}}  
\left(
\frac{c_5k}{\epsilon^2}
\right)^{2d}
\end{split}
\end{equation*}
where $c_1,c_2,c_3,c_4$ are positive constants depending only on $d,\brho_0,\bsigma_0$, $c_5$ is given by
$$
c_5=\max \left(
\sup_{\bx \in 	B_{\delta_*,1}(\brho_0)}\Vert \bx \Vert_1 3c_2^{-1},
\sup_{\bx \in 	B_{\delta_*,1}(\bsigma_0)}\Vert \bx \Vert_13c_3^{-1}
\right)
$$ 
and, without loss of generality, we assume that $\epsilon$ satisfies $c_1\epsilon^2<1$ and 
$$
c_2\epsilon^2/k < \sup_{\bx \in 	B_{\delta_*,1}(\brho_0)}\Vert \bx \Vert_1, \quad c_3\epsilon^2/k < \sup_{\bx \in 	B_{\delta_*,1}(\bsigma_0)}\Vert \bx \Vert_1.
$$
Therefore, 
\begin{equation*}
\begin{split}
\mathcal{N}\left(2\epsilon, \Gset_{\bvarTheta,n}, \dist_H\right)&\leq \sum_{k=d+1}^{\nu_n}
\left(
\frac{c_4}{\epsilon^2}
\right)^{\binom{k-1}{d-1}}
\left(
\frac{c_5k}{\epsilon^2}
\right)^{2d}\\
&\leq \nu_n
\left(
\frac{c_4}{\epsilon^2}
\right)^{\nu_n^{d-1}}\left(
\frac{c_5\nu_n}{\epsilon^2}
\right)^{2d}
\end{split}
\end{equation*}
and, for $\nu_n$ sufficiently large,
\begin{equation*}
\begin{split}
\log \mathcal{N}(2\epsilon, {\mathcal{G}}_{n,1}, \dist_H)  \leq c_6 \log \nu_n + c_7\nu_n^{d-1} \leq c_8\nu_n^{d-1},
\end{split}
\end{equation*}
for some positive $c_6$, $c_7$, $c_8$. Thus, for large enough $n$, choosing 
$$
\nu_n =\floor{(n/c_8)^{1/(d-1)}},
$$ 
it holds that 
$
\log \mathcal{N}(2\epsilon, {\mathcal{G}}_{n,1}, \dist_H)  \leq n \epsilon^2.
$
As a result, arguments in the proof of Theorem 6.23 in \cite{r10} 
guarantee the existence of a sequence of measurable test functions $s_n:(\bzero,\binf)^n\mapsto [0,1]$ satisfying 
\begin{equation*}
\begin{split}
\int_{(\bzero,\binf)^n}s_n(\bx_1,\ldots,\bx_n)\prod_{i=1}^n g_{\brho_0,\bsigma_0}(\bx_i|H_0) \diff \bx_i &\leq e^{n\epsilon^2}e^{-2n\epsilon^2} \\
\sup_{G: g \in \Gset_{\bvarTheta,n}}\int_{(\bzero,\binf)^n}\{1-s_n(\bx_1,\ldots,\bx_n)\}\prod_{i=1}^n g(\bx_i) \diff \bx_i&\leq e^{-2\epsilon^2n}.
\end{split}
\end{equation*}
Furthermore, for $\Gset_{\bvarTheta,n}^\complement$ denoting the relative complement of $\Gset_{\bvarTheta,n}$ in $\phi_{\Hset\times\bvarTheta}\left( \Hset \times 
{\mathcal{U}_\delta}
\right)\cap \widetilde{\mathcal{U}}_\epsilon^\complement$ and $q$ is as in Condition \ref{cond:angularprior}\ref{cond: mvt th2}, as $n\to \infty$ we have
\begin{equation*}\label{eq:up4}
\begin{split}
\Pi_{\Gset_{\bvarTheta}}(\Gset_{\bvarTheta,n}^\complement)&\leq \Pi_{\Hset\times \bvarTheta}\left(
\{
\Hset \setminus \cup_{k = d+1}^{\nu_n} \Hset_k\} \times (0,\infty)^{2d}
\right)\\
& \leq  e^{-q \nu_n^{d-1}}\sim e^{-nq/c_8 }.
\end{split}
\end{equation*}
%
Finally, the existence of tests complying with Condition \ref{cond: newcond}\ref{margintest} is guaranteed by Lemma \ref{lem: tests}. Condition \ref{cond: newcond} is now verified and the final results follows from an application of Proposition \ref{prop:gen_cons}.

\subsubsection{Preliminary auxiliary results for the proof of Theorem \ref{theo: cons_weibull}}\label{sec:preli_weib}
Similarly to Section \ref{app:aux_frec}, we start by defining the functional
\begin{equation*}
\begin{split}
&\mathscr{V}_{H_*, \bomega_*, \bsigma_*, \bmu_*}^{(l)}(H, \bomega, \bsigma, \bmu; \widetilde{H}, \widetilde{\bomega}, \widetilde{\bsigma},\widetilde{\bmu})\\
&\quad :=
\left( 
\int_{(- \binf, \bmu_*)} \left[
\log^+ \left\lbrace
\frac{g_{\bomega,\bsigma,\bmu}(\bx|H)}{g_{\widetilde{\bomega},\widetilde{\bsigma},\widetilde{\bmu}}(\bx|\widetilde{H})}
\right\rbrace
\right]^lg_{\brho_*, \bsigma_* ,\bmu_*}(\bx|H_*)\diff \bx
\right)^{1/l}.
\end{split}
\end{equation*}
for $l \in \nat_+$, $H_*,H, \widetilde{H}\in \Hset$ and $\bomega_*,\bomega, \widetilde{\bomega},\bsigma_*,\bsigma, \widetilde{\bsigma}\in (\bzero, \binf)$, $\bmu_*,\bmu, \widetilde{\bmu}\in \reald$. Notice that, if for some $j \in \{1, \ldots,d\}$ it holds that $\widetilde{\mu}_j< \min(\mu_j, \mu_{*,j})$, then the above functional equals $+\infty$. More generally, the semiparametric model 
$$
\{
g_{\bomega,\bsigma,\bmu}(\by|H): H \in \Hset, (\bomega, \bsigma, \bmu)\in \bvarTheta
\},
$$
with $\bvarTheta=(1,\infty)^d\times (0,\infty)^d \times \reald$, is less regular than the other ones considered in Section \ref{sec:binf_general_max} of the main article.  Therefore, the analysis of positive Kullback-Leibler divergences is herein confined to the case of $l=1$. Control over the first-order positive divergence is sufficient to establish the strong consistency results of Theorem \ref{theo: cons_weibull}, while yielding weaker results when consistency is extended to the case of data of the form of sample maxima. See Sections \ref{sec:weibulldom} of the main paper and Section \ref{sec:cont_weib} of the present manuscript for details.
Once more, we resort to the notational convention for vectors raised to powers and rectangles in $[-\infty,\infty]^d$ introduced before Lemma \ref{aux1}. 

\begin{lemma}\label{lem:vweib}
	Let $H\in \Hset$, with exponent functions $V(\cdot|H)$. Then, for any  $\widetilde{H} \in \Hset$, $\widetilde{\bomega} \in (\bone, \binf)$, ${\bomega}  \in B_{\delta_1,\infty}(\widetilde{\bomega} )$, 
	$\widetilde{\bsigma} \in (\bzero, \binf)$,
	${\bsigma} \in B_{\delta_2,\infty}(\widetilde{\bsigma})$,
	$\widetilde{\bmu}\in \reald$, ${\bmu} \in B^+_{\delta_3,1}(\widetilde{\bmu}):=\{\bmu' > \widetilde{\bmu}: \, \Vert \bmu' - \widetilde{\bmu} \Vert_1 < \delta_3\}$ with 
	$\varepsilon\in(0,1/2)$ and
	\begin{equation}\label{eq:deltas_weib}
	\delta_1 \in \left(0, \min_{1\leq j \leq d}\{\widetilde{\omega}_j-1\} \varepsilon \wedge 1\right), \quad 	\delta_2 \in \left(0, \min_{1\leq j \leq d}\widetilde{\sigma}_j \varepsilon\right), \quad \delta_3 \in (0, \varepsilon).
	\end{equation} 
	Then, defining 
	$b_j \equiv b_j(\widetilde{\omega}_j, \widetilde{\sigma}_j)$,  $j=1,\ldots,d$, as in \eqref{eq:c_j} and
	$$
	c(\widetilde{\bomega}, \widetilde{\bsigma})
	:= 	\frac{2\max_{1 \leq j \leq d} \widetilde{\omega}_j}{
		\min_{1 \leq j \leq d}	b_j
	}
	\max_{1\leq j \leq d}
	\left[
	1+ 2^{\widetilde{\omega}_{j}}
	+2^{\widetilde{\omega}_{j}}
	\frac{\Gamma(2\widetilde{\omega}_j+1)}{2\widetilde{\sigma}_j^{\widetilde{\omega}_j(1-2\widetilde{\omega}_j)}}
	\left\lbrace
	1-\frac{\gamma(2\widetilde{\omega}_j, \widetilde{\sigma}_j^{-\widetilde{\omega}_j})}{\Gamma(2\widetilde{\omega}_j)}
	\right\rbrace
	\right], 
	$$
	the term
	$$
	\int_{(-\binf, \widetilde{\bmu})}
	\left[ V\left(
	\left\lbrace
	\frac{\bmu -\bx}{\bsigma}
	\right\rbrace^{-\bomega}
	\bigg{|}H
	\right) 
	-V\left(
	\left\lbrace
	\frac{\widetilde{\bmu} -\bx}{\bsigma}
	\right\rbrace^{-\bomega}
	\bigg{|}H
	\right) 
	\right]_+g_{\widetilde{\bomega}, \widetilde{\bsigma}, \widetilde{\bmu}}(\bx|\widetilde{H})\diff \bx
	$$
	is bounded from above by
	$
	c(\widetilde{\bomega}, \widetilde{\bsigma}) \Vert \bmu  - \widetilde{\bmu} \Vert_1
	$.
\end{lemma}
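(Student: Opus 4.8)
The plan is to chain together three observations. First, on the integration domain $\bx<\widetilde{\bmu}$ one has $\bzero<\widetilde{\bmu}-\bx<\bmu-\bx$, so raising to the negative powers $-\bomega$ reverses the inequality and $\{(\bmu-\bx)/\bsigma\}^{-\bomega}\le\{(\widetilde{\bmu}-\bx)/\bsigma\}^{-\bomega}$ componentwise; since the exponent function $V(\cdot|H)$ is coordinatewise nonincreasing, the bracket $[\,\cdot\,]_+$ inside the integral equals the (already nonnegative) plain difference, and the positive part can be dropped. Second, exactly as in the proof of Lemma~\ref{lem: aux} --- using that $V(1/\,\cdot\,|H)$ is a $D$-norm, property~(4.37) in \cite{r2}, and the reverse triangle inequality --- one bounds this difference, for each $\bx<\widetilde{\bmu}$, by
\begin{equation*}
V\left(\left\{\frac{\bmu-\bx}{\bsigma}\right\}^{-\bomega}\bigg| H\right)-V\left(\left\{\frac{\widetilde{\bmu}-\bx}{\bsigma}\right\}^{-\bomega}\bigg| H\right)\le\sum_{j=1}^{d}\left|\left(\frac{\mu_j-x_j}{\sigma_j}\right)^{\omega_j}-\left(\frac{\widetilde{\mu}_j-x_j}{\sigma_j}\right)^{\omega_j}\right|.
\end{equation*}
Third, since each summand depends only on the single coordinate $x_j$, and the $j$-th margin of the reverse-Weibull max-stable density $g_{\widetilde{\bomega},\widetilde{\bsigma},\widetilde{\bmu}}(\cdot|\widetilde{H})$ is the univariate reverse-Weibull density $x_j\mapsto g_{\widetilde{\omega}_j,\widetilde{\sigma}_j}(x_j-\widetilde{\mu}_j)$ (the marginal structure recalled in Section~\ref{sec:general_theory}), integrating the displayed bound reduces the whole problem to the control of $d$ one-dimensional integrals.

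For $x_j<\widetilde{\mu}_j<\mu_j$ the $j$-th absolute value is $((\mu_j-x_j)/\sigma_j)^{\omega_j}-((\widetilde{\mu}_j-x_j)/\sigma_j)^{\omega_j}$, and I would apply the mean value theorem to $t\mapsto t^{\omega_j}$: since $\omega_j>1$ the factor $t^{\omega_j-1}$ is increasing, so this is at most $\omega_j\sigma_j^{-1}(\mu_j-\widetilde{\mu}_j)\,((\mu_j-x_j)/\sigma_j)^{\omega_j-1}$. After the substitution $v=(\widetilde{\mu}_j-x_j)/\widetilde{\sigma}_j$, which turns $g_{\widetilde{\omega}_j,\widetilde{\sigma}_j}(x_j-\widetilde{\mu}_j)\,\diff x_j$ into the standard Weibull kernel $\widetilde{\omega}_j v^{\widetilde{\omega}_j-1}e^{-v^{\widetilde{\omega}_j}}\,\diff v$, one writes $\mu_j-x_j=(\mu_j-\widetilde{\mu}_j)+\widetilde{\sigma}_j v$, uses $(a+b)^{p}\le2^{p}(a^{p}+b^{p})$ for $p>0$, and splits the $v$-integral at $v=\widetilde{\sigma}_j^{-1}$ (equivalently, at $\widetilde{\mu}_j-x_j=1$). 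The restrictions \eqref{eq:deltas_weib} then supply the uniform bounds $\sigma_j>\widetilde{\sigma}_j/2$, $\omega_j<2\widetilde{\omega}_j$, $\mu_j-\widetilde{\mu}_j<\varepsilon<1/2$, which (with $b_j$ as in \eqref{eq:c_j}) dominate the prefactor $\omega_j\sigma_j^{-1}$ by $2\max_{1\le j\le d}\widetilde{\omega}_j/\min_{1\le j\le d}b_j$ and reduce the remaining integrals to complete and upper incomplete Gamma functions; collecting these, the $j$-th one-dimensional integral is bounded by $(\mu_j-\widetilde{\mu}_j)$ times the bracketed quantity in the definition of $c(\widetilde{\bomega},\widetilde{\bsigma})$, and summing over $j$ yields $c(\widetilde{\bomega},\widetilde{\bsigma})\,\Vert\bmu-\widetilde{\bmu}\Vert_1$.

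The routine parts are the monotonicity and $D$-norm Lipschitz estimates of the first paragraph and the marginalisation; the main obstacle is the bookkeeping in the last step, where the one-dimensional integrals must be estimated so as to reproduce exactly the stated constant. This requires handling the exponent $\omega_j-1$ uniformly regardless of whether it lies below or above $1$, tracking the powers of $\widetilde{\sigma}_j$ generated by the change of variables (which produce the factor $\widetilde{\sigma}_j^{\widetilde{\omega}_j(1-2\widetilde{\omega}_j)}$), and writing the tail contribution over $\{\widetilde{\mu}_j-x_j\ge1\}$ as an upper incomplete Gamma integral of the form $\Gamma(2\widetilde{\omega}_j+1)\{1-\gamma(2\widetilde{\omega}_j,\widetilde{\sigma}_j^{-\widetilde{\omega}_j})/\Gamma(2\widetilde{\omega}_j)\}$, up to the multiplicative constants absorbed into $c(\widetilde{\bomega},\widetilde{\bsigma})$.
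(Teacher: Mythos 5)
Your proposal is correct and follows essentially the same route as the paper's proof: the $D$-norm reverse-triangle bound reducing the difference of exponent functions to $\sum_j|((\mu_j-x_j)/\sigma_j)^{\omega_j}-((\widetilde{\mu}_j-x_j)/\sigma_j)^{\omega_j}|$, the mean value theorem with $\omega_j>1$ to extract $(\mu_j-\widetilde{\mu}_j)$ times $\omega_j\sigma_j^{-\omega_j}(\mu_j-x_j)^{\omega_j-1}$, marginalisation to univariate reverse-Weibull integrals, and the split near $|\mu_j-x_j|=1$ with the tail controlled by an upper incomplete Gamma integral. The only (immaterial) differences are your initial observation that the positive part is superfluous by monotonicity of $V$, and a change of variables where the paper simply recentres at $\widetilde{\mu}_j$.
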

\begin{proof}
	By rguments similar to those in the proof of Lemma \ref{lem: aux} we obtain
	$$
	\left[ V\left(
	\left\lbrace
	\frac{\bmu -\bx}{\bsigma}
	\right\rbrace^{-\bomega}
	\bigg{|}H
	\right) 
	-V\left(
	\left\lbrace
	\frac{\widetilde{\bmu} -\bx}{\bsigma}
	\right\rbrace^{-\bomega}
	\bigg{|}H
	\right) 
	\right]_+ \leq \bigg{\Vert} 
	\left\lbrace
	\frac{\bmu -\bx}{\bsigma}
	\right\rbrace^{\bomega}
	-
	\left\lbrace
	\frac{\widetilde{\bmu} -\bx}{\bsigma}
	\right\rbrace^{\bomega}
	\bigg{\Vert}_1,
	$$
	where, by the multivariate mean-value theorem and the bounds in \eqref{eq:deltas_weib}, the term on the right-hand side is bounded from above by 
	$$
	\sum_{j=1}^d \frac{\omega_j}{\sigma_j^{\omega_j}}(\mu_j - \widetilde{\mu}_j) |\mu_j - x_j|^{\omega_j-1} 
	\leq 
	\frac{2\max_{1 \leq j \leq d} \widetilde{\omega}_j}{
		\min_{1 \leq j \leq d}	b_j
	}
	\sum_{j=1}^d (\mu_j - \widetilde{\mu}_j) |\mu_j - x_j|^{\omega_j-1},
	$$
	where 
	\begin{equation}\label{eq:c_j}
	b_j=\inf_{\sigma_j\in(\widetilde{\sigma}_j/2 , 3\widetilde{\sigma}_j/2), 
		\omega_j\in (\widetilde{\omega}_j/2 , 3\widetilde{\omega}_j/2 )}
	\sigma_j^{\omega_j}.
	\end{equation}
	Moreover, for $j=1, \ldots,d$, it holds that
	\begin{equation*}
	\begin{split}
	&\int_{(-\binf, \widetilde{\bmu})} |\mu_j - x_j|^{\omega_j-1}
	g_{\widetilde{\bomega}, \widetilde{\bsigma}, \widetilde{\bmu}}(\bx|\widetilde{H})\diff \bx \\
	&\quad \leq 
	1 + \int_{-\infty}^{\mu_j-\widetilde{\mu}_j-1} |\mu_j-\widetilde{\mu}_j-x_j|^{\omega_j-1} g_{\widetilde{\omega}_j, \widetilde{\sigma}_j}(x_j)\diff x_j
	\\
	&\quad  \leq 1+ 2^{\omega_j-1}
	\int_{-\infty}^{\mu_j-\widetilde{\mu}_j-1} (-x_j)^{\omega_j-1} g_{\widetilde{\omega}_j, \widetilde{\sigma}_j}(x_j)\diff x_j
	\\
	&\quad  \leq 1+ 2^{\widetilde{\omega}_{j}}
	+2^{\widetilde{\omega}_{j}}
	\int_{-\infty}^{-1} (-x_j)^{\widetilde{\omega}_j} g_{\widetilde{\omega}_j, \widetilde{\sigma}_j}(x_j)\diff x_j
	\end{split}
	\end{equation*}
	where, for $x_j<0$,
	\begin{equation}\label{eq:sing_weib}
	g_{\widetilde{\omega}_j,\widetilde{\sigma}_j}(x_j):=\exp(-(-x_j/\widetilde{\sigma}_j)^{\widetilde{\omega}_j})(-x_j/\widetilde{\sigma}_j)^{\widetilde{\omega}_j-1}(\widetilde{\omega}_j/\widetilde{\sigma}_j)
	\end{equation}
	and
	\begin{equation*}
	\begin{split}
	\int_{-\infty}^{-1} (-x_j)^{\widetilde{\omega}_j} g_{\widetilde{\omega}_j, \widetilde{\sigma}_j}(x_j)\diff x_j
	&\leq \widetilde{\omega}_j\int_1^\infty \widetilde{\sigma}_j^{-\widetilde{\omega}_j} x^{2\widetilde{\omega}_j-1}\exp\left(-x_j \widetilde{\sigma}_j^{-\widetilde{\omega}_j}\right)\diff x_j\\
	&=\frac{\Gamma(2\widetilde{\omega}_j+1)}{2\widetilde{\sigma}_j^{\widetilde{\omega}_j(1-2\widetilde{\omega}_j)}}
	\left\lbrace
	1-\frac{\gamma(2\widetilde{\omega}_j, \widetilde{\sigma}_j^{-\widetilde{\omega}_j})}{\Gamma(2\widetilde{\omega}_j)}
	\right\rbrace
	\end{split}
	\end{equation*}
	with $\Gamma(\cdot)$ and $\gamma(\cdot,\cdot)$ denoting the Gamma and the lower incomplete Gamma function. The result now follows.
\end{proof}

\begin{lemma}\label{lem:divweib}
	Let $H_0\in\Hset$ satisfy property \ref{cond: finite_new} in Definition \ref{cond:mvt_angular}. Let $(\bomega_0, \bsigma_0,\bmu_0) \in \bvarTheta$. Then, for all $\epsilon>0$ there exists $H^*\in \Hset'$ and $\delta_1,\delta_2,\delta_3,\delta_4>0$ such that
	\begin{equation}\label{eq:weibkulb1}
			\kulb_+^{(1)}(g_{\bomega_0, \bsigma_0, \bmu_0}(\cdot|H_0), g_{\bomega, \bsigma,\bmu}(\cdot|H)) < \epsilon
	\end{equation}
	for all $\bomega\in B_{\delta_1,1}(\bomega_0)$, $\bsigma\in B_{\delta_2,1}(\bsigma_0)$, $\bmu \in B^+_{\delta_3,1}(\bmu_0)$ and $H \in B_{\delta_4, \infty}(H^*)$.
	Thus, in particular, $\kulb(g_{\bomega_0, \bsigma_0,\bmu_0}(\cdot|H_0),g_{\bomega,\bsigma,\bmu}(\cdot|H))<\epsilon$.
\end{lemma}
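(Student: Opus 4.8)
The plan is to mimic the proof of Lemma~\ref{cor: alpha_kulb}, splitting the divergence by Minkowski's inequality into a ``pure dependence'' contribution and a ``marginal'' contribution. Writing $\kulb_+^{(1)}(g_{\bomega_0,\bsigma_0,\bmu_0}(\cdot|H_0),g_{\bomega,\bsigma,\bmu}(\cdot|H))=\mathscr{V}_{H_0,\bomega_0,\bsigma_0,\bmu_0}^{(1)}(H_0,\bomega_0,\bsigma_0,\bmu_0;H,\bomega,\bsigma,\bmu)$ and using $\log^+(ab)\le\log^+a+\log^+b$ together with Minkowski's inequality, this is bounded by
\[
\mathscr{V}_{H_0,\bomega_0,\bsigma_0,\bmu_0}^{(1)}(H_0,\bomega_0,\bsigma_0,\bmu_0;H,\bomega_0,\bsigma_0,\bmu_0)+\mathscr{V}_{H_0,\bomega_0,\bsigma_0,\bmu_0}^{(1)}(H,\bomega_0,\bsigma_0,\bmu_0;H,\bomega,\bsigma,\bmu).
\]
The substitution $\bx\mapsto\{(\bmu_0-\bx)/\bsigma_0\}^{-\bomega_0}$ maps $\{\bx<\bmu_0\}$ onto $(\bzero,\binf)$ with cancelling Jacobians, so the first term equals $\kulb_+^{(1)}(g_\bone(\cdot|H_0),g_\bone(\cdot|H))$, which by Proposition~\ref{cor: New_KL_cor} (taken with $l=1$) is $<\epsilon/2$ for all $H\in B_{\delta_4,\infty}(H^*)$, for a suitable $H^*\in\Hset'$ and $\delta_4>0$. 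Note that the restriction $\bmu\in B^{+}_{\delta_3,1}(\bmu_0)$ (i.e.\ $\bmu>\bmu_0$) guarantees that every density below is strictly positive and finite on the support $\{\bx<\bmu_0\}$, so all divergences considered are finite.

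For the second term I would expand the log-ratio of two $\bomega$-Weibull max-stable densities with common angular pm $H$ using \eqref{eq: weibdens}:
\[
\log\frac{g_{\bomega_0,\bsigma_0,\bmu_0}(\bx|H)}{g_{\bomega,\bsigma,\bmu}(\bx|H)}=\sum_{j=1}^d\log\frac{\omega_{0,j}\sigma_j}{\omega_j\sigma_{0,j}}+B(\bx)+C(\bx),
\]
with $B(\bx)=\sum_j[(\omega_j+1)\log\{(\mu_j-x_j)/\sigma_j\}-(\omega_{0,j}+1)\log\{(\mu_{0,j}-x_j)/\sigma_{0,j}\}]$ and $C(\bx)=\log\{g_\bone(\by_0|H)/g_\bone(\by_1|H)\}$, where $\by_0=\{(\bmu_0-\bx)/\bsigma_0\}^{-\bomega_0}$ and $\by_1=\{(\bmu-\bx)/\bsigma\}^{-\bomega}$. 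The constant is continuous in $(\bomega,\bsigma)$ and vanishes at $(\bomega_0,\bsigma_0)$, so it is $<\epsilon/6$ once $\delta_1,\delta_2$ are small. For $B(\bx)$ I would rewrite it as $\sum_j(\omega_j+1)[\log\{\sigma_{0,j}/\sigma_j\}+\log(1+(\mu_j-\mu_{0,j})/(\mu_{0,j}-x_j))]+\sum_j(\omega_j-\omega_{0,j})\log\{(\mu_{0,j}-x_j)/\sigma_{0,j}\}$: under $g_{\bomega_0,\bsigma_0,\bmu_0}(\cdot|H_0)$ each $\log\{(\mu_{0,j}-X_j)/\sigma_{0,j}\}$ has finite moments of every order (since $((\mu_{0,j}-X_j)/\sigma_{0,j})^{\omega_{0,j}}$ is standard exponential), whence the $(\omega_j-\omega_{0,j})$-weighted part is $O(\delta_1)$; while $\log(1+(\mu_j-\mu_{0,j})/(\mu_{0,j}-x_j))\ge0$ and, because $\omega_{0,j}>1$ makes the marginal density vanish at $\mu_{0,j}$, the change of variable $u=\mu_{0,j}-x_j$ together with a split at $u=\sqrt{\mu_j-\mu_{0,j}}$ shows its integral is $O(\sqrt{\delta_3})$.

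The main obstacle is $C(\bx)$. Using $g_\bone(\by|H)=G_\bone(\by|H)\sum_{\part\in\allpart_d}\prod_i\{-V_{I_i}(\by|H)\}$ I would bound $\log^+C(\bx)\le[V(\by_1|H)-V(\by_0|H)]_+ + \log^+\{\sum_\part\prod_i(-V_{I_i}(\by_0|H))/\sum_\part\prod_i(-V_{I_i}(\by_1|H))\}$ and treat the last term by the Jensen argument of \eqref{eq:jensen}, reducing it to $d\,\max_{\part}\max_{I_i\in\part}\log^+\{-V_{I_i}(\by_0|H)/-V_{I_i}(\by_1|H)\}$. For $[V(\by_1|H)-V(\by_0|H)]_+$ I would exploit that $V(\cdot^{-1}|H)$ is a $D$-norm, so the reverse triangle inequality gives $[V(\by_1|H)-V(\by_0|H)]_+\le\sum_j|((\mu_j-x_j)/\sigma_j)^{\omega_j}-((\mu_{0,j}-x_j)/\sigma_{0,j})^{\omega_{0,j}}|$, and a multivariate mean-value estimate in $(\omega_j,\sigma_j,\mu_j)$ combined with the finite Weibull moments of $((\mu_{0,j}-X_j)/\sigma_{0,j})^{\omega_{0,j}}$ and of its logarithm yields $O(\delta_1+\delta_2+\delta_3)$. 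For the $-V_{I_i}$ ratios I would first pass from $H$ to some $H_k\in\Hset_k\cap B_{2\delta_4,\infty}(H^*)$ via Lemma~\ref{lem: approx}, paying an $O(\sqrt{\delta_4})$ error as in \eqref{eq:delta_bound}, then isolate the location shift $\bmu_0\rightsquigarrow\bmu$ using Lemma~\ref{lem:weibkulb}, and finally --- with a common location $\bmu$ --- apply the substitution $\bv=\{(\bmu-\bx)/\bsigma_0\}^{-\bomega_0}$, under which $\{(\bmu-\bx)/\bsigma\}^{-\bomega}=\bv^{\bomega/\bomega_0}\{\bsigma/\bsigma_0\}^{\bomega}$, bringing the remaining scale/shape change into exactly the form controlled by Lemma~\ref{aux1} with $(\brho,\bsigma,\widetilde{\brho},\widetilde{\bsigma})=(\bomega_0,\bsigma,\bomega,\bsigma_0)$. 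Collecting the bounds and choosing $\delta_1,\dots,\delta_4$ small enough gives $\mathscr{V}_{H_0,\bomega_0,\bsigma_0,\bmu_0}^{(1)}(H,\bomega_0,\bsigma_0,\bmu_0;H,\bomega,\bsigma,\bmu)<\epsilon/2$, hence \eqref{eq:weibkulb1}; the last assertion follows from $\kulb\le\kulb_+^{(1)}$. The delicate points will be the uniform control of the $-V_{I_i}$ ratios over the whole parameter neighbourhood and making sure, via $\omega_{0,j}>1$, that all the Weibull moments entering the mean-value bounds are indeed finite.
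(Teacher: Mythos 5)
Your proposal is correct in substance and assembles the same ingredients as the paper (Proposition \ref{cor: New_KL_cor}, the $D$-norm reverse triangle inequality, the Jensen step \eqref{eq:jensen}, Lemmas \ref{lem: approx}, \ref{lem:weibkulb} and \ref{aux1}), but it organises them differently. The paper uses a \emph{three}-term Minkowski decomposition
$S_1+S_2+S_3$, changing first the angular pm ($H_0\rightsquigarrow H$), then shape and scale at the fixed true location ($(\bomega_0,\bsigma_0)\rightsquigarrow(\bomega,\bsigma)$ with $\bmu_0$ held), and only last the location ($\bmu_0\rightsquigarrow\bmu$). This buys two things: $S_2$ reduces \emph{exactly} to the already-proven Fr\'echet bound $\mathscr{V}_{H_0,\brho_0,\bsigma_0}^{(1)}(H,\brho_0,\bsigma_0^{-1};H,\brho,\bsigma^{-1})$ via the Weibull-to-Fr\'echet transformation, so all of Lemma \ref{cor: alpha_kulb} is reused verbatim; and the location shift is quarantined in $S_3$, where it splits cleanly into the $V$-difference (Lemma \ref{lem:vweib}), an exactly computable marginal term equal to $\sum_j(\mu_j-\mu_{0,j})\Gamma(1-1/\omega_{0,j})/\sigma_{0,j}$ (so $O(\delta_3)$, sharper than your $O(\sqrt{\delta_3})$), and the $-V_{I_i}$ ratio (Lemma \ref{lem:weibkulb}). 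Your two-term split forces you to re-derive both of these effects inside a single functional, which is workable but heavier.

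The one point where your sketch needs repair is the order of operations in the $-V_{I_i}$ ratios. You propose to remove the location shift first (Lemma \ref{lem:weibkulb}) and then, ``with a common location $\bmu$'', substitute $\bv=\{(\bmu-\bx)/\bsigma_0\}^{-\bomega_0}$ and invoke Lemma \ref{aux1}. But the integrating measure is $g_{\bomega_0,\bsigma_0,\bmu_0}(\cdot|H_0)$, and under a substitution anchored at $\bmu\neq\bmu_0$ its pushforward is \emph{not} a simple max-stable density, so Lemma \ref{aux1} does not apply as stated (its integration step uses the unit-Fr\'echet margins of $g_\bone(\cdot|H)$). Either reverse the order — change shape/scale at the common location $\bmu_0$, where the substitution carries the true law exactly onto $g_\bone(\cdot|H_0)$, and do the location shift last, which is precisely the paper's $S_2$-then-$S_3$ ordering — or keep your order but redo the integration of the pointwise bounds from Lemma \ref{aux1}'s proof against the actual pushforward, checking finiteness of the relevant $|\log((\mu_j-X_j)/\sigma_j)|$ moments (which do hold since $\mu_j-\mu_{0,j}$ is small and $\mu_{0,j}-X_j$ is Weibull). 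With that fixed, your argument goes through.
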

\begin{proof}
	Observe that 
	\begin{equation}\label{eq:weibbound}
	\begin{split}
	\kulb_+^{(1)}(g_{\bomega_0, \bsigma_0, \bmu_0}(\cdot|H_0), g_{\bomega, \bsigma,\bmu}(\cdot|H))&=
	\mathscr{V}_{H_0, \bomega_0, \bsigma_0, \bmu_0}^{(1)}(H_0, \bomega_0, \bsigma_0, \bmu_0; H, \bomega, \bsigma,\bmu) \\
	& \leq 	\mathscr{V}_{H_0, \bomega_0, \bsigma_0, \bmu_0}^{(1)}(H_0, \bomega_0, \bsigma_0, \bmu_0; H,  \bomega_0, \bsigma_0, \bmu_0)\\
	& \quad + \mathscr{V}_{H_0, \bomega_0, \bsigma_0, \bmu_0}^{(1)}(H, \bomega_0, \bsigma_0, \bmu_0; H, \bomega, \bsigma,\bmu_0)\\
	& \quad +
	\mathscr{V}_{H_0, \bomega_0, \bsigma_0, \bmu_0}^{(1)}(H, \bomega, \bsigma, \bmu_0; H, \bomega, \bsigma,\bmu)\\
	&=: S_1+S_2+S_3.
	\end{split}
	\end{equation}
	A change of variables yields
	$
	S_1=\kulb_+^{(1)}(g_\bone(\cdot|H_0),g_\bone(\cdot|H)),
	$
	where the term on the right-hand side is defined as in Section \ref{sec:signed_kulb_simple}. Thus, by Proposition \ref{cor: New_KL_cor}, $S_1$ can be made striclty smaller than $\epsilon/3$ by choosing $H \in B_{\delta_4, \infty}(H^*)$, for a suitable $H^* \in \Hset'$ and a sufficiently small $\delta_4$. Similarly, we have
	$$
	S_2=\mathscr{V}_{H_0, \brho_0, \bsigma_0}^{(1)}(H, \brho_0, \bsigma_0^{-1}; H, \brho, \bsigma^{-1}),
	$$
	where the term on the right-hand side is defined as in Section \ref{app:aux_frec}, with $\brho_0=\bomega_0$ and $\brho=\bomega$. Thus, $S_2$ can can be made striclty smaller than $\epsilon/3$ by reasoning as in the proof of Lemma \ref{cor: alpha_kulb}. We are now left with the analysis of $S_3$.

	Using the inequality $\log^+(xyz) \leq \log^+(x)+\log^+(y)+\log^+(z)$, $x,y,z>0$, we obtain
	\begin{equation*}
	\begin{split}
	S_3 &\leq \int_{(-\binf,\bmu_0)} \left[ V\left(
	\left\lbrace
	\frac{\bmu -\bx}{\bsigma}
	\right\rbrace^{-\bomega}
	\bigg{|}H
	\right) 
	-V\left(
	\left\lbrace
	\frac{\bmu_0 -\bx}{\bsigma}
	\right\rbrace^{-\bomega}
	\bigg{|}H
	\right) 
	\right]_+ g_{\bomega_0,\bsigma_0,\bmu_0}(\bx|H_0) \diff \bx
	\\
	& \quad + \sum_{j=1}^d \int_{(-\binf,\bmu_0)} \log^+\left\lbrace
	\frac{\mu_j-x_j}{\mu_{0,j}-x_j} 
	\right\rbrace
	g_{\bomega_0,\bsigma_0,\bmu_0}(\bx|H_0) \diff \bx\\
	&\quad +
	\int_{(-\binf,\bmu_0)}
	\log^+
	\left\lbrace
	\frac{\sum_{\part \in \allpart_d}
		\prod_{i=1}^{m}\left[-V_{I_i}\left(
		\left\lbrace
		\frac{\bmu_0 -\bx}{\bsigma}
		\right\rbrace^{-\bomega}
		\bigg{|}H
		\right) \right]}{\sum_{\part \in \allpart_d}
		\prod_{i=1}^{m}\left[-V_{I_i}\left(
		\left\lbrace
		\frac{\bmu -\bx}{\bsigma}
		\right\rbrace^{-\bomega}
		\bigg{|}H
		\right) \right]}
	\right\rbrace
	g_{\bomega_0,\bsigma_0,\bmu_0}(\bx|H_0) \diff \bx\\
	&=: T_1+ T_2 + T_3.
	\end{split}
	\end{equation*}
	By Lemma \ref{lem:vweib}, 
	the term $T_1$ can be made arbitrarily small by choosing $\delta_1, \delta_2, \delta_3$ small enough. Furthermore, we have that
	\begin{equation}\label{eq:T2}
	\begin{split}
	T_2&\leq \sum_{j=1}^d\int_{(-\binf,\bmu_0)} 
	\frac{\mu_j-\mu_{0,j}}{\mu_{0,j}-x_j} 
	g_{\bomega_0,\bsigma_0,\bmu_0}(\bx|H_0) \diff \bx\\
	& = \sum_{j=1}^d \frac{\mu_j-\mu_{0,j}}{\sigma_{0,j}}\Gamma(1-1/\omega_{0,j})
	\end{split}
	\end{equation}
	where the term on the right hand side can be made arbitrarily small by choosing $\delta_3$ small enough. Reasoning as in the proof of Lemma \ref{cor: alpha_kulb} and using Lemma \ref{lem:weibkulb}, we also conclude that
	$T_3$ can be made small by choosing $\delta_1, \delta_2, \delta_3, \delta_4$ small enough. Hence, there exists suitable choices of the radii yielding $S_3 < \epsilon/3$. The proof is now complete.  
\end{proof}

In the bivariate case ($d=2$), we have the following analogous result for angular pm's $H_0$ with unbounded angular density.
\begin{lemma}\label{lem:divweibunb}
	Let $H_0\in\Hset$ satisfy property \ref{cond: infinite_new} in Definition \ref{cond:mvt_angular}. Let $(\bomega_0, \bsigma_0,\bmu_0) \in \bvarTheta$. Then, for all $\epsilon>0$ there exists $H^*\in \Hset'$ and $\delta_1,\delta_2,\delta_3,\delta_4>0$ such that
	$$
	\kulb(g_{\bomega_0, \bsigma_0, \bmu_0}(\cdot|H_0), g_{\bomega, \bsigma,\bmu}(\cdot|H)) < \epsilon
	$$
	for all $\bomega\in B_{\delta_1,1}(\bomega_0)$, $\bsigma\in B_{\delta_2,1}(\bsigma_0)$, $\bmu \in B^+_{\delta_3,1}(\bmu_0)$ and $H \in B_{\delta_4, \infty}(H^*)$.
	Further assuming that, for some $s>0$,
	$$
	\int_0^1 (h_0(t))^{1+s}\diff t<\infty,
	$$
	then $H^*$ and $\delta_j$, $j=1, \ldots,4$, can be chosen in such a way that \eqref{eq:weibkulb1} is also satisfied.
\end{lemma}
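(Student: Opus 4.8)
The plan is to mirror the proof of Lemma~\ref{lem:divweib} almost verbatim, the only novelty being that the purely-dependence term is controlled through Proposition~\ref{prop:newprop_KL} rather than Proposition~\ref{cor: New_KL_cor}, the latter being unavailable because the true angular density $h_0$ is unbounded at the vertices of $\resimp$. Concretely, writing $\brho_0=\bomega_0$ and using the telescoping argument behind \eqref{eq:weibbound}, Minkowski's inequality applied to $\log^+$ gives, for every $H\in\Hset$, $\bomega\in B_{\delta_1,1}(\bomega_0)$, $\bsigma\in B_{\delta_2,1}(\bsigma_0)$ and $\bmu\in B^+_{\delta_3,1}(\bmu_0)$,
\[
\kulb_+^{(1)}\bigl(g_{\bomega_0,\bsigma_0,\bmu_0}(\cdot|H_0),\, g_{\bomega,\bsigma,\bmu}(\cdot|H)\bigr)\le S_1+S_2+S_3,
\]
with $S_1=\mathscr{V}^{(1)}_{H_0,\bomega_0,\bsigma_0,\bmu_0}(H_0,\bomega_0,\bsigma_0,\bmu_0;H,\bomega_0,\bsigma_0,\bmu_0)$, $S_2=\mathscr{V}^{(1)}_{H_0,\bomega_0,\bsigma_0,\bmu_0}(H,\bomega_0,\bsigma_0,\bmu_0;H,\bomega,\bsigma,\bmu_0)$ and $S_3=\mathscr{V}^{(1)}_{H_0,\bomega_0,\bsigma_0,\bmu_0}(H,\bomega,\bsigma,\bmu_0;H,\bomega,\bsigma,\bmu)$. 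For the first (weaker) claim I would instead use the two-term split $\kulb(g_{\bomega_0,\bsigma_0,\bmu_0}(\cdot|H_0),g_{\bomega,\bsigma,\bmu}(\cdot|H))\le\kulb(g_{\bomega_0,\bsigma_0,\bmu_0}(\cdot|H_0),g_{\bomega_0,\bsigma_0,\bmu_0}(\cdot|H))+S_2+S_3$, so that only the ordinary Kullback--Leibler divergence of the simple part has to be made small.

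A change of variables identifies $S_1$ with $\kulb_+^{(1)}(g_\bone(\cdot|H_0),g_\bone(\cdot|H))$, and $\kulb(g_{\bomega_0,\bsigma_0,\bmu_0}(\cdot|H_0),g_{\bomega_0,\bsigma_0,\bmu_0}(\cdot|H))$ with $\kulb(g_\bone(\cdot|H_0),g_\bone(\cdot|H))$. This is the only place where the hypothesis $h_0\in$ property~\ref{cond: infinite_new} in Definition~\ref{cond:mvt_angular} enters: Proposition~\ref{prop:newprop_KL} (first part) supplies $H^*\in\Hset'$ and $\delta_4>0$ with $\kulb(g_\bone(\cdot|H_0),g_\bone(\cdot|H))<\epsilon/3$ for all $H\in B_{\delta_4,\infty}(H^*)$, which yields the first claim; its second part, valid precisely under the extra hypothesis $\int_0^1 h_0(t)^{1+s}\diff t<\infty$, upgrades this (take $l=1$ in \eqref{eq:l_div_simp}) to $\kulb_+^{(1)}(g_\bone(\cdot|H_0),g_\bone(\cdot|H))<\epsilon/3$, giving \eqref{eq:weibkulb1}.

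The remaining two pieces are handled exactly as in Lemma~\ref{lem:divweib}, since those estimates only ever see $h_0$ through the unit-Fr\'echet margins of the base measure and hence are insensitive to its unboundedness. For $S_2$ a change of variables ($\bsigma_0\mapsto\bsigma_0^{-1}$, with $\bomega_0$ playing the role of $\brho_0$) reduces it to the functional bounded in the proof of Lemma~\ref{cor: alpha_kulb}, whose bound is driven by the finite moments $\int|\log y_j|\,g_\bone(\by|H_0)\diff\by$ together with Lemmas~\ref{lem: aux} and~\ref{aux1}; thus $S_2<\epsilon/3$ for $\delta_1,\delta_2,\delta_4$ small. For $S_3$ I would repeat the $T_1+T_2+T_3$ decomposition of the $\log^+$ of the $\bomega$-Weibull density ratio: the exponent-function part is controlled by Lemma~\ref{lem:vweib}, the marginal-ratio part is a direct integral equal to $\sum_j(\mu_j-\mu_{0,j})\sigma_{0,j}^{-1}\Gamma(1-1/\omega_{0,j})$ as in \eqref{eq:T2}, and the partition-probability part is controlled by Lemma~\ref{lem:weibkulb}; each is $<\epsilon/9$ once $\delta_1,\delta_2,\delta_3$ are small enough. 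Collecting the three bounds and shrinking $\delta_1,\dots,\delta_4$ (with $H^*$ as fixed by $S_1$) delivers the lemma.

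I do not expect a genuine obstacle here: the only step that is not a transcription of the $\bomega$-Weibull estimates already in the paper is the control of $S_1$, and that is fully outsourced to Proposition~\ref{prop:newprop_KL} (which in turn rests on the bivariate simplifications of the exponent function and, for the positive-divergence version, on $h_0^{1+s}\in L^1(0,1)$). The only mild bookkeeping point is to check that the pair $(H^*,\delta_4)$ produced for $S_1$ and the radii $\delta_1,\delta_2,\delta_3$ demanded by $S_2$ and $S_3$ can be chosen simultaneously, which is immediate since $S_1$ fixes $(H^*,\delta_4)$ first and $S_2$, $S_3$ then impose only upper bounds on the remaining radii.
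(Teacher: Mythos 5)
Your proposal is correct and follows essentially the same route as the paper: the same two decompositions (the three-term split with the ordinary Kullback--Leibler divergence of the simple part for the first claim, and the split \eqref{eq:weibbound} for the second), with the simple-max-stable term outsourced to the two parts of Proposition \ref{prop:newprop_KL} and the remaining terms $S_2$, $S_3$ bounded exactly as in Lemma \ref{lem:divweib}. No gaps.
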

\begin{proof}
	The proofs of the first and second results obtain by exploting the inequalities
	\begin{equation*}
	\begin{split}
	\kulb(g_{\bomega_0, \bsigma_0, \bmu_0}(\cdot|H_0), g_{\bomega, \bsigma,\bmu}(\cdot|H))
	& \leq 	\kulb(g_{\bomega_0, \bsigma_0, \bmu_0}(\cdot|H_0),  g_{\bomega_0, \bsigma_0, \bmu_0}(\cdot|H))\\
	& \quad + \mathscr{V}_{H_0, \bomega_0, \bsigma_0, \bmu_0}^{(1)}(H, \bomega_0, \bsigma_0, \bmu_0; H, \bomega, \bsigma,\bmu_0)\\
	& \quad +
	\mathscr{V}_{H_0, \bomega_0, \bsigma_0, \bmu_0}^{(1)}(H, \bomega, \bsigma, \bmu_0; H, \bomega, \bsigma,\bmu)
	\end{split}
	\end{equation*}
	and \eqref{eq:weibbound}, respectively, and proceeding as in the proof of Lemma \ref{lem:divweib}, but now appealing to Proposition \ref{prop:newprop_KL} in place of Proposition \ref{cor: New_KL_cor}.
\end{proof}
For given $H_0\in \Hset_0$, $\omega_0\in (\bone,\binf)$, $\bsigma_0\in (\bzero,\binf)$, $\bmu_0 \in \reald$ and for all $\epsilon>0$, define 
\begin{eqnarray}
\mathcal{V}_\epsilon&:=& 
\{(H, \bomega,\bsigma, \bmu) \in \Hset \times\bvarTheta: \,
	\kulb(g_{\bomega_0, \bsigma_0, \bmu_0}(\cdot|H_0), g_{\bomega, \bsigma,\bmu}(\cdot|H)) < \epsilon
\},
\\
\label{eq:v1setweib}
\mathcal{V}_\epsilon^{(1)}&:=& 
\{(H, \bomega,\bsigma, \bmu) \in \Hset \times\bvarTheta: \,
	\kulb_+^{(1)}(g_{\bomega_0, \bsigma_0, \bmu_0}(\cdot|H_0), g_{\bomega, \bsigma,\bmu}(\cdot|H)) < \epsilon
\}.
\end{eqnarray}
The following result is an immediate consequence of Lemma \ref{lem:divweib}. Its proof is analogous to that of Corollary \ref{lem: KLalpha} and is therefore omitted.

\begin{cor}\label{lem: omega}
	Under Conditions \ref{cond:genprior}\ref{cond:indep}--\ref{cond:angularpmprior} and \ref{cond:genprior}\ref{cond:compactprior}, for all $\epsilon>0$ we have that
$$
\Pi_{\Hset \times \bvarTheta}\left( \mathcal{V}_\epsilon \right)>0.
$$   
Thus, the induced prior $\Pi_{\Gset_{\bvarTheta}}$ on the Borel sets of
$
\Gset_{\bvarTheta}=\{g_{\bomega,\bsigma,\bmu}(\cdot|H): \, H \in\Hset, (\bomega,\bsigma,\bmu)\in \bvarTheta\},
$ where
$\bvarTheta=(1,\infty)^d\times(0,\infty)^d\times \reald$,
possesses the Kullback-Leibler property. Further assuming that Condition  \ref{cond:frecextend}\ref{cond:strongertruedens} is satisfied, we also have 
$$
\Pi_{\Hset \times \bvarTheta}\left( \mathcal{V}_\epsilon^{(1)}\right)>0.
$$
\end{cor}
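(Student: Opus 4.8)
\textbf{Proof proposal for Corollary \ref{lem: omega}.}
The plan is to run the argument of Corollary \ref{lem: KLalpha} verbatim, substituting the $\bomega$-Weibull divergence estimates for the $\brho$-Fr\'echet ones. First I would dispose of measurability: the excerpt already arranges $(\bx,H,\bomega,\bsigma,\bmu)\mapsto g_{\bomega,\bsigma,\bmu}(\bx|H)$ to be a jointly Borel map into $[0,\infty]$ (through \eqref{eq:gen_dens}, the joint measurability supplied by Corollary \ref{cor:Polish_d>2}, and composition with $U_{\bvartheta}$), so the arguments of Appendix~A of \cite{r48}, already invoked in the proof of Corollary \ref{lem: KLalpha}, yield $\mathcal{V}_\epsilon,\mathcal{V}_\epsilon^{(1)}\in\sigBor_\Hset\otimes\sigBor_{\bvarTheta}$, where $\sigBor_{\bvarTheta}$ is the Borel $\sigma$-field of the $L_1$-topology on $\bvarTheta=(1,\infty)^d\times(0,\infty)^d\times\reald$.

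Next I would localize. Since $H_0\in\Hset_0$, it satisfies property \ref{cond: finite_new}, or (only if $d=2$) property \ref{cond: infinite_new}, of Definition \ref{cond:mvt_angular}. In the first case Lemma \ref{lem:divweib}, and in the second Lemma \ref{lem:divweibunb}, produce $H^*\in\Hset'$ and radii $\delta_1,\delta_2,\delta_3,\delta_4>0$ such that the whole set
\[
\mathcal{W}:=B_{\delta_4,\infty}(H^*)\times B_{\delta_1,1}(\bomega_0)\times B_{\delta_2,1}(\bsigma_0)\times B^+_{\delta_3,1}(\bmu_0)
\]
is contained in $\mathcal{V}_\epsilon$; the one-sidedness of the $\bmu$-ball $B^+$ is forced by support-containment ($\bmu\geq\bmu_0$ is necessary for the Kullback--Leibler divergence to be finite). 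For $\mathcal{V}_\epsilon^{(1)}$ the same holds: Lemma \ref{lem:divweib} controls $\kulb_+^{(1)}$ directly when $h_0$ is as in \ref{cond: finite_new}, whereas in the bivariate unbounded case one uses the second part of Lemma \ref{lem:divweibunb}, which requires precisely Condition \ref{cond:frecextend}\ref{cond:strongertruedens} (integrability of $h_0^{1+s}$); after shrinking the radii if necessary, $\mathcal{W}\subset\mathcal{V}_\epsilon^{(1)}$.

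Then I would show $\Pi_{\Hset\times\bvarTheta}(\mathcal{W})>0$. By Condition \ref{cond:genprior}\ref{cond:indep} the joint prior factorizes as $\Pi_\Hset\times\Pi_{\bvarTheta}$, so it suffices to bound the two marginal masses below. For the angular part, $H^*\in\Hset'$ and Lemma \ref{lem: approx} give, for any $\delta>0$, a BP-form angular pm $H_k\in\Hset_k$ and a sub-ball $B_{\delta,k}\subset B_{\delta/2,\infty}(H_k)\subset B_{\delta_4,\infty}(H^*)$; since $\Pi_\Hset$ satisfies Condition \ref{cond:angularprior} by Condition \ref{cond:genprior}\ref{cond:angularpmprior}, $\Pi_\Hset(B_{\delta,k})>0$, exactly as in the ``Kullback--Leibler property'' step of the proof of Theorem \ref{theo:post_consistency_mvt}, hence $\Pi_\Hset(B_{\delta_4,\infty}(H^*))>0$. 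For the finite-dimensional part, Condition \ref{cond:genprior}\ref{cond:compactprior} makes $\Pi_{\bvarTheta}$ charge every neighbourhood of $\bvartheta_0=(\bomega_0,\bsigma_0,\bmu_0)$, and since its $\bmu$-marginal is non-degenerate this carries over to the truncated box $B_{\delta_1,1}(\bomega_0)\times B_{\delta_2,1}(\bsigma_0)\times B^+_{\delta_3,1}(\bmu_0)$. Multiplying gives $\Pi_{\Hset\times\bvarTheta}(\mathcal{W})>0$, hence $\Pi_{\Hset\times\bvarTheta}(\mathcal{V}_\epsilon)>0$ and $\Pi_{\Hset\times\bvarTheta}(\mathcal{V}_\epsilon^{(1)})>0$. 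Finally, writing $\mathcal{V}_\epsilon=\phi_{\Hset\times\bvarTheta}^{-1}(\mathcal{K}_\epsilon)$ with $\mathcal{K}_\epsilon$ the $\epsilon$-Kullback--Leibler ball around $g_{\bvartheta_0}(\cdot|H_0)=g_{\bomega_0,\bsigma_0,\bmu_0}(\cdot|H_0)$, positivity of $\Pi_{\Hset\times\bvarTheta}(\mathcal{V}_\epsilon)$ transfers to $\Pi_{\Gset_{\bvarTheta}}(\mathcal{K}_\epsilon)>0$ for every $\epsilon>0$, i.e. $g_{\bvartheta_0}(\cdot|H_0)$ lies in the Kullback--Leibler support of $\Pi_{\Gset_{\bvarTheta}}$.

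The main obstacle I expect is the bookkeeping around the one-sided location neighbourhood: one must verify that support-containment genuinely forces $\bmu\geq\bmu_0$ (so $B^+$ cannot be replaced by a two-sided ball), and then confirm that the compactly supported marginals of Condition \ref{cond:genprior}\ref{cond:compactprior} still put positive mass on this truncated box — this is the only place where the argument departs from the $\brho$-Fr\'echet case. A secondary subtlety, already noted in the remark following Theorem \ref{theo: cons_weibull}, is that in the $\bomega$-Weibull model only the first-order positive divergence $\kulb_+^{(1)}$ is tractable, so the $\mathcal{V}_\epsilon^{(l)}$ conclusion cannot be extended beyond $l=1$ — which is why the restriction $\bomega>\bone$ is baked into $\bvarTheta$ and, in the unbounded bivariate case, why Condition \ref{cond:frecextend}\ref{cond:strongertruedens} is imposed.
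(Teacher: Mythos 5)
Your argument is correct and is essentially the proof the paper intends: the authors omit it, stating only that the corollary is an immediate consequence of Lemma \ref{lem:divweib} (together, for the bivariate unbounded case, with Lemma \ref{lem:divweibunb}) by the same scheme as Corollary \ref{lem: KLalpha}, which is exactly what you execute — measurability via the Appendix-A-of-\cite{r48} argument, localization on $B_{\delta_4,\infty}(H^*)\times B_{\delta_1,1}(\bomega_0)\times B_{\delta_2,1}(\bsigma_0)\times B^+_{\delta_3,1}(\bmu_0)$, and positivity of the two prior marginals. The one-sided location box you flag is indeed the only delicate point and is glossed over in the paper as well: positivity of $\Pi_{\bvarTheta}$ on $B^+_{\delta_3,1}(\bmu_0)$ needs slightly more than Condition \ref{cond:genprior}\ref{cond:compactprior} literally states (e.g.\ an absolutely continuous location marginal), so your explicit caveat is warranted rather than a defect of your proof.
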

We now state the last techical result of this subsection, concerning tests on the marginal shape, scale and location parameters. 
We recall that, for all $\omega,\sigma>0$, $\mu \in \mathbb{R}$, the cdf of a univariate location-scale (reverse) $\omega$-Weibul distribution is 
$$
G_{\omega,\sigma, \mu}(x)= \exp	\left\lbrace-\left((\mu-x)/\sigma\right)^\omega \right\rbrace, \quad x < \mu.
$$
In what follows, as a convention, real valued terms of the form of suprema over an empty set are interpreted as equal to zero.

\begin{lemma}\label{lem: tests_weib}
	For $j =1, \ldots,d$, let $K_j \subset (1,\infty)\times(0,\infty)\times \real$ be a compact neighbourhood of $(\omega_{0,j},\sigma_{0,j}, \mu_{0,j})$.
	Then, for each  $\epsilon>0$ and $j=1, \ldots,d$, there exist Borel-measurable functions $t_{n,j}:  (0, \infty)^n\mapsto\{0,1\}$, $n \in \mathbb{N}_+$, and a positive constant $c_j(\epsilon)$  such that
	$$
	G_{\omega_{0,j},\sigma_{0,j}, \mu_{0,j}}^{(n)} t_{n,j} \leq 2e^{-nc_j(\epsilon)}, \quad \sup_{(\rho_j,\sigma_j, \mu_j) \in \Theta_j^{\emph{\text{(A)}}} } G_{\rho_{j},\sigma_j, \mu_j}^{(n)}(1- t_{n,j}) \leq 2e^{-nc_j(\epsilon)},
	$$
	where $\Theta_j^{\emph{\text{(A)}}} =\{(\omega,\sigma,\mu) \in K_j:
	\Vert (\omega,\sigma,\mu) - (\omega_{0,j},\sigma_{0,j},\mu_{0,j}) \Vert_\infty > \epsilon
	\}$ and, for all $\omega,\sigma>0$, $\mu \in \mathbb{R}$, 
	\begin{equation*}
	\begin{split}
	G_{\omega,\sigma, \mu}^{(n)} t_{n,j} =\int_{(-\infty, \mu)^n}t_{n,j}(x_1,\ldots,x_n) \prod_{i=1}^n 
	\frac{\omega e^{-((\mu-x_i)/\sigma)^{\omega	}}}{\sigma((\mu-x_i)/\sigma)^{1+\omega}}\diff x_i. 
	\end{split}
	\end{equation*}
\end{lemma}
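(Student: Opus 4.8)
The plan is to adapt the argument of Lemma~\ref{lem: tests} (the $\brho$-Fr\'echet case), but to exploit the compactness of $\Theta_j^{\text{(A)}}$, which here is a closed subset of the compact neighbourhood $K_j$, so as to replace the explicit distributional computations of that proof by a soft compactness argument. Fix $j\in\{1,\ldots,d\}$ and $\epsilon>0$, and set
$$
\widetilde{\Theta}_j:=\{(\omega,\sigma,\mu)\in K_j:\ \Vert(\omega,\sigma,\mu)-(\omega_{0,j},\sigma_{0,j},\mu_{0,j})\Vert_\infty\geq \epsilon\},
$$
a compact set with $\Theta_j^{\text{(A)}}\subset \widetilde{\Theta}_j$ and $(\omega_{0,j},\sigma_{0,j},\mu_{0,j})\notin\widetilde{\Theta}_j$. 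The crucial step is to prove that
$$
\epsilon_j\equiv\epsilon_j(\epsilon):=\inf_{(\omega,\sigma,\mu)\in\widetilde{\Theta}_j}\dist_{KS}\!\left(G_{\omega,\sigma,\mu},\,G_{\omega_{0,j},\sigma_{0,j},\mu_{0,j}}\right)>0.
$$
This rests on two facts. First, the map $(\omega,\sigma,\mu)\mapsto G_{\omega,\sigma,\mu}$ is continuous from $(1,\infty)\times(0,\infty)\times\real$ into the space of cdf's equipped with the uniform distance $\dist_{KS}$: each $G_{\omega,\sigma,\mu}$ is a \emph{continuous} cdf (its left limit at the upper endpoint $\mu$ equals $1$ since $\omega>0$), so pointwise convergence of $G_{\omega_n,\sigma_n,\mu_n}$ along any convergent parameter sequence — which holds including at the moving endpoints — upgrades to uniform convergence by P\'olya's theorem \citep[e.g.,][Proposition A.11]{r10}. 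Second, the three-parameter reverse Weibull family is identifiable: equality of two such cdf's forces equality of the upper endpoints, hence of $\mu$, and then $(u/\sigma)^\omega=(u/\sigma_{0,j})^{\omega_{0,j}}$ for all $u>0$; taking logarithms gives a linear identity in $\log u$ that can hold only if $\omega=\omega_{0,j}$ and $\sigma=\sigma_{0,j}$. Hence $(\omega,\sigma,\mu)\mapsto\dist_{KS}(G_{\omega,\sigma,\mu},G_{\omega_{0,j},\sigma_{0,j},\mu_{0,j}})$ is a positive continuous function on the compact set $\widetilde{\Theta}_j$, so its infimum is attained and strictly positive, i.e.\ $\epsilon_j>0$.

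With $\epsilon_j$ in hand, the remainder mirrors Lemma~\ref{lem: tests} verbatim. Define the empirical distribution function map $\widehat{G}_{n,j}:(x_1,\ldots,x_n)\mapsto\{n^{-1}\sum_{i=1}^n\indic(x_i\leq x)\}_{x\in\real}$ and the Borel-measurable tests
$$
t_{n,j}:(x_1,\ldots,x_n)\mapsto\indic\!\left(\dist_{KS}(\widehat{G}_{n,j},\,G_{\omega_{0,j},\sigma_{0,j},\mu_{0,j}})>\epsilon_j/2\right).
$$
The exponential (DKW-type) bound on the Kolmogorov--Smirnov statistic valid for an i.i.d.\ sample from \emph{any} distribution — the first formula on p.~253 of \cite{r289}, as used in Lemma~\ref{lem: tests} — yields at once $G^{(n)}_{\omega_{0,j},\sigma_{0,j},\mu_{0,j}}t_{n,j}=G^{(n)}_{\omega_{0,j},\sigma_{0,j},\mu_{0,j}}\{\sqrt{n}\,\dist_{KS}(\widehat{G}_{n,j},G_{\omega_{0,j},\sigma_{0,j},\mu_{0,j}})>\sqrt{n}\,\epsilon_j/2\}\leq 2e^{-n\epsilon_j^2/2}$. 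For the type-II error, if $1-t_{n,j}=1$ then $\dist_{KS}(\widehat{G}_{n,j},G_{\omega_{0,j},\sigma_{0,j},\mu_{0,j}})\leq\epsilon_j/2$, so for any $(\omega_j,\sigma_j,\mu_j)\in\Theta_j^{\text{(A)}}$ the reverse triangular inequality gives $\dist_{KS}(\widehat{G}_{n,j},G_{\omega_j,\sigma_j,\mu_j})\geq\dist_{KS}(G_{\omega_j,\sigma_j,\mu_j},G_{\omega_{0,j},\sigma_{0,j},\mu_{0,j}})-\tfrac{\epsilon_j}{2}\geq\tfrac{\epsilon_j}{2}$, whence $G^{(n)}_{\omega_j,\sigma_j,\mu_j}(1-t_{n,j})\leq G^{(n)}_{\omega_j,\sigma_j,\mu_j}\{\dist_{KS}(\widehat{G}_{n,j},G_{\omega_j,\sigma_j,\mu_j})\geq\epsilon_j/2\}\leq 2e^{-n\epsilon_j^2/2}$, uniformly over $\Theta_j^{\text{(A)}}$. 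Taking $c_j(\epsilon)=\epsilon_j(\epsilon)^2/2$ completes the construction, and Borel measurability of $t_{n,j}$ is immediate since the empirical cdf map is Borel and $\dist_{KS}$ is continuous.

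\textbf{Main obstacle.} The only delicate point is the uniform lower bound $\epsilon_j>0$, and within it the continuity of $(\omega,\sigma,\mu)\mapsto G_{\omega,\sigma,\mu}$ in the uniform metric despite the fact that the support $(-\infty,\mu)$ of $G_{\omega,\sigma,\mu}$ moves with $\mu$; this is precisely where the continuity of the limiting cdf, and hence P\'olya's theorem, is used. Everything else is routine and parallels Lemma~\ref{lem: tests}; in contrast to that lemma, the compactness of $\Theta_j^{\text{(A)}}$ — guaranteed in the present setting by Condition~\ref{cond:extendweib}\ref{cond:compsupp1}--\ref{cond:compsupp2} — lets us bypass the explicit case-by-case estimates of the Kolmogorov--Smirnov distance carried out there.
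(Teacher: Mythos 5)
Your proposal is correct, but it takes a genuinely different route from the paper. The paper's proof is indirect: it verifies identifiability of the three-parameter reverse Weibull family and continuity of $(\omega,\sigma,\mu)\mapsto G_{\omega,\sigma,\mu}$ in the \emph{total variation} norm (handling the moving endpoint by truncating near $\mu$ and applying Scheff\'e's lemma), invokes Lemma 10.6 of \cite{r999} to obtain estimators that are uniformly consistent over the compact set $K_j$, builds consistent tests from those estimators, and finally upgrades them to exponentially consistent tests via the construction in the proof of Lemma 10.3 of \cite{r999}. You instead transplant the direct Kolmogorov--Smirnov construction of Lemma \ref{lem: tests}: the only new ingredient is the uniform separation $\epsilon_j=\inf_{\widetilde{\Theta}_j}\dist_{KS}(G_{\omega,\sigma,\mu},G_{\omega_{0,j},\sigma_{0,j},\mu_{0,j}})>0$, which you obtain from compactness of $\widetilde{\Theta}_j$, identifiability, and continuity of the parametrization in the weaker uniform metric — correctly justified by pointwise convergence (including at the moving endpoint, where $\omega>1$ bounded away from zero makes $((\mu_n-\mu)/\sigma_n)^{\omega_n}\to0$) plus P\'olya's theorem, since each limiting cdf is continuous. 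The DKW bound and the reverse-triangle argument then go through verbatim. Your approach is more elementary and self-contained, avoids the total-variation continuity verification entirely, and yields the explicit constant $c_j(\epsilon)=\epsilon_j(\epsilon)^2/2$; the paper's route is less explicit but reuses off-the-shelf machinery and the same identifiability/continuity facts it needs elsewhere. One cosmetic remark: the domain $(0,\infty)^n$ in the lemma's statement is inherited from the Fr\'echet case, and your tests are naturally defined on $\real^n$ (or $(-\infty,\mu)^n$), which is what the integrals in the statement actually require; this does not affect the validity of your argument.
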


\begin{proof}
	Wtihout loss of generality, we assume $K_j \cap \Theta_j^{\text{(A)}}\neq \emptyset$, for $j=1, \ldots,d$.
	We start by verifying the conditions of Lemma 10.6 in \cite{r999}, 
	which establishes the existence of a sequence of estimators $\bT_{n,j}$ that are uniformly consistent on $K_j$, for $j=1, \ldots,d$. 
	By assumption, 
	the latter are compact sets. Consider the subfamilies of univariate three-parameter Weibull distributions
	$$
	\{
	G_{\omega,\sigma, \mu }: \, (\omega,\sigma,\mu) \in K_j
	\}, \quad j	=1, \ldots, d.
	$$
	Clearly, each of such class is identifiable, i.e. $G_{\omega, \sigma,\mu}\neq G_{\widetilde{\omega}, \widetilde{\sigma},\widetilde{\mu}}$ if $(\omega, \sigma,\mu),(\widetilde{\omega}, \widetilde{\sigma},\widetilde{\mu})\in K_j$ and $(\omega, \sigma,\mu) \neq(\widetilde{\omega}, \widetilde{\sigma},\widetilde{\mu})$. Furthermore, for $j=1, \ldots,d$, the map 
	\begin{equation}\label{eq:themap}
	(\omega,\sigma,\mu)\mapsto G_{\omega, \sigma,\mu}
	\end{equation}
	on $K_j$ is continuous under the total variation norm. To see this, let $(\omega_n, \sigma_n,\mu_n)$ be a sequence in $K_j$ converging to $(\omega,\sigma,\mu)$ and denote by $\overline{G}_{\omega_n, \sigma_n,\mu_n}$ and $\overline{G}_{\omega, \sigma,\mu}$ the renormalised restrictions of $G_{\omega_n, \sigma_n,\mu_n}$ and $G_{\omega, \sigma,\mu}$ on $(-\infty,\mu-\varepsilon)$, for some $\varepsilon>0$. 
	There exists $n_\varepsilon\in \nat_+$ such that, for all $n \geq n_\varepsilon$, $\mu_n \in (\mu\pm \varepsilon)$  and
	\begin{equation*}
		\begin{split}
			\dist_T(G_{\omega_n, \sigma_n,\mu_n},G_{\omega, \sigma,\mu})
		&\leq \dist_T(\overline{G}_{\omega_n, \sigma_n,\mu_n},G_{\omega, \sigma,\mu})+ 
		2 G_{\omega, \sigma,\mu}((-\infty,\mu-\varepsilon)^\complement)\\
		&\quad +
		2 G_{\omega_n, \sigma_n,\mu_n}((-\infty,\mu-\varepsilon)^\complement).
		\end{split}
	\end{equation*}
	The first term on the right-hand side converges to zero by Scheff\'e's lemma. The second term equals $1- \exp(-(\varepsilon/\sigma)^\omega)$ and can be made arbitrarily small by choosing $\varepsilon$ small enough. As for the third term, we have
	\begin{equation*}
	\begin{split}
	\limsup_{n \to \infty}G_{\omega_n, \sigma_n,\mu_n}((-\infty,\mu-\varepsilon)^\complement) & \leq \limsup_{n \to \infty} G_{\omega_n, \sigma_n,\mu_n}((\mu - \varepsilon, \mu_n)) \\
	& =\limsup_{n \to \infty} \left[
	1- \exp\left\lbrace
	-((\mu_n-\mu+\varepsilon)/\sigma_n)^{\omega_n}
	\right\rbrace
	\right]
	\\
	& \leq  1-\exp(-(2\varepsilon/\sigma)^\omega),
	\end{split}
	\end{equation*}
	where the term on the third line can also be made arbitrarily small by choosing $\varepsilon$ small enough. It follows that the map  in \eqref{eq:themap} is sequentially continuous and, therefore, continuous.  

	At this stage, Lemma 10.6 in \cite{r999} 
	can be applied and, letting $\bX_{1:n}=(\bX_1, \ldots,\bX_n)$ be a sample of iid draws from $G_{\bomega_0, \bsigma_0,\bmu_0}$, the test functions 
	$$
	\tilde{t}_{n,j}(X_{j,1:n}):=\indic(\Vert \bT_{n,j}-(\omega_{0,j}, \sigma_{0,j}, \mu_{0,j})\Vert_\infty \geq \epsilon/2), \quad j=1, \ldots,d,
	$$
	satisfy 
	$$
	G_{\omega_{0,j}, \sigma_{0,j}, \mu_{0,j}}^{(n)}\tilde{t}_{n,j} \to 0, \quad  \sup_{ (\rho_j,\sigma_j, \mu_j) \in \bvarTheta_j^{\text{(A)}} } G_{\rho_{j},\sigma_j, \mu_j}^{(n)}(1- \tilde{t}_{n,j})\to 0,
	$$
	as $n \to 0$, see also \citep[ p. 144, lines 24--29]{r999}.
	The final result now obtains using the construction in the proof of Lemma 10.3 in \cite{r999},
	 see in particular the last three lines on page 143 and the first nine lines on page 144.  
\end{proof}

\subsubsection{Proof of Theorem \ref{theo: cons_weibull}}

The proof proceeds analogously to that of Theorem \ref{th:alpha_frec},  now using  Corollary \ref{lem: omega} in place of Corollary \ref{th:alpha_frec}, Lemma \ref{lem:entropyweib} in place of \ref{aux: n2} and Lemma \ref{lem: tests_weib} in place of Lemma \ref{lem: tests}. Full derivations are therefore omitted.



\subsubsection{Proof of Theorem \ref{cor:Gumbel_cons}}

We have the following facts:
\\

\noindent
\textbf{Fact 1.} For all $l \in \nat_+$, $H\in \Hset$, $\bsigma> \bzero$ and $\bmu \in \reald$, the $l$-th order positive Kullback-Leibler divergence from $g_{ \bsigma, \bmu}(\cdot|H)$ to the true data generating density $g_{ \bsigma_0, \bmu_0}(\cdot|H_0)$, i.e.
\begin{equation}\label{eq:div_gumb}
\int_{\reald} 
\left[
\log^+
\left\lbrace
\frac{
	g_{\bsigma_0, \bmu_0}(\bx|H_0)
}{
	g_{\bsigma, \bmu}(\bx|H)
}
\right\rbrace
\right]^lg_{\bsigma_0, \bmu_0}(\bx|H_0) \diff \bx,
\end{equation}
equals
$$
\mathscr{V}_{H_0, \bone, \bsigma_0^*}^{(l)}(H_0, \brho^*_0, \bsigma_0^*; H, \brho^*, \bsigma^*),
$$
which is defined as in Section \ref{app:aux_frec}, with
\begin{equation}\label{eq:newpar}
\begin{split}
\brho_0^*&=\left(
\sigma_{0,1}^{-1}, \ldots, \sigma_{0,d}^{-1}
\right), \quad
\bsigma_{0}^*=\left(e^{\mu_{0,1}},
\ldots, e^{\mu_{0,d}}
\right),\\
\brho^*&=(\sigma_1^{-1}, \ldots,\sigma_d^{-1}),\hspace{1.5em}
\bsigma^*=\left(
e^{\mu_1}, \ldots,e^{\mu_d}
\right).
\end{split}
\end{equation}
Analogously,
$$
\kulb(g_{ \bsigma_{0},\bmu_0}(\cdot|H_0),g_{ \bsigma,\bmu}(\cdot|H))=\kulb(g_{\brho^*_0, \bsigma_{0}^*}(\cdot|H_0),g_{\brho^*, \bsigma^*}(\cdot|H)).
$$
Clearly, for any $\delta>0$ there exists $\varepsilon>0$ such that, 
\begin{equation}\label{eq:implies}
\bsigma \in B_{\varepsilon,1}(\bsigma_0)
\text{ and } 
\bmu \in B_{\varepsilon,1}(\bmu_0)
\implies \brho^* \in B_{\delta,1}(\brho_0^*) \text{ and } \bsigma^* \in B_{\delta,1}(\bsigma^*_0).
\end{equation}
Therefore, results analogous to Lemmas  \ref{cor: alpha_kulb}--\ref{lem:newkulblem2} and Corollary \ref{lem: KLalpha} hold true also in the present setting. In particular, for any $\epsilon>0$, the prior $\Pi_{\Hset \times \bvarTheta}$ resulting from Conditions \ref{cond:genprior}\ref{cond:indep}--\ref{cond:angularpmprior} and \ref{cond:genprior}\ref{cond:scaleloc} possesses the Kullback-Leibler property. 
\\

\noindent
\textbf{Fact 2}. For $j=1, \ldots,d$, $\sigma_j >0$ and $\mu_j \in \real$, recalling that $G_{ \mu_j, \sigma_j}(x)=\exp(-\exp((\mu_j-x)/\sigma_j))$, $x\in \real$, we have that 
\begin{equation*}
\begin{split}
\dist_{KS}(G_{\sigma_{0,j},\mu_{0,j}},
G_{\sigma, \mu} 
)
= 
\dist_{KS}\left(
G_{\rho^*_{0,j}, \sigma^*_{0,j}},
G_{\rho^*_{j}, \sigma^*_{j}}
\right)
\end{split}
\end{equation*}
where the parameters $\rho^*_{0,j}, \sigma^*_{0,j}, \rho^*_{j}, \sigma^*_{j}$ are defined as in \eqref{eq:newpar} and, for $x>0$,
\begin{equation*}
\begin{split}
G_{\rho^*_{0,j}, \sigma^*_{0,j}}(x)&=\exp\left\lbrace
-\left(
x/\sigma_{0,j}^*
\right)^{-\rho_{0,j}^*}
\right\rbrace,\\
G_{\rho^*_{j}, \sigma^*_{j}}(x)&=\exp\left\lbrace
-\left(
x/\sigma_{j}^*
\right)^{-\rho_{j}^*}
\right\rbrace.
\end{split}
\end{equation*}
Furthermore, if $(\sigma_j,\mu_j)\in B_{\delta,\infty}^\complement((\sigma_{0,j}, \mu_{0,j}))$, for some $\delta>0$, then there exists $\epsilon>0$ such that
$
(\rho_j^*,\sigma_j^*)\in B_{\epsilon,\infty}^\complement((\rho_{0,j}^*,\sigma_{0,j}^*))$.
Consequently, arguments analogous to those in the proof of Lemma \ref{lem: tests} apply also in the present setting and a similar result obtains.
\\

\noindent
\textbf{Fact 3}.  For all $k \geq d+1$, $H_k, \widetilde{H}_k \in \Hset_k$, $\bsigma, \widetilde{\bsigma} \in B_{\varepsilon,1}(\bsigma_{0})$ and $\bmu, \widetilde{\bmu} \in B_{\varepsilon,1}({\bmu}_0)$, with $\varepsilon>0$, a change of variables yields
\begin{equation}\label{eq:hellingumbel}
\dist_H \left( g_{\bsigma, \bmu}(\cdot|H_k), 
g_{ \widetilde{\bsigma}, \widetilde{\bmu}}(\cdot|\widetilde{H}_k)
\right)=\dist_H \left( g_{\brho^*, \bsigma^*}(\cdot|H_k), 
g_{\widetilde{\brho}^*, \widetilde{\bsigma}^*}(\cdot|\widetilde{H}_k)
\right),
\end{equation}
where $\brho^*, \bsigma^*$ are given in \eqref{eq:newpar} and $\widetilde{\brho}^*, \widetilde{\bsigma}^*$ are defined in an analogous fashion. Let $\brho_0^*, \bsigma_0^*$ be as in \eqref{eq:newpar} and choose $\varepsilon>0$ such that the implication in \eqref{eq:implies} is satisfied for a positive $\delta>0$ complying with 	\eqref{eq:deltastar}, where $\rho_{0,j}$ and $\sigma_{0,j}$ are now replaced by $\rho_{0,j}^*$ and $\sigma_{0,j}^*$, respectively, for $j=1, \ldots,d$. Hence, assuming without loss of generality that
$$
\varepsilon <\frac{1}{2}\min
\left\lbrace
\min_{1\leq j\leq d} \sigma_{0,j}, \min_{1\leq j\leq d} \mu_{0,j}
\right\rbrace,
$$
by Lemma \ref{aux: n2}, the term on the right-hand side of \eqref{eq:hellingumbel} is bounded from above by
\begin{equation*}
\begin{split}
&  \sqrt{ c_0 \Vert \bphi_\circ - \widetilde{\bphi}_\circ\Vert_1} + \sqrt{c_0k \Vert \brho^*-\widetilde{\brho}^* \Vert_1 + c_0k \Vert \bsigma^*-\widetilde{\bsigma}^* \Vert_1}\\
& \leq \sqrt{ c_0 \Vert \bphi_\circ - \widetilde{\bphi}_\circ\Vert_1}+
\sqrt{c_0^*k \Vert \bsigma-\widetilde{\bsigma} \Vert_1 + c_0^*k \Vert \bmu-\widetilde{\bmu} \Vert_1},
\end{split}
\end{equation*}
where $c_0$ is a positive constant depending on $d$, $\brho_0^*$ and $\bsigma_0^*$, 
$$
c_0^*:= c_0 \max\left\lbrace \frac{4}{\min_{1\leq j} \sigma_{0,j}^2}, \max_{1\leq j\leq d} e^{2\mu_{0,j}}
\right\rbrace,
$$
while $\Vert \bphi_\circ - \widetilde{\bphi}_\circ\Vert_1$ is as in Lemma \ref{lem: L1}.
\\

Combining Facts 1-3, the results in the statement can be established by mirroring the proof of Theorem \ref{th:alpha_frec}.


\subsection{Proofs of the results in Section \ref{sec:repar}}

\subsubsection{Proof of Proposition \ref{prop:RC}}\label{sec:RCproof}
Preliminary observe that, under Condition \ref{cond:strong},
$g_{\bvartheta_0}(\cdot|H_0)$ is positive and continuous on $(\bzero,\binf)$, if $\bvartheta_0=(\brho_0, \bone)$, or $(-\binf, \bzero)$, if $\bvartheta_0=(\bomega_0, \bone, \bzero)$, or $\reald$, if $\bvartheta_0=( \bone, \bzero)$, respectively. Moreover, for the choices of $\ba_{m_n}$ and $\bb_{m_n}$ given in Section \ref{eq: EB} of the main article, we have that
$$
\{\bx \in \reald: f_{m_n}(\bx)>0\} \subset \{\bx \in \reald: g_{\bvartheta_0}(\bx|H_0)>0\}.
$$
In this view, let $E_n$ be a sequence of measurable events such that $\prodGtHtrue(E_n|H_0) =o(e^{-nc})$ as $n \to \infty$, for some $c>0$, and notice that for any $\epsilon \in(0,c)$
\begin{equation}\label{eq:westart}
Q_n(E_n) \leq e^{n \epsilon} \prodGtHtrue(E_n|H_0) + Q_n(E_n \cap \{R_n > \epsilon\})
\end{equation}
where $R_n$ denotes the sequence of rescaled of log likelihood ratios
$$
R_n := \frac{1}{n}\sum\log\left\lbrace
\frac{f_{m_n}(\bX_{n,i})}{g_{\bvartheta_0}(\bX_{n,i}|H_0)}
\right\rbrace
$$
and $\bX_{n,1},\ldots, \bX_{n,n}$ are iid rv's distributed according to $F^{m_n}(\ba_{m_n}\cdot\, +\bb_{m_n})$.
By assumpton, the first term on the right-hand side of \eqref{eq:westart} is of order $o(e^{-(c-\epsilon)n})$.
We next show that
the second term is of order $O(1/n^2)$, wherefrom the final result follows.

Let $\varepsilon_n^2=\dist_H^2(f_{m_n}, g_{\bvartheta_0}(\cdot|H))$ and observe that, under Condition \ref{cond:strong}, Corollary 3.1 in 
\cite{r32}
together with Lemma B.1(ii) and equation (B.1) in \cite{r10}
imply $\varepsilon_n \to 0$ as $n \to \infty$. This fact, together with Condition \ref{cond:densratio} and Theorem 5 in \cite{r434}
allow to deduce that, for all sufficiently large $n \geq n_0$
\begin{equation}\label{eq: bound_1}
\mathbb{E}{R_n}=\kulb(f_{m_n},g_{\bvartheta_0}(\cdot|H_0))\leq
\left[
a+8 \max\left\lbrace
1,\log \left(\frac{J_0}{\varepsilon_n} \right)\right\rbrace 
\right] \varepsilon_n^2 < \frac{\epsilon}{2}
\end{equation}
where $a$ is a positive global constant. Moreover, arguments analogous to those in the proof of Lemma 4.1 in
\cite{r32}
allow to deduce that
\begin{equation}\label{eq: bound_2}
\max(\varsigma_2,\varsigma_3)<1+\varsigma_4<\infty,
\end{equation}
where, for $l=2,3,4$,
$$
\varsigma_l:= \sup_{n \geq n_0}(-1)^l\int\left[\log \left\lbrace
\frac{f_{m_n}(\bx)}{g_{\bvartheta_0}(\bx|H_0)}
\right\rbrace\right]^l f_{m_n}(\bx) \diff \bx.
$$
The bounds in \eqref{eq: bound_1}-\eqref{eq: bound_2}, Markov's inequality and a few simple manipulations now yield
\begin{equation*}
\begin{split}
Q_n(E_n \cap \{R_n > \epsilon\}) & \leq Q_n(\{R_n -\mathbb{E}R_n > \epsilon/2\}) 	\\
& \leq  Q_n(\{|R_n -\mathbb{E}R_n| > \epsilon/2\}) 	\\
& \leq (2/\epsilon)^4\left[
\frac{1+\varsigma_4}{n^3}(1+2\epsilon+3\epsilon^2/2)
+\frac{3}{n^2}\varsigma_2^2
\right]\\
&=O(1/n^2).
\end{split}
\end{equation*}
The proof is now complete.

\subsection{Proofs of the results in Section \ref{sec:frec}}

\subsubsection{Auxiliary results for the proof of Theorem \ref{th:rem_cont_frec}}

\begin{lemma}\label{lem:pseudokulb}
	Under the assumptions of Theorem \ref{th:rem_cont_frec}, for any $c>0$ eventually almost surely as $n \to \infty$
	$$
	\int_{\Hset \times \bvarTheta} \prod_{i=1}^n 
	\left\lbrace
	\frac{g_{\brho,\bsigma}(\overline{\bM}_{m_n,i}|H)}{g_{\brho_0, \bone}(\overline{\bM}_{m_n,i}|H_0)}
	\right\rbrace
	\diff(\Pi_\Hset \times \overline{\Psi}_n)(H, \brho, \bsigma) \geq e^{-nc}.
	$$
\end{lemma}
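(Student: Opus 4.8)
The plan is to carry out the usual Schwartz-type evidence lower bound, but adapted to the empirical-Bayes prior $\overline{\Psi}_n$ and the triangular array $\overline{\bM}_{m_n,1:n}$, and this lemma plays in the misspecified setting exactly the role that the bound $p(\bX_{1:n})\ge e^{-nc}$ plays in \eqref{eq: applykulb}. First I would make $\overline{\Psi}_n$ explicit. In the Fr\'echet case the first line of \eqref{eq: psin} pushed forward by $\psi_n$ (see \eqref{eq:reparam}, which only rescales the scale component and leaves $\brho$ untouched) reads
\[
\diff\overline{\Psi}_n(\brho,\bsigma)\ \propto\ \diff\Pi_{\text{sh}}(\brho)\times\prod_{j=1}^d \kappa_{n,j}\,\pisc(\kappa_{n,j}\sigma_j)\,\diff\sigma_j,\qquad \kappa_{n,j}:=a_{m_n,j}/\widehat{\sigma}_{n,j},
\]
and the implied constant equals $1$ since $\int\kappa_{n,j}\pisc(\kappa_{n,j}\sigma_j)\diff\sigma_j=\int\pisc=1$. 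By Condition \ref{cond:frecextend}\ref{cond:estscale}, $\kappa_{n,j}\to1$ $\iprodFtruealt$-a.s.\ for each $j$. Since $\pisc$ is continuous and strictly positive on $[1\pm\eta]$, on the eventually-a.s.\ event $A_n:=\{\kappa_{n,j}\in(1\pm\eta'),\ j=1,\dots,d\}$ (for a small fixed $\eta'$) we get $\kappa_{n,j}\pisc(\kappa_{n,j}\sigma_j)\ge\underline{\pi}>0$ uniformly for $\sigma_j$ in a fixed neighbourhood $[1\pm\eta_0]$ of $1$ (with $\eta',\eta_0$ chosen so that $[1\pm\eta']\cdot[1\pm\eta_0]\subset[1\pm\eta]$). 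Hence, on $A_n$, the product prior dominates a \emph{deterministic} sub-probability measure $\underline{\mu}:=\Pi_{\Hset}\otimes\Pi_{\text{sh}}\otimes\bigl(\underline{\pi}^d\,\indic_{[1\pm\eta_0]^d}\,\diff\bsigma\bigr)$, which is the device that decouples the prior from the data.

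Next I would fix a Kullback--Leibler neighbourhood of the limiting parameter. By Lemmas \ref{cor: alpha_kulb}--\ref{lem:newkulblem2} (using Condition \ref{cond:frecextend}\ref{cond:strongertruedens} in the bivariate unbounded-density case to reach orders $l=2,3,4$) there are $H^\ast\in\Hset'$ and radii $\delta_1,\delta_2,\delta_3>0$ such that the set $\mathcal{W}_\epsilon:=B_{\delta_3,\infty}(H^\ast)\times B_{\delta_1,1}(\brho_0)\times B_{\delta_2,1}(\bone)$ is contained in $\bigcap_{l=1}^4\mathcal{V}_\epsilon^{(l)}$, with $\mathcal{V}_\epsilon^{(l)}$ as in \eqref{eq:vsetfrec} and $\bsigma_0=\bone$ (this is exactly the content used in Corollary \ref{lem: KLalpha}). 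Shrinking $\delta_2$ so that $B_{\delta_2,1}(\bone)\subset[1\pm\eta_0]^d$, and using that $\Pi_{\text{sh}}$ has full support on $(0,\infty)^d$ together with Condition \ref{cond:genprior}\ref{cond:angularpmprior} (which, as in the Kullback--Leibler step of the proof of Theorem \ref{theo:post_consistency_mvt}, gives $\Pi_\Hset$-positive mass to $B_{\delta_3,\infty}(H^\ast)$), the constant $\pi_\epsilon:=\underline{\mu}(\mathcal{W}_\epsilon)$ is strictly positive. On $A_n$, restricting the evidence integral to $\mathcal{W}_\epsilon$, bounding $\Pi_\Hset\times\overline{\Psi}_n\ge\underline{\mu}$, applying Jensen's inequality to the sub-probability measure $\underline{\mu}|_{\mathcal{W}_\epsilon}$ and then Fubini, I get
\[
\int_{\Hset\times\bvarTheta}\prod_{i=1}^n\frac{g_{\brho,\bsigma}(\overline{\bM}_{m_n,i}|H)}{g_{\brho_0,\bone}(\overline{\bM}_{m_n,i}|H_0)}\,\diff(\Pi_\Hset\times\overline{\Psi}_n)\ \ge\ \pi_\epsilon\exp\!\Bigl(-\textstyle\sum_{i=1}^n w(\overline{\bM}_{m_n,i})\Bigr),
\]
where, using $\log(g/g_0)\ge-\log^+(g_0/g)$, the \emph{fixed} nonnegative function $w(\bm):=\pi_\epsilon^{-1}\int_{\mathcal{W}_\epsilon}\log^+\{g_{\brho_0,\bone}(\bm|H_0)/g_{\brho,\bsigma}(\bm|H)\}\,\diff\underline{\mu}$ does not depend on $n$ or on the data.

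It then remains to show $n^{-1}\sum_{i=1}^n w(\overline{\bM}_{m_n,i})\le c/2$ eventually $\iprodFtruealt$-a.s. For fixed $n$ the $\overline{\bM}_{m_n,i}$ are i.i.d.\ with density $f_{m_n}$, and for $n\ge n_0$ one has $f_{m_n}\le J_0\,g_{\brho_0,\bone}(\cdot|H_0)$ by Condition \ref{cond:densratio}; combining this with Jensen on $\underline{\mu}|_{\mathcal{W}_\epsilon}/\pi_\epsilon$ for the powers $t$ and $t^4$ and with $\mathcal{W}_\epsilon\subset\mathcal{V}_\epsilon^{(1)}\cap\mathcal{V}_\epsilon^{(4)}$ yields $\sup_{n\ge n_0}\int w\,f_{m_n}\le J_0\epsilon$ and $\sup_{n\ge n_0}\int w^4 f_{m_n}\le J_0\epsilon<\infty$. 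With the uniform fourth-moment bound in hand, a Rosenthal (Marcinkiewicz--Zygmund) inequality for the centred i.i.d.\ sum gives that the probability that $|n^{-1}\sum_i(w(\overline{\bM}_{m_n,i})-\int w\,f_{m_n})|>\epsilon$ is $O(n^{-2})$, hence summable; since this probability is the same whether computed under $Q_n$ or under $\iprodFtruealt$ (the array being a measurable function of the underlying i.i.d.\ sequence), Borel--Cantelli gives $n^{-1}\sum_i w(\overline{\bM}_{m_n,i})\le(J_0+1)\epsilon$ eventually a.s. Choosing $\epsilon$ so small that $(J_0+1)\epsilon<c/2$, and then $n$ large enough that $A_n$ holds and $\pi_\epsilon\ge e^{-nc/2}$, the displayed bound becomes $\ge\pi_\epsilon e^{-nc/2}\ge e^{-nc}$, which proves the lemma.

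The main obstacle is precisely the coupling created by the data-dependent prior: $\overline{\Psi}_n$, the re-blocking into $\overline{\bM}_{m_n,i}$, and the estimators $\widehat{\bsigma}_n$ all live on the same underlying sequence, so one cannot simply invoke a strong law for a fixed prior and a fixed sampling distribution. The two ingredients that resolve this are (i) the deterministic minorisation $\Pi_\Hset\times\overline{\Psi}_n\ge\underline{\mu}$ on $A_n$, which substitutes a fixed measure for the random one inside the evidence integral, and (ii) the higher-order Kullback--Leibler control of Corollary \ref{lem: KLalpha}, which upgrades a weak law into a summable Borel--Cantelli bound for the triangular array; without the fourth-moment estimate only convergence in probability would be available, which is exactly the weaker conclusion one settles for in the reverse-Weibull domain of Theorem \ref{theo:rem_weib}.
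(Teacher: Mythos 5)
Your proof is correct and follows essentially the same route as the paper's: a deterministic minorisation of the data-dependent prior on an eventually-almost-sure event, restriction to a set of positive prior mass inside $\cap_{l\le4}\mathcal{V}_\epsilon^{(l)}$ via Corollary \ref{lem: KLalpha}, Jensen plus Fubini with Condition \ref{cond:densratio} to control the mean, and a fourth-moment bound with Borel--Cantelli for the triangular array. The only cosmetic difference is that you invoke a Rosenthal-type inequality where the paper uses Markov's inequality with the explicit fourth-moment expansion for centred i.i.d.\ sums, which amounts to the same estimate.
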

\begin{proof}
	
	Denote by $\underline{\Pi}_{\text{sc}}^{(d)}$ any pm with positive Lebesgue density $\underline{\pi}_{\text{sc}}^{(d)}$ on $(0,\infty)^d$ satisfying
	$$
	\sup_{\bx \in (1\pm \eta)^d} \underline{\pi}_{\text{sc}}^{(d)}(\bx)< M,
	$$
	where $M$ is defined via
	$$
	M^{1/d}:= \inf_{x \in (1\pm \eta)}\pisc(x)
	$$
	and is positive and bounded by Condition  \ref{cond:frecextend}\ref{cond:pisc}\ref{posit}.
	Define $\Pi_{\bvarTheta}=\Pi_{\text{sh}}\times \underline{\Pi}_{\text{sc}}^{(d)}$, where $\bvarTheta=(0,\infty)^{2d}$. Then, denote $\Pi_{\Hset\times \bvarTheta}=\Pi_{\Hset}\times \Pi_{\bvarTheta}$ and observe it satisfies the hypotheses of Corollary \ref{lem: KLalpha}. Consequently, for all $\epsilon>0$ we have that
	$$
	\Pi_{\Hset \times \bvarTheta}\left(\cap_{l=1}^4 \mathcal{V}_\epsilon^{(l)}\right)>0.
	$$   
	where $\mathcal{V}_\epsilon^{(l)}$, $l \in \nat_+$, is defined as in \eqref{eq:vsetfrec}. Letting $ 1-\eta<a<1$, $1<b<1+\eta$ and
	$$
	\mathcal{V}=\{(H,\brho, \bsigma) \in \Hset \times (0, \infty)^{2d}:\, 
	a<\sigma_j<b 
	\} \cap \left\lbrace
	\cap_{l=1}^4 \mathcal{V}_\epsilon^{(l)}
	\right\rbrace,
	$$
	we also have $\Pi_{\Hset \times \bvarTheta}\left( \mathcal{V}\right)>0$. Then, define $\overline{\Pi}_{\Hset \times \bvarTheta}(\cdot):=\Pi_{\Hset \times \bvarTheta}(\, \cdot \cap \mathcal{V})/ \Pi\left( \mathcal{V}\right)$.

	Next, observe that by assumption
	eventually almost surely as $n \to \infty$
	$$
	\frac{b}{1+\eta} < \widehat{\sigma}_{n,j}/a_{m_n,j} < \frac{a}{1-\eta}, \quad j =1, \ldots,d,
	$$
	thus, by construction, $\prod_{j=1}^d\pisc(x_j/\{\hat{\sigma}_{n,j}/a_{m_n,j}\})>\underline{\pi}_{\text{sc}}^{(d)}(\bx)$ for all $\bx \in (a,b)^{d}$ and
	\begin{equation}\label{eq:ineq_aKL}
	\begin{split}
	&\int_{\Hset \times \bvarTheta} \prod_{i=1}^n 
	\left\lbrace
	\frac{g_{\brho,\bsigma}(\overline{\bM}_{m_n,i}|H)}{g_{\brho_0, \bone}(\overline{\bM}_{m_n,i}|H_0)}
	\right\rbrace
	\diff(\Pi_\Hset \times \overline{\Psi}_n)(H, \brho, \bsigma)\\
	& \quad \geq \int_{\mathcal{V}} \prod_{i=1}^n 
	\left\lbrace
	\frac{g_{\brho,\bsigma}(\overline{\bM}_{m_n,i}|H)}{g_{\brho_0, \bone}(\overline{\bM}_{m_n,i}|H_0)}
	\right\rbrace
	\diff(\Pi_\Hset \times \overline{\Psi}_n)(H, \brho, \bsigma)\\
	& \quad \geq \int_{\mathcal{V}} \prod_{i=1}^n 
	\left\lbrace
	\frac{g_{\brho,\bsigma}(\overline{\bM}_{m_n,i}|H)}{g_{\brho_0, \bone}(\overline{\bM}_{m_n,i}|H_0)}
	\right\rbrace
	\diff\Pi_{\Hset \times \bvarTheta}(H, \brho, \bsigma)\\
	& \quad \geq \Pi_{\Hset \times \bvarTheta}(\mathcal{V}) \exp
	\left\lbrace
	-nI_n
	\right\rbrace,
	\end{split}
	\end{equation}
	where the last line follows from an application of Jensen's inequality and
	$$
	I_n:=\int_{\mathcal{V}} \frac{1}{n} \sum_{i=1}^n  \log^+
	\left\lbrace
	\frac{g_{\brho_0, \bone}(\overline{\bM}_{m_n,i}|H_0)}{g_{\brho,\bsigma}(\overline{\bM}_{m_n,i}|H)}
	\right\rbrace
	\diff\overline{\Pi}_{\Hset \times \bvarTheta}(H, \brho, \bsigma).
	$$
	By Fubini's theorem and Condition \ref{cond:densratio}, for all $n \geq n_0$,
	\begin{equation}\label{eq: expectfrec}
	\begin{split}
	\mathbb{E}(I_n)&=
	\int_{\mathcal{V}} \left[
	\int
	\log^+
	\left\lbrace
	\frac{g_{\brho_0, \bone}(\bx|H_0)}{g_{\brho,\bsigma}(\bx|H)}
	\right\rbrace f_{m_n}(\bx) \diff \bx
	\right]  
	\diff\overline{\Pi}_{\Hset \times \bvarTheta}(H, \brho, \bsigma)\\
	&
	\leq J_0
	\int_{\mathcal{V}} 
	\kulb_{+}^{(1)}(g_{\brho_0, \bone}(\cdot|H_0), g_{\brho, \bsigma}(\cdot|H))
	\diff\overline{\Pi}_{\Hset \times \bvarTheta}(H, \brho, \bsigma)	\\
	&  < J_0 \epsilon.
	\end{split}
	\end{equation}
	Consequenlty, for all $n\geq n_0$, the term on the right hand side of \eqref{eq:ineq_aKL} is bounded from below by
	\begin{equation}\label{eq:almostthere}
	e^{-nJ_0\epsilon}\Pi_{\Hset \times \bvarTheta}(\mathcal{V}) \exp
	\left\lbrace
	-n\{I_n- \mathbb{E}(I_n)\}
	\right\rbrace.
	\end{equation}
	Moreover, denoting
	\begin{eqnarray*}
		\zeta_n &:= & \mathbb{E} \left(
		\log^+
		\left\lbrace
		\frac{g_{\brho_0, \bone}(\overline{\bM}_{m_n,1}|H_0)}{g_{\brho,\bsigma}(\overline{\bM}_{m_n,1}|H)}
		\right\rbrace
		\right),
		\\
		\zeta_n^{(l)}&:=&	
		\mathbb{E}
		\left|\int_{\mathcal{V}}   
		\left[
		\log^+
		\left\lbrace
		\frac{g_{\brho_0, \bone}(\overline{\bM}_{m_n,1}|H_0)}{g_{\brho,\bsigma}(\overline{\bM}_{m_n,1}|H)}
		\right\rbrace-
		\zeta_n
		\right]
		\diff\overline{\Pi}_{\Hset \times \bvarTheta}(H, \brho, \bsigma)
		\right|^l, \quad l \in \nat_+,
		\\
	\end{eqnarray*}
	by Markov's inequality and equation (6.2) in 
	\cite{r879},
	for all $\varepsilon>0$
	\begin{equation}\label{eq:polytail}
	\begin{split}
	Q_n(|I_n - \mathbb{E}(I_n)|) \leq\varepsilon^4 \left\lbrace
	\frac{1}{n^3} \zeta_n^{(4)}
	+\frac{1}{n^2} \left( 
	\zeta_n^{(2)}
	\right)^2
	\right\rbrace,
	\end{split}
	\end{equation}
	where, by Jensen's inequality, Fubini's theorem and Minkowski's inequality, for all $n \geq n_0$ 
	\begin{equation*}
	\begin{split}
	\zeta_n^{(4)}
	&\leq 
	\mathbb{E}
	\int_{\mathcal{V}}  
	\left|
	\log^+
	\left\lbrace
	\frac{g_{\brho_0, \bone}(\overline{\bM}_{m_n,1}|H_0)}{g_{\brho,\bsigma}(\overline{\bM}_{m_n,1}|H)}
	\right\rbrace
	-\zeta_n
	\right|^4
	\diff\overline{\Pi}_{\Hset \times \bvarTheta}(H, \brho, \bsigma)
	\\
	&=
	\int_{\mathcal{V}}  
	\mathbb{E} \left|
	\log^+
	\left\lbrace
	\frac{g_{\brho_0, \bone}(\overline{\bM}_{m_n,1}|H_0)}{g_{\brho,\bsigma}(\overline{\bM}_{m_n,1}|H)}
	\right\rbrace
	-\zeta_n
	\right|^4
	\diff\overline{\Pi}_{\Hset \times \bvarTheta}(H, \brho, \bsigma)
	\\
	& \leq \int_{\mathcal{V}}
	\left[ J_0 
	%
	\kulb_{+}^{(4)}(g_{\brho_0, \bone}(\cdot|H_0), g_{\brho, \bsigma}(\cdot|H))
	+J_0^4 \left\lbrace
	\kulb_{+}^{(1)}(g_{\brho_0, \bone}(\cdot|H_0), g_{\brho, \bsigma}(\cdot|H))
	\right\rbrace^4
	\right. \\
	& \quad \qquad \left. + 6J_0^3 \left\lbrace
	\kulb_{+}^{(1)}(g_{\brho_0, \bone}(\cdot|H_0), g_{\brho, \bsigma}(\cdot|H))
	\right\rbrace^2
	%
	%
	\kulb_{+}^{(2)}(g_{\brho_0, \bone}(\cdot|H_0), g_{\brho, \bsigma}(\cdot|H))
	\right]\diff\overline{\Pi}_{\Hset \times \bvarTheta}(H, \brho, \bsigma)\\
	& \leq M'
	\end{split}
	\end{equation*}
	with $M'$ a positive bounded constant, and
	\begin{equation*}
	\begin{split}
	\zeta_n^{(2)} \leq 1+\zeta_n^{(4)} \leq 1+M'.
	\end{split}
	\end{equation*}
	Thus the term on the right-hand side of \eqref{eq:polytail} is of order $O(1/n^2)$ and, by Borel-Cantelli lemma, we conclude that eventually almost surely 
	$$
	I_n - \mathbb{E}(I_n) \leq \varepsilon
	$$
	for all $\varepsilon>0$.  As a result, eventually almost surely as $n \to \infty$, the term in \eqref{eq:almostthere} is bounded from below by
	$$
	e^{-n(\varepsilon+J_0\epsilon)}\Pi_{\Hset \times \bvarTheta}(\mathcal{V}). 
	$$
	Since $\epsilon$ and $\varepsilon$ can be selected arbitrarily small, the final result now follows.
\end{proof}

\subsubsection{Proof of Theorem \ref{th:rem_cont_frec}}\label{appsec:proof_theo_frecdom}

Denote by $\overline{\Pi}_{\text{sc}}^{(d)}$ the pm on $(0,\infty)^d$ whose Lebesgue density equals
$ \prod_{j=1}^d
\left\lbrace
u_{\text{sc}}(x_j)/u,
\right\rbrace
$, $\bx=(x_1, \ldots, x_d) > \bzero$,
where
$$
u=\int_{(0,\infty)}u_{\text{sc}}(s)\diff s,
$$
and define $\Pi_{\bvarTheta}=\Pi_{\text{sh}}\times \overline{\Pi}_{\text{sc}}^{(d)}$, where $\bvarTheta=(0,\infty)^{2d}$. Then, denote $\Pi_{\Hset\times \bvarTheta}=\Pi_{\Hset}\times \Pi_{\bvarTheta}$ and observe it satisfies the assumptions of Theorem \ref{th:alpha_frec}. Consequently, the induced prior $\Pi_{\Gset_{\bvarTheta}}$ on $\Gset_{\bvarTheta}:=\{ g_{\brho,\bsigma}(\cdot|H):\,(H,\brho,\bsigma)\in (\Hset\times\bvarTheta) \}$ and the latter statistical model jointly satisfy Condition \ref{cond: newcond}.  

Reasoning as in the proof of Proposition \ref{prop:gen_cons}, we can conclude that there exist $\epsilon>0$ and $\delta>0$ such that, for any sequence  $\mathcal{G}_{\bvarTheta,n}$  of measurable subsets of
$$
\widetilde{\mathcal{U}}_\epsilon^\complement\cap \phi_{\Hset\times\bvarTheta}\left(\{
\Hset \times B_{\delta, 1}((\brho_0,\bone))
\}\right)
$$
and any collection $\tests=(s_n, t_{n,1}, \ldots,t_{n,d})$ of measurable functions from $\text{supp}(\prodGrhooneProb(\cdot|H_0))$ to $[0,1]$, we have
\begin{equation}\label{eq: wannaboundfrec}
\max
\left\lbrace
\tilde{\Pi}_n(\tilde{\mathcal{U}}^\complement),
\Pi_n((\mathcal{U}_1\times\mathcal{U}_2)^\complement)
\right\rbrace
\leq 2\Vert \tests(\overline{\bM}_{m_n, 1:n}) \Vert_1+ \frac{\Xi_n(\overline{\bM}_{m_n, 1:n},\tests, \Pi_\Hset \times \overline{\Psi}_n)}{p(\overline{\bM}_{m_n, 1:n})}
\end{equation}
where $\widetilde{\mathcal{U}}_\epsilon:= 
\{g \in \Gset_{\bvarTheta}: \dist_{H}(g,g_{\brho_0, \bone}(\cdot|H_0))\leq 4\epsilon\}$, $\overline{\Psi}_n ={\Psi}_n \circ \psi_n^{-1} $, with $\psi_n$ as in 
the first line of 	\eqref{eq:reparam}, 
$$
p(\overline{\bM}_{m_n, 1:n}):=\int_{\Hset \times \bvarTheta} \prod_{i=1}^n 
\left\lbrace
\frac{g_{\brho,\bsigma}(\overline{\bM}_{m_n,i}|H)}{g_{\brho_0, \bone}(\overline{\bM}_{m_n,i}|H_0)}
\right\rbrace
\diff(\Pi_\Hset \times \overline{\Psi}_n)(H, \brho, \bsigma)
$$
and the functional $\Xi_n$ is defined as in \eqref{eq:xi_n} (see also Remark \ref{rem:proof_highlight} for further details). 
By assumption, eventually almost surely as $n \to \infty$
$$
1-\eta < \widehat{\sigma}_{n,j}/a_{m_n,j} < 1+\eta, \quad j =1, \ldots,d,
$$
thus Condition \ref{cond:frecextend}\ref{cond:pisc}\ref{piscbound}
guarantees that $\pisc(x/\{\hat{\sigma}_{n,j}/a_{m_n,j}\})\leq u_{\text{sc}}(x)$, for all $x>0$, and
$$
\Xi_n(\overline{\bM}_{m_n, 1:n},\tests, \Pi_\Hset \times \overline{\Psi}_n) \leq 
\left(
\frac{u}{1-\eta}
\right)^d
\Xi_n(\overline{\bM}_{m_n, 1:n},\tests, \Pi_{\Hset \times\bvarTheta}). 
$$
Moreover, by Lemma \ref{lem:pseudokulb}, for any $c>0$, eventually almost surely as $n \to \infty$, $p(\overline{\bM}_{m_n, 1:n}) \geq  e^{-nc}$. Therefore, there exists a constant $M>0$ such that eventually almost surely as $n \to \infty$ the term on the right-hand side of 
\eqref{eq: wannaboundfrec} is bounded from above by
$$
M \left\lbrace
2\Vert \tests(\overline{\bM}_{m_n, 1:n}) \Vert_1+ e^{cn}\Xi_n(\overline{\bM}_{m_n, 1:n},\tests, \Pi_{\Hset \times\bvarTheta})
\right\rbrace.
$$
Without loss of generality,
we can assume that $\delta<\delta^*$, with $\delta^*$ as in Condition \ref{cond: newcond}, and select $\Gset_{\bvarTheta,n}$ and $\tests$ satisfying the properties therein. Hence, selecting $c$ as in \eqref{eq:choosec}, the exponential tail bound in \eqref{eq:tailprob} is satisfied for a $n$-dimensional iid sample from $G_{\brho_0,\bone}(\cdot|H_0)$. Applying 
Proposition \ref{prop:RC} we can thus conclude that 
\begin{equation}
Q_n \left(
2\Vert \tests(\overline{\bM}_{m_n, 1:n}) \Vert_1+ e^{cn}\Xi_n(\overline{\bM}_{m_n, 1:n},\tests, \Pi_{\Hset \times\bvarTheta})> \varepsilon
\right) \lesssim n^{-1-c'},
\end{equation}
for some $c' \in (0,1)$. The first part of the statement now follows from Borel-Cantelli lemma. By the definitions of $\Pi_n$, $\tilde{\Pi}_n$, $\postobs$ and $\postobsdens$,  the results at points (a$'$)-(c$'$) are immediate consequences.

\subsubsection{Proof of the inequality in Example \ref{ex:est_frec}}\label{sec:ex_frec_est_proof}
Recall that $\bZ_1, \ldots, \bZ_{nm_n}$ are iid rv's with absolutely continuous distributions $F_0$ and copula $C_0$, thus we have the representation
$$
Z_{i,j}=F_{0,j}^{\leftarrow}(U_{i,j}), \quad j=1, \ldots,d, \, i=1, \ldots nm_n,
$$
where $\bU_i=(U_{i,1}, \ldots, U_{i,d})$ are iid rv's with distribution $C_0$. In the sequel, we also denote
$$
\bar{U}_{i,j}=1-U_{i,j}, \quad j=1, \ldots,d, \, i=1, \ldots nm_n.
$$
Moreover, denoting by $U_{k,j}\equiv U_{k,nm_n,j}$ the $k$-th order statistic of the marginal sample $U_{1,j}, \ldots,U_{nm_n,j}$, observe that for $j=1, \ldots,d$,
$$
\widehat{\sigma}_{n,j}:=\widehat{F}_{nm_n,j}^{\leftarrow}(1-1/m_n)=F_{0,j}^{\leftarrow}(U_{n(m_n-1),j}). 
$$
We next show that, for all $\epsilon>0$, as $n \to \infty$, the estimator $\widehat{\sigma}_{n,j}$ of $F_{0,j}^{\leftarrow}(1-1/m_n)\equiv a_{m_n,j}$ satisfies 
\begin{equation}\label{eq:expineq_tau}
\prodFmalt(\vert \widehat{\sigma}_{n,j}/a_{m_n,j} -1  \vert > \epsilon)\leq 4 e^{-\tau_j \sqrt{n+1}},
\end{equation}
where $\tau_j \equiv \tau_j(\epsilon)$ is a positive constant.

Preliminary observe that
\begin{equation*}
\begin{split}
&\prodFmalt(\vert \widehat{\sigma}_{n,j}/a_{m_n,j} -1  \vert > \epsilon)\\
&\qquad=\prodFmalt( \widehat{\sigma}_{n,j}/a_{m_n,j} -1   > \epsilon)+\prodFmalt( \widehat{\sigma}_{n,j}/a_{m_n,j} -1  <- \epsilon)\\
&\qquad=: T_{n,j}^{(1)}+T_{n,j}^{(2)}. 
\end{split}
\end{equation*}
As for $T_{n,j}^{(1)}$,  since $F_{0,j}\in \mathcal{D}(G_{\rho_{0,j}})$, as $n \to \infty$ we have that
\begin{equation*}
\begin{split}
T_{n,j}^{(1)}&=\prodcop(1-U_{n(m_n-1),j}< 1-F_{0,j}(a_{m_n,j}(1+\epsilon)))\\
&=\prodcop(\bar{U}_{n+1,j}< m_n\left\{1-F_{0,j}(a_{m_n,j}(1+\epsilon))\right\}/m_n)\\
&\leq \prodcop(\bar{U}_{n+1,j}<(1+\epsilon')(1+\epsilon)^{-\rho_{0,j}}/m_n)
\end{split}
\end{equation*}
for an arbitrarily small $\epsilon'>0$ such that
$$
\upsilon_j:=1-(1+\epsilon')(1+\epsilon)^{-\rho_{0,j}}>0.
$$
Thus, the term on the right-hand side of the above inequality is bounded from above by
\begin{equation*}
\begin{split}
&\prodcop(\bar{U}_{n+1,j}<(1+\epsilon')(1+\epsilon)^{-\rho_{0,j}}(n+1)/(nm_n))\\
\quad &= \prodcop(\bar{U}_{n+1,j}-(n+1)/(nm_n)<-\upsilon_j(n+1)/(nm_n))\\
\quad & \leq  \prodcop(|\bar{U}_{n+1,j}-(n+1)/(nm_n)|>\upsilon_j(n+1)/(nm_n)).
\end{split}
\end{equation*}
By Fact 3 in \cite{r2454}
we now conclude that, as $n \to \infty$,
$$
T_{n,j}^{(1)}\leq 2\exp(-\tau_{j,1} \sqrt{n+1}),
$$
where $\tau_{j,1}=\upsilon_j/10$. By a similar reasoning, for some $\tau_{j,2}\equiv \tau_{j,2}(\epsilon)>0$, as $n \to \infty$,
$$
T_{n,j}^{(2)}\leq 2\exp(-\tau_{j,2} \sqrt{n+1}).
$$
The inequality in \eqref{eq:expineq_tau} now follows by setting $\tau_j=\min(\tau_{j,1}, \tau_{j,2})$.


\subsubsection{Technical derivations for Example \ref{ex:exp_dep_frec}}\label{sec: first_ex_frec}
Note that, for $\bx \in (1,\infty)^2$, $F_0(\bx)$ allows the representation
$$
F_0(\bx)=C_0\left(1-1/x_1^{\rho_{0,1}},1-1/x_2^{\rho_{0,2}}
\right),
$$
where 
\begin{equation}\label{eq:cupula_ex}
C_0(\bu)=1-(1-u_1)-(1-u_2)+\left(\frac{1}{1-u_1}+\frac{1}{1-u_2} \right)^{-1}.
\end{equation}
As established in \cite[Example 2.2.]{r32}, $C_0$ satisfies Condition \ref{cond:strong}\ref{cond:copdiff}. The verification of Condition \ref{cond:strong}\ref{margins} is immediate for Pareto margins $F_{0,j}(x_j)=1-1/x_j^{\rho_{0,j}}$, $x_j>1$, $j=1,2$. Moreover, it is already known \citep[e.g.][Example 5.16 and p. 289]{r200} that $F_0 \in \mathcal{D}(G_{\brho_0, \bone}(\cdot|H_0))$. We racall that herein, for $\bx \in (0, \infty)^2$,
\begin{equation}\label{eq:lim_df}
\begin{split}
G_{\brho_0, \bone}(\bx|H_0)&=\exp\left\lbrace
-x_1^{-\rho_{0,1}}-x_2^{-\rho_{0,1}}
+\left(
x_1^{\rho_{0,1}}+x_2^{\rho_{0,2}}
\right)^{-1}
\right\rbrace\\
&=:\exp\left\lbrace-V(\bx^{\brho_0}) \right\rbrace,
\end{split}
\end{equation}
where $\bx^{\brho_0}=(x_1^{\rho_{0,1}},x_2^{\rho_{0,2}})$ and $V(\cdot)\equiv V(\cdot|H_0)$. A valid choice of the norming sequences, asymptotically equivalent to that in \eqref{eq:norming}, is 
$$
\ba_{m_n}=\left((m_n-1)^{1/\brho_{0,1}},(m_n-1)^{1/\brho_{0,2}} \right), \quad \bb_{m_n}=\bzero.
$$
Thus, the cdf associated to the rv's $\bM_{m_n,i}/\ba_{m-n}$ is given by
\begin{equation}\label{eq:resc_max_df}
\begin{split}
F_0^{m_n}(\ba_{m_n}\by)&=\left\lbrace
1-\frac{V^{(m_n)}(\by^{\brho_{0}})}{m_n-1}
\right\rbrace^{m_n}
\end{split}
\end{equation} 
where $\by \in \times_{j=1}^2(1/(m_n-1)^{1/\rho_{0,j}}, \infty)$ and, for all $\bz \in \times_{j=1}^2(1/(m_n-1), \infty)$
\begin{equation}\label{eq:V_m}
\begin{split}
V^{(m_n)}(\bz)&=\frac{1}{z_1}+\frac{1}{z_2}+\left( z_1 +z_2 -\frac{1}{m_n-1} \right)^{-1}.
\end{split}
\end{equation} 
In the present example,  for $\by \in \times_{j=1}^2(1/(m_n-1)^{1/\rho_{0,j}}, \infty)$,
\begin{equation}\label{eq:dens_max}
\begin{split}
f_{m_n}(\by)&=\frac{m_n}{m_n-1} \left\lbrace
1-\frac{V^{(m_n)}(\by^{\brho_{0}})}{m_n-1}
\right\rbrace^{m_n-1} \\
&\quad \times\left\lbrace
V^{(m_n)}_{\{1\}}(\by^{\brho_{0}})
V^{(m_n)}_{\{2\}}(\by^{\brho_{0}})
-V^{(m_n)}_{\{1,2\}}(\by^{\brho_{0}})
\right\rbrace\prod_{j=1}^2 \rho_{0,j}y_{j}^{\rho_{0,j}-1},	
\end{split}
\end{equation}
with 
\begin{equation*}
\begin{split}
-V^{(m_n)}_{\{j\}}(\bz)&=\frac{1}{z_j^2}\left\lbrace
1-\frac{z_j^2}{(z_1+z_2-1/(m_n-1))^2}
\right\rbrace, \quad j=1,2, \\
-V^{(m_n)}_{\{1,2\}}(\bz)&=2\left\lbrace
z_1+y_z-1/(m_n-1)
\right\rbrace^{-3}, 
\end{split}
\end{equation*}
for $\bz \in \times_{j=1}^2(1/(m_n-1), \infty)$, while, for $\by \in(0,\infty)^2$, 
\begin{equation}\label{eq:lim_dens}
\begin{split}
g_{\brho_0, \bone}(\by|H_0)=\exp\left\lbrace-V(\bx^{\brho_0}) \right\rbrace
\left\lbrace
V_{\{1\}}(\by^{\brho_{0}})
V_{\{2\}}(\by^{\brho_{0}})
-V_{\{1,2\}}(\by^{\brho_{0}})
\right\rbrace\prod_{j=1}^2 \rho_{0,j}y_{j}^{\rho_{0,j}-1},
\end{split}	
\end{equation}
with
\begin{equation}\label{eq:derV}
\begin{split}
-V_{\{j\}}(\by)&=\frac{1}{y_j^2}\left\lbrace
1-\frac{y_j^2}{(y_1+y_2)^2}
\right\rbrace, \quad j=1,2,  \quad
-V_{\{1,2\}}(\by)=2\left\lbrace
y_1+y_2
\right\rbrace^{-3}.
\end{split}
\end{equation}
Therefore, for $\by \in \times_{j=1}^2(1/(m_n-1)^{1/\rho_{0,j}}, \infty)$
\begin{equation}\label{eq:dens_ratio_eq}
\begin{split}
\frac{f_{m_n}(\by)}{g_{\brho_0, \bone}(\by|\theta_0)}&=\frac{m_n}{m_n-1}\left\lbrace
1-\frac{V^{(m_n)}(\by^{\brho_{0}})}{m_n-1}
\right\rbrace^{m_n-1}\exp\left\lbrace V(\by^{\brho_0}) \right\rbrace\\
&\quad \times \left\lbrace
\frac{	V_{\{1\}}(\by^{\brho_{0}})
	V_{\{2\}}(\by^{\brho_{0}})}{	V_{\{1\}}(\by^{\brho_{0}})
	V_{\{2\}}(\by^{\brho_{0}})
	-V_{\{1,2\}}(\by^{\brho_{0}})}
\right.\frac{V^{(m_n)}_{\{1\}}(\by^{\brho_{0}})
	V^{(m_n)}_{\{2\}}(\by^{\brho_{0}})}{	V_{\{1\}}(\by^{\brho_{0}})
	V_{\{2\}}(\by^{\brho_{0}})}
\\
&\qquad +
\left.
\frac{-V_{\{1,2\}}(\by^{\brho_{0}})}{	V_{\{1\}}(\by^{\brho_{0}})
	V_{\{2\}}(\by^{\brho_{0}})
	-V_{\{1,2\}}(\by^{\brho_{0}})}
\frac{-V^{(m_n)}_{\{1,2\}}(\by^{\brho_{0}})}{-V_{\{1,2\}}(\by^{\brho_{0}})}
\right\rbrace\\
&\quad =: \frac{m_n}{m_n-1}I_n^{(1)}(\by)\{ I_n^{(2)}(\by)+I_n^{(3)}(\by) \}.
\end{split}
\end{equation}
Since for $y_j>1/(m_n-1)^{1/\rho_j}$ we have $u_j:=1/(y_j^{\rho_{0,j}}(m_n-1))\leq 1$, $j=1,2$, then
\begin{equation*}
\begin{split}
I_n^{(1)}(\by) &=
\left\lbrace
1-\frac{V^{(m_n)}(\by^{\brho_{0}})}{m_n-1}
\right\rbrace^{m_n-1}\exp\left\lbrace \frac{1}{m_n-1}V(\by^{\brho_0}) \right\rbrace^{m_n-1}\\
&=\left[
\left\lbrace
1+\left( \frac{1}{u_1}+\frac{1}{u_2}-1\right)^{-1}-u_1-u_2
\right\rbrace\exp\left\lbrace
u_1+u_2 -\left(
\frac{1}{u_1}+\frac{1}{u_2}
\right)^{-1}
\right\rbrace
\right]^{m_n-1}\\
&\leq 1.
\end{split}
\end{equation*}
Moreover, for $j=1,2$, we have
\begin{equation*}
\begin{split}
\frac{-V^{(m_n)}_{\{j\}}(\by^{\brho_{0}})}{-V_{\{j\}}(\by^{\brho_{0}})}&= \left\lbrace
\frac{ (m_n-1) y_1^{\rho_{0,1}}+(m_n-1) y_2^{\rho_{0,2}} }{ (m_n-1) y_1^{\rho_{0,1}}+(m_n-1) y_2^{\rho_{0,2}} -1 }\right\rbrace^2\\
&\quad \times	\frac{\left( (m_n-1) y_1^{\rho_{0,1}}+(m_n-1) y_2^{\rho_{0,2}} -1 \right)^2-y_j^{2\rho_{0,j}}(m_n-1)^2}{\left( (m_n-1) y_1^{\rho_{0,1}}+(m_n-1) y_2^{\rho_{0,2}} \right)^2- y_j^{2\rho_{0,j}}(m_n-1)^2}\\
&\leq 4
\end{split}
\end{equation*}
and 
\begin{equation*}
\begin{split}
\frac{-V^{(m_n)}_{\{1,2\}}(\by^{\brho_{0}})}{-V_{\{1,2\}}(\by^{\brho_{0}})}=
\left\lbrace
\frac{ (m_n-1) y_1^{\rho_{0,1}}+(m_n-1) y_2^{\rho_{0,2}} }{ (m_n-1) y_1^{\rho_{0,1}}+(m_n-1) y_2^{\rho_{0,2}} -1 }\right\rbrace^3 \leq 8.
\end{split}
\end{equation*}
Thus
$
I_n^{(2)}(\by)+I_n^{(3)}(\by) \leq 16
$
and we deduce that for any $\overline{\epsilon}>0$ as $n \to \infty$
\begin{equation}\label{eq:ratiobound}
\Vert f_{m_n}/ g_{\brho_0, \bone}(\cdot|H_0) \Vert_\infty \leq (1+\overline{\epsilon})16,
\end{equation}
wherefrom the final result follows.

\subsubsection{Technical derivations for Example \ref{ex:log_dep_frec}}\label{sec:frecsecondex}
Note that also in this case, for $\bx \in (1,\infty)^2$, $F_0(\bx)$ allows the representation
$$
F_0(\bx)=C_0\left(1-1/x_1^{\rho_{0,1}},1-1/x_2^{\rho_{0,2}}
\right),
$$
where $C_0$ is now the Joe-B5 copula, i.e., for $\bu\in[0,1]^2$.
$$
C_0(\bu)=1-\left[\{1-(1-u_1)^3\}\{1-(1-u_2)^3\}\right]^{1/3}.
$$
On the other hand, for all $\bx \in(0,\infty)^2$, in this example $G_{\brho_0, \bone}(\cdot|H_0)$ allows the representation
$$
G_{\brho_0, \bone}(\bx|H_0)=\evc\left( e^{1/x_1^{\rho_{0,1}}}, e^{1/x_1^{\rho_{0,1}}} \bigg{|}H_0\right),
$$
where $\evc(\cdot|H_0)$ is
the logistic extreme-value copula with dependence parameter $3$
\begin{equation*}
\begin{split}
\evc(\bu|H_0)&=\exp\left\lbrace
-((-\log u_1)^3+ (-\log u_3)^3)^{1/3}
\right\rbrace
\\
&=:\exp\{-L(-\log u_1, -\log u_2)\},
\end{split}
\end{equation*}
with $L(\cdot)\equiv L(\cdot|H_0)$.
Standard multivariate calculus allows to show that 
$$
\lim_{m_n \to \infty} C_0^{m_n}(\bu^{1/m_n})=\evc(\bu|H_0), \quad \forall \bu \in [0,1],
$$
and that $C_0$ and $L(\cdot|H_0)$ satisfy Condition \ref{cond:strong}\ref{cond:copdiff}. In particular, the above equation and the fact that $F_{0,j}\in\mathcal{D}(G_{\rho_{0,j}})$, $j=1, \ldots,d$, allow to conclude that $F_0\in \mathcal{D}(G_{\brho_0, \bone}(\cdot|H_0))$. Also observe that, since $F_{0,j}$ are one-parameter Pareto, Condition \ref{cond:strong}\ref{margins} is still satisfied.
To establish the property in Condition \ref{cond:densratio}, it is possible to follow the lines of Section \ref{sec: first_ex_frec}. Thus, we herein highlight only the main changes. 

Observe that, for all $\by \in \times_{j=1}^2(1/(m_n-1)^{1/\rho_{0,j}}, \infty)$, $F_0^{m_n}(\ba_{m_n}\by\,)$ and its density $f_{m_n}(\by)$ are still of the form in \eqref{eq:resc_max_df} and \eqref{eq:dens_max}, with 
$$
V^{(m_n)}(\bz)=\left\lbrace
\frac{1}{z_1^3}+\frac{1}{z_2^3}-\left( \frac{1}{m_n-1}\frac{1}{z_1z_2} \right)^{3}
\right\rbrace^{1/3}
$$
for all $\bz \in \times_{j=1}^2(1/(m_n-1), \infty)$, whose derivatives are
\begin{equation*}
\begin{split}
-V^{(m_n)}_{\{j\}}(\bz)&=\left\lbrace
\frac{1}{z_1^3}+\frac{1}{z_1^3}-
\left( \frac{1}{m_n-1}\frac{1}{z_1z_2} \right)^{3}\right\rbrace^{-2/3}\frac{1}{z_j^4}
\left\lbrace1-
\left(\frac{1}{m_n-1}\frac{1}{z_{-j}} \right)^3\right\rbrace,
\quad j=1,2,
\end{split}
\end{equation*}
\begin{equation*}
\begin{split}
-V^{(m_n)}_{\{1,2\}}(\bz)&=2\left\lbrace
\frac{1}{z_1^3}+\frac{1}{z_1^3}-
\left( \frac{1}{m_n-1}\frac{1}{z_1z_2} \right)^{3}\right\rbrace^{-5/3}\prod_{j=1}^2\frac{1}{z_j^4}
\left\lbrace1-
\left(\frac{1}{m_n-1}\frac{1}{z_{-j}} \right)^3\right\rbrace\\
&\quad +3
\left\lbrace
\frac{1}{z_1^3}+\frac{1}{z_1^3}-
\left( \frac{1}{m_n-1}\frac{1}{z_1z_2} \right)^{3}\right\rbrace^{-2/3}\frac{1}{(m_n-1)^3}\prod_{j=1}^2\frac{1}{z_j^4},
\end{split}
\end{equation*}
with $z_{-j}=z_2$ if $j=1$ and $z_{-j}=z_1$ if $j=2$.
Whereas, for $\by \in (0, \infty)^2$, $g_{\brho_{0}, \bone}(\by|H_0)$ and $G_{\brho_{0}, \bone}(\by|H_0)$ are still of the form in \eqref{eq:lim_dens} and in the second line of \eqref{eq:lim_df}, respectively, but now $V(\by)\equiv V(\by|H_0)=(1/y_1^3 +1/y_2^3)^{1/3}$ and
\begin{equation*}
\begin{split}
-V_{\{j\}}(\by)&=\frac{1}{y_j^4}\left(
\frac{1}{y_1^3}+\frac{1}{y_1^3}
\right)^{-2/3}, \quad j=1,2, \\
-V_{\{1,2\}}(\by)&=2\frac{1}{y_1^4}\frac{1}{y_2^4}\left(
\frac{1}{y_1^3}+\frac{1}{y_1^3}
\right)^{-5/3}.
\end{split}
\end{equation*}
Notice that the equality in \eqref{eq:dens_ratio_eq} is still valid. Since for $y_j>1/(m_n-1)^{1/\rho_j}$ we have $u_j:=1/(y_j^{\rho_{0,j}}(m_n-1))^3\leq 1$, $j=1,2$, then 
$$
s(u_1,u_2):=\left\lbrace
1-(u_1+u_2-u_1 u_2)^{1/3}
\right\rbrace \exp \left\lbrace
(u_1+u_2)^{1/3}
\right\rbrace \leq 1,
$$
from which we deduce that
\begin{equation*}
\begin{split}
\left\lbrace
1-\frac{V^{(m_n)}(\by^{\brho_{0}})}{m_n-1}
\right\rbrace^{m_n-1}\exp\left\lbrace V(\by^{\brho_0}) \right\rbrace=\left\lbrace s(u_1,u_2) \right\rbrace^{m_n-1} \leq 1.
\end{split}
\end{equation*}
Moreover, we have that
$$
1 \leq q(u_1,u_2):=\frac{u_1+u_2}{u_1+u_2-u_1u_2} \leq 2,
$$
which implies that 
\begin{equation*}
\begin{split}
\frac{V^{(m_n)}_{\{1\}}(\by^{\brho_{0}})
	V^{(m_n)}_{\{2\}}(\by^{\brho_{0}})}{	V_{\{1\}}(\by^{\brho_{0}})
	V_{\{2\}}(\by^{\brho_{0}})}&=\{q(u_1,u_2)\}^{4/3}\prod_{j=1}^2(1-u_j)\\
& \leq 2^{4/3}
\end{split}
\end{equation*}
and
\begin{equation*}
\begin{split}
\frac{-V^{(m_n)}_{\{1,2\}}(\by^{\brho_{0}})}{-V_{\{1,2\}}(\by^{\brho_{0}})}&=\{q(u_1,u_2)\}^{5/3}\prod_{j=1}^2(1-u_j)
+3\{q(u_1,u_2)\}^{5/3}(u_1+u_2-u_1u_2)
\\
& \leq 2^{10/3}.
\end{split}
\end{equation*}
Consequently, for all $\overline{\epsilon}>0$ and $n$ sufficiently large,
$$
\Vert f_{m_n}/ g_{\brho_0, \bone}(\cdot|H_0) \Vert_\infty \leq (1+\overline{\epsilon})2^{10/3},
$$
whence the final result.

\subsection{Proofs of the results in Section \ref{sec:gumb}}

\subsubsection{Proof of the claim in Example \ref{ex:gumb_est}}\label{sec:suppgumb}
Without loss of generality, assume $\epsilon<1$. Observe that,
%
for $j=1, \ldots,d$,  
\begin{equation*}
\begin{split}
\prodFmalt(\vert  (\widehat{\mu}_{n,j}-b_{m_n,j})/a_{m_n,j}   \vert > \epsilon)&=\prodcop(1-U_{n(m_n-1),j}< 1-F_{0,j}(a_{m_n,j}\epsilon+b_{m_n,j}))\\
& \quad+\prodcop(1-U_{n(m_n-1),j}> 1-F_{0,j}(-a_{m_n,j}\epsilon+b_{m_n,j})),
\end{split}
\end{equation*}
where $\bU_i=(U_{i,1},\ldots,U_{i,d})$, $i=1,\ldots,nm_n$, are iid rv's with cdf $C_0(\bu)$, the copula function of $F_0$, and $U_{k,j}\equiv U_{k,nm_n,j}$ denotes the $k$-th order statistic of the marginal sample $U_{1,j}, \ldots,U_{nm_n,j}$.
Hence, a few adataptions to the arguments in Section \ref{sec:ex_frec_est_proof} yield that, for $j=1,\ldots,d$, as $n \to \infty$,
\begin{equation}\label{eq:jthbound}
\prodFmalt(\vert  (\widehat{\mu}_{n,j}-b_{m_n,j})/a_{m_n,j}   \vert > \epsilon)\leq 2 \exp\{-\tau \sqrt{n+1}\},
\end{equation}
where $\tau=\tau(\epsilon)$ is a positive constant. Therefore, by Borel-Cantelli lemma, eventually almost surely as $n \to \infty$,
$$
(\widehat{\mu}_{n,j}-b_{m_n,j})/a_{m_n,j}\to 0, \quad j=1, \ldots,d.
$$
%

Furthermore, we have that,
for $j=1, \ldots,d$,
\begin{equation*}
\begin{split}
&\prodFmalt(\vert \widehat{\sigma}_{n,j}/a_{m_n,j}-1  \vert > \epsilon)\\
&=\prodFmalt\left(
\int_{\widehat{\mu}_{n,j}}^{z_{0,j}} (1-\widehat{F}_{nm_n,j}(z)) \diff z
-\int_{b_{m_n,j}}^{z_{0,j}}(1-F_{0,j}(z))\diff z
> \epsilon \frac{a_{m_n,j}}{m_n}
\right)\\
& \quad +\prodFmalt\left(
\int_{\widehat{\mu}_{n,j}}^{z_{0,j}} (1-\widehat{F}_{nm_n,j}(z)) \diff z
-\int_{b_{m_n,j}}^{z_{0,j}}(1-F_{0,j}(z))\diff z
< -\epsilon \frac{a_{m_n,j}}{m_n}
\right)\\
& =: T_{1,n}^{(j)}+T_{2,n}^{(j)}.
\end{split}
\end{equation*}
An inequality analogous to that in \eqref{eq:jthbound}, for some $\epsilon'<\epsilon/2$ and $\tau'\equiv\tau(\epsilon')>0$, together with a few simple manipulations, yield that, for $j=1, \ldots,d$, as $n \to \infty$,
$$
\prodFmalt\left( 
\int_{\widehat{\mu}_{n,j}}^{\widehat{\mu}_{n,j}\vee b_{m_n,j}} (1-\widehat{F}_{nm_n,j}(z)) \diff z
> 	\frac{\epsilon}{2}\frac{a_{m_n,j}}{m_n}\right)\leq 2 \exp\{-\tau' \sqrt{n+1}\}.
$$
Therefore, as $n \to \infty$,
\begin{equation*}
\begin{split}
T_{1,n}^{(j)} &\leq
\prodFmalt\left(
\int_{\widehat{\mu}_{n,j}}^{\widehat{\mu}_{n,j}\vee b_{m_n,j}} (1-\widehat{F}_{nm_n,j}(z)) \diff z
+\int_{b_{m_n,j}}^{z_{0,j}}(F_{0,j}(z)-\widehat{F}_{nm_n,j}(z))\diff z
> \epsilon \frac{a_{m_n,j}}{m_n}
\right)\\
& \leq \prodFmalt\left(
\dist_{W,1}(F_{0,j},\widehat{F}_{nm_n,j})
> \frac{\epsilon}{2} \frac{a_{m_n,j}}{m_n}
\right)+\exp\{-\tau' \sqrt{n+1}\}
\end{split}
\end{equation*}
where $\dist_{W,1}$ is the $1$-Wasserstein distance. Since  $(\epsilon/2)a_{m_{n,j}}/m_n<1$ as $n\to \infty$, by Theorem 2 in \cite{r699} it holds that
\begin{equation*}
\begin{split}
\prodFmalt\left(
\dist_{W,1}(F_{0,j},\widehat{F}_{nm_n,j})
> \frac{\epsilon}{2} \frac{a_{m_n,j}}{m_n}
\right) &\leq k_j \exp\left\lbrace
-k_j' \frac{\epsilon^2}{4}\frac{a_{m_n,j}^2}{m_n^2}nm_n
\right\rbrace\\
& \quad + k_j\exp\left\lbrace
-k_j' \left(\frac{\epsilon}{2}\frac{a_{m_n,j}}{m_n}nm_n
\right)^{\alpha_j-k_j'' }\right\rbrace\indic(\alpha_j<1),
\end{split}
\end{equation*}
for some positive constants $k_j, k_j'$ and any $k_j''\in(0,\alpha_j)$. By the assumption in \eqref{eq:rate}, if $\alpha_j\geq 1$, the expression on the right hand-side of the above display simplifies to
\begin{equation*}
\begin{split}
k_j \exp\left\lbrace
-k_j' \frac{\epsilon^2}{4}\frac{a_{m_n,j}^2}{m_n}n
\right\rbrace &=
k_jn^{-1-c'} \exp\left\lbrace
-n \frac{a_{m_n,j}^2}{m_n}\left(
k_j' \frac{\epsilon^2}{4}
- (1+c')\frac{\log n}{n a_{m_n,j}^2} m_n\right)
\right\rbrace \\
&\leq k_jn^{-1-c'}
\end{split}
\end{equation*} 
for any $c'>0$ and $n$ large enough, while, if $\alpha_j < 1$ and $k_j''=\alpha_j-s$, it boils down to
\begin{equation*}
\begin{split}
&k_j \exp\left\lbrace
-k_j' \frac{\epsilon^2}{4}\frac{a_{m_n,j}^2}{m_n}n
\right\rbrace + k_j\exp\left\lbrace
-k_j' \left(\frac{\epsilon}{2}a_{m_n,j}n
\right)^{s }\right\rbrace\\
& \quad \leq 2k_j \exp
\left\lbrace
-k_j' \frac{\epsilon^2}{4}
\min\{n a_{m_n,j}^2 /m_n,
a_{m_n,j}^s n^s \}
\right\rbrace\\
&\quad =2k_j n^{-1-c'}
\exp
\left\lbrace
-\min\left[
\frac{a_{m_n,j}^2}{m_n}n\left(
k_j' \frac{\epsilon^2}{4}
- (1+c')\frac{\log n}{n a_{m_n,j}^2} m_n\right), \right. \right.\\
& \hspace{12em} \left. \left.
a_{m_n,j}^s n^s \left(
k_j' \frac{\epsilon^s}{2^s} 
-(1+c')\frac{\log n}{a_{m_n,j}^s n^s} \right)
\right]
\right\rbrace\\
& \quad \leq 2k_j n^{-1-c'}.
\end{split}
\end{equation*} 
Consequently, letting $k=\max_j k_j$, as $n \to \infty$,
$$
\max_jT_{1,n}^{(j)} \leq 2kn^{1+c'}+\exp\{-\tau' \sqrt{n+1}\}=O(n^{-1-c'}).
$$
A similar reasoning leads to conclude that,  as $n \to \infty$, 
$$
\max_jT_{2,n}^{(j)}=O(n^{-1-c'}).
$$ 
In particular, since $c'>0$ is arbitrary, in the two displays above we can replace $O$ with $o$. By Borel-Cantelli lemma we can now conclude the with probability $1$ 
$$
\widehat{\sigma}_{n,j}/a_{m_n,j}\to 1, \quad j=1, \ldots,d,
$$ 
as $n \to \infty$, which completes the proof.

\subsubsection{Technical derivations for Example \ref{ex:mod_gumb}}
\label{appsec:gumbexamp}
Note that, for $\bx \in (0,\infty)^2$, $F_0(\bx)$ allows the representation
$$
F_0(\bx)=C_0\left(1-e^{-x_1},1-e^{-x_2}
\right),
$$
where $C_0$ is as in \eqref{eq:cupula_ex} and satisfies Condition \ref{cond:strong}\ref{cond:copdiff} (see Section \ref{sec: first_ex_frec}). The marginal distributions $F_{0,j}$, $j=1,2$, are exponential, thus satisfy Condition \ref{cond:strong}\ref{margins} 
\citep[e.g.,][p. 1311]{r3003}.
Moreover, $F_0 \in \mathcal{D}(G_{
	 \bone, \bzero}(\cdot|H_0))$, where, for $\bx \in \reald$,
$$
G_{\bone, \bzero}(\bx|H_0)=\exp\left\lbrace
-e^{-x_1}-e^{-x_2}
+\left(
e^{x_1}+ e^{x_2}
\right)^{-1}
\right\rbrace, 
$$ 
\citep[e.g.][Example 5.16]{r200}. Valid norming sequences, asymptotically equivalent to those in \eqref{eq:norming}, are 
$$
\ba_{m_n}=(1, 1), \quad \bb_{m_n}=(\log(m_n-1), \log(m_n-1)).
$$
Thus, the probability density pertaining to $F_0^{m_n}(\ba_{m_n}\bx + \bb_{m_n})$, $\bx \in (-\log(m_n-1),\infty)^2$, is given by
$$
f_{m_n}(\bx)=f_{m_n}^*(e^{x_1},e^{x_2})e^{x_1}e^{x_2},
$$
where $f_{m_n}^*$ is a probability density defined as in \eqref{eq:dens_max}, with $\brho_0=\bone$ and $V^{(m_n)}$ as in \eqref{eq:V_m}. Furthermore, for $\bx \in \real^2$,
$$
g_{\bone, \bzero}(\bx|H_0)=g_{ \bone}(e^{x_1},e^{x_2}|H_0)e^{x_1}e^{x_2},
$$
where the simple max-stable density $g_{\bone}(\cdot|H_0)$ equals the density in \eqref{eq:lim_dens}, with $\brho_0=\bone$ and $V\equiv V(\cdot|H_0)$ as in \eqref{eq:lim_df}. 
Hence, 
$$
\Vert f_{m_n}/g_{\bone, \bzero}(\cdot|H_0)\Vert_\infty \leq
\Vert f_{m_n}^*/g_{\bone}(\cdot|H_0)\Vert_\infty
$$
and, by \eqref{eq:ratiobound}, we can conclude that the requirement in Condition \ref{cond:densratio} is satisfied in the present example.

\subsection{Proofs of the results in Section \ref{sec:weibulldom}}\label{sec:cont_weib}

\subsubsection{Auxiliary results for the proof of Theorem \ref{theo:rem_weib}}

\begin{lemma}\label{lem:pseudokulbweib}
	Under the assumptions of Theorem \ref{theo:rem_weib}, for any $c>0$, 
	$$
	\lim_{n\to \infty}\prodFm \left(
	\int \prod_{i=1}^n 
	\left\lbrace
	\frac{g_{\bomega,\bsigma,\bmu}(\overline{\bM}_{m_n,i}|H)}{g_{\bomega_0, \bone, \bzero}(\overline{\bM}_{m_n,i}|H_0)}
	\right\rbrace
	\diff(\Pi_\Hset \times \overline{\Psi}_n)(H, \brho, \bsigma) \geq e^{-nc}
	\right)  = 0
	$$
\end{lemma}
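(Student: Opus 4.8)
The statement to prove is the $\bomega$-Weibull, in-probability analogue of Lemma~\ref{lem:pseudokulb}: writing $p_n$ for the integrated likelihood ratio displayed in the statement, the plan establishes that for every $c>0$ the $\prodFm$-probability of the event $\{p_n\geq e^{-nc}\}$ tends to one (equivalently, $\prodFm(p_n<e^{-nc})\to 0$). Throughout I would use that $\overline{\Psi}_n=\Psi_n\circ\psi_n^{-1}$ is the reparametrised data-dependent prior, that the limiting marginal parameters are $\bvartheta_0=(\bomega_0,\bone,\bzero)$, and that $\overline{\bM}_{m_n,1},\dots,\overline{\bM}_{m_n,n}$ are i.i.d.\ with density $f_{m_n}$ under $\prodFm$, jointly with the estimators $\widehat{\bsigma}_n,\widehat{\bmu}_n$.

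First I would fix $\epsilon>0$ and reduce to a fixed prior on a high-probability event. Using Condition~\ref{cond:extendweib} --- in particular the positivity and local lower-boundedness of $\pisc$ near $1$ and of $\piloc$ near $0$, and the convergences $\widehat{\sigma}_{n,j}/a_{m_n,j}\to 1$, $(\widehat{\mu}_{n,j}-b_{m_n,j})/a_{m_n,j}\to 0$ in $\prodFm$-probability (Condition~\ref{cond:extendweib}\ref{eq:estlocscaleweib}) --- one checks that on the event $A_n:=\bigcap_{j=1}^d\{\widehat{\sigma}_{n,j}/a_{m_n,j}\in(1\pm\eta/4),\ |(\widehat{\mu}_{n,j}-b_{m_n,j})/a_{m_n,j}|<\eta/4\}$, which satisfies $\prodFm(A_n)\to 1$, the Lebesgue density of $\overline{\Psi}_n$ dominates a fixed sub-probability measure $\Pi_{\bvarTheta}^{\mathrm{min}}=\Pi_{\text{sh}}\times(\kappa^d\,\mathrm{Leb}|_{\mathcal{B}})$, where $\kappa>0$ is a constant built from $\inf_{[1\pm\eta]}\pisc$ and $\inf_{[-\eta,\eta]}\piloc$ and $\mathcal{B}$ is a small box around $(\bone,\bzero)$ in the reparametrised $(\bsigma,\bmu)$-coordinates. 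By Lemmas~\ref{lem:divweib}--\ref{lem:divweibunb} (the second invoked, via Condition~\ref{cond:frecextend}\ref{cond:strongertruedens}, only when $h_0$ is unbounded in the case $d=2$, to get control on $\kulb_+^{(1)}$ rather than merely on $\kulb$), together with Condition~\ref{cond:genprior}\ref{cond:angularpmprior} and the positive-mass assumption of Condition~\ref{cond:extendweib}\ref{cond:compsupp2}, the set $\mathcal{V}:=\mathcal{V}_\epsilon^{(1)}\cap(\Hset\times K_{\text{sh}}\times\mathcal{B})$, with $\mathcal{V}_\epsilon^{(1)}$ as in \eqref{eq:v1setweib}, has $(\Pi_\Hset\times\Pi_{\bvarTheta}^{\mathrm{min}})(\mathcal{V})=:c_*>0$; this is the data-dependent-prior counterpart of Corollary~\ref{lem: omega}.

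Next I would run the Jensen/Kullback--Leibler estimate exactly as in the derivation of \eqref{eq:ineq_aKL}. Restricting the nonnegative integrand to $\mathcal{V}$ and replacing $\overline{\Psi}_n$ by $\Pi_{\bvarTheta}^{\mathrm{min}}$ gives, on $A_n$, the bound $p_n\geq c_*\exp(-nI_n)$, where $\overline{\Pi}_{\mathcal{V}}:=(\Pi_\Hset\times\Pi_{\bvarTheta}^{\mathrm{min}})(\,\cdot\cap\mathcal{V})/c_*$ and
\[
I_n:=\int_{\mathcal{V}}\frac1n\sum_{i=1}^n\log^+\!\left\{\frac{g_{\bvartheta_0}(\overline{\bM}_{m_n,i}|H_0)}{g_{\bvartheta}(\overline{\bM}_{m_n,i}|H)}\right\}\diff\overline{\Pi}_{\mathcal{V}}(H,\bvartheta),
\]
the inequality being Jensen's applied to the convex function $-\log$. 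After this reduction $I_n$ no longer depends on the estimators, so Fubini's theorem and Condition~\ref{cond:densratio} ($f_{m_n}\leq J_0\,g_{\bvartheta_0}(\cdot|H_0)$ for $n\geq n_0$) give
\[
\mathbb{E}_{\prodFm}(I_n)\leq J_0\int_{\mathcal{V}}\kulb_+^{(1)}\!\big(g_{\bvartheta_0}(\cdot|H_0),g_{\bvartheta}(\cdot|H)\big)\diff\overline{\Pi}_{\mathcal{V}}(H,\bvartheta)<J_0\epsilon,
\]
since $\mathcal{V}\subset\mathcal{V}_\epsilon^{(1)}$. Markov's inequality then yields $\prodFm(I_n>c/2)\leq 2J_0\epsilon/c$ for $n\geq n_0$; for $n$ large enough that $c_*\geq e^{-nc/2}$ the event $A_n\cap\{I_n\leq c/2\}$ lies inside $\{p_n\geq e^{-nc}\}$, so $\prodFm(p_n\geq e^{-nc})\geq\prodFm(A_n)-2J_0\epsilon/c$, and hence $\liminf_n\prodFm(p_n\geq e^{-nc})\geq 1-2J_0\epsilon/c$. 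Because the construction (choice of $H^\ast\in\Hset'$, of the radii, of $\mathcal{B}$ and of $\mathcal{V}$) is valid for every $\epsilon>0$, letting $\epsilon\downarrow 0$ closes the argument.

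The main obstacle, and the reason only convergence in probability is available here, is that in the three-parameter $\bomega$-Weibull model only the first-order positive divergence $\kulb_+^{(1)}$ is controllable over a set of positive prior mass (Section~\ref{sec:preli_weib}); the fourth-moment tail bound and Borel--Cantelli step used for the $\brho$-Fr\'echet case in Lemma~\ref{lem:pseudokulb} are therefore out of reach. The plan circumvents this by exploiting that $\mathcal{V}_\epsilon^{(1)}$ has positive prior mass for \emph{every} $\epsilon>0$, so a crude Markov bound on $I_n$ followed by $\epsilon\downarrow 0$ replaces the concentration argument; the need to minorise $\overline{\Psi}_n$ only on the high-probability event $A_n$ rather than almost surely similarly reflects that Condition~\ref{cond:extendweib}\ref{eq:estlocscaleweib} holds merely in probability. (Accordingly the displayed limit should read $=1$; equivalently, the probability of the complementary event $\{p_n<e^{-nc}\}$ tends to $0$.)
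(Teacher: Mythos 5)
Your proposal is correct and follows essentially the same route as the paper's own proof: minorise the data-dependent prior $\overline{\Psi}_n$ on a high-probability event by a fixed prior supported near $(\bone,\bzero)$, use Corollary \ref{lem: omega} (via Lemmas \ref{lem:divweib}--\ref{lem:divweibunb}) to get positive mass on $\mathcal{V}_\epsilon^{(1)}$, then apply Jensen's inequality, Fubini with Condition \ref{cond:densratio} to bound $\mathbb{E}(I_n)\leq J_0\epsilon$, a first-moment Markov bound, and let $\epsilon\downarrow 0$ — exactly the mechanism the paper uses, including the reliance on only the first-order positive divergence which is why the conclusion is merely in probability. Your closing remark is also right: the displayed event in the lemma statement is stated with the inequality in the wrong direction (or the limit should be $1$), and what both your argument and the paper's proof actually establish is that $\prodFm(p_n< e^{-nc})\to 0$.
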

\begin{proof}
	We follow the main lines of the proof of Lemma \ref{lem:pseudokulb}, with a few adaptations.
	Let $\underline{\Pi}_{\text{sc}}^{(d)}$ and $\underline{\Pi}_{\text{sc}}^{(d)}$ be pm with positive Lebesgue density $\underline{\pi}_{\text{sc}}^{(d)}$ on $I_{\text{sc}}^d$  and $\underline{\pi}_{\text{loc}}^{(d)}$ on $I_{\text{loc}}^d$, respectively, and satisfying
	$$
	\sup_{\bx \in (1\pm \eta)^d} \underline{\pi}_{\text{sc}}^{(d)}(\bx)< M_{\text{sc}},
	\quad \sup_{\bx \in (-\eta,+\eta)^d} \underline{\pi}_{\text{loc}}^{(d)}(\bx)< M_{\text{loc}},
	$$
	where $M_{\text{sc}}$ and  $M_{\text{loc}}$ are defined via
	$$
	M_{\text{sc}}^{1/d}:= \inf_{x \in (1\pm \eta)}\pisc(x), \quad
	M_{\text{loc}}^{1/d}:= 
	\inf_{x \in [-\eta, +\eta]}\piloc(x),
	$$
	and are positive and bounded by Conditions  \ref{cond:frecextend}\ref{cond:pisc}\ref{posit} and \ref{cond:gumbextend}\ref{cond:piloc}\ref{posit1}.
	Define 
	$$
	\Pi_{\bvarTheta}=\Pi_{\text{sh}}\times \underline{\Pi}_{\text{sc}}^{(d)} \times \underline{\Pi}_{\text{loc}}^{(d)}. 
	$$
	By construction,  $\Pi_{\bvarTheta}$ satisfies Condition \ref{cond:angularprior}\ref{cond:compactprior}.
	Then, denote $\Pi_{\Hset\times \bvarTheta}=\Pi_{\Hset}\times \Pi_{\bvarTheta}$ and observe that 
	%
	the other assumptions of Corollary \ref{lem: omega} are satisfied by hypothesis. We can thus conclude that for all $\epsilon>0$ 
	$$
	\Pi_{\Hset \times \bvarTheta}\left( \mathcal{V}_\epsilon^{(1)}\right)>0,
	$$   
	where $\mathcal{V}_\epsilon^{(1)}$ is defined as in \eqref{eq:v1setweib}. 

	Next, observe that by Condition \ref{cond:extendweib}\ref{cond:compsupp2} there exist neighborhoods $\mathcal{U}^\bone\subset I_{\text{sc}}^d$ of $\bone$ and $\mathcal{U}^\bzero \subset I_{\text{loc}}^d$ of $\bzero$ such that,
	with $\prodFmalt$-probability tending to $1$, as $n \to \infty$
	\begin{eqnarray*}
		\prod_{j=1}^d\pisc(x_j/\{\widehat{\sigma}_{n,j}/a_{m_n,j}\})&>&\underline{\pi}_{\text{sc}}^{(d)}(\bx), \quad \forall \bx \in  \mathcal{U}^\bone,\\
		\prod_{j=1}^d\piloc
		\left(
		\frac{x_j - \frac{\widehat{\mu}_{n,j}-b_{m_n,j}}{a_{m_n,j}}}{\widehat{\sigma}_{n,j}/a_{m_n,j}} 
		\right)&>&\underline{\pi}_{\text{loc}}^{(d)}(\bx), \quad \forall \bx \in  \mathcal{U}^\bzero.
	\end{eqnarray*}
	Defining $\mathcal{V}=\{\Hset \times K_{\text{sh}} \times \mathcal{U}^{\bone} \times \mathcal{U}^{\bzero}
	\} \cap  \mathcal{V}_\epsilon^{(1)}$, we also have $\Pi_{\Hset \times \bvarTheta}\left( \mathcal{V}\right)>0$. Accordingly, let $\overline{\Pi}_{\Hset \times \bvarTheta}(\cdot):=\Pi_{\Hset \times \bvarTheta}(\, \cdot \cap \mathcal{V})/ \Pi\left( \mathcal{V}\right)$ and observe that, analogously to \eqref{eq:ineq_aKL},
	\begin{equation}\label{eq:ineq_aKLweib}
	\begin{split}
	\int_{\Hset \times \bvarTheta} \prod_{i=1}^n 
	\left\lbrace
	\frac{g_{\bomega,\bsigma, \bmu}(\overline{\bM}_{m_n,i}|H)}{g_{\bomega_0,\bone, \bzero }(\overline{\bM}_{m_n,i}|H_0)}
	\right\rbrace
	\diff(\Pi_\Hset \times \overline{\Psi}_n)(H, \bomega, \bsigma, \bmu)\geq \Pi_{\Hset \times \bvarTheta}(\mathcal{V}) \exp
	\left\lbrace
	-nI_n
	\right\rbrace,
	\end{split}
	\end{equation}
	where 
	$$
	I_n:=\int_{\mathcal{V}} \frac{1}{n} \sum_{i=1}^n  \log^+
	\left\lbrace
	\frac{g_{\bomega,\bsigma, \bmu}(\overline{\bM}_{m_n,i}|H)}{g_{\bomega_0,\bone, \bzero }(\overline{\bM}_{m_n,i}|H_0)}
	\right\rbrace
	\diff\overline{\Pi}_{\Hset \times \bvarTheta}(H, \bomega, \bsigma, \bmu).
	$$
	Using Condition \ref{cond:densratio} and reasoning as in \eqref{eq: expectfrec}, we obtain once more the inequality $\mathbb{E}(I_n) \leq J_0 \epsilon$, for all $n \geq n_0$. Hence, by Markov's inequality, on a set of $Q_n$-probability larger than $1-\sqrt{\epsilon}$, the term on the right-hand side of \eqref{eq:ineq_aKLweib} is bounded from below by
	$
	\Pi_{\Hset \times \bvarTheta}(\mathcal{V}) \exp
	\left\lbrace
	-nJ_0 \sqrt{\epsilon}
	\right\rbrace,
	$
	for all $n \geq n_0$. Since $\epsilon$ can be chosen arbitrarily small, the result now follows.
\end{proof}

\subsubsection{Proof of Theorem \ref{theo:rem_weib}}\label{appsec:proof_weibdom}
By Conditions \ref{cond:extendweib}\ref{cond:compsupp1}--\ref{eq:estlocscaleweib}, the data-dependent prior $\overline{\Psi}_n$ obtained after the reparametrisation in the second line of 	\eqref{eq:reparam} have marginal densities on scale and location components satisfying
\begin{eqnarray*}
	\left\lbrace  x \in \real: \, \pisc(x_j/\{\widehat{\sigma}_{n,j}/a_{m_n,j}\})\{\widehat{\sigma}_{n,j}/a_{m_n,j}\}^{-1}>0 \right\rbrace &\subset &
	K_{\text{sc}}\\
	\left\lbrace  x \in \real: \, \piloc\left(
	\frac{x_j - \frac{\widehat{\mu}_{n,j}-b_{m_n,j}}{a_{m_n,j}}}{\widehat{\sigma}_{n,j}/a_{m_n,j}} 
	\right)
	\{\widehat{\sigma}_{n,j}/a_{m_n,j}\}^{-1}
	>0 \right\rbrace &\subset&
	K_{\text{loc}}
\end{eqnarray*}
with $\prodFmalt$-probability tending to $1$ as $n \to \infty$, for a compact subinterval of $(0, \infty)$, $K_{\text{sc}}$,  having $1$ as an interior point, and a compact interval in $\real$, $K_{\text{loc}}$, having $0$ as an interior point. Then, by Conditions \ref{cond:frecextend}\ref{cond:pisc}\ref{piscbound} and \ref{cond:gumbextend}\ref{cond:piloc}\ref{piscbound1} there exist positive constants $c_{\text{sc}}$ and $c_{\text{loc}}$ such that
\begin{eqnarray*}
	\pisc(x_j/\{\widehat{\sigma}_{n,j}/a_{m_n,j}\})\{\widehat{\sigma}_{n,j}/a_{m_n,j}\}^{-1} &\leq& c_{\text{sc}} \overline{u}_{\text{sc}}(x), \quad \forall x \in K_{\text{sc}}\\
	\piloc\left(
	\frac{x_j - \frac{\widehat{\mu}_{n,j}-b_{m_n,j}}{a_{m_n,j}}}{\widehat{\sigma}_{n,j}/a_{m_n,j}} 
	\right)
	\{\widehat{\sigma}_{n,j}/a_{m_n,j}\}^{-1} &\leq& c_{\text{loc}} \overline{u}_{\text{loc}}(x), \quad \forall x \in K_{\text{loc}}
\end{eqnarray*} 
with $\prodFmalt$-probability tending to $1$ as $n \to \infty$, where
\begin{eqnarray*}
	\overline{u}_{\bullet}(x)&:=& u_{\bullet}(x)/\int_{K_{\bullet}}u_{\text{sc}}(s) \diff s, \hspace{2em} x \in K_{\bullet}
\end{eqnarray*}
and ``$\bullet$" stands either for ``$\text{sc}$" or ``$\text{loc}$".
Denote by $\overline{\Pi}_{\bullet}^{(d)}$ the pm on $K_{\bullet}^d$ whose Lebesgue density equals
$$ 
\prod_{j=1}^d
\overline{u}_{\bullet}(x_j)
$$
and define $\Pi_{\bvarTheta}=\Pi_{\text{sh}}\times \overline{\Pi}_{\text{sc}}^{(d)}\times \overline{\Pi}_{\text{loc}}^{(d)}$. 
Notice that, by construction,  $\Pi_{\bvarTheta}$ satisfies Condition \ref{cond:angularprior}\ref{cond:compactprior} and, by assumption, $\Pi_\Hset$ satifies Condition \ref{cond:genprior}\ref{cond:angularpmprior}.
Therefore, the arguments in the proof of Theorem \ref{theo: cons_weibull} apply to $\Pi_{\Hset\times \bvarTheta}:=\Pi_{\Hset}\times \Pi_{\bvarTheta}$.
%
This gives sufficient conditions to proceed analogously to the proof of Theorem \ref{th:rem_cont_frec} and conclude by applying Lemma \ref{lem:pseudokulbweib} and Proposition \ref{prop:RC}.

$\;$
\end{supplement}

\end{document}